\newtheorem{theorem}{Theorem}[section]
\newtheorem{corollary}[theorem]{Corollary}
\newtheorem{proposition}[theorem]{Proposition}
\newtheorem{lemma}[theorem]{Lemma}
\newtheorem{conjecture}[theorem]{Conjecture}
\theoremstyle{definition}
\newtheorem{definition}[theorem]{Definition}
\newtheorem{example}[theorem]{Example}
\newtheorem{remark}[theorem]{Remark}
\newtheorem{question}[theorem]{Question}
\DeclareFontFamily{OT1}{rsfs}{}
\DeclareFontShape{OT1}{rsfs}{n}{it}{<-> rsfs10}{}
\DeclareMathAlphabet{\curly}{OT1}{rsfs}{n}{it}
\newcommand\I{\mathcal I}
\newcommand\LL{\mathbb L}
\renewcommand\O{\mathcal O}
\newcommand\PP{\mathbb P}
\newcommand\cP{\mathcal P}
\newcommand\cV{\mathcal V}
\newcommand\cW{\mathcal W}
\newcommand\EE{\mathbb E}
\newcommand\cA{\mathcal A}
\newcommand\F{\mathcal F}
\newcommand\C{\mathbb C}
\newcommand\cC{\mathcal C}
\newcommand\FF{\mathbb F}
\newcommand\GG{\mathbb G}
\newcommand\II{\mathbb I}
\newcommand\Q{\mathbb Q}
\newcommand\cQ{\mathcal Q}
\newcommand\R{\mathbb R}
\newcommand{\cX}{\mathcal{X}}
\newcommand\Z{\mathbb Z}
\newcommand\cZ{\mathcal Z}
\newcommand\Coh{\mathrm{Coh}}
\newcommand\m{\mathfrak m}
\newcommand\pure{\mathrm{pure}}
\newcommand\vd{\mathrm{vd}}
\newcommand\pt{\mathrm{pt}}
\newcommand\vir{\mathrm{vir}}
\newcommand{\red}{\mathrm{red}}
\newcommand\DT{\mathrm{DT}}
\newcommand\PT{\mathrm{PT}}
\newcommand\td{\mathrm{td}}
\newcommand\rk{\operatorname{rk}}
\newcommand\tr{\operatorname{tr}}
\newcommand\coker{\operatorname{coker}}
\newcommand\im{\operatorname{im}}
\newcommand\ch{\operatorname{ch}}
\newcommand\ev{\operatorname{ev}}
\newcommand\id{\operatorname{id}}
\newcommand\Hom{\operatorname{Hom}}
\renewcommand\hom{\mathcal{H}{\it{om}}}
\newcommand\Ext{\operatorname{Ext}}
\newcommand\ext{\curly Ext}
\newcommand\At{\operatorname{At}}
\newcommand\Aut{\operatorname{Aut}}
\newcommand\ob{\operatorname{ob}}
\newcommand\Proj{\operatorname{Proj}\,}
\newcommand\Spec{\operatorname{Spec}\,}
\newcommand\Hilb{\operatorname{Hilb}}
\newcommand\Sym{\operatorname{Sym}}
\newcommand\mdot{{\scriptscriptstyle\bullet}}
\newcommand\INTO{\ar@{^{(}->}[r]}
\DeclareRobustCommand{\SkipTocEntry}[4]{}
\def\dual{^{\vee}}
\def\fI{\mathfrak{I}}
\def\Supp{\mathrm{Supp}}
\def\cone{\mathrm{cone}}
\def\udot{^{\mdot}}
\def\spl{\mathrm{spl}}
\def\tv{\widetilde{v}}
\def\cM{\mathcal{M}}
\def\cL{\mathcal{L}}
\def\cG{\mathcal{G}}
\def\cB{\mathcal{B}}
\def\Pic{\mathrm{Pic}}
\def\GL{\mathrm{GL}}
\def\PGL{\mathrm{PGL}}
\def\SL{\mathrm{SL}}
\def\rank{\mathrm{rank}}
\def\cPair{{\curly Pair}}
\def\cCoh{{\curly Coh}}
\def\cPerf{{\curly Perf}}
\def\cQuot{{\curly Quot}}
\def\tPair{\widetilde{\cPair}}
\def\TT{\mathbb{T}}
\def\PTqvX{{\curly P}^{(q)}_v(X)}
\def\PTqvXB{{\curly P}^{(q)}_{\tv}(\cX/\cB)}
\def\PTqvXb{{\curly P}^{(q)}_{\tv_b}(\cX_b)}
\def\KS{\mathsf{KS}}
\def\SD{\mathsf{SD}}
\def\sfob{\mathsf{ob}}
\def\tgamma{\widetilde{\gamma}}
\def\fC{\mathfrak{C}}
\def\fq{\mathfrak{q}}
\def\bbA{\mathbb{A}}
\def\sfB{\mathsf{B}}
\def\top{\mathrm{top}}
\def\SR{\mathsf{SR}}
\def\XX{\mathbb{X}}
\def\UU{\mathbb{U}}
\def\HH{\mathbb{H}}
\def\fQ{\mathfrak{Q}}
\def\trunc{\tau^{\geq-1}}
\def\curP{\curly P}
\def\fl{\mathfrak{l}}
\def\cH{\mathcal{H}}
\def\rvd{\mathrm{rvd}}
\def\sX{\mathscr{X}}
\def\KK{\mathbb{K}}
\def\cT{\mathcal{T}}
\def\abs{\mathrm{abs}}
\def\sY{\mathscr{Y}}
\def\tcurP{\widetilde{\curP}}
\def\sfY{\mathsf{Y}}
\def\sZ{\mathscr{Z}}
\def\tsX{\widetilde{\sX}}
\def\tEE{\widetilde{\EE}}
\def\tphi{\widetilde{\phi}}
\def\tSigma{\widetilde{\Sigma}}
\def\talpha{\widetilde{\alpha}}
\def\cU{\mathcal{U}}
\def\YY{\mathbb{Y}}
\def\and{\quad\mathrm{and}\quad}
\newcommand{\sr}{\mathsf{sr}}
\newcommand{\cl}{\mathrm{cl}}
\newcommand{\an}{\mathrm{an}}
\def\dSt{\mathsf{dSt}}
\def\cdga{\mathsf{cdga}}
\def\gr{\mathrm{gr}}
\def\MM{\mathbb{M}}
\def\sfD{\mathsf{D}}
\def\dg{\mathsf{dg}}
\def\DR{\mathsf{DR}}
\def\op{\mathrm{op}}
\def\aff{\mathrm{aff}}
\def\NC{\mathsf{NC}}
\def\CC{\mathsf{CC}}
\def\Map{\mathsf{Map}}
\def\ttalpha{\widetilde{\widetilde{\alpha}}}
\def\HN{\mathsf{HN}}
\def\HochP{\mathsf{HP}}
\def\HC{\mathsf{HC}}
\def\HochH{\mathsf{HH}}
\def\PTVV{\mathrm{PTVV}}
\def\UMap{R\underline{\Map}}
\def\txi{\widetilde{\xi}}
\def\hxi{\widehat{\xi}}
\def\sE{\mathscr{E}}
\def\cPic{{\curly Pic}}
\def\tcM{\widetilde{\cM}}
\def\sp{\mathrm{sp}}
\def\hOmega{\widehat{\Omega}}
\def\PC{\mathsf{PC}}
\def\cY{\mathcal{Y}}
\def\hcY{\widehat{\cY}}
\def\tDelta{\widetilde{\Delta}}
\def\bcA{\overline{\cA}}
\def\cS{\mathcal{S}}
\def\J{\mathcal{J}}
\begin{document}
\title[Counting surfaces on Calabi-Yau 4-folds I]{Counting surfaces on Calabi-Yau 4-folds I: foundations}
\author[Y.~Bae, M.~Kool, H.~Park]{Younghan Bae, Martijn Kool, and Hyeonjun Park}
\maketitle
\begin{abstract}
This is the first part in a series of papers on counting surfaces on Calabi-Yau 4-folds. Besides the Hilbert scheme of 2-dimensional subschemes, we introduce \emph{two} types of moduli spaces of stable pairs. We show that all three moduli spaces are related by GIT wall-crossing and parametrize stable objects in the bounded derived category. 

We construct  \emph{reduced} Oh-Thomas virtual cycles on the moduli spaces via Kiem-Li cosection localization and prove that they are deformation invariant along Hodge loci. As an application, we show that the variational Hodge conjecture holds for any family of Calabi-Yau 4-folds supporting a non-zero reduced virtual cycle.
\end{abstract}

\tableofcontents
\addtocontents{toc}{\protect\setcounter{tocdepth}{1}}

\section{Introduction}
\subsection{Background}
Counting curves on a space $X$ is one of the central topics in modern enumerative geometry. It is a rich subject with links to many other fields of mathematics as well as physics (string theory).
In \cite{K95} Kontsevich introduced the concept of \textit{hidden smoothness} of moduli spaces of stable maps in order to define algebraic Gromov-Witten (GW) invariants of $X$. The mathematical foundation of its virtual cycles was established by Li-Tian \cite{LT} and Behrend-Fantechi \cite{BF}. When $X$ is a \emph{3-fold}, an alternative enumeration can be carried out using Grothendieck's Hilbert schemes. This change of perspective represents the difference between viewing curves on $X$ as being parametrized or cut out by equations. In \cite{Th00}, Thomas then defined virtual cycles on Hilbert schemes of a 3-fold $X$, which yields Donaldson-Thomas (DT) invariants\footnote{\cite{Th00} defines DT invariants for stable sheaves of any rank on Fano and Calabi-Yau $3$-folds.} of $X$. A remarkable connection between GW and DT invariants for 3-folds was discovered by  Maulik-Nekrasov-Okounkov-Pandharipande \cite{MNOP}. This correspondence was further refined by Pandharipande-Thomas via stable pair (PT) invariants \cite{PT1,PT2,PT3}. We refer the reader to \cite{PT14} for an overview.

It was recently found that a similar relation between curve counting invariants exists on \emph{Calabi-Yau 4-folds} $X$. The structure of GW invariants of $X$ was studied by Klemm and Pandharipande \cite{KP}. On the sheaf side, the obstruction theories are not 2-term, and a new discovery was needed. Inspired by work of Donaldson and Thomas \cite{DT}, Cao and Leung \cite{CL} defined such a virtual cycle in special cases. The general case was subsequently established by Borisov and Joyce \cite{BJ} using methods of derived differential geometry. The algebraic counterpart was obtained by Oh and Thomas \cite{OT}. Both constructions \cite{BJ,OT} deeply rely on the derived structure of the moduli space --- the shifted symplectic structures of Pantev-To\"en-Vaqui\'e-Vezzosi \cite{PTVV} and the derived Darboux theorem of Brav-Bussi-Joyce \cite{BBJ}. 
Another feature of the theory is that it requires orientations.
The existence of orientations was shown by Cao-Gross-Joyce \cite{CGJ}. 
An explicit GW-PT correspondence on Calabi-Yau 4-folds was conjectured by Cao, Maulik and Toda \cite{CMT1,CMT2,CT21}. Virtual counts of points and curves on Calabi-Yau 4-folds recently attracted quite some attention. 
In physics, Nekrasov-Piazzalunga's Magnificent Four conjecture \cite{Nek, NP}
provides a formula for counting points on the Calabi-Yau 4-fold $\C^4$. For the case of counting points on $X$ we refer to \cite{CK1,Par,Boj2,KR} and for the case of counting curves on $X$ we refer to \cite{CK2,CKM,CT19,CT20,Par}.

The next natural step in enumerative geometry is counting higher-dimensional objects, such as surfaces on Calabi-Yau 4-folds --- the topic of this paper. It is not clear at present how to define virtual cycles on the stable map side (e.g., see Alexeev \cite{Ale} for the moduli space and Jiang \cite{Jia} for some recent developments).
On the sheaf side, a natural starting point is the virtual cycle of \cite{BJ, OT} on Hilbert schemes of 2-dimensional subschemes. However, we will see that for enumeration of surfaces, this virtual cycle generally does \emph{not} suffice and has to be modified.

\subsection{Counting surfaces}

The aim of this project is to count surfaces on Calabi-Yau 4-folds $X$ via sheaf theory. This goal leads to two immediate challenges.
\begin{enumerate}
    \item[(A)] The Hilbert scheme parametrizes subschemes which are not pure dimensional, i.e.~2-dimensional subschemes may have 0- and 1-dimensional components. Informally: Hilbert schemes allow for free roaming curves and points thereby obscuring the contribution of genuine surfaces.
    \item[(B)] A Hodge class of type $(2,2)$ on $X$ generally does not remain of type $(2,2)$ as the complex structure of $X$ deforms. When this happens, the virtual cycle vanishes.
\end{enumerate}
The main purpose of this paper is to resolve (A) and (B).

Inspired by Pandharipande-Thomas's work on curves on 3-folds \cite{PT1}, we settle (A) by constructing moduli spaces of stable pairs on $X$. We find that for 2-dimensional pairs, there is more structure than for 1-dimensional pairs, and we obtain \emph{two} types which we call $\PT_0$ and $\PT_1$ pairs. The stable pair moduli spaces give smaller compactifications --- roughly speaking, $\PT_0$ pairs prevent free roaming points on $X$ and $\PT_1$ pairs prevent free roaming points \emph{and} curves on $X$. The Hilbert scheme and two stable pair moduli spaces each have desirable features and should be studied together.


We overcome (B) by constructing reduced virtual cycles using the cosection localization method of Kiem and Li \cite{KL13}.
This method was generalized to the case of \emph{isotropic cosections} by Kiem and the third-named author \cite{KP20}.
Instead, in this paper we use a variant, namely \emph{non-degenerate cosections}.
For a class $\gamma\in H^{2}(X,\Omega^2_X)\cap H^4(X,\Q)$, virtual cycles of moduli spaces parametrizing 2-dimensional pairs in class $\gamma$ often vanish.
Similar vanishing for curve counting theories on surfaces/3-folds led to \textit{reduced theories} e.g.~\cite{MPT,KT1} (we refer the reader to the introductions of these papers for more references). From the perspective of curves on surfaces/3-folds $X$, reducing is usually needed when $X$ has a non-zero holomorphic 2-form. However, for compact Calabi-Yau 3-folds satisfying $h^{0,1}(X) = h^{0,2}(X) = 0$, it is of no concern. As we will see, even on compact Calabi-Yau 4-folds satisfying $h^{0,1}(X) = h^{0,2}(X) = h^{0,3}(X) = 0$ reducing is necessary for most interesting surface classes $\gamma$.

\subsection{Moduli spaces of stable pairs}

There are three types of stability conditions on $2$-dimensional pairs.

\begin{definition}[Definition \ref{def:stabilityconditionsofpairs'}]
Let $X$ be a smooth projective variety of dimension $n\geq 2$.  
Let $F$ be a 2-dimensional coherent sheaf on $X$, $s:\O_X\to F$ a section, and $Q:=\coker(s)$. The pair $(F,s)$ is called
\begin{enumerate}
    \item[(i)] {\em $\DT(=\PT_{-1})$-stable} if $s$ is surjective,
    \item[(ii)] {\em $\PT_0$-stable} if $T_0(F)=0$ and $\dim(Q)\leq 0$,
    \item[(iii)] {\em $\PT_1$-stable} if $T_1(F)=0$ and $\dim(Q)\leq 1$.
\end{enumerate}
Here $T_d(F)$ denotes the maximal $d$-dimensional subsheaf of $F$.
\end{definition}

The $\DT$-stable pairs correspond to ideal sheaves. 
The $\PT_1$-stable pairs are natural generalizations of Pandharipande-Thomas's stable pairs \cite{PT1}, which are a special case of Le Potier's coherent systems \cite{Pot1,Pot2}.
The underlying sheaf of a $\PT_0$-stable pair is usually \textit{not} pure and this stability condition appears to be new. 
We introduce $\PT_0$-stability as a bridge between $\DT$- and $\PT_1$-stability.

Suppose $(F,s)$ is a $\PT_q$ pair on $X$ with cokernel $Q$ and \emph{smooth} scheme-theoretic support $S$, which generally has 0-, 1-, and 2-dimensional connected components. 
For $q=0$, $S$ has no 0-dimensional connected components and $Q$ is supported on the 1-dimensional connected components of $S$. For $q=1$, $S$ has no 0- or 1-dimensional components and the pair $(F,s)$ can be equivalently described by a nested triple $Z \subset C \subset S$, where $C$ is an effective divisor and $Z$ is a 0-dimensional subscheme (Corollary \ref{cor:PT1smsupp}). We give various characterizations of $\PT_q$ pairs with \emph{singular} support in Section \ref{sec:pair}. 

\begin{example}[Proposition \ref{Lem.PT0=PT1}]
Suppose $(F,s)$ is a pair which is $\PT_0$- and $\PT_1$-stable, i.e., $F$ is pure and $Q$ is 0-dimensional. 
Then $Q$ is a quotient sheaf of the 0-dimensional sheaf $\ext^{n-1}_X(\O_S,K_X)$, which is supported on the locus where $S$ is not Cohen-Macaulay.
\end{example}




Our first main result (Section \ref{sec:moduli}) is the construction of projective moduli spaces  of $\PT_q$-stable pairs via geometric invariant theory (GIT).
\begin{theorem}[Theorem \ref{Thm:GIT}]\label{thm:1}
Let $X$ be a smooth projective variety and $v\in H^*(X,\mathbb{Q})$. Then, for each $q \in \{-1,0,1\}$, there exists a fine moduli space $\PTqvX$ of $\PT_q$-stable pairs on $X$ with $\ch(F)=v$ as a projective scheme.
Moreover, there exists a projective scheme $\cM$, an action of $\PGL_N$ on $\cM$ for some $N$, and an $\SL_N$-equivariant ample line bundle $\cL^{(q)} \in \Pic^{\SL_N}(\cM)_{\Q}$ for each $q \in \{-1,0,1\}$ such that \[\curP^{(q)}_v(X) = [\cM^{st}(\cL^{(q)})/\PGL_N] = [\cM^{ss}(\cL^{(q)})/\PGL_N].\]
In particular, $\PTqvX$ is the GIT quotient $ \cM/\!/_{\cL^{(q)}}\SL_N$.
\end{theorem}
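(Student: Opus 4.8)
The plan is to construct each $\PTqvX$ as a GIT quotient of a Quot-scheme-type parameter space, following the method of Gieseker, Simpson and Huybrechts--Lehn for moduli of sheaves, together with Le Potier's treatment of the section in his theory of coherent systems \cite{Pot1,Pot2} (the case $q=1$ being the natural generalization of Pandharipande--Thomas's stable pairs \cite{PT1}); the new features are handling all three stabilities with a single parameter space carrying three linearizations, and accommodating the impure underlying sheaves that occur for $q=0$. The first step is boundedness: the $\PT_q$-stable pairs $(F,s)$ with $\ch(F)=v$ form a bounded family. For $q=-1$ this is boundedness of the Hilbert scheme; for $q\in\{0,1\}$ one bounds $F$ by controlling separately its pure $2$-dimensional part (whose support class is fixed by $v$), its torsion subsheaf $T_1(F)$ (zero when $q=1$, and purely $1$-dimensional when $q=0$ since $T_0(F)=0$), and the cokernel $Q=\coker(s)$ (of dimension $\le q$), which via the exact sequence $0\to\ker(s)\to\O_X\to F\to Q\to 0$ reduces everything to Grothendieck-type boundedness for the subsheaf $\ker(s)\subseteq\O_X$ once its numerical invariants are pinned down by $v$. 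Boundedness produces an integer $m\gg 0$ --- chosen large enough to simultaneously $m$-regularize all sheaves that will enter the Hilbert--Mumford analysis below --- so that for every such pair $F(m)$ is globally generated with $H^{>0}(X,F(m))=0$ and $h^0(X,F(m))=P(m)=:N$, where $P$ is the Hilbert polynomial determined by $v$; fix $H\cong\k^{N}$.

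Next, set $\bQuot:=\Quot_X(H\otimes\O_X(-m),P)$ with its universal quotient $\rho\colon H\otimes\O_X(-m)\twoheadrightarrow\F$ on $X\times\bQuot$; the functor of sections $T\mapsto\Hom_{X\times T}(\O_{X\times T},\F_T)$ is represented by a linear scheme (abelian cone) over $\bQuot$, and I take $\cM$ to be a $\PGL_N$-invariant projective completion of the locally closed subscheme of that linear scheme on which $\rho$ induces an isomorphism $H\xrightarrow{\sim}H^0(X,F(m))$ and $(F,s)$ has the expected shape. There is a tautological $\GL_N$-action on $\cM$ through $H$; since its center merely rescales the universal pair, it descends to a $\PGL_N$-action. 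An $\SL_N$-linearized ample $\Q$-line bundle $\cL^{(q)}\in\Pic^{\SL_N}(\cM)_{\Q}$ is obtained by combining the Grothendieck--Simpson polarization of $\bQuot$ (attached to an auxiliary twist $\ell\gg m$) with a natural line bundle measuring the section, using $q$-dependent positive rational weights; the ratio of these weights is the wall-crossing parameter --- Le Potier's $\alpha$ --- with $q=-1$ and $q=1$ at its two extremes and $q=0$ in between. At $q=-1$ the linearization forces $s$ to be surjective, so that $(F,s)$ is an ideal sheaf and $\curP^{(-1)}_v(X)$ is the Hilbert scheme of $2$-dimensional subschemes.

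The technical heart is the identification of GIT (semi)stability on $\cM$ with $\PT_q$-(semi)stability. A one-parameter subgroup $\lambda\colon\GG_m\to\SL_N$ produces a weighted decomposition $H=\bigoplus_i H_i$, hence an increasing filtration of $F$ by subsheaves together with a position of $s$ relative to it, and the Hilbert--Mumford weight $\mu^{\cL^{(q)}}(\lambda,\cdot)$ decomposes as a Quot contribution plus $q$ times a section contribution. Using the uniform $m$-regularity to replace the numbers $h^0(F_i(m))$ by the leading terms of the relevant Hilbert polynomials (the standard estimates of Simpson, Le Potier and Huybrechts--Lehn), one proves that a point of $\cM$ is GIT-semistable for $\cL^{(q)}$ if and only if (a) $\rho$ induces an isomorphism $H\cong H^0(X,F(m))$ --- a point at which this fails being destabilized by an explicit $\lambda$, exactly as in the sheaf case --- and (b) the associated pair $(F,s)$ is $\PT_q$-semistable; moreover GIT-stable $\iff$ $\PT_q$-stable. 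The content of (b) is to match, over all filtrations, the inequality $\mu^{\cL^{(q)}}\le 0$ with the conditions $T_q(F)=0$ and $\dim Q\le q$, the position of the section being precisely what discriminates the three cases; here one invokes the characterizations of $\PT_q$ pairs from Section~\ref{sec:pair}.

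Finally, $\PT_q$-stability admits no strictly semistable objects and a $\PT_q$-stable pair is simple (its only endomorphism fixing $s$ is the identity), so the matching above gives $\cM^{ss}(\cL^{(q)})=\cM^{st}(\cL^{(q)})$ and $\PGL_N$ acts on this locus with trivial stabilizers; hence $\cM/\!/_{\cL^{(q)}}\SL_N$ is a geometric quotient equal to the stack quotient $[\cM^{st}(\cL^{(q)})/\PGL_N]=[\cM^{ss}(\cL^{(q)})/\PGL_N]$, it is projective because $\cM$ is projective and $\cL^{(q)}$ ample, and the $\PGL_N$-equivariant universal quotient and section on $\cM^{st}(\cL^{(q)})\times X$ descend to a universal pair on $\PTqvX\times X$, making $\PTqvX$ a fine moduli space. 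One may equivalently regard the objects as the two-term complexes $[\O_X\to F]$ in $D^b(X)$, which links this construction to the derived-category description of the moduli. I expect the main obstacle to be the Hilbert--Mumford matching: carrying the section term through the numerical inequality while keeping one and the same parameter space valid for all three linearizations demands precise control of the Castelnuovo--Mumford regularity of the (possibly impure) sheaves appearing in filtrations, together with careful bookkeeping relating $\dim Q\le q$ and the torsion-vanishing $T_q(F)=0$ to the numerical criterion; boundedness in the impure case $q=0$ is a secondary, but also non-routine, point.
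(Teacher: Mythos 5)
Your plan is essentially the paper's: a Quot-scheme parameter space with a $\PGL_N$-action and three $q$-dependent linearizations related by GIT wall-crossing, together with a Hilbert--Mumford analysis matching GIT (semi)stability to $\PT_q$-stability, closed off by simplicity and non-existence of strictly semistable points. A few places where your sketch has genuine gaps or diverges from how the argument actually closes are worth flagging.

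First, the boundedness reduction is not right as stated. You claim that the exact sequence $0\to\ker(s)\to\O_X\to F\to Q\to 0$ reduces everything to boundedness of $\ker(s)$ ``once its numerical invariants are pinned down by $v$'' --- but fixing $\ch(F)=v$ does \emph{not} pin down the numerical invariants of $\O_Z=\mathrm{im}(s)$ when $Q\neq 0$, since $\chi(\O_Z)=\chi(F)-\chi(Q)$ and $\chi(Q)$ is a priori unbounded. The substantive fact needed (the paper's Proposition~\ref{prop:boundofchiforPT}) is that $\chi(\O_Z)$ is bounded below with $\alpha_2,\alpha_1$ fixed, proved by an injective map from a Hilbert scheme of points $W\mapsto Z\sqcup W$ into a fixed finite-type Hilbert scheme. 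Without this lemma (and its $\PT_1$ counterpart, an upper bound coming from reflexive hulls of pure sheaves), the bounded family of $\PT_0$- or $\PT_1$-stable pairs is not established and the choice of the uniform $m$ cannot be made.

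Second, ``a $\PGL_N$-invariant projective completion'' is not innocuous. The paper uses a specific one: the closure of $\PP_{\cQ^{\circ}}(R\pi_*\FF)$ inside $\cQ\times\PP(V\otimes R_m^*)$. Crucially, Remark~\ref{rmk:parameterspace} notes this is strictly larger than Le Potier's parameter space, and this enlargement is exactly what makes all three moduli spaces arise as GIT quotients of the \emph{same} $\cM$; with a smaller closure (restricted to pure sheaves, as in \cite{Pot1}) the $\DT$ and $\PT_0$ linearizations would not recover the Hilbert scheme and the $\PT_0$ moduli space.

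Third, the way the biconditional ``GIT-semistable $\iff$ $\PT_q$-stable'' is closed differs from your sketch. For $q=0$ the paper proves both directions by direct Hilbert--Mumford estimates, but for $q=-1$ and $q=1$ the direction ``GIT-semistable $\Rightarrow$ $\cQ^{\circ}$-and-$\PT_q$-stable'' is only established on the open stratum $\cM|_{\cQ^\circ}$, and the argument is then completed by invoking the already-known projectivity of the Hilbert scheme (Grothendieck) and of Le Potier's coherent systems, together with a properness/separatedness argument. This shortcut is needed because the paper's $\cM$ is larger than Le Potier's and no previously published result identifies the semistable locus for the new linearizations on that larger space. Your sketch presupposes a direct proof of the full biconditional, which is not what the paper does (and it is not clear it could be done without essentially re-deriving these classical projectivity results). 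Finally, one cosmetic point: the linearizations $\cL^{(q)}$ are organized by polynomials $g^{(q)}(t)$ as in Stoppa--Thomas, not by a single Le Potier $\alpha$; the ordering $g^{(-1)}>g^{(0)}>g^{(1)}$ is by polynomial comparison for large argument, and $\PT_0$ sits strictly between the other two precisely because it agrees with $P_v$ to leading order but is smaller in the next coefficient.
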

Theorem \ref{thm:1} shows that $\PT_q$ pair moduli spaces are related by GIT wall-crossing.
Since the underlying sheaves of $\PT_0$ pairs are not {\em pure}, we needed finer arguments than the standard ones for pure sheaves.
This approach is inspired by an analogous result of Stoppa-Thomas on 1-dimensional pairs \cite{ST}.

Following \cite{PT1}, we show that the moduli space of $\PT_q$ pairs consists of open components of the moduli space of perfect complexes. Denote by $\cPerf(X)_{\O_X}^{\mathrm{spl}}$ the moduli space of simple perfect complexes on $X$ with trivial determinant.

\begin{theorem}[Theorem \ref{Thm:PairtoPerf}, Proposition \ref{prop:PTq=Bayerstab}]\label{thm:2}
Let $X$ be a smooth projective variety of dimension $n\geq 4$, $v \in H^*(X,\Q)$, and  $q\in \{-1,0,1\}$.
The canonical map 
\begin{equation}\label{eq:1.1}
\curP^{(q)}_v(X) \to \cPerf(X)_{\O_X}^{\mathrm{spl}}, \quad (F,s) \mapsto I\udot=[\O_X \xrightarrow{s} F]	
\end{equation}
is an open embedding.
Moreover, there is a polynomial Bridgeland stability condition $\rho^{(q)}$ such that the image of \eqref{eq:1.1} is the locus of $\rho^{(q)}$-stable objects. 
\end{theorem}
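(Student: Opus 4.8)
The plan is to follow the strategy of Pandharipande--Thomas \cite{PT1}, treating $q\in\{-1,0,1\}$ in parallel. First I would check that the assignment $(F,s)\mapsto I\udot=[\O_X\xrightarrow{s}F]$ lands in $\cPerf(X)_{\O_X}^{\mathrm{spl}}$: the complex $I\udot$ is perfect because $X$ is smooth; its determinant is trivial because $F$, being $2$-dimensional with $n\ge 4$, is supported in codimension $\ge 2$, so that $\det F\cong\O_X$ and $\det I\udot\cong\det(\O_X)\otimes(\det F)^{-1}\cong\O_X$; and $I\udot$ is simple. The latter I would deduce from the distinguished triangle $F[-1]\to I\udot\to\O_X\xrightarrow{s}F$: applying $\Hom(-,\O_X)$ gives $\Hom(I\udot,\O_X)=\Hom(\O_X,\O_X)=\C$ (using $\Hom(F,\O_X)=0$, as $F$ is torsion), generated by the canonical map $p\colon I\udot\to\O_X$; and applying $\Hom(I\udot,-)$ reduces $\Hom(I\udot,I\udot)=\C$ to $\Hom(I\udot,F[-1])=0$, which via the truncation triangle $\ker(s)\to I\udot\to\coker(s)[-1]$ follows from $\Hom(\coker(s),F)=0$. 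This last vanishing is where the $\PT_q$-hypotheses enter: a nonzero map $\coker(s)\to F$ would have image a subsheaf of $F$ of dimension $\le q$, contradicting $T_q(F)=0$ (and $\coker(s)=0$ when $q=-1$).

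Next I would show that \eqref{eq:1.1} is a monomorphism of moduli functors with canonical inverse on its image. Given an isomorphism $\phi\colon I\udot\to J\udot$ between the complexes attached to $(F,s)$ and $(F',s')$, the computation above forces $p'\circ\phi=\lambda\,p$ for some $\lambda\in\C^{\times}$; after rescaling $\phi$ it extends to a morphism between the two defining triangles which is the identity on $\O_X$, and the five lemma yields an isomorphism of pairs $(F,s)\cong(F',s')$. Conversely, any $E$ in the image determines a unique (up to scalar) nonzero map $E\to\O_X$, and $(F,s)$ is recovered as $F=\cone(E\to\O_X)$ together with the induced section $\O_X\to F$. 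Running the same argument in families shows \eqref{eq:1.1} is injective with inverse, so, since $\curP^{(q)}_v(X)$ is already a fine moduli space (Theorem \ref{Thm:GIT}), it only remains to prove the image is open.

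To do this --- and to prove the second assertion --- I would identify the image with a locus of stable objects. One checks that $E\in D^b_{\mathrm{coh}}(X)$ lies in the image precisely when $\det E\cong\O_X$, $\mathcal H^i(E)=0$ for $i\ne 0,1$, $\mathcal H^0(E)$ is the ideal sheaf $I_S$ of a $2$-dimensional subscheme, $\dim\mathcal H^1(E)\le q$ with the purity of $\mathcal H^0(E)$ and $\mathcal H^1(E)$ dictated by $T_q$, and there is no subobject or quotient forcing $\cone(E\to\O_X)$ to fail $\PT_q$-stability. This is exactly the statement that $E$ lies in a suitable tilt $\mathcal A^{(q)}$ of $\Coh(X)$ at the torsion pair $(\{\text{sheaves of dimension}\le q\},\;\text{its complement})$, has numerical class $1-v$, and is stable with respect to a polynomial Bridgeland stability condition $\rho^{(q)}$ built by Bayer's large-volume-limit recipe (as used in curve counting by Toda). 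Since $\rho^{(q)}$ is a genuine polynomial stability condition, Harder--Narasimhan filtrations exist and semistability equals stability for the class $1-v$, so the stable locus is open in $\cPerf(X)_{\O_X}^{\mathrm{spl}}$; combined with the previous step this makes \eqref{eq:1.1} an open embedding onto the $\rho^{(q)}$-stable locus. (Alternatively, one can verify openness of the $\PT_q$-conditions directly as in \cite{PT1} and check that \eqref{eq:1.1} matches deformation--obstruction theories.)

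The main obstacle is two-fold. First, the Ext-vanishings behind simplicity and, more seriously, behind the cohomological description of the image must be pushed through in the $\PT_0$ case, where the underlying sheaf is \emph{not} pure: one must treat the $0$- and $1$-dimensional parts of $F$ separately and keep track of $\ext^{\,n-1}_X(\O_S,K_X)$ (cf.\ Proposition \ref{Lem.PT0=PT1}), which is precisely why finer arguments than the standard ones for pure sheaves are needed here. Second, the stability data $\rho^{(q)}$ must be engineered so that, \emph{uniformly} in $q\in\{-1,0,1\}$, the $\rho^{(q)}$-semistable objects of class $1-v$ are exactly the $\PT_q$ pair complexes; choosing the phases of the polynomial central charge correctly and carrying out the accompanying seesaw / Harder--Narasimhan comparisons is the technical heart, and it is this uniformity that makes the resulting open embeddings compatible with the GIT wall-crossing of Theorem \ref{Thm:GIT}.
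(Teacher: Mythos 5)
Your outline of the pointwise ingredients (perfectness, trivial determinant from codimension $\ge 2$, simplicity via the triangle $I\udot\to\O_X\to F$ and the $\Ext$-vanishings coming from $T_q(F)=0$ and $\Hom(Q,F)=0$) matches the paper. The divergence is in how you get from these to an open embedding, and there the proposal has two real gaps.

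First, the claim that ``running the same argument in families shows \eqref{eq:1.1} is injective with inverse'' hides the hardest step. To recover $(F,s)$ from a $T$-family $E$ of perfect complexes you must produce $F=\cone(E\to\O_{X\times T})$ as a $T$-\emph{flat coherent sheaf} together with the section, but the cone of a map of perfect complexes has no reason to be concentrated in one degree or to be flat over $T$, and its formation does not commute with base change. Making this work is precisely the content of Proposition~\ref{prop:infinitesimallifting}: one picks a finite locally free resolution (using the Auslander--Buchsbaum formula, the bound on $\mathrm{depth}$ coming from $T_q(F)=0$, and, crucially, the hypothesis $n\ge 4$ so that the rank-one reflexive sheaf $h^0(I\udot)^{**}$ is automatically locally free outside codimension $\ge 3$), identifies $h^0(I\udot)$ with an ideal sheaf using $\det I\udot\cong\O_{X\times T}$, and assembles the pair from the resulting short exact sequence. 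None of this is routine, and it is where the ``finer arguments'' for the non-pure $\PT_0$ case really live — not only in the $\Ext$-vanishings you flag. The paper only needs the infinitesimal version of this (a square-zero extension of a fat point), which together with universal injectivity gives \'etaleness and hence an open immersion by \cite[02LC]{Sta}; proving the honest ``inverse for all $T$'' that your functorial argument needs is strictly harder.

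Second, and independently, your route to openness assumes that the $\rho^{(q)}$-stable locus inside $\cPerf(X)^{\spl}_{\O_X}$ is open. For polynomial Bridgeland stability conditions this is not automatic: Bayer's construction gives Harder--Narasimhan filtrations, but openness of the stable locus on families of complexes requires a separate argument (boundedness of destabilizing subobjects or an analogue of the support property), which you neither cite nor supply. The paper deliberately avoids this by proving openness via the \'etale criterion and then establishing the Bayer-stability characterization (Proposition~\ref{prop:PTq=Bayerstab}) as a logically independent supplement. If you wish to keep the stability-driven route you would need to prove that openness; otherwise the bijection of $\C$-points onto a set that happens to be open does not promote a monomorphism to an open immersion (a bijective monomorphism of finite type need not be an isomorphism — think of a reduced point mapping into a fat point — so you still need flatness or \'etaleness somewhere).

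So the approach is genuinely different from the paper's and, as written, not complete. The cleanest repair is exactly the paper's: prove universal injectivity (your Lemma~\ref{lem:injectivity'}-style argument), prove the infinitesimal lifting property for $\PT_q$ pairs along square-zero extensions (Proposition~\ref{prop:infinitesimallifting}), conclude the map is \'etale and hence an open immersion, and then identify the image with the $\rho^{(q)}$-stable locus afterwards as a pointwise statement.
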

For $q=1$, Theorem \ref{thm:2} was also shown by Gholampour-Jiang-Lo \cite{GJL}.
The connection to Bayer's polynomial Bridgeland stability \cite{Bay} is inspired by \cite{GJL}.

\subsection{Reduced virtual cycles}

Let $X$ be a {\em Calabi-Yau $4$-fold}, i.e., a smooth projective variety of dimension $4$ with trivial canonical bundle. In particular, $X$ could be a hyperk{\"a}hler $4$-fold or an abelian $4$-fold.
Fix a trivialization $\omega : \O_X \xrightarrow{\cong}\Omega^4_X$.

Throughout this section, let $q \in \{-1,0,1\}$ and let $\curP := \PTqvX$ be the moduli space of $\PT_q$ pairs $(F,s)$ on $X$ with Chern character
\[\ch(F)=v=(0,0,\gamma,\beta,n-\gamma\cdot\td_2(X)) \in H^*(X,\Q).\]
By Theorem \ref{thm:2},  we have an Oh-Thomas virtual cycle \cite{OT}
\[\left[\curP\right]^\vir \in A_{\vd}\left(\curP\right), \qquad \vd = n - \frac12 \gamma^2,\]
which maps to Borisov-Joyce's virtual cycle \cite{BJ} under the cycle class map by \cite{OT2}.\footnote{Here $A_*(-)$ denotes the Chow group with rational coefficients. We also use the convention that $A_{i}(-)=0$ for $i \in \Q\setminus\Z$.}
The Oh-Thomas virtual cycle is constructed from the 
canonical 3-term {\em symmetric obstruction theory}
\[\phi : \EE:=R\hom_{\pi}(\II\udot,\II\udot)_0[3] \to \LL_{\curP}, \qquad \SD : \EE\dual[2] \cong \EE,\]
of Huybrechts-Thomas \cite{HT} and
an {\em orientation}
\[ o  : \O_{\curP} \xrightarrow{\cong} \det(\EE)\]
of Cao-Gross-Joyce  \cite{CGJ}. 
Then the intrinsic normal cone is {\em isotropic} by the derived Darboux theorem of Brav-Bussi-Joyce \cite{BBJ}, which is necessary to define the virtual cycle. We refer to Section \ref{sec:VFC} for the precise statements.

For a generic surface class $\gamma$, the Oh-Thomas virtual cycle $[\curP]^\vir$ vanishes due to the following two heuristics:
\begin{enumerate}
\item [(a)] When deforming $X$, the codimension of the Hodge locus of $\gamma$ in the base is often positive.
Therefore deformation invariance of virtual cycles implies the vanishing.
\item [(b)] There often exist non-degenerate cosections given by semi-regularity maps. 
They give trivial factors $\O$ of $\EE$ which imply the vanishing.
\end{enumerate}
We present a motivating example which reveals the above two phenomena.
\begin{example}\label{Ex:1}
Let $X \subseteq \PP^5$ be a smooth sextic hypersurface which contains a plane $S\subseteq X$. Let $v=\ch(\O_S) \in H^*(X,\Q)$ and $q\in\{-1,0,1\}$. 
\begin{itemize}
\item The virtual dimension is $-\frac{19}{2}$.
\item The codimension of the Hodge locus of $\gamma=v_2=[S]$ is $19$ \cite{CGG}. 
\item There are $19$ non-degenerate cosections.
\end{itemize}
\end{example}

We first discuss the codimension of the Hodge locus in (a).
By Bloch \cite{Blo} (see Proposition \ref{prop:ob=srdual}), the obstruction to deforming the Hodge class $\gamma \in H^2(X,\Omega^2_X) \subseteq H^4(X,\C)$ is governed by the symmetric bilinear form
\begin{equation*}\label{eq:1.2}
\mathsf{B}_\gamma : H^1(X,T_X) \otimes H^1(X,T_X) \to \C, \quad \xi_1 \otimes \xi_2 \mapsto \int_X \iota_{\xi_1}\iota_{\xi_2}\gamma \cup \omega\,,	
\end{equation*}
where $\iota$ denotes the contraction map. In particular, its rank
\begin{equation*}\label{eq:1.2.1}
    \rho_\gamma :=\rk (\mathsf{B}_\gamma)
\end{equation*}
is the codimension of the Hodge locus of $\gamma$.\footnote{Let $f:\cX\to\cB$ be a smooth proper morphism of smooth analytic spaces such that the fibre $\cX_0$ over a point $0 \in \cB$ is $X^\an$ and $\cB$ is contractible. Then there exists a unique lift $\tgamma$ of $\gamma \in H^4(X,\Q)$ as a section of the local system $R^4f_* \Q$. The {\em Hodge locus} of $\gamma$ is defined as the subset $\mathrm{Hd}_\gamma:=\{b\in\cB : \tgamma_b \in H^{2,2}(\cX_b)\}$ and has a natural analytic scheme structure (see \cite{Voi}). If the Kodaira-Spencer map $\KS : T_{\cB,0} \to H^1(X,T_X)$ is an isomorphism, then $\rho_\gamma = \dim(T_{\cB,0}) -\dim(T_{\mathrm{Hd}_\gamma,0})$ by \cite{Blo}.
We will not use the Hodge locus in this paper except for heuristic arguments.\label{footnote:codHdg}}

We now discuss the cosections in (b). 
By Buchweitz-Flenner \cite{BF03}, a perfect complex $[I\udot]\in \curP$ gives rise to a {\em semi-regularity map}
\begin{equation}\label{eq:sr1}
\sr : \Ext^2_X(I\udot,I\udot)_0 \to H^3(X,\Omega_X^1)\cong H^1(X,T_X)\dual, \quad e\mapsto \tr(\At(I\udot)\circ e),
\end{equation}
where $\At(I\udot)$ is the Atiyah class.
This semi-regularity map relates the obstructions of deforming the perfect complex $I\udot$ and the Hodge class $\gamma=\ch_2(I\udot)$ via the commutative triangle 
\begin{equation}\label{eq:t1} 
\xymatrix{ H^1(X,T_X) \ar[rr]^-{\sfob=\sr\dual} \ar[rd]_-{\mathsf{B}_\gamma} && \Ext^2_{X}(I\udot,I\udot)_0  \ar[ld]^-{\sr} \\ & H^1(X,T_X)\dual}.
\end{equation}
The symmetry of the above triangle given by $\sfob=\sr\dual$ is a special feature of Calabi-Yau 4-folds $X$ and $(2,2)$-classes $\gamma \in H^2(X,\Omega^2_X)$. 
The semi-regularity maps $\sr$ can be extended to cosections on the moduli space,
\[\SR : \EE\dual[1]=R\hom_{\pi}(\II\udot,\II\udot)_0[2] \to H^1(X,T_X)\dual \otimes \O_{\curP}.\]
Then $\SR^2 := \SR\circ\SR\dual = \sfB_\gamma\otimes 1_{\curP}$, which can be shown by extending the triangle \eqref{eq:t1}. See Section \ref{sec:VFC} for details.

Choose a non-degenerate subspace $V \subseteq H^1(X,T_X)$ with respect to the bilinear form $\sfB_\gamma$. Then $V \cong V^\vee$.
Assume that $V$ is {\em maximal}, i.e., $\dim(V)=\rho_\gamma$.

\begin{enumerate}
\item [(a')] From the perspective of Hodge theory, $V$ is a transversal slice to the Hodge locus of $\gamma$. Heuristically, we want to constrain virtual cycles to be deformation invariant only on the Hodge locus of $\gamma$.
\item [(b')] From the perspective of cosections, $V$ induces \emph{non-degenerate} cosections
\[\Sigma : \EE\dual[1] \xrightarrow{\SR} H^1(X,T_X)\dual \otimes \O_{\curP} \to V\dual \otimes \O_{\curP}.\]
Here non-degeneracy means that the square $\Sigma^2:=\Sigma\circ\Sigma\dual$ is an isomorphism.
In particular, we have a direct sum decomposition
\[\EE = \EE^\red \oplus (V\otimes \O_{\curP})[1]\]
and we want to remove the trivial factors from the obstruction theory.
\end{enumerate}

The second main result in this paper achieves (a') and (b').
\begin{theorem}[Theorem \ref{Thm:RVFC}, Proposition \ref{prop:redSOT}]\label{thm:3}
Let $X$ be a Calabi-Yau 4-fold, $v \in H^*(X,\Q)$, and $q\in \{-1,0,1\}$. For any maximal non-degenerate subspace $V \subseteq H^1(X,T_X)$ with respect to $\sfB_{\gamma}$, the composition
\[\phi^\red :\EE^\red \xrightarrow{(1,0)} \EE \xrightarrow{\phi} \LL_{\PTqvX}\]
is a symmetric obstruction theory such that the reduced closed substack of the intrinsic normal cone is isotropic (see Section \ref{sec:VFC} for the precise definition).
Consequently, given orientations of $\EE$ and $H^1(X,T_X)_\gamma:=H^1(X,T_X)/\ker(\sfB_\gamma)$, there exists a canonical reduced virtual cycle
\[\left[\PTqvX\right]^\red \in A_{\rvd}\left(\PTqvX\right), \qquad \rvd = n- \frac12 \gamma^2 + \frac12 \rho_\gamma,\]
which is independent of the choice of $V$.\footnote{The subspace $V$ depends on a choice, but $H^1(X,T_X)_\gamma$ is canonically defined. These spaces are identified via the map $V\subset H^1(X,T_X)\to H^1(X,T_X)_\gamma$.}
\end{theorem}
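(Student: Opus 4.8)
The plan is to reduce the statement to a purely local-algebraic manipulation of the three-term symmetric obstruction theory $\EE$ together with the cosection $\Sigma$, and then invoke the Oh-Thomas machinery of \cite{OT} in the reduced setting. First I would establish the direct sum decomposition $\EE = \EE^\red \oplus (V \otimes \O_{\curP})[1]$ claimed in (b'). This follows because $\Sigma : \EE\dual[1] \to V\dual \otimes \O_{\curP}$ has $\Sigma^2 = \sfB_\gamma|_V \otimes 1_{\curP}$ an isomorphism by maximality and non-degeneracy of $V$; hence $\Sigma\dual$ is a split injection of $V \otimes \O_{\curP}$ into $\EE[-1]$ (equivalently $(V\otimes\O_\curP)[1] \hookrightarrow \EE$ is split), and the splitting is compatible with the self-duality $\SD$ because $V$ is an isotropic-complement, i.e.\ $\sfB_\gamma|_V$ identifies $V$ with $V\dual$ symmetrically. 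One must check that $\EE^\red$ inherits a symmetric structure $\SD^\red : (\EE^\red)\dual[2] \cong \EE^\red$; this is formal once the decomposition respects $\SD$, since the trivial summand $(V\otimes\O)[1]$ with its symmetric pairing from $\sfB_\gamma|_V$ is itself self-dual in the appropriate shifted sense and splits off orthogonally.

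Next I would verify that $\phi^\red : \EE^\red \to \LL_{\curP}$ is an obstruction theory, i.e.\ that $h^0$ is an isomorphism and $h^{-1}$ is surjective. The point is that the cosection $\Sigma$ factors through $h^1(\EE\dual) = \Ext^2_\pi(\II\udot,\II\udot)_0$, i.e.\ through the obstruction sheaf, and dually $\sfob = \Sigma\dual$ lands in $h^{-1}(\EE)$ but maps to zero in $h^{-1}(\LL_\curP)$ — this is precisely the content of the commuting triangle \eqref{eq:t1} globalized, namely that the composition $V\otimes\O_\curP \to h^{-1}(\EE) \to h^{-1}(\LL_\curP)$ vanishes because deforming $\gamma$ as a Hodge class imposes no constraint on the scheme $\curP$ (the Hodge locus is cut out in the base, not in the moduli space). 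Therefore removing the split summand $(V\otimes\O)[1]$ does not disturb the truncation $\tau^{\geq -1}$, and $\phi^\red$ is still an obstruction theory; the virtual dimension shifts by $\dim V = \rho_\gamma$, giving $\rvd = n - \tfrac12\gamma^2 + \tfrac12\rho_\gamma$ (the factor $\tfrac12$ because $\EE^\red$ is symmetric and the Oh-Thomas virtual dimension is half the Euler characteristic).

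Then I would show the intrinsic normal cone, or rather its reduced closed substack $\mathfrak{C}_\red \subseteq \mathfrak{N}_{\curP}$, is isotropic inside the reduced obstruction bundle stack $h^1/h^0((\EE^\red)\dual)$. Here the isotropy for the \emph{unreduced} $\EE$ comes from the derived Darboux theorem \cite{BBJ}; one must argue this survives restriction to the reduced summand. The cleanest route: isotropy of $\mathfrak{C}_\red$ in $h^1/h^0(\EE\dual)$ with respect to the quadratic form induced by $\SD$ is equivalent, after splitting off the trivial orthogonal summand carrying the positive-definite (over the relevant field, up to sign — really non-degenerate) form $\sfB_\gamma|_V$, to isotropy in $h^1/h^0((\EE^\red)\dual)$, because the cone is supported over $\curP$ and the trivial summand contributes a fixed linear subspace on which the cone is constant, hence automatically sits in the kernel of the projection to $V\dual$ — equivalently, the image of $\mathfrak{C}$ under the composite to $V\dual\otimes\O$ is zero, which is again the triangle \eqref{eq:t1}. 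With the reduced symmetric obstruction theory, the isotropic reduced cone, and orientations of $\EE$ and of $H^1(X,T_X)_\gamma$ (the latter inducing an orientation of the trivial summand, hence via the splitting an orientation of $\EE^\red$), the Oh-Thomas construction applies verbatim to produce $[\curP]^\red \in A_{\rvd}(\curP)$. Independence of the choice of $V$ should follow because any two maximal non-degenerate $V, V'$ are related by the $\O(\sfB_\gamma)$-action (or are homotopic through such subspaces), under which the localized Oh-Thomas class is invariant — alternatively one defines the class via the full cosection $\SR$ onto $H^1(X,T_X)_\gamma^\dual$ intrinsically, bypassing $V$ altogether, and checks the two constructions agree.

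\textbf{Main obstacle.} The delicate step is the isotropy of the reduced cone: one must be careful that the splitting $\EE \simeq \EE^\red \oplus (V\otimes\O)[1]$ is genuinely orthogonal for the symmetric structure $\SD$ and compatible with the derived symplectic form globally over $\curP$, not just pointwise, so that the derived Darboux normal form descends to $\EE^\red$. Equivalently, one needs the cosection $\Sigma$ to be \emph{non-degenerate} in the technical sense (that $\Sigma\circ\Sigma\dual$ is an isomorphism of \emph{bundles}, not merely generically) — which is exactly why the hypothesis requires $V$ non-degenerate for $\sfB_\gamma$ rather than the possibly-degenerate $\SR$ — and then to run the Oh-Thomas "reduction by a non-degenerate cosection" argument, the symmetric analogue of Kiem-Li cosection localization \cite{KL13} in the isotropic-cone setting. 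Making this reduction step precise at the level of $(-2)$-shifted symplectic derived schemes is the technical heart; everything else is bookkeeping of Chern characters and the commuting triangle \eqref{eq:t1}.
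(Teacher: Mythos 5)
Your overall architecture — split off the trivial orthogonal summand $\EE \cong \EE^\red \oplus (V\otimes\O_\curP)[1]$ given by the non-degenerate cosection $\SR_V$, check that $\phi^\red = \phi\circ(1,0)$ is a symmetric obstruction theory, verify the reduced cone is isotropic via a Kiem–Li-type reduction, and then run the Oh–Thomas construction — matches the paper's skeleton. But there is a genuine gap at the obstruction-theory step, and you have essentially flagged it yourself while underestimating what is required.

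The gap is your assertion that the composition $V\otimes\O_\curP \to h^{-1}(\EE) \to h^{-1}(\LL_\curP)$ vanishes, which you attribute to the triangle \eqref{eq:t1} and to the slogan that the Hodge locus is cut out in the base. Neither justifies it. The triangle \eqref{eq:t1} only encodes the factorization $\sfB_\gamma = \sr\circ\sfob$ of the bilinear form and is silent about $h^{-1}(\LL_\curP)$. The slogan is the right heuristic, but turning it into a proof is exactly the content of the paper's algebraic-twistor argument: one must prove that $\PTqvX$ is unchanged when base-changed to the infinitesimal twistor family $\cX_V \to \cA_V$ (Proposition \ref{prop:P(X)=P(X/A)}), which needs the horizontal lift of the relative Chern character (Proposition \ref{lem:chishorizontal}); then establish the compatibility of the cosection $\SR_V$ with the Kodaira–Spencer class (Lemma \ref{lem:compatibilityofKS}); and only then does the diagram chase of \eqref{eq:S5.6} yield that $\phi^\red$ is an obstruction theory. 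What that chase actually delivers is the surjectivity of $h^{-1}(\phi^\red)$ via the long exact sequence and the splitting of $\LL_{\curP/\cA_V}$ induced by the section $\curP\to\cA_V$; it does not — and need not — assert that the image of $V\otimes\O_\curP$ in $h^{-1}(\trunc\LL_\curP)$ is zero. Your vanishing is a sufficient condition, but it is strictly stronger than what is available, and I see no argument for it; indeed the paper explicitly warns (Remark \ref{Rem2}) that the cosection data alone does not produce a reduced obstruction theory, which is why a second input (the twistor family or, in the appendix, the lift to a $(-1)$-shifted closed $1$-form) is required. Absent this, your step 2 does not close.

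The remaining steps are broadly sound but imprecise in places. For isotropy you correctly identify Kiem–Li cone reduction as the engine, but the decomposition of the quadratic function $\fq_\EE = \fq_{\EE^\red}\circ p_1 + \fq_V\circ p_2$ and the uniqueness of the dotted arrow into $\fQ(\EE^\red_V)$ need the explicit fibre-diagram argument (Proposition \ref{prop:conereduction}), not the informal remark that "the cone is supported over $\curP$". For independence of $V$, the $O(\sfB_\gamma)$-action is a riskier route than the one the paper takes: $O(\sfB_\gamma)$ is disconnected and the reduced cycle is only defined up to sign by the orientation, so you would have to track orientations under the action; the paper instead observes that splittings of $H^1(X,T_X) \twoheadrightarrow H^1(X,T_X)_\gamma$ form an affine space, hence any two are joined by an $\bbA^1$-family, and deformation invariance of the square-root Gysin pullback does the rest. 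Your alternative of working "intrinsically with the full cosection $\SR$" does not directly work because $\SR$ is degenerate (its square is $\sfB_\gamma$, with kernel); one must quotient by $\ker(\sfB_\gamma)$, which is precisely what the choice of $V$ facilitates.
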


We prove that $\phi^\red$ is an obstruction theory via the {\em algebraic twistor family} \cite{KT1} associated to the transversal slice $V$ and prove the isotropic condition via a variation of the {\em cosection localization} in \cite{KL13,KP20} for the non-degenerate cosections $\Sigma$. In contrast to the {\em isotropic} cosections in \cite{KP20}, the {\em non-degenerate} cosections considered in this paper allow us to handle cases where the virtual dimension $n-\frac12\gamma^2$ is not an integer, but the reduced virtual dimension $n-\frac12\gamma^2+\frac12\rho_\gamma$ is an integer, which actually occurs quite often (e.g. Example \ref{Ex:1}).

We note that the algebraic twistor method alone does not produce a reduced virtual cycle
since the isotropic condition for $\phi$ does not imply it for $\phi^\red$.
The cosection localization method alone produces the reduced virtual cycle as a short-cut, but does not give a reduced obstruction theory.
Combining the two methods enables us to directly apply all the known tools for handling Oh-Thomas virtual cycles (torus localization \cite{OT}, cosection localization \cite{KP20}, virtual pullback \cite{Par}) to the reduced virtual cycle. 
We will indeed apply these tools in the sequels to this paper.

We observe that the reduced virtual cycle is closely related to semi-regularity.
In \cite{BF03}, a perfect complex $[I\udot] \in \curP$ is called {\em semi-regular} if the semi-regularity map $\sr$ in \eqref{eq:sr1} is injective.

\begin{theorem}[Theorem \ref{Thm:SR=smoothofrvd}]\label{thm:4}
Let $X$ be a Calabi-Yau 4-fold, $v \in H^*(X,\Q)$, and $q\in \{-1,0,1\}$. 
For any $\PT_q$ pair $[(F,s)] \in \PTqvX$, the associated complex $I\udot$ is semi-regular if and only if the moduli space $\PTqvX$ is smooth of dimension $\rvd=n-\frac12\gamma^2+\frac12\rho_\gamma$ at  $[I\udot]$.
\end{theorem}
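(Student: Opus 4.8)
The plan is to show that semi-regularity of $I\udot$ is equivalent to vanishing of the reduced obstruction space at $[I\udot]$, and then to invoke the standard fact that a symmetric (or any) obstruction theory whose obstruction space vanishes at a point forces smoothness of the expected dimension there. First I would unwind the reduced obstruction theory $\phi^\red : \EE^\red \to \LL_{\PTqvX}$ from Theorem \ref{thm:3}. The decomposition $\EE = \EE^\red \oplus (V\otimes\O_{\PTqvX})[1]$ combined with $\EE\dual[2]\cong\EE$ means the fibre of $\EE^\red$ at $[I\udot]$ has cohomology concentrated in degrees $[-2,0]$ with $h^0 = \Hom_X(I\udot,I\udot)_0 = 0$ (simpleness, from Theorem \ref{thm:2}), $h^{-1} = \Ext^1_X(I\udot,I\udot)_0$ the tangent space, and $h^{-2} = \Ext^2_X(I\udot,I\udot)_0 / (\text{image of }V)$ after removing the trivial $V$-factor. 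Concretely, using the triangle \eqref{eq:t1} and the construction of $\Sigma$ from $\SR$, the reduced obstruction space is the cokernel $\Ext^2_X(I\udot,I\udot)_0 / \sfob(V)$, equivalently $\ker\bigl(\sr : \Ext^2_X(I\udot,I\udot)_0 \to H^1(X,T_X)\dual\bigr)$ once one checks, via the non-degeneracy of $\sfB_\gamma|_V$ and the symmetry $\sfob = \sr\dual$, that $\sr$ restricted to $\sfob(V)$ is injective with image a complement of $\ker(\sr)$. That identification is the crux: the reduced obstruction space at $[I\udot]$ is canonically $\ker(\sr)$, hence $I\udot$ is semi-regular (i.e. $\sr$ injective) exactly when the reduced obstruction space vanishes.

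Granting this, for the ``only if'' direction: if $I\udot$ is semi-regular, then $h^{-2}(\EE^\red|_{[I\udot]}) = 0$ and $h^0 = 0$, so $\EE^\red|_{[I\udot]}$ is perfect of amplitude $[-1,0]$ near $[I\udot]$; a $2$-term obstruction theory with vanishing $h^{-2}$ at a point is automatically perfect in amplitude $[-1,0]$ in a neighbourhood, and the standard argument (Behrend--Fantechi, or here the Oh--Thomas setup) shows the moduli space is smooth of dimension $\dim\Ext^1_X(I\udot,I\udot)_0$ there. One then computes this dimension: since the virtual dimension of $\EE$ is $\vd = n - \tfrac12\gamma^2$ and we removed a trivial summand $V[1]$ of rank $\rho_\gamma$, the ``virtual dimension'' of $\EE^\red$ is $\rvd = n - \tfrac12\gamma^2 + \tfrac12\rho_\gamma$; by symmetry of $\phi^\red$ and vanishing of $h^0, h^{-2}$ at $[I\udot]$, $\dim\Ext^1_X(I\udot,I\udot)_0 = \rvd$, giving smoothness of the expected reduced dimension. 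For the ``if'' direction: if $\PTqvX$ is smooth of dimension $\rvd$ at $[I\udot]$, then $\LL_{\PTqvX}|_{[I\udot]}$ sits in degree $0$ with rank $\rvd$, and the obstruction-theory map $\phi^\red$ forces $h^{-1}(\EE^\red|_{[I\udot]})$ to surject onto $\Omega_{\PTqvX}|_{[I\udot]}$ of dimension $\rvd$; again by symmetry and the rank count, $h^{-2}(\EE^\red|_{[I\udot]}) = 0$, which by the identification above means $\ker(\sr) = 0$.

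I expect the main obstacle to be the bookkeeping in the first paragraph: making the identification ``reduced obstruction space $= \ker(\sr)$'' genuinely canonical and compatible with the construction of $\Sigma$ in Section \ref{sec:VFC}, rather than merely dimension-correct. One must verify that the map $\sfob = \sr\dual : H^1(X,T_X) \to \Ext^2_X(I\udot,I\udot)_0$ indeed factors the trivialization $\EE \to \EE^\red \oplus V[1]$ on cohomology, i.e. that the chosen splitting off of $V\otimes\O[1]$ is precisely the one cutting out $\sfob(V)$; this uses that $V$ is non-degenerate for $\sfB_\gamma = \sr\circ\sfob$, so that $\sfob|_V$ is injective and $\sr|_{\sfob(V)}$ is an isomorphism onto $\sfob(V)\dual$, whence $\Ext^2_X(I\udot,I\udot)_0 = \sfob(V) \oplus \ker(\sr)$ and the $\EE^\red$-part recovers the second summand. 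A secondary subtlety is that $\EE^\red$ is, a priori, only a $3$-term complex that happens to have vanishing $h^{-2}$ at the point in question; one should be slightly careful that ``perfect obstruction theory of amplitude $[-1,0]$ near the point'' follows — this is standard (the locus where $h^{-2}$ vanishes is open, and there $\EE^\red$ is quasi-isomorphic to a two-term complex), but worth stating. Everything else — simpleness, the symmetry $\SD$, the rank/virtual-dimension count — is either quoted from Theorems \ref{thm:2}--\ref{thm:3} or a direct Euler-characteristic computation, so I would keep those steps brief.
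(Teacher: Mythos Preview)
Your strategy is sound and gives a valid alternative proof, but there are two slips in the execution, and the paper's own argument for the forward direction takes a genuinely different route.

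First, your cohomological indexing is off: since $\EE = R\hom_\pi(\II\udot,\II\udot)_0[3]$, one has $h^0(\EE|_{[I\udot]}) = \Ext^3_X(I\udot,I\udot)_0$ (cotangent), $h^{-1} = \Ext^2_X(I\udot,I\udot)_0$ (obstruction), $h^{-2} = \Ext^1_X(I\udot,I\udot)_0$ (tangent). Your logic survives relabelling, but ``$h^0 = \Hom_0 = 0$'' is false. Second, and more importantly, the identification $\Ext^2_0 / \sfob(V) \cong \ker(\sr)$ is \emph{not} valid in general: it would require $\rk(\sr) = \rho_\gamma$, i.e.\ that $\sfY$ is non-degenerate on $\im(\sfob)$, which can fail. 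What \emph{is} true --- and is all you need --- is the equivalence $\Ext^2_0/\sfob(V) = 0 \iff \ker(\sr) = 0$. Forward: if $\sr$ is injective then $\sfob = \sr^\vee$ is surjective, and $\sfB_\gamma = \sfob^*(\sfY)$ with $\sfY$ non-degenerate gives $\rho_\gamma = \mathrm{ext}^2$, so the injective map $\sfob|_V$ (Corollary~\ref{Cor1}) is bijective. Backward: if $\sfob(V) = \Ext^2_0$ and $e = \sfob(v) \in \ker(\sr)$, then $\sfB_\gamma(v,-) = 0$, so $v = 0$ by non-degeneracy on $V$. With this equivalence in hand, vanishing of $h^{-1}(\EE^\red|_{[I\udot]})$ forces formal smoothness via the lifting criterion (obstructions over small extensions live in $h^{-1}(\EE^\red|_{[I\udot]})^\vee$), and $\mathrm{ext}^1 = \rvd$ follows from the rank count. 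Your $(\Leftarrow)$ is then essentially the paper's: $\mathrm{ext}^1 = \rvd$ gives $\mathrm{ext}^2 = \rho_\gamma$, hence $\sfob|_V$ bijective, hence $\sr$ injective.

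The paper, by contrast, proves $(\Rightarrow)$ not via lifting but via the Brav--Bussi--Joyce Darboux theorem: it writes a minimal local chart $U(t) \subseteq U$ with $t$ an isotropic section of an orthogonal bundle $E$, represents $\SR_V$ by a bundle map $\widetilde{\SR_V}:E|_{U(t)} \to V^\vee \otimes \O$ whose fibre at $[I\udot]$ is the isomorphism $\sfob|_V^\vee$, and then invokes the cone reduction (Proposition~\ref{prop:conereduction}) to deduce $(C_{U(t)/U})_\red = U(t)$; Nakayama's lemma then forces $t = 0$, so $U(t) = U$ is smooth of dimension $\rvd$. The paper explicitly remarks that the infinitesimal-lifting route (your approach, as in \cite[Thm.~7.3]{Blo}) also works --- the Darboux argument is chosen to illustrate the DT4 cone machinery, not because lifting fails. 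Your approach is shorter and uses only the \emph{existence} of $\phi^\red$ as an obstruction theory; theirs makes the local geometry explicit and exercises the cone-reduction apparatus that drives the rest of the paper.
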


As a direct corollary of Theorem \ref{thm:4}, if all $[(F,s)] \in \PTqvX$ are semi-regular, the reduced virtual cycle equals the fundamental cycle (for a certain choice of orientations).
Thus, heuristically, we view the semi-regular case as the {\em ideal situation}. Put differently, we view the reduced virtual cycle as the correct {\em virtual generalization} of the fundamental cycle in the semi-regular case.

For Calabi-Yau 4-folds, Theorem \ref{thm:4} generalizes the results of Bloch \cite{Blo} and Buchweitz-Flenner \cite{BF03}. Indeed, it is shown in loc.~cit.~that semi-regularity implies smoothness, whereas Theorem \ref{thm:4} also provides a formula for the dimension and a converse.
\begin{example}
We revisit  Example \ref{Ex:1}.
All planes $S \cong \PP^2$ in a smooth sextic $4$-fold $X$ are semi-regular and the moduli space ${\curly P}^{(-1)}_v(X) \cong {\curly P}^{(0)}_v(X) \cong {\curly P}^{(1)}_v(X)$, with $v = \ch(\O_S)$, consists of isolated reduced points (Corollary \ref{cor:cicy4}). 
Hence the reduced virtual dimension is zero and the contribution of any plane $S\subset X$ to the reduced virtual cycle  is $\pm[\{S\}]$. 
\end{example}

More generally, we give a simple description of the semi-regularity map for local complete intersection surfaces. This identification is important for computing $\rho_\gamma$ and the reduced virtual dimension.
Determining $\rho_\gamma$ is an interesting new aspect of the surface counting theory. For example, $\rho_\gamma$ is only sub-additive in $\gamma$, e.g., $\rho_{d \gamma} = \rho_{\gamma}$ for any $d \in \Z_{>0}$. 

\begin{proposition}[Proposition~\ref{prop:4.6.1}]
Let $i: S\hookrightarrow X$ be a local complete intersection surface inside a Calabi-Yau 4-fold $X$. Then there exists an isomorphism 
\[\theta_S: H^1(S,N_{S/X})\to \Ext^2_X(I_{S/X},I_{S/X})_0\]
preserving non-degenerate symmetric bilinear forms induced by Serre duality. Under this isomorphism, the sheaf theoretic obstruction and semi-regularity maps \eqref{eq:t1} match with the ones from the deformation  theory of subschemes.
\end{proposition}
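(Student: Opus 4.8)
The plan is to compute the complex $R\hom_X(I,I)_0$, where $I:=I_{S/X}$, directly from the exact triangle $I\to\O_X\to\O_S$ and the local complete intersection hypothesis, show it is isomorphic to $N_{S/X}[-1]$, and then read the symmetric pairing and the maps of \eqref{eq:t1} off this description. Write $j\colon I\hookrightarrow\O_X$, $p\colon\O_X\to\O_S$, and $\mathrm{id}\colon\O_X=R\hom_X(\O_X,\O_X)\to R\hom_X(I,I)$ for the unit, so that $j^*=j_*\circ\mathrm{id}$. Applying $R\hom_X(I,-)$ and $R\hom_X(-,\O_X)$ to $I\to\O_X\to\O_S$, using the trace splitting $R\hom_X(I,I)=\O_X\oplus R\hom_X(I,I)_0$, and applying the octahedral axiom to the factorization $j^*=j_*\circ\mathrm{id}$, one obtains a distinguished triangle
\[R\hom_X(I,I)_0\longrightarrow R\hom_X(\O_S,\O_X)[1]\xrightarrow{\ \beta\ }R\hom_X(I,\O_S)\xrightarrow{+1}.\]
Since $S\subset X$ is lci of codimension $2$, the local Koszul resolution gives $\ext^k_X(\O_S,\O_S)\cong\wedge^kN_{S/X}$, whence (via the sequence $0\to I\to\O_X\to\O_S\to0$) $\hom_X(I,\O_S)=N_{S/X}$, $\ext^1_X(I,\O_S)=\wedge^2N_{S/X}$, and $\ext^k_X(I,\O_S)=0$ otherwise; using $K_X=\O_X$ we have $\wedge^2N_{S/X}=\omega_S$ and $R\hom_X(\O_S,\O_X)=\omega_S[-2]$ (all sheaves pushed forward from $S$). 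So the triangle reads $R\hom_X(I,I)_0\to\omega_S[-1]\xrightarrow{\beta}R\hom_X(I,\O_S)\xrightarrow{+1}$, with only the degrees $0,1$ of the target carrying cohomology, namely $N_{S/X}$ and $\omega_S$.

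Next I would show $\beta$ is an isomorphism on top cohomology. The induced map $\ext^1(\beta)\colon\omega_S\to\omega_S$ is an endomorphism of an invertible sheaf on $S$, i.e.\ multiplication by some $c\in H^0(S,\O_S)$. If $c$ vanished on a connected component $S_0$, then over a neighbourhood of $S_0$ the long exact sequence of cohomology sheaves of the triangle would place $\omega_{S_0}$ in degree $2$ of $R\hom_X(I,I)_0$; the hypercohomology spectral sequence together with $\dim S_0=2$ would then yield $H^2(S_0,\omega_{S_0})\subseteq\Ext^4_X(I_{S_0},I_{S_0})_0$, which is nonzero --- contradicting the Calabi--Yau Serre duality $\Ext^4_X(I_{S_0},I_{S_0})_0\cong\Hom_X(I_{S_0},I_{S_0})_0^\vee=0$. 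Hence $c$ is nowhere vanishing, $\ext^1(\beta)$ is an isomorphism, the cone of $\beta$ is $N_{S/X}$ placed in degree $0$, and therefore $R\hom_X(I,I)_0\cong N_{S/X}[-1]$. Applying $R\Gamma(X,-)$ and taking cohomology produces $\theta_S\colon H^1(S,N_{S/X})\xrightarrow{\sim}\Ext^2_X(I,I)_0$ (and, as a bonus, $\Ext^1_X(I,I)_0\cong H^0(N_{S/X})$, $\Ext^3_X(I,I)_0\cong H^2(N_{S/X})$). One may assume $S$ connected throughout, the whole construction being compatible with disjoint unions.

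It then remains to match the extra structure. For the pairings: on $H^1(S,N_{S/X})$ the Serre pairing is $(\eta_1,\eta_2)\mapsto\int_S\eta_1\wedge\eta_2$ via $N_{S/X}\otimes N_{S/X}\xrightarrow{\wedge}\wedge^2N_{S/X}=\omega_S$ (using $\rk N_{S/X}=2$, equivalently $N_{S/X}^\vee\otimes\omega_S\cong N_{S/X}$), whereas on $\Ext^2_X(I,I)_0$ it is $(e_1,e_2)\mapsto\int_X\tr(e_1\circ e_2)$; both are governed by the exterior algebra structure of $R\hom_X(\O_S,\O_S)\cong\wedge^\bullet N_{S/X}$ transported through the triangle above, and unwinding Yoneda composition and the trace shows they agree under $\theta_S$ (up to the universal sign fixed by the orientations). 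For the maps in \eqref{eq:t1}: $\sfob\colon H^1(X,T_X)\to\Ext^2_X(I,I)_0$ is the traceless part of the contraction of the Atiyah class $\At(I)\in\Ext^1_X(I,I\otimes\Omega^1_X)$ with $\xi$; by functoriality of Atiyah classes along $I\to\O_X\to\O_S$ and the identification of the relevant component of $\At(\O_S)$ with the conormal map $N_{S/X}^\vee\to\Omega^1_X|_S$, this contraction is carried by $\theta_S^{-1}$ to the image of $\xi$ under $H^1(X,T_X)\to H^1(S,i^*T_X)\to H^1(S,N_{S/X})$ --- precisely the obstruction to deforming the subscheme $S$ as $X$ varies. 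Dualizing via the isometry $\theta_S$ identifies $\sr$ with the corresponding subscheme semi-regularity map, so that \eqref{eq:t1} with $\gamma=[S]$ becomes Bloch's semi-regularity identity for the cycle $S$; equivalently $\mathsf{B}_{[S]}(\xi_1,\xi_2)=\int_X\iota_{\xi_1}\iota_{\xi_2}[S]\cup\omega$ equals $\int_S$ of the wedge of the images of $\xi_1,\xi_2$ in $H^1(S,N_{S/X})$ (cf.\ \cite{Blo,BF03}).

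The construction of $\theta_S$ is clean; the real work is in matching the additional structures. Showing that $\theta_S$ is an isometry and, above all, that it intertwines the Atiyah-class-defined maps of \eqref{eq:t1} with the geometric obstruction and semi-regularity maps, requires comparing the constructions on explicit cochain and Atiyah-cocycle representatives rather than merely on objects of $D^b_{\mathrm{coh}}(X)$, carefully tracking the functoriality of Atiyah classes along $I\to\O_X\to\O_S$. Each such comparison is ``morally just naturality'', but this bookkeeping is where the effort lies; by contrast, the first three steps are essentially formal.
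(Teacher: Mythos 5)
Your derivation of $R\hom_X(I,I)_0 \cong N_{S/X}[-1]$ is a genuinely different (and in some ways cleaner) route than the paper's. The paper works entirely at the level of global $\Ext$-groups, combining the long exact sequence coming from the octahedron (its equation~\eqref{eqn:les1}) with the local-to-global spectral sequence (its equation~\eqref{eqn:les2}) and showing by the fundamental local isomorphism that both sequences split; your sheaf-theoretic reduction shortcuts this. However, your argument that $\ext^1(\beta)$ is an isomorphism has a bug: an endomorphism $c\in H^0(S,\O_S)$ of an invertible sheaf need not be either nowhere vanishing or identically zero on a connected component. Since $S$ is lci but not assumed reduced or irreducible, $c$ could vanish along a proper closed subscheme (a curve, points, or a nilpotent locus), in which case $h^2(R\hom(I,I)_0)$ is $\coker(c)$ rather than $\omega_{S_0}$, and the Serre-duality contradiction as you phrased it does not apply. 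The simple fix is more direct and avoids Serre duality entirely: since $\O_S$ is perfect of length $2$, the ideal sheaf $I$ has $\mathrm{pd}(I)=1$, so $\ext^2_X(I,I)=0$; this gives surjectivity of $\ext^1(\beta)$ immediately, and a surjective endomorphism of a coherent sheaf on a Noetherian scheme is automatically an isomorphism.

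The bigger gap is the one you yourself flag at the end. Establishing that $\theta_S$ is an \emph{isometry} and that it intertwines the Atiyah-class obstruction map $\sfob$ and the Buchweitz--Flenner semi-regularity map $\sr$ with their subscheme-theoretic counterparts $\sfob_{S/X}$, $\sr_{S/X}$ is the actual content of the Proposition, and ``unwinding Yoneda composition and the trace'' / ``exterior algebra structure of $\wedge^\bullet N_{S/X}$'' is not a proof. The paper's verification is not long but is genuinely nontrivial: it introduces an auxiliary map $\theta_S'\colon H^1(S,N_{S/X})\to\Ext^2_X(\O_S,\O_S)$, proves the commutativity $(2')$ by a diagram chase through $\theta_S'$ using naturality of Atiyah classes and injectivity of $d_S\colon\Ext^2_X(I,I)_0\to\Ext^3_X(\O_S,I)$ (dual to surjectivity of (ii)), and proves the pairing compatibility for $\theta_S'$ via the identity $i^! = \SD_S\circ Li^*\circ\SD_X^{-1}$ valid for lci embeddings, then transfers it to $\theta_S$ by cyclicity of the trace. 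None of these steps is a formality, and the sign/orientation bookkeeping you wave at is exactly where such arguments usually go wrong. As written, the proposal establishes the isomorphism (modulo the bug above) but leaves the two substantive compatibility claims of the Proposition unproven.
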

We identify the dual of the semi-regularity map \eqref{eq:sr1} with the obstruction map $\ob_{S/X}:H^1(X,T_X)\to H^1(S,N_{S/X})$ for Hilbert schemes, which is easier to determine. This leads to the following corollary:
\begin{corollary}[Corollary~\ref{cor:cicy4}, \ref{cor:4.6.5}]\label{cor:6} 
\hfill
\begin{enumerate}
\item[$(1)$] Let $X\subset \PP^N$ be a smooth complete intersection Calabi-Yau 4-fold. Suppose $S\subset X$ is a complete intersection surface in $\PP^N$. Then $I_{S/X}$ is semi-regular. Moreover if $v=\ch(\O_S)$ and $q \in \{-1,0,1\}$, then $\PTqvX$ is smooth of dimension $\rvd=h^0(N_{S/X})$ at $I_{S/X}$.
\item[$(2)$] Let $X$ be a Calabi-Yau 4-fold with non-degenerate holomorphic $2$-form. Suppose $i:S\hookrightarrow X$ is a smooth lagrangian surface and $v=\ch(\O_S)$. Then $\rvd=h^{1,0}(S) - \frac{1}{2} \dim \coker(i^*)$ where $i^*: H^1(X,\Omega_X^1)\to H^1(S,\Omega_S^1)$.
\end{enumerate}
\end{corollary}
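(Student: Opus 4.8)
The plan is to treat the two statements in turn, both resting on Proposition~\ref{prop:4.6.1}, Theorem~\ref{thm:4}, and a cohomology computation on $S$; for $(2)$ the Hodge index theorem enters at the end.

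\emph{Part (1).} Since a complete intersection of codimension $2$ in $X$ is Cohen--Macaulay, $\O_S$ is pure of dimension $2$, so $T_0(\O_S)=T_1(\O_S)=0$; as the cokernel of $\O_X\twoheadrightarrow\O_S$ vanishes, $(\O_S,\O_X\twoheadrightarrow\O_S)$ is $\PT_q$-stable for every $q\in\{-1,0,1\}$, giving a point $[I_{S/X}]\in\PTqvX$ near which the three moduli spaces coincide (Theorem~\ref{thm:2}) with the Hilbert scheme. Writing $S$ as the zero locus in $X$ of a regular sequence of sections of $\O_X(m_1),\O_X(m_2)$, we have $N_{S/X}\cong\O_S(m_1)\oplus\O_S(m_2)$; a complete intersection surface in $\PP^N$ is arithmetically Cohen--Macaulay, so $H^1(S,\O_S(m))=0$ for all $m$ and hence $H^1(S,N_{S/X})=0$. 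By Proposition~\ref{prop:4.6.1}, $\Ext^2_X(I_{S/X},I_{S/X})_0\cong H^1(S,N_{S/X})=0$, so the semi-regularity map~\eqref{eq:sr1} is trivially injective: $I_{S/X}$ is semi-regular. Theorem~\ref{thm:4} then says $\PTqvX$ is smooth of dimension $\rvd$ at $[I_{S/X}]$, and since its Zariski tangent space there is $H^0(S,N_{S/X})$ (the Hilbert scheme tangent space), we get $\rvd=h^0(N_{S/X})$.

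\emph{Part (2).} Now $\gamma=\ch_2(\O_S)=[S]$, and again $[I_{S/X}]\in\PTqvX$ for all $q$. The Lagrangian hypothesis gives the standard isomorphism: since $\sigma|_S=0$ and $\dim S=\tfrac12\dim X$ we have $T_S=T_S^\perp$, so contraction with $\sigma$ carries $T_S\hookrightarrow T_X|_S$ isomorphically onto $T_S^\perp=N_{S/X}^\vee\subseteq\Omega^1_X|_S$, inducing $N_{S/X}\xrightarrow{\ \sim\ }\Omega^1_S$ compatibly with the surjections $T_X|_S\twoheadrightarrow N_{S/X}$ and $\Omega^1_X|_S\twoheadrightarrow\Omega^1_S$; globally $T_X\cong\Omega^1_X$. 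By Proposition~\ref{prop:4.6.1} and the identification of the tangent space, $\Ext^1_X(I_{S/X},I_{S/X})_0\cong H^0(S,\Omega^1_S)$ and $\Ext^2_X(I_{S/X},I_{S/X})_0\cong H^1(S,\Omega^1_S)$, so, putting $e_i:=\dim\Ext^i_X(I_{S/X},I_{S/X})_0$, we have $e_1=h^{1,0}(S)$ and $e_2=h^{1,1}(S)$. As $\EE=R\hom_\pi(\II\udot,\II\udot)_0[3]$ has cohomology $\Ext^i_X(I_{S/X},I_{S/X})_0$ in degree $i-3$ at $[I_{S/X}]$, with $e_0=e_4=0$ and $e_3=e_1$ by Serre duality, the rank of $\EE$ is $2e_1-e_2$; combining with the decomposition $\EE=\EE^\red\oplus(V\otimes\O_{\PTqvX})[1]$ of Theorem~\ref{thm:3}, in which $\dim V=\rho_\gamma$, we obtain
\[\rvd=\tfrac12\rk(\EE^\red)=e_1-\tfrac12 e_2+\tfrac12\rho_\gamma=h^{1,0}(S)-\tfrac12 h^{1,1}(S)+\tfrac12\rho_\gamma.\]
Since $\dim\coker(i^*)=h^{1,1}(S)-\rk(i^*)$, it now suffices to prove $\rho_\gamma=\rk\bigl(i^*:H^1(X,\Omega^1_X)\to H^1(S,\Omega^1_S)\bigr)$.

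For this, the triangle~\eqref{eq:t1} shows that $\sfB_\gamma$ is the pullback along $\sfob$ of the symmetric form induced by Serre duality on $\Ext^2_X(I_{S/X},I_{S/X})_0$, so $\rho_\gamma=\rk(\sfB_\gamma)$ equals the rank of that form restricted to $\im(\sfob)$. Under $\theta_S$ (Proposition~\ref{prop:4.6.1}), $\sfob$ becomes the Hilbert scheme obstruction map $\ob_{S/X}:H^1(X,T_X)\to H^1(S,N_{S/X})$ and the form becomes the natural pairing on $H^1(S,N_{S/X})$; transporting through the symplectic isomorphisms above, $\ob_{S/X}$ becomes $i^*:H^1(X,\Omega^1_X)\to H^1(S,\Omega^1_S)$ (it is induced by $T_X|_S\twoheadrightarrow N_{S/X}$), and the pairing becomes the cup product $H^1(S,\Omega^1_S)\times H^1(S,\Omega^1_S)\to H^2(S,\Omega^2_S)=\C$ (the isomorphism $\wedge^2 N_{S/X}\cong K_S$ corresponding to $\wedge^2\Omega^1_S=\Omega^2_S$), i.e.\ the intersection form on $H^{1,1}(S)$. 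Passing to real coefficients, $\rho_\gamma$ is the rank of the intersection form on $\im(i^*_\R)\cap H^{1,1}(S,\R)$. By the Hodge index theorem this form has signature $(1,h^{1,1}(S)-1)$, and $\im(i^*_\R)\cap H^{1,1}(S,\R)$ contains the restriction $i^*\kappa$ of an ample class $\kappa$ on $X$, with $\int_S(i^*\kappa)^2>0$. A subspace of a quadratic space of signature $(1,k)$ containing a vector of positive square splits as the line $\R\langle i^*\kappa\rangle$ orthogonal to a negative-definite subspace, hence carries a non-degenerate form; therefore $\rho_\gamma=\dim\bigl(\im(i^*_\R)\cap H^{1,1}(S,\R)\bigr)=\rk(i^*)$, as required.

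The only substantive point is in Part $(2)$: verifying that the Lagrangian condition turns the abstract data $(\sfB_\gamma,\sfob,\sr)$ into the intersection form on $\im(i^*)$ under Proposition~\ref{prop:4.6.1} --- in particular that the Serre-duality form on $\Ext^2$ restricts to the cup product and that $\sfob$ restricts to $i^*$ --- and then using the Hodge index theorem (via the elementary splitting lemma for signature-$(1,k)$ forms) to see that this restricted form is non-degenerate, so that $\rho_\gamma$ is exactly $\rk(i^*)$ and not smaller. Part $(1)$ is formal given Proposition~\ref{prop:4.6.1}, Theorem~\ref{thm:4}, and the arithmetically Cohen--Macaulay vanishing.
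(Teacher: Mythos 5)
Your Part~(2) argument is correct and follows essentially the same route as the paper: identify $N_{S/X}\cong\Omega^1_S$ via the symplectic form, transport $\sfob_{S/X}$ to $i^*$ and the Serre-duality pairing to the intersection form, then use the Hodge index theorem and the positivity of $(i^*\kappa)^2$ to conclude the form is non-degenerate on $\im(i^*)$, so $\rho_\gamma=\rk(i^*)$. The bookkeeping via $\chi(\Omega^1_S)$ matches Corollary~\ref{cor:4.6.4}.

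Your Part~(1) has a genuine gap. You write $S$ as ``the zero locus in $X$ of a regular sequence of sections of $\O_X(m_1),\O_X(m_2)$'' to conclude $N_{S/X}\cong\O_S(m_1)\oplus\O_S(m_2)$, and then invoke the ACM vanishing of $H^1(S,\O_S(m))$ to deduce $H^1(S,N_{S/X})=0$. But the hypothesis is that $S$ is a complete intersection \emph{in $\PP^N$}, not in $X$. If $X\subset\PP^N$ is cut out by $N-4$ equations and $S\subset\PP^N$ by $N-2$ equations, then $S$ sits in $X$ defined by $N-2$ restricted equations, two more than its codimension in $X$; it is generically \emph{not} a complete intersection in $X$, and $N_{S/X}$ need not split. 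In fact $H^1(S,N_{S/X})=0$ is simply false in these examples: from $0\to N_{S/X}\to E_2|_S\to E_1|_S\to 0$ one gets $H^1(N_{S/X})=\coker\bigl(H^0(E_2|_S)\to H^0(E_1|_S)\bigr)$, which is not generally zero; semi-regularity and Theorem~\ref{Thm:SR=smoothofrvd} force $h^1(N_{S/X})=\rho_\gamma$, and the paper's examples after Corollary~\ref{cor:cicy4} give $\rho_\gamma=71,\,122,\,141$ for certain surfaces in the Fermat sextic. So semi-regularity cannot be established by killing $\Ext^2$. The correct argument (as in the paper) shows instead that $\sfob_{S/X}:H^1(X,T_X)\to H^1(S,N_{S/X})$ is \emph{surjective}, by chasing the diagram through $H^0(\PP^N,E_1)\to H^0(S,E_1|_S)\to H^1(S,N_{S/X})$ and verifying, via the Koszul resolution of $I_{S/\PP^N}$, that $H^1(\PP^N,E_1\otimes I_{S/\PP^N})=0$ and $H^1(S,E_2|_S)=0$; surjectivity of $\sfob_{S/X}$, not vanishing of its target, is what gives injectivity of $\sr$ by Proposition~\ref{prop:4.6.1}. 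Your reduction from semi-regularity to the smoothness and dimension claim via Theorem~\ref{Thm:SR=smoothofrvd} is fine, but the semi-regularity step as written does not go through.
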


For Corollary \ref{cor:6} (1) and $X$ a sextic 4-fold, we show that $\rvd$ is often zero, but can be positive in some cases.
For Corollary \ref{cor:6} (2), we find that lagrangian planes are semi-regular with $\rvd = 0$, but we also consider Schoen surfaces \cite{Sch} embedded in their Albanese 4-fold which are semi-regular with $\rvd = 4$. 
Furthermore, for Corollary \ref{cor:6} (2), we also consider non-semi-regular cases on hyperk\"ahler 4-folds with lagrangian fibration and the Hilbert square $K3^{[2]}$.
Finally, we show that $\rho_\gamma$ vanishes when  $h^{2,0}(X)=0$ and the class $\gamma$ is a product of two $(1,1)$-classes. In this case, non-reduced virtual cycles contain non-trivial information. 
See Section~\ref{ss:rvd} for a precise list of examples.

\subsection{Deformation invariance}
Let
\[f:\cX\to \cB\]
be a {\em family of Calabi-Yau 4-folds}, i.e., a smooth projective morphism of relative dimension $4$ to a smooth connected affine scheme $\cB$ such that $\omega_{\cX/\cB} \cong \O_{\cX}$ and with connected fibres. Consider a horizontal section 
\[\tv\in \bigoplus_{p=2}^4 F^p H^{2p}_{DR}(\cX/\cB),\] 
where $F^p H^{2p}_{DR}(\cX/\cB)$ is the $p$th filtered piece of the Hodge bundle $H^{2p}_{DR}(\cX/\cB)$. 
Then $\tv_b \in H^*(\cX_b,\C)$ is a locally constant family of cohomology classes on the fibres $\cX_b:=\cX\times_{\cB}\{b\}$ over $b\in \cB$.
The component $\tv_2$ induces a section  $\tgamma$ of $R^2f_*\Omega^2_{\cX/\cB}$. 
The bilinear forms $\sfB_{\tgamma_b}$ on $H^1(\cX_b,T_{\cX_b})$ for $b \in \cB$ extend to a bilinear form $\sfB_{\tgamma}$ on the vector bundle $R^1f_*(T_{\cX/\cB})$. 

Let $\PTqvXB$ be the moduli space of $\PT_q$-stable pairs on the fibres of $f:\cX\to\cB$. This relative moduli space fits into the fibre diagram
\begin{equation*}
\xymatrix{
\PTqvXb \ar@{^{(}->}[r]^{} \ar[d] & \PTqvXB \ar[d]^{} \\
\{b\} \ar@{^{(}->}[r]^{i_b} & \cB
}	
\end{equation*}
for each $b\in \cB$. We have a relative obstruction theory $\phi_{\cX/\cB} :\EE_{\cX/\cB} \to \LL_{\PTqvXB/\cB}$.

Our third main result (Section \ref{sec:deformationinvariance}) is the deformation invariance of the reduced virtual cycle in Theorem \ref{thm:3}.

\begin{theorem}[Theorem \ref{Thm:DefInv}] \label{thm:5}
Let $f:\cX\to\cB$ be a family of Calabi-Yau 4-folds.
Let $\tv \in \bigoplus_p F^pH^{2p}_{DR}(\cX/\cB)$ be a horizontal section, $\tgamma \in \Gamma(\cB, R^2f_*\Omega^2_{\cX/\cB})$ the induced section, and  $q\in \{-1,0,1\}$.
Assume that the function $b\in \cB \mapsto \rho_{\tgamma_b}$ is constant. Given orientations 
\[\O_{\PTqvXB} \cong \det(\EE_{\cX/\cB}),\qquad \O_{\cB} \cong \det(R^1f_*(T_{\cX/\cB})/\ker(\sfB_{\tgamma})),\]
there exists a cycle class
\[\left[\PTqvXB\right]^\red \in A_{\rvd+\dim(\cB)}\left(\PTqvXB\right)\]
such that, for all $b \in \cB$, we have
\[\left[\PTqvXb\right]^\red = i_b^!\left[\PTqvXB\right]^\red \in A_{\rvd}\left(\PTqvXb\right)\]
for the induced orientation.
Here $i_b^!$ denotes the refined Gysin pullback.
\end{theorem}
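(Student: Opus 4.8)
The plan is to deduce Theorem \ref{thm:5} from the construction of the reduced obstruction theory in Theorem \ref{thm:3} together with the standard machinery of functoriality of virtual cycles under base change. First I would set up the relative version of the reduced obstruction theory. Over $\cB$, the bilinear form $\sfB_{\tgamma}$ on $R^1f_*(T_{\cX/\cB})$ has locally constant rank $\rho_{\tgamma_b}$ by hypothesis, so the kernel $\ker(\sfB_{\tgamma})$ is a subbundle and the quotient $\cV:= R^1f_*(T_{\cX/\cB})/\ker(\sfB_{\tgamma})$ is a vector bundle of rank $\rho_\gamma$ carrying a non-degenerate symmetric form. Choosing \'etale-locally on $\cB$ a maximal non-degenerate subbundle $\cV_\cB \subseteq R^1f_*(T_{\cX/\cB})$ lifting $\cV$, the relative semi-regularity cosection $\SR_{\cX/\cB}$ composed with the projection to $\cV_\cB^\vee$ gives a relative non-degenerate cosection $\Sigma_{\cX/\cB}$ on $\EE_{\cX/\cB}^\vee[1]$, and hence a splitting $\EE_{\cX/\cB} = \EE_{\cX/\cB}^\red \oplus (\cV_\cB \otimes \O)[1]$ with $\phi_{\cX/\cB}^\red : \EE_{\cX/\cB}^\red \to \LL_{\PTqvXB/\cB}$. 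The same argument as in Theorem \ref{thm:3} — the algebraic twistor family over the relative transversal slice plus the relative cosection localization for $\Sigma_{\cX/\cB}$ — shows $\phi_{\cX/\cB}^\red$ is a (relative) obstruction theory whose intrinsic normal cone has isotropic reduced substack, so by the Oh--Thomas formalism and the chosen orientations one obtains a relative virtual cycle $[\PTqvXB]^\red \in A_{\rvd + \dim\cB}(\PTqvXB)$. Independence of the choices of $\cV_\cB$ follows exactly as in Theorem \ref{thm:3}, and the local cycles glue because the construction is canonical on overlaps.

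Next I would establish compatibility under the base change $i_b : \{b\}\hookrightarrow\cB$. The key point is that $\EE_{\cX/\cB}^\red$ restricts to $\EE_{\cX_b}^\red$: since $\EE_{\cX/\cB}$ is the relative Huybrechts--Thomas obstruction theory, $Li_b^* \EE_{\cX/\cB} \simeq \EE_{\cX_b}$, and the cosection $\Sigma_{\cX/\cB}$ restricts to $\Sigma_{\cX_b}$ (using that $\SR$ is compatible with base change — it is built from the Atiyah class and the trace, both of which commute with pullback), hence the direct-sum decomposition restricts and $Li_b^* \EE_{\cX/\cB}^\red \simeq \EE_{\cX_b}^\red$. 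One then invokes the functoriality of the Oh--Thomas localized virtual cycle under refined Gysin pullback: for a DM-type morphism with a $(-2)$-shifted symmetric obstruction theory over a base, $i_b^! [\PTqvXB]^\red = [\PTqvXb]^\red$, provided the orientations and isotropic structures are compatible — which they are by construction, since the global orientation of $\EE_{\cX/\cB}$ restricts to the fibrewise one and the chosen orientation of $\cV$ (equivalently of $R^1f_*(T_{\cX/\cB})/\ker\sfB_{\tgamma}$) restricts to the orientation of $H^1(\cX_b,T_{\cX_b})_{\tgamma_b}$ used to define $[\PTqvXb]^\red$.

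I expect the main obstacle to be making the base-change compatibility of the reduced virtual cycle genuinely rigorous rather than just plausible. There are two intertwined difficulties. First, the reduced cycle is defined via cosection localization applied to the \emph{square} of a cosection, and one must check that the localized class (a priori living on the cosection's zero locus) together with its comparison to the Oh--Thomas construction via the twistor family is stable under refined Gysin pullback — this requires a relative version of the compatibility statement in \cite{KP20} and the twistor-family argument of \cite{KT1}, in the $(-2)$-shifted setting of \cite{OT,Par}. Second, one must verify that the isotropic structure on the reduced intrinsic normal cone over $\cB$ restricts to the fibrewise one; this is where the hypothesis that $\rho_{\tgamma_b}$ is constant is essential, because only then does $\cV_\cB$ exist as a bundle and the twistor family deform flatly. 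I would handle this by phrasing everything in terms of the virtual pullback formalism of \cite{Par} for Oh--Thomas theories: establish that $\phi_{\cX/\cB}^\red$ together with the relative isotropic cone defines a class in the Oh--Thomas framework over $\cB$, that $\phi_{\cX_b}^\red$ is its fibre, and that the general compatibility of Oh--Thomas virtual pullbacks with composition (here, composing $i_b^!$ with the structure map) yields the claimed identity. The bookkeeping of orientations — ensuring the two determinant trivializations restrict correctly and that the sign in $i_b^!$ matches — is routine but must be spelled out once carefully.
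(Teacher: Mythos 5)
Your high-level strategy — build a relative reduced virtual cotangent complex from the relative semi-regularity cosection and a non-degenerate subbundle $\cV \subseteq R^1f_*T_{\cX/\cB}$, then deduce compatibility with $i_b^!$ from bivariance — is the same as the paper's, but you have skipped the single technical step that actually makes the relative cone reduction work, and that step is where the hypothesis on the Hodge type of $\tgamma$ enters. Kiem--Li's cone reduction lemma (Lemma \ref{Lem:KLconereduction}) is a statement about an \emph{absolute} obstruction theory $\EE \to \tau^{\geq -1}\LL_{\sX}$: it does not directly apply to the relative data $\phi_{\cX/\cB} : \EE_{\cX/\cB} \to \tau^{\geq -1}\LL_{\curP/\cB}$ and the relative intrinsic normal cone $\fC_{\curP/\cB}$, because the relative cone contains $T_{\cB}$-directions that the relative cosection does not a priori control. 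The paper's resolution is to modify $\phi_{\cX/\cB}$ to an absolute obstruction theory $\phi^{\abs}$ and lift $\SR_{\cV}$ to an absolute cosection $\SR^{\abs}$; this lift exists if and only if the composition
\begin{equation*}
\cT|_{\curP} \xrightarrow{\ \SR^{\dual}\ } \EE_{\cX/\cB}[-1] \xrightarrow{\ \phi_{\cX/\cB}[-1]\ } \tau^{\geq -1}\LL_{\curP/\cB}[-1] \xrightarrow{\ \KS_{\curP/\cB}\ } \Omega^1_{\cB}|_{\curP}
\end{equation*}
vanishes. This vanishing is the heart of the argument: by Lemma \ref{lem:compatibilityofKS} it reduces to the vanishing of $\KS_{\cX/\cB} \circ \sfB_{\tgamma} : \cT \to \Omega^1_{\cB}$, which is exactly Bloch's criterion \cite[Prop.~4.2]{Blo} that the Hodge locus of $\tgamma$ contains all of $\cB$, i.e.\ that $\tv_2$ lies in $F^2H^4_{DR}(\cX/\cB)$. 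Your proposal never invokes this hypothesis, and the phrase ``the same argument as in Theorem \ref{thm:3} $\ldots$ shows $\phi^\red_{\cX/\cB}$ is a (relative) obstruction theory whose intrinsic normal cone has isotropic reduced substack'' papers over precisely this gap.

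A secondary point: you also do not address why the (non-reduced) relative isotropic condition $\fC_{\curP/\cB} \subseteq \fQ(\EE_{\cX/\cB})$ holds at all. In the absolute case this is the Darboux theorem of Brav--Bussi--Joyce; in the relative case it requires a relative Darboux statement that itself depends on $\tgamma$ having no $(0,4)$-component (see Remark \ref{Rem:RelativeDarboux} and \cite{Par2}), i.e.\ again on $F^2$-horizontality. Finally, you propose the algebraic twistor family for the relative reduced obstruction theory; the paper does not need this for Theorem \ref{thm:5}, since the cosection route alone produces the class $[\curP]^\red = \sqrt{0^!_{\fQ(\EE^{\red}_{\cV})}}[\fC_{\curP/\cB}]$, and the base-change identity $i_b^![\curP]^\red = [\curP_b]^\red$ then follows from Vistoli's rational equivalence for $[\fC_{\curP/\cB}]$ together with bivariance of the localized square root Euler class. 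Repairing your argument amounts to: drop the twistor-family step, prove the absolute lift of the cosection via the Bloch vanishing, and cite the relative Darboux input for the non-reduced isotropic condition.
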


In general, reduced virtual cycles given by cosection localization \cite{KL13} are deformation invariant only under an additional assumption. In Theorem \ref{thm:5}, this assumption follows from the fact that the Hodge locus of $\tgamma$ in $\cB$ is $\cB$ itself.

The existence of a relative orientation is required in Theorem \ref{thm:5}. We conjecture (Conjecture \ref{conj:familyorientation}) that a relative orientation exists after a finite {\'e}tale base change of the base $\cB$ and we prove its topological counterpart (Corollary \ref{Cor:Or.2}). The existence of a relative orientation is crucial for deformation invariance of numerical invariants, but is \emph{not} needed in the main application below.

\subsubsection*{Variational Hodge Conjecture}
The Hodge conjecture predicts that for any smooth projective variety $X$, all rational $(p,p)$-classes on $X$ are algebraic.
In \cite{Gr66} Grothendieck introduced a variant of the Hodge conjecture.

\begin{conjecture}[variational Hodge conjecture]\label{conj:1}
Let $X$ be a smooth projective variety and $\gamma$ an algebraic $(p,p)$-class on $X$. For any smooth projective morphism $f:\cX\to \cB$ to a smooth connected scheme $\cB$ and a horizontal section $\tv_p$ of $F^p\cH^{2p}_{DR}(\cX/\cB)$ 
such that $\cX_0 \cong X$ and $(\tv_p)_0=\gamma$ for some closed point $0 \in \cB$, the cohomology classes $(\tv_p)_b$ are algebraic for all closed points $b\in\cB$. 
\end{conjecture}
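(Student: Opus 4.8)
The plan is to deduce the variational Hodge conjecture for families of Calabi-Yau 4-folds from the deformation invariance of the reduced virtual cycle (Theorem \ref{thm:5}), by running the following argument. Suppose $f : \cX \to \cB$ is a family of Calabi-Yau 4-folds, $0 \in \cB$ a closed point with $\cX_0 \cong X$, and $\tv$ a horizontal section of $\bigoplus_p F^p\cH^{2p}_{DR}(\cX/\cB)$ whose degree-$4$ component at $0$ is an algebraic class $\gamma = (\tv_2)_0 \in H^2(X,\Omega^2_X)$; we may pass to a non-empty affine open and hence assume $\cB$ is affine and connected, and since $\gamma$ is algebraic there is a $\PT_q$-stable pair $(F,s)$ on $X$ (e.g. the ideal sheaf of a surface $S$ representing $\gamma$, taking $q=-1$) whose moduli space $\curP := \PTqvXb[0]$ — using the notation $\PTqvXb$ with $b=0$ — is non-empty. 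The heart of the matter is to show the reduced virtual cycle $[\curP]^\red$ can be arranged to be non-zero, after which the sequel-style application of Theorem \ref{thm:5} transports algebraicity of $\gamma$ along the whole base.

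The key steps are as follows. First, I would invoke the hypothesis that the Hodge locus of $\tgamma$ in $\cB$ is all of $\cB$: this is automatic because $\tv$ is a \emph{horizontal} section lying in the Hodge filtration fibrewise, so $\tv_b \in H^{2,2}(\cX_b)$ for every $b$, and consequently by the interpretation of $\rho_\gamma$ as the codimension of the Hodge locus (see footnote \ref{footnote:codHdg}) the function $b \mapsto \rho_{\tgamma_b}$ is identically zero --- in particular constant --- so the main numerical hypothesis of Theorem \ref{thm:5} is satisfied, and $\rvd = \vd = n - \tfrac12\gamma^2$ is forced to be an integer. Second, I would choose orientations: after a finite étale base change (harmless for the conclusion, which is insensitive to such) the relative orientation of $\EE_{\cX/\cB}$ exists by Conjecture \ref{conj:familyorientation}/its topological counterpart, and the orientation of $R^1f_*(T_{\cX/\cB})/\ker(\sfB_{\tgamma})$ is vacuous here since $\sfB_{\tgamma} = 0$ identically, i.e. this quotient is the zero bundle. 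Third, with these in hand Theorem \ref{thm:5} produces $[\PTqvXB]^\red$ with $i_b^![\PTqvXB]^\red = [\PTqvXb]^\red$ for all $b$; and Theorem \ref{thm:4} (semi-regularity criterion) gives situations --- e.g. whenever every $\PT_q$ pair on $X$ with $\ch(F)=v$ is semi-regular, as in the complete-intersection or lagrangian-plane examples of Corollary \ref{cor:6} --- where $[\curP]^\red = [\curP]$ is the honest (non-zero) fundamental cycle. More generally the statement to be proved is conditional: we assume the family \emph{supports a non-zero reduced virtual cycle}, i.e. $[\PTqvXB]^\red \neq 0$. Fourth, and this is the crux, I would argue that $[\PTqvXb]^\red \neq 0$ forces $\PTqvXb \neq \emptyset$, so there is a $\PT_q$-stable pair $(F_b,s_b)$ on $\cX_b$ with $\ch(F_b) = \tv_b$ in $H^*(\cX_b,\Q)$; since $\ch_2(F_b) = (\tv_2)_b = \tgamma_b$ and $F_b$ is a genuine coherent sheaf (supported on a surface), its Chern class $\tgamma_b$ is algebraic. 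This is exactly the assertion of Conjecture \ref{conj:1} for the class $\tv_2$.

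There is one subtlety in step four to handle with care: non-vanishing of $[\PTqvXb]^\red$ does not literally require the moduli space to be non-empty unless one knows the cycle lives in $A_*$ of that space --- but it does, by construction, so non-emptiness is immediate. The genuinely delicate point is rather propagating non-vanishing from the total space to \emph{every} fibre: a priori $i_b^![\PTqvXB]^\red$ could vanish for some special $b$ even if the total cycle is non-zero. To rule this out I would use the standard specialization/conservation-of-number principle: $i_b^!$ is compatible with proper pushforward to $\cB$, and since $\cB$ is connected the degree of a zero-cycle (when $\rvd = 0$) is locally constant, hence constant; more generally one observes that $[\PTqvXB]^\red$ is a cycle on a scheme projective over $\cB$, so $f_*([\PTqvXB]^\red) \in A_{\rvd + \dim\cB}(\cB) = A_{\dim\cB}(\cB)$ (as $\rvd = 0$ in the interesting numerical case, or after capping with a suitable class) is a non-zero multiple of $[\cB]$, and its refined pullback $i_b^!$ to each point is then a non-zero zero-cycle, forcing $\PTqvXb \neq \emptyset$. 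I expect this conservation-of-number step --- making precise that "non-zero total reduced cycle" implies "non-empty fibre for all $b$" --- to be the main obstacle, though it is entirely standard once the deformation-invariance statement Theorem \ref{thm:5} is available; the Hodge-theoretic inputs ($\rho_{\tgamma_b} \equiv 0$, $\sfB_{\tgamma} \equiv 0$) are formal consequences of horizontality, and the orientation issue is disposed of by passing to an étale cover.
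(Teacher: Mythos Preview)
The statement you were given is a \emph{conjecture}; the paper does not prove it. What the paper proves is the conditional Theorem~\ref{thm:6} (= Theorem~\ref{Thm:VHC}): if $[\PTqvX]^\red \neq 0$ for some $v$ with $v_2=\gamma$, then Conjecture~\ref{conj:1} holds for $(X,\gamma)$. Your proposal is visibly aimed at this conditional result, so I will compare against the paper's proof of Theorem~\ref{Thm:VHC}.

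Your argument contains a fundamental error in the first step. You claim that because $\tv$ is horizontal and lies in $F^2$, the Hodge locus in $\cB$ is all of $\cB$, and hence $\rho_{\tgamma_b}\equiv 0$. The first clause is correct, but the conclusion is not: $\rho_\gamma=\rk(\sfB_\gamma)$ is computed on $H^1(X,T_X)$, the space of \emph{all} first-order deformations of $X$, not on $T_{\cB,0}$. The codimension interpretation in footnote~\ref{footnote:codHdg} explicitly requires the Kodaira-Spencer map $T_{\cB,0}\to H^1(X,T_X)$ to be an isomorphism; for an arbitrary family $\cX\to\cB$ sitting inside the Hodge locus this fails badly. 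Example~\ref{Ex:1} already exhibits $\rho_\gamma=19$ for a plane in a sextic. Consequently your claims that $\sfB_{\tgamma}\equiv 0$, that the orientation of $\cT_{\tgamma}$ is vacuous, and that $\rvd=\vd$ are all false, and the entire chain of reasoning built on them collapses.

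There is a second confusion: the hypothesis of Theorem~\ref{Thm:VHC} is that the reduced cycle on the \emph{central fibre} $[\curP^{(q)}_v(X)]^\red$ is non-zero, not that some relative cycle $[\PTqvXB]^\red$ is non-zero. Your ``conservation of number'' step then runs the wrong way and in any case assumes $\rvd=0$, which is not given.

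The paper's actual proof of Theorem~\ref{Thm:VHC} proceeds quite differently. It replaces $\tv$ by the pair $(\tgamma,P_v)$ to avoid needing a horizontal lift of $v_3$; it shrinks $\cB$ to make $\omega_{\cX/\cB}$ trivial; it uses Proposition~\ref{Prop:vanishingforjumpingrho} to ensure $0\in\cB^\circ$ and then restricts to $\cB^\circ$ so that $b\mapsto\rho_{\tgamma_b}$ is constant (but typically \emph{nonzero}); it handles orientations by a $2$-fold cover of $\cB$ and an infinitesimal trick on $\curP$. The endgame is a contradiction argument: if $p:\curP\to\cB$ were not surjective, one cuts down to a smooth curve $\cB$ with $\mathrm{im}(p)=\{0\}$, so $j:\curP_0\hookrightarrow\curP$ is a nilpotent thickening, writes $[\curP]^\red=j_*\alpha$, and applies the self-intersection formula $i_0^!j_*\alpha=e(N_{0/\cB})\cap\alpha=0$ to contradict $[\curP_0]^\red\neq 0$. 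None of this survives in your sketch once the erroneous $\rho_\gamma\equiv 0$ step is removed.
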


Our fourth main result is an affirmative answer to Conjecture \ref{conj:1} when there exists a non-zero reduced virtual cycle. 
\begin{theorem}[Theorem \ref{Thm:VHC}]\label{thm:6}
Let $X$ be a Calabi-Yau 4-fold and let $\gamma$ be a $(2,2)$-class on $X$.
If for some $v \in H^*(X,\Q)$ with $v_2=\gamma$, $q \in \{-1,0,1\}$, and choice of orientation on $\PTqvX$ we have
\[[\PTqvX]^\red \neq 0 \in A_{\rvd}(\PTqvX)\]
then Conjecture \ref{conj:1} holds for $X$ and $\gamma$.
\end{theorem}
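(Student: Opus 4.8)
The plan is to deduce the variational Hodge conjecture for the pair $(X,\gamma)$ from deformation invariance of the reduced virtual cycle (Theorem \ref{thm:5}) combined with a classical argument. First I would reduce to the case of a local base: since the assertion of Conjecture \ref{conj:1} concerns algebraicity of $(\tv_2)_b$ for \emph{closed} points $b$, and algebraicity can be checked after connecting $b$ to $0$ by a chain of morphisms from smooth affine curves, it suffices to treat a family $f : \cX \to \cB$ over a smooth connected affine base $\cB$ with $\cX_0 \cong X$, $(\tv_2)_0 = \gamma$, and --- shrinking $\cB$ --- we may assume $\cB$ is a curve or, more robustly, that the Kodaira--Spencer map is surjective onto a transversal slice so that the Hodge locus of $\tgamma$ inside $\cB$ is all of $\cB$. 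Concretely: because $\gamma$ stays of type $(2,2)$ all along the family by hypothesis, the bilinear form $\sfB_{\tgamma_b}$ has locally constant rank equal to $\rho_{\tgamma_b}$ on the Hodge locus, which here is $\cB$; after possibly passing to an open subset of $\cB$ we get that $b \mapsto \rho_{\tgamma_b}$ is constant, so the hypothesis of Theorem \ref{thm:5} on the rank is satisfied.

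Next I would address the orientation hypothesis of Theorem \ref{thm:5}. A genuine relative orientation over $\cB$ is only conjectured (Conjecture \ref{conj:familyorientation}) to exist after finite \'etale base change, but for the present application we do \emph{not} need numerical invariants --- only non-vanishing of a cycle class on a single fibre --- so it is enough to work \'etale-locally: after a finite \'etale cover $\cB' \to \cB$ containing a point $0'$ over $0$, relative orientations exist (using Corollary \ref{Cor:Or.2} together with the fact that over an affine scheme a topological trivialization of the real line bundle $\det(\EE_{\cX/\cB})$ lifts to an algebraic one), and pulling back the original orientation on $\curP := \PTqvX$ we may arrange $[\curP_{0'}^{(q)}(\cX_{0'})]^\red \neq 0$ as well, since $\cX_{0'} \cong X$ and the reduced virtual cycle is unchanged under the isomorphism. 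Thus, replacing $\cB$ by $\cB'$, we have the full set of hypotheses of Theorem \ref{thm:5} and a relative reduced virtual cycle $[\PTqvXB]^\red \in A_{\rvd + \dim\cB}(\PTqvXB)$ with $i_b^![\PTqvXB]^\red = [\PTqvXb]^\red$ for every closed $b \in \cB'$.

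The core of the argument is then: fix a closed point $b \in \cB'$. Since $i_b^!$ is the refined Gysin pullback along the regular embedding $\{b\} \hookrightarrow \cB'$ and $[\PTqvXB]^\red$ pushes forward to the ambient relative moduli space, the class $[\PTqvXb]^\red = i_b^![\PTqvXB]^\red$ is supported on the fibre $\PTqvXb$, which is a projective scheme parametrizing actual $\PT_q$-stable pairs $(F,s)$ on $\cX_b$ with $\ch(F) = \tv_b$. A non-zero cycle class in $A_{\rvd}(\PTqvXb)$ forces $\PTqvXb$ to be non-empty; hence there exists a $\PT_q$-stable pair $(F,s)$ on $\cX_b$ with $\ch(F) = \tv_b$, so in particular $\ch_2(F) = (\tv_2)_b$ is the class of an algebraic cycle (the cycle associated to the $2$-dimensional support of $F$, with multiplicities). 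This is exactly the algebraicity of $(\tv_2)_b$; running this over all closed points $b$ of $\cB'$, and then descending along $\cB' \to \cB$ and chaining up curves, gives algebraicity of $(\tv_2)_b$ for all closed $b \in \cB$, which is Conjecture \ref{conj:1} for $(X,\gamma)$.

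The main obstacle I anticipate is making the passage to \'etale covers and shrinkings of $\cB$ harmless: one must check that the reduced virtual cycle on the central fibre remains non-zero after base change (this is immediate since $\cX_{0'} \cong X$ and $[\,\cdot\,]^\red$ is intrinsic to the fibre with its induced orientation) and, more delicately, that restricting the \emph{open} affine $\cB$ to a neighborhood where $\rho_{\tgamma_b}$ is constant does not lose the point $0$ --- but $\rho_{\tgamma_0} = \rho_\gamma$ and constancy near $0$ is automatic on the Hodge locus, which equals $\cB$ by hypothesis, so no point is lost. The only other subtlety is the standard reduction from a general smooth connected $\cB$ to affine curves through $0$ and $b$, which is classical (a normal variety is chain-connected by curves). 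Everything else is a direct application of Theorem \ref{thm:2} (to know $\PTqvXb$ is projective and parametrizes genuine pairs whose support is an algebraic cycle) and Theorem \ref{thm:5}.
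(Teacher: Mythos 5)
Your overall strategy (reduce to curves, apply Theorem \ref{thm:5}, deduce non-emptiness of each fibre moduli space) is the right one in outline, but the proposal has a genuine gap at the crux, and two supporting assertions are incorrect as stated.

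The central gap is the final step, where you write that ``A non-zero cycle class in $A_{\rvd}(\PTqvXb)$ forces $\PTqvXb$ to be non-empty'' and conclude algebraicity of $(\tv_2)_b$ for arbitrary $b$. But you have only established $[\curP^{(q)}_{\tv_0}(\cX_0)]^\red \neq 0$ for $b=0$; deformation invariance gives the \emph{formula} $[\PTqvXb]^\red = i_b^![\PTqvXB]^\red$, not the non-vanishing. If $\PTqvXb$ were empty, the refined Gysin pullback would simply be $0 \in A_*(\emptyset)$ with no contradiction. What the paper actually proves is surjectivity of $p : \curP \to \cB$ via a reductio: if the (projective) image of $p$ misses some point, pass to a smooth affine curve $\cB$ through $0$ on which the set-theoretic image of $p$ is a finite set, shrink so it is only $\{0\}$, and observe that then $\curP_{\mathrm{red}} = (\curP_0)_{\mathrm{red}}$. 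Since Chow groups are invariant under nilpotent thickening, $[\curP]^\red = j_*\alpha$ for some $\alpha$ supported on $\curP_0$, and the self-intersection formula kills it: $[\curP_0]^\red = i_0^! j_*\alpha = e(N_{0/\cB}) \cap \alpha = 0$ because $N_{0/\cB}$ is a trivial line bundle on a point. This contradicts the hypothesis. That argument is the engine of the proof and is missing from your proposal.

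Two further problems. First, you assert that $\rho_{\tgamma_b}$ is locally constant on the Hodge locus because $\gamma$ stays of type $(2,2)$. This is false: $\rho_{\tgamma_b}$ is only lower semi-continuous even on the Hodge locus, and can jump up at special points (cf.~Remark \ref{Rem:B0}). The correct statement, and the one the paper uses, is that the non-vanishing hypothesis $[\curP_0]^\red \neq 0$ combined with Proposition \ref{Prop:vanishingforjumpingrho} forces $0 \in \cB^\circ$ (the open set where $\rho$ attains its maximum), so one may shrink $\cB$ to $\cB^\circ$. Without invoking that vanishing proposition, you cannot assume constancy. Second, your appeal to Corollary \ref{Cor:Or.2} to produce an algebraic relative orientation does not work: that corollary is purely topological (about $\Z_2$-bundles over the topological mapping space), and the paper is explicit that it is only \emph{evidence} for the algebraic Conjecture \ref{conj:familyorientation}, which remains open. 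The claim that ``a topological trivialization of $\det(\EE_{\cX/\cB})$ lifts to an algebraic one over an affine base'' is not justified. The paper instead sidesteps the orientation issue entirely: once the reductio has reduced to the case where $\curP$ is a nilpotent thickening of $\curP_0$, Lemma \ref{lem:nilp.orienation} lifts the orientation on $\curP_0$ (which exists by \cite{CGJ}) to $\curP$. This is why the statement of Theorem \ref{thm:6} can drop the orientation hypothesis of Theorem \ref{thm:5}.

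Finally, a minor omission: Conjecture \ref{conj:1} only posits a horizontal section $\tv_2$ of $F^2\cH^4_{DR}(\cX/\cB)$, not horizontal lifts of the other components $v_3, v_4$ of $v$; the paper addresses this by passing to the variant moduli space $\curP^{(q)}_{\tgamma,P_v}(\cX/\cB)$ where one fixes $\ch_2$ cohomologically and the remaining components only via the Hilbert polynomial. You also do not justify shrinking $\cB$ to make $\omega_{\cX/\cB}$ trivial, though this is a standard semicontinuity argument.
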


We prove Theorem \ref{thm:6} via the deformation invariance in Theorem \ref{thm:5}. Here we do not require the two technical assumptions in Theorem \ref{thm:5}. 
The function $b\in\cB\mapsto \rho_{\tgamma_b}\in \Z$ is lower semi-continuous and the reduced virtual cycle vanishes if $\rho_{\tgamma_b}$ drops at $0 \in \cB$  (Proposition \ref{Prop:vanishingforjumpingrho}). 
The assumption on orientations can be removed by working with virtual {\em cycles} instead of {\em invariants}.

We view Theorem \ref{thm:6} as a {\em virtual} generalization of a result of Buchweitz-Flenner \cite{BF03} (see also \cite{Blo}) for Calabi-Yau 4-folds. In fact, \cite{BF03} implies that Conjecture \ref{conj:1} holds if there exists a semi-regular $\PT_q$ pair $I\udot=[\O_X\to F]$ on $X$ with $\ch_2(F) = \gamma$.
Theorem \ref{thm:6} recovers the result of \cite{BF03} for Calabi-Yau 4-folds, because the reduced virtual cycle equals the fundamental cycle near a semi-regular point (Theorem \ref{Thm:SR=smoothofrvd}).

This application to the variational Hodge conjecture shows that it can be useful to work with the virtual cycle instead of invariants, because one can restrict to each connected component of the moduli space. 

\begin{example}[Example~\ref{ex:k3k3}]
We give a non-semi-regular application of Theorem~\ref{thm:6}. Let $X=S_1\times S_2$ be a product of two K3 surfaces. Let $\beta_1\in H^{1,1}(S_1)$, $\beta_2\in H^{1,1}(S_2)$ be two irreducible effective classes with $\beta_1^2\geq 2$ and $\beta_2^2=0$. 
Let $\gamma=\beta_1\cup \beta_2$.
Then, for any $q \in \{-1,0,1\}$, we have $\PTqvX=|\O_{S_1}(\beta_1)|\times \PP^1$ for some $v\in H^*(X,\mathbb{Q})$ with $v_2=\gamma$. In this case, the reduced virtual cycle is $\pm h^0(\O_{S_1}(\beta_1))\cdot[\pt\times\PP^1]$. Therefore Conjecture \ref{conj:1} holds for such $\gamma$. 
This is a non-semi-regular case since the reduced virtual cycle differs from the fundamental cycle.
\end{example}


\subsection{Moduli stack of semi-stable sheaves}

In Appendix \ref{Appendix:Artinstacks}, we generalize the construction of reduced virtual cycles to {\em Artin} stacks (which are global quotient stacks). In particular, we consider the moduli stack $\cM^{H,ss}_v(X)$ of Gieseker semi-stable sheaves on $X$ (with respect to a polarization $H$) with Chern character $v \in H^*(X,\Q)$, rigidified by the action of $B\GG_m$. 
The Chern character $v$ is arbitrary so this includes moduli of $2$-dimensional \emph{torsion} sheaves and moduli of \emph{higher rank} sheaves.

\begin{theorem}[Theorem \ref{Thm:semistable.rigidified}]\label{thm:7}
Let $(X,H)$ be a polarized Calabi-Yau 4-fold and $v \in H^*(X,\Q)$. 
For any choice of orientation on $\cM^{H,ss}_v(X)$, there exists a canonical reduced virtual cycle
\[[\cM^{H,ss}_v(X)]^\red \in A_{1- \frac12\chi(v,v)+\frac12\rho_\gamma}(\cM^{H,ss}_v(X))\]
where $\gamma:=v_2$ and $\chi(v,v):=\int_Xv\dual \cdot v \cdot \td(X)$.
Moreover, if $[\cM^{H,ss}_v(X)]^\red \neq 0$, then Conjecture \ref{conj:1} holds for $X$ and $\gamma$.
\end{theorem}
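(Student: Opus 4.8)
The plan is to mimic the construction in the main body of the paper, replacing the fine moduli spaces $\PTqvX$ (which are quotient \emph{schemes}) by the Artin stack $\cM^{H,ss}_v(X)$, which is a global quotient stack $[\cM/\PGL_N]$ by Theorem~\ref{thm:1}-type GIT arguments; concretely one takes a large Quot scheme and rigidifies by $B\GG_m$. The first step is to record the derived enhancement: $\cM^{H,ss}_v(X)$ carries a $(-2)$-shifted symplectic structure in the sense of \cite{PTVV}, hence a $3$-term symmetric obstruction theory $\phi:\EE\to\LL_{\cM^{H,ss}_v(X)}$ with self-duality $\SD:\EE\dual[2]\cong\EE$, exactly as for pair moduli spaces; this is where the Euler characteristic pairing enters, giving $\rk\EE$ and the virtual dimension $1-\tfrac12\chi(v,v)$ (the extra $+1$ coming from the $B\GG_m$-rigidification). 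One then needs the Appendix~\ref{Appendix:Artinstacks} extension of the Oh--Thomas/Borisov--Joyce machinery to global quotient stacks: given an orientation $\O\cong\det(\EE)$, the isotropy of the intrinsic normal cone (via \cite{BBJ}) produces $[\cM^{H,ss}_v(X)]^\vir$, and one must check that the semi-regularity cosection $\SR$ still makes sense here.

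The second step is the reduction. Here the key input is that the semi-regularity map of Buchweitz--Flenner \eqref{eq:sr1} is defined for any perfect complex with trivial determinant, in particular for a $2$-dimensional semistable sheaf $F$ viewed via its Atiyah class; it assembles into a cosection $\SR:\EE\dual[1]\to H^1(X,T_X)\dual\otimes\O$ with $\SR^2=\sfB_\gamma\otimes 1$, where $\gamma=v_2=\ch_2(F)$ (only $\ch_2$ enters the semi-regularity pairing). Choosing a maximal non-degenerate subspace $V\subseteq H^1(X,T_X)$ for $\sfB_\gamma$ gives a non-degenerate cosection $\Sigma:\EE\dual[1]\to V\dual\otimes\O$, hence a splitting $\EE=\EE^\red\oplus(V\otimes\O)[1]$ and a reduced symmetric obstruction theory $\phi^\red:\EE^\red\to\LL$, precisely as in Theorem~\ref{thm:3} and Proposition~\ref{prop:redSOT}. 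The reduced virtual dimension becomes $1-\tfrac12\chi(v,v)+\tfrac12\rho_\gamma$. One then invokes the combined algebraic-twistor-plus-cosection-localization argument from the proof of Theorem~\ref{thm:3}, now in the Artin-stack setting of the Appendix, to show $\phi^\red$ really is an obstruction theory and that the reduced intrinsic normal cone is isotropic, yielding the canonical class $[\cM^{H,ss}_v(X)]^\red$; independence of $V$ follows as before.

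For the variational Hodge conjecture statement, the third step is to run the deformation-invariance argument of Theorem~\ref{thm:5}/Theorem~\ref{thm:6} relative to a family $f:\cX\to\cB$ of Calabi-Yau $4$-folds through $X$ with a horizontal section $\tv$ extending $v$. One forms the relative moduli stack $\cM^{H,ss}_{\tv}(\cX/\cB)$ with its relative reduced obstruction theory; lower semicontinuity of $b\mapsto\rho_{\tgamma_b}$ together with Proposition~\ref{Prop:vanishingforjumpingrho} forces the reduced cycle on a fibre where $\rho$ jumps to vanish, so on the open locus where $\rho$ is constant (a neighbourhood of $0$) the reduced cycles are related by refined Gysin pullback $i_b^!$. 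Since $[\cM^{H,ss}_v(X)]^\red\neq0$, its pushforward to a point or rather its non-vanishing forces $\cM^{H,ss}_{\tv_b}(\cX_b)$ to be non-empty for all $b$ in that neighbourhood, hence there is a semistable sheaf on $\cX_b$ with $\ch_2=\tgamma_b$, so $\tgamma_b$ is algebraic; a connectedness/spreading-out argument over $\cB$ (as in the proof of Theorem~\ref{thm:6}, using that algebraicity is a closed-then-open type condition propagated along the constructible stratification of $\cB$ by $\rho$) finishes it, and the orientation hypothesis is dispensable because one works with cycles rather than numbers.

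The main obstacle I expect is not the Hodge-theoretic input—that is essentially identical to the pair case—but the \emph{foundational} work of extending the Oh--Thomas virtual cycle, cosection localization, and the derived Darboux isotropy statement from Deligne--Mumford-type quotients to genuine Artin (global quotient) stacks with positive-dimensional stabilizers, carried out in Appendix~\ref{Appendix:Artinstacks}; in particular one must be careful that the $B\GG_m$-rigidification interacts correctly with the self-dual obstruction theory and the orientation, and that the cosection $\SR$ descends to the rigidified stack. Once that framework is in place, both the reduced-cycle construction and the application to Conjecture~\ref{conj:1} are formal consequences of the corresponding arguments for $\PTqvX$.
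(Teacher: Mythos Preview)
Your outline captures the overall strategy correctly, and you are right that the foundational Artin-stack material in Appendix~\ref{Appendix:Artinstacks} does most of the heavy lifting. However, there is a genuine gap in your first step. You assert that $\cM^{H,ss}_v(X)$ carries a $(-2)$-shifted symplectic structure from \cite{PTVV} and hence a $3$-term symmetric obstruction theory ``exactly as for pair moduli spaces''. This is not correct. The PTVV symplectic form lives on $R\cCoh(X)$, so it is the \emph{non-rigidified} stack $\tcM\subseteq\cCoh(X)$ that inherits a symmetric obstruction theory $\tphi:\tEE\to\trunc\LL_{\tcM}$, and because $\tcM$ is a genuine Artin stack this complex has tor-amplitude $[-3,1]$, not $[-2,0]$. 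Passing to the rigidified stack $\cM=\tcM\!\!\fatslash\,\GG_m$ via $[R\cCoh(X)/B\GG_m]$ gives an obstruction theory $\phi_1:\EE_1\to\trunc\LL_\cM$ sitting in a triangle $\lambda^*\EE_1\to\tEE\to\O_{\tcM}[-1]$; this $\EE_1$ is \emph{not} self-dual, because you have removed $\O[-1]$ but not its Serre-dual partner $\O[3]$.

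The paper therefore inserts a non-trivial \emph{symmetrization} step: using the reduction operation of \cite[Lem.~C.2]{Par}, one builds a reduction diagram which cuts out both $\O[3]$ and $\O[-1]$ from $\lambda_*\tEE$ (this uses the weight decomposition of $D(\tcM)$ and the vanishing $\Hom(\O[3],\O[-2])=0$), producing a genuine symmetric complex $\EE$ on $\cM$ together with an obstruction theory $\phi$. One must then verify separately that the isotropic condition, the orientation, and the cosection $\SR$ all transfer from $\tEE$ on $\tcM$ to $\EE$ on $\cM$ through this reduction diagram; each of these requires a short argument (descent along the $B\GG_a$-torsor for isotropy, $\det(\EE)\cong\lambda_*\det(\tEE)$ for orientation, and Hom-vanishing for the cosection). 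You gesture at this difficulty in your final paragraph, but your first step presupposes it away; without the symmetrization, there is no symmetric obstruction theory on $\cM$ to reduce.
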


We construct the reduced virtual cycle as a class in Kresch's Chow group \cite{Kre} by extending Oh-Thomas's construction \cite{OT} and Kiem-Li's cosection localization \cite{KL13} to Artin stacks (which are global quotients).
If there exist strictly semi-stable sheaves, this virtual cycle does not provide numerical invariants since there is no push-forward map for $\cM^{H,ss}_v(X) \to \Spec(\C)$. Nonetheless, this virtual cycle is sufficient for applications to the variational Hodge conjecture.



There is a ``fixed determinant version'' of Theorem \ref{thm:7} under the assumption $v_0\neq 0$ and $v_1=0$ (Theorem \ref{Thm:semistable.fixeddet}).

\subsection{Derived algebraic geometry}

In Appendix \ref{Appendix:ReductionviaDAG}, we revisit Theorem \ref{thm:3} via derived algebraic geometry.
The main result is the following:

\begin{theorem}[Corollary \ref{Cor:ReducedOTviaDAG}]
In the situation of Theorem \ref{thm:3}, there exists a $(-2)$-shifted symplectic derived enhancement of $\PTqvX$ inducing the reduced symmetric obstruction theory $\phi^\red$.
\end{theorem}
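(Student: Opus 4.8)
The plan is to produce a $(-2)$-shifted symplectic derived enhancement of $\PTqvX$ whose induced symmetric obstruction theory is the reduced one $\phi^\red$, by performing a symplectic reduction at the level of derived stacks. First I would recall that $\PTqvX$ already carries a $(-2)$-shifted symplectic derived structure coming from the mapping stack description: by Theorem \ref{thm:2} it is an open substack of $\cPerf(X)_{\O_X}^{\mathrm{spl}}$, and the latter inherits a $(-2)$-shifted symplectic form $\omega$ from the Calabi-Yau structure on $X$ via the PTVV \cite{PTVV} construction (the $R\UMap(X_{\mathrm{dR}}, \cdots)$ recipe, using the fixed trivialization $\omega:\O_X\xrightarrow{\cong}\Omega^4_X$). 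Denote by $\curP^{\mathrm{der}}$ this derived enhancement, with cotangent complex $\LL_{\curP^{\mathrm{der}}} \simeq \EE$, and whose underlying symmetric obstruction theory $\phi:\EE\to\LL_{\curP}$ is exactly the Huybrechts-Thomas \cite{HT} one appearing in the statement. The task is then to modify $\curP^{\mathrm{der}}$ so that the factor $(V\otimes\O_{\curP})[1]$, which splits off $\EE$ by the non-degeneracy of the cosection $\Sigma$ (the decomposition $\EE = \EE^\red \oplus (V\otimes\O_{\curP})[1]$ recorded before Theorem \ref{thm:3}), is removed while keeping the rest of the shifted symplectic form intact.

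The key step is a \emph{derived symplectic reduction}. The cosection $\SR$ (extending the Buchweitz-Flenner semi-regularity map \eqref{eq:sr1}) is the shadow of a morphism of derived stacks. Concretely, I would realize the maximal non-degenerate slice $V\subseteq H^1(X,T_X)$ as giving rise to a Hamiltonian-type action or, more directly, to a Lagrangian correspondence: the semi-regularity map is induced by the Atiyah class together with the trace pairing, and on the derived level it corresponds to a map from $\curP^{\mathrm{der}}$ to a linear $(-1)$-shifted symplectic (or $(-2)$-shifted, after the appropriate shift) target built from $V$. Choosing the maximal non-degenerate $V$ makes this map a submersion onto its image in the relevant sense, and the derived symplectic reduction (equivalently, a derived Lagrangian intersection / coisotropic reduction in the sense of the $(-2)$-shifted analogue of Calaque-Pantev-To\"en-Vaqui\'e-Vezzosi shifted coisotropic structures) produces a new derived stack $\curP^{\red,\mathrm{der}}$ with the same classical truncation $\PTqvX$ but with cotangent complex $\EE^\red$. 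The $(-2)$-shifted symplectic form descends because $V$ is chosen \emph{non-degenerate}: the restriction of $\sfB_\gamma$ to $V$ being an isomorphism is precisely the condition that the would-be-reduced form is non-degenerate, matching the condition ``$\Sigma^2$ is an isomorphism'' stated in (b'). Finally I would check that the symmetric obstruction theory induced on the classical truncation by this reduced $(-2)$-shifted symplectic structure agrees with $\phi^\red = \phi\circ(1,0)$; this is essentially bookkeeping with the splitting $\EE\simeq\EE^\red\oplus(V\otimes\O_{\curP})[1]$ and the compatibility of the PTVV form with the trace pairing, together with the identity $\SR^2 = \sfB_\gamma\otimes 1_{\curP}$ recorded before Theorem \ref{thm:3}.

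An alternative, possibly cleaner, route is to build the reduced enhancement \emph{directly} as a mapping stack into the \emph{algebraic twistor family} of Kao-Thomas \cite{KT1} associated to the slice $V$: since $\phi^\red$ is already known to be an obstruction theory by Theorem \ref{thm:3} via that twistor family, one only needs to upgrade the twistor construction to carry a relative $(-2)$-shifted symplectic structure and then restrict to the central fibre. Concretely, one takes the total space $\cX_V\to\Delta_V$ of the algebraic twistor family, notes it is a family of (non-algebraic, but that is harmless for derived $\UMap$) Calabi-Yau $4$-folds, forms the relative derived moduli stack of simple complexes, which carries a $\Delta_V$-relative $(-2)$-shifted symplectic form, and restricts to the point $0\in\Delta_V$; the derived fibre over $0$ has classical truncation $\PTqvX$ and, because the twistor family is ``transverse to the Hodge locus'' in the precise sense used in the proof of Theorem \ref{thm:3}, its cotangent complex is $\EE^\red$ rather than $\EE$. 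This makes the identification of $\phi^\red$ with the induced obstruction theory almost automatic, since it is built into the twistor construction.

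\textbf{Main obstacle.} The hard part will be making the ``derived symplectic reduction along $V$'' rigorous: one must exhibit the semi-regularity cosection as arising from a genuine coisotropic (or Lagrangian) structure on a map of derived stacks, not merely a cosection of the obstruction theory, and then invoke a reduction theorem in the $(-2)$-shifted setting, where the foundations are thinner than in the $0$-shifted or $(-1)$-shifted cases. The twistor-family route sidesteps this by replacing reduction with restriction of a relative symplectic form, so the real work there is checking that the relative $(-2)$-shifted symplectic form on the relative moduli stack exists and is compatible with the base $\Delta_V$ (i.e. that $\cX_V\to\Delta_V$ has a relative Calabi-Yau / relative orientation structure inducing it), and that restricting to $0$ produces exactly $\EE^\red$ — which is where the non-degeneracy of $\sfB_\gamma|_V$ re-enters. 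I expect the cleanest write-up to use the twistor family as the primary construction and to mention the symplectic-reduction viewpoint as a remark.
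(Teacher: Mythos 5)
There is a genuine gap, and it is one the paper itself flags explicitly. Your preferred ``cleaner'' route is the algebraic twistor family: form the relative derived moduli stack $R\curP^{(q)}_v(\cX_V/\cA_V)$, note its absolute cotangent complex over $\Spec(\C)$ is $\EE^\red_V$, and hope the $(-2)$-shifted symplectic structure comes along for free. But the paper's Remark \ref{Rem:ReducedDerived} is precisely about why this does not work: the natural map between derived enhancements runs $R\PTqvX \hookrightarrow R\curP^{(q)}_v(\cX_V/\cA_V)$, i.e.\ \emph{into} the reduced enhancement, so you cannot pull back the $(-2)$-shifted symplectic form of $R\PTqvX$ along it. The relative PTVV form on $R\curP^{(q)}_v(\cX_V/\cA_V)/\cA_V$ identifies $\EE^\vee[2]\cong\EE$ (the \emph{unreduced} $\EE = \LL_{R\curP^{(q)}_v(\cX_V/\cA_V)/\cA_V}|_{\curP}$), not $(\EE^\red_V)^\vee[2]\cong\EE^\red_V$; passing from this relative duality to a closed, non-degenerate absolute $2$-form on the complex $\EE^\red_V$ is exactly the missing content. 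Saying ``restricting to $0$ produces $\EE^\red$'' identifies the obstruction theory, not the symplectic structure, and the two are not the same thing at the level of derived enhancements. Your first route (``derived coisotropic/symplectic reduction along $V$'') is a reasonable heuristic, but you do not exhibit $V$ as a coisotropic in the $(-2)$-shifted sense and do not produce the reduction theorem you would need, and you acknowledge as much; as written it is not a proof.

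The paper's actual argument goes through an independent mechanism that neither of your routes uses. It sets up a three-tier reduction formalism (Theorem \ref{Thm:DerivedCL}): a $(-1)$-shifted $1$-form gives Kiem--Li cone reduction; a $(-1)$-shifted \emph{closed} $1$-form gives a reduced obstruction theory; a $(-1)$-shifted \emph{exact} $1$-form, i.e.\ a $(-1)$-shifted function, gives a reduced derived enhancement by derived zero locus, which comes with a map $R\curP^\red\to R\curP$ in the \emph{correct} direction to pull back the symplectic form. The two substantive inputs are then: (i) Proposition \ref{Prop:SRclosed}, which upgrades the semi-regularity cosections to $(-1)$-shifted \emph{closed} $1$-forms $\alpha_\xi = \widehat{[\iota_\xi\omega]\cdot\Theta}$ using a generalization of the PTVV integration map (Theorem \ref{Prop:Integration}) and the hat product for derived mapping stacks; and (ii) the derived Poincar\'e lemma (Proposition \ref{Prop:DerivedPoincare}), which says that over a quasi-projective classical truncation, $(-1)$-shifted closed $1$-forms are automatically exact. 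With these, $R\curP^\red$ is the derived vanishing locus of $\rho_\gamma$ shifted functions inside $R\curP$, its cotangent complex is $\EE^\red_V$, and the $(-2)$-shifted symplectic form pulls back and stays non-degenerate precisely because $\sfB_\gamma|_V$ is non-degenerate. Your proposal does not anticipate the closed-form upgrade nor the Poincar\'e lemma, and without them the construction of a $(-2)$-shifted symplectic structure on the reduced enhancement does not close.
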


We first introduce general reduction methods via $(-1)$-shifted {\em closed}/{\em exact} 1-forms (Theorem \ref{Thm:DerivedCL}).
In order to apply this to the moduli space $\PTqvX$, we show that the semi-regularity maps can be enhanced to $(-1)$-shifted closed $1$-forms (Proposition \ref{Prop:SRclosed}).
This is achieved by generalizing the integration map of \cite{PTVV} (Theorem \ref{Prop:Integration}).

As a byproduct, we obtain a new proof of Bloch's conjecture for surfaces on Calabi-Yau $4$-folds (Remark \ref{Rem:BlochConj}). The general case of Bloch's conjecture was shown by Pridham \cite{Pri} and Bandiera-Lepri-Manetti \cite{BLM} (and by Iacono-Manetti \cite{IM} when the normal bundle is extendable).


\subsection{Relations to physics}
Finding connections between surface counting on Calabi-Yau 4-folds and string theory is an interesting direction. We briefly point out relations to two (disjoint) parts of the physics literature:
\begin{itemize}
    \item In M-/F-theory, the complex moduli of a Calabi-Yau 4-fold $X$ are fixed by fixing a rational 4-cycle class called the $G_4$-flux.
    Then Gukov-Vafa-Witten \cite{GVW} introduced an associated superpotential giving rise to a symmetric bilinear form which (up to change of coordinates) coincides with $\sfB_\gamma$. Explicit bounds on the rank $\rho_\gamma$ of $\sfB_\gamma$ appear to be of interest to physicists \cite{BCV,BV,BBGL,GGHHP,LVWX}.\footnote{We thank Grimm for pointing us to the physics literature, specifically \cite{BCV}.}
    \item In the series of papers starting from \cite{Nek1}, Nekrasov studied the moduli space of (non-commutative) instantons on origami spacetimes in $\mathbb{C}^4$. Our choice of $K$-theoretic insertions for toric Calabi-Yau 4-folds in the sequels (and the choice in \cite{CKM, KR}) are motivated by Nekrasov-Piazzalunga's study of SUSY Yang-Mills theory on $\C^4$ \cite{Nek,NP}.
\end{itemize}

\subsection{Sequels}
We briefly mention the content of the sequels to this paper:
\begin{itemize}
    \item In Part II we study the $\DT/\PT_0$ correspondence. We conjecture a general correspondence and provide evidence on toric Calabi-Yau 4-folds via a vertex formalism, and on compact Calabi-Yau 4-folds via a virtual Lefschetz principle.
    \item In Part III we study the $\PT_0/\PT_1$ correspondence in special cases. We consider virtual projective bundles and Weierstrass 4-folds.
    \item In Part IV we compute $\PT_1$ invariants in some examples including local surfaces and Calabi-Yau 4-folds with a nondegenerate holomorphic 2-form.
\end{itemize}
We will also further explore the link between the variational Hodge conjecture and the reduced virtual cycles.

\subsection{Related work}
Reduced virtual cycles for {\em curves} on hyperk{\"a}hler $4$-folds were studied by Cao-Oberdieck-Toda in \cite{COT1,COT2}.
For the moduli space of $\PT_1$-pairs on $X$, Theorem \ref{thm:2} was independently proven by Gholampour-Jiang-Lo \cite{GJL}. 
Correspondences for $\PT_q$ invariants in the context of Joyce's wall-crossing framework \cite{GJT,Joy,J21} will be an interesting direction to study.

There are two types of generalizations of cosection localization \cite{KL13} to Donaldson-Thomas theory of Calabi-Yau $4$-folds \cite{BJ,OT}. In \cite{Sav}, Savvas showed vanishing of Borisov-Joyce virtual cycles with a nowhere vanishing isotropic cosection.
In \cite{KP20}, Kiem and the third-named author studied cosection localization for Oh-Thomas virtual cycles \cite{OT}.

\subsection*{Acknowledgements}
We thank Rahul Pandharipande for encouraging us to study surfaces on Calabi-Yau 4-folds, which prompted this project.
The second-named author wishes to thank Richard Thomas for previous collaborations on reduced virtual cycles for curves on surfaces, which obviously impacted this paper. He also thanks Yalong Cao for their collaborations which led him to study Calabi-Yau 4-folds in the first place.
The third-named author thanks Young-Hoon Kiem for teaching him the cosection localization and a previous collaboration on its generalization to Calabi-Yau $4$-folds.
We are also grateful to Markus Upmeier for conversations on orientation problems. We thank Dhyan Aranha, Arkadij Bojko, Amin Gholampour, Thomas Grimm, Yunfeng Jiang, Dominic Joyce, Woonam Lim, Jason Lo, Sergej Monavari, Georg Oberdieck, Jeongseok Oh, J{\o}rgen Rennemo, and Cumrun Vafa for many interesting conversations.
Y.B.~is supported by ERC Grant ERC-2017-AdG-786580-MACI and Korea Foundation for Advanced Studies (KFAS).
The project has received funding from the European Research Council (ERC) under the European Union Horizon 2020 research and innovation
program (grant agreement No.~786580).
M.K.~is supported by NWO grant VI.Vidi.192.012.

\subsection{Notation and conventions}
We use the following notation and conventions throughout the paper.
\begin{itemize}
\item All schemes and algebraic stacks are assumed to be of finite type over the field $\C$, unless stated otherwise. 
A {\em variety} is a separated integral scheme.
A {\em point} $x\in X$ in a scheme or an algebraic stack $X$ means a $\C$-valued point. 
\item A {\em Calabi-Yau $n$-fold} is a smooth projective variety $X$ of dimension $n$ with $K_X \cong \O_X$. 
\item For a coherent sheaf $F$ on a smooth quasi-projective variety $X$, we use the following notation for the various duals:
\begin{itemize} 
\item $F^\vee := R\hom_X(F,\O_X)$;
\item $F^D:=\ext^c_X(F,K_X)$, where $c$ is the codimension of $F$ in $X$.
\end{itemize}
\item For a smooth projective variety $X$, we denote by 
\begin{itemize}
\item $\Coh(X)$ the abelian category of coherent sheaves on $X$;
\item $\cCoh(X)$ the moduli stack of coherent sheaves on $X$;
\item $D^b_{\mathrm{coh}}(X)$ the bounded derived category of coherent sheaves on $X$.
\end{itemize}
\item For a perfect complex $\FF$ on a scheme $\sX$ of tor-amplitude $(-\infty,0]$, denote by $\fC(\FF):=h^1/h^0(\FF\dual)$ the associated abelian cone stack \cite[Prop.~2.4]{BF}.
\item An {\em orthogonal bundle} on a scheme $\sX$ is a pair of a vector bundle $E$ and an isomorphism $q: E\to E\dual$ such that $q=q\dual$. By abuse of notation, we also denote by $q:E \to \bbA^1_{\sX}$ the corresponding quadratic function. An {\em orientation} of an orthogonal bundle $E$ is an isomorphism $o : \O_{\sX} \cong \det(E)$ such that $\det(q) = (-1)^{\frac{r(r-1)}{2}}(o \circ o\dual)^{-1}$ where $r=\rank(E)$.
\item A {\em symmetric complex} on a scheme $\sX$ is a pair of a perfect complex $\EE$ on $\sX$ of tor-amplitude $[-2,0]$ and an isomorphism $\theta: \EE\dual[2] \cong \EE$ such that $\theta\dual[2]=\theta$.  An {\em orientation} of a symmetric complex $\EE$ is an isomorphism $o:\O_{\sX}\cong \det(\EE)$ such that $\det(\theta)=  (-1)^{\frac{r(r-1)}{2}}o \circ o\dual$  where $r=\rank(\EE)$.\footnote{This convention is slightly different from \cite{Par}. In loc.~cit., all symmetric complexes are assumed to be equipped with an orientation.}
\item For a morphism $f : \sX \to \sY$ of algebraic stacks, we denote by $\LL_{\sX/\sY}$ the full cotangent complex \cite{Ill} (\cite{Ols}) and by $\tau^{\geq -1}\LL_{\sX/\sY}$ the truncated cotangent complex. 
\item For any algebraic stack $\sX$, we denote by $A_*(\sX)$ the Chow group of Kresch \cite{Kre} with $\Q$-coefficients. For any $d \in \Q \setminus \Z$, we let $A_d(\cX):=0$.
\item For a morphism $f:\sX\to\sY$ and a coherent sheaf (resp.~a perfect complex) $E$ on $\sY$, we denote the pullback $f^*E$ (resp.~the derived pullback $L f^*E$) by $E|_{\sX}$. 
\end{itemize}

\section{Stable pairs}\label{sec:pair}

In this section, we introduce the $\PT_q$-stability conditions on 2-dimensional pairs for $q\in\{-1,0,1\}$, and present basic properties which will be used throughout the paper and its sequels. The $\PT_{-1}$-stable pairs coincide with $\DT$-stable pairs (i.e.~ideal sheaves), $\PT_1$-stable pairs are $\PT$-stable pairs (i.e.~Le Potier stable pairs \cite{Pot1,Pot2,PT1}), and $\PT_0$-stable pairs is a new intermediate notion
\begin{equation*}\label{eq:DTPT0PT1wallcrossing}
\DT:=\PT_{-1} \leadsto \PT_0 \leadsto \PT_{1}=:\PT.
\end{equation*}


Any $\PT_q$ pair $(F,s)$ on a smooth variety $X$ has an associated complex
\[I\udot := [\O_X \xrightarrow{s} F]\]
in $D^b _{\mathrm{coh}}(X)$ --- the bounded derived category of coherent sheaves on $X$.
We study $\PT_q$ pairs through these associated complexes. We provide descriptions of these associated complexes under some various assumptions on the singularities of the support of $F$. 
The results in this section provide some insights into the geometric meaning of $\PT_q$-stability. They will also play an important role in the sequels to this paper.

Throughout this section, we fix a smooth projective variety $X$ of dimension $n$ over the complex field $\C$.

\subsection{$\PT_q$-stability}

For any integer $q \geq -1$, consider the two full subcategories
\begin{align*}
\Coh_{\leq q}(X) &:= \{F \in \Coh(X) : \dim(F)\leq q\},\\
\Coh_{\geq q+1}(X) &:=\{ F \in \Coh(X) : T_q(F)=0\}.
\end{align*}

\begin{definition}\label{def:stabilityconditionsofpairs'}
Let $F$ be a coherent sheaf on $X$ with 2-dimensional support and let $s: \O_X \to F$ be a section.
For $q \in \{-1,0,1\}$, we say that the pair $(F,s)$ is {\em $\PT_q$-stable} if
\[F \in \Coh_{\geq q+1}(X) \and Q:=\coker(\O_X \xrightarrow{s} F) \in \Coh_{\leq q}(X).\]
\end{definition}


Abbreviating, we also refer to $\PT_q$-stable pairs as {\em $\PT_q$ pairs}.

\begin{example}
The $\PT_q$-stability conditions for the following two extreme cases are well-known.
	\begin{enumerate}
		\item [DT)] A pair $(F,s)$ is $\PT_{-1}$-stable if and only if $s:\O_X \to F$ is surjective. Hence $\PT_{-1}$ pairs on $X$ correspond to 2-dimensional closed subschemes of $X$ (which may have irreducible components of dimension 0 and 1). Thus we refer to $\PT_{-1}$-stability as $\DT$-stability.
		\item [PT)] A pair $(F,s)$ is $\PT_{1}$-stable if and only if $F$ is pure and $\dim(Q)\leq 1$. Hence $\PT_{1}$ pairs are exactly the 2-dimensional stable pairs in the sense of Le Potier \cite[Def.~4.2]{Pot1}.\footnote{Equivalently, $\PT_{1}$ pairs are the 2-dimensional $\alpha$-stable pairs in the sense of \cite[Def.~4.4]{Pot2} for sufficiently big $\alpha(t) \in \Q[t]$.} Their 1-dimensional analogs
		are used in \cite{PT1,CMT2} (and many other places). Thus we sometimes refer to $\PT_{1}$-stability as $\PT$-stability.
	
	\end{enumerate}
\end{example}

We present some elementary properties of $\PT_q$ pairs. We begin with an observation on the supports.

\begin{lemma}\label{lem:support}
Given a $\PT_q$ pair $(F,s)$ on $X$, the scheme theoretic support of the coherent sheaf $F$ is the scheme theoretic support of the section $s$.
\end{lemma}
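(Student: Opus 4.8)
We want to show that for a $\PT_q$ pair $(F,s)$ on $X$, the scheme-theoretic support of $F$ equals the scheme-theoretic support of the section $s\colon \O_X\to F$. Recall that the scheme-theoretic support of a coherent sheaf $F$ is cut out by the annihilator ideal $\mathrm{Ann}(F)\subseteq \O_X$, while the scheme-theoretic support of the section $s$ is by definition cut out by the kernel of the map $\O_X\to F$, i.e.\ by $\mathrm{Ann}(s(1))$, the annihilator of the image section $s(1)\in F$. Clearly $\mathrm{Ann}(F)\subseteq \mathrm{Ann}(s(1))$ since anything killing all of $F$ kills $s(1)$ in particular; so the support of $s$ is a closed subscheme of the support of $F$, and we must prove the reverse inclusion of ideals, namely $\mathrm{Ann}(s(1))\subseteq \mathrm{Ann}(F)$.

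The plan is to argue locally. Fix an affine open $\Spec A\subseteq X$ with $F$ corresponding to an $A$-module $M$, and let $m=s(1)\in M$. Suppose $a\in A$ annihilates $m$; we want $a\in\mathrm{Ann}(M)$, i.e.\ $aM=0$. Consider multiplication by $a$ on $M$ and on $Q=\coker(s)$. Since $\dim Q\le q\le 1$ while $\dim F=2$, multiplication by $a$ on $M$ has image contained in (a lift of) $Q$-directions only up to the section $m$; more precisely, from the exact sequence $0\to \O_X\xrightarrow{s} F\to Q\to 0$ (exactness on the left because $s$ has no kernel, as $\dim Q<\dim F$ forces $s\ne 0$ and $\O_X$ is a line bundle, so $\ker s$ is either $0$ or all of $\O_X$), reducing mod the image of $s$ gives that $aM\subseteq \O_X\cdot m + (\text{stuff detected in }Q)$. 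The key point is to leverage the purity hypothesis: for $q\in\{0,1\}$ we have $T_q(F)=0$, i.e.\ $F$ has no subsheaf supported in dimension $\le q$. I would show that $aF$ (or an appropriate torsion piece thereof) is a subsheaf of $F$ supported where $a$ vanishes intersected with the support of $Q$, which has dimension $\le q$, hence must be zero; this forces $a\in\mathrm{Ann}(F)$ away from that locus, and then a dimension/purity argument closes the gap on the locus itself.

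More carefully: let $Z=V(a)\cap \Supp(F)$. On $\Supp(F)\setminus Z$, the element $a$ is invertible, so $am=0$ there forces $m=0$, but $Q=F/(\O_X\cdot m)$, and $m=0$ on that open means $F=Q$ there, contradicting $\dim F=2>\dim Q\ge \dim(F|_{\Supp F\setminus Z})$ unless $Z$ already contains a dense-enough part of $\Supp F$ — one deduces $Z\supseteq$ the union of $2$-dimensional components of $\Supp F$. Then $\Supp(F)=Z\cup(\text{lower-dimensional components})$, and on the lower-dimensional part the vanishing $T_q(F)=0$ (for $q\ge 1$, covering the $1$-dimensional components; for $q=0$, covering $0$-dimensional components, with the $1$-dimensional components handled by $\dim Q\le 0$ forcing $m$ to generate $F$ generically on curves) pins things down. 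The cleanest route is probably: reduce to showing $\Supp(F)\subseteq V(\mathrm{Ann}(s(1)))$ as \emph{sets} first, using that $\dim Q<\dim F$ and $F/T_q(F)=F$ has no small subsheaves, so $F$ is generically generated by $s(1)$ on every component of its support of dimension $>q$; and then promote set-theoretic to scheme-theoretic equality using that $\Supp(F)$ with its natural (Fitting or annihilator) structure and $V(\ker s)$ have the same reduced structure and the latter is contained in the former, combined with the pair conditions to rule out embedded/non-reduced discrepancy.

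\textbf{Main obstacle.} The delicate step is the scheme-theoretic (not merely set-theoretic) equality, and handling the possibly non-pure $q=0$ case where $F$ may have a $1$-dimensional component on which $s(1)$ need not be a generator as a section but where $\dim Q\le 0$ nonetheless constrains the cokernel. I expect the crux to be showing that the ideal inclusion $\mathrm{Ann}(F)\supseteq \mathrm{Ann}(s(1))$ is an equality by exploiting $T_q(F)=0$ to say that any local section of $F$ killed by a function vanishing on $\Supp(s(1))$ generates a subsheaf of dimension $\le q$, hence vanishes; making this precise uniformly across $q=-1,0,1$, rather than case-by-case, is where the real work lies.
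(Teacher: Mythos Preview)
Your proposal circles the right idea but never lands on it cleanly, and in the process introduces both unnecessary complications and an actual error. The paper's proof is a two-line argument: consider the subsheaf $\mathrm{Ann}(s)\cdot F\subseteq F$. Away from $\Supp(Q)$ the section $s$ is surjective, so $F$ is generated by $s(1)$ there and hence $\mathrm{Ann}(s)\cdot F=0$ on that open set. Therefore $\mathrm{Ann}(s)\cdot F$ is set-theoretically supported on $\Supp(Q)$, giving $\mathrm{Ann}(s)\cdot F\in\Coh_{\leq q}(X)$; since $F\in\Coh_{\geq q+1}(X)$, this subsheaf vanishes, which is exactly $\mathrm{Ann}(s)\subseteq\mathrm{Ann}(F)$. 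This is scheme-theoretic from the start --- no set-theoretic step, no promotion, no case split on $q$.

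You glimpse this in your first pass (``$aF$ \ldots\ is a subsheaf of $F$ \ldots\ which has dimension $\le q$, hence must be zero''), but you misidentify the support: $aF$ is \emph{not} supported on $V(a)\cap\Supp(Q)$ --- the relevant locus is simply $\Supp(Q)$, and the reason is that $s$ generates $F$ off $\Supp(Q)$, not anything about where $a$ vanishes. Your ``More carefully'' paragraph then heads in the wrong direction entirely (studying where $a$ is invertible) and the dimension contradiction you sketch does not go through. Finally, the sequence $0\to\O_X\xrightarrow{s}F\to Q\to 0$ is \emph{not} exact on the left: $\ker s$ is the ideal sheaf of $\Supp(s)$, which is nonzero whenever $F$ has positive-codimensional support; your justification (``$\ker s$ is either $0$ or all of $\O_X$'') confuses submodules of $\O_X$ with subobjects in some other category. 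The correct left-exact sequence is $0\to\O_Z\to F\to Q\to 0$ where $Z=\Supp(s)$, and the whole point of the lemma is that $Z=\Supp(F)$.
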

\begin{proof} 
The proof is analogous to \cite[Lem.~1.6]{PT1}. 
Let $\mathrm{Ann}(s) \subseteq \O_X$ be the ideal sheaf of the support of $s$. Away from the support of $Q=\coker(s)$, the sheaf $F$ is generated by the section $s$. Hence $\mathrm{Ann}(s)\cdot F \subseteq F$ is set-theoretically supported in the support of $Q$, and thus $\mathrm{Ann}(s)\cdot F \in \Coh_{\leq q}(X)$. Since $F \in \Coh_{\geq q+1}(X)$, we obtain $\mathrm{Ann}(s)\cdot F=0$.
\end{proof}

We note that the $\PT_q$ pairs have limit descriptions. 

\begin{proposition} \label{lem:limPTq} Let $W \subseteq Z \subseteq X$ be closed subschemes such that 
$\O_Z \in \Coh_{\geq q+1}(X)$ and $\O_W \in \Coh_{\leq q}(X)$. Suppose $W$ is reduced 
and denote its ideal by $\fI \subseteq \O_Z$. 
Then $\PT_q$ pairs $(F,s)$ with support $Z$ and cokernel $Q$ satisfying $\Supp(Q)^{\mathrm{red}} \subseteq W$ are equivalent to coherent subsheaves of $\varinjlim \hom_X(\fI^r,\O_Z) / \O_Z$.
\end{proposition}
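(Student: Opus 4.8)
\emph{Setup and the two maps.} The plan is to set up a bijection between the two stated sets by passing through the associated complex $I\udot = [\O_X \xrightarrow{s} F]$. Given a $\PT_q$ pair $(F,s)$ with support $Z$ and cokernel $Q$ satisfying $\Supp(Q)^{\red} \subseteq W$, Lemma \ref{lem:support} tells us that the section $s$ factors as $\O_X \twoheadrightarrow \O_Z \xrightarrow{s'} F$, so I may work entirely with $\O_Z$-modules; and since $F \in \Coh_{\geq q+1}(X)$ with $\O_Z \in \Coh_{\geq q+1}(X)$ as well, the map $s'$ is injective (its kernel is a subsheaf of $\O_Z$ supported on $\Supp(Q)^{\red}\subseteq W$, hence in $\Coh_{\leq q}(X)$, hence zero). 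So $(F,s)$ is the same data as a subsheaf $\O_Z \hookrightarrow F$ with $F\in\Coh_{\geq q+1}(X)$ and cokernel $Q$ with reduced support inside $W$. The target set is then obtained by identifying such $F$ with subsheaves of $\varinjlim_r \hom_X(\fI^r,\O_Z)/\O_Z$: the point is that $F$ is recovered from $Q$ as the preimage of $Q \subseteq \big(\varinjlim_r \hom_X(\fI^r,\O_Z)\big)/\O_Z$, once we know the natural map $F \hookrightarrow \varinjlim_r \hom_X(\fI^r,\O_Z)$ exists and is injective.

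\emph{The key injection.} The central step is to construct, for each such $F$, a canonical injection $F \hookrightarrow \varinjlim_r \hom_X(\fI^r,\O_Z)$ extending $\id_{\O_Z}$. Away from $W$, the inclusion $\O_Z \hookrightarrow F$ is an isomorphism, so $F$ is a coherent sheaf of $\O_Z$-modules agreeing with $\O_Z$ on the dense open $X\setminus W$; thus $F$ maps to the sheaf of ``meromorphic sections along $W$,'' which over the locus where $Z$ is, say, pure is exactly $\varinjlim_r \hom_{\O_Z}(\fI^r,\O_Z)$ (sections of $\O_Z$ over $Z\setminus W$ with a pole of bounded order along $W$). The crucial local computation is: for a coherent $\O_Z$-module $F'$ with $F' \supseteq \O_Z$ and $\coker$ supported set-theoretically on $W = V(\fI)$, multiplication by $\fI^r$ carries $F'$ into $\O_Z$ for $r \gg 0$ (by coherence and the Artin–Rees/Nullstellensatz-type finiteness, using $W$ reduced so that $\fI$ is a radical ideal of $\O_Z$), which gives an $\O_Z$-linear map $\fI^r \otimes F' \to \O_Z$, i.e. an element of $\hom_X(\fI^r,\O_Z)$ for each section of $F'$; passing to the colimit in $r$ and sheafifying gives the desired map $F \to \varinjlim_r\hom_X(\fI^r,\O_Z)$. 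Injectivity is because the composite $\O_Z \to F \to \varinjlim_r \hom_X(\fI^r,\O_Z)$ is the standard inclusion and $F/\O_Z = Q$ is $\fI$-power-torsion, so any section of $F$ dying in the colimit already dies in some $\hom_X(\fI^r,\O_Z)$, forcing it into the torsion of $\O_Z$ along $W$, which vanishes since $\O_Z\in\Coh_{\geq q+1}(X)$ and $Q \in \Coh_{\leq q}(X)$.

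\emph{Checking it is a bijection of the right sets.} In the reverse direction, given a coherent subsheaf $\bar{Q} \subseteq \varinjlim_r\hom_X(\fI^r,\O_Z)/\O_Z$, define $F$ as its preimage under $\varinjlim_r\hom_X(\fI^r,\O_Z) \twoheadrightarrow \varinjlim_r\hom_X(\fI^r,\O_Z)/\O_Z$, with $s : \O_X \to \O_Z \hookrightarrow F$; then $Q = \coker(s) = \bar Q$ has reduced support in $W$ (everything in the colimit is $\fI$-power-torsion mod $\O_Z$), and I must check $F \in \Coh_{\geq q+1}(X)$, i.e. $T_q(F) = 0$. This follows because $\O_Z \in \Coh_{\geq q+1}(X)$ and $T_q(F)$ would inject into $Q \in \Coh_{\leq q}(X)$ only through its intersection with... — more precisely, $T_q(F) \cap \O_Z \subseteq T_q(\O_Z) = 0$, so $T_q(F)$ injects into $\bar Q$; but $T_q(F)$ is also a subsheaf of $\varinjlim_r\hom_X(\fI^r,\O_Z)$, and I claim this colimit has no nonzero $\le q$-dimensional subsheaf — indeed $\hom_X(\fI^r,\O_Z)$ shares the depth-one-along-$W$ behaviour of $\O_Z$ off smaller loci; one argues that any $(\le q)$-dimensional subsheaf would, after multiplying by $\fI^r$, land in $T_q(\O_Z)=0$ yet be nonzero, a contradiction. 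The two constructions are mutually inverse by unwinding definitions.

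\emph{Main obstacle.} The delicate point — and the step I expect to cost the most — is the torsion/purity bookkeeping in the last paragraph: verifying that $F$ built as a preimage lies in $\Coh_{\geq q+1}(X)$, and dually that the $F$ from a pair really injects into the colimit with image a \emph{subsheaf} (not just a subquotient) of $\varinjlim_r\hom_X(\fI^r,\O_Z)$. Both reduce to the claim that $\varinjlim_r\hom_X(\fI^r,\O_Z)$ is, like $\O_Z$, free of $(\le q)$-dimensional subsheaves; this is where reducedness of $W$ (so that $\fI$ has no embedded behaviour that could create new low-dimensional torsion) and the hypothesis $\O_Z \in \Coh_{\geq q+1}(X)$ are both genuinely used, and it will likely require a short local-cohomology or associated-primes argument comparing $\operatorname{Ass}(\hom_X(\fI^r,\O_Z))$ with $\operatorname{Ass}(\O_Z)$. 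Modulo this, the proof is a formal adaptation of the $q=1$ limit description in \cite[Appendix]{PT1}.
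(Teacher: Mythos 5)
Your proposal is correct and, modulo phrasing, it is the same proof as the paper's: pass to the short exact sequence $0 \to \O_Z \to F \to Q \to 0$ via Lemma~\ref{lem:support}, embed $F$ into $\varinjlim_r \hom_X(\fI^r,\O_Z)$ using that $\fI^r$ kills $Q$ for large $r$ (the paper phrases this as $F\hookrightarrow \hom_X(\hom_X(F,\O_Z),\O_Z)\subseteq \hom_X(\fI^r,\O_Z)$, which by adjunction is the ``multiply sections by $\fI^r$'' map you describe), with injectivity from $F\in\Coh_{\geq q+1}(X)$; and in the reverse direction, kill $\leq q$-dimensional subsheaves of $\hom_X(\fI^r,\O_Z)$ by the adjunction $G\otimes\fI^r\to\O_Z$ together with $T_q(\O_Z)=0$. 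One small wording slip: in your injectivity paragraph, a section of $F$ that is $\fI$-power-torsion generates a $\leq q$-dimensional subsheaf of $F$, not of $\O_Z$, so the vanishing is by $F\in\Coh_{\geq q+1}(X)$ (which is your conclusion anyway); this does not affect the argument.
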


\begin{proof}
The proof is analogous to \cite[Prop.~1.8]{PT1}. Choose a $\PT_q$ pair $(F,s)$. By Lemma \ref{lem:support}, we can form a short exact sequence
\begin{equation}\label{SES.OFQ}
\xymatrix{
0 \ar[r] & \O_Z \ar[r] & F \ar[r] & Q \ar[r] & 0,
}\end{equation}
where $Z$ is the support of $F$ and $Q$ is the cokernel of $s$. Since $\O_Z \in \Coh_{\geq q+1}(X)$ and $Q \in \Coh_{\leq q}(X)$, we have $\hom_X(Q,\O_Z)=0$. Hence the short exact sequence \eqref{SES.OFQ} gives us an inclusion \[\hom_X(F,\O_Z) \hookrightarrow \hom_X(\O_Z,\O_Z)=\O_Z,\] which is an isomorphism away from the support of $Q$. Hence $\hom_X(F,\O_Z)$ is an ideal such that $\fI^r \subseteq \hom_X(F,\O_Z)$ for some integer $r$. Since $F \in \Coh_{\geq q+1}(X)$, we have \[F \subseteq \hom_X(\hom_X(F,\O_Z),\O_Z) \subseteq \hom_X(\fI^r,\O_Z)\subseteq \varinjlim_{r \to \infty}\hom_X(\fI^r,\O_Z),\] where the last inclusion map follows from $\hom_X(\frac{\fI^r}{\fI^{r+1}},\O_Z)=0$.

On the other hand, choose a coherent subsheaf $F \subseteq \varinjlim \hom_X(\fI^r , \O_Z)$ containing $\O_Z$. Since $F$ is coherent, we have $F \subseteq \hom_X(\fI^r , \O_Z)$ for some sufficiently large $r$. Let $s: \O_X \to \O_Z \hookrightarrow F$ be the composition. Consider a subsheaf $G\subseteq F$ of dimension $\leq q$. By adjunction, the inclusion map $G \hookrightarrow \hom_X(\fI^r,\O_Z)$ corresponds to a map $G \otimes \fI^r \to \O_Z$, which is zero since $\O_Z \in \Coh_{\geq q+1}(X)$. Hence $G=0$. This means that $F \in \Coh_{\geq q+1}(X)$. Therefore $(F,s)$ is a $\PT_q$-stable pair.
\end{proof}


\begin{remark} \label{rem:generalizePTq}
Some results in this paper hold for $d$-dimensional pairs. For a coherent sheaf $F$ on $X$ with a $d$-dimensional support, a section $s: \O_X \to F$, and any $q \in \{-1,0,\ldots, d-1\}$, we define the pair $(F,s)$ to be {\em $\PT_q$-stable} if
\[F \in \Coh_{\geq q+1}(X) \and Q:=\coker(\O_X \xrightarrow{s} F) \in \Coh_{\leq q}(X).\]
Then Lemma \ref{lem:support} and Proposition \ref{lem:limPTq} hold for $d$-dimensional $\PT_q$ pairs as well (as is immediately obvious from the proofs). However, most parts of the paper require $d=2$, so we restrict to this case.
\end{remark}

\subsection{Special cases}

Given a $\PT_0$ pair $(F,s)$ on an $n$-dimensional smooth projective variety $X$, we may consider the following two special cases:
\begin{enumerate}
\item $(F,s)$ is additionally $\DT$-stable, i.e.~$s:\O_X \to F$ is surjective.
\item $(F,s)$ is additionally $\PT_1$-stable, i.e.~$F$ is pure.
\end{enumerate}
The first case (1) is very clear. It consists of 2-dimensional subschemes $Z \subset X$ with no (possibly embedded) $0$-dimensional components. The second case (2) is more interesting. This case depends on the singularities of the support. 


\begin{proposition}\label{Lem.PT0=PT1}
Let $S$ be a \emph{pure} 2-dimensional subscheme of $X$. Then the following three conditions are equivalent:
\begin{enumerate}
\item[$\mathrm{(i)}$] $S$ is not Cohen-Macaulay.
\item[$\mathrm{(ii)}$] $\ext^{n-1}_X(\O_S,K_X) \neq 0$.
\item[$\mathrm{(iii)}$] There exists a $\PT_0$ pair $(F,s)$ with support $S$ and non-zero cokernel $Q$.
\end{enumerate}
Furthermore, for any $m \geq 0$, the $\PT_0$ pairs $(F,s)$ with support $S$ and $\chi(Q) = m$ are in bijective correspondence with the closed points of the Quot scheme
$$
{\curly Quot}_X(\ext^{n-1}_X(\O_S,K_X),m)
$$
parametrizing length $m$ quotients of the 0-dimensional sheaf $\ext^{n-1}_X(\O_S,K_X)$.
\end{proposition}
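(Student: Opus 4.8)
The plan is to work with the short exact sequence $0 \to \O_S \to F \to Q \to 0$ and dualize. Since $S$ is pure of dimension $2$, $\O_S$ has codimension $c = n-2$ in $X$, so $\O_S^D = \ext^{n-2}_X(\O_S, K_X)$ is the dualizing sheaf of $S$, and $\ext^{j}_X(\O_S, K_X) = 0$ for $j < n-2$. The key homological input is the long exact sequence of $\ext^\bullet_X(-, K_X)$ applied to the sequence above. First I would establish the equivalence (i) $\Leftrightarrow$ (ii): purity of $S$ means $\ext^{j}_X(\O_S, K_X) = 0$ for $j < n-2$, and $S$ is Cohen–Macaulay if and only if in addition $\ext^{j}_X(\O_S,K_X) = 0$ for $j > n-2$; since $X$ is $n$-dimensional and $\O_S$ has a $2$-dimensional support, the only possibly-nonzero higher Ext is $\ext^{n-1}_X(\O_S, K_X)$ (which is $0$-dimensional) — this is a standard local-cohomology/depth computation, so (i) fails exactly when $\ext^{n-1}_X(\O_S,K_X) \neq 0$.

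Next, for (ii) $\Rightarrow$ (iii) and the Quot scheme count, the main idea is the following duality dictionary. A $\PT_0$ pair with support $S$ is, by the proof of Proposition~\ref{lem:limPTq}, a subsheaf $\O_S \subseteq F \subseteq \varinjlim_r \hom_X(\fI^r, \O_S)$ with $F \in \Coh_{\geq 1}(X)$ and $Q = F/\O_S$ of dimension $\leq 0$; equivalently, since $Q$ is $0$-dimensional, $F$ differs from $\O_S$ only at finitely many points. I would apply $\hom_X(-, K_X) = (-)^D$ to $0 \to \O_S \to F \to Q \to 0$. Because $Q$ is $0$-dimensional, $Q^\vee = \ext^n_X(Q, K_X)[-n]$ with $\ext^n_X(Q,K_X)$ again $0$-dimensional of the same length, and all lower $\ext$'s vanish; because $F \in \Coh_{\geq 1}$ with pure $2$-dimensional part equal (generically) to $\O_S$, the sheaf $F^D := \ext^{n-2}_X(F, K_X)$ is a pure $2$-dimensional sheaf and $F \hookrightarrow F^{DD}$ reconstructs $F$ (using $F \in \Coh_{\geq 1}$, so $F$ has no sections supported in codimension $> n-2$ that would be killed by double-dualizing). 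Chasing the long exact Ext sequence then produces a short exact sequence
\[
0 \to \ext^{n-1}_X(Q, K_X) \to \ext^{n-1}_X(F, K_X) \to \ext^{n-1}_X(\O_S, K_X) \to 0
\]
wait — more precisely the connecting maps give $\ext^{n-2}_X(\O_S,K_X) \cong \ext^{n-2}_X(F,K_X)$ (both equal $\omega_S$, reflecting that $F$ and $\O_S$ agree in codimension $n-2$) and an exact sequence $0 \to \ext^{n-1}_X(F,K_X) \to \ext^{n-1}_X(\O_S,K_X) \to \ext^n_X(Q,K_X) \to \ext^n_X(F,K_X) \to 0$. The point is that passing to the pair $(F,s)$ is equivalent, after dualizing, to passing to the \emph{subsheaf} $\ext^{n-1}_X(F,K_X) \subseteq \ext^{n-1}_X(\O_S, K_X)$ with quotient $\ext^n_X(Q,K_X)$ of length $\chi(Q) = m$ (using $\ext^n_X(F,K_X) = 0$, which holds because $F$, being a subsheaf of $\varinjlim_r \hom_X(\fI^r,\O_S)$ with the pure part $\O_S$ fixed, has no $0$-dimensional subsheaves contributing to top Ext — this is where $F \in \Coh_{\geq 1}$ is used). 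Equivalently, quotients $\ext^{n-1}_X(\O_S, K_X) \twoheadrightarrow \ext^n_X(Q,K_X)$ of length $m$.

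To turn this into the stated bijection with ${\curly Quot}_X(\ext^{n-1}_X(\O_S, K_X), m)$, I would argue the correspondence is reversible: given a length-$m$ quotient $\ext^{n-1}_X(\O_S,K_X) \twoheadrightarrow T$ with kernel $K$, reconstruct $F$ as the dual of the corresponding modification of $\omega_S$; concretely, one takes the preimage under $\O_S \hookrightarrow \omega_S^D$-type constructions, or more cleanly runs the double-dual of the elementary modification. Since all sheaves involved are coherent and the constructions are functorial in flat families, this gives an isomorphism of the sets of $\C$-points (the proposition only claims a bijection on closed points, so I need not verify it scheme-theoretically, though the argument is visibly family-friendly). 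The remaining implication (iii) $\Rightarrow$ (ii) is immediate: if some $\PT_0$ pair with support $S$ has $Q \neq 0$, the above shows $\ext^{n-1}_X(\O_S,K_X)$ admits a nonzero finite-length quotient, hence is nonzero.

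The main obstacle I anticipate is the bookkeeping in the dualization step — specifically, verifying cleanly that $F \in \Coh_{\geq 1}(X)$ forces $\ext^n_X(F, K_X) = 0$ and that $F$ is reflexive in the appropriate sense ($F \xrightarrow{\sim} \ext^{n-2}_X(\ext^{n-2}_X(F,K_X), K_X)$ with no correction terms), so that the elementary modification $\O_S \subseteq F$ dualizes exactly to an elementary modification $\ext^{n-1}_X(F,K_X) \subseteq \ext^{n-1}_X(\O_S,K_X)$ with no spurious $\ext^{n-1}$ or $\ext^n$ contributions from $F$ itself. This is a purity/depth argument: $F$ has no subsheaf of dimension $\leq 0$, and its codimension-$(n-2)$ part coincides with the Cohen–Macaulay-away-from-finitely-many-points sheaf $\O_S$, so the only Ext-cohomology of $F$ in degrees $> n-2$ is the $0$-dimensional $\ext^{n-1}_X(F,K_X)$, which injects into $\ext^{n-1}_X(\O_S,K_X)$. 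Once this is pinned down, the equivalences and the Quot-scheme identification follow formally from the long exact Ext sequence.
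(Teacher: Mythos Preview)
Your approach is essentially the paper's: both dualize the short exact sequence $0\to\O_S\to F\to Q\to 0$ and read off the correspondence with quotients of $\ext^{n-1}_X(\O_S,K_X)$. The paper packages this as the isomorphism
\[
\Ext^1_X(Q,\O_S)\;\cong\;\Hom_X\bigl(\ext^{n-1}_X(\O_S,K_X),\,Q^D\bigr),
\]
then observes (via \cite[Prop.~1.1.10]{HL}) that $F$ is pure exactly when the corresponding morphism is \emph{surjective}; your long-exact-sequence computation is the same content unwound.

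One point to correct: you anticipate needing $F\xrightarrow{\sim}F^{DD}$, but this is \emph{false} in general. From your own long exact sequence, $\ext^{n-2}_X(F,K_X)\cong\ext^{n-2}_X(\O_S,K_X)=\omega_S$, so $F^{DD}\cong\O_S^{DD}$, and the biduality map $F\hookrightarrow F^{DD}=\O_S^{DD}$ has cokernel of length $\chi(\ext^{n-1}_X(\O_S,K_X))-m$, which is typically nonzero. What the forward direction actually requires is only $\ext^n_X(F,K_X)=0$, and your justification for that (via $T_0(F)=0$, Auslander--Buchsbaum) is correct. For the inverse, the construction you mention in passing --- pull back along $\O_S\hookrightarrow\omega_S^D=\O_S^{DD}$ with cokernel $E^D$ where $E=\ext^{n-1}_X(\O_S,K_X)$ --- is exactly the right one: a quotient $E\twoheadrightarrow T$ dualizes to $T^D\hookrightarrow E^D$, and $F$ is the preimage of $T^D$ in $\O_S^{DD}$. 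Since $\O_S^{DD}$ is pure $2$-dimensional, so is its subsheaf $F$, giving back a $\PT_0$ pair with cokernel $T^D$. So drop the reflexivity claim and lean on the preimage construction; the rest of your outline goes through and matches the paper.
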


\begin{proof}
Since $S$ is pure, $\ext^i_X(\O_S,K_X)=0$ for $i \neq n -2, n -1$ and $\ext^{n-1}_X(\O_S,K_X)$ is 0-dimensional \cite[Prop.~1.1.6, 1.1.10]{HL}. Let $i: S \hookrightarrow X$ denote the inclusion map. Since $R\hom_X(\O_S,K_X) = R i_* i^! K_X$, we have $\ext^{n-1}_X(\O_S,K_X)=0$ if and only if $S$ is Cohen-Macaulay. 

For any 0-dimensional sheaf $Q$, we have
\begin{equation} \label{eqn:ext1iso}
\Ext^1_X(Q,\O_S) \cong \Hom_X(\ext^{n-1}_X(\O_S,K_X), Q^D)
\end{equation}
where $Q^D:=\ext^n_X(Q,K_X)$. Hence (iii) implies (ii). On the other hand, if $\ext^{n-1}_X(\O_S,K_X)\neq 0$, we can always find a closed point $P\in X$ in the support of $\ext^{n-1}_X(\O_S,K_X)$ and a non-zero map $\ext^{n-1}_X(\O_S,K_X) \to \O_P$. By \eqref{eqn:ext1iso}, we find a non-split extension in $\Ext^1_X(\O_P^D,\O_S)\neq 0$ from which we obtain a $\PT_0$ pair $(F,s)$ with non-zero cokernel. By Lemma \ref{lem:support}, the support of this $\PT_0$ pair is $S$. This proves (ii)$\implies$(iii). 

Finally, it follows from \eqref{eqn:ext1iso} that there exists a bijection between the collections
$$
\Big( \bigsqcup_{Q} \Ext^1_X(Q,\O_S) \Big) / \cong, \quad \quad \Big( \bigsqcup_Q \Hom_X(\ext^{n-1}_X(\O_S,K_X), Q) \Big) / \cong
$$
where the union is over all 0-dimensional sheaves on $X$ and we divide out by isomorphisms. This bijection is obtained by dualizing the short exact sequence $0 \to \O_S \to F \to Q \to 0$. Since $F$ is pure if and only if $\ext^{n-1}_X(F,K_X)$ is 0-dimensional and $\ext^{n}_X(F,K_X) = 0$ \cite[Prop.~1.1.10]{HL}, it follows that the short exact sequences with $F$ \emph{pure} correspond precisely to the \emph{surjective} morphisms. The result follows.
\end{proof}

Now consider a $\PT_1$ pair $(F,s)$. We can similarly consider the following two special cases:
\begin{enumerate}
\item [(3)] $(F,s)$ is additionally $\PT_0$-stable, i.e.~$Q$ is 0-dimensional.
\item [(4)] $F$ is additionally reflexive.
\end{enumerate}
Case (3) is exactly case (2) above. Case (4) is a ``dual'' version of case (1) above, in the sense of the following proposition:

\begin{proposition}\label{lem:S2.1}
Let $(F,s)$ be a pair of a coherent sheaf $F$ on $X$ and a section $s: \O_X \to F$. 
\begin{enumerate}
\item[$(1)$] 
If $(F,s)$ is $\PT_0$-stable and $\chi(F)$ is minimal among all $\PT_0$ pairs with Chern characters $\ch_{n-2}(F)$ and $\ch_{n-1}(F)$, then $(F,s)$ is $\DT$-stable.
\item[$(2)$] 
If $(F,s)$ is $\PT_1$-stable and $\chi(F)$ is maximal among all $\PT_1$ pairs with Chern characters $\ch_{n-2}(F)$ and $\ch_{n-1}(F)$, then $F$ is reflexive.
\end{enumerate}	
\end{proposition}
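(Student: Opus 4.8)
The plan is to prove the two statements by playing the cokernel sheaf $Q$ against the structure sheaf of the support, using the limit descriptions from Proposition~\ref{lem:limPTq} together with the basic duality facts for $2$-dimensional sheaves on the $n$-fold $X$ recorded in the proof of Proposition~\ref{Lem.PT0=PT1}. Throughout write $Z$ for the scheme-theoretic support of $F$ (equivalently, of $s$, by Lemma~\ref{lem:support}) and use the short exact sequence $0 \to \O_Z \to F \to Q \to 0$ from \eqref{SES.OFQ}. Both statements assert that under an extremality hypothesis on $\chi(F)$, the ``expected'' optimum is in fact attained trivially, so the strategy in each case is: assume the conclusion fails, produce from $(F,s)$ another $\PT_q$ pair with strictly smaller (resp.\ larger) Euler characteristic and the same $\ch_{n-2}$ and $\ch_{n-1}$, and derive a contradiction with minimality (resp.\ maximality).

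For part $(1)$: suppose $(F,s)$ is $\PT_0$-stable but not $\DT$-stable, so $Q = \coker(s)$ is a nonzero $0$-dimensional sheaf. Since forming $\ch_{n-2}$ and $\ch_{n-1}$ only sees the $2$- and $1$-dimensional parts, replacing $F$ by $\O_Z$ (i.e.\ taking the pair $(\O_Z, \O_X \twoheadrightarrow \O_Z)$) keeps $\ch_{n-2}$ and $\ch_{n-1}$ fixed but drops $\chi$ by $\chi(Q) = \mathrm{length}(Q) > 0$; one must only check that $\O_Z \in \Coh_{\geq 1}(X)$, i.e.\ that $Z$ has no $0$-dimensional components, and this follows because $F \in \Coh_{\geq 1}(X)$ forces its subsheaf $\O_Z$ to lie in $\Coh_{\geq 1}(X)$ as well. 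This contradicts minimality of $\chi(F)$, so $Q = 0$ and $s$ is surjective.

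For part $(2)$: suppose $(F,s)$ is $\PT_1$-stable but $F$ is not reflexive. Then the natural map $F \hookrightarrow F\dual{}\dual =: F'$ (double dual with respect to $\O_X$, which for a pure $2$-dimensional sheaf on the smooth $n$-fold $X$ is again a pure $2$-dimensional sheaf, by \cite[Prop.~1.1.10]{HL}) is not an isomorphism, and $F'/F$ is supported in dimension $\leq 1$ since $F$ agrees with its reflexive hull away from a codimension-$\geq 3$ locus in $Z$. Composing $s$ with this inclusion gives a section $s' : \O_X \to F'$ with the same scheme-theoretic support $Z$; I claim $(F', s')$ is again $\PT_1$-stable, i.e.\ $F'$ is pure (true) and $\coker(s')$ has dimension $\leq 1$. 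For the latter: $\coker(s')$ sits in an exact sequence with $\coker(s) = Q$ (dimension $\leq 1$) and $F'/F$ (dimension $\leq 1$), hence has dimension $\leq 1$. Moreover $\ch_{n-2}(F') = \ch_{n-2}(F)$ and $\ch_{n-1}(F') = \ch_{n-1}(F)$ because $F'/F$ has dimension $\leq 1$, which forces $\ch_{n-2}$ unchanged, and one checks $\ch_{n-1}$ is unchanged too since $F \hookrightarrow F'$ is an isomorphism in codimension $\leq 2$ on $Z$ --- the reflexive hull only modifies $F$ along the singular locus of $Z$, which has dimension $\leq 1$, wait, this needs care. The cleaner route is: the double dual $F\dual{}\dual$ differs from $F$ only along a closed subset of dimension $\leq 1$ (the non-reflexive locus), so $\ch_{n-1}(F')-\ch_{n-1}(F)$ is the class of a sheaf of dimension $\leq 1$ and need not vanish in general, so one instead argues that \emph{among $\PT_1$ pairs with the prescribed $\ch_{n-2}$ and $\ch_{n-1}$} there is a maximal-$\chi$ one, and passing from $F$ to $F'$ can only weakly increase $\chi$ while one must then also adjust by the $1$-dimensional discrepancy; so the honest statement being proved is that the \emph{reflexive hull construction within the given discrete invariants} increases $\chi$. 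I expect the main obstacle to be exactly this bookkeeping: verifying that the reflexive hull of a $\PT_1$-stable $F$ is again $\PT_1$-stable with the \emph{same} $\ch_{n-2}$ and $\ch_{n-1}$, which amounts to showing that $F$ and $F\dual{}\dual$ differ only in dimension $\leq 0$ (not merely $\leq 1$) --- this is true precisely because $F$ pure and $2$-dimensional implies $F$ is already reflexive in codimension $1$ on its support, i.e.\ the locus where $F \neq F\dual{}\dual$ has dimension $\leq 0$. Granting this, $(F\dual{}\dual, s')$ is a $\PT_1$ pair with identical $\ch_{n-2}, \ch_{n-1}$ and $\chi(F\dual{}\dual) = \chi(F) + \mathrm{length}(F\dual{}\dual/F) > \chi(F)$, contradicting maximality; hence $F = F\dual{}\dual$ is reflexive.

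Thus the logical skeleton in both cases is identical --- exhibit a modification ($\O_Z$ in case $(1)$, the reflexive hull in case $(2)$) that moves $\chi$ in the forbidden direction while preserving $\ch_{n-2}$ and $\ch_{n-1}$ and the relevant $\PT_q$-stability --- and the only real work, which I would isolate as a preliminary lemma on pure $2$-dimensional sheaves on a smooth $n$-fold, is the codimension estimate on the non-reflexive locus needed to keep $\ch_{n-1}$ fixed in part $(2)$; this is a standard consequence of the fact (\cite[Prop.~1.1.10]{HL}) that $F$ pure of dimension $2$ is equivalent to $F \to F\dual{}\dual$ being injective with cokernel of dimension $\leq 0$.
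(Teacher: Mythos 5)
Your proof of part $(1)$ matches the paper's argument exactly (compare $F$ against the $\DT$ pair $(\O_Z,\O_X\twoheadrightarrow\O_Z)$). Part $(2)$ is substantively the same idea — pass to the double dual and invoke maximality — but as written it contains a genuine definitional slip: you take $F^{\vee\vee}$ to be the ordinary double dual with respect to $\O_X$. For a $2$-dimensional torsion sheaf on an $n$-fold with $n\geq 4$ one has $\hom_X(F,\O_X)=0$, so $F^{\vee\vee}=0$ and the ``inclusion'' $F\hookrightarrow F^{\vee\vee}$ is the zero map. What you actually want, and what \cite[Prop.~1.1.10]{HL} speaks about, is the dual $F^D:=\ext^{n-2}_X(F,K_X)$ defined in the paper's notation section, with ``reflexive'' in the statement meaning $F\cong F^{DD}$. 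With that correction the argument goes through: the paper gets the $0$-dimensionality of $F^{DD}/F$ cleanly from the spectral sequence of \cite[Lem.~1.1.8]{HL}, which yields a short exact sequence $0\to F\to F^{DD}\to \ext^{n-1}_X(F,K_X)^D\to 0$ with $0$-dimensional third term; this is precisely the codimension estimate you correctly identify as the crux, so once the duality convention is fixed your proof coincides with the paper's.
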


In Proposition \ref{prop:boundofchiforPT}, we will show that $\chi(F)$ is bounded below for $\DT$ and $\PT_0$ pairs $(F,s)$ with fixed Chern characters  $\ch_{n-2}(F)$ and $\ch_{n-1}(F)$, and bounded above in the $\PT_1$ case.
\begin{proof}[Proof of Proposition \ref{lem:S2.1}]
(1) Form the short exact sequence
\[\xymatrix{ 0 \ar[r] & \O_Z \ar[r] & F \ar[r] & Q \ar[r] & 0,}\]
where $Z$ is the support of $F$ and $Q$ is the cokernel of $s$. Then we have
\[\ch_{n-2}(\O_Z)=\ch_{n-2}(F), \quad \ch_{n-1}(\O_Z)=\ch_{n-1}(F), \and \chi(\O_Z)=\chi(F)-\chi(Q).\]
Since $(\O_Z,1:\O_X \twoheadrightarrow \O_Z$) is a $\PT_0$ pair, and
$\chi(F)$ is minimal, we have $Q=0$. Therefore $(F,s)$ is a $\DT$ pair.


(2) Consider the canonical map
\[\theta_F : F \to F^{DD}.\]
The purity of $F$ implies $\ext^{n-1}_X(F,K_X) \in \Coh_{\leq 0}(X)$ and $\ext^n_X(F,K_X)=0$ by \cite[Prop.~1.1.10]{HL}. Moreover, the spectral sequence
\[\ext^p_X(\ext^{-q} _X(F,K_X),K_X) \Rightarrow F \]
in \cite[Lem.~1.1.8]{HL} gives us a short exact sequence
\[\xymatrix{0 \ar[r] & F \ar[r] & F^{DD} \ar[r] & \ext_X^{n-1}(F,K_X)^D \ar[r] & 0,}\]
where $\ext_X^{n-1}(F,K_X)^D \in \Coh_{\leq 0}(X)$. Hence $(F^{DD}, \theta_F \circ s)$ is also a $\PT_1$ pair such that
\[\chi(F^{DD}) = \chi(F) +\chi(\ext^{n-1}_X(F,K_X)^D) \geq \chi(F).\]
Since $\chi(F)$ is maximal, we have $F=F^{DD}$.
\end{proof}

\subsection{Associated complexes}

Let $X$ be a smooth projective variety of dimension $n$. Given a $\PT_q$ pair $(F,s)$ on $X$, we can form an associated complex
\[I\udot:=[\O_X \xrightarrow{s} F],\]
with $\O_X$ in degree zero. We view $I\udot$ as an element of $D^b_{\mathrm{coh}}(X)$ --- the bounded derived category of coherent sheaves on $X$. In this subsection, we analyze $\PT_q$ pairs in terms of the associated complex $I\udot$.

We first note that a $\PT_q$ pair is equivalent to its associated complex.

\begin{lemma}\label{lem:injectivity'}
Assume $X$ is a smooth projective variety of dimension $n \geq 4$.
Consider two $\PT_q$ pairs $(F_1,s_1)$ and $(F_2,s_2)$. Then the two associated complexes $I_1\udot:=[\O_X \xrightarrow{s_1} F_1]$ and $I_2\udot:=[\O_X \xrightarrow{s_2} F_2]$ are quasi-isomorphic if and only if the two $\PT_q$ pairs $(F_1,s_1)$ and $(F_2,s_2)$ are isomorphic.
\end{lemma}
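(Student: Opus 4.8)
The plan is to prove both implications by analyzing the complex $I\udot := [\O_X \xrightarrow{s} F]$ intrinsically, i.e.\ by recovering the pair $(F,s)$ from $I\udot$ using its cohomology sheaves together with the section coming from $\O_X$. The ``if'' direction is trivial: an isomorphism of pairs is a commuting square with vertical isomorphisms, hence induces a quasi-isomorphism of the two-term complexes. So the content is the ``only if'' direction.

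First I would observe that, since $\dim(\Supp F) = 2 < n$, the complex $I\udot$ has cohomology $\curly H^0(I\udot) = \ker(s)$ and $\curly H^1(I\udot) = \coker(s) = Q$. But $\ker(s) \subseteq \O_X$ is an ideal sheaf, and by Lemma~\ref{lem:support} the annihilator of $s$ annihilates $F$, so $s$ is generically injective along $X$; being a subsheaf of the torsion-free $\O_X$ with torsion cokernel (as $\ker(s)$ has dimension $\le 0$ forced... ) — more carefully, $\ker(s)$ is a subsheaf of $\O_X$ whose support has dimension $\le 2$, hence $\ker(s)$ is either $0$ or all of $\O_X$; since $F \neq 0$ it is $0$. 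Thus $s$ is injective and $I\udot$ is quasi-isomorphic to the two-term complex $[\O_X \hookrightarrow F]$ placed in degrees $0,1$, with $\curly H^0(I\udot)=0$ and $\curly H^1(I\udot)=Q$. Now the key point: from $I\udot$ we reconstruct $F$ and $s$. Consider the canonical truncation triangle
\[
\O_X[0] \to I\udot \to \curly H^1(I\udot)[-1] \xrightarrow{+1},
\]
which is exactly the short exact sequence $0\to\O_X\xrightarrow{s} F\to Q\to 0$ viewed in the derived category. I want to argue that the natural map $\O_X \to I\udot$ is canonically determined by $I\udot$ (it is the inclusion of the sub-object $\curly H^0$ of the standard $t$-structure truncation $\tau^{\le 0}$ — wait, that is zero). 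The cleaner route: $F = \curly H^0(\tau^{\ge 0}$ ... Actually the right statement is that $I\udot$ lives in degrees $[0,1]$ with vanishing $\curly H^0$, so $I\udot \simeq \curly H^1(I\udot)[-1]$ would follow — which is false — so one must instead use the \emph{non-split} extension data. The correct reconstruction is: the canonical morphism $\O_X = \tau^{\le 0}(I\udot[\,?\,])$... I would instead follow the argument of \cite[Lem.~1.3]{PT1}: apply $R\hom_X(-,\O_X)$ or examine $\Hom_{D^b}(\O_X, I\udot)$. One shows $\Hom_{D^b}(\O_X,I\udot) = H^0(F)$ contains a distinguished element — actually the point in \cite{PT1} is that $I\udot$ determines the \emph{sub}complex $\O_X$ because $\Hom(I\udot,\O_X[1])$ or the map to $\curly H^1(I\udot)[-1]$ has a canonical splitting-type property. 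Let me restructure: given a quasi-isomorphism $\psi : I_1\udot \xrightarrow{\sim} I_2\udot$, I want to produce an isomorphism $F_1\cong F_2$ compatible with the sections.

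The cleanest approach is the following. Both $I_j\udot$ sit in triangles $\O_X \xrightarrow{s_j} F_j \to Q_j \xrightarrow{+1}$. Apply the functor $\tau^{\le 0}$... no. Apply $\curly H^1(-)$: this gives $Q_1 \cong Q_2$ canonically via $\psi$. Next I claim the section $\O_X \to I_j\udot$ is recovered as follows: it is the unique (up to the trivial automorphisms of $\O_X$, i.e.\ scalars — but here the section is \emph{fixed}, not up to scalar, so one must be slightly careful, cf.\ the rigidity coming from $\Hom(\O_X,\O_X)=\C$) map $\O_X \to I_j\udot$ whose cone is a sheaf in degree $1$, equivalently a complex with $\curly H^0 = 0$. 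Concretely: $\mathrm{cone}(\O_X \xrightarrow{s_j} F_j) = Q_j[-1]$... hmm the cone of $\O_X \to F_j$ is $Q_j$ in degree $0$ if $\O_X \to F_j$ injective — no, $\mathrm{cone}$ of an injection $A\hookrightarrow B$ is $B/A$ in degree $0$. I'm overcomplicating. Let me just commit to the structure of the proof of \cite[Lem.~1.3]{PT1}, which is the obvious template the authors invoke: (i) show $I_j\udot$ has $\curly H^0 = 0$ and $\curly H^1 = Q_j$; (ii) show that $\Hom_{D^b_{\mathrm{coh}}(X)}(\O_X, I_j\udot)$ together with the induced evaluation recovers $F_j$ via $F_j = \mathrm{cone}(\O_X \to I_j\udot)[0]$... and that a quasi-isomorphism $\psi$ intertwines the canonical maps $\O_X \to I_1\udot$ and $\O_X \to I_2\udot$ up to a scalar which can be normalized to $1$ because both come from the \emph{same} $\O_X$ with the \emph{same} generator.

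In outline the steps are: \emph{Step 1:} Reduce to $n\ge 4$ being used only to guarantee — actually the hypothesis $n\ge 4$ (rather than $n\ge 3$) is presumably to ensure $\Hom_X(\O_X, F_j[-1])$-type vanishings or that certain $\Ext$'s between $\O_X$ and torsion sheaves of codimension $\ge 2$ vanish, making the association $\{$pairs$\} \to \{$complexes$\}$ fully faithful; I would track where codimension $\ge 2$ (i.e.\ $n\ge 4$ for $2$-dimensional $F$) is needed, likely in showing $\Hom_{D^b}(\O_X, F[-1]) = H^{-1}(F) = 0$ and that there are no ``extra'' morphisms $\O_X \to I\udot$ beyond multiples of the canonical one. \emph{Step 2:} Establish $\curly H^0(I_j\udot) = 0$, $\curly H^1(I_j\udot) = Q_j$, using $\PT_q$-stability (in particular $F_j \in \Coh_{\ge q+1}(X)$ forces $\ker s_j = 0$, as any subsheaf of $\O_X$ of dimension $\le 2$ is $0$; combined with $\dim X = n \ge 4 > 2$ this is immediate since a nonzero ideal sheaf has the same dimension as $X$). \emph{Step 3:} Show $\Hom_{D^b_{\mathrm{coh}}(X)}(\O_X, I_j\udot) \cong \C\cdot[s_j] \oplus (\text{stuff that dies})$ — more precisely that the canonical class $[\O_X \to I_j\udot]$ is the unique morphism, up to scalar, whose mapping cone is a pure sheaf (namely $F_j$) placed in a single degree; equivalently whose cone has vanishing $\curly H^{\le -1}$ and $\curly H^{\ge 1}$. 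This uses $F_j$ has no sections supported... and the $n \ge 4$ vanishing. \emph{Step 4:} Given $\psi: I_1\udot \xrightarrow{\sim} I_2\udot$, compose the canonical map $\O_X \to I_1\udot$ with $\psi$ to get a map $\O_X \to I_2\udot$; by Step 3 it equals $\lambda\cdot[s_2]$ for some $\lambda \in \C^\times$; rescaling $\psi$ by $\lambda^{-1}$ we get a commuting triangle, hence (taking cones) an isomorphism $F_1 \xrightarrow{\sim} F_2$ with $s_2 = (\text{this iso})\circ s_1$, i.e.\ an isomorphism of pairs.

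\emph{Main obstacle.} The crux — and the step I expect to require the hypothesis $n\ge 4$ and the most care — is Step 3: proving that the canonical morphism $\O_X \to I\udot$ is characterized intrinsically, so that an abstract quasi-isomorphism must respect it (up to scalar). Equivalently, one must control $\Hom_{D^b_{\mathrm{coh}}(X)}(\O_X, I\udot)$. From the triangle $Q[-1] \to \O_X \xrightarrow{s} F \to \dots$ wait, the triangle is $\O_X \xrightarrow{s} F \to Q \xrightarrow{+1}$, giving the long exact sequence
\[
\Hom_X(\O_X,\O_X) \to \Hom_X(\O_X, F) \to \Hom_X(\O_X, Q) \to \Ext^1_X(\O_X,\O_X) = H^1(\O_X),
\]
so $\Hom_{D^b}(\O_X, I\udot)$ — reading off from $I\udot = [\O_X \to F]$ in degrees $0,1$ — is $H^0$ of the total complex, and the subtlety is separating the ``section part'' from $H^0(F)$ and $H^{-1}(Q)$; here $Q$ has dimension $\le q \le 1 < n$ so various cohomology groups vanish, and $n\ge 4$ (versus $n = 3$) buys the needed vanishing of $\Ext$'s between $\O_X$ and the codimension-$\ge 2$ sheaves $F, Q$. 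I would model this precisely on \cite[Lem.~1.3]{PT1}, which is surely what the authors do (the proof in the excerpt says ``analogous to \cite[Lem.~1.6]{PT1}'' for the neighboring lemma, so the same template applies), checking that every place where \emph{purity} of $F$ was used in \cite{PT1} can be replaced by the weaker $\PT_q$-stability condition $F\in\Coh_{\ge q+1}(X)$, together with $\dim X = n \ge 4$.
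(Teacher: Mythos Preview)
Your overall strategy---show that the canonical map between $I\udot$ and $\O_X$ is determined up to scalar, then extend a quasi-isomorphism to a morphism of triangles and rescale---is exactly the paper's approach (modelled on \cite[Prop.~1.21]{PT1}). But there are two genuine errors in your execution that derail the argument.

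First, your cohomology computation in Step~2 is wrong. You claim $\ker(s)\subseteq \O_X$ has support of dimension $\le 2$ and hence is $0$. In fact the opposite is true: since $F$ is supported on a $2$-dimensional closed subscheme $Z\subset X$, the section $s:\O_X\to F$ vanishes identically on $X\setminus Z$, so $\ker(s)=I_{Z/X}$ is a nonzero ideal sheaf of full dimension $n$. Thus $h^0(I\udot)=I_{Z/X}\neq 0$ and $h^1(I\udot)=Q$; the complex $I\udot$ is a genuine two-step object, which is why you correctly sensed that ``$I\udot\simeq Q[-1]$ would follow---which is false.''

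Second, and more structurally, you have the canonical map pointing the wrong way. The distinguished triangle is
\[
F[-1]\longrightarrow I\udot \longrightarrow \O_X \xrightarrow{\ s\ } F,
\]
so the natural morphism is $I\udot\to\O_X$, not $\O_X\to I\udot$. (Indeed, from this triangle one computes $\Hom_{D^b}(\O_X,I\udot)=0$ whenever $s\neq 0$, so your Step~3 as written cannot work.) The relevant space to control is $\Hom_X(I\udot,\O_X)$: applying $\Hom_X(-,\O_X)$ to the triangle and using
\[
\Ext^{\le 1}_X(F,\O_X)\cong H^{\ge n-1}(X,F\otimes K_X)^*=0
\]
(Serre duality plus $\dim F=2\le n-2$, which is exactly where $n\ge 4$ enters) gives $\Hom_X(I\udot,\O_X)\cong\Hom_X(\O_X,\O_X)=\C$. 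Now given $\phi:I_1\udot\xrightarrow{\sim}I_2\udot$, the composite $I_1\udot\to I_2\udot\to\O_X$ lies in $\Hom_X(I_1\udot,\O_X)=\C$, hence equals $c$ times the canonical map for some $c\in\C^\times$; completing to a morphism of triangles yields $\psi:F_1\to F_2$ with $\psi\circ s_1=c\cdot s_2$, and $c^{-1}\psi$ is the desired isomorphism of pairs. This is precisely the paper's proof; once you reverse the arrow and use the correct Hom, your outline becomes it.
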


\begin{proof}
The proof is analogous to \cite[Prop.~1.21]{PT1}. Since 
\[\Ext^{\leq 1}_X(F_1,\O_X)\cong H^{\geq n-1}(X,F_1\otimes K_X)^*=0,\]
by $F_1 \in \Coh_{\leq2}(X)$, the map
\[\C=\Hom_X(\O_X,\O_X) \to \Hom_X(I^\mdot_1,\O_X)\]
is an isomorphism. Therefore, any quasi-isomorphism $\phi:I^\mdot_1 \to I^\mdot_2$ can be completed to an isomorphism of distinguished triangles
\[\xymatrix{
I^\mdot_1 \ar[r] \ar[d]^{\phi} & \O_X \ar[r]^{s_1} \ar@{.>}[d]^c & F_1 \ar[r] \ar@{.>}[d]^\psi &  \\
I^\mdot_2 \ar[r]  & \O_X \ar[r]^{s_2} & F_2 \ar[r] & 
}\]
for some dotted arrows $c$ and $\psi$. Since $c\in \Gamma(X,\O_X^*)=\C^*$ is a non-zero constant, $c^{-1}\cdot\psi :F_1 \to F_2$ is an isomorphism of sheaves with $\psi(s_1)=s_2$.
\end{proof}

We show that a $\PT_0$ pair, for which the pure part of its scheme theoretic support is Cohen-Macaulay, can be expressed by a map from a $\DT$ pair to a $1$-dimensional pure sheaf.


\begin{proposition} 
For any $\PT_0$ pair $(F,s)$ on $X$ such that $S=\Supp(F)^{\pure}$ is Cohen-Macaulay, we have a natural quasi-isomorphism
\begin{equation}\label{Eq.PT0.ItoG}
I\udot \cong [I_{S/X} \to T_1(F)],  
\end{equation}
where $T_1(F)$ is zero or pure 1-dimensional and $I_{S/X}$ denotes the ideal sheaf of $S$ in $X$.

In particular, when $\dim(X) = 3$, we have $I_{S/X} = \O_X(-S)$ and 
\[I\udot \cong [\O_X \to T_1(F)(S)](-S)\]
which, for $T_1(F) \neq 0$, is a 1-dimensional stable pair on $X$ as in \cite{PT1}.
\end{proposition}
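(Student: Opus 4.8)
The plan is to analyze the $\PT_0$ pair $(F,s)$ by decomposing $F$ along its torsion filtration and exploiting that $\Supp(F)^{\pure}$ is Cohen-Macaulay. First I would recall that $\PT_0$-stability means $T_0(F) = 0$ and $\dim Q \leq 0$, so $F$ has a two-step filtration $0 \to T_1(F) \to F \to F/T_1(F) \to 0$ where $T_1(F)$ is zero or pure $1$-dimensional and $F/T_1(F)$ is pure $2$-dimensional with support $S$. By Lemma \ref{lem:support}, the section $s$ has scheme-theoretic support equal to $S$, and since $\dim Q \leq 0$ while $F/T_1(F)$ is pure $2$-dimensional, the composite $\O_X \xrightarrow{s} F \to F/T_1(F)$ must be surjective onto $\O_S \hookrightarrow F/T_1(F)$ up to the $0$-dimensional cokernel — but actually the $0$-dimensional cokernel $Q$ can only live on the $1$-dimensional sheaf $T_1(F)$ (this is exactly the structural statement recalled in the introduction for smooth support, and follows here from purity of $F/T_1(F)$). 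Hence the image of $s$ in $F/T_1(F)$ is all of $\O_S$, i.e. $F/T_1(F) \cong \O_S$.

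Next I would assemble the octahedron/diagram. We have the complex $I\udot = [\O_X \xrightarrow{s} F]$ (with $\O_X$ in degree $0$), and the surjection $F \twoheadrightarrow \O_S$ with kernel $T_1(F)$. Since $\O_X \xrightarrow{s} F \to \O_S$ is the canonical surjection $\O_X \twoheadrightarrow \O_S$, we get a map of two-term complexes $[\O_X \xrightarrow{s} F] \to [\O_X \twoheadrightarrow \O_S]$, and the latter is quasi-isomorphic to $I_{S/X}$ (placed in degree $0$). Taking cones, the cone of $I\udot \to I_{S/X}$ is the cone of $F \to \O_S$, namely $T_1(F)[1]$. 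Therefore we obtain a distinguished triangle $I_{S/X} \to T_1(F)[1] \to I\udot[1]$, i.e. $I\udot \cong \mathrm{cone}(I_{S/X} \to T_1(F))$, which is exactly the claimed two-term complex $[I_{S/X} \to T_1(F)]$ — here one must check that the connecting map $I_{S/X} \to T_1(F)$ is an honest map of sheaves (it lives in $\Hom_X(I_{S/X}, T_1(F))$ since both are sheaves), so the cone is genuinely the two-term complex with $I_{S/X}$ in degree $-1$ and $T_1(F)$ in degree $0$; shifting back gives \eqref{Eq.PT0.ItoG}. The purity assertion on $T_1(F)$ is automatic since $\PT_0$-stability forces $T_0(F) = 0$, hence $T_0(T_1(F)) = 0$.

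For the last sentence, when $\dim X = 3$ the surface $S$ is a pure $2$-dimensional (hence codimension-$1$) subscheme; since $S$ is Cohen-Macaulay and $X$ is smooth of dimension $3$, $S$ is a Cartier divisor, so $I_{S/X} = \O_X(-S)$ is a line bundle. Substituting into \eqref{Eq.PT0.ItoG} and twisting by $\O_X(S)$ gives $I\udot(S) \cong [\O_X \to T_1(F)(S)]$, i.e. $I\udot \cong [\O_X \to T_1(F)(S)](-S)$. Finally, when $T_1(F) \neq 0$, the sheaf $T_1(F)(S)$ is pure $1$-dimensional on $X$, and one checks the section $\O_X \to T_1(F)(S)$ (obtained by twisting the connecting map) has $0$-dimensional cokernel: indeed its cokernel is $Q(S)$, which is $0$-dimensional since $Q$ is. Thus $(T_1(F)(S), \text{section})$ is precisely a $1$-dimensional $\PT_1$-stable pair in the sense of \cite{PT1}.

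I expect the main obstacle to be the first step — showing rigorously that the cokernel $Q$ of $s$ is supported on $T_1(F)$, equivalently that $s$ induces a surjection $\O_X \to \O_S$ after passing to $F/T_1(F)$. The point is that $F/T_1(F)$ is pure $2$-dimensional, so it has no $0$-dimensional subsheaves, and the image of $Q$ in $F/T_1(F)$ would be a $0$-dimensional subsheaf, forcing the composite $\O_X \to F \to F/T_1(F)$ to be surjective; combined with Lemma \ref{lem:support} identifying the support of $s$ with $S$, this pins down $F/T_1(F) \cong \O_S$. Once that is in hand, the rest is formal homological algebra (cones of maps of two-term complexes) plus the standard fact that a Cohen-Macaulay codimension-one subscheme of a smooth variety is Cartier.
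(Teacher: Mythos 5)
The core of your argument --- decomposing along the torsion filtration, identifying $F/T_1(F) \cong \O_S$, and then deducing the quasi-isomorphism from a cone/pushout diagram --- matches the paper's strategy. But there is a genuine gap in the step $F/T_1(F) \cong \O_S$, and it is exactly where the Cohen--Macaulay hypothesis on $S$ must enter: you never invoke it.

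Set $Q' := \coker\bigl(\O_S \hookrightarrow F/T_1(F)\bigr)$; as you observe, $Q'$ is a quotient of $Q$, hence $0$-dimensional. You then claim that purity of $F/T_1(F)$ forces $Q' = 0$, saying that ``the image of $Q$ in $F/T_1(F)$ would be a $0$-dimensional subsheaf.'' This is where the reasoning goes wrong: $Q'$ is a \emph{quotient} of $F/T_1(F)$, not a subsheaf, so the fact that $F/T_1(F)$ has no $0$-dimensional subsheaves is irrelevant. Indeed, when $S$ is \emph{not} Cohen--Macaulay, there exist pure $2$-dimensional sheaves $F'$ sitting in non-split short exact sequences $0 \to \O_S \to F' \to Q' \to 0$ with $Q'$ a non-zero $0$-dimensional sheaf; this is exactly the content of Proposition \ref{Lem.PT0=PT1}, which identifies such $Q'$ with quotients of $\ext^{n-1}_X(\O_S,K_X)$, a sheaf supported on the non-Cohen--Macaulay locus of $S$. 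The paper's proof goes through Proposition \ref{Lem.PT0=PT1}: one checks that $(F/T_1(F),\bar s)$ is a $\PT_0$ pair with pure support, identifies that support with $S$ using $T_1(\O_Z)=I_{S/Z}$ (so that $(F/T_1(F),\bar s)$ is simultaneously $\PT_0$- and $\PT_1$-stable), and then the Cohen--Macaulay hypothesis forces $\ext^{n-1}_X(\O_S,K_X)=0$, hence $Q'=0$ and $F/T_1(F)\cong\O_S$. Inserting this citation repairs your argument; as written, the key step is unjustified and the proposition's hypothesis is unused. Your remaining cone manipulation and the $\dim X = 3$ specialization are fine, and are essentially a rephrasing of the paper's observation that the square with vertices $I_{S/X}$, $T_1(F)$, $\O_X$, $F$ is cartesian and cocartesian.
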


\begin{proof}
We may assume $T_1(F) \neq 0$, because otherwise the result is immediate from Proposition \ref{Lem.PT0=PT1}. Note that $F/T_1(F)$ is a pure 2-dimensional sheaf and $T_1(F)$ is a pure 1-dimensional sheaf. Hence the support of the $\PT_0$-stable pair $\O_X \to F/T_1(F)$ is also pure. Let $Z = \Supp(F)$, so $S = Z^{\pure}$. Since $T_1(\O_Z)=I_{S/Z}$ by definition, 
the support of the $\PT_0$ pair $\O_X \to F/T_1(F)$ is $S$. By Proposition \ref{Lem.PT0=PT1} above, we obtain the desired isomorphism $\O_S \cong F/T_1(F)$. Moreover, since the commutative square
\[\xymatrix{
I_{S/X} \ar[r] \ar[d] & T_1(F) \ar[d] \\
\O_X \ar[r] & F
}\]
is cartesian and cocartesian, we have the desired quasi-isomorphism \eqref{Eq.PT0.ItoG}.
\end{proof}


Dually, $\PT_1$ pairs (with Cohen-Macaulay support) can be expressed by maps between $1$-dimensional pure sheaves and $\DT$ pairs.

\begin{proposition} \label{lem:PT1CMsupp}
For any $\PT_1$ pair $(F,s)$ on $X$ with Cohen-Macaulay support $S$, we have a natural quasi-isomorphism
\[I\udot \cong \cone(Q[-2] \to I_{S/X}), \]
where the cokernel $Q$ is zero or pure 1-dimensional. 
\end{proposition}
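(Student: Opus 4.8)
The plan is to split the statement into a purely homological part, which gives the quasi-isomorphism, and a geometric part, which establishes that $Q$ is zero or pure $1$-dimensional; the Cohen-Macaulay hypothesis will enter only in the latter, and that is where I expect the genuine work to be. For the homological part: by Lemma~\ref{lem:support} the scheme-theoretic support of $F$ equals $V(\mathrm{Ann}(s))$, so $s$ factors canonically as $\O_X\twoheadrightarrow\O_S\hookrightarrow F$ with $\ker(s)=I_{S/X}$ and $\coker(s)=Q$; in particular there is a short exact sequence $0\to\O_S\to F\to Q\to 0$ with $\dim(Q)\leq 1$, and $\O_S$, being a subsheaf of the pure sheaf $F$, is itself pure $2$-dimensional. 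Since $I\udot=[\O_X\xrightarrow{s}F]$ is concentrated in degrees $0$ and $1$ with $h^0(I\udot)=\ker(s)=I_{S/X}$ and $h^1(I\udot)=\coker(s)=Q$, the canonical truncation triangle $\tau^{\leq 0}I\udot\to I\udot\to\tau^{\geq 1}I\udot\xrightarrow{+1}$ reads $I_{S/X}\to I\udot\to Q[-1]\xrightarrow{+1}I_{S/X}[1]$. Rotating this triangle one step to the left identifies $I\udot$ with $\cone(Q[-2]\to I_{S/X})$, the map being a shift of the connecting morphism --- equivalently, the class of the Yoneda $2$-extension $0\to I_{S/X}\to\O_X\to F\to Q\to 0$. (One can instead argue via the octahedral axiom applied to the factorization $\O_X\twoheadrightarrow\O_S\hookrightarrow F$, exactly as in the $\PT_0$ case treated above.)

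For the geometric part I would show $T_0(Q)=0$, so that $Q$, having dimension $\leq 1$, is zero or pure $1$-dimensional. Apply $R\hom_X(-,K_X)$ to the sequence $0\to\O_S\to F\to Q\to 0$ and compare vanishing ranges of $\ext$-sheaves. Since $S$ is Cohen-Macaulay and $\O_S$ is pure $2$-dimensional, Proposition~\ref{Lem.PT0=PT1} gives $\ext^i_X(\O_S,K_X)=0$ for $i\neq n-2$, in particular $\ext^{n-1}_X(\O_S,K_X)=0$; since $F$ is pure of dimension $2$, \cite[Prop.~1.1.10]{HL} gives $\ext^i_X(F,K_X)=0$ for $i<n-2$ and for $i=n$; and since $\dim(Q)\leq 1$, one has $\ext^i_X(Q,K_X)=0$ for $i<n-1$. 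The long exact sequence of $\ext$-sheaves then squeezes $\ext^n_X(Q,K_X)$ between $\ext^{n-1}_X(\O_S,K_X)=0$ and $\ext^n_X(F,K_X)=0$, so $\ext^n_X(Q,K_X)=0$. Using a further short exact sequence $0\to T_0(Q)\to Q\to Q'\to 0$ with $Q'$ zero or pure $1$-dimensional, one gets $\ext^n_X(Q,K_X)\cong\ext^n_X(T_0(Q),K_X)$, and since $\ext^n_X(-,K_X)$ is a perfect duality on $0$-dimensional sheaves this forces $T_0(Q)=0$, completing the proof.

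The only non-formal step is this purity of $Q$, and the subtle point is to arrange the $\ext$-sheaf vanishings exactly right --- in particular to see that the Cohen-Macaulay hypothesis is used precisely once, through $\ext^{n-1}_X(\O_S,K_X)=0$. This is sharp: without it, $\ext^{n-1}_X(\O_S,K_X)$ is a nonzero $0$-dimensional sheaf supported on the non-Cohen-Macaulay locus of $S$, and a $\PT_1$ pair over such $S$ can genuinely acquire $0$-dimensional torsion in $Q$, in agreement with Proposition~\ref{Lem.PT0=PT1}.
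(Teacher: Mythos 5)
Your proof is correct and has the same two-part skeleton as the paper's: the quasi-isomorphism is a formal truncation-triangle statement, and the genuine content is the purity of $Q$, where Cohen--Macaulayness enters precisely through $\ext^{n-1}_X(\O_S,K_X)=0$. You reach the purity by a dual route. The paper applies $\Hom_X(T_0(Q),-)$ to $0\to\O_S\to F\to Q\to 0$ and squeezes $\Hom_X(T_0(Q),Q)$ between $\Hom_X(T_0(Q),F)=0$ (purity of $F$) and $\Ext^1_X(T_0(Q),\O_S)=0$ (via the duality \eqref{eqn:ext1iso}, which is exactly where $\ext^{n-1}_X(\O_S,K_X)=0$ is used), concluding $T_0(Q)=0$ because the inclusion $T_0(Q)\hookrightarrow Q$ is a nonzero hom otherwise. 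You instead apply $R\hom_X(-,K_X)$ to the same short exact sequence, squeeze $\ext^n_X(Q,K_X)$ between $\ext^{n-1}_X(\O_S,K_X)=0$ and $\ext^n_X(F,K_X)=0$, and deduce $T_0(Q)=0$ from perfect duality of $\ext^n_X(-,K_X)$ on $0$-dimensional sheaves. These are the same vanishings used contravariantly rather than covariantly; the paper's \eqref{eqn:ext1iso} is the local-to-global shadow of the duality you apply. Two minor points: Proposition~\ref{Lem.PT0=PT1} as \emph{stated} only gives $\ext^{n-1}_X(\O_S,K_X)=0$, not the full $\ext^i_X(\O_S,K_X)=0$ for all $i\neq n-2$ (that stronger statement is true and appears inside the proof of Proposition~\ref{Lem.PT0=PT1}, but you only need $i=n-1$); and your sharpness remark at the end, that dropping Cohen--Macaulayness genuinely produces $0$-dimensional torsion in $Q$, is correct and is precisely the content of Proposition~\ref{Lem.PT0=PT1}.
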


\begin{proof}
The quasi-isomorphism follows from the canonical distinguished triangle
\[\xymatrix{ I_{S/X} \ar[r] & I^\mdot \ar[r] & Q[-1] \ar[r] & I_{S/X}[1]. }\]
Thus it suffices to assume $Q \neq 0$ and show it is pure 1-dimensional.
Applying $\Hom_X(T_0(Q),-)$ to the short exact sequence $0 \to \O_S \to F \to Q \to 0$, we obtain a long exact sequence
\[\xymatrix{ \cdots \ar[r] & \Hom_X(T_0(Q),F) \ar[r] & \Hom_X(T_0(Q),Q) \ar[r] & \Ext^1_X(T_0(Q), \O_S) \ar[r] & \cdots }.\]
By purity of $F$ and Proposition \ref{Lem.PT0=PT1}, we deduce $T_0(Q)=0$, which implies that $Q$ is pure 1-dimensional.
\end{proof}

For $\PT_1$ pairs with Gorenstein support, we also have the following characterization.

\begin{proposition} \label{lem:PT1PTcv}
For any $\PT_1$ pair $(F,s)$ on $X$ with Gorenstein support $S$ and non-zero cokernel $Q$, we have a natural quasi-isomorphism
\begin{equation}\label{Eq.PT1toPTc}
F\dual \otimes K_X[n-2] \cong [\O_S \to Q^D \otimes K_S\dual] \otimes K_S
\end{equation}
where $Q^D:=\ext^{n-1}_X(Q,K_X)$ is a pure 1-dimensional sheaf supported on $S$.
Moreover, this induces an equivalence between $\PT_1$ pairs supported on $S$ with non-zero cokernel and 1-dimensional $\PT$ pairs on $S$. 
\end{proposition}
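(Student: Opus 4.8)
The plan is to set up a Serre-duality dictionary that turns a $\PT_1$ pair on the Gorenstein surface $S$ into a $\PT$ pair (i.e.\ a Le Potier stable pair) on $S$, and then run the argument of Proposition~\ref{lem:PT1CMsupp} in reverse. First I would apply $R\hom_X(-,K_X)[n-2]$ to the short exact sequence $0\to\O_S\to F\to Q\to 0$. Since $F$ is pure $2$-dimensional, $\ext^i_X(F,K_X)=0$ for $i\neq n-2$ by \cite[Prop.~1.1.10]{HL}, so $F\dual\otimes K_X[n-2]$ is just the sheaf $F^D=\ext^{n-2}_X(F,K_X)$; since $Q$ is pure $1$-dimensional (by Proposition~\ref{lem:PT1CMsupp}), $\ext^i_X(Q,K_X)=0$ for $i\neq n-1$, so $Q^D=\ext^{n-1}_X(Q,K_X)$ is pure $1$-dimensional. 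Dualizing the sequence therefore yields an exact sequence $0\to Q^D[-1]\to F^D\to (\O_S)^D\to \ext^{n-1}_X(F,K_X)\to\cdots$; using that $S$ is Gorenstein so that $(\O_S)^D=\ext^{n-2}_X(\O_S,K_X)=\omega_S=K_S$ is a line bundle on $S$ and the higher $\ext$ of $\O_S$ vanish, I get a short exact sequence of sheaves
\[
0\to F^D\to K_S\to Q^D\to 0,
\]
where the connecting map $K_S\to Q^D$ comes from $s$. (Here I am implicitly using that Grothendieck duality for $i\colon S\hookrightarrow X$ a Gorenstein subvariety gives $i^!K_X\cong\omega_S[-(n-2)]$ with $\omega_S$ invertible, so all the duals are computed on $S$.)

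Next I would rewrite this sequence so that the ambient line bundle becomes $\O_S$, which is the form in which Le Potier stable pairs on $S$ naturally live. Twisting by $K_S\dual$ and shifting appropriately, the complex $[\O_S\to Q^D\otimes K_S\dual]$ (with $\O_S$ the image of the section dual to $s$) has cohomology sheaves $F^D\otimes K_S\dual$ in degree $0$ and nothing else, so $F\dual\otimes K_X[n-2]\cong [\O_S\to Q^D\otimes K_S\dual]\otimes K_S$; this is exactly \eqref{Eq.PT1toPTc}. To finish the equivalence claim, I would check that $(\text{complex})\mapsto H^0$ sends $\PT_1$ pairs to $\PT$ pairs and back. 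Concretely: the complex $[\O_S\to G]$ with $G:=Q^D\otimes K_S\dual$ pure $1$-dimensional and cokernel $F^D\otimes K_S\dual$ which — being the dual of the pure $2$-dimensional $F$ on the Gorenstein $S$ — is again pure $2$-dimensional, is precisely a $\PT$ pair on $S$. Conversely, starting from a $1$-dimensional $\PT$ pair $[\O_S\to G]$ on $S$ (meaning $G/\O_S\cdot s$ is $\le 1$-dimensional and the relevant purity holds), the same dualization on $S$ recovers $F$ pure and $Q$ of dimension $\le 1$, hence a $\PT_1$ pair on $X$ supported on $S$. The two constructions are mutually inverse because $F^{DD}=F$ (purity of $F$ over $S$ Gorenstein, via the spectral sequence in \cite[Lem.~1.1.8]{HL}) and likewise on the $S$ side. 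I should also record that $Q^D$ is supported on $S$ and pure $1$-dimensional, which follows from $Q$ being pure $1$-dimensional with $\Supp(Q)\subseteq S$ together with the standard behaviour of $\ext^{n-1}_X(-,K_X)$ on pure $1$-dimensional sheaves.

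The main obstacle I anticipate is bookkeeping the twists and Grothendieck duality carefully when $S$ is only Gorenstein (not smooth): one must be sure that $i^!K_X$ is a \emph{shifted line bundle} $\omega_S[-(n-2)]$ and that $R\hom_X(\O_S,K_X)$, $R\hom_X(F,K_X)$, $R\hom_X(Q,K_X)$ are concentrated in the expected single degrees so that all the ``$D$''-duals are honest sheaves and the complexes $I\udot$, $F\dual\otimes K_X[n-2]$, $[\O_S\to Q^D\otimes K_S\dual]$ have no hidden higher cohomology. Once the degrees are pinned down, the isomorphism \eqref{Eq.PT1toPTc} is a formal consequence of applying $R\hom_X(-,K_X)$ to the triangle $I_{S/X}\to I\udot\to Q[-1]$ of Proposition~\ref{lem:PT1CMsupp} (Gorenstein $\Rightarrow$ $I_{S/X}\dual\otimes K_X$ is computed via the two-term resolution implicit in the Cohen--Macaulay case), and the equivalence of moduli-theoretic data then reduces to checking stability is preserved, which is immediate from the characterizations ``$F$ pure, $\dim Q\le 1$'' on both sides.
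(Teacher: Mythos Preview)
Your overall strategy --- apply $R\hom_X(-,K_X)$ to the short exact sequence $0\to\O_S\to F\to Q\to 0$ --- is exactly what the paper does, but the execution contains a genuine error that breaks the argument.

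The central mistake is the claim that purity of $F$ forces $\ext^i_X(F,K_X)=0$ for all $i\neq n-2$. By \cite[Prop.~1.1.6,~1.1.10]{HL}, purity of a $2$-dimensional sheaf only gives $\ext^n_X(F,K_X)=0$ and $\ext^{n-1}_X(F,K_X)\in\Coh_{\leq 0}(X)$; the latter is in general nonzero (it vanishes precisely when $F$ is reflexive). Consequently $F\dual\otimes K_X[n-2]$ is \emph{not} a sheaf but a two-term complex, and the long exact sequence reads
\[
0\longrightarrow F^D\longrightarrow K_S\longrightarrow Q^D\longrightarrow \ext^{n-1}_X(F,K_X)\longrightarrow 0,
\]
not the short exact sequence you wrote (vanishing of $\ext^{n-1}_X(\O_S,K_X)$ makes the last map surjective, it does not kill $\ext^{n-1}_X(F,K_X)$). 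The quasi-isomorphism \eqref{Eq.PT1toPTc} thus identifies the \emph{complex} $F\dual\otimes K_X[n-2]$ with $[K_S\to Q^D]$, not a sheaf with a sheaf.

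This error propagates to your verification that the dual is a $1$-dimensional PT pair on $S$. For such a pair one needs the sheaf $Q^D\otimes K_S\dual$ pure $1$-dimensional (fine) and the \emph{cokernel} of the section $\O_S\to Q^D\otimes K_S\dual$ to be $0$-dimensional. That cokernel is $\ext^{n-1}_X(F,K_X)\otimes K_S\dual$, and its $0$-dimensionality is precisely how ``$F$ pure'' translates across the duality --- this is the content of the equivalence. Your sentence identifying the cokernel as ``$F^D\otimes K_S\dual$ \ldots pure $2$-dimensional'' confuses kernel with cokernel and is incompatible with the definition of a $1$-dimensional stable pair. Finally, $F^{DD}=F$ does not follow from purity (that is reflexivity); the inverse construction works because one is dualizing the two-term complex, which is genuinely an involution, together with $Q\cong Q^{DD}$ for the pure $1$-dimensional $Q$.
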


\begin{proof}
Since $S$ is Gorenstein, we have $R\hom_X(\O_S,K_X) \cong K_S[-(n-2)]$. 
Moreover, $Q$ is pure 1-dimensional by Proposition \ref{lem:PT1CMsupp}.
Applying the functor $R\hom_X(-,K_X)$ to the short exact sequence $0 \to \O_S \to F \to Q \to 0$, we obtain a distinguished triangle
\[\xymatrix{ F\dual \otimes K_X[n-2] \ar[r] & K_S \ar[r] & Q^D \ar[r] & F\dual \otimes K_X[n-1], }\]
which implies the desired quasi-isomorphism \eqref{Eq.PT1toPTc}. 

Since $Q$ is pure 1-dimensional, we have $Q \cong Q^{DD}$ and $Q^D$ is pure 1-dimensional as well. Moreover $F$ is pure, hence the cokernel $\ext_X^{n-1}(F,K_X)$ of the map $K_S \to Q^D$ is 0-dimensional. It is easy to see that all these operations are invertible thereby establishing an equivalence.
\end{proof}

The equivalence in Proposition \ref{lem:PT1PTcv} implies that $\PT_1$ pairs on Gorenstein (and smooth) supports have simple descriptions. Fix a 2-dimensional pure subscheme $S$ of $X$. 

\begin{corollary}\label{cor:PT1Gorsupp}
Assume that $S$ is Gorenstein. A $\PT_1$ pair $(F,s)$ on $X$ with support $S$ and non-zero cokernel is equivalent to a pair $(C,t)$ where $C \subset S$ is a pure 1-dimensional subscheme and $t : \omega_C \twoheadrightarrow R$ is a 0-dimensional quotient.
\end{corollary}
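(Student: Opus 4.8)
The plan is to deduce Corollary \ref{cor:PT1Gorsupp} directly from the equivalence established in Proposition \ref{lem:PT1PTcv}, by unpacking what a 1-dimensional $\PT$ pair on the Gorenstein scheme $S$ amounts to. First I would recall that, by Proposition \ref{lem:PT1PTcv}, giving a $\PT_1$ pair $(F,s)$ on $X$ with support $S$ and nonzero cokernel is the same as giving a 1-dimensional $\PT$ pair on $S$, i.e. (in the sense of \cite{PT1,Pot1}) a pure 1-dimensional sheaf $G$ on $S$ together with a section $\O_S \to G$ whose cokernel is 0-dimensional; under the correspondence $G = Q^D \otimes K_S^\vee \otimes K_S = Q^D$ up to the twist bookkeeping, where $Q^D = \ext^{n-1}_X(Q,K_X)$ is pure 1-dimensional supported on $S$. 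So it remains to translate ``1-dimensional $\PT$ pair on the smooth-or-Gorenstein surface $S$'' into the data $(C \subset S, t\colon \omega_C \twoheadrightarrow R)$.

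The key step is the standard structure theory of stable pairs on a surface, exactly as in \cite[Sec.~1]{PT1} but one dimension up. Given a pure 1-dimensional sheaf $G$ on $S$ with a section $\O_S \to G$ having 0-dimensional cokernel, Lemma \ref{lem:support} (in its $d$-dimensional form, Remark \ref{rem:generalizePTq}) identifies the scheme-theoretic support of $G$ with the scheme-theoretic support $C$ of the section; $C$ is then a pure 1-dimensional subscheme of $S$, and $G$ is a rank-one torsion-free sheaf on $C$ together with a section generating it away from finitely many points. Applying Grothendieck--Serre duality on the (Gorenstein, hence the dualizing sheaf $\omega_C$ is a line bundle on a purely 1-dimensional Gorenstein curve) scheme $C$, such a pair dualizes: $\Hom_C(-, \omega_C)$ sends $[\O_C \to G]$ to $[\,G^D \to \omega_C\,]$ with $G^D := \ext^{?}_C(G,\omega_C)$; purity of $G$ forces $G^D$ to be again a rank-one torsion-free sheaf on $C$ with $G^{DD} = G$, and the 0-dimensionality of $\coker(\O_C \to G)$ translates into surjectivity of the dual map $\omega_C \twoheadrightarrow G^D$ onto... wait, one must be careful about which way the arrow goes. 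The cleanest route is: a 1-dimensional $\PT$ pair on $S$ is, by the very definition of \cite{PT1} adapted to surfaces, equivalent to a proper 1-dimensional subscheme $C \subset S$ together with a surjection $\omega_C \twoheadrightarrow R$ onto a 0-dimensional sheaf $R$ — this is \cite[Prop.~C/1.10 and the discussion preceding]{PT1} with the 3-fold replaced by the surface $S$, and only the Cohen-Macaulay-ness of $C$ (automatic since $C$ is pure 1-dimensional) plus Gorenstein-ness of $S$ being used so that $\omega_C$ is a line bundle. I would then simply invoke that dictionary; the pure 1-dimensional $\PT$ sheaf is $G = \ker(\omega_C \twoheadrightarrow R)^{D}$ appropriately interpreted, and chasing the duality in Proposition \ref{lem:PT1PTcv} matches $R$ with a quotient computed from $Q$.

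Concretely I would write: let $(F,s)$ be a $\PT_1$ pair with support $S$ and $Q \neq 0$. By Proposition \ref{lem:PT1PTcv}, $(F,s)$ corresponds bijectively to a 1-dimensional $\PT$ pair on $S$; unravelling \eqref{Eq.PT1toPTc}, this is the pair $\O_S \to Q^D \otimes K_S^\vee$, which after the $\otimes K_S$ twist in \eqref{Eq.PT1toPTc} is the stable pair $\O_S \to G$ with $G := Q^D \otimes K_S^\vee \otimes K_S$; more usefully, dualizing once more over $S$ returns the data on the dual side. Then I apply the surface analog of the equivalence in \cite[Prop.~1.8, Prop.~1.10]{PT1}: a 1-dimensional $\PT$ pair on $S$ is equivalent to a pair $(C,t)$ with $C \subset S$ pure 1-dimensional and $t\colon \omega_C \twoheadrightarrow R$ a 0-dimensional quotient, where $C$ is the support of the pair (well-defined by Lemma \ref{lem:support}) and $R$ is Serre-dual to the cokernel of the section. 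Composing the two equivalences gives the Corollary, with $C = \Supp(F)^{\pure} \cap(\text{support of }Q)$... — more precisely $C$ is the 1-dimensional scheme determined by $Q$ inside $S$, and $R = \ext^1_C(\coker(\O_C\to G),\omega_C)$ (a length-$\ell(Q)$ sheaf). I would spell out just enough of this bookkeeping to make the bijection unambiguous.

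The main obstacle I anticipate is purely one of \emph{bookkeeping with dualities and twists}: Proposition \ref{lem:PT1PTcv} already performs one Grothendieck duality $R\hom_X(-,K_X)$ and introduces the twists $K_S$, $K_S^\vee$; the surface-level statement $(C,\omega_C \twoheadrightarrow R)$ is most naturally obtained by performing a \emph{second} duality, now over $C$ (or $S$), and one must check that the two dualities compose to the identity on the relevant data (so that no spurious twist by $K_S^{\otimes 2}$ survives) and that ``$Q$ pure 1-dimensional supported on $S$'' is exactly the condition making $\omega_C$ a line bundle and $R$ the correct 0-dimensional quotient. In other words, the only real work is to confirm that the composite ``($\PT_1$ pair on $X$) $\leftrightarrow$ (1-dim.~$\PT$ pair on $S$) $\leftrightarrow$ $(C,\omega_C\twoheadrightarrow R)$'' is well-defined and reversible, reusing Lemma \ref{lem:support}, Proposition \ref{Lem.PT0=PT1}, and the Gorenstein hypothesis; everything else is a citation to the 1-dimensional theory of \cite{PT1} with $\PP^3$ replaced by the Gorenstein surface $S$.
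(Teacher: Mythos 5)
Your overall strategy is the same as the paper's: pass through Proposition \ref{lem:PT1PTcv} to a $1$-dimensional $\PT$ pair on $S$, then apply a second duality to package it as $(C,\,\omega_C\twoheadrightarrow R)$. The key move the paper makes, and which you should do directly, is to take the $\PT$ pair $\O_S\to G$ on $S$, form its defining sequence $0\to\O_C\to G\to Q'\to 0$ with $C$ the (pure $1$-dimensional) support, and apply $R\hom_S(-,K_S)$ to this sequence. Since $\ext^2_S(G,K_S)=0$ by purity of $G$, the connecting map gives a surjection $\omega_C=\ext^1_S(\O_C,K_S)\twoheadrightarrow\ext^2_S(Q',K_S)$, and applying $R\hom_S(-,K_S)$ once more, together with $\O_C^{DD}\cong\O_C$ and purity of $\O_C^D$, shows the construction is invertible.

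Two points in your write-up need correction. First, you assert that $C$ is Gorenstein and $\omega_C$ is a line bundle. That is false under the hypotheses of the corollary: a pure $1$-dimensional subscheme of a Gorenstein (but non-smooth) surface $S$ is only Cohen--Macaulay, and $\omega_C$ is merely the $\ext^1_S(\O_C,K_S)$ sheaf, not a line bundle. The argument doesn't need invertibility of $\omega_C$ at all; that extra structure is exactly what the \emph{next} corollary (smooth $S$, so $C$ lci, hence Gorenstein) adds. Second, the worry about composing the duality from Proposition \ref{lem:PT1PTcv} with a second duality and picking up a spurious $K_S^{\otimes 2}$ twist is a red herring. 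The proof never composes two dualities on the same short exact sequence: Proposition \ref{lem:PT1PTcv} hands you a fresh $1$-dimensional $\PT$ pair on $S$ as an abstract object, and you apply $R\hom_S(-,K_S)$ just once to \emph{its} sequence $0\to\O_C\to G\to Q'\to 0$. The cokernel $Q'$ here is not the cokernel $Q$ of the original $\PT_1$ pair on $X$, and no bookkeeping of twists from the first duality enters. If you want an explicit formula relating $R$ to the original $Q$ you would need to chase both dualities, but the corollary asks only for an equivalence, not such a formula.
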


\begin{proof}
By the previous lemma, we may view a $\PT_1$ pair on $X$ with support $S$ and non-zero cokernel as a 1-dimensional $\PT$ pair on $S$. The support of this stable pair is a 1-dimensional pure subscheme $C \subset S$ and the stable pair is determined by a short exact sequence
$$
0 \to \O_C \to G \to Q \to 0,
$$
where $G$ is pure 1-dimensional and $Q$ is 0-dimensional. Applying $R \hom_S(-, K_S)$ and using $\ext^2_S(G,K_S) = 0$ (purity of $G$), we obtain a quotient
$$
t : \omega_C = \ext_S^1(\O_C,K_S) \twoheadrightarrow \ext^2_S(Q,K_S).
$$
Note that applying $R \hom_S(-, K_S)$ again, and using the fact that $\O_C^{DD} \cong \O_C$ and $\O_C^D$ is pure 1-dimensional, we can recover the original 1-dimensional $\PT$ pair on $S$, and hence $\PT_1$ pair on $X$. 
\end{proof}

\begin{corollary}\label{cor:PT1smsupp}
Assume that $S$ is smooth. A $\PT_1$ pair $(F,s)$ on $X$ with support $S$ and non-zero cokernel is equivalent to a pair $(C,Z)$ of an (non-zero) effective divisor $C \subseteq S$ and a 0-dimensional subscheme $Z \subseteq C$.
\end{corollary}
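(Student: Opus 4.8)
The plan is to specialize Corollary \ref{cor:PT1Gorsupp} to the smooth case, where the only remaining task is to translate a pair $(C,t:\omega_C\twoheadrightarrow R)$ with $C\subset S$ a pure $1$-dimensional subscheme and $R$ a $0$-dimensional sheaf into the combinatorial datum $(C,Z)$. First I would note that on a smooth surface $S$, a pure $1$-dimensional subscheme $C\subset S$ has no embedded points, hence $\O_C$ is Cohen-Macaulay and $C$ is an effective divisor; moreover $\omega_C = \ext^1_S(\O_C,K_S)$ is a line bundle on $C$ (by adjunction, $\omega_C \cong K_S(C)|_C$). The condition $C\neq 0$ corresponds exactly to the cokernel $Q$ being non-zero, as in the statement.

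Next I would observe that since $\omega_C$ is a line bundle on $C$, specifying a $0$-dimensional quotient $t:\omega_C\twoheadrightarrow R$ is the same as specifying the kernel, which is an ideal sheaf $\mathcal{J}\subseteq\O_C$ (after twisting by $\omega_C^{-1}$) cutting out a $0$-dimensional subscheme $Z\subseteq C$ with $\mathcal{O}_Z \cong R\otimes\omega_C^{-1}$, equivalently of the same length. Concretely, the short exact sequence $0\to \mathcal{K}\to\omega_C\to R\to 0$ with $\mathcal{K}$ a rank-one torsion-free (hence invertible, as $C$ is a curve in a smooth surface) subsheaf of the line bundle $\omega_C$ gives $\mathcal{K}\cong\omega_C\otimes I_{Z/C}$ for a unique $0$-dimensional subscheme $Z\subseteq C$ of length $\ell(R)$. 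This assignment $t\mapsto Z$ is a bijection, with inverse sending $Z$ to the quotient $\omega_C\twoheadrightarrow\omega_C\otimes\O_Z$.

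I would then assemble these two observations: under the equivalence of Corollary \ref{cor:PT1Gorsupp} (which applies since a smooth surface is Gorenstein), a $\PT_1$ pair $(F,s)$ on $X$ with support $S$ and non-zero cokernel corresponds to the data $(C,t)$, and by the above $(C,t)$ corresponds bijectively, and functorially, to $(C,Z)$ with $C\subseteq S$ a non-zero effective divisor and $Z\subseteq C$ a $0$-dimensional subscheme. One can also record the numerical translation for later use: if $[C]=\beta$ and $\ell(Z)=n$ then $\ch(F)$ is determined accordingly (this matches the nested-triple description $Z\subset C\subset S$ alluded to in the introduction).

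The main obstacle is purely bookkeeping: making sure that "equivalence" is understood at the level of families (so that the resulting identification of moduli spaces is an isomorphism of schemes, as needed in Section \ref{sec:moduli}), and checking that the line-bundle-twist argument identifying quotients of $\omega_C$ with ideals of $0$-dimensional subschemes of $C$ is valid even when $C$ is non-reduced or singular — here the key point is that $C$, being a divisor in a smooth surface, is Gorenstein and $1$-dimensional, so $\omega_C$ is a line bundle and torsion-free rank-one sheaves on $C$ are invertible away from the finitely many non-reduced/singular points, with the twisting statement still holding since $\Hom_C(I_{Z/C},\O_C)=\O_C$ for $Z$ of dimension $0$. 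No genuinely hard input beyond Corollary \ref{cor:PT1Gorsupp} is needed.
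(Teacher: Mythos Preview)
Your proposal is correct and follows essentially the same approach as the paper: reduce to Corollary~\ref{cor:PT1Gorsupp}, observe that a pure $1$-dimensional subscheme $C$ of a smooth surface is a local complete intersection (hence Gorenstein with invertible $\omega_C$), and then translate a $0$-dimensional quotient of the line bundle $\omega_C$ into a $0$-dimensional subscheme $Z\subseteq C$. The paper's proof is just a two-line version of what you wrote; your extra discussion of families and of the non-reduced case is fine but not needed for the statement as given.
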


\begin{proof}
Since any pure 1-dimensional subscheme $C$ of a smooth surface $S$ is a local complete intersection, $C$ is Gorenstein and $K_C$ is invertible. Hence the proof follows directly from Corollary \ref{cor:PT1Gorsupp}.
\end{proof}

\subsection{Numerical bounds}

Let $X$ be a smooth projective variety of dimension $n$ with fixed very ample line bundle $\O_X(1)$. For any coherent sheaf $F$ on $X$, we denote its Hilbert polynomial by
\[P_F(t) = \sum_{i\geq0} \alpha_i(F) \cdot \frac{t^i}{i!},\]
where $\alpha_0(F)=\chi(F)$ is the Euler characteristic. In this subsection, we derive boundedness results on $\alpha_i(F)$ for $\PT_q$ pairs $(F,s)$ which are crucial in the GIT construction in subsection \ref{ss:GIT}.


\begin{proposition}\label{prop:boundofchiforPT}
Fix rational numbers $\alpha_2>0$ and $\alpha_1$.
\begin{enumerate}
\item[$(1)$] For each $q\in \{-1,0,1\}$, the set
\[\{\alpha_1(F) : \text{$\PT_q$ pairs $(F,s)$ such that $\alpha_2(F)=\alpha_2$}\}\]
is bounded below.
\item[$(2)$] The two sets
\[\{\chi(F) : \text{$\DT$ pairs $(F,s)$ such that $\alpha_2(F)=\alpha_2$, $\alpha_1(F)=\alpha_1$}\}\]
\[\{\chi(F) : \text{$\PT_0$ pairs $(F,s)$ such that $\alpha_2(F)=\alpha_2$, $\alpha_1(F)=\alpha_1$}\}\]
are bounded below, and the set
\[\{\chi(F) : \text{$\PT_1$ pairs $(F,s)$ such that $\alpha_2(F)=\alpha_2$, $\alpha_1(F)=\alpha_1$}\}\]
is bounded above.
\end{enumerate}
\end{proposition}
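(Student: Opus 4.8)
The plan is to reduce everything to the structure sheaf $\O_Z$ of the scheme-theoretic support $Z$ of $F$, via the short exact sequence $0 \to \O_Z \to F \to Q \to 0$ supplied by Lemma \ref{lem:support}, and then to control the numerical invariants of $\O_Z$ and of $Q$ separately. Since $\alpha_2(F) = \alpha_2(\O_Z)$ and $\alpha_1(F) = \alpha_1(\O_Z) + \alpha_1(Q)$ and $\chi(F) = \chi(\O_Z) + \chi(Q)$, the problem splits into a statement about 2-dimensional subschemes $Z\subseteq X$ with fixed leading Hilbert-polynomial coefficients, and a statement about the $\leq q$-dimensional sheaf $Q$ with the constraints imposed by $\PT_q$-stability.

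For part (1): the set of Hilbert polynomials $P_{\O_Z}$ of 2-dimensional subschemes $Z \subseteq X$ with $\alpha_2(\O_Z) = \alpha_2$ fixed is bounded — this is a standard consequence of Grothendieck's boundedness / Matsusaka-type estimates, or one can quote that subschemes with bounded $\alpha_2$ and $\alpha_1$ ranging in a half-line fall into finitely many families; in any case $\alpha_1(\O_Z)$ is bounded both above and below once $\alpha_2$ is fixed, because the ideal sheaf $I_{Z/X}$ has rank one and one uses a Castelnuovo–Mumford regularity bound depending only on $\alpha_2$. So it suffices to bound $\alpha_1(Q)$ below. For $q = -1$ we have $Q = 0$. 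For $q=0$, $Q$ is $0$-dimensional so $\alpha_1(Q) = 0$. For $q = 1$, $Q$ is a quotient of $F$ (hence of no particular sheaf a priori) of dimension $\le 1$; here I would instead use purity of $F$: applying $\hom_X(-,\O_Z)$ to $0 \to \O_Z \to F \to Q \to 0$ as in the proof of Proposition \ref{lem:limPTq} exhibits $F \hookrightarrow \hom_X(\fI^r, \O_Z)$ for $\fI = I_{\Supp(Q)^{\red}/Z}$, and since $F \supseteq \O_Z$ the quotient $Q$ injects into $\varinjlim \hom_X(\fI^r,\O_Z)/\O_Z$. The latter limit sheaf, being the "curves sticking out of $Z$" sheaf, has $\alpha_1$ bounded in terms of $\alpha_1(\O_Z)$ and $\alpha_2$ alone (its $1$-dimensional part is supported on the non-Cohen-Macaulay locus of $Z$, whose degree is bounded by $\alpha_1, \alpha_2$), giving the lower bound on $\alpha_1(Q)$ and hence on $\alpha_1(F)$.

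For part (2): now $\alpha_1(F) = \alpha_1$ is \emph{also} fixed, so $\alpha_1(\O_Z)$ ranges in a bounded set and therefore $P_{\O_Z}$, in particular $\chi(\O_Z)$, ranges in a bounded set — this is again Grothendieck boundedness for subschemes with bounded Hilbert polynomial coefficients in the top two degrees. Thus everything reduces to bounding $\chi(Q)$. For $\DT$ pairs, $Q = 0$, trivially bounded. For $\PT_0$ pairs, $Q$ is $0$-dimensional and by Proposition \ref{Lem.PT0=PT1} (applied to $S = Z^{\pure}$, together with the $T_1$-analysis in the proof of the subsequent Proposition) $Q$ is a quotient of a fixed $0$-dimensional sheaf — more precisely, the length of $Q$ is bounded by the length of $\ext^{n-1}_X(\O_S,K_X)$ plus the degree of $T_1(\O_Z)=I_{S/Z}$, both bounded once $P_{\O_Z}$ is bounded — so $\chi(Q) \ge 0$ and $\chi(Q)$ is bounded, giving a lower bound for $\chi(F)$. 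For $\PT_1$ pairs, $Q$ is pure $1$-dimensional (Proposition \ref{lem:PT1CMsupp} on the Cohen–Macaulay locus, and in general $T_0(Q)=0$) with $\alpha_1(Q) = \alpha_1 - \alpha_1(\O_Z)$ bounded; I would bound $\chi(Q)$ \emph{above} by combining: (i) for pure $1$-dimensional sheaves with fixed leading coefficient $\alpha_1(Q)$, there is no a priori upper bound on $\chi$ in general — so instead use that $Q$ is a quotient of $F$ with $F$ pure and $F \hookrightarrow F^{DD}$, i.e. $Q$ is pinned as a subquotient of $\varinjlim \hom_X(\fI^r,\O_Z)/\O_Z$ whose $1$-dimensional part sits on the non-CM locus of $Z$; the Euler characteristic of sections of such a fixed bounded family is bounded above. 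Equivalently, via Proposition \ref{lem:PT1PTcv}/Corollary \ref{cor:PT1Gorsupp} on the Gorenstein locus, $Q$ corresponds to a $0$-dimensional quotient of $\omega_C$ for a curve $C$ of bounded degree, so its length is bounded above, making $\chi(F) = \chi(\O_Z) + \chi(Q)$ bounded above.

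\medskip

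\noindent\textbf{Main obstacle.} The genuinely delicate point is the $\PT_1$ case in part (1) and the upper bound in part (2): one must show the "overhang" sheaf $\varinjlim \hom_X(\fI^r,\O_Z)/\O_Z$ — i.e. the failure of $\O_Z$ to be $S_2$, realized along the non-Cohen–Macaulay locus — has $\alpha_1$ bounded and $\chi$ bounded above in terms of the two fixed Hilbert coefficients only. This requires a uniform (Castelnuovo–Mumford-type) regularity bound over the bounded family of all 2-dimensional $Z$ with the given $\alpha_2$ (and $\alpha_1$), so that the relevant $\ext$-sheaves live in a bounded family; Grothendieck's boundedness of the Quot/Hilbert scheme with fixed Hilbert polynomial is exactly what makes this go through, and I expect the write-up to cite \cite[Lem.~1.1.10]{HL} for purity-to-$\ext$ translations and a standard boundedness reference (e.g.\ \cite{HL}) for the regularity estimate. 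The other cases ($q=-1,0$) are essentially immediate once the boundedness of $P_{\O_Z}$ is in hand.
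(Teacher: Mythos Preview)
Your reduction via $0\to\O_Z\to F\to Q\to 0$ is the right starting point and matches the paper for part~(1) and for the $\PT_0$ lower bound in~(2). But two of your load-bearing claims are false, and they are precisely where the work lies.

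\textbf{The $\DT$ lower bound in (2) is not Grothendieck boundedness.} You assert that once $\alpha_2(\O_Z)$ and $\alpha_1(\O_Z)$ are fixed, ``$P_{\O_Z}$, in particular $\chi(\O_Z)$, ranges in a bounded set'' by Grothendieck boundedness. This is wrong: adding isolated points to $Z$ fixes $\alpha_2,\alpha_1$ and sends $\chi(\O_Z)\to+\infty$, so there is no two-sided bound and no boundedness theorem to quote. (Your earlier claim that $\alpha_1(\O_Z)$ is bounded above once $\alpha_2$ is fixed is also false---add curves---though harmless for~(1).) What is needed is only a \emph{lower} bound on $\chi(\O_Z)$, and the paper obtains it by a different, concrete trick: for $\chi(\O_Z)<0$ one embeds $(X\setminus Z)^{[-\chi(\O_Z)]}$ into the fixed-polynomial Hilbert scheme $\Hilb(X,\tfrac12\alpha_2t^2+\alpha_1t)$ via $W\mapsto Z\sqcup W$; injectivity plus finite-typeness of the target bounds $-\chi(\O_Z)$. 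Your $\PT_0$ case then follows immediately from $\chi(F)=\chi(\O_Z)+\chi(Q)\ge\chi(\O_Z)$, with no need to bound $\chi(Q)$ above (and Proposition~\ref{Lem.PT0=PT1} does not apply since $\Supp(F)$ need not be pure).

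\textbf{The $\PT_1$ upper bound in (2): your route does not close.} Your plan is to trap $Q$ inside the ``overhang'' sheaf $\varinjlim\hom(\fI^r,\O_Z)/\O_Z$ and bound its $\chi$ from there, or to pass to the Gorenstein description. Both fail in general: the limit depends on $W=\Supp(Q)^{\red}$, which varies with the pair; the non-CM locus of $Z$ does not control it; and even if the ambient sheaf were bounded, a subsheaf of a $1$-dimensional sheaf need not have $\chi$ bounded above. The Gorenstein/curve-pair translation (Proposition~\ref{lem:PT1PTcv}, Corollary~\ref{cor:PT1Gorsupp}) only covers Gorenstein support. The paper takes an entirely different tack that never looks at $Q$: one shows $\mu_{\min}(F)$ is bounded below (via $\O_S\hookrightarrow F$ and \cite[Cor.~2.13]{Pot1}) and then invokes a separate lemma (Lemma~\ref{lem:boundofchi}) proving that pure $2$-dimensional sheaves with fixed $\alpha_2,\alpha_1$ and $\mu_{\min}\ge C$ have $\chi$ bounded above. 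That lemma is the real content---it runs through Harder--Narasimhan and Jordan--H\"older filtrations and a reflexive-hull argument for $\mu$-stable pieces---and is what your ``main obstacle'' paragraph is groping towards without finding.

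Minor note on part~(1), $q=1$: your limit-sheaf detour is unnecessary, since $\alpha_1(Q)\ge 0$ trivially and hence $\alpha_1(F)\ge\alpha_1(\O_S)$, bounded below by \cite[Cor.~2.13]{Pot1}; this is exactly the paper's argument.
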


\begin{proof}
(1) The $\DT$ case is shown in \cite[Cor.~2.13]{Pot1}. The $\PT_0$ case follows directly from the $\DT$ case by considering the short exact sequence $0 \to \O_Z \to F \to Q \to 0$ since $\alpha_2(F)=\alpha_2(\O_Z)$ and $\alpha_1(F)=\alpha_1(\O_Z)$. Finally, the $\PT_1$ case also follows directly from the $\DT$ case by considering the short exact sequence $0 \to \O_S \to F \to Q \to 0$ since $\alpha_2(F)=\alpha_2(\O_S)$ and $\alpha_1(F)\geq \alpha_1(\O_S)$. \\

\noindent (2) ($\DT$) Choose a $\DT$ pair $(\O_Z,1:\O_X \twoheadrightarrow \O_Z)$ such that $\alpha_2(\O_Z) = \alpha_2$, $\alpha_1(\O_Z) = \alpha_1$, and $\chi(\O_Z)<0$. Consider the map
\begin{equation}\label{2}
(X\setminus Z)^{[-\chi(\O_Z)]} \to \Hilb(X,\tfrac12\alpha_2 t^2+ \alpha_1 t) : W \mapsto Z \sqcup W    
\end{equation}
from the Hilbert scheme of points to the Hilbert scheme of closed subschemes with Hilbert polynomial $v=\tfrac12\alpha_2 t^2+ \alpha_1 t$. Since the map \eqref{2} is injective and the Hilbert scheme $\Hilb(X,v)$ is of finite type, the Euler characteristic $\chi(\O_Z)$ has a lower bound which only depends on $X$, $\alpha_2$, and $\alpha_1$. \\

\noindent ($\PT_0$) Given a $\PT_0$ pair $(F,s)$, we can form a short exact sequence
\[\xymatrix{0 \ar[r] & \O_Z \ar[r] & F \ar[r] & Q \ar[r] & 0,}\]
where $Z$ is the support of $F$ and $Q$ is the cokernel of $s$. Since
\[\alpha_2(F)=\alpha_2(\O_Z), \quad \alpha_1(F)=\alpha_1(\O_Z), \and \chi(F) =\chi(\O_Z)+\chi(Q),\]
a lower bound in the case ($\DT$) also gives us a lower bound for the case ($\PT_0$). \\

\noindent ($\PT_1$). Choose a $\PT_1$ pair $(F,s)$ and form a short exact sequence
\[\xymatrix{0 \ar[r] & \O_S \ar[r] & F \ar[r] & Q \ar[r] & 0,}\]
where $S$ is the support of $F$ and $Q$ is the cokernel of $s$. By Lemma \ref{lem:boundofchi}(2) below, it suffices to show that $\mu_{\min}(F)$ is bounded below. Note that there exists a surjective map $F \twoheadrightarrow F_{\min}$ to a 2-dimensional $\mu$-semistable sheaf $F_{\min}$ such that $\mu_{\min}(F)=\mu(F_{\min})$. We can assume that $F_{\min}$ is pure. We can form an induced short exact sequence
\[\xymatrix{0 \ar[r] & \O_Z \ar[r] & F_{\min} \ar[r] & R \ar[r] & 0}\]
for a subscheme $Z \subseteq S$ and a quotient sheaf $Q \twoheadrightarrow R$. Note that we have
\[\mu_{\min}(\O_S) \leq \mu_{\min}(\O_Z) \leq \mu(F_{\min}) = \mu_{\min}(F)\]
(cf. \cite[Lem.~1.3.3]{HL}). Since $S$ is pure, the minimal slope $\mu_{\min}(\O_S)$ is bounded below by \cite[Cor.~2.13]{Pot1}. Therefore $\mu_{\min}(F)$ is also bounded below.
\end{proof}

We need the following lemma to complete the proof of Proposition \ref{prop:boundofchiforPT}(2) for $\PT_1$ pairs.

\begin{lemma}
\label{lem:boundofchi}
Let $X$ be a smooth projective variety with a fixed very ample line bundle $\O_X(1)$ and let $\alpha_2$, $\alpha_1$ be rational numbers such that $\alpha_2>0$.
\begin{enumerate}
\item[$(1)$] The set of integers
\[\left\{
\alpha_0 :
\begin{matrix}
\text{$\mu$-stable reflexive sheaves $F$ on $X$ such that} \\ 
\text{$P_F(t) = \tfrac{1}{2} \alpha_2 t^2 +\alpha_1 t +\alpha_0$}
\end{matrix}\right\}\]
is bounded above.
\item[$(2)$] For a fixed rational number $C$, the set of integers
\[\left\{
\alpha_0 :
\begin{matrix}
\text{pure sheaves $F$ on $X$ such that} \\ 
\text{$P_F(t) = \tfrac{1}{2} \alpha_2 t^2 +\alpha_1 t +\alpha_0$ and $\mu_{\min}(F)\geq C $}
\end{matrix}\right\}\]
is bounded above.
\end{enumerate}
\end{lemma}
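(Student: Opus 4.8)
The plan is to deduce both statements from a single boundedness input --- the Le Potier--Simpson estimate, which bounds the number of global sections of a pure $2$-dimensional sheaf $G$ on $X$ by an explicit function of its multiplicity $\alpha_2(G)$ and of $\mu_{\max}(G)$ (see \cite[\S 3.3]{HL}; it can also be extracted from \cite[\S 2]{Pot1}). Since the full Hilbert polynomial of $F$ is $\tfrac12\alpha_2 t^2+\alpha_1 t+\alpha_0$ with $\alpha_2,\alpha_1$ fixed, bounding $\alpha_0=\chi(F)$ from above is the same as bounding $h^0(F)+h^2(F)$ from above: indeed $F$ has dimension $2$, so $H^i(X,F)=0$ for $i>2$ and hence $\chi(F)\le h^0(F)+h^2(F)$.

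First I would bound $h^2(F)$. Because $F$ is pure of dimension $2$, and hence of codimension $n-2$, the local-to-global spectral sequence for $R\hom_X(F,K_X)$ degenerates in the relevant range, and Serre duality gives $h^2(F)=h^0(F^D)$ with $F^D=\ext^{n-2}_X(F,K_X)$ again pure of dimension $2$. A direct Serre-duality computation yields $P_{F^D}(t)=P_F(-t)$, so $\alpha_2(F^D)=\alpha_2$ and $\mu(F^D)=-\mu(F)$; more generally $(-)^D$ negates slopes and reverses the Harder--Narasimhan filtration of a pure $2$-dimensional sheaf, whence $\mu_{\max}(F^D)=-\mu_{\min}(F)$. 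Thus as soon as $\mu_{\min}(F)$ is bounded below, $\mu_{\max}(F^D)$ is bounded above and the Le Potier--Simpson estimate applied to $F^D$ bounds $h^2(F)=h^0(F^D)$ from above. Likewise $h^0(F)$ is bounded above once $\mu_{\max}(F)$ is bounded above.

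It remains to control these slopes in the two cases. In case $(1)$, $F$ is $\mu$-stable, hence $\mu$-semistable and pure, so $\mu_{\max}(F)=\mu_{\min}(F)=\mu(F)=\alpha_1/\alpha_2$ is fixed (reflexivity is not needed beyond this), and both bounds apply. In case $(2)$, $\mu_{\min}(F)\ge C$ is assumed, so only $\mu_{\max}(F)$ must be bounded above. If $F$ is $\mu$-semistable this is again $\alpha_1/\alpha_2$; otherwise let $F_1\subsetneq F$ be the maximal destabilizing subsheaf, so that $F/F_1$ is a nonzero pure $2$-dimensional sheaf which, being a quotient of $F$, satisfies $\mu(F/F_1)\ge\mu_{\min}(F/F_1)\ge\mu_{\min}(F)\ge C$ by \cite[Lem.~1.3.3]{HL}. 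Since $\alpha_2(F_1)$ and $\alpha_2(F/F_1)$ are positive integers summing to $\alpha_2$, we have $\alpha_2(F_1)\ge1$ and $\alpha_2(F/F_1)\le\alpha_2$, hence
\[
\mu_{\max}(F)=\mu(F_1)=\frac{\alpha_1-\alpha_1(F/F_1)}{\alpha_2(F_1)}\le\frac{\alpha_1-C\,\alpha_2(F/F_1)}{\alpha_2(F_1)}\le|\alpha_1|+|C|\,\alpha_2 .
\]
Combining with the second paragraph gives the required upper bound on $\alpha_0$.

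The step I expect to be delicate is arranging the Le Potier--Simpson estimate in precisely the form used: a bound on $h^0$ in terms of the \emph{slope} $\mu_{\max}$ and the multiplicity $\alpha_2$ only, with the (possibly singular and non-reduced) support of $F$ allowed to vary. One has to check that fixing $\alpha_2$ and $\alpha_1$ alone still confines the support to a bounded family, so that the relevant Castelnuovo-type bounds on the Euler characteristics occurring are uniform; and one should verify carefully that $(-)^D$ reverses Harder--Narasimhan filtrations and negates slopes for pure $2$-dimensional sheaves on a smooth variety of arbitrary dimension, controlling the $\ext^{n-1}$-terms, which are of dimension $\le1$ and therefore invisible to slopes.
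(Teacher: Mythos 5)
Your proof is correct, but it takes a genuinely different route from the paper's. The paper proves part (1) by a geometric argument in the spirit of \cite[Prop.~3.6(2)]{GK}: given a $\mu$-stable reflexive $F$ with $\alpha_0 > 0$, it constructs an injective map from a Hilbert scheme of points $U^{[\alpha_0]}$ on a smooth locus of the support into the finite-type moduli space $M^{\mu}(X,\tfrac{1}{2}\alpha_2 t^2+\alpha_1 t)$ via quotients $F\twoheadrightarrow\O_W$, and then bounds $\alpha_0$ by the dimension of that moduli space; reflexivity is genuinely used there, to show the map is injective by passing to double duals. Part (2) is then established by a reduction chain (general pure $\Rightarrow$ semistable via Harder--Narasimhan, $\Rightarrow$ stable via Jordan--H\"older, $\Rightarrow$ reflexive via $F\hookrightarrow F^{DD}$), ultimately invoking part (1). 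Your argument instead bounds $\chi(F)\leq h^0(F)+h^2(F)=h^0(F)+h^0(F^D)$ directly by the Le Potier--Simpson estimate once $\mu_{\max}(F)$ and $\mu_{\max}(F^D)$ are controlled, which is more unified (one tool handles both parts) and notably shows the reflexivity hypothesis in (1) is superfluous for the conclusion of (1) itself. Both approaches are valid; the paper's has the feature of being self-contained in the moduli-theoretic framework already built up, while yours is shorter once the Le Potier--Simpson input is granted.

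Two small corrections to your side remarks. First, for $F$ pure of dimension $2$ on a smooth $n$-fold, $\ext^{n-1}_X(F,K_X)$ is of dimension $\leq 0$, not $\leq 1$ (this is \cite[Prop.~1.1.10]{HL}, and is also stated in the paper's proof of Proposition~\ref{lem:S2.1}); had it only been $\leq 1$-dimensional it would contribute to $\alpha_1(F^D)$ and therefore to the slope, so this improvement actually matters for the step you flagged. Second, the identity $P_{F^D}(t)=P_F(-t)$ is only correct up to the constant $\chi(\ext^{n-1}_X(F,K_X))$; precisely, $P_{F^D}(t)=P_F(-t)+\chi(\ext^{n-1}_X(F,K_X))$, which by the previous point is an additive constant and so has no effect on $\alpha_2$, $\alpha_1$, or slopes. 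Finally, the claim $\mu_{\max}(F^D)\leq-\mu_{\min}(F)$ (the direction you actually need) can be proved without fully ``reversing'' the Harder--Narasimhan filtration: for any $2$-dimensional $G\subseteq F^D$ the image $H$ of $F^{DD}\to G^D$ is a $2$-dimensional quotient of $F^{DD}$ with $\mu(H)\leq\mu(G^D)=-\mu(G)$, while $\mu_{\min}(F^{DD})=\mu_{\min}(F)$ since $F\to F^{DD}$ has $0$-dimensional kernel and cokernel; hence $\mu(G)\leq-\mu_{\min}(F)$.
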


\begin{proof} 
(1) The proof is similar to \cite[Prop.~3.6(2)]{GK}. Choose a $\mu$-stable reflexive sheaf $F$ with $P_F(t) = \tfrac{1}{2} \alpha_2 t^2 +\alpha_1 t +\alpha_0$. Assume that $\alpha_0>0$. Let $Z$ be the support of $F$. Let $U \subseteq Z^{\mathrm{red}}$ be a non-empty smooth connected open subscheme of the reduced subscheme $Z^{\mathrm{red}}$ of $Z$ . After shrinking $U$ to a possibly smaller non-empty open subscheme, we may assume that $F|_U \cong \O_U ^{\oplus r}$ is a trivial vector bundle on $U$.

We first claim that there is a map
\[\Phi : U^{[\alpha_0]} \to M^{\mu}(X,\tfrac{1}{2} \alpha_2 t^2 +\alpha_1 t)\]
from the Hilbert scheme of points on $U$ to the moduli space of $\mu$-stable pure sheaves on $X$ such that there is a short exact sequence 
\begin{equation}\label{eq:2}
\xymatrix{0 \ar[r] & \Phi(W) \ar[r] & F \ar[r] & \O_W \ar[r] & 0,}
\end{equation}
for all $W \in U^{[\alpha_0]}$. Indeed, let $\cW \subseteq U^{[\alpha_0]}\times U \subseteq U^{[\alpha_0]}\times X$ be the universal family. Then we have a surjective map $\O_{U^{[\alpha_0]}} \boxtimes F|_U \twoheadrightarrow \O_{U^{[\alpha_0]} \times U} \twoheadrightarrow \O_{\cW}$ induced by some projection map $\O_U^{\oplus r} \to \O_U$. Hence we can form a short exact sequence
\begin{equation}\label{eq:3}
\xymatrix{0 \ar[r] & \F' \ar[r] & \O_{U^{[\alpha_0]}} \boxtimes F \ar[r] & \O_{\cW} \ar[r] & 0}
\end{equation}
of sheaves on $U^{[\alpha_0]}\times X$, which are flat over $U^{[\alpha_0]}$. Hence $\F'$ defines the map $\Phi$ and by taking the fibres of \eqref{eq:3} over $W \in U^{[\alpha_0]}$, we obtain \eqref{eq:2}. 
From \eqref{eq:2} and the fact that $F$ is reflexive, it follows that $\Phi(W)$ is pure 2-dimensional and $\Phi(W)^D \cong F^D$.
Moreover, $\Phi(W) \cong F \in \Coh_{2,1}(X):=\Coh_{\leq2}(X)/\Coh_{\leq0}(X)$ so that $\Phi(W)$ is $\mu$-stable.

We then claim that the map $\Phi$ is injective. Choose two 0-dimensional closed subschemes $W_1, W_2 \subseteq U$ and let $F_i=\Phi(W_i)$ for both $i=1,2$. Assume that there is an isomorphism $\phi : F_1 \xrightarrow{\cong} F_2$ of sheaves. Then we can form a commutative square
\[\xymatrix{ F_1 \ar[d]^{\phi} \ar@{^{(}->}[r] & F_1^{DD} \ar[d]^{\phi^{DD}} & F \ar[l]_-{\cong} \ar@{.>}[d]^{\psi}\\ F_2 \ar@{^{(}->}[r] & F_2^{DD} & F \ar[l]_-{\cong} }\]
since $F_1^{DD} \cong  F^{DD} \cong F_2^{DD}$ and $F$ is reflexive. Since $F$ is $\mu$-stable, $F$ is stable, and thus simple, i.e.~$\Hom_X(F,F)=\C$. Therefore by replacing $\phi$ by $c\cdot\phi$ for some constant $c \in \C^*$, we may assume that the dotted arrow $\psi$ is the identity map $\id_F$.
Hence $F_1 \cong F_2$ as subsheaves of $F$, and thus the cokernels are isomorphic, $\O_{W_1} \cong \O_{W_2}$. Therefore $W_1=W_2$ and $\Phi$ is injective.

Since $\Phi$ is injective and $\dim(U^{[\alpha_0]}) \geq \dim(U) \cdot\alpha_0$, the Euler characteristic $\alpha_0$ is bounded above by the dimension of the finite type scheme $M^{\mu}(X,\tfrac12 \alpha_2 t^2 + \alpha_1 t)$. This proves (1). \\

\noindent (2) {\em Case 1.} We first show that 
\[
\{\alpha_0: \text{ $\mu$-stable pure sheaves $F$ on $X$ such that $P_F(t) = \tfrac{1}{2} \alpha_2 t^2 +\alpha_1 t +\alpha_0$}\}
\] 
is bounded above. Let $F$ be a $\mu$-stable pure sheaf with $P_F(t) = \tfrac{1}{2} \alpha_2 t^2 +\alpha_1 t +\alpha_0$. Since $F$ is pure, we can form a short exact sequence 
\[\xymatrix{0 \ar[r] & F \ar[r] & F^{DD} \ar[r] & R \ar[r] & 0 }\]
for some 0-dimensional sheaf $R$ by \cite[Prop. 1.1.10]{HL}. Then $F^{DD}$ is a reflexive $\mu$-stable sheaf with $P_{F^{DD}}(t) = \tfrac{1}{2} \alpha_2 t^2 +\alpha_1 t + (\alpha_0+\chi(R))$. Since $\alpha_0(F^{DD})= \alpha_0 + \chi(R)$ is bounded above by Lemma \ref{lem:boundofchi}(1), $\alpha_0(F)$ is also bounded above. \\

\noindent {\em Case 2.} We then show that 
\[
\{\alpha_0 : \text{ $\mu$-semistable pure sheaves $F$ on $X$ such that $P_F(t) = \tfrac{1}{2} \alpha_2 t^2 +\alpha_1 t +\alpha_0$}\}
\] 
is bounded above. Choose a $\mu$-semistable pure sheaf $F$ with $P_F(t) = \tfrac{1}{2} \alpha_2 t^2 +\alpha_1 t +\alpha_0$. Consider the Jordan-H{\"o}lder filtration
\[0=F_0 \subseteq F_1 \subseteq \cdots \subseteq F_l =F\]
of $F$ with respect to $\mu$-stability \cite[Thm.~1.6.7(ii)]{HL}. The length $l$ of the filtration is bounded from above because the leading term $\alpha_2$ of the Hilbert polynomial is fixed.  Since $F$ is pure, we can assume that all $F_i/F_{i-1}$ are also pure. Since each $F_i/F_{i-1}$ is a $\mu$-stable pure sheaf with the slope $\mu(F_i/F_{i-1})=\mu(F)$, Case 2 follows from Case 1. \\

\noindent {\em Case 3.} Finally, we consider the general case. Let $F$ be a pure sheaf with $P_F(t)= \tfrac12 \alpha_2 t^2 + \alpha_1 t + \alpha_0$. Choose a Harder-Narasimhan filtration
\[0=F_0 \subseteq F_1 \subseteq \cdots \subseteq F_l =F\]
of $F$ with respect to the $\mu$-stability \cite[Thm.~1.6.7(i)]{HL}. Since $F$ is pure, we may assume that $F_i/F_{i-1}$ are $\mu$-semistable pure sheaves. If $\alpha_2(F)$, $\alpha_1(F)$ are fixed and $\mu_{\min}(F)$ is bounded below, then there are only finitely many possible $\alpha_1(F_i/F_{i-1})$ (and $\alpha_2(F_i/F_{i-1})$). Therefore
\[\alpha_0(F) = \sum_i \alpha_0(F_i/F_{i-1})\]
is also bounded above by the result in Case 2. 
\end{proof}

\section{Moduli spaces} \label{sec:moduli}

In this section, we construct moduli spaces of $\PT_q$ pairs via geometric invariant theory and prove that they are open subschemes of the moduli space of perfect complexes parametrizing polynomially Bridgeland stable objects.

\subsection{Moduli stack of pairs}\label{ss:modulistackofpairs}

In this subsection, we introduce the moduli stack of all pairs and prove that the $\PT_q$-stability conditions are bounded and open conditions.

\begin{definition}
Let $X$ be a smooth projective variety over $\C$.
\begin{enumerate}
\item We define the {\em moduli stack of pairs on $X$} as a 2-functor
\[\tPair(X) : \mathrm{Sch}^{\mathrm{op}}_{\C} \to \mathrm{Groupoid}\]
sending a scheme $T$ to the groupoid of (not necessarily 2-dimensional) pairs $(F,s)$ of $T$-flat coherent sheaves $F$ on $X \times T$ and sections $s \in \Gamma(X\times T, F)$. The morphisms $(F_1,s_1) \to (F_2,s_2)$ in the groupoid are isomorphisms $\phi :F_1 \to F_2$ of sheaves such that $\phi(s_1)=s_2$.
\item We define the {\em moduli stack of non-zero pairs on $X$} as the substack
\[\cPair(X) \subseteq \tPair(X)\]
consisting of the pairs $(F,s)$ such that $s_t \neq 0 \in \Gamma(X,F_t)$ for all $t \in T$.
\end{enumerate}
\end{definition}



The moduli stacks $\tPair(X)$ and $\cPair(X)$ are algebraic stacks. Moreover, they have explicit descriptions as unions of open substacks which are global quotient stacks. We will now review the proofs of these facts.

Let us first fix some notation. Let $X$ be a smooth projective variety of dimension $n$ with a fixed very ample line bundle $\O_X(1)$. Fix $v \in H^*(X,\Q)$ and let $P_v(t) \in \Q[t]$ be the associated Hilbert polynomial. Choose an integer $m$. Fix a vector space $V$ of dimension $P_v(m)$. Let 
\begin{equation}\label{eq:def.quot}
\cQ :=\cQuot(X,V \otimes \O_X(-m),v)    
\end{equation}
be the Quot scheme of quotients $\rho : V\otimes \O_X(-m) \twoheadrightarrow F$ with $\ch(F)=v$. 
Let 
\begin{equation}\label{eq:quot.universal}
V \otimes \O_{\cQ} \otimes \O_X(-m) \twoheadrightarrow \FF    
\end{equation}
be the universal quotient on $\cQ\times X$ and let $\pi : \cQ \times X \to \cQ$ be the projection map. Consider the open subscheme
\begin{equation}\label{eq:def.quot0}
\cQ^{\circ} := \Big\{(F,\rho) \in \cQ : V \stackrel{\cong}{\to} H^0(X,F(m)) \text{ and } H^{i}(X,F(m))=0 \, \forall i>0\Big\},\    
\end{equation}
where $V \to H^0(X,F(m))$ is the map induced by $\rho$. Then the canonical map 
\begin{equation}\label{eq:S2.4}
V \otimes \O_{\cQ} \to R\pi_*(\FF(m))
\end{equation}
induced by \eqref{eq:quot.universal} is an isomorphism over $\cQ^{\circ}$.

\begin{proposition}\label{prop:modulistackofpairs}
The moduli stack $\cPair(X)$ has the following properties:
\begin{enumerate}
\item[$(1)$] We have a decomposition 
\[\cPair(X) = \bigsqcup_{v \in H^*(X,\Q)} \cPair(X,v)\]
by disjoint open substacks $\cPair(X,v)$ consisting of the pairs $(F,s)$ on $X$ with $\ch(F)=v \in H^*(X,\Q)$. 
\item[$(2)$] We have
\[\cPair(X,v) = \bigcup_{m \geq 0}\cPair(X,v)_m\]
for open substacks $\cPair(X,v)_m$ consisting of the pairs $(F,s)$ on $X$ such that $F(m)$ is globally generated and $H^{>0}(X,F(m))=0$.
\item[$(3)$] We have a global quotient stack presentation
\[\cPair(X,v)_m = \left[\PP_{\cQ^{\circ}}(R \pi_* \FF)/\PGL(V)\right],\]
where $\PP_{\cQ^{\circ}}(R\pi_*\FF):=\Proj \Sym h^0((R \pi_* \FF)\dual)$ denotes the projective cone. 
\end{enumerate}
Consequently, the moduli stack $\cPair(X)$ is an algebraic stack (locally of finite type).
\end{proposition}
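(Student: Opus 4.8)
The plan is to establish the three listed properties in order, since each relies on the previous one, and then deduce algebraicity as a formal consequence. For \textbf{(1)}, I would first recall that for a $T$-flat family of coherent sheaves $F$ on $X\times T$ with $T$ connected, the Chern character $\ch(F_t)\in H^*(X,\Q)$ is locally constant in $t$; this is standard (constructibility of cohomology and the Grothendieck--Riemann--Roch formula, or simply flatness of the Hilbert polynomial). Hence the condition ``$\ch(F_t)=v$ for all $t$'' is open and closed on the base, giving the claimed decomposition of $\cPair(X)$ into disjoint open (and closed) substacks $\cPair(X,v)$. No serious difficulty here.

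For \textbf{(2)}, the key input is boundedness: the family of coherent sheaves $F$ on $X$ that occur in some non-zero pair $(F,s)\in\cPair(X,v)$ with $\ch(F)=v$ is \emph{not} a priori bounded (there is no stability constraint on an arbitrary pair), so the union over $m$ genuinely requires all $m\geq 0$ rather than $m\gg 0$. The point is instead that for \emph{each} individual pair $(F,s)$, Serre vanishing provides some $m_0$ with $F(m)$ globally generated and $H^{>0}(X,F(m))=0$ for all $m\geq m_0$; and these two conditions (global generation, higher cohomology vanishing) are open in families by semicontinuity of cohomology and the usual openness of global generation (cf.\ the construction of $\cQ^\circ$ in \eqref{eq:def.quot0}). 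Thus each $\cPair(X,v)_m$ is an open substack, and every pair lies in one of them, giving the covering $\cPair(X,v)=\bigcup_{m\geq0}\cPair(X,v)_m$. I would be careful to phrase $\cPair(X,v)_m$ as a substack of $\cPair(X,v)$ by imposing these conditions fibrewise on the universal family, and to note that $m'\geq m$ gives $\cPair(X,v)_m\subseteq \cPair(X,v)_{m'}$ (by Castelnuovo--Mumford regularity), so the union is filtered.

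For \textbf{(3)}, which is the technical heart, I would argue as follows. Over $\cPair(X,v)_m$, the universal sheaf $\mathcal F$ satisfies $R\pi_*(\mathcal F(m))=\pi_*(\mathcal F(m))$, a vector bundle of rank $P_v(m)=\dim V$, and the quotient $V\otimes\O(-m)\twoheadrightarrow F$ data is exactly a point of $\cQ^\circ$; classically $\cCoh(X,v)_m=[\cQ^\circ/\GL(V)]$, and since the $\GG_m$-scalars act trivially on the moduli \emph{stack} of sheaves one gets $[\cQ^\circ/\PGL(V)]$ (one should be slightly careful: $\cCoh$ has automorphisms, so really $\cCoh(X,v)_m \simeq [\cQ^\circ/\GL(V)]$ and the rigidification by $\GG_m$ is what produces the $\PGL$ quotient; for the pair version no rigidification is needed because a non-zero section rigidifies). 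Adding the section $s:\O_X\to F$ amounts, after twisting by $\O(m)$ and applying $\pi_*$, to a section of $\pi_*(\mathcal F(m))$, i.e.\ a point of the total space of this bundle; the non-vanishing condition $s_t\neq0$ together with the fact that $(F,s)\cong(F,\lambda s)$ are \emph{not} identified in $\cPair$ but the $\GG_m\subseteq\GL(V)$ scalars \emph{do} rescale $s$, means the honest parameter space is $\PP_{\cQ^\circ}(R\pi_*\mathcal F)=\Proj\Sym h^0((R\pi_*\mathcal F)^\vee)$, the projectivization of that bundle, with its residual $\PGL(V)$-action. I would then check that this projective-cone construction represents the correct functor: a $T$-point of $\PP_{\cQ^\circ}(R\pi_*\mathcal F)$ is a $T$-point of $\cQ^\circ$ together with a line subbundle of the pullback of $(R\pi_*\mathcal F)^\vee$, equivalently (dualizing) a nowhere-zero section of $\pi_*(\mathcal F(m))$ up to scalar, equivalently a non-zero section $s:\O_X\to F(m)$ up to scalar, equivalently (untwisting) a non-zero section $\O_X\to F$ up to scalar; quotienting by $\PGL(V)$ kills the scalar ambiguity in $V$ but the section itself is only defined projectively, matching $\cPair(X,v)_m$. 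Hence $\cPair(X,v)_m=[\PP_{\cQ^\circ}(R\pi_*\mathcal F)/\PGL(V)]$.

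Finally, algebraicity: each $\cPair(X,v)_m$ is a global quotient of a quasi-projective scheme (a projective-bundle-like object over the quasi-projective $\cQ^\circ$) by the algebraic group $\PGL(V)$, hence an algebraic stack of finite type; $\cPair(X,v)$ is the increasing union of these open substacks, hence an algebraic stack locally of finite type; and $\cPair(X)$ is the disjoint union over $v\in H^*(X,\Q)$, hence algebraic and locally of finite type. The main obstacle I anticipate is getting the bookkeeping in \textbf{(3)} exactly right --- in particular pinning down why the section contributes a \emph{projectivization} of $R\pi_*\mathcal F$ (as opposed to its total space or a Grassmannian bundle) and reconciling the $\GG_m$-scalar action on $V$ with the scaling of the section so that the residual group is precisely $\PGL(V)$; this is the ``$\cPair$ needs no rigidification'' subtlety. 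Everything else is an assembly of standard boundedness, semicontinuity, and Quot-scheme facts.
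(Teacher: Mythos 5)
Parts (1), (2), and the final algebraicity argument are fine and close enough in spirit to the paper's. The real problems are in part (3), and there is one conceptual point that the paper organizes quite differently.

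\textbf{The isomorphism $(F,s)\cong(F,\lambda s)$.} You assert that ``$(F,s)\cong(F,\lambda s)$ are \emph{not} identified in $\cPair$.'' This is backwards. A morphism $(F_1,s_1)\to(F_2,s_2)$ in $\cPair(X)$ is by definition an isomorphism $\phi\colon F_1\to F_2$ with $\phi(s_1)=s_2$; taking $\phi=\lambda\cdot\id_F$ gives an isomorphism $(F,s)\cong(F,\lambda s)$. This is precisely \emph{why} the projectivization and the $\PGL(V)$-quotient produce the correct stack: the residual $\GG_m\subseteq\GL(V)$ acts on the fibre $\Gamma(X,F)$ of the cone by the scalar automorphisms of $F$, which are genuine isomorphisms in $\cPair(X)$, so passing to $\Proj$ over $\PGL(V)$ loses nothing. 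Taken at face value your stated reason would argue \emph{against} the projectivization, so the conclusion of (3) does not follow from what you write.

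\textbf{The functor of points of $\PP_{\cQ^\circ}(R\pi_*\FF)$.} The chain of ``equivalently''s does not hold. $\PP_{\cQ^\circ}(R\pi_*\FF)$ is by definition $\Proj\Sym h^0((R\pi_*\FF)^\vee)$, the projective cone of the coherent sheaf $h^0((R\pi_*\FF)^\vee)$ on $\cQ^\circ$ (not a projective \emph{bundle}; this sheaf need not be locally free for arbitrary pairs). $(R\pi_*\FF)^\vee$ is a perfect complex, so ``line subbundle of $(R\pi_*\FF)^\vee$'' does not parse; the cone is governed by its $h^0$. Twisting $s\colon\O_X\to F$ by $\O(m)$ gives $\O_X(m)\to F(m)$, and pushing down gives an element of $R_m^*\otimes\pi_*(\FF(m))\cong V\otimes R_m^*$, \emph{not} a section of $\pi_*(\FF(m))$; the $R_m^*$ factor is exactly what the paper's embedding \eqref{3} carries around. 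Finally ``untwisting'' a section $\O_X\to F(m)$ does not produce a section $\O_X\to F$. The correct identification of $T$-points of $\Spec\Sym h^0((R\pi_*\FF)^\vee)$ over $\cQ^\circ$ with sections $s\in\Gamma(X\times T,\FF|_{X\times T})$ is an application of EGA III 7.7.8 (relative representability of the Hom functor), which your argument never invokes.

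\textbf{Comparison with the paper.} The paper does not unwind $\PP_{\cQ^\circ}(R\pi_*\FF)$ by hand. It first observes that $\tPair(X)$ (pairs with possibly zero section) is by definition the abelian cone $\Spec\Sym\ext^n_{\pi^C}(\FF^C,K_X)$ over $\cCoh(X)$, where $\ext^n_{\pi^C}(\FF^C,K_X)=h^0((R\pi^C_*\FF^C)^\vee)$, and $\cPair(X)$ is the complement of the zero section. All of (1), (2), and the algebraicity then follow by base change from the known decomposition $\cCoh(X)=\bigsqcup_v\bigcup_m\cCoh(X,v)_m=\bigsqcup_v\bigcup_m[\cQ^\circ/\GL(V)]$, and (3) is a one-line manipulation: pull the cone back to $\cQ^\circ$, remove the zero section, and use $\PGL(V)=\GL(V)/\GG_m$ together with the free $\GG_m$-scaling on the non-zero locus to identify $[(C_{\cQ^\circ}\setminus 0)/\GL(V)]$ with $[\PP_{\cQ^\circ}(R\pi_*\FF)/\PGL(V)]$. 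If you keep your more hands-on route, you should at least (i) correct the sign of the $(F,s)\cong(F,\lambda s)$ claim, (ii) replace the chain of equivalences with a direct appeal to the representability of the section functor by the cone of $h^0((R\pi_*\FF)^\vee)$, and (iii) be explicit about why the $\GG_m$-action on the non-zero locus of the cone is free (the section is assumed nowhere zero) so that the $\GG_m$-quotient really is $\Proj$.
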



\begin{proof}[Proof of Proposition \ref{prop:modulistackofpairs}]
Let $\cCoh(X)$ denote the moduli stack of coherent sheaves on $X$. Then 
\[\cCoh(X) = \bigsqcup_{v \in H^*(X,\Q)} \cCoh(X,v) \and \cCoh(X,v) = \bigcup_{m \geq 0} \cCoh(X,v)_m\]
for open substacks $\cCoh(X,v)_m \subseteq \cCoh(X,v) \subseteq \cCoh(X)$
such that
\begin{align*}
\cCoh(X,v)(\C) &= \{F \in \cCoh(X)(\C) : \ch(F)=v \in H^*(X,\Q) \},\\
\cCoh(X,v)_m(\C) &= \left\{F \in \cCoh(X,v)(\C) : \begin{matrix} F(m) \text{ globally generated and } \\  H^i(X,F(m))=0 \text{ for all } i>0 \end{matrix} \right\}.
\end{align*}
Let $\FF^C$ be the universal family on $\cCoh(X) \times X$ and let $\pi^C : \cCoh(X) \times X \to \cCoh(X)$ be the projection map. Then $R\pi^C_*(\FF^C(m))$ is a vector bundle over $\cCoh(X,v)_m$ and the associated $\GL_{P_v(m)}$ bundle is exactly $\cQ^{\circ}$. Hence we have a global quotient stack presentation (see also \cite[Thm.~4.6.2.1]{LMB})
\[\cCoh(X,v)_m = \left[\cQ^{\circ} / \GL(V) \right].\]

By definition, the moduli stack of pairs is the abelian cone
\[\tPair(X) = C(\ext^n_{\pi^C}(\FF^C,K_X)) = \Spec \Sym  \ext^n_{\pi^C}(\FF^C,K_X) \]
associated to the coherent sheaf $\ext^n_{\pi^C}(\FF^C,K_X)=h^0((R\pi^C_*\FF^C)\dual)$ on $\cCoh(X)$ and the moduli stack of non-zero pairs is the complement
\[\cPair(X) = \tPair(X) \setminus 0_{\cCoh(X)} \subseteq \tPair(X)\]
of the zero section $0_{\cCoh(X)}: \cCoh(X) \hookrightarrow \tPair(X)$. Hence, we have  (1) and (2) for the open substacks $\cPair(X,v):=\cPair(X)\times_{\cCoh(X)}\cCoh(X,v)$ and $\cPair(X,v)_m:=\cPair(X)\times_{\cCoh(X)}\cCoh(X,v)_m$. Moreover, we have a global quotient stack presentation
\[\cPair(X,v)_m = \left[(C_{\cQ^{\circ}}(\ext^n_\pi(\FF,K_X))\setminus 0_{\cQ^{\circ}})/\GL(V) \right] = \left[\PP_{\cQ^{\circ}}(R\pi_*\FF)/\PGL(V)\right]\]
since $\PGL(V) = \GL(V)/\GG_m$. 
\end{proof}





\begin{proposition}\label{prop:repPTq}
There exists an open substack $\cPair^{(q)}(X,v)$ of $\cPair(X,v)$, for any $q \in \{-1,0,1\}$, such that the $\C$-points are given by
\[\cPair^{(q)}(X,v)(\C) = \{(F,s)\in \cPair(X,v) : \text{$(F,s)$ is $\PT_q$-stable}\}.\]
Moreover, the moduli stack $\cPair^{(q)}(X,v)$ is an algebraic space of finite type.
\end{proposition}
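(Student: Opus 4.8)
The plan is to realize $\cPair^{(q)}(X,v)$ as the open substack of the algebraic stack $\cPair(X,v)$ of Proposition~\ref{prop:modulistackofpairs} cut out by $\PT_q$-stability, and then to show that this substack is of finite type (using boundedness) and is an algebraic space (because $\PT_q$ pairs have no non-trivial automorphisms). First I would prove that $\PT_q$-stability is an open condition: for a scheme $T$ and a $T$-flat family $(\F,s)$ of non-zero pairs on $X$ with $\ch(\F_t)=v$, the set of $t\in T$ for which $(\F_t,s_t)$ is $\PT_q$-stable is open in $T$. By Definition~\ref{def:stabilityconditionsofpairs'} this is the intersection of the locus where $\coker(s_t)\in\Coh_{\leq q}(X)$ and the locus where $\F_t\in\Coh_{\geq q+1}(X)$. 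For the first, writing $Q:=\coker(s\colon\O_{X\times T}\to\F)$ one has $\coker(s_t)=Q\otimes\kappa(t)$, hence $\Supp(\coker(s_t))=\Supp(Q)\cap(X\times\{t\})$, so this locus is exactly the set of $t$ over which the proper morphism $\Supp(Q)\to T$ has fibres of dimension $\leq q$ --- an open condition by upper semi-continuity of fibre dimension. For the second, $\F_t\in\Coh_{\geq q+1}(X)$ means $\F_t$ has no subsheaf of dimension $\leq q$: this is vacuous for $q=-1$, equivalent to $\ext^n_X(\F_t,K_X)=0$ for $q=0$, and equivalent to purity of $\F_t$ for $q=1$, and its openness in $T$ follows from the theory of relative torsion filtrations, as in the openness of purity used to construct moduli of sheaves (cf.\ \cite{HL}; for $q\in\{-1,1\}$ see also \cite{Pot1,PT1}). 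Intersecting, the subfunctor $\cPair^{(q)}(X,v)\subseteq\cPair(X,v)$ with the stated $\C$-points is an open substack, hence an algebraic stack.

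Second I would establish that the sheaves $F$ underlying $\PT_q$ pairs with $\ch(F)=v$ form a bounded family. For $q=-1$ this is finiteness of the Hilbert scheme. For $q=0$, from $0\to\O_Z\to F\to Q\to 0$ with $Q$ zero-dimensional, Proposition~\ref{prop:boundofchiforPT}(2) bounds $\chi(\O_Z)$ below while $\chi(\O_Z)=\chi(v)-\mathrm{length}(Q)\leq\chi(v)$ bounds it above; hence $\O_Z$ ranges over finitely many Hilbert schemes, $\mathrm{length}(Q)$ is bounded, and $F$ lies in a bounded family of extensions. For $q=1$, the proof of Proposition~\ref{prop:boundofchiforPT}(2) shows $\mu_{\min}(F)$ is bounded below; since $\mu(F)$ is fixed and the number of Harder--Narasimhan factors of a $2$-dimensional sheaf with fixed $\ch_{n-2}$ is bounded, $\mu_{\max}(F)$ is bounded above, so Grothendieck's boundedness lemma (e.g.\ \cite[Lem.~1.7.9]{HL}) yields boundedness --- alternatively one invokes Le Potier's boundedness of semistable coherent systems \cite{Pot1,Pot2}. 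Boundedness gives an integer $m_0$ with $\cPair^{(q)}(X,v)\subseteq\cPair(X,v)_m$ for all $m\geq m_0$, and since $\cPair(X,v)_m=[\PP_{\cQ^{\circ}}(R\pi_*\FF)/\PGL(V)]$ is of finite type, so is $\cPair^{(q)}(X,v)$.

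Third I would show that the automorphism group scheme of any family of $\PT_q$ pairs is trivial. The crucial point is that a $\PT_q$ pair $(F,s)$ satisfies $\Hom_X(\coker(s),F)=0$: since $\coker(s)\in\Coh_{\leq q}(X)$ and $F\in\Coh_{\geq q+1}(X)$, the image of any such homomorphism would be simultaneously a quotient of dimension $\leq q$ and a subsheaf of $F$, hence zero. Consequently any endomorphism $\phi$ of $(F,s)$ is the identity: the endomorphism $\psi:=\phi-\id_F$ annihilates $s$, so it factors through $\coker(s)$ and lies in the image of $\Hom_X(\coker(s),F)=0$. The same argument over $\Spec\C[\epsilon]/(\epsilon^2)$ shows $\underline{\Aut}(F,s)$ has trivial tangent space at the identity, and over an arbitrary base $T$ --- using that $\coker(s\colon\O_{X\times T}\to\F)$ is relatively of dimension $\leq q$ over $\F$, which is relatively pure of dimension $\geq q+1$, so that $\Hom_{X\times T}(\coker(s),\F)=0$ --- shows that $\cPair^{(q)}(X,v)$ has trivial inertia stack. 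Equivalently its diagonal is a monomorphism, so $\cPair^{(q)}(X,v)$ is an algebraic space; it is of finite type by the second step.

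The step I expect to be the main obstacle is the openness of the condition $\F_t\in\Coh_{\geq q+1}(X)$ in the first step. For $q\in\{-1,0\}$ it is immediate, as it reduces to the vanishing of the top relative $\ext$-sheaf $\ext^n_\pi(\F,K_X)$, which commutes with base change; but for $q=1$ it is exactly the openness of purity for the family $\F$, which requires the relative torsion filtration and care with the base-change behaviour of the lower relative $\ext$-sheaves. Promoting the automorphism vanishing to arbitrary bases in the third step is a routine, if slightly technical, complement.
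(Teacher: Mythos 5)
Your proposal is correct and follows the same three-step skeleton as the paper's proof: openness of $\PT_q$-stability (paper's Lemma \ref{lem:openness}), boundedness (Lemma \ref{lem:boundedness}), and triviality of automorphisms (Lemma \ref{lem:splpair}). The one place where you take a genuinely different and cleaner route is the openness of $\F_t\in\Coh_{\geq q+1}(X)$. For $q=0$, you observe the equivalence $T_0(\F_t)=0 \Leftrightarrow \ext^n_X(\F_t,K_X)=0$ (which follows from Auslander--Buchsbaum: $T_0(\F_t)=0$ iff $\F_t$ has depth $\geq 1$ at every closed point iff local projective dimension $\leq n-1$), and then argue openness directly from the fact that the top relative $\ext$-sheaf $\ext^n_\pi(\F,K_X)$ commutes with base change, so that its support projects to a closed subset of $T$. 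This is substantially shorter than the paper's argument for $q=0$, which instead bounds $\chi(T_0(\F_t))$ from above via $\dim_\C\Ext^n_X(\F_t,K_X)$, deduces finiteness of the set of Hilbert polynomials $\{P_{\F_t/T_0(\F_t)}\}$, and then invokes properness of the relative Quot schemes $\cQuot(X\times T/T,\F,P)$ over $T$. Your observation nicely isolates $q=1$ (openness of purity) as the genuinely hard case, whereas in the paper's write-up $q=0$ looks like the longest case. Similarly, for the cokernel condition you use upper semi-continuity of the fibre dimension of $\Supp(Q)\to T$, while the paper uses finiteness of the set $\{P_{Q_t}\}$ plus upper semi-continuity of Hilbert polynomials in families; both are valid.

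Two small remarks. Your argument that $\Hom_{X\times T}(\coker(s),\F)=0$ ``over an arbitrary base $T$'' is hand-wavy as stated: $\F$ and $\coker(s)$ on $X\times T$ do not literally lie in the absolute categories $\Coh_{\geq q+1}$ and $\Coh_{\leq q}$ of $X\times T$, and restriction to a fibre is not left exact. The cleaner route (which you also gesture at) is to note that the inertia of a finite-type algebraic stack over $\C$ is a finite-type group algebraic space, so showing the $\C$-fibres of $\underline{\Aut}$ are reduced points plus showing the tangent space at the identity vanishes --- both of which follow from the $\C$-point computation, over $\C$ and over $\C[\epsilon]/(\epsilon^2)$ respectively --- already forces the inertia to be trivial. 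Finally, your alternative boundedness argument for $q=1$ via $\mu_{\min}(\F)$ and Grothendieck's lemma does work (the bound on $\mu_{\min}$ is indeed extracted in the proof of Proposition \ref{prop:boundofchiforPT}(2)), but the paper simply cites Le Potier's boundedness for coherent systems, as you also note.
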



We will prove Proposition \ref{prop:repPTq} through  three steps. We first prove that $\PT_q$ pairs are {\em simple} pairs.

\begin{lemma}\label{lem:splpair}
For any $\PT_q$ pair $(F,s)$ on $X$, we have $$\Aut_{\cPair(X)}(F,s)=\{1\}.$$
\end{lemma}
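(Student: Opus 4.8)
The plan is to show that any automorphism $\phi$ of a $\PT_q$ pair $(F,s)$ is the identity by reducing to the fact that $\O_Z$ (resp.\ $\O_S$) has no nontrivial pair-automorphisms, which is clear since $\Aut(\O_X \twoheadrightarrow \O_Z) = 1$. Concretely, let $\phi : F \to F$ be an isomorphism of sheaves with $\phi(s) = s$. First I would use Lemma \ref{lem:support} to form the canonical short exact sequence $0 \to \O_Z \to F \to Q \to 0$ (writing $\O_S$ when I mean the pure part in the $\PT_1$ case, but the structural argument is the same), where $Z = \Supp(F)$ is the scheme-theoretic support of $s$, the map $\O_X \to \O_Z$ is the canonical surjection, and $Q = \coker(s)$. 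Since $\phi$ fixes $s$, it carries the image subsheaf $\O_Z = \im(s) \subseteq F$ to itself and restricts to the identity on it (the restriction $\phi|_{\O_Z}$ is an automorphism of $\O_Z$ fixing the image of $1 \in \Gamma(\O_X)$, hence the identity). Consequently $\phi$ descends to an automorphism $\bar\phi$ of $Q$, and we get $\psi := \phi - \id_F : F \to F$ which kills $\O_Z$ and therefore factors as $F \twoheadrightarrow Q \xrightarrow{g} F$ for a map $g : Q \to F$; moreover $g$ composed with $F \twoheadrightarrow Q$ equals $\bar\phi - \id_Q$.

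The key step is then to show $\Hom_X(Q,F) = 0$, which forces $g = 0$, hence $\psi = 0$ and $\phi = \id_F$. For this I would use that $Q \in \Coh_{\leq q}(X)$ while $F \in \Coh_{\geq q+1}(X)$: any map $Q \to F$ has image a quotient of $Q$, hence lies in $\Coh_{\leq q}(X)$, but $F$ has no nonzero subsheaves in $\Coh_{\leq q}(X)$ by definition of $\Coh_{\geq q+1}(X) = \{T_q(-) = 0\}$. Therefore $\Hom_X(Q,F) = 0$. This is really the only nontrivial input, and it is immediate from the definitions; I expect it to be routine rather than an obstacle. (For $q = -1$ the cokernel $Q$ vanishes and the statement is trivial; for $q \in \{0,1\}$ the argument above applies verbatim.)

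Alternatively, and perhaps more cleanly, I would phrase the whole argument via the associated complex $I\udot = [\O_X \xrightarrow{s} F]$: an automorphism of the pair $(F,s)$ is the same as an automorphism of the distinguished triangle $I\udot \to \O_X \xrightarrow{s} F$ inducing the identity on $\O_X$, and by the computation $\Hom_X(I\udot, \O_X) \cong \C$ (established in the proof of Lemma \ref{lem:injectivity'}, using $\Ext^{\leq 1}_X(F,\O_X) = 0$) together with $\Hom_X(Q,F)=0$ one sees such an automorphism must be the identity. However this requires $n \geq 4$, whereas the direct argument above works for all $n$, so I would keep the direct one as the main proof. The only care needed is to handle the $\PT_1$ case, where the canonical subsheaf is $\O_S$ rather than $\O_Z$ — but since $S$ is still the scheme-theoretic support of $s$ by Lemma \ref{lem:support}, nothing changes. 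I do not anticipate any genuine difficulty here; the statement is a formal consequence of the two subcategories $\Coh_{\leq q}$ and $\Coh_{\geq q+1}$ being disjoint in the appropriate sense.
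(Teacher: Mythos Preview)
Your proof is correct and follows essentially the same approach as the paper: form the short exact sequence $0 \to \O_Z \to F \to Q \to 0$, observe that $\Hom_X(Q,F)=0$ because $Q\in\Coh_{\leq q}(X)$ and $F\in\Coh_{\geq q+1}(X)$, and conclude that any pair-automorphism is the identity. The paper packages this as the injectivity of $\Hom_X(F,F)\to\Gamma(X,F),\ \phi\mapsto\phi(s)$, whereas you write out $\psi=\phi-\id_F$ explicitly, but these are the same argument; your aside about $\O_S$ versus $\O_Z$ in the $\PT_1$ case is unnecessary since Lemma~\ref{lem:support} already gives a uniform support $Z$ in all cases.
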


\begin{proof}
Consider the short exact sequence of sheaves
\[\xymatrix{0 \ar[r] & \O_Z \ar[r] & F \ar[r] & Q \ar[r] & 0}\]
where $Z$ is the support of $F$ and $Q$ is the cokernel of $s$. By the $\PT_q$-stability condition, we have $\Hom_X(Q,F) =0.$ Hence we have an injective map
\begin{equation}\label{eq:S2.15}
\Hom_X(F,F) \longrightarrow \Hom_X(\O_Z,F) = \Gamma(X,F) : \phi \mapsto \phi(s).
\end{equation}
Since $\Aut_{\cPair(X)}(F,s)$ is the fibre of the above map \eqref{eq:S2.15} over the point $s \in \Gamma(X,F)$, the automorphism group $\Aut_{\cPair(X)}(F,s)$ is trivial.
\end{proof}

Secondly, we prove that the collection of $\PT_q$ pairs is {\em bounded}.

\begin{lemma}\label{lem:boundedness}
The collection of sheaves
\[\{F \in \cCoh(X,v) (\C): \text{$\exists s\in \Gamma(X,F)$ such that $(F,s)$ is $\PT_q$-stable}\}\]
for fixed $X$ and $v$ is bounded in the sense of \cite[Def.~1.1]{Gro}.
\end{lemma}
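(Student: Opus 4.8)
The plan is to reduce boundedness to a uniform bound on the Castelnuovo--Mumford regularity of the sheaves $F$ arising in $\PT_q$ pairs, using Kleiman's criterion: a family of coherent sheaves with fixed Hilbert polynomial is bounded iff their regularities are uniformly bounded (equivalently, one can use \cite[Lem.~1.7.6, Thm.~3.3.7]{HL} on boundedness via restriction to hyperplanes, or Grothendieck's Lemma directly). Since the Hilbert polynomial $P_v$ is fixed, it suffices to show that there is an integer $m_0$, depending only on $X$ and $v$, such that every such $F$ is $m_0$-regular.

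First I would set up the structural short exact sequence that is available for any $\PT_q$ pair, namely $0 \to \O_Z \to F \to Q \to 0$ with $Z = \Supp(F)$ (of pure dimension $2$ in the cases $q=0,1$, and equal to the scheme-theoretic support in general by Lemma \ref{lem:support}) and $\dim(Q) \leq q$. The strategy is to bound the regularity of $F$ by bounding the regularities of $\O_Z$ and of $Q$ separately, since regularity behaves well in short exact sequences (if $\O_Z$ is $a$-regular and $Q$ is $b$-regular then $F$ is $\max(a,b)$-regular). The subschemes $Z$ that occur are $2$-dimensional closed subschemes of $X$ with Hilbert polynomial determined by $\alpha_2(F)=\alpha_2$ and $\alpha_1(F)=\alpha_1$ (these are fixed by $v$); such $Z$ are parametrized by finitely many components of a Hilbert scheme of $X$ — but here one must be careful, because $\chi(\O_Z)$ is \emph{not} fixed (it differs from $\chi(F)$ by $\chi(Q)$). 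This is precisely where the numerical bounds of the previous subsection enter: by Proposition \ref{prop:boundofchiforPT}, for $\PT_q$ pairs with fixed $\alpha_2,\alpha_1$ the quantity $\chi(F)$ — hence also $\chi(\O_Z)=\chi(F)-\chi(Q)$ and $\chi(Q)$, the latter being $\geq 0$ and $\leq \chi(F) - \chi(\O_Z)_{\min}$ — ranges over a \emph{finite} set of integers. Thus the pairs $(\O_Z, Q)$ have only finitely many possible Hilbert polynomials, so the $Z$'s lie in finitely many Hilbert schemes (a bounded family, hence uniformly regular) and the $Q$'s lie in finitely many Quot-type families of sheaves of dimension $\leq q \leq 1$ on $X$ with bounded Hilbert polynomial. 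For the $Q$-part one additionally needs that $Q$ itself, as a subquotient structure $\coker(s)$ with $s$ a section of the bounded family $F$, is bounded; the cleanest route is to observe that $Q$ is a quotient of $F$ and $\Supp(Q)$ is low-dimensional with bounded degree and Euler characteristic, and then invoke boundedness of sheaves of bounded dimension and Hilbert polynomial on a fixed projective variety (\cite[Lem.~1.7.9]{HL} or Grothendieck).

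Concretely the steps are: (1) fix $v$, extract $\alpha_2 = \alpha_2(v) > 0$ and $\alpha_1 = \alpha_1(v)$; (2) invoke Proposition \ref{prop:boundofchiforPT} to get that $\chi(\O_Z)$ lies between an explicit lower bound (depending only on $X,\alpha_2,\alpha_1$) and $\chi(v)$, so $Z$ ranges over a finite union of Hilbert schemes $\Hilb(X,P)$ — a bounded family; (3) deduce that $\chi(Q)=\chi(v)-\chi(\O_Z)$ takes finitely many nonnegative values, and that $Q$ (of dimension $\leq q$, degree controlled by $v$) ranges over a bounded family of sheaves on $X$; (4) combine via the sequence $0\to\O_Z\to F\to Q\to 0$: a bounded family of $\O_Z$'s and a bounded family of $Q$'s, together with the fact that extensions/sections are cut out by a finite-type parameter space (indeed $\cPair(X,v)$ is already known to be a finite-type-in-each-$m$ algebraic stack by Proposition \ref{prop:modulistackofpairs}), forces the family of $F$'s to be bounded — equivalently, uniformly $m_0$-regular. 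The main obstacle is step (3)/(2): making sure one really has a \emph{two-sided} bound on $\chi(\O_Z)$, i.e.\ that $\chi(Q)$ cannot be arbitrarily large. The lower bound on $\chi(\O_Z)$ is exactly Proposition \ref{prop:boundofchiforPT}(2), and the upper bound $\chi(\O_Z) = \chi(F) - \chi(Q) \leq \chi(F) = \chi(v)$ follows since $Q$ is an effective quotient (so $\chi(Q)\geq 0$, as $Q$ has dimension $\leq 1$ and is globally generated after a twist — or simply because $Q \in \Coh_{\leq q}(X)$ with $q \le 1$ has $\chi(Q) = h^0(Q) - h^1(Q)$ and one twists up; more robustly, $\chi(Q)\ge 0$ holds for $\dim Q\le 1$ once we note $Q$ is a quotient of the bounded $F$ and use semicontinuity, or just bound $\chi(Q)$ below separately by the same Proposition applied to the $\PT_q$ pair data). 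Once this finiteness of possible $\chi(\O_Z)$ is in hand, everything else is a routine assembly of standard boundedness statements.
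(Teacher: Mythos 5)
Your plan matches the paper's actual argument for the case $q=0$: set up the short exact sequence $0 \to \O_Z \to F \to Q \to 0$, bound $\chi(\O_Z)$ from below via Proposition \ref{prop:boundofchiforPT}(2) applied to the $\DT$ pair $(\O_Z,1)$, bound it from above via $\chi(Q)\ge 0$, conclude that the $\O_Z$'s form a bounded family, and transfer the regularity bound from $\O_Z$ to $F$ (the paper does this by observing $H^{>0}(X,Q(n))=0$ for all $n$ since $Q$ is $0$-dimensional, then citing \cite[Lem.~1.7.6]{HL}). For $q=-1$ your argument trivializes to Grothendieck's boundedness of Hilbert schemes, which is also fine.

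However, there is a genuine gap in the way you try to make the argument uniform over all $q$. The two key inputs — $\alpha_1(\O_Z)=\alpha_1(F)$ and $\chi(Q)\ge 0$ — both hold because $Q$ is $0$-dimensional, which is guaranteed only for $q\in\{-1,0\}$. For $q=1$ the cokernel $Q$ is typically $1$-dimensional (pure $1$-dimensional, by Proposition \ref{lem:PT1CMsupp}), so $\alpha_1(\O_S)<\alpha_1(F)$ in general and, more seriously, $\chi(Q)\ge 0$ fails: for instance a non-split $\PT_1$ pair $0\to\O_S\to F\to\O_C\to 0$ with $C\subset S$ a smooth curve of genus $g\ge 2$ has $\chi(Q)=1-g<0$. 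None of the four justifications you float for $\chi(Q)\ge 0$ repairs this: twisting $Q$ up changes $\chi$; appealing to semicontinuity from boundedness of $F$ is circular since boundedness of $F$ is the thing being proved; and bounding $\chi(Q)$ below via Proposition \ref{prop:boundofchiforPT} would require a priori control on $\alpha_1(\O_S)$ and an \emph{upper} bound on $\chi(\O_S)$, which is exactly what you are missing. The paper avoids all of this by citing Le Potier's boundedness theorem \cite[Thm.~4.11]{Pot1} for $q=1$ (and \cite[Thm.~2.1]{Gro} for $q=-1$), reserving the short-exact-sequence argument only for the new case $q=0$ where $Q$ is $0$-dimensional. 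You should restructure the proof the same way: prove the $q=0$ case by your argument and dispose of $q=\pm 1$ by citation.
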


\begin{proof}
The boundedness of the $\DT=\PT_{-1 }$ pairs follows from \cite[Thm.~2.1]{Gro} and the boundedness of the $\PT=\PT_{1}$ pairs follows from \cite[Thm.~4.11]{Pot1}. Thus it suffices to consider the $\PT_0$ pairs.  

Given a $\PT_0$ pair $(F,s)$ with $\ch(F) = v$, let $Z$ be the support of $F$ and $Q$ the cokernel of $s$. By Proposition \ref{prop:boundofchiforPT}(2) for $\DT$-stability, $\chi(\O_Z)$ is bounded below. On the other hand $\chi(\O_Z)$ is bounded above since $\chi(Q)\geq 0$. Moreover, $\ch_i(\O_Z)=\ch_i(F)$ for $i<n$. Hence there are only finitely many possible Chern characters $\ch(\O_Z)$, and the collection of all such $\O_Z$ is bounded by \cite[Thm.~2.1]{Gro}. 
Therefore, there exists an integer $m$ such that for all $\O_Z$ as above, we have $H^i(X,\O_Z(m-i)) = 0$ for all $i>0$. For any $\PT_0$ pair $(F,s)$ with $\ch(F) = v$, we then also have $H^i(X,F(m-i)) = 0$ for all $i>0$ ($H^{i}(X,Q(m-i))$ is automatically zero since $Q$ is 0-dimensional). By \cite[Lem.~1.7.6]{HL}, we deduce the required boundedness for $\PT_0$ pairs. 
\end{proof}


The proof of Lemma \ref{lem:boundedness} also proves the following lemma, which will be used in the proof of Proposition \ref{prop:GIT/PTq} in the next subsection.

\begin{lemma}\label{lem2}
The collection of sheaves 
\[\{F \in \cCoh(X,v)(\C) : \text{$\exists s\in \Gamma(X,F)$ such that $\coker(\O_X \xrightarrow{s} F)\in \Coh_{\leq0}(X)$}\}\]
for fixed $X$ and $v$ is bounded.
\end{lemma}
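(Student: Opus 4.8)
The plan is to run the boundedness argument used for $\PT_0$ pairs in the proof of Lemma \ref{lem:boundedness} essentially verbatim. Indeed, that argument only used that $\coker(\O_X\xrightarrow{s}F)$ is $0$-dimensional; the purity hypothesis $T_0(F)=0$ built into $\PT_0$-stability was irrelevant there, so the weaker hypothesis of Lemma \ref{lem2} already suffices.

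First I would, given $F\in\cCoh(X,v)(\C)$ together with a section $s$ such that $Q:=\coker(\O_X\xrightarrow{s}F)\in\Coh_{\leq0}(X)$, factor $s$ through its image $\im(s)=\O_Z$, where $Z$ is the scheme-theoretic support of $s$, producing the short exact sequence
\[0\to\O_Z\to F\to Q\to 0.\]
Since $Q$ is $0$-dimensional, $\ch_i(\O_Z)=\ch_i(F)=v_i$ for all $i<n$, and $\chi(\O_Z)=\chi(F)-\chi(Q)\leq\chi(F)$ because $\chi(Q)\geq 0$; this bounds $\chi(\O_Z)$ from above. For a lower bound I would apply Proposition \ref{prop:boundofchiforPT}(2) to the $\DT$ pair $(\O_Z,\O_X\twoheadrightarrow\O_Z)$ --- here one uses that in the setting where Lemma \ref{lem2} is invoked, namely the proof of Proposition \ref{prop:GIT/PTq}, the class $v$ is the Chern character of a $2$-dimensional sheaf, so that $\alpha_2(\O_Z)=\alpha_2(F)>0$ --- obtaining a lower bound on $\chi(\O_Z)$ depending only on $X$ and $v$. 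Combined with the equalities $\ch_i(\O_Z)=v_i$ for $i<n$, this shows that $\ch(\O_Z)$ ranges over a finite set, hence the family of all such $\O_Z$ is bounded by \cite[Thm.~2.1]{Gro}.

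Finally I would deduce boundedness of the sheaves $F$ themselves. Boundedness of the family $\{\O_Z\}$ provides an integer $m$ with $H^{i}(X,\O_Z(m-i))=0$ for all $i>0$ and all $Z$ as above; since $Q$ is $0$-dimensional, $H^{i}(X,Q(\ell))=0$ for all $i>0$ and all $\ell$; so twisting the short exact sequence by $\O_X(m-i)$ gives $H^{i}(X,F(m-i))=0$ for all $i>0$, i.e.\ every such $F$ is $m$-regular, and Kleiman's criterion \cite[Lem.~1.7.6]{HL} yields the claimed boundedness. I do not expect a genuine obstacle here, since this is a mild generalization of the $\PT_0$ case of Lemma \ref{lem:boundedness}; the only point requiring care is checking that $(\O_Z,\O_X\twoheadrightarrow\O_Z)$ is a bona fide $\DT$ pair, so that Proposition \ref{prop:boundofchiforPT}(2) is applicable.
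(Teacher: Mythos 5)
Your proof is correct and is exactly the argument the paper intends: the text remarks ``the proof of Lemma~\ref{lem:boundedness} also proves the following lemma,'' and you have correctly identified that the $\PT_0$ case of that proof only uses $Q\in\Coh_{\leq 0}(X)$ (not the purity condition $T_0(F)=0$), so it applies verbatim to the larger collection in Lemma~\ref{lem2}.
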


Lastly, we show that the $\PT_q$-stability conditions are {\em open}.

\begin{lemma}\label{lem:openness}
Let $(F,s)$ be a $T$-flat family of pairs on $X$ parametrized by a scheme $T$ of finite type over $\C$. Then 
\[\{t \in T(\C) : \text{the fibre $(F_t,s_t)$ of $(F,s)$ is $\PT_q$-stable}\}\]
is a Zariski open subset of $T(\C)$.
\end{lemma}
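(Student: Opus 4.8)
The plan is to break the $\PT_q$-stability condition into its two constituent parts --- $F_t \in \Coh_{\geq q+1}(X)$ and $Q_t := \coker(s_t) \in \Coh_{\leq q}(X)$ --- and show each defines a Zariski open locus in $T$. Since $(F,s)$ is a $T$-flat family and $T$ is of finite type, the whole family of sheaves $\{F_t\}$ is bounded, and likewise $\{Q_t\}$ is a family of cokernels, though $Q$ itself need not be $T$-flat. First I would handle the condition on $Q_t$: the locus where $\dim(Q_t) \leq q$ is open. One way is to stratify by the support: the function $t \mapsto \dim \Supp(Q_t)$ is upper semicontinuous (the support of the non-flat sheaf $Q$ is closed in $X \times T$, and properness of $X$ together with semicontinuity of fibre dimension of the support gives upper semicontinuity of $\dim \Supp(Q_t)$ in $t$). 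Hence $\{t : \dim(Q_t) \leq q\}$ is open.

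Next I would handle the condition $F_t \in \Coh_{\geq q+1}(X)$, i.e.\ $T_q(F_t) = 0$, the vanishing of the maximal $q$-dimensional subsheaf. Equivalently, $F_t$ has no associated points of dimension $\leq q$; equivalently, $\ext^i_X(F_t, K_X) = 0$ for all $i > n - q - 1$, by the standard duality characterization of the codimension of torsion subsheaves \cite[Prop.~1.1.6]{HL}. The family $\{F_t\}$ being bounded, I may twist by a sufficiently positive $\O_X(\ell)$ so that cohomology and base change applies uniformly; then the sheaves $\ext^i_{\pi}(F, K_X \boxtimes \O_T)$ are computed fibrewise up to base change, and their vanishing on a fibre is an open condition on $T$ by semicontinuity. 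This shows $\{t : T_q(F_t) = 0\}$ is open. Intersecting the two open loci gives the claim. I should be slightly careful that the two conditions interact correctly (e.g.\ that $Q_t = \coker(s_t)$ genuinely, not $\coker$ computed in a derived sense), but flatness of $F$ over $T$ guarantees $\coker(s)|_t = \coker(s_t)$, so there is no subtlety there.

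The main obstacle I anticipate is the semicontinuity argument for $T_q(F_t) = 0$: unlike purity of a single codimension, here we need the vanishing of \emph{all} lower-dimensional associated subsheaves simultaneously, so I must package this as the vanishing of a finite collection of $\ext$-sheaves $\ext^i_{\pi}(F, K_X)$ for $i$ in a range, and verify that base change holds for each after a uniform twist --- this uses boundedness (Lemma \ref{lem:boundedness}) in an essential way. An alternative, perhaps cleaner, route is to invoke the standard fact that purity and more generally the condition ``$\dim T_q(F_t) < q+1$'' are open in flat families (this is implicit in the constructions of moduli of pure sheaves in \cite{HL}, e.g.\ around Thm.~4.3.4), but since $F$ here is not assumed pure --- indeed $\PT_0$ sheaves are typically impure --- I would prefer to give the direct $\ext$-sheaf argument rather than cite a purity statement verbatim.
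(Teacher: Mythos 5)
Your plan for the cokernel condition is essentially the paper's (semicontinuity of the size of $Q_t$, phrased via dimension of support rather than via finiteness of Hilbert polynomials as in the paper, but these are interchangeable once boundedness is in place). The gap is in your handling of the condition $F_t \in \Coh_{\geq q+1}(X)$.

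Your characterization ``$T_q(F_t)=0 \iff \ext^i_X(F_t,K_X)=0$ for all $i>n-q-1$'' is \emph{false} for $q=1$. Purity of a $2$-dimensional sheaf $F$ on an $n$-fold requires $\ext^n_X(F,K_X)=0$ and $\ext^{n-1}_X(F,K_X)$ to be \emph{$0$-dimensional} (i.e.\ a codimension bound), not to vanish. Indeed, a pure $2$-dimensional scheme $S\subset X$ has $\ext^{n-1}_X(\O_S,K_X)\neq 0$ precisely when $S$ is not Cohen-Macaulay --- this is Proposition~\ref{Lem.PT0=PT1} of this very paper and is the source of the interesting $\PT_0$/$\PT_1$ distinction. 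So your proposed openness criterion would exclude all non-CM pure supports and is not even set-theoretically the right condition. The correct $\ext$-theoretic criterion is \cite[Prop.~1.1.10]{HL}, which bounds $\dim\ext^i$ rather than asserting vanishing; but a dimension bound on $\ext^{n-1}$ is harder to make vary well in families because, unlike $\ext^n$, the sheaf $\ext^{n-1}$ does not satisfy base change automatically.

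Your reluctance to cite the openness of purity is also misplaced: \cite[Prop.~2.3.1]{HL} asserts that the locus of $t$ where $F_t$ is pure is open in \emph{any} $T$-flat family, with no purity hypothesis on the family itself, so it applies directly and cleanly to the $q=1$ case (as the paper does). For $q=0$ your characterization $T_0(F_t)=0\iff\ext^n_{X_t}(F_t,K_{X_t})=0$ \emph{is} correct, but the argument you sketch via ``cohomology and base change after twisting'' conflates the relative $\Ext$-sheaves $\ext^i_\pi(F,-)$ on $T$ with the local $\ext^i_{X\times T}(F,-)$ on $X\times T$; twisting by $\O_X(\ell)$ changes what you are computing rather than forcing base change. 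What actually saves you in the $q=0$ case is that the \emph{top} local $\ext$ sheaf is a cokernel, so its formation commutes with restriction to fibres for any $T$-flat $F$; one then concludes by properness of $X\times T\to T$. The paper takes a genuinely different route for $q=0$: bound $\chi(T_0(F_t))$ using $\dim_\C\Ext^n_X(F_t,K_X)$ (which is bounded since these are fibre dimensions of a coherent sheaf on $T$), deduce finiteness of the set of Hilbert polynomials $\{P_{F_t/T_0(F_t)}\}$, and then express the closed complement of the good locus as a finite union of images of proper relative Quot schemes. This sidesteps base change for local $\ext$ altogether, at the cost of invoking Quot scheme properness.
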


\begin{proof}
We first show that the condition 
\[Q_t:=\coker(\O_X \xrightarrow{s_t} F_t) \in \Coh_{\leq q}(X)\]
 is an open condition.
By \cite[Prop.~1.2(i)]{Gro}, the set $\{P_{Q_t} : t \in T(\C)\}$ of Hilbert polynomials is finite. Hence the result follows from upper semi-continuity of Hilbert polynomials of non-flat families of coherent sheaves.

Now we consider the second condition
\[F_t \in \Coh_{\geq q+1}(X).\]
Firstly, if $q=-1$, then this condition is always satisfied. Secondly, if $q=1$, this condition is open by the openness of purity \cite[Prop.~2.3.1]{HL}. Finally, assume that $q=0$. We show that the set of the Hilbert polynomials
\begin{equation} \label{eqn:PF/T0F}
\{P_{{F_t}/{T_0(F_t)}} : t \in T(\C)\}
\end{equation}
is finite. It suffices to show that $\chi(T_0(F_t))$ is bounded above. Note that the short exact sequence $0 \to T_0 (F_t)\to F_t \to F_t/T_0(F_t) \to 0$
induces a short exact sequence
\[\xymatrix{0 \ar[r] & \ext^n_X(F_t/T_0(F_t), K_X) \ar[r] & \ext^n_X(F_t,K_X) \ar[r] & \ext^n_X(T_0(F_t),K_X) \ar[r] & 0}\]
of 0-dimensional sheaves by \cite[Prop.~1.1.6]{HL}. Then we have
\[\chi(T_0(F_t)) = \chi(\ext^n_X(T_0(F_t),K_X)) \leq \chi(\ext^n_X(F_t,K_X)) \leq \dim_\C \Ext^n_X(F_t,K_X)\]
where $\pi^T : X \times T \to T$ is the projection map, $i_t :\{t\} \hookrightarrow T$ is the inclusion map, and the last inequality follows from the local-to-global spectral sequence. Since $\Ext^n_X(F_t,K_X)$ are the fibres of the coherent sheaf  $\ext^n_{\pi^T} (F,K_X)$ on a scheme $T$ of finite type, their dimensions are bounded above.

Since, for any $t \in T(\C)$, the relative Quot schemes
\[\cQuot(X\times T/T , F, P_{F_t/T_0(F_t)})  \to T\]
are proper over $T$, their images are closed. The claim follows from finiteness of \eqref{eqn:PF/T0F}.
\end{proof}

Proposition \ref{prop:repPTq} follows directly from the above results.

\begin{proof}[Proof of Proposition \ref{prop:repPTq}]
By Lemma \ref{lem:openness}, we can form $\cPair^{(q)}(X,v)$ as an open substack of $\cPair(X,v)$. Then Lemma \ref{lem:splpair} proves that $\cPair^{(q)}(X,v)$ is an algebraic space and Lemma \ref{lem:boundedness} proves that $\cPair^{(q)}(X,v)$  is of finite type.
\end{proof}


\begin{remark}\label{rmk:repforDTPT1}
By \cite{Gro,Pot1}, the moduli spaces $\cPair^{(q)}(X,v)$ of $\PT_q$ pairs are projective schemes for $q=-1$ and $q=1$. In the next subsection, we will show that the space $\cPair^{(0)}(X,v)$ is also a projective scheme.
\end{remark}

We end this subsection with a remark on the natural derived enhancement of the moduli stack of pairs. This derived structure will not be used in the rest of the paper.

\begin{remark}\label{rem:derivedinterpretation}
There is a derived stack  $R\tPair(X)$ whose classical truncation is $\tPair(X)$ and the tangent complex at a point $(F,s)$ is 
\[\TT_{R\tPair(X)}|_{(F,s)} = R\Hom_X(I^\mdot,F)\]
where $I^\mdot=[\O_X \xrightarrow{s} F]$. We sketch the construction here. Recall that there exists a derived moduli stack $R\cCoh(X)$ of coherent sheaves as an open substack of the derived moduli stack $R\cPerf(X)$ of perfect complexes. Let $\FF$ denote the universal family on $R\cCoh(X)$ and let $\pi:R\cCoh(X)\times X\to R\cCoh(X)$ denote the projection map. Define the derived moduli stack of pairs as the total space
\[R\tPair(X):=\mathrm{Tot}_{R\cCoh(X)}(R\pi_* \FF)\]
of the perfect complex $R\pi_*\FF$ on $R\cCoh(X)$.
Moreover, the derived loop stack
\[\cL_{R\tPair(X),(F,s)}:=\Spec(\C)\times^h_{(F,s),R\tPair(X),(F,s)}\Spec(\C)\]
is an open substack of the fibre of the map between derived linear stacks
\[\mathrm{Tot}(R\Hom_X(F,F)) \xrightarrow{s} \mathrm{Tot}(R\Hom_X(\O_X,F))\]
at the point $s \in \Hom_X(\O_X,F)$. From the distinguished triangle
\[\xymatrix{R\Hom_X(I\udot,F)[-1] \ar[r] &R\Hom_X(F,F) \ar[r]^{s} & R\Hom_X(\O_X,F)}\]
the cotangent complex can be computed as
\[\TT_{R\tPair(X)}|_{(F,s)} \cong \TT_{\cL_{R\tPair(X),(F,s)}}|_{0}[1] \cong R\Hom_X(I\udot,F). \]
\end{remark}

\subsection{Geometric invariant theory}\label{ss:GIT}

In this subsection, we construct moduli spaces of $\PT_q$ pairs as projective schemes via geometric invariant theory. Although we largely follow \cite{Pot1}, our arguments differ since the underlying sheaves $F$ of $\PT_q$-stable pairs $(F,s)$ are not {\em pure} for $q=-1,0$. Our approach is inspired by \cite{ST}.

\begin{theorem}\label{Thm:GIT} 
Let $X$ be a smooth projective variety and $v \in H^*(X,\Q)$. 
Then there exists a projective scheme $\cM$, an action of $\PGL_N$ on $\cM$ for some $N$, and an $\SL_N$-equivariant ample line bundle $\cL^{(q)} \in \Pic^{\SL_N}(\cM)_{\Q}$ for each $q \in \{-1,0,1\}$ such that \[\cPair^{(q)}(X,v) = [\cM^{st}(\cL^{(q)})/\PGL_N] = [\cM^{ss}(\cL^{(q)})/\PGL_N].\] 
In particular, the moduli spaces $\cPair^{(q)}(X,v)$ are the GIT quotients $ \cM/\!/_{\cL^{(q)}}\SL_N$ and hence projective schemes.\end{theorem}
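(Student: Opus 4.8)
The plan is to follow the classical GIT construction of moduli of semistable pairs as in Le~Potier \cite{Pot1}, but with the refinements of Stoppa--Thomas \cite{ST} needed to handle sheaves $F$ that are not pure. First I would fix $X$, $v$, and an auxiliary integer $m \gg 0$: by Lemma~\ref{lem:boundedness} and Lemma~\ref{lem2}, for $m$ sufficiently large every $F$ underlying a $\PT_q$-stable pair (for all three values of $q$ simultaneously) satisfies $H^{>0}(X,F(m)) = 0$ and $F(m)$ globally generated, so that $F(m)$ has $N := P_v(m)$ global sections. Setting $V$ of dimension $N$, one forms the Quot scheme $\cQ = \cQuot(X, V\otimes\O_X(-m), v)$ and the locally closed subscheme $\cQ^\circ \subseteq \cQ$ of quotients inducing an isomorphism on global sections as in \eqref{eq:def.quot0}. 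Over $\cQ^\circ$ the universal sheaf $\FF$ has $R\pi_*\FF(m) = V\otimes\O_{\cQ^\circ}$, so the projective cone $\PP_{\cQ^\circ}(R\pi_*\FF)$ from Proposition~\ref{prop:modulistackofpairs}(3) is the parameter space for framed pairs, carrying a natural $\GL(V)$-action (descending to $\PGL(V)$ once one remembers the pair, not the framing). Take $\cM$ to be a suitable projective completion of this space — concretely the closure of (the image of) $\PP_{\cQ^\circ}(R\pi_*\FF)$ inside a product of a Grassmannian-type bundle and a projective bundle, equipped with the $\PGL_N$-action and the tautological polarizations pulled back from these factors.

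The heart of the argument is constructing, for each $q \in \{-1,0,1\}$, an $\SL_N$-linearized ample $\Q$-line bundle $\cL^{(q)}$ on $\cM$ whose (semi)stable locus is exactly the set of points representing $\PT_q$-stable pairs. One takes $\cL^{(q)}$ to be a weighted combination $\cL_{\mathrm{sheaf}}^{\otimes a} \otimes \cL_{\mathrm{section}}^{\otimes b}$ of the polarization governing the sheaf $F$ (via the Grothendieck embedding of the Quot scheme) and the polarization governing the section $s$ (via the projectivization of $R\pi_*\FF$), with the ratio $b/a$ a rational number depending on $q$. The key step is the Hilbert--Mumford numerical analysis: for a one-parameter subgroup $\lambda$ of $\SL_N$ acting on a point $[(F,s,\text{frame})]$, one computes $\mu^{\cL^{(q)}}(\lambda)$ in terms of the weighted filtration of $V$ induced by $\lambda$ and the induced subsheaves $F' \subseteq F$ together with whether the section $s$ lies in the subspace. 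As in \cite{Pot1,ST}, GIT semistability for $\cL^{(q)}$ translates, after letting $m \to \infty$, into the inequality
\[
\frac{P_{F'}(\ell)}{\rank^\sharp F'} \;\lesssim\; \frac{P_F(\ell)}{\rank^\sharp F} \quad (\text{corrected by the section term})
\]
for all relevant subsheaves $F'$, where for the non-pure cases $q=-1,0$ one must use the reduced Hilbert polynomial relative to $\Coh_{\leq q}$ rather than the usual leading coefficient — this is exactly where the Stoppa--Thomas-style bookkeeping enters, because destabilizing subsheaves can now be lower-dimensional (supported on the non-pure part) and the naive slope comparison fails. One checks that for the chosen ratio $b/a = b_q/a_q$ this chain of inequalities is equivalent to: $s$ is surjective ($q=-1$), or $T_0(F)=0$ and $\dim\coker(s)\le 0$ ($q=0$), or $T_1(F)=0$ and $\dim\coker(s)\le 1$ ($q=1$), i.e. precisely $\PT_q$-stability from Definition~\ref{def:stabilityconditionsofpairs'}; moreover semistable $=$ stable in each case because $\PT_q$ pairs are simple (Lemma~\ref{lem:splpair}).

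Given this, the conclusion is formal. The open subscheme $\cM^{ss}(\cL^{(q)}) = \cM^{st}(\cL^{(q)})$ consists of framed $\PT_q$-stable pairs, the $\PGL_N$-action on it is free with quotient the algebraic space $\cPair^{(q)}(X,v)$ of Proposition~\ref{prop:repPTq}, so $[\cM^{st}(\cL^{(q)})/\PGL_N] = [\cM^{ss}(\cL^{(q)})/\PGL_N] = \cPair^{(q)}(X,v)$; since $\SL_N$ acts with finite stabilizers and $\cL^{(q)}$ is ample, Mumford's GIT gives that $\cM /\!\!/_{\cL^{(q)}} \SL_N$ is a projective scheme which corepresents this quotient stack, hence equals $\cPair^{(q)}(X,v)$, which is therefore a projective scheme. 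The main obstacle, as indicated, is the Hilbert--Mumford computation in the non-pure cases $q=-1,0$: one has to verify that the weighted-filtration slope inequalities produced by GIT really do cut out the subsheaf-theoretic conditions $T_q(F)=0$ and $\dim\coker(s)\le q$ and nothing more or less, carefully tracking the contribution of torsion and lower-dimensional subsheaves and the precise admissible range of the polarization ratio $b_q/a_q$; this is the part where \cite{ST} is genuinely needed and where most of the work of Section~\ref{ss:GIT} will go.
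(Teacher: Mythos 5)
Your proposal follows essentially the same route as the paper's Section~\ref{ss:GIT}: the same parameter space $\cM=\overline{\PP_{\cQ^{\circ}}(R\pi_*\FF)}\subseteq \cQ\times\PP(V\otimes R_m^*)$, the same type of linearization $\cL_{g(t),l}=\O_\cG(1)|_\cQ\boxtimes\O_{\PP(V\otimes R_m^*)}(g(l))$ with the polynomial $g^{(q)}$ chosen per stability condition, the same Hilbert--Mumford analysis (Lemma~\ref{Lem:HilbMum}), and the same Stoppa--Thomas-style bookkeeping to handle lower-dimensional subsheaves. You also correctly identify the essential point of Remark~\ref{rmk:parameterspace} --- that $\cM$ must be taken larger than Le~Potier's parameter space, so that all three stability conditions can be realized as GIT quotients of the same $\cM$.

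One place where the paper's execution differs from your uniform plan is worth flagging, because it is exactly where a direct argument becomes delicate: in Proposition~\ref{prop:GIT/PTq}(1), the implication ``$\cL^{(q)}$-semistable $\Rightarrow (F,\rho)\in\cQ^\circ$'' is only proved directly in the case $q=0$. For $q=-1$ and $q=1$ the paper instead proves the weaker statement ``$\cL^{(q)}$-semistable \emph{and} $(F,\rho)\in\cQ^\circ$ $\Rightarrow$ $\PT_q$-stable'', and then closes the gap by invoking the already-known projectivity of $\cPair^{(-1)}(X,v)$ and $\cPair^{(1)}(X,v)$ (due to Grothendieck and Le~Potier): the open dense embedding of a proper scheme into a separated scheme is an isomorphism, forcing $\cM^{st}(\cL^{(q)})=\cM^{ss}(\cL^{(q)})$ to lie entirely over $\cQ^\circ$. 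Your proposal envisions a direct Hilbert--Mumford verification of $\cQ^\circ$-membership for all three $q$; this is cleaner conceptually but was not what the authors actually carried out for $q=\pm1$, and a reader fleshing out your outline should be prepared to either find such a direct argument or fall back on the properness shortcut.
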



The rest of this subsection is devoted to the proof of Theorem \ref{Thm:GIT}. By the boundedness result in Lemma \ref{lem:boundedness}, we have $\cPair^{(q)}(X,v) \subseteq \cPair(X,v)_m$ for sufficiently large $m$. Denoting the Hilbert polynomial determined by $v$ by $P_v(t)$, we then define
$$
V:=\C^{P_v(m)}, \quad R_m := \Gamma(X,\O_X(m)).
$$
We use the notation from subsection \ref{ss:modulistackofpairs} (e.g., the Quot scheme $\cQ$ in \eqref{eq:def.quot}, the universal family \eqref{eq:quot.universal}, and the open subscheme $\cQ^{\circ}$ in \eqref{eq:def.quot0}).  Moreover, by Proposition \ref{prop:modulistackofpairs}, we have a presentation of the moduli stack as a global quotient stack, 
\[\cPair(X,v)_m=[\PP_{\cQ^{\circ}}(R\pi_*\FF)/\PGL(V)].\] 
We would like to use geometric invariant theory on $\PP_{\cQ^{\circ}}(R\pi_*\FF)$. Thus we need to consider a $\PGL(V)$-equivariant compactification of  $\PP_{\cQ^{\circ}}(R\pi_*\FF)$ and linearizations on it.


We define the {\em parameter space} $\cM$ as follows:

\begin{definition}
Define a projective scheme
\[\cM:=\overline{\PP_{\cQ^{\circ}}(R\pi_*\FF)} \subseteq \cQ \times \PP(V \otimes R_m^*)\]
as the closure of $\PP_{\cQ^{\circ}}(R\pi_*\FF)$ in $\cQ \times \PP(V \otimes R_m^*)$ with respect to the embedding
\begin{equation}\label{3}
\xymatrix{
\PP_{\cQ^{\circ}}(R\pi_*\FF) \ar@{^{(}->}[r] & \PP_{\cQ^{\circ}}(R\pi_*(\FF(m)) \otimes R_m^*)  & \\
& \PP_{\cQ^{\circ}}(\O_{\cQ^{\circ}}\otimes V\otimes R_m^*) \ar@{^{(}->}[u]_{\cong} \ar@{^{(}->}[r] & \cQ \times \PP(V\otimes R_m^*).
}    
\end{equation}
Here the first map in \eqref{3} is the closed embedding 
induced by the canonical map
\[\Gamma(X,\O_X(m)) \otimes R\pi_*\FF \to R\pi_*(\FF(m)),\]
the second map in \eqref{3} is an isomorphism induced by the canonical map \eqref{eq:S2.4}, and the last map in \eqref{3} is the open embedding induced by $\cQ^{\circ} \hookrightarrow \cQ$.
\end{definition}

\begin{remark}\label{rmk:parameterspace} The parameter space $\cM$ in this paper is larger than the parameter space of Le Potier in \cite{Pot1,Pot2}. Le Potier considered $\overline{\cQ}_{\mathrm{pure}} \times \PP(V\otimes R_m ^*)$ where $\overline{\cQ}_{\mathrm{pure}} \subseteq \cQ$ is the closure of the open subscheme $\cQ_{\mathrm{pure}}\subseteq \cQ$. The difference arises from the fact that we want to realize all three moduli spaces $\cPair^{(q)}(X,v)$ for $q=-1,0,1$, as GIT quotients of \emph{the same} scheme $\mathcal{M}$ by the same group $\PGL(V)$ (with respect to different linearizations).
\end{remark}

We consider the {\em linearizations} $\cL_{g(t),l}$ on $\cM$ defined as follows:

\begin{definition}
For a sufficiently large $l$, consider the standard Grothendieck embedding into the Grassmannian
\[\cQ \hookrightarrow \cG:={\curly Grass}(V\otimes R_{l-m}, P_v(l)) : (F,\rho) \mapsto (V\otimes R_{l-m} \twoheadrightarrow \Gamma(X,F(l))).\]
Fix a polynomial $g(t) \in \Q[t]$ with positive leading coefficient. Define the $\SL(V)$-equivariant very ample line bundle
\[\cL_{g(t),l} := \O_{\cG}(1)|_{\cQ}\boxtimes \O_{\PP(V\otimes R_m^*)}(g(l)) \in \Pic^{\SL(V)}(\cM)_\Q,\]
where $\O_{\cG}(1)$ is the polarization on $\cG$ induced by the standard Pl\"ucker embedding of $\cG$.
\end{definition}

We describe the $\cL_{g(t),l}$-stable points via the Hilbert-Mumford criterion.

\begin{lemma}\label{Lem:HilbMum} 
A point $(F,\rho,s)$
in $\cQ \times \PP(V\otimes R_m^*)$ is $\cL_{g(t),l}$-semistable if and only if the following two conditions are satisfied:
\begin{enumerate}
\item[$(1)$] for all non-zero proper subspaces $V' \subseteq V$, we have
\begin{equation}\label{eq:ineq1}
\frac{P_{F'}(l)}{\dim(V')} \geq \frac{P_F(l) -g(l)}{\dim(V)}
\end{equation}
where $F':=\rho(V'\otimes \O_X(-m)) \subseteq F$, and
\item[$(2)$] for all non-zero proper subspaces $V' \subseteq V$ with $s \in V' \otimes R_m^*$, we have
\begin{equation}\label{eq:ineq2}
\frac{P_{F'}(l) - g(l)}{\dim(V')} \geq \frac{P_F(l) -g(l)}{\dim(V)},
\end{equation}
where again $F':=\rho(V'\otimes \O_X(-m)) \subseteq F$.
\end{enumerate}
Moreover, $(F,\rho,s)$ is $\cL_{g(t),l}$-stable if and only if the above two inequalities hold strictly.
\end{lemma}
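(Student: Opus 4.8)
The plan is to apply the Hilbert--Mumford numerical criterion for the $\SL(V)$-action on $\cQ\times\PP(V\otimes R_m^*)$ linearized by $\cL_{g(t),l}$, and to match the resulting inequalities with \eqref{eq:ineq1} and \eqref{eq:ineq2}. Recall that $(F,\rho,s)$ is $\cL_{g(t),l}$-semistable if and only if the Mumford weight satisfies $\mu^{\cL_{g(t),l}}\big((F,\rho,s),\lambda\big)\leq 0$ for every one-parameter subgroup $\lambda:\GG_m\to\SL(V)$, with strict inequality characterizing stability. Since $\cL_{g(t),l}=\O_\cG(1)|_\cQ\boxtimes\O_{\PP(V\otimes R_m^*)}(g(l))$ is an external product, the weight is additive,
\[\mu^{\cL_{g(t),l}}\big((F,\rho,s),\lambda\big)=\mu^{\O_\cG(1)}\big((F,\rho),\lambda\big)+g(l)\cdot\mu^{\O_{\PP(V\otimes R_m^*)}(1)}\big([s],\lambda\big).\]
A one-parameter subgroup of $\SL(V)$ is the same datum as a weighted filtration of $V$, i.e.\ a weight decomposition $V=\bigoplus_{n\in\Z}V_n$ of total weight zero; writing $V_{\leq n}=\bigoplus_{i\leq n}V_i$, both summands above are, up to a fixed positive normalization, nonnegative combinations of their values on the one-step filtrations determined by the subspaces $V_{\leq n}\subsetneq V$. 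By this convexity it suffices to verify the inequality for the one-parameter subgroups attached to a single proper nonzero subspace $V'\subsetneq V$ with the obvious central normalization, and conversely such a family of inequalities forces $\mu^{\cL_{g(t),l}}(\cdot,\lambda)\leq 0$ for all $\lambda$.

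For such a $V'$, the two weights are computed by the standard GIT formulas for the Quot scheme, as in \cite[Ch.~4]{HL} and \cite{Pot1}, adapted to our larger parameter space in the spirit of \cite{ST}. For $l$ sufficiently large the Grothendieck embedding $\cQ\hookrightarrow\cG$ is defined and the image of $V'\otimes R_{l-m}$ in $H^0(X,F(l))$ has dimension $P_{F'}(l)$, where $F':=\rho(V'\otimes\O_X(-m))$; consequently $\mu^{\O_\cG(1)}\big((F,\rho),\lambda\big)$ is an explicit linear expression in $P_{F'}(l)$, $P_F(l)$, $\dim V'$ and $\dim V$. The weight $\mu^{\O_{\PP(V\otimes R_m^*)}(1)}\big([s],\lambda\big)$ for the section detects whether or not $s$ lies in $V'\otimes R_m^*$, contributing one of two possible values according to the minimal weight subspace containing $s$. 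After scaling the second contribution by $g(l)$, adding, and dividing through by $\dim(V')\dim(V)$, the condition $\mu^{\cL_{g(t),l}}\leq 0$ becomes precisely \eqref{eq:ineq1} in the case $s\notin V'\otimes R_m^*$ and \eqref{eq:ineq2} in the case $s\in V'\otimes R_m^*$; the strict versions correspond to $\cL_{g(t),l}$-stability. Together with the converse direction of the convexity reduction this proves the lemma.

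The main obstacle is not conceptual but the \emph{uniformity} of the choice of $l$: one must fix a single $l$ -- large enough that $\cQ\hookrightarrow\cG$ is a closed immersion, that $\cL_{g(t),l}$ is very ample, and, crucially, that $\dim\big(\im(V'\otimes R_{l-m}\to H^0(X,F(l)))\big)=P_{F'}(l)$ simultaneously for all $(F,\rho)\in\cQ$ and all $V'\subseteq V$. Here boundedness enters: every such $F'$ is a quotient of $\O_X(-m)^{\oplus\dim V}$ whose Hilbert polynomial is dominated by $P_v$, so by Grothendieck's boundedness \cite{Gro} only finitely many Hilbert polynomials $P_{F'}$ occur and the family of all such $F'$ is bounded (with $m$ itself fixed via the boundedness of the relevant sheaves, Lemma \ref{lem:boundedness}); hence all these $F'$ are uniformly $l$-regular for $l\gg 0$, which yields both the vanishing $H^{>0}(F'(l))=0$ and the surjectivity $V'\otimes R_{l-m}\twoheadrightarrow H^0(F'(l))$ used above. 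The remaining work -- tracking the normalization constants in the convexity step and separating the two cases for $s$ -- is routine.
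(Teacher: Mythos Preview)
Your proposal is correct and follows essentially the same route as the paper: both apply the Hilbert--Mumford criterion, reduce a general one-parameter subgroup to the one-step filtrations $V_{\leq i}\subset V$ (your ``convexity'' reduction is exactly the explicit sum rewriting the paper carries out, citing \cite[Lem.~4.4.3,~4.4.4]{HL} and \cite[Lem.~3.12]{ST}), and then read off \eqref{eq:ineq1} or \eqref{eq:ineq2} according to whether $s\in V'\otimes R_m^*$. Your third paragraph on the uniformity of $l$ is a valid concern that the paper handles by the same boundedness argument via the citation to \cite{HL}.
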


\begin{proof}
This is standard by \cite[Lem.~4.4.3, Lem.~4.4.4]{HL} (see also \cite[Lem.~3.12]{ST}). For completeness, we briefly go through the argument. 


Note that $\cQ$ has a right $\SL(V)$-action and $\PP(V\otimes R_m^*)$ has a left $\SL(V)$-action. Here we will use the left-action convention, i.e., $\phi \cdot (F,\rho) = (F,\rho \circ \phi^{-1})$ for $\phi \in \SL(V)$. 

Choose a 1-parameter torus $\lambda : \GG_m \to \SL(V)$. Equivalently, choose a weight decomposition \[V = \bigoplus_{i \in \Z} V_i,\] where the determinant $1$ condition implies $\sum_{i \in \Z } i \dim(V_i)=0$. Let $V_{\leq i}:= \bigoplus_{j\leq i}V_j$, $F_{\leq i}:=\rho(V_{\leq i}\otimes \O_X(-m))$, $F_i:=\frac{F_{\leq i}}{F_{\leq i-1}}$, and $\rho_i : V_i \otimes \O_X(-m) \twoheadrightarrow F_i$ be the induced surjective map. Let $s=\sum_i s_i$ such that $s_i \in V_i \otimes R_m ^*$ and let $i_{\max}$ be the maximal integer $i$ such that $s_i \neq 0$.

As in \cite[Lem.~4.4.3]{HL}, the limit point is \[\lim_{t \to \infty} \lambda(t) \cdot (F,\rho,s) = (\bigoplus_i F_i, \bigoplus_i \rho_i, s_{i_{\mathrm{max}}}).\] Since the fibre of the line bundle $\cL_{g(t),l}$ over a point $(F,\rho,s)$ is given as \[\cL_{g(t),l}|_{(F,\rho,s)} = \det(H^0(X,F(l))) \otimes \C\langle s\rangle ^{\otimes-g(l)}\] 
where $\C\langle s\rangle \subseteq V\otimes R_m^*$ is the subspace generated by $s$, we can easily compute the Hilbert-Mumford weight \begin{align*} \mu^{\lambda^{-1}}_{\cL_{g(t),l}} (F,\rho,s) &= \sum_i \left(-i\cdot P_{F_i}(l) + i_{\mathrm{max}} \cdot g(l)\right)\\ &= \sum_i\left(  \left(P_{F_{\leq i}}(l) - \dim(\Gamma_{\leq i}) \cdot g(l) \right) -  \frac{\dim(V_{\leq i})}{\dim(V)} \cdot (P_F(l) -g(l)) \right) \end{align*} where $\Gamma_{\leq i}:=\C\langle s \rangle \cap (V_{\leq i}\otimes R_m^*)$.

By \cite[Thm.~2.1]{MFK}, the point $(F,\rho,s)$ is semistable if and only if \[\frac{P_{F'}(l) - \dim(\Gamma') \cdot g(l)}{\dim(V')} \geq \frac{P_F(l) -g(l)}{\dim(V)}\] for all proper subspaces $V' \subseteq V$, where $F':=\rho(V'\otimes \O_X(-m))$ and $\Gamma':=\C\langle s \rangle \cap (V'\otimes R_m^*)$. This completes the proof.
\end{proof}

We will focus on the following three types of linearizations:

\begin{definition}\label{def:linearizationDTPT0PT1}
Recall that $P_v(t) \in \Q[t]$ is the Hilbert polynomial associated to $v \in H^*(X,\Q)$.
\begin{enumerate}
\item [($\DT$)] For any rational number $a>0$, we define a polynomial
 \[g^{(-1)}(t):=P_v(t) + a\cdot t^2 .\]
\item [($\PT_0$)] For any rational number $0< \epsilon <1$, we define a polynomial
\[g^{(0)}(t):=P_v(t) -\epsilon \cdot t.\]
\item [($\PT_1$)] For any rational number $0< \delta <\tfrac12$, we define a polynomial
\[g^{(1)}(t):=P_v(t) - \delta \cdot t^2.\]
\end{enumerate}
For each integer $q \in \{-1,0,1\}$, we define a linearization
\[\cL^{(q)}:= \cL_{g^{(q)}(t),l}\]
for sufficiently large $l$.
\end{definition}

\begin{remark}
We list the linearizations previously considered in the literature \cite{Pot1,Pot2,ST}. \begin{enumerate} \item In \cite{Pot1}, Le Potier considered $\O_{\cG}(c) \boxtimes \O_{\PP(V\otimes R_m^*)}(\frac{l^d}{d!})$ for $c>1$. \item In \cite{Pot2}, Le Potier considered $\O_{\cG}(1+ \frac{\alpha(m)}{P_v(m)}) \boxtimes \O_{\PP(V\otimes R_m^*)}(P_v(l))$ for $\alpha(t) \in \Q[t]$. \item In \cite{ST}, Stoppa-Thomas considered $\O_{\cG}(c) \boxtimes \O_{\PP(V\otimes R_m^*)}(l)$ for $c>0$ (for 1-dimensional pairs). 
\end{enumerate} The linearizations $\cL_{g(t),l}$ in this paper are 2-dimensional versions of the linearizations in \cite{ST}, which contain the linearizations in \cite{Pot1} as a special case.
However our parameter space is larger than the one in \cite{Pot1}, as explained in Remark \ref{rmk:parameterspace}.
\end{remark}	

We now compare GIT stability for the line bundles $\cL^{(q)}$ in Definition \ref{def:linearizationDTPT0PT1} and $\PT_q$-stability introduced in Definition \ref{def:stabilityconditionsofpairs'}.

\begin{proposition}\label{prop:GIT/PTq}
There exists an integer $M$ which only depends on $X$ and $v$, and integers $L_m$ which depend on $X$, $v$, and $m$ such that for any $m \geq M$, $l \geq L_m$, and $(F,\rho,s) \in \cM$, we have the following:
\begin{enumerate}
\item[$(1)$] $(F,\rho,s)$ is $\cL^{(q)}$-semistable $\implies$ $(F,\rho) \in \cQ^{\circ}$ and $(F,s)$ is $\PT_q$-stable;
\item[$(2)$] $(F,\rho) \in \cQ^{\circ}$ and $(F,s)$ is $\PT_q$-stable $\implies$ $(F,\rho,s)$ is $\cL^{(q)}$-stable.
\end{enumerate}
\end{proposition}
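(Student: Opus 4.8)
The plan is to follow the standard strategy for comparing GIT stability with sheaf-theoretic stability as in \cite[Ch.~4]{HL} and \cite{Pot1,Pot2}, but to run it simultaneously for all three linearizations $\cL^{(q)}$, $q\in\{-1,0,1\}$, using the Hilbert--Mumford criterion in Lemma \ref{Lem:HilbMum} as the bridge. The key point is that the defining polynomials $g^{(q)}(t)=P_v(t)+c_q t^2$ (with $c_{-1}=a>0$, a degree-one correction $-\epsilon t$ for $q=0$, and $c_1=-\delta<0$ for $q=1$) are chosen so that the numerical inequalities \eqref{eq:ineq1}, \eqref{eq:ineq2} translate precisely into the conditions $F\in\Coh_{\geq q+1}(X)$ and $Q\in\Coh_{\leq q}(X)$ for sufficiently large $m\ll l$. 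Throughout I would fix $m\geq M$ so that the collections of sheaves appearing (the underlying $F$ of $\PT_q$ pairs, by Lemma \ref{lem:boundedness}, and the sheaves $\coker(s)\in\Coh_{\leq 0}$, by Lemma \ref{lem2}, as well as all their relevant subquotients) are bounded and $m$-regular, and then choose $l\geq L_m$ large enough that Hilbert polynomials evaluated at $l$ faithfully detect the leading coefficients one needs.

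The main steps are as follows. First I would record the numerical dictionary: for a subspace $V'\subseteq V$ of a point $(F,\rho,s)\in\cQ^\circ$, with $F'=\rho(V'\otimes\O_X(-m))$, boundedness plus $m$-regularity give $\dim(V')\le P_{F'}(m)$ with equality iff $V'=H^0(F'(m))$, and then $P_{F'}(l)/\dim(V')$ compared to $P_F(l)/\dim(V)$ is controlled, to leading order in $l$, by the reduced Hilbert polynomials of $F'$ and $F$. Plugging $g^{(q)}$ into \eqref{eq:ineq1}--\eqref{eq:ineq2} and expanding in $l$ then shows, for $l\gg 0$: the $t^2$-correction governs the dimension of the support, so $q=-1$ (large positive $a$) forces $s$ to be surjective (any proper $F'$ containing the image of $s$ would violate \eqref{eq:ineq2}), while $q=1$ (negative $-\delta$) forces purity of $F$ via the standard argument bounding the maximal destabilizing subsheaf; the degree-one correction $-\epsilon t$ for $q=0$ is calibrated to allow exactly a $0$-dimensional cokernel $Q$ but no $0$-dimensional torsion in $F$, i.e.\ $F\in\Coh_{\ge 1}$ and $Q\in\Coh_{\le 0}$. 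Second, for direction (1), I would assume $\cL^{(q)}$-semistability; taking $V'=H^0(F(m))$ for the image subsheaf of $\rho$ (or the saturation thereof) and applying \eqref{eq:ineq1} for the subsheaf generated in degree $m$ forces $(F,\rho)\in\cQ^\circ$ — this is the usual ``semistability $\Rightarrow$ $\rho$ identifies $V$ with $H^0(F(m))$'' argument — and then the inequalities applied to the subsheaves $T_q(F)\subseteq F$ and to the preimage of $Q$ yield $\PT_q$-stability. Third, for direction (2), I would assume $(F,\rho)\in\cQ^\circ$ and $(F,s)$ $\PT_q$-stable, and verify the strict forms of \eqref{eq:ineq1}--\eqref{eq:ineq2} for every proper $0\neq V'\subsetneq V$: boundedness lets me reduce to finitely many Hilbert polynomials of possible $F'$, and $\PT_q$-stability (no $T_q$ in $F$, $\dim Q\le q$) makes each reduced-Hilbert-polynomial inequality strict in the top relevant coefficient, which for $l$ large dominates.

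The step I expect to be the main obstacle is the case $q=0$ — exactly the new stability condition — because there the underlying sheaf $F$ is neither pure nor a quotient of $\O_X$, so neither Le Potier's purity arguments \cite{Pot1} nor the ideal-sheaf arguments for $\DT$ apply verbatim; I would need to control both the $1$-dimensional torsion $T_1(F)$ (forbidden) and the $0$-dimensional cokernel $Q$ (allowed, of bounded length by Lemma \ref{lem2}) with a single linear correction $-\epsilon t$, and check that no subspace $V'$ detecting either phenomenon destabilizes or is destabilized. This is where the Stoppa--Thomas-style \cite{ST} refinement — working on the larger parameter space $\cM$ and tracking the section $s$ through the term $\O_{\PP(V\otimes R_m^*)}(g(l))$ with a degree-one twist — is essential, and verifying the numerical bookkeeping uniformly in $m$ and then $l$ is the technical heart of the argument. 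The $q=\pm 1$ cases should then follow by comparison with the known results of Le Potier and Grothendieck (cf.\ Remark \ref{rmk:repforDTPT1}), with the GIT computation organized so that the three cases differ only in the sign/degree of the correction polynomial $g^{(q)}$.
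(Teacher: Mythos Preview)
Your overall strategy is correct and broadly matches the paper's, but there are two points where your plan diverges from what the paper actually does, and one of them is a genuine gap.

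First, a slip in the $q=0$ discussion: you write that for $\PT_0$ you must forbid ``the $1$-dimensional torsion $T_1(F)$''. That is the $\PT_1$ condition. For $\PT_0$ one only needs $T_0(F)=0$ (so $F\in\Coh_{\geq 1}$) together with $Q\in\Coh_{\leq 0}$; $1$-dimensional torsion in $F$ is allowed. Relatedly, the order of steps in part~(1) for $q=0$ matters more than your sketch suggests: the paper does \emph{not} first establish $(F,\rho)\in\cQ^{\circ}$ and then read off $\PT_0$-stability. It first shows $V\to H^0(F(m))$ is injective, then uses the closure description of $\cM$ to extract an honest section $s$, then applies \eqref{eq:ineq2} with $V'=V\cap H^0(\O_Z(m))$ to force $Q\in\Coh_{\leq 0}$, and only \emph{after} that invokes the boundedness of Lemma~\ref{lem2} to get $H^{>0}(F(m))=0$, i.e.\ $(F,\rho)\in\cQ^{\circ}$. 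Your ``usual semistability $\Rightarrow$ $V\cong H^0(F(m))$'' shortcut does not run here without first pinning down the cokernel.

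Second, and more substantively, the paper does \emph{not} prove part~(1) directly for $q=\pm 1$. For $q=-1$ your proposed kernel argument fails outright: if $V'=\ker(V\to H^0(F(m)))\neq 0$ then $F'=0$, and \eqref{eq:ineq1} reads $0\geq (P_F(l)-g^{(-1)}(l))/\dim(V)=-a l^2/\dim(V)$, which is vacuous since the right-hand side is negative. So one cannot conclude $(F,\rho)\in\cQ^{\circ}$ from $\cL^{(-1)}$-semistability by this route. Instead, for both $q=-1$ and $q=1$ the paper proves part~(2) directly, proves only the restricted implication ``$\cL^{(q)}$-semistable \emph{and} $(F,\rho)\in\cQ^{\circ}$ $\Rightarrow$ $\PT_q$-stable'', and then closes the gap by a properness argument: the resulting chain of open embeddings
\[
\cPair^{(q)}(X,v)\ \hookrightarrow\ \cM^{st}(\cL^{(q)})/\!/\PGL(V)\ \hookrightarrow\ \cM^{ss}(\cL^{(q)})/\!/\PGL(V)
\]
has proper source (this is where the known projectivity of the Hilbert scheme and of Le Potier's moduli space, Remark~\ref{rmk:repforDTPT1}, enters) and separated target, hence is an isomorphism; this forces $\cM^{st}=\cM^{ss}$ and yields part~(1) in full. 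Your plan to handle $q=\pm 1$ ``by comparison with the known results'' is therefore on the right track, but the comparison is this properness/density trick, not a parallel numerical verification.
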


We will consider the three stability conditions separately. We start with $\PT_0$-stability.

\begin{proof}[Proof of Proposition \ref{prop:GIT/PTq} for $\PT_0$-stability]

\hfill
\\
\hfill

\noindent (1) Assume that $(F,\rho,s)$ is $\cL^{(0)}$-semistable.
We first claim that the induced map
\[\Gamma(\rho(m)) : V \to \Gamma(X,F(m))\]
is injective. Let $V'$ be the kernel of the above map $\Gamma(\rho(m))$. Then the induced sheaf $F':=\rho(V'\otimes \O_X(-m)) \subseteq F$ is zero. If $V'\neq 0$, then the $\cL^{(0)}$-semistability of $(F,\rho,s)$ implies an inequality
\[\frac{0}{\dim(V')} \geq \frac{\epsilon \cdot l}{\dim(V)}\]
by Lemma \ref{Lem:HilbMum}(1). This leads to a contradiction for $l>0$. Hence $V'=0$.

We claim that $(F,s) \in \cPair(X,v)$. More precisely, there exists a section $\O_X \to F$ that fits into the commutative diagram
\[\xymatrix@C+2pc{
\C \ar[d]^s \ar@{.>}[r]& \Gamma(X,F) \ar@{^{(}->}[d] \\
V \otimes R_m^* \ar@{^{(}->}[r]^-{\Gamma(\rho(m))} & \Gamma(X,F(m)) \otimes R_m^*
}\]
determined by the dotted arrow. Indeed, consider the open subscheme
\[\cQ^{\star}:=\{(F,\rho) \in \cQ : V \xrightarrow{\Gamma(\rho(m))} \Gamma(X,F(m)) \text{ is injective}\}\]
of $\cQ$. Note that the restriction $\cM|_{\cQ^{\star}}$ of $\cM$ is the closure of $\PP_{\cQ^{\circ}}(R\pi_*\FF)$ in $\cQ^{\star}\times \PP(V\otimes R_m ^*)$. Hence we can form a commutative diagram
\[\xymatrix{
\cM|_{\cQ^{\star}} \ar@{.>}[r]^{\exists} \ar@{^{(}->}[d] &  \PP_{\cQ^{\star}}(R\pi_*\FF) \ar@{^{(}->}[d]\\
\cQ^{\star}\otimes \PP(V\otimes R_m^*) \ar@{^{(}->}[r] & \PP_{\cQ^{\star}}(R\pi_*(\FF(m)) \otimes R_m^*)
}\]
of closed embeddings of projective schemes over $\cQ^{\star}$. By the result in the previous paragraph, we have $(F,\rho,s) \in \cM|_{\cQ^{\star}}$, which proves the claim.

Next, we claim that $Q:=\coker(\O_X \xrightarrow{s} F) \in \Coh_{\leq0}(X)$. Let
\[V':= V \cap \Gamma(X,\O_Z(m)) \quad\text{in}\quad \Gamma(X,F(m)),\]
where $\O_Z:=\im(\O_X \xrightarrow{s} F)$. Then the induced sheaf $F':=\rho(V'\otimes \O_X(-m)) \subseteq F$ is contained in $\O_Z \subseteq F$. Note that we have a commutative diagram
\[\xymatrix{
V'\otimes R_m^* \ar@{^{(}->}[r] \ar@{^{(}->}[d]  & \Gamma(X,\O_Z(m)) \otimes R_m^* \ar@{^{(}->}[d]^s & \Gamma(X,\O_Z) \ar@{_{(}->}[l] \ar@{^{(}->}[d]^s\\
V \otimes R_m^* \ar@{^{(}->}[r]^-{\rho(m)} & \Gamma(X,F(m)) \otimes R_m^* & \Gamma(X,F) \ar@{_{(}->}[l]
}\]
where the left square is cartesian. Hence we have $s \in V' \otimes R_m^*$. By Lemma \ref{Lem:HilbMum}(2), we have inequalities
\begin{equation}\label{eq:S2.7}    
\frac{P_{\O_Z}(l) - g^{(0)}(l)}{\dim(V')} \geq\frac{P_{F'}(l) - g^{(0)}(l)}{\dim(V')} \geq \frac{\epsilon \cdot l }{\dim(V)},
\end{equation}
for sufficiently large $l$, where the two collections
\[\{\O_Z:=\im(\O_X \xrightarrow{s} F)\} \and \{F' := \im(V'\otimes \O_X(-m) \xrightarrow{\rho} F)\}\]
for fixed $X$, $v$, $m$ are bounded by \cite[Prop.~1.2(i)]{Gro}. Hence the set $\{P_{\O_Z}(t)\}$ of Hilbert polynomials is finite. Since the leading coefficient of the left-hand side of \eqref{eq:S2.7}, as a polynomial in $l$, should be positive, we can take $l$ sufficiently large and deduce $\alpha_2(\O_Z)=\alpha_2(F)$ and $\alpha_1(\O_Z)>\alpha_1(F) - \epsilon$.
Since $\epsilon<1$, we have $\alpha_1(\O_Z)=\alpha_1(F)$, and thus $Q$ is 0-dimensional.

We now show that $(F,\rho) \in \cQ^{\circ}$. By Lemma \ref{lem2}, the collection of coherent sheaves $F$ with $\ch(F)=v$ and $\coker(s) \in \Coh_{\leq 0}(X)$ is bounded for fixed $X,v$.
Hence we have $H^{>0}(X,F(m))=0$ for sufficiently large $m$, as desired. 

We finally claim that $F \in \Coh_{\geq 1}(X)$. Let $T_0(F) \subseteq F$ be the 0-dimensional torsion sheaf and let
\[V' := \Gamma(X, T_0(F)(m)) \subseteq \Gamma(X,F(m))\xleftarrow{\cong} V,\]
where the last isomorphism follows from the result in the previous paragraph. Then the induced sheaf $F':=\rho(V' \otimes \O_X(-m)) \subseteq F$ is $T_0(F)$ since the 0-dimensional sheaf $T_0(F)$ is globally generated. If $V' \neq 0$, then we have an inequality
\[1= \frac{\chi(T_0(F))}{\chi(T_0(F)(m))} = \frac{\chi(F')}{\dim(V')}\geq \frac{\epsilon \cdot l }{\dim(V)}\]
by Lemma \ref{Lem:HilbMum}(1). This leads to contradiction for big enough $l$. Therefore $V'=0$ and $F'=T_0(F)=0$. \\

\noindent (2) Assume that $(F,\rho) \in \cQ^{\circ}$ and $(F,s)$ is $\PT_0$-stable. We will prove the $\cL^{(0)}$-stability of $(F,\rho,s)$ through Lemma \ref{Lem:HilbMum}. Choose a proper non-zero subspace $V' \subseteq V$ and let $F' := \rho(V' \otimes \O_X(-m)) \subseteq F$ be the induced subsheaf. Then we have a commutative square
\begin{equation}\label{10}
\xymatrix{
V' \ar@{^{(}->}[r] \ar@{^{(}->}[d] & \Gamma(X, F'(m)) \ar@{^{(}->}[d] \\
V \ar[r]^<<<<{\cong} & \Gamma(X,F(m)).
}
\end{equation}
We first prove the inequality \eqref{eq:ineq1} in Lemma \ref{Lem:HilbMum}(1). Note that the collection 
\[\{F' = \im(V' \otimes \O_X(-m) \xrightarrow{\rho} F) : (F,\rho) \in \cQ^{\circ} \and V' \subseteq V\}\]
for fixed $X,v,m$ is bounded by \cite[Prop.~1.2.(i)]{Gro}. Hence the set $\{P_{F'}(t)\}$ of the corresponding Hilbert polynomials is finite. Since $F' \in \Coh_{\geq 1}(X)$ by $\PT_0$-stability and $\chi(F')$ has a lower bound, independent of the choices of $F$, $V'$, $l$, we have an inequality
\[\frac{P_{F'}(l)}{\dim(V')} > \frac{l + \chi(F')}{\dim(V)}> \frac{\epsilon \cdot l}{\dim(V)}\]
for sufficiently large $l$. This proves \eqref{eq:ineq1}.

We then prove the second inequality \eqref{eq:ineq2} in Lemma \ref{Lem:HilbMum}(2) when $s \in V' \otimes R_m^*$. Indeed, consider the canonical commutative diagram
\begin{equation}\label{12}
\xymatrix{
\Gamma(X,F') \ar[r] \ar@{^{(}->}[d] & \Gamma(X,F) \ar[r] \ar@{^{(}->}[d] & \Gamma(X, F/F') \ar@{^{(}->}[d] \\
\Gamma(X,F'(m)) \otimes R_m^* \ar[r] & \Gamma(X,F(m)) \otimes R_m^* \ar[r] & \Gamma(X,F/F'(m)) \otimes R_m^*
}    
\end{equation}
induced by the short exact sequence $0 \to F' \to F \to F/F' \to 0$. By \eqref{10}, we have $s \in \Gamma(X,F'(m)) \otimes R_m^*$. 
By a diagram chasing argument, we can deduce that the left square in \eqref{12} is cartesian. Hence $s \in \Gamma(X,F')$ and we have $\O_Z:=\im(s:\O_X \to F) \subseteq F'$ as subsheaves of $F$. Since $(F,s)$ is $\PT_0$-stable, we have $\alpha_2(F')=\alpha_2(F)$ and $\alpha_1(F')=\alpha_1(F)$. Since $\chi(F')$ has a lower bound, independent of the choices of $F$, $V'$, $l$, we have an inequality
\[\frac{\epsilon\cdot l + \chi(F') - \chi(F)}{\dim(V')} > \frac{\epsilon \cdot l}{\dim(V)}\]
for sufficiently large $l$. This proves \eqref{eq:ineq2}.
\end{proof}

By \cite{Gro,Pot1}, the moduli spaces $\cPair^{(-1)}(X,v)$ and $\cPair^{(1)}(X,v)$ are projective schemes  (see Remark \ref{rmk:repforDTPT1}). We will use these results to prove Proposition \ref{prop:GIT/PTq} for the $\DT$-stability and the $\PT_1$-stability.\footnote{Proposition \ref{prop:GIT/PTq} for the $\PT_1$-stability is not a consequence of the results in \cite{Pot1}, since the parameter space $\cM$ in this paper is {\em larger} than the parameter space in \cite{Pot1} (see Remark \ref{rmk:parameterspace}).} 


\begin{proof}[Proof of Proposition \ref{prop:GIT/PTq} for $\DT$-stability] 

\hfill
\\
\hfill

\noindent (2) Assume that $(F,\rho) \in \cQ^{\circ}$ and $(F,s)$ is a $\DT$ pair. Choose a proper non-zero subspace $V' \subseteq V$ and let $F'=\rho(V'\otimes \O_X(-m)) \subseteq F$ be the induced subsheaf.

We first prove the inequality \eqref{eq:ineq1}. Note that the leading coefficient of the right-hand side of \eqref{eq:ineq1}, as a polynomial of $l$, is negative. On the other hand, the leading coefficient of the left-hand side of \eqref{eq:ineq1} is always positive. Since the collection $\{F'\}$ for fixed $X$, $v$, $m$ is bounded by \cite[Prop.~1.2(i)]{Gro}, the set $\{P_{F'}(t)\}$ is finite. Hence we have \eqref{eq:ineq1} for sufficiently large $l$.

We then prove the second inequality \eqref{eq:ineq2} when $s \in V' \otimes R_m^*$. Indeed, we can form the commutative square \eqref{10} and the commutative diagram \eqref{12} where the left square is cartesian. Hence we have $s \in \Gamma(X,F')$.  Since $(F,s)$ is a $\DT$ pair, the coherent sheaf $F$ is generated by the section $s$. Therefore $F'=F$. We claim that $V'=V$, which then leads to a contradiction. Indeed, since $s \in V' \otimes R_m^*$, we have a commutative diagram
\[\xymatrix{
\C\cong\Gamma(X,\O_X) \ar[r] \ar@{.>}[d] & \Gamma(X,F) \ar@{^{(}->}[d] \\
V' \otimes R_m^* \ar@{^{(}->}[r] & \Gamma(X,F(m)) \otimes R_m^* 
}\]
for some dotted arrow. By adjunction, we can form a commutative diagram
\[\xymatrix{
R_m \ar[r] \ar@{.>}[d] & \Gamma(X,F) \otimes R_m \ar[d] \\
V' \ar@{^{(}->}[r] & \Gamma(X,F(m)).
}\]
The collection $\{\I_{Z/X} : \ch(\O_Z)=v\}$ is bounded, so for $m$ sufficiently large (with lower bound only depending on $X,v$), we may assume that the map
\[R_m = \Gamma(X,\O_X(m)) \to \Gamma(X,F(m))\]
is surjective. Therefore, $V' \hookrightarrow V=\Gamma(X,F(m))$ is also surjective, which proves the claim. \\

\noindent (1) We first claim that 
\[(F,\rho,s) \in \cM^{ss}(\cL^{(-1)})|_{\cQ^{\circ}} \implies  \text{$(F,s)$ is a $\DT$ pair}\]
for sufficiently large $m$ and $l$. Indeed, choose $(F,\rho,s) \in \cM^{ss}(\cL^{(-1)})|_{\cQ^{\circ}}$ and let $Z$ be the support of $s$ and $Q$ be the cokernel of $s$. Let $V'= \Gamma(X,\O_Z(m)) \subseteq V\cong\Gamma(X,F(m))$ and let $F'=\rho(V'\otimes \O_X(-m)) \subseteq \O_Z \subseteq F$. Then clearly $s\in V' \otimes R_m^*$. Thus we have the inequality \eqref{eq:ineq2}. Also note 
\begin{equation} \label{eq:dimVdimV'}
-\frac{a}{\dim(V')} < - \frac{a}{\dim(V)}.
\end{equation}
As above, for fixed $X,v,m$ the collection of $\{F'\}$ is bounded, so there are finitely many $\{P_{F'}(t)\}$. Taking $l$ sufficiently large, \eqref{eq:ineq2} contradicts \eqref{eq:dimVdimV'}. Hence $V'=V$ and thus $F'=F$. Therefore $\O_Z=F$ and $(F,s)$ is a $\DT$ pair.

By the results in (2) and the previous paragraph, we have open embeddings
\[\left[\cM^{ss}|_{\cQ^{\circ}}(\cL^{(-1)})/\PGL(V)\right] \subseteq \cPair^{(-1)}(X,v) \subseteq \left[ \cM^{st}(\cL^{(-1)})/\PGL(V)\right] \]
which induces an open dense embedding
\[\cPair^{(-1)}(X,v) \hookrightarrow \cM^{st}(\cL^{(-1)})/\!/\PGL(V)\hookrightarrow \cM^{ss}(\cL^{(-1)})/\!/\PGL(V).\]
Since $\cPair^{(-1)}(X,v)$ is proper and $\cM^{ss}(\cL^{(-1)})/\!/\PGL(V)$ is separated, the above open embedding is an isomorphism. Hence $\cM^{st}(\cL^{(-1)})=\cM^{ss}(\cL^{(-1)})$ and $\cPair^{(-1)}(X,v)=[\cM^{st}(\cL^{(-1)})/\PGL(V)]$. 
\end{proof}

\begin{proof}[Proof of Proposition \ref{prop:GIT/PTq} for $\PT_1$-stability]

\hfill
\\
\hfill

\noindent (2) Assume that $(F,\rho) \in \cQ^{\circ}$ and $(F,s)$ is a $\PT_1$ pair. Choose a proper non-zero subspace $V' \subseteq V$ and let $F'=\rho(V'\otimes \O_X(-m)) \subseteq F$ be the induced sheaf. 

We first prove the inequality \eqref{eq:ineq1}. Indeed, since $F$ is pure, the subsheaf $F'$ is also 2-dimensional. Since $\delta<\frac12$ and $\dim(V')<\dim(V)$, we have an inequality
\[\frac{\alpha_2(F')}{2\dim(V')} > \frac{\delta}{\dim(V)},\]
between the leading coefficients of the both sides of \eqref{eq:ineq1}, as polynomials of $l$. For fixed $X,v,m$, the collection $\{F'\}$ is bounded by \cite[Prop.~1.2(i)]{Gro}, and hence the set $\{P_{F'}(t)\}$ is finite. Therefore we have \eqref{eq:ineq1} for sufficiently large $l$.

We then prove the inequality \eqref{eq:ineq2} when $s \in V' \otimes R_m^*$. Indeed, we can form the commutative square \eqref{10}. We can also form the commutative diagram \eqref{12} where the left square is cartesian. Hence we have $s \in \Gamma(X,F')$. 
Since $(F,s)$ is a $\PT_1$ pair, $\alpha_2(F)=\alpha_2(F')$. Since $\dim(V')< \dim(V)$, we have an inequality
\[\frac{\delta}{\dim(V')}> \frac{\delta}{\dim(V)}\]
between the leading coefficients of the both sides of \eqref{eq:ineq2}, as polynomials of $l$. Since the collection $\{F'\}$ is bounded, we have \eqref{eq:ineq2} for sufficiently large $l$. \\

\noindent (1) We will show that 
\begin{equation}\label{eq:S2.3}
(F,\rho,s) \in \cM^{ss}(\cL^{(1)})|_{\cQ^{\circ}} \implies  \text{$(F,s)$ is a $\PT_1$ pair}	
\end{equation}
for sufficiently large $m$ and $l$. Choose $(F,\rho,s) \in \cM^{ss}(\cL^{(1)})|_{\cQ^{\circ}}$. Let $Q=\coker(s: \O_X \to F)$ be the cokernel and $\O_S = \im(s: \O_X \to F)$ the image.

We first claim that $Q \in \Coh_{\leq 1}(X).$ Let $V'= \Gamma(X,\O_S(m)) \subseteq \Gamma(X,F(m)) \cong V$ and let $F'=\rho(V'\otimes \O_X(-m)) \subseteq \O_S \subseteq F$. Then clearly $s\in V' \otimes R_m^*$ and in particular $V' \neq 0$. Hence we have inequality \eqref{eq:ineq2}. If $\alpha_2(F')< \alpha_2(F)$, then the leading term of the left-hand side of \eqref{eq:ineq2} is negative since $\delta<\frac{1}{2}$. Since the collection $\{F'\}$ is bounded by \cite[Prop.~1.2(i)]{Gro} for fixed $X,v,m$, this leads to a contradiction for sufficiently large $l$. Hence $\alpha_2(F')=\alpha_2(F)$ and thus $\alpha_2(\O_S)=\alpha_2(F)$. This proves the claim.

We then claim that $F$ is pure. Consider the torsion subsheaf $TF\subseteq F$. Let $V'= \Gamma(X, TF(m)) \subseteq V$ and let $F'=\rho(V'\otimes \O_X(-m))\subseteq TF \subseteq F$. If $V'\neq 0$, then we have the inequality \eqref{eq:ineq1}. Since the degree of the left-hand side of \eqref{eq:ineq1}, as polynomial of $l$, is $\leq 1$ and the degree of the right-hand side of \eqref{eq:ineq1} is 2, and the collection $\{F'\}$ is bounded, the inequality \eqref{eq:ineq1} leads to a contradiction for sufficiently large $l$. Hence we deduce that 
\begin{equation}\label{25}
\Gamma(X, TF(m))=0.	
\end{equation}
Consider the short exact sequence
\[\xymatrix{ 0 \ar[r] & TF \ar[r] & F \ar[r] & F/TF \ar[r] & 0.}\]
Define $\O_Z = \mathrm{im}(\O_S \to F \to F/TF)$.
Then we obtain a short exact sequence
\begin{equation} \label{eqn:sesPT1pairR}
\xymatrix{ 0 \ar[r] & \O_Z \ar[r] & F/TF \ar[r] & R \ar[r] & 0,}
\end{equation}
where $R=\coker(\O_Z \to F/TF) \in \Coh_{\leq 1}(X)$. Then both $F/TF$ and $\O_Z$ are pure 2-dimensional sheaves such that $\alpha_2(F/TF)=\alpha_2(\O_Z)=\alpha_2(F)$. In particular, \eqref{eqn:sesPT1pairR} defines a $\PT_1$ pair. Moreover, $\alpha_1(F/TF)$ is bounded above by $\alpha_1(F)$ and bounded below $\alpha_1(\O_{Z})$, which is bounded below by a constant depending on $X,\alpha_2(F)$ but not on $m$ by \cite[Cor.~2.13]{Pot1}. In particular, $\alpha_1(F/TF)$ takes finitely many values.
By Proposition \ref{prop:boundofchiforPT}(2),  $\chi(F/TF)$ is bounded above by a constant depending on $X,v$ but not on $m$. Since $\chi(TF)=\chi(F)-\chi(F/TF)$, we have
\begin{equation}\label{26}
	\chi(TF)\geq C(X,v)
\end{equation}
for a constant $C(X,v)$ depending on $X,v$ but independent of $m$. By the Riemann-Roch formula, we have
\[\chi(TF(m)) = r \cdot m + \chi(TF)\]
where $r = \alpha_1(F) - \alpha_1(F/TF) \geq 0$ since $TF \in \Coh_{\leq 1}(X)$. Moreover, $r \geq 0$ takes finitely many values independent of $m$. We deduce from \eqref{25} and \eqref{26} that, for all $F$ as above, we have $r=0$ for sufficiently large $m$, which means $TF$ is 0-dimensional. Hence $TF$ is globally generated and thus \eqref{25} implies $TF=0$. Therefore $F$ is pure.

Combining the results in (2) and \eqref{eq:S2.3}, we have open embeddings
\[\left[\cM^{ss}|_{\cQ^{\circ}}(\cL^{(1)})/\PGL(V)\right] \subseteq \cPair^{(1)}(X,v) \subseteq \left[ \cM^{st}(\cL^{(1)})/\PGL(V)\right]\]
which induces an open dense embedding
\[\cPair^{(1)}(X,v) \hookrightarrow \cM^{st}(\cL^{(1)})/\!/\PGL(V)\hookrightarrow \cM^{ss}(\cL^{(1)})/\!/\PGL(V).\]
Since $\cPair^{(1)}(X,v)$ is proper and $\cM^{ss}(\cL^{(1)})/\!/\PGL(V)$ is separated, the above map is an isomorphism. Consequently we have $\cM^{st}(\cL^{(1)})=\cM^{ss}(\cL^{(1)})$ and $\cPair^{(1)}(X,v)=[\cM^{st}(\cL^{(1)})/\PGL(V)]$.
\end{proof}

Theorem \ref{Thm:GIT} is a direct corollary of Proposition \ref{prop:GIT/PTq}.

\begin{proof}[Proof of Theorem \ref{Thm:GIT}]
Since the stacks
\[\cPair^{(q)}(X,v), \quad [\cM^{st}(X,\cL^{(q)})/\PGL(V)],\quad  [\cM^{ss}(X,\cL^{(q)})/\PGL(V)]\]
are open substacks of $\cPair(X,v)_m = [\cM|_{\cQ^{\circ}}/\PGL(V)] = [\PP_{\cQ^{\circ}}(R\pi_*\FF)/\PGL(V)]$, it suffices to compare the $\C$-points. Proposition \ref{prop:GIT/PTq} completes the proof.
\end{proof}

We end this subsection with the following question:

\begin{question}
Does Theorem \ref{Thm:GIT} holds for $d$-dimensional $\PT_q$ pairs with $d>2$ as defined in Remark \ref{rem:generalizePTq}?
\end{question}

\subsection{Moduli of complexes}\label{ss:openembeddingtoPerf}

In this subsection, we prove that the moduli spaces of $\PT_q$ pairs are open subschemes of the moduli space of perfect complexes. We basically follow the reasoning in \cite{PT1}, but the actual proof requires a finer argument.  

Let $X$ be a smooth projective variety and let $v \in H^*(X,\Q)$.
Recall \cite{Ina,Lie} that there exists a moduli space 
\[\cPerf(X,v)_{\O_X}^{\spl}\]
 of simple perfect complexes $I\udot$ on $X$ with $\det(I\udot)\cong \O_X$ and $\ch(I\udot) = v$, as an algebraic space. 
 


\begin{theorem}\label{Thm:PairtoPerf}
Let $X$ be a smooth projective variety of dimension $n\geq 4$, $v \in H^*(X,\Q)$, and $q\in \{-1,0,1\}$. Then we have an open embedding
\begin{equation}\label{eq:S2.1}
\cPair^{(q)}(X,v) \hookrightarrow \cPerf(X,v')_{\O_X}^{\mathrm{spl}} : (F,s) \mapsto I\udot:=[\O_X \to F]	
\end{equation}
where $v':= 1-v \in H^*(X,\Q)$.
\end{theorem}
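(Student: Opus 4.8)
The plan is to follow the strategy of Pandharipande--Thomas \cite[Sec.~2.3]{PT1}, adapting it to the higher-dimensional setting $n \geq 4$ and to all three stability conditions $q \in \{-1,0,1\}$. There are two things to prove: (i) the map \eqref{eq:S2.1} is well-defined, i.e. for a $\PT_q$ pair $(F,s)$ the associated complex $I\udot = [\O_X \xrightarrow{s} F]$ is a \emph{simple} perfect complex with trivial determinant; and (ii) the map is an open embedding, i.e. it is a monomorphism of algebraic spaces which is \'etale (equivalently, induces isomorphisms on tangent and obstruction spaces, or more directly is formally \'etale and injective on points).

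For (i), perfectness of $I\udot$ is immediate since $\O_X$ and $F$ are coherent on the smooth variety $X$, so $I\udot \in D^b_{\mathrm{coh}}(X)$ has finite Tor-amplitude. The determinant is $\det(I\udot) = \det(\O_X)\otimes\det(F)^{-1}$; but $F$ is $2$-dimensional (codimension $\geq 2$), so $\det(F) \cong \O_X$ and hence $\det(I\udot) \cong \O_X$. (One should also pin down the normalization: $\ch(I\udot) = 1 - \ch(F) = v'$.) Simpleness, $\Hom_X(I\udot, I\udot) = \C$, follows from Lemma \ref{lem:injectivity'} together with the vanishing $\Ext^{\leq 1}_X(F,\O_X) \cong H^{\geq n-1}(X, F\otimes K_X)^* = 0$ used there, which requires $\dim F \leq 2 < n-1$, i.e. $n \geq 4$ --- this is exactly where the hypothesis $n \geq 4$ enters. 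More precisely, applying $\Hom_X(-, I\udot)$ to the triangle $F[-1] \to I\udot \to \O_X$ and using $\Ext^{\leq 1}_X(F, \O_X) = 0$ and $\Hom_X(F, F[-1]) = 0$ one reduces $\Hom_X(I\udot, I\udot)$ to $\Hom_X(\O_X, I\udot)$, and then the long exact sequence from $F[-1]\to I\udot\to\O_X$ gives $\Hom_X(\O_X, I\udot) = \C$ because the connecting map $\Hom_X(\O_X,\O_X)\to \Ext^1_X(\O_X, F[-1]) = \Hom_X(\O_X, F)$ sends $1 \mapsto s \neq 0$ and is therefore injective (the pair is non-zero), while $\Hom_X(\O_X, F[-1]) = 0$. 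The same argument works in families over any base, giving a morphism of algebraic spaces $\cPair^{(q)}(X,v) \to \cPerf(X,v')^{\mathrm{spl}}_{\O_X}$.

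For (ii), injectivity on points is precisely Lemma \ref{lem:injectivity'}. To show the map is \'etale it suffices to show it is formally \'etale, i.e. induces isomorphisms on deformations and obstructions; since both moduli spaces are algebraic spaces (no automorphisms, by Lemma \ref{lem:splpair} on the source), this amounts to comparing the deformation-obstruction theories. The tangent space to $\cPair$ at $(F,s)$ is $\Hom_X(I\udot, F)$ (cf. Remark \ref{rem:derivedinterpretation}), while the tangent space to $\cPerf^{\mathrm{spl}}_{\O_X}$ at $I\udot$ is $\Ext^1_X(I\udot, I\udot)_0$; the obstruction spaces are the next $\Ext$ groups. Applying $\Hom_X(I\udot, -)$ to the triangle $I\udot \to \O_X \xrightarrow{s} F$ gives a long exact sequence relating $\Ext^\bullet_X(I\udot, I\udot)$, $\Ext^\bullet_X(I\udot, \O_X)$ and $\Ext^\bullet_X(I\udot, F)$; the key input is that $\Ext^i_X(I\udot, \O_X)$ is concentrated so as to force $\Ext^1_X(I\udot,I\udot) \xrightarrow{\cong} \Ext^1_X(I\udot, F) = \Hom_X(I\udot, F)$ and an injection on $\Ext^2$, and then passing to the trace-free parts $\Ext^\bullet_X(I\udot,I\udot)_0$ matches them up. Here again one uses $\Ext^{\leq 1}_X(F, \O_X) = 0$ (Serre duality, $n \geq 4$) and the fact that $\Hom_X(\O_X, I\udot) = \C$ accounts precisely for the trace part, so that the trace-free deformations of $I\udot$ coincide with all deformations of the pair $(F,s)$. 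The main obstacle is this last bookkeeping: carefully identifying the trace-decomposition of $\Ext^\bullet_X(I\udot,I\udot)$ with the deformation theory of the pair $(F,s)$ (as opposed to that of the complex $I\udot$ with its full determinant), and checking that the comparison map of obstruction theories is compatible --- this is the ``finer argument'' alluded to just before the statement, and is where the hypothesis $n\geq 4$ and the simpleness/rigidity of the determinant are used in an essential way. Finally, the image being open follows formally once the map is an \'etale monomorphism of algebraic spaces locally of finite type, hence an open immersion.
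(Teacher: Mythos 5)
Your high-level skeleton is correct and matches the paper's: show the map is well-defined (Lemma \ref{lem:det} for triviality of the determinant, and the short argument using $\Ext^{\le 1}_X(F,\O_X)=0$ for simpleness), then show it is a formally smooth monomorphism of finite type, hence \'etale, hence an open embedding. The well-definedness part of your proposal is essentially the paper's.

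The genuine gap is in your treatment of formal smoothness. You propose to establish it by comparing tangent and obstruction spaces via the long exact sequence coming from $\Hom_X(I\udot,-)$ applied to the triangle $I\udot \to \O_X \to F$. This is \emph{not} what the paper does, and as written your argument has a concrete error and a conceptual hole. The error: the claim ``$\Ext^1_X(I\udot,I\udot) \xrightarrow{\cong} \Ext^1_X(I\udot,F) = \Hom_X(I\udot,F)$'' is false as stated; $\Ext^1(I\udot,F)$ and $\Hom(I\udot,F)$ are different groups, and the long exact sequence only gives $0 \to \Hom(I\udot,F) \to \Ext^1(I\udot,I\udot) \to \Ext^1(I\udot,\O_X)$, where $\Ext^1(I\udot,\O_X)$ is \emph{not} zero in general (it contains $H^1(X,\O_X)$). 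The conceptual hole: a dimension-match of tangent and obstruction spaces does not by itself give formal smoothness --- you need a \emph{morphism} of obstruction theories compatible with the map of moduli problems, and the paper explicitly remarks (immediately after Theorem \ref{Thm:PairtoPerf}) that the two natural derived enhancements --- $R\Hom_X(I\udot,F)$ for pairs versus $R\Hom_X(I\udot,I\udot)_0[1]$ for complexes --- are \emph{different}, so no such compatibility is automatic.

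The ``finer argument'' the paper alludes to is not the trace bookkeeping you describe; it is Proposition \ref{prop:infinitesimallifting}, which proves the infinitesimal lifting property directly: given a perfect complex $I\udot$ on $X \times B$ with trivial determinant restricting to a $\PT_q$ pair over $B_0$, one must actually \emph{reconstruct} a pair structure on $I\udot$. This requires showing $I\udot$ has tor-amplitude $[-(n-2),1]$ (via Auslander--Buchsbaum, using $T_0(F_b)=0$), proving $h^0(I\udot)\to h^0(I\udot)^{**}$ is injective (which needs Lemma \ref{lem:infinitesimalextensionofpuresheaf} on nilpotent extensions of pure sheaves and a Serre-duality vanishing $\Ext^n_{X\times B}(A, K\otimes\omega_X)=0$ that uses $\dim K \le q \le 1$ and the resolution length $n-2$ --- this is exactly where $n\ge 4$ enters), showing $h^0(I\udot)^{**}$ is locally free via a codimension-$(n-1)\ge 3$ argument and \cite[Lem.~6.13]{Kol}, identifying $h^0(I\udot)$ as an ideal sheaf and $h^1(I\udot)$ as the cokernel $Q$, and finally verifying flatness. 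None of this can be replaced by a formal comparison of $\Ext$-groups; your proposal acknowledges ``the main obstacle'' but misidentifies what it actually is.
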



The rest of this subsection is devoted to the proof of Theorem \ref{Thm:PairtoPerf}. Throughout this subsection, we use the notation in Theorem \ref{Thm:PairtoPerf}.

To any $\PT_q$ pair $(F,s)$ on $X$, we can associate a perfect complex 
\[I\udot:=[\O_X \xrightarrow{s} F]\]
with $\det(I\udot) \cong \O_X$. We first show that $I\udot$ is {\em simple}.

\begin{lemma}
Given a $\PT_q$ pair $(F,s)$ on $X$, we have
\[\Ext^{<0}_X(I\udot,I\udot)=0 \and \Hom_X(I\udot,I\udot)=\C\]
for the associated perfect complex $I\udot:=[\O_X \xrightarrow{s} F]$.
\end{lemma}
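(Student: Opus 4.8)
The plan is to compute $R\Hom_X(I\udot, I\udot)$ directly using the distinguished triangle
\[\xymatrix{F[-1] \ar[r] & I\udot \ar[r] & \O_X \ar[r]^-{s} & F}\]
which expresses $I\udot = [\O_X \xrightarrow{s} F]$ with $\O_X$ in degree $0$. Applying $R\Hom_X(-, I\udot)$ and $R\Hom_X(\O_X, -)$, $R\Hom_X(F,-)$ to this triangle reduces everything to the four groups $R\Hom_X(\O_X,\O_X)$, $R\Hom_X(\O_X, F)$, $R\Hom_X(F,\O_X)$, and $R\Hom_X(F,F)$. The key input is the vanishing established already in the proof of Lemma \ref{lem:injectivity'}: since $F \in \Coh_{\leq 2}(X)$ and $n \geq 4$, Serre duality gives
\[\Ext^i_X(F,\O_X) \cong H^{n-i}(X, F \otimes K_X)^* = 0 \quad \text{for } i \leq 1,\]
because $F \otimes K_X$ has cohomology only in degrees $\leq 2 < n-1$. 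Thus $R\Hom_X(F,\O_X)$ is concentrated in degrees $\geq 2$, and in particular $\Hom_X(F,\O_X) = \Ext^1_X(F,\O_X) = 0$.

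First I would handle the negative Ext groups. From the triangle, $\Ext^{<0}_X(I\udot, I\udot)$ fits between $\Ext^{<0}_X(\O_X, I\udot)$-type terms and $\Ext^{*}_X(F[-1], I\udot)$-type terms; unwinding, the only way to get a class in $\Ext^{k}_X(I\udot,I\udot)$ for $k<0$ is through $\Ext^{k-1}_X(\O_X, F) = H^{k-1}(X,F)$ (zero for $k \leq 0$), $\Ext^{k}_X(F,\O_X)$ (zero for $k \leq 1$ by the above), $\Ext^{k}_X(\O_X,\O_X) = H^k(X,\O_X)$ (zero for $k<0$), and $\Ext^{k+1}_X(F,F)$ composed with $\Ext^{k}_X(F,\O_X)$ — but the latter already vanishes. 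A clean bookkeeping of the long exact sequences then forces $\Ext^{<0}_X(I\udot,I\udot) = 0$.

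Next I would compute $\Hom_X(I\udot, I\udot)$. Applying $\Hom_X(-, I\udot)$ and using $\Ext^{-1}_X(\O_X, I\udot) = 0$ (which follows from $H^{-1}(X,\O_X) = 0$ and $\Ext^{-1}_X(\O_X, F) = H^{-1}(X,F) = 0$ via the defining triangle of $I\udot$), one gets an injection $\Hom_X(\O_X, I\udot) \hookrightarrow \Hom_X(I\udot, I\udot)$, and since the next term $\Hom_X(F[-1], I\udot) = \Ext^1_X(F, I\udot)$ receives contributions only from $\Ext^1_X(F,\O_X) = 0$ and $\Ext^0_X(F,F) \to \Ext^1_X(F,\O_X)$, one can pin down that the map $\Hom_X(\O_X, I\udot) \to \Hom_X(I\udot, I\udot)$ is an isomorphism. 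It remains to identify $\Hom_X(\O_X, I\udot)$: from the triangle $I\udot \to \O_X \xrightarrow{s} F$, we get an exact sequence $H^{-1}(X,F) \to \Hom_X(\O_X, I\udot) \to H^0(X,\O_X) \xrightarrow{s_*} H^0(X,F)$, i.e.\ $\Hom_X(\O_X, I\udot) = \ker(s_* : \C \to H^0(X,F))$. Since $s \neq 0$ (the pair is nonzero, or simply $s$ is part of the data and the support of $F$ is $2$-dimensional so $s_*(1) = s \neq 0$), this kernel is $0$ — wait, that would give $\Hom_X(I\udot,I\udot) = 0$, which is wrong. I should instead use the \emph{other} triangle $F[-1] \to I\udot \to \O_X$, giving $H^{-1}(X,\O_X) \to \Hom_X(\O_X, F[-1])\to \Hom_X(\O_X, I\udot) \to \Hom_X(\O_X,\O_X) \to \Hom_X(\O_X, F)$; here $\Hom_X(\O_X, F[-1]) = \Ext^{-1}_X(\O_X, F) = 0$ and the last map $\Hom_X(\O_X, \O_X) = \C \to \Hom_X(\O_X, F) = H^0(X,F)$ is precisely $1 \mapsto s$, which is injective, so $\Hom_X(\O_X, I\udot) = 0$ — still giving $0$. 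The resolution is that I have the grading of the triangle backwards: the correct rotation places $F$ in degree $-1$ only after a shift, and one must instead compute via $R\Hom_X(I\udot, I\udot)$ using $\Hom_X(I\udot, -)$ on $I\udot \to \O_X \to F$, yielding $\Hom_X(I\udot, I\udot) \to \Hom_X(I\udot, \O_X) \to \Hom_X(I\udot, F)$ with $\Hom_X(I\udot, \O_X) = \C$ (this is the computation already cited in Lemma \ref{lem:injectivity'}: $\C = \Hom_X(\O_X,\O_X) \xrightarrow{\sim} \Hom_X(I\udot, \O_X)$ since $\Ext^{\leq 1}_X(F,\O_X) = 0$), and the identity of $I\udot$ maps to the canonical projection $I\udot \to \O_X$; one checks the composite $I\udot \to \O_X \xrightarrow{s} F$ is zero in the derived category (it is, being two consecutive maps in a triangle), so the identity lifts and $\Hom_X(I\udot, I\udot) = \C$, with the only subtlety being to show nothing else lifts, i.e.\ that $\Hom_X(I\udot, I\udot) \to \Hom_X(I\udot, \O_X) = \C$ is injective, which follows from $\Hom_X(I\udot, F[-1]) = \Ext^1_X(F[-1]^\vee \otimes \cdots)$ — more directly, from $\Ext^{-1}_X(I\udot, F) $ being squeezed between $H^{-1}(X,F)=0$ and $\Ext^{-1}_X(F,F)$... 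The main obstacle, then, is precisely this careful diagram chase to show the natural map $\C = \Hom_X(\O_X, \O_X) \to \Hom_X(I\udot, I\udot)$ induced by functoriality of $[\O_X \to F]$ is an isomorphism; all the required vanishings ($\Ext^{\leq 1}_X(F,\O_X) = 0$, $H^{<0}(X,\O_X) = H^{<0}(X,F) = 0$) are in hand, so this is bookkeeping in the style of \cite[Prop.~1.21]{PT1}, but it must be carried out with the correct placement of the triangle's connecting maps.
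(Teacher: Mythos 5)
There is a genuine gap in your argument, and it's precisely at the point you dismiss as "bookkeeping." You assert that the only required vanishings are $\Ext^{\leq 1}_X(F,\O_X) = 0$, $H^{<0}(X,\O_X) = 0$, and $H^{<0}(X,F) = 0$, but these alone do \emph{not} give $\Hom_X(I\udot,I\udot)=\C$ --- and in fact the statement would be false without a further input from $\PT_q$-stability. Concretely: take $F = \O_S \oplus T$ with $S$ a surface and $T$ a nonzero sheaf of dimension $\leq 1$, and $s = (1,0)$. Then the complex $[\O_X \xrightarrow{s} F]$ visibly splits as $[\O_X \to \O_S]\oplus T[-1]$, so $\Hom_X(I\udot,I\udot) \supseteq \C \oplus \Hom_X(T,T)$ has dimension $\geq 2$, even though all the vanishings you list hold. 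The input that rules this out is $\Hom_X(Q,F)=0$, which is exactly where $\PT_q$-stability enters (here $Q = T \neq 0$ and $\Hom_X(Q,F)\neq 0$).

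To see where this surfaces in your computation: the key group you need to kill is $\Ext^{-1}_X(I\udot,F)$, since the long exact sequence from $R\Hom_X(I\udot,-)$ applied to $I\udot \to \O_X \to F$ gives
\[
0 = \Ext^{-1}_X(I\udot,\O_X) \to \Ext^{-1}_X(I\udot,F) \to \Hom_X(I\udot,I\udot) \to \Hom_X(I\udot,\O_X)=\C,
\]
so $\Hom_X(I\udot,I\udot)=\C$ iff $\Ext^{-1}_X(I\udot,F)=0$. Applying $R\Hom_X(-,F)$ to your triangle $F[-1]\to I\udot\to\O_X$ presents $\Ext^{-1}_X(I\udot,F)$ as squeezed between $H^{-1}(X,F)=0$ and $\Hom_X(F,F)$, \emph{not} between $H^{-1}(X,F)$ and $\Ext^{-1}_X(F,F)$ as you wrote (that would trivially vanish, which is why you thought the argument was routine). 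More precisely, $\Ext^{-1}_X(I\udot,F)=\ker\bigl(\Hom_X(F,F)\to H^0(X,F),\ \phi\mapsto\phi(s)\bigr)$, and since $\O_Z:=\im(s)\subseteq F$ is generated by $s$, this kernel is $\Hom_X(Q,F)$ --- which vanishes for a $\PT_q$ pair because $Q\in\Coh_{\leq q}(X)$ and $F\in\Coh_{\geq q+1}(X)$. Your approach is salvageable once this identification and use of stability is made explicit. The paper avoids the issue by introducing the \emph{second} triangle $\I_Z\to I\udot\to Q[-1]$; applying $R\Hom_X(-,F)$ to that triangle directly exhibits $\Ext^{-1}_X(I\udot,F)$ as a quotient of $\Hom_X(Q,F)=0$, so the role of $\PT_q$-stability appears on the surface rather than being buried in the kernel of a restriction map.

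Your computation of $\Ext^{<0}_X(I\udot,I\udot)=0$ is fine --- those degrees only require $\Ext^{\leq 1}_X(F,\O_X)=0$ and the obvious vanishing of negative sheaf cohomology, and the same sequences give $\Ext^{\leq -2}_X(I\udot,F)=0$ without stability. It is only the degree-zero statement that needs $\Hom_X(Q,F)=0$, and that is the part you cannot wave away.
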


\begin{proof}
The proof is analogous \cite[Lem.~1.15]{PT1}. Consider the two canonical distinguished triangles
\[\xymatrix{I\udot \ar[r] & \O_X \ar[r] & F  } \and \xymatrix{\I_Z \ar[r] & I\udot \ar[r] & Q[-1]}\]
where $Z$ is the support of $F$ and $Q$ is the cokernel of $s$. We can form the following three induced distinguished triangles
\begin{align}
&\xymatrix{R\Hom_X(F,\O_X) \ar[r] & R\Hom_X(\O_X,\O_X) \ar[r] & R\Hom_X(I^\mdot,\O_X)},
\label{eq:S2.dt1}\\
&\xymatrix{R\Hom_X(Q,F)[1] \ar[r] & R\Hom_X(I^\mdot,F) \ar[r] & R\Hom_X(\I_Z,F)},
\label{eq:S2.dt2}\\
&\xymatrix{R\Hom_X(I^\mdot,I^\mdot) \ar[r] & R\Hom_X(I^\mdot,\O_X) \ar[r] &R\Hom_X(I^\mdot,F)}.
\label{eq:S2.dt3}
\end{align}
The distinguished triangle \eqref{eq:S2.dt1} implies
\begin{equation}\label{Eq7}
\Hom_X(I^\mdot,\O_X)=\C \and \Ext^{<0}_X(I^\mdot,\O_X)=0
\end{equation}
since $\Ext^{\leq 1}_X(F,\O_X)=H^{\geq n-1}(X,F)^*=0$. On the other hand, the distinguished triangle \eqref{eq:S2.dt2} implies
\begin{equation}\label{Eq8}
\Ext^{<0}_X(I^\mdot,F)=0 
\end{equation}
since $\Hom_X(Q,F)=0$ by $\PT_q$-stability. Then the distinguished triangle \eqref{eq:S2.dt3} and equations \eqref{Eq7} and \eqref{Eq8} prove that $I^\mdot$ is simple.
\end{proof}

Since $\cPair^{(q)}(X,v)$ is a {\em fine} moduli space, there exists a universal family 
\[\II\udot = [\O_{\cPair^{(q)}(X,v)\times X} \xrightarrow{} \FF],\]
which defines a map 
\begin{equation}\label{eq:S2.2}
\cPair^{(q)}(X,v) \xrightarrow{\II\udot} \cPerf(X,v)_{\O_X}^{\spl}	
\end{equation}
between the moduli spaces by Lemma \ref{lem:det} below. 

\begin{lemma}\label{lem:det}
Let $f:\sX \to \sY$ be a smooth projective morphism of schemes. Let $\mathcal{F}$ be a $\sY$-flat coherent sheaf on $\sX$. If $\mathrm{codim}(\Supp(\mathcal{F}_y)) \geq 2$ for all $y \in \sY$, then $\det(\mathcal{F}) \cong \O_{\sX}$.
\end{lemma}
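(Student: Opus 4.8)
The plan is to establish triviality of $\det(\mathcal{F})$ in three stages: (1) that $\det(\mathcal{F})$ makes sense as a line bundle on $\sX$ and is canonically trivialized away from $\Supp(\mathcal{F})$; (2) that $\det(\mathcal{F})$ restricts to the trivial bundle on every fibre $\sX_y$; and (3) that these two facts together force $\det(\mathcal{F}) \cong \O_\sX$.

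For (1): since $f$ is smooth of relative dimension $n$ and $\mathcal{F}$ is $\sY$-flat, I would take a partial locally free resolution $E_\bullet \to \mathcal{F}$ of length $n$ on a small affine open of $\sX$ (the syzygies stay $\sY$-flat because $\mathcal{F}$ is); its $n$-th syzygy $K$ restricts, over each $y$, to an $n$-th syzygy on the \emph{regular} variety $\sX_y$, hence is locally free there by Auslander--Buchsbaum. A $\sY$-flat coherent sheaf that is locally free on every fibre of a smooth morphism is locally free (relative local flatness criterion), so $K$ is locally free and $0 \to K \to E_{n-1} \to \cdots \to E_0 \to \mathcal{F} \to 0$ is a finite locally free resolution; hence $\det(\mathcal{F}) := \bigotimes_i \det(E_i)^{(-1)^i}\otimes\det(K)^{(-1)^n}$ is a well-defined line bundle, independent of the resolution, whose formation commutes with base change. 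On $\cU := \sX \setminus \Supp(\mathcal{F})$ this complex is exact, yielding a canonical isomorphism $\det(\mathcal{F})|_{\cU} \cong \O_{\cU}$. For (2): base-change compatibility gives $\det(\mathcal{F})|_{\sX_y} \cong \det(\mathcal{F}_y)$, and $\mathcal{F}_y$ is supported in codimension $\geq 2$ in the smooth (hence normal) $\sX_y$; a rank-one reflexive sheaf on a normal variety is recovered from its restriction to the complement of any codimension-$\geq 2$ closed subset, so from $\det(\mathcal{F}_y)|_{\cU_y} \cong \O_{\cU_y}$ we get $\det(\mathcal{F}_y) \cong \O_{\sX_y}$. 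Along the way I would also record, using equidimensionality of the smooth $f$ and upper semicontinuity of fibre dimension, that $\Supp(\mathcal{F})$ itself has codimension $\geq 2$ in $\sX$, so $\cU$ is dense in every fibre.

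For (3): $\det(\mathcal{F})$ is a line bundle on the smooth projective $f:\sX\to\sY$ that is trivial on every (geometrically connected, reduced) fibre, so by cohomology and base change $M := f_*\det(\mathcal{F})$ is a line bundle on $\sY$ and $f^*M \xrightarrow{\sim} \det(\mathcal{F})$. Pulling back to $\cU$ and comparing with the canonical trivialization there shows $(f|_\cU)^*M \cong \O_\cU$; since $\cU \to \sY$ is faithfully flat with geometrically connected fibres, pushing the canonical trivializing section forward along $f|_\cU$ (and using $(f|_\cU)_*\O_\cU = \O_\sY$) produces a section of $M$ which one checks fibrewise, via Nakayama, to be nowhere vanishing, whence $M \cong \O_\sY$ and $\det(\mathcal{F}) \cong \O_\sX$. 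The hard part will be this last identification for a general base $\sY$ (which may be non-reduced or non-normal, as in the intended application to $\cPair^{(q)}(X,v)$): when $\sX$ happens to be normal one can bypass it entirely by extending the trivialization of $\det(\mathcal{F})$ directly from $\cU$ across the codimension-$\geq 2$ locus, but in general one must argue carefully that $f_*\O_\sX = \O_\sY$ persists after restriction to $\cU$ and that the constructed section is genuinely nowhere zero. This is the step where the technical care concentrates; everything else is standard.
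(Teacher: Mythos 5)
The paper's own proof is a one-line citation of Hassett--Kov\'acs \cite[Prop.~3.5]{HK}, so your reconstruction is far more explicit. Your steps (1) and (2) are correct: $\sY$-flatness plus $\mathrm{pd}_{\O_{\sX_y,x}}(\mathcal{F}_y)_x\leq n$ (Auslander--Buchsbaum on the regular fibre) gives, via the relative flatness criterion, a finite locally free resolution whose syzygies are $\sY$-flat, so $\det(\mathcal{F})$ is a well-defined line bundle commuting with base change, and on each smooth fibre the codimension-$\geq 2$ support forces $\det(\mathcal{F}_y)\cong\O_{\sX_y}$. The gap you yourself flag in (3) is real: you do not establish $(f|_\cU)_*\O_\cU=\O_\sY$, and this is precisely the point where $\sX$ failing to be normal (because $\sY$ is an arbitrary scheme, e.g.\ non-reduced as in the application) is dangerous. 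Without it the $f_*$/Nakayama argument does not close.

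The missing input is exactly what \cite[Prop.~3.5]{HK} supplies, and it can be proven directly by a local flatness argument that bypasses your $f_*$ detour entirely. For $x\in Z:=\Supp(\mathcal{F})$ with $y=f(x)$, let $I\subset B:=\O_{\sX,x}$ be the ideal of $Z$. Since $\O_{\sX_y,x}=B/\m_y B$ is regular and $\mathrm{codim}(Z_y)\geq 2$, there is a length-$2$ $\bar B$-regular sequence in $I\bar B$; lifting it to $I$ and using that $B$ is $\O_{\sY,y}$-flat, the standard local criterion for flatness shows the lifted sequence is $B$-regular, so $\mathrm{depth}_Z(\O_\sX)\geq 2$ at every point of $Z$. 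Hence $\O_\sX\to j_*\O_\cU$ is an isomorphism, where $j:\cU=\sX\setminus Z\hookrightarrow\sX$. Since $\det(\mathcal{F})$ is locally isomorphic to $\O_\sX$ it too satisfies $\det(\mathcal{F})\xrightarrow{\sim}j_*(\det(\mathcal{F})|_\cU)$, and the canonical trivialization on $\cU$ then gives $\det(\mathcal{F})\cong j_*\O_\cU\cong\O_\sX$ directly, with no appeal to cohomology and base change or to connectedness of fibres. I would recommend replacing step (3) with this argument; as written, the proof is incomplete at the step you identified.
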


\begin{proof}
This follows directly from \cite[Prop.~3.5]{HK}.
\end{proof}

We show that the above map \eqref{eq:S2.2} satisfies the {\em infinitesimal lifting property}.




\begin{proposition}\label{prop:infinitesimallifting}
Let $B_0 \subseteq B$ be an nilpotent extension of 0-dimensional schemes with a unique closed point $b \in B_0$. Let $I\udot$ be a perfect complex on $X \times B$ with trivial determinant. 
Let $i:X \times B_0 \hookrightarrow X\times B$ be the inclusion map.
If $I\udot|_{X\times B_0} \cong [\O_{X\times B_0} \xrightarrow{s_0} F_0]$ for a $B_0$-flat family $(F_0,s_0)$ of $\PT_q$ pairs over $B_0$, then there exists a $B$-flat family $(F,s)$ of $\PT_q$ pairs over $B$ such that $I\udot\ \cong [\O_{X\times B} \xrightarrow{s} F]$ and $(F,s)|_{X\times B_0}=(F_0,s_0)$.
\end{proposition}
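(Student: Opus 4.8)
The plan is to follow \cite[Prop.~2.2]{PT1}, paying close attention to base change over the (possibly non‑reduced) Artin local ring $A:=\Gamma(B,\O_B)$. Write $A_0:=\Gamma(B_0,\O_{B_0})$, let $j:X\cong X\times\{b\}\hookrightarrow X\times B$ be the inclusion of the central fibre, and set $I_b\udot:=Lj^*I\udot$. By hypothesis $Li^*I\udot\cong[\O_{X\times B_0}\xrightarrow{s_0}F_0]$, so $I_b\udot\cong[\O_X\xrightarrow{s_b}F_b]$ is the complex of the $\PT_q$ pair $(F_b,s_b)$ over $b$, where $F_b$ has $\dim\Supp(F_b)\le 2$. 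Since $\dim X\ge 4$, Serre duality gives $\Ext^{\le 1}_X(F_b,\O_X)=0$, so from the triangle $I_b\udot\to\O_X\to F_b$ one recovers $\Hom_X(I_b\udot,\O_X)=\C$, $\Ext^{<0}_X(I_b\udot,\O_X)=0$, and an exact sequence
\[0\to H^1(X,\O_X)\to\Ext^1_X(I_b\udot,\O_X)\to\Ext^2_X(F_b,\O_X).\]
The same argument over $X\times B_0$ (using $\Ext^{\le 1}_{X\times B_0}(F_0,\O_{X\times B_0})=0$) shows that restriction along $I_0\udot\to\O_{X\times B_0}$ identifies $\Hom_{X\times B_0}(I_0\udot,\O_{X\times B_0})\cong\Hom_{X\times B_0}(\O_{X\times B_0},\O_{X\times B_0})=A_0$, and the canonical projection $\phi_0:I_0\udot=[\O_{X\times B_0}\xrightarrow{s_0}F_0]\to\O_{X\times B_0}$ corresponds to $1\in A_0$.

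The heart of the argument is to lift $\phi_0$ to a morphism $\phi:I\udot\to\O_{X\times B}$ with $Li^*\phi=\phi_0$. By dévissage we may assume $B_0\hookrightarrow B$ is a square‑zero extension with $\mathfrak m_{A_0}\cdot\ker(A\to A_0)=0$, so that the ideal of $X\times B_0$ in $X\times B$ is $\mathcal I\cong j_*\O_X^{\oplus k}$ with $k=\dim_\C\ker(A\to A_0)$. As $I\udot$ is perfect, adjunction gives $R\Hom_{X\times B}(I\udot,\mathcal I)\cong R\Hom_X(I_b\udot,\O_X)^{\oplus k}$, and the obstruction $\mathrm{ob}(\phi_0)$ to lifting $\phi_0$ is the image of $\phi_0$ under the connecting map into $\Ext^1_X(I_b\udot,\O_X)^{\oplus k}$. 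Since $\phi_0$ lies in the image of $\Hom_{X\times B_0}(\O_{X\times B_0},\O_{X\times B_0})$, a diagram chase with the two relevant connecting homomorphisms (for $\mathcal I\to\O_{X\times B}\to\O_{X\times B_0}$ and for $I_0\udot\to\O_{X\times B_0}\to F_0$) shows that $\mathrm{ob}(\phi_0)$ maps to $0$ in $\Ext^2_X(F_b,\O_X)^{\oplus k}$, hence lies in the subspace $H^1(X,\O_X)^{\oplus k}$. I expect this remaining $H^1(X,\O_X)$‑contribution to be precisely what is annihilated by the hypothesis $\det I\udot\cong\O_{X\times B}$, which rigidifies the determinant direction, exactly as in \cite{PT1}. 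This is the step I anticipate to be the main obstacle: both the dévissage bookkeeping and the correct use of the fixed‑determinant condition over the reducible/non‑reduced base are the ``finer argument'' the text alludes to. Granting it, $\mathrm{ob}(\phi_0)=0$, so a lift $\phi$ exists.

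Given $\phi$, put $F:=\cone(\phi)$ with $s:\O_{X\times B}\to F$ the second arrow of the triangle $I\udot\xrightarrow{\phi}\O_{X\times B}\xrightarrow{s}F\to I\udot[1]$; rotating shows $I\udot\cong[\O_{X\times B}\xrightarrow{s}F]$ with $\O_{X\times B}$ in degree $0$, and since $Li^*$ is triangulated, $Li^*F\cong\cone(\phi_0)\cong F_0$ with $Li^*s=s_0$. Now $F$ is perfect (cone of perfect complexes) and $Lj^*F\cong F_b$ is a coherent sheaf concentrated in degree $0$; by the standard Nakayama criterion over an Artin local base, a perfect complex on $X\times B$ whose derived restriction to the unique closed fibre is concentrated in degree $0$ is a $B$‑flat coherent sheaf. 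Hence $(F,s)$ is a $B$‑flat family of pairs with $(F,s)|_{X\times B_0}\cong(F_0,s_0)$ and central fibre $(F_b,s_b)$.

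Finally, $(F_b,s_b)$ is $\PT_q$‑stable, and since $B$ is $0$‑dimensional with unique closed point $b$ and $\PT_q$‑stability is an open condition in flat families (Lemma \ref{lem:openness}), the whole family $(F,s)$ over $B$ consists of $\PT_q$ pairs. This produces the desired $B$‑flat family of $\PT_q$ pairs with $I\udot\cong[\O_{X\times B}\xrightarrow{s}F]$ restricting to $(F_0,s_0)$, completing the proof of the proposition.
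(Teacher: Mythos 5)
Your strategy is genuinely different from the paper's, and it founders exactly at the step you flag. The paper does not lift the projection $\phi_0 : I_0^\bullet \to \O_{X\times B_0}$ by an obstruction-theory argument; instead, it proves directly that $h^0(I^\bullet)$ is torsion-free of rank one, embeds it into the reflexive hull $h^0(I^\bullet)^{**}$, shows the hull is a line bundle, and uses the trivial-determinant hypothesis to identify $h^0(I^\bullet)^{**}$ with $\O_{X\times B}$ (agreement on the codimension-$\geq 2$ complement of $\mathrm{Supp}\,F_0$ plus rigidity of line bundles). The map $\phi : I^\bullet \to h^0(I^\bullet) \hookrightarrow h^0(I^\bullet)^{**}\cong\O_{X\times B}$ then comes for free; there is no separate obstruction class in $H^1(X,\O_X)$ to kill. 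Your proposal, by contrast, tries to manufacture $\phi$ first and prove the obstruction in $\Ext^1_X(I_b^\bullet,\O_X)^{\oplus k}$ vanishes, and neither of the two claims you need is established: (i) the assertion that $\mathrm{ob}(\phi_0)$ dies in $\Ext^2_X(F_b,\O_X)^{\oplus k}$ is not proved, and the ``two connecting homomorphisms'' you invoke live in different categories ($D(X\times B)$ for the ideal-sequence and $D(X\times B_0)$ for the pair-sequence); one would need a lift of $\phi_0$ to even set up the compatible $3\times 3$ diagram, which is circular; (ii) the assertion that $\det I^\bullet\cong\O_{X\times B}$ annihilates an $H^1(X,\O_X)$-valued obstruction is not how the determinant hypothesis is used either here or in \cite{PT1}, and no mechanism is given for it.

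Moreover, your route bypasses entirely what the paper identifies as the genuinely ``finer argument'' needed for $q\in\{0,1\}$: because $F_b$ is not pure, one must prove that the canonical map $h^0(I^\bullet)\to h^0(I^\bullet)^{**}$ is still injective. The paper does this via the depth estimate $F_b\in\Coh_{\geq 1}(X)$, the Auslander--Buchsbaum bound $\mathrm{pd}\,F_b\leq n-1$, a length-$(n-2)$ locally free resolution of $\coker(E^{-1}\to E^0)$, and a relative Serre-duality computation showing that the kernel $K$ (supported where $h^1(I^\bullet)\neq 0$, of dimension $\leq q\leq 1$) satisfies $\Hom_{X\times B}(K,A)=0$. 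None of this appears in your plan, and it is precisely the content that makes the non-pure case work. What your argument does have correct: the Nakayama criterion applied to $F=\cone(\phi)$ once $\phi$ exists, the identification $\Ext^1_{X\times B}(I^\bullet,\mathcal I)\cong\Ext^1_X(I_b^\bullet,\O_X)^{\oplus k}$ via adjunction, and the openness argument at the end. But the central lifting step is a genuine gap, not a formality; the paper's proof establishes a posteriori that $\phi$ exists, but via a completely different mechanism than the one you propose.
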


\begin{proof}
We first take $q = 0, 1$. The (easier) case $q=-1$ is discussed at the end of the proof. Let $(F_b,s_b)$ be the fibre of $(F_0,s_0)$ over the point $b \in B_0 $. Since $F_b \in \Coh_{\geq1}(X)$, we have $\mathrm{depth}_{\O_{X,x}}((F_b)_x) \geq 1$ for all closed points $x \in \Supp(F_b)$. 
By the Auslander-Buchsbaum formula, we have
\[\mathrm{pd}_{\O_{X,x}}((F_b)_x) \leq n-1\]
for all $x \in X$.
Then $I^{\mdot}|_{X\times\{b\}}\cong[\O_X \xrightarrow{s_b} F_b]$ has tor-amplitude $[-(n-2),1]$, and thus we have a quasi-isomorphism with a complex of locally free sheaves
\begin{equation} \label{eqn:resIb}
I^\mdot|_{X \times \{b\}}\cong[F^{-(n-2)}\to \cdots \to F^0 \to F^1].
\end{equation}
Since $X\times B$ is quasi-projective, $I^\mdot$ is quasi-isomorphic to a bounded complex of locally free sheaves. Using Nakayama's lemma, and the fact that a morphism between locally free sheaves is injective on the fibres if and only if it is injective with locally free cokernel, we see from \eqref{eqn:resIb} that there exists a quasi-isomorphism with a complex of locally free sheaves
\[I^\mdot\cong[E^{-(n-2)}\to \cdots \to E^0 \to E^1].\]
Moreover since $I\udot$ has amplitude $[0,1]$ by \cite[Lem.~2.1.4]{Lie}, we have
\[I^\mdot \cong [A \to E^1]\]
where $A=\coker(E^{-1} \to E^0)$ is $B$-flat.

We claim that the canonical map
\[h^0(I^\mdot)\to h^0(I^\mdot)^{**}\]
is injective. Indeed, let $K$ be the kernel of the above map $h^0(I^\mdot)\to h^0(I^\mdot)^{**}$. Note that $h^0(I^\mdot)$ is a vector bundle away from the support of $F_0$ by \cite[Lem.~2.1.4]{Lie}. 
Hence $\dim(K) \leq 2 < n$.

The support $W$ of $h^1(I^\mdot)$ is of dimension $\leq q$. This follows from the fact that $i^*h^1(I^\mdot)\cong h^1(I_0^\mdot)$ 
and Nakayama's lemma. 
Away from $W$, the coherent sheaf $h^0(I^\mdot)$ is $B$-flat because $A$ is $B$-flat.
Since the ideal sheaf $h^0(I_b^\mdot)$ is torsion free, all non-zero subsheaves of $h^0(I^\mdot)|_{X \times B \setminus W}$ 
are $n$-dimensional by Lemma \ref{lem:infinitesimalextensionofpuresheaf} below. Therefore $K$ is (set-theoretically) supported in $W$. Thus we have $\dim(K)\leq  \dim(W) \leq q.$

Consider the local-to-global spectral sequence 
\[H^i(X\times B,\ext^j_{X\times B}(A,K\otimes \omega_X))\Rightarrow \Ext^{i+j}_{X\times B}(A,K\otimes \omega_X).\]
Since 
$\dim(K\otimes \omega_X) \leq q \leq 1$
and there is a resolution $A\cong[E^{-(n-2)}\to \cdots \to E^0]$ of length $n-2$, we have
\[\Ext^n_{X\times B}(A,K\otimes \omega_X)=0.\]
By Serre duality $R\hom_{\pi}(K,A)\cong R\hom_{\pi}(A, K \otimes \omega_X))^{\vee}[-n],$ we have
\[\Hom_{X\times B}(K,A)^*\cong\Ext^n_{X\times B}(A,K\otimes \omega_X)=0,\]
where the dual is taken as a $\Gamma(B,\O_B)$-module.
Since $K \subseteq A$, we deduce $K=0$.
This proves the claim.

Since the rest of the proof is exactly the same as \cite{PT1}, we will only sketch the remaining arguments. By \cite[Lem.~6.13]{Kol}, $h^0(I^\mdot)^{**}$ is a vector bundle away from the support of $Q=h^1(I^\mdot)$. 
In the proof of \cite[Lem.~6.13]{Kol}, it is also shown that a reflexive rank 1 sheaf which is locally free outside a subset of codimension $n-1 \geq 3$, is in fact locally free everywhere. Therefore $h^0(I^\mdot)^{**}$ is locally free. 

Next, we have $\det(I^\mdot) \cong \O_{X \times B}$ and $W \ \subset \mathrm{Supp}(F_0)$, thus
$$
h^0(I^\mdot)^{**}|_{(X\times B)\setminus \mathrm{Supp}(F_0)} \cong h^0(I\udot)|_{(X\times B)\setminus \mathrm{Supp}(F_0)} \cong \O_{(X\times B) \setminus \mathrm{Supp}(F_0)}.
$$
Since $\mathrm{Supp}(F_0)$ has codimension $n-2\geq 2$, we find $h^0(I\udot)^{**}\cong \O_{X\times B}$ globally and $\I_Z \cong h^0(I^\mdot)$ is an ideal sheaf of a closed subscheme $Z \subseteq X \times B$.
The distinguished triangle 
\[\xymatrix{\I_Z \ar[r] & I^\mdot \ar[r] & Q[-1]}\]
defines an element
\[\alpha\in \Ext^2_{X\times B}(Q,\I_Z)\cong\Ext^1_{X\times B}(Q,\O_Z)\]
since $\Ext^1_{X\times B}(Q,\O_{X\times B})=\Ext^2_{X\times B}(Q,\O_{X\times B})=0$ (recall $q \leq 1$ and $n \geq 4$). Hence we have a short exact sequence
\[\xymatrix{0 \ar[r] & \O_Z \ar[r] & F \ar[r] & Q \ar[r] & 0}\]
of coherent sheaves on $X\times B$. Therefore we have an isomorphism
\[I^\mdot \cong [\O_{X\times B} \xrightarrow{s} F]\]
of perfect complexes. Since $F$ and $F_0$ are 2-dimensional, $\Hom_{X\times B}(I^\mdot,\O_{X\times B}) \cong \Gamma(X\times B,\O_{X\times B}) \cong \Gamma(B,\O_B)$ and $\Hom_{X\times B_0}(I_0^\mdot,\O_{X\times B_0}) \cong \Gamma(B_0,\O_{B_0})$. As in \cite{PT1}, we therefore have $L i^*F \cong F_0$. Hence $F$ is flat over $B$ by \cite[Lem.~2.5]{PT1}, which completes the proof.

For $q=-1$, $I^\mdot$ is only quasi-isomorphic to a bounded complex of locally free sheaves $[E^{-(n-1)} \to \cdots \to E^1]$. However, this time $W = \emptyset$, so $h^0(I^\mdot)\to h^0(I^\mdot)^{**}$ is obviously injective (Lemma \ref{lem:infinitesimalextensionofpuresheaf}). The rest of the proof proceeds as in the previous paragraph.
\end{proof}

We need the following lemma to complete the proof of Proposition \ref{prop:infinitesimallifting}.

\begin{lemma}\label{lem:infinitesimalextensionofpuresheaf}
Let $(A,\m)$ be a local Artinian $\C$-algebra and let $X$ be a quasi-projective scheme. Consider an $A$-flat family $F$ of sheaves on $X\times \Spec(A)$ whose fibre $F_0=F \otimes_A A/\m$ is a pure sheaf. Then $F$ is also a pure sheaf, i.e., for all non-zero subsheaves $G \subseteq F$, we have $\dim(G)=\dim(F)$.
\end{lemma}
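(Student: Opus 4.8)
The plan is to prove that $F$ is pure by contradiction, reducing the statement about the $A$-flat family to the known purity of the central fibre $F_0$ via a dimension count on associated subschemes. First I would suppose, for contradiction, that there is a non-zero subsheaf $G \subseteq F$ with $d := \dim(G) < \dim(F) =: e$. Passing to the maximal such subsheaf, we may assume $G = T_d(F)$ is the maximal $d$-dimensional subsheaf of $F$; in particular $G$ is a coherent $\O_{X \times \Spec(A)}$-module. Since $\Spec(A)$ is a one-point scheme, one would like to say that $G$ is itself $A$-flat and deduce a contradiction by restricting to the closed fibre, but $G$ need not be $A$-flat a priori, so a little care is needed.

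The key step is the following: I would filter $A$ by powers of its maximal ideal, $A \supseteq \m \supseteq \m^2 \supseteq \cdots \supseteq \m^N = 0$, and argue inductively on $N$ (the length of $A$). For the base case $A = \C$ there is nothing to prove. For the inductive step, set $A' := A/\m^{N-1}$ (or $A/(t)$ for a suitable socle element $t$ annihilated by $\m$), giving a small extension $0 \to (t) \to A \to A' \to 0$ with $(t) \cong \C$. By flatness of $F$ over $A$, tensoring with this sequence yields a short exact sequence $0 \to F_0 \to F \to F' \to 0$ of sheaves on $X \times \Spec(A)$, where $F' := F \otimes_A A'$ is $A'$-flat and, by induction, pure of dimension $e$; and $F_0 = F \otimes_A \C$ is pure of dimension $e$ by hypothesis. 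Now given a non-zero subsheaf $G \subseteq F$, consider its image $\bar G$ in $F'$ and the kernel $G \cap F_0 \subseteq F_0$. Both $\bar G$ (if non-zero, as a subsheaf of the pure sheaf $F'$) and $G \cap F_0$ (if non-zero, as a subsheaf of the pure sheaf $F_0$) are of dimension $e$. Since $G$ fits in the short exact sequence $0 \to G \cap F_0 \to G \to \bar G \to 0$ and $G \neq 0$, at least one of the two outer terms is non-zero, hence has dimension $e$, whence $\dim(G) = e$. This contradicts $\dim(G) < e$ and completes the induction.

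The main obstacle I anticipate is organizing the inductive structure cleanly: one must choose the small extension $A \to A'$ so that the kernel is a principal ideal isomorphic to the residue field $\C$ (this is standard for local Artinian $\C$-algebras — take $t$ in the socle $\mathrm{Ann}_A(\m)$), and one must verify that $F \otimes_A A'$ remains flat over $A'$, which is immediate since flatness is preserved under base change. A secondary point worth stating explicitly is that a non-zero subsheaf of a pure $e$-dimensional sheaf is again $e$-dimensional — this is just the definition of purity (no embedded or lower-dimensional associated subsheaves), applied to $F_0$ and to $F'$. No deeper input is needed; in particular one does not need to know that $G$ itself is $A$-flat, which is the technical trap the filtration argument is designed to sidestep. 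I would close by remarking that the same argument shows more generally that $T_d(F) = 0$ for all $d < \dim(F)$, which is the form in which the lemma is used in the proof of Proposition \ref{prop:infinitesimallifting}.

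\begin{proof}
We argue by induction on the length $N$ of $A$. If $N=1$, then $A=\C$ and $F=F_0$ is pure by hypothesis. Assume $N\geq 2$ and that the statement holds for all local Artinian $\C$-algebras of length $<N$. Choose a non-zero element $t\in A$ in the socle $\mathrm{Ann}_A(\m)$, so that $(t)\cong \C$ as an $A$-module, and set $A':=A/(t)$, a local Artinian $\C$-algebra of length $N-1$. Tensoring the short exact sequence $0\to (t)\to A\to A'\to 0$ with the $A$-flat sheaf $F$ gives a short exact sequence of sheaves on $X\times\Spec(A)$
\[
0\longrightarrow F_0\longrightarrow F\longrightarrow F'\longrightarrow 0,
\]
where $F':=F\otimes_A A'$. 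Since flatness is stable under base change, $F'$ is flat over $A'$ with central fibre $F_0$, so by the inductive hypothesis $F'$ is pure; and $F_0$ is pure by hypothesis. Set $e:=\dim(F)=\dim(F_0)=\dim(F')$.

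Now let $G\subseteq F$ be a non-zero subsheaf. Writing $G_0:=G\cap F_0\subseteq F_0$ and letting $\bar G\subseteq F'$ be the image of $G$, we have a short exact sequence
\[
0\longrightarrow G_0\longrightarrow G\longrightarrow \bar G\longrightarrow 0.
\]
Since $G\neq 0$, either $G_0\neq 0$ or $\bar G\neq 0$. If $G_0\neq 0$, then $\dim(G_0)=e$ by purity of $F_0$, hence $\dim(G)=e$. If $\bar G\neq 0$, then $\dim(\bar G)=e$ by purity of $F'$, hence again $\dim(G)=e$. In either case $\dim(G)=\dim(F)$, so $F$ is pure. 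This completes the induction.
\end{proof}
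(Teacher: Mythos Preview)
Your proof is correct and rests on the same core idea as the paper's: filter $F$ so that each graded piece is the pure sheaf $F_0$, and use purity of $F_0$ to control subsheaves. The organization differs slightly. The paper fixes a composition series $0=\m_N\subset\cdots\subset\m_1=\m\subset A$ with $\m_i/\m_{i+1}\cong A/\m$, obtains from flatness the exact sequences $0\to\m_{i+1}F\to\m_iF\to F_0\to0$, and for a subsheaf $G$ with $\dim(G)<\dim(F)$ shows directly that $G\subseteq\m_iF$ for all $i$ (since the image of $G$ in each $F_0$ must vanish), whence $G=0$. You instead run an induction on the length of $A$ via a small extension $0\to(t)\to A\to A'\to0$, obtain $0\to F_0\to F\to F'\to0$, and invoke the inductive hypothesis to know $F'$ is pure; then any nonzero $G\subseteq F$ has a nonzero piece in $F_0$ or $F'$, forcing $\dim(G)=e$. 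Your packaging is arguably cleaner and makes the logical structure more transparent; the paper's direct filtration avoids setting up an induction but is otherwise the same argument unrolled.
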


\begin{proof}
Since $A$ is Artinian, we may take a sequence of ideals \cite[Prop.~3.7]{Eis}
\[0=\m_N\subset \cdots \subset\m_2\subset \m_1 =\m,\quad\text{such that}\quad\m_i/\m_{i+1} \cong A/\m\,.\]
We let $\m_0=A$. Suppose we have a subsheaf $G\subset F$ such that $\dim(G)<\dim(F)$. Since $F$ is flat over $A$, we have a short exact sequence
\begin{equation}\label{Eq1}
\xymatrix{0 \ar[r]& \m_{i+1}\cdot F \ar[r]& \m_i \cdot F \ar[r]& F_0 \ar[r]& 0}
\end{equation}
of sheaves on $X \times \Spec(A)$, where $\m_i \otimes_A F=\m_i\cdot F$. Consider the induced morphism 
\[\xymatrix{
0 \ar[r] & G \cap \m\cdot F \ar[r] \ar@{^{(}->}[d] & G \ar[r] \ar@{^{(}->}[d] & \frac{G}{G \cap \m\cdot F} \ar[r] \ar@{^{(}->}[d] & 0 \\
0 \ar[r] & \m\cdot F \ar[r] & F \ar[r] & F_0 \ar[r] & 0}\]
of short exact sequences. Note that
\[\dim(\tfrac{G}{G\cap \m \cdot F}) \leq \dim(G)<\dim(F)=\dim(F_0).\]
Since $F_0$ is pure, we have $\frac{G}{G\cap \m \cdot F} = 0$. Therefore we deduce that
\[G \subseteq \m \cdot F.\]
Moreover, applying this argument to \eqref{Eq1}, we deduce that 
\[G \subseteq \m_i \cdot F \implies G \subseteq \m_{i+1} \cdot F.\] 
Therefore we obtain $G \subseteq \m_N \cdot F = 0$ by induction on $i$.
\end{proof}

Now we can simply deduce Theorem \ref{Thm:PairtoPerf} from Lemma \ref{lem:injectivity'} and Proposition \ref{prop:infinitesimallifting} as follows:

\begin{proof}[Proof of Theorem \ref{Thm:PairtoPerf}]
By Lemma \ref{lem:injectivity'}, the map
\[\cPair^{(q)}(X,v) \xrightarrow{\II\udot} \cPerf(X,v)_{\O_X}^{\spl}\]
in \eqref{eq:S2.2} is injective on $\C$-points. Since $\C$ is algebraically closed, the above map is universally injective. In order to verify that the above map is formally smooth it is enough to check the lifting property along extensions of local Artinian $\C$-algebras \cite[02HW]{Sta} which is done in Proposition \ref{prop:infinitesimallifting}. Since $\cPair^{(q)}(X,v)$ is of finite type, the above map is {\'e}tale. Moreover the above map is universally injective and {\'e}tale, thus it is an open embedding (\cite[tag 02LC]{Sta}).
\end{proof}

\begin{remark}
The open embedding \eqref{eq:S2.1} in Theorem \ref{Thm:PairtoPerf} induces a derived enhancement of the moduli space $\cPair^{(q)}(X,v)$ of $\PT_q$ pairs. The tangent complex at $(F,s) \in \cPair^{(q)}(X,v)$ is given as
\[\TT|_{(F,s)} = R\Hom_X(I\udot,I\udot)_0[1]\]
where $I\udot=[\O_X \xrightarrow{s} F]$. This derived structure (of perfect complexes) is {\em different} from the derived structure (of pairs) in Remark \ref{rem:derivedinterpretation}.
\end{remark}

\begin{remark}
For $\PT_1$-stability, Theorem \ref{Thm:PairtoPerf} was independently proved by Gholampour-Jiang-Lo in \cite[Prop.~2.18]{GJL} using \cite[Lem.~5.12]{Lo}.
\end{remark}

\begin{remark}
In the proof of Proposition \ref{prop:infinitesimallifting} we only used the fact that $T_0(F)=0$ and $Q\in \Coh_{\leq 1}(X)$, and the statement can be generalized as follows.
Let $\cPerf(X)_{\O_X}$ be the moduli stack of universally gluable perfect complexes $I\udot$ on $X$ with $\det(I\udot)\cong \O_X$ which is an algebraic stack locally finite type over $\mathbb{C}$ \cite[Thm. 4.2.1]{Lie}. By the proof of Lemma \ref{lem:openness} we have an open substack
\[\cPair^{[0,1]}(X):= \Big\{(F,s) \in \cPair(X) : T_0(F)=0 \and Q \in \Coh_{\leq 1}(X) \Big\}\]
inside $\cPair(X)$. For $q=0,1$, we have 
\[\xymatrix{
\cPair^{(q)}(X) \ar@{^{(}->}[r] \ar@{^{(}->}[rd] & \cPair^{[0,1]}(X) \ar@{^{(}->}[r] \ar[d]_\epsilon & \cPair(X)\,, \\
& \cPerf(X)_{\O_X}
}\]
where the horizontal morphisms are open embedding and the morphism $\epsilon$ is formally smooth by Proposition \ref{prop:infinitesimallifting}. 
We claim that the map 
\[\epsilon : \cPair^{[0,1]}(X) \to \cPerf(X)_{\O_X}\] 
is an open embedding.
Indeed, it suffices to show that the induced map $\epsilon(\C)$ on the groupoids of $\C$-valued points is fully faithful. By the same argument as for Lemma \ref{lem:injectivity'}, the induced map $\pi_0(\epsilon(\C))$ on the sets of isomorphism classes is injective. By \eqref{eq:S2.dt3} (in fact by \eqref{eqn:4.6.1}), we have an exact sequence
\[0\to \Ext_X^{-1}(I\udot,F)\to \Hom_X(I\udot,I\udot)_0\to \Ext^1_X(F,\O_X)=0\]
for $(F,s)\in\cPair^{[0,1]}(X)$ and hence $\epsilon(\C)$ preserves stabilizers. This proves the claim.
\end{remark}

\subsection{Polynomial Bridgeland stability}
In this subsection, we prove that the $\PT_q$-stability conditions can be realized as Bayer's polynomial Bridgeland stability conditions \cite{Bri07,Bay} on the derived category. 
The results of this subsection, which are inspired by \cite{GJL}, are not used elsewhere in the paper. We use a different torsion pair compared to \cite{GJL}, which allows us to incorporate 2-dimensional (Gieseker) stable sheaves as well.

Let $X$ be a smooth projective variety of dimension $n \geq 4$. Let $D^b _{\mathrm{coh}}(X)$ be the bounded derived category of coherent sheaves on $X$.
We consider the perversity function
$$
p : \{0,1,\ldots, n\} \to \Z, \quad p(0)=p(1)=p(2)=0, \quad p(3)=\ldots =p(n)=-1.
$$
The perversity function defines a bounded $t$-structure on $D^b_{\mathrm{coh}}(X)$ \cite{AB} and this $t$-structure coincides with the one associated to the torsion pair
$$
(\Coh_{\leq2} (X), \Coh_{\geq3} (X)),
$$
\cite[Cor.~2.2]{HRS}. We denote the heart by
$$
\cA := \cA^p := \langle \, \Coh_{\leq2} (X), \Coh_{\geq3} (X) [1]\, \rangle\,.
$$
The abelian category $\cA$ consists of the objects $E\udot \in D^{[-1,0]}_{\mathrm{coh}}(X)$ satisfying
\[h^{-1}(E^\mdot) \in \Coh_{\geq3}(X) \quad \textrm{and} \quad  h^0(E^\mdot) \in \Coh_{\leq 2}(X).\]
A sequence
$0\to A\udot\to E\udot\to B\udot \to 0$  is a short exact sequence in $\cA$ if and only if the associated sequence in $D^b_{\mathrm{coh}}(X)$ is a distinguished triangle. For any $E\udot\in \cA$, the following sequence is exact in $\cA$
\[0\to h^{-1}(E\udot)[1]\to E\udot\to h^0(E\udot)\to 0.\]

For any $\PT_q$ pair $I^\mdot$ on $X$, we have $I^\mdot[1] \in \cA$. Throughout this subsection, we fix $v\in H^*(X,\mathbb{Q})$ such that
$v_{\leq n-3} = 0$ and $v_{n-2} \neq 0$.


Denote the semi-closed upper half plane by
$$
\mathbb{H} = \{ z \in \C \, : \, z \in \R_{>0} \cdot e^{i \pi \phi(z)} \quad 0 < \phi(z) \leq 1 \},
$$
where $\phi(z)$ denotes the phase of $z$. Denote by $\mathbb{H}^\circ \subset \mathbb{H}$ the open upper half plane. Recall \cite{Bay} that a Bayer stability condition (also known as a polynomial Bridgeland stability condition) is determined by a {\em stability vector}, i.e., a sequence
\[\rho = (\rho_0,\ldots , \rho_n)\in (\C^*)^{n+1}\]
such that $\rho_d/\rho_{d+1} \in \mathbb{H}^\circ$ for $0\leq d \leq n-1$ and $(-1)^{p(d)}\rho_d\in\mathbb{H}$ for all $d$. We fix a polarization $H$ of $X$.
Define the central charge $Z\colon K^0(X)\to \C [x]$ as
\[Z(E^\mdot) (x) = \sum_{d=0}^n  x^d  \rho_d \int_X H^d \ch(E^\mdot)\td(X),\]
for any $E^\mdot \in K^0(X)$. 
For $E\udot\in\cA$, let $\phi$ be the polynomial phase function, i.e.,
\[Z(E^\mdot)(x) \in \R_{>0} \cdot e^{\pi i \phi(E^\mdot)(x)}, \quad x\gg 0,\]
for some continuous function germ $\phi \in (\phi_0,\phi_0+1]$ for some $\phi_0$. We say $\phi<\psi$ if and only if $\phi(x)<\psi(x)$ for all $0\ll x<+\infty$, and similarly for $\phi \leq \psi$.

An object $E\udot$ in $\cA$ is called {\em Bayer stable} (resp.~{\em Bayer semi-stable}) with respect to the stability vector $\rho$ if for all proper non-zero subobjects $D\udot\subset E\udot$, we have $\phi(D\udot)< \phi(E\udot)$ (resp. $\phi(D\udot)\leq \phi(E\udot)$).
\begin{definition}
Consider stability vectors $\rho^{(q)}$ for $q \in \{-1,0,1\}$ satisfying the following:
\begin{figure}[htb]
    \begin{equation}\label{Eq.Bayerstabilityvectors}
    {\renewcommand{\arraystretch}{1}
    \begin{tabular}{c|c}
      $\rho^{(-1)}$  & $\phi(-\rho_n)>\phi(\rho_0) >\phi(\rho_1) >\phi(\rho_2)$ \\
      \hline $\rho^{(0)}$  & $\phi(\rho_0)>\phi(-\rho_n) >\phi(\rho_1) >\phi(\rho_2)$\\
      \hline $\rho^{(1)}$ & $\phi(\rho_0)>\phi(\rho_1)>\phi(-\rho_n) >\phi(\rho_2)$.
    \end{tabular} }        
    \end{equation}
\end{figure}   	
\end{definition}
From the condition on the stability vector, we have 
\begin{equation} \label{eqn:Bayerstabaux}
\phi(-\rho_3),\ldots, \phi(-\rho_{n-1}) > \phi(-\rho_n).
\end{equation}
The following proposition is a minor modification of \cite[Prop.~6.1.1]{Bay}.
\begin{proposition} \label{prop:PTq=Bayerstab}
Fix $v \in H^*(X,\Q)$ such that $v_{\leq n-3}=0$ and $v_{n-2}\neq0$.
Let $E^\mdot \in D^b_{\mathrm{coh}}(X)$ with $\ch(E^{\mdot}) = v-1$ and $\det(E^\mdot) \cong \O_X$. Let $q\in\{-1,0,1\}$. Then  $E^\mdot$ is a $\rho^{(q)}$-semistable object of $\cA$ if and only if $E^\mdot \cong I^\mdot[1]$ for some $\PT_q$ pair $(F,s)$ where $I^\mdot=[\O_X \xrightarrow{s}F]$ is the associated complex. Moreover, $E^\mdot$ cannot be strictly $\rho^{(q)}$-semistable.
\end{proposition}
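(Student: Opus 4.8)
The plan is to prove the proposition by carefully translating the numerical inequalities defining $\rho^{(q)}$-(semi)stability into the structural conditions defining $\PT_q$-stability, following the strategy of \cite[Prop.~6.1.1]{Bay} but adapted to our torsion pair. First I would record the basic vanishing consequences of the hypotheses $v_{\leq n-3}=0$, $v_{n-2}\neq 0$, $\ch(E\udot)=v-1$, and $\det(E\udot)\cong\O_X$. In particular $\rk(E\udot)=0$, and the leading term of the central charge $Z(E\udot)(x)$ is governed by $\rho_{n-2}\int_X H^{n-2}v_{n-2}$; since $v_{n-2}$ is effective and non-zero this is non-vanishing, so the polynomial phase $\phi(E\udot)$ is well-defined and equals $\phi(\rho_{n-2})$ up to lower-order corrections. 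The key observation, already used in \cite{Bay}, is that for objects in $\cA$ the ordering of phases is essentially lexicographic in the Chern character truncated by codimension, with the perversity-twisted phases $\phi(-\rho_n), \phi(-\rho_{n-1}),\dots$ controlling the contribution of $h^{-1}$ and $\phi(\rho_0),\phi(\rho_1),\phi(\rho_2)$ controlling the $0$-, $1$-, $2$-dimensional parts of $h^0$. The precise placement of $\phi(-\rho_n)$ among $\phi(\rho_0),\phi(\rho_1),\phi(\rho_2)$ in the table \eqref{Eq.Bayerstabilityvectors} is exactly what distinguishes $q=-1,0,1$.

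Next I would prove the ``only if'' direction. Suppose $E\udot\in\cA$ is $\rho^{(q)}$-semistable with the given numerical invariants. Using $h^{-1}(E\udot)[1]\hookrightarrow E\udot$ in $\cA$, one compares $\phi(h^{-1}(E\udot)[1])$ with $\phi(E\udot)$: because $\rk(E\udot)=0$, either $h^{-1}(E\udot)=0$ or it is a torsion-free sheaf of positive rank placed in degree $-1$; a rank count together with the fact that $\det(E\udot)\cong\O_X$ forces $h^{-1}(E\udot)$ to be either $0$ or a rank-one reflexive sheaf, and semistability together with the phase inequalities pins it down to $\O_X$ (this is where one uses $n\geq 4$ so that $h^0$ sits in codimension $\geq 2$ and the ideal-sheaf description is available). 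Thus $E\udot=I\udot[1]$ for a complex $I\udot=[\O_X\xrightarrow{s}F]$ with $F=h^0(E\udot)\in\Coh_{\leq2}(X)$. One then runs the standard ``$T_q$ destabilizes'' and ``large cokernel destabilizes'' arguments: if $T_q(F)\neq 0$ it gives a subobject (for $q=0,1$) with strictly larger phase contradicting semistability, forcing $F\in\Coh_{\geq q+1}(X)$; and if $\dim\coker(s)>q$ the sub-object $[\O_X\to\O_Z]=\O_X\to \im(s)$ (equivalently its shift) has the wrong phase, forcing $Q=\coker(s)\in\Coh_{\leq q}(X)$. For $q=-1$ the argument is the one of \cite{Bay}: semistability forces $s$ surjective. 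Each of these comparisons is a short computation with the polynomial phase, using the placement of $\phi(-\rho_n)$ dictated by the table.

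For the ``if'' direction I would take a $\PT_q$ pair $(F,s)$ and show $I\udot[1]$ is $\rho^{(q)}$-stable (in particular semistable, and not strictly semistable). Given any short exact sequence $0\to A\udot\to I\udot[1]\to B\udot\to 0$ in $\cA$ with $A\udot,B\udot\neq 0$, one analyzes the long exact cohomology sequence: $h^{-1}(A\udot)\subseteq h^{-1}(I\udot[1])=\O_X$, $h^0(A\udot)\subseteq h^0$ or fits in an extension, and similarly for $B\udot$. The two cases are (a) $h^{-1}(A\udot)=\O_X$, so $h^{-1}(B\udot)=0$ and $B\udot=h^0(B\udot)$ is a non-zero quotient of $F$ living in $\Coh_{\leq 2}$; and (b) $h^{-1}(A\udot)=0$, so $A\udot$ is a sheaf, a subsheaf of $F$ up to the contribution of $Q$. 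In case (a) one uses $T_q(F)=0$ and the structure of $Q$ to bound the codimension of the support of the quotient and conclude $\phi(A\udot)<\phi(I\udot[1])$; in case (b) one uses $\dim Q\leq q$ together with purity-type properties of $F$ from Section~\ref{sec:pair} to the same effect. Strict stability is precisely what rules out strictly semistable objects.

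The main obstacle I expect is the bookkeeping in case (b) of the ``if'' direction and symmetrically the subobject analysis in the ``only if'' direction: one must correctly account for the $1$-dimensional (and, for $q=1$, $0$-dimensional) cokernel $Q$ when identifying sub- and quotient sheaves of $F$ inside the complex $I\udot$, and verify that the relevant phase inequalities are strict rather than merely weak — this is where the open conditions $0<\epsilon<1$ and $0<\delta<\tfrac12$ in Definition~\ref{def:linearizationDTPT0PT1} have their Bayer-stability analogue in the choice of $\rho^{(q)}$. The adaptation to a torsion pair that includes $2$-dimensional Gieseker-stable sheaves (rather than the one used in \cite{GJL}) means one cannot simply quote \cite{GJL}; instead one imitates \cite[Prop.~6.1.1]{Bay} line by line with $\Coh_{\leq 2}$ in place of the torsion-free/torsion split, which is routine but requires care at each phase comparison. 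Everything else reduces to the elementary fact that polynomial phases in $\cA$ are compared lexicographically by codimension of Chern character, together with the already-established structural results (Lemma~\ref{lem:support}, Proposition~\ref{lem:limPTq}, and the characterizations in subsection~\ref{sec:pair}).
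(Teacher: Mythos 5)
Your strategy is the paper's strategy: adapt Bayer's Prop.\ 6.1.1 to the heart $\cA$, show that a semistable $E\udot$ has the right cohomology, and then argue stability of $I\udot[1]$ by a case analysis on $h^{-1}(A\udot)$ for a subobject $A\udot\hookrightarrow E\udot$. However, the proposal contains a systematic error in identifying the cohomology of $I\udot[1]$ that would derail the argument if not corrected.

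With $I\udot = [\O_X\xrightarrow{s}F]$ in degrees $0$ and $1$, one has $h^0(I\udot)=\ker(s)$ and $h^1(I\udot)=\coker(s)=Q$; hence $h^{-1}(E\udot)=\ker(s)$ (which for a $\PT_q$ pair is the ideal sheaf $\I_Z$ of the scheme-theoretic support $Z=\Supp(F)$, \emph{not} $\O_X$) and $h^0(E\udot)=Q$ (\emph{not} $F$). In particular $\ch_0(E\udot)=-1$, not $0$, and the leading phase of $E\udot$ is $\phi(-\rho_n)$, not $\phi(\rho_{n-2})$: the placement of $\phi(-\rho_n)$ in \eqref{Eq.Bayerstabilityvectors} is exactly what makes all the comparisons below work. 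In the ``only if'' direction, the right intermediate conclusion is that $h^{-1}(E\udot)$ is a rank-one torsion-free sheaf with trivial determinant (shown by destabilizing with $T[1]$ for $T$ the maximal torsion subsheaf and invoking \eqref{eqn:Bayerstabaux}), after which the argument of Proposition~\ref{prop:infinitesimallifting} constructs $F$ and the presentation $E\udot\cong[\O_X\to F][1]$; this does \emph{not} pin $h^{-1}(E\udot)$ down to $\O_X$. Purity of $F$ then comes from destabilizing by the torsion subsheaf $T\subset F$ (which \emph{is} a subobject of $E\udot$, sitting between $\I_Z[1]$ and $E\udot$), and the bound $\dim Q\leq q$ comes from the quotient $E\udot\twoheadrightarrow h^0(E\udot)=Q$.

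In the ``if'' direction your two-case dichotomy is incomplete: since $h^{-1}(E\udot)=\I_Z$ is not simple as a subsheaf, one must treat three cases, $h^{-1}(A\udot)=0$, $0\neq h^{-1}(A\udot)\subsetneq h^{-1}(E\udot)$, and $h^{-1}(A\udot)=h^{-1}(E\udot)$; in the intermediate case $h^{-1}(B\udot)$ is a non-zero torsion object of $\Coh_{\geq 3}(X)$, so it has dimension in $\{3,\dots,n-1\}$ and the inequality \eqref{eqn:Bayerstabaux} gives $\phi(E\udot)<\phi(B\udot)$. In the third case $h^{-1}(B\udot)=0$ and $B\udot\cong h^0(B\udot)$ is a quotient of $Q$, so the $\PT_q$ hypothesis $\dim Q\leq q$ places it above $\phi(E\udot)$. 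In your case (b) the vague ``subsheaf of $F$ up to the contribution of $Q$'' needs to be made precise by the isomorphism $\Hom_{\cA}(A\udot,E\udot)\cong\Hom_X(A\udot,F)$ (which uses $n\geq 4$ to kill $\Ext^1_X(A\udot,\O_X)$ for a sheaf $A\udot$ of dimension $\leq 2$); this exhibits $A\udot$ as a subsheaf of $F$, whence $T_q(F)=0$ forces $\dim A\udot>q$ and the table gives $\phi(A\udot)<\phi(E\udot)$. With these corrections your plan reduces to the paper's proof.
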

\begin{proof}
Let $E\udot$ be a $\rho^{(q)}$-semistable object of $\cA$ satisfying the above conditions. We first show that $h^{-1}(E\udot)$ is a rank one torsion free sheaf. Since $h^{0}(E\udot)\in \Coh_{\leq2}(X)$,  $h^{-1}(E\udot)$ has rank $1$ and $h^{-1}(E\udot)\in \Coh_{\geq 3}(X)$. Suppose $h^{-1}(E\udot)$ has a non-trivial torsion subsheaf. Let $T\subset h^{-1}(E\udot)$ be the maximal torsion subsheaf. Then $0\to T[1]\to h^{-1}(E\udot)[1]\to (h^{-1}(E\udot)/T)[1]\to 0$ is a short exact sequence in $\cA$ and hence $T[1]$ is a subobject of $E\udot$ in $\cA$. Then \eqref{eqn:Bayerstabaux} leads to a contradiction. Therefore $h^{-1}(E\udot)$ is a rank one torsion free sheaf with trivial determinant. 
Next we show that $h^0(E\udot)\in \Coh_{\leq 1}(X)$. By assumption $h^0(E\udot)$ has dimension at most two. Because $E\udot$ is $\rho^{(q)}$-semistable, $h^0(E\udot)$ has dimension at most one, otherwise, $E\udot\twoheadrightarrow h^0(E\udot)$ destabilizes $E\udot$. 
By a standard argument (see the last two paragraphs of the proof of Proposition \ref{prop:infinitesimallifting}), we have $E\udot \cong I\udot[1]$, $I\udot=[\O_X\to F]$ for some 2-dimensional coherent sheaf $F$.
For $q=1$, we show that $F$ is pure.  Let $T\subset F$ be the maximal torsion subsheaf of $F$. Then $0\to T\to I\udot[1]\to [\O_X\to F/T][1]\to 0$ is a short exact sequence in $\cA$ and hence $T$ is a subobject of $E\udot$. 
By the stability vector \eqref{Eq.Bayerstabilityvectors}, we have $T=0$. For $q=0$, a similar argument shows that $F$ has no non-trivial 0-dimensional subsheaf and $h^0(E^\mdot)$ is 0-dimensional. For $q=-1$ we have $h^0(E^\mdot) = 0$.

Conversely, suppose $E^\mdot \cong I^\mdot[1]$ for some $\PT_q$ pair $(F,s)$ where $I^\mdot=[\O_X \xrightarrow{s}F]$. Then $E\udot\in\cA$. We prove $E\udot$ is a $\rho^{(q)}$-stable object. 
Take any short exact sequence
$$
0 \to A\udot \to E\udot \to B\udot \to 0
$$
in $\cA$ 
with $A^\mdot, B^\mdot \neq 0$.
Consider its long exact sequence
$$
0 \to h^{-1}(A^\mdot) \to h^{-1}(E^\mdot) \to h^{-1}(B^\mdot) \to h^{0}(A^\mdot) \to h^{0}(E^\mdot) \to h^{0}(B^\mdot) \to 0.
$$
We distinguish three cases: \\

\noindent \textbf{Case 1:} $0 \neq h^{-1}(A^\mdot) \subsetneq h^{-1}(E^\mdot)$. Then $h^{-1}(B^\mdot)$ has rank 0. If $h^{-1}(B^\mdot) \neq 0$, then it has dimension in $\{3, \ldots, n-1\}$ so we obtain 
$\phi(E^\mdot) < \phi(B^\mdot)$ from \eqref{eqn:Bayerstabaux}. Suppose $h^{-1}(B^\mdot) = 0$, then $B^\mdot \cong h^0(B^\mdot)$ and since $h^0(E\udot)$ is at most 1-dimensional, $h^0(B\udot)$ has dimension $\leq 1$. For $q=1$, \eqref{Eq.Bayerstabilityvectors} implies that $\phi(E^\mdot) < \phi(B^\mdot)$. For $q=0$, we have $h^0(B^\mdot)$ is 0-dimensional so we also find $\phi(E^\mdot) < \phi(B^\mdot)$. Finally, for $q=-1$ we have $h^0(B^\mdot) = 0$ contradicting the assumption $B^\mdot \neq 0$. \\

\noindent \textbf{Case 2:} $h^{-1}(A^\mdot) = h^{-1}(E^\mdot)$. Then $h^{-1}(B^\mdot) \subset h^0(A^\mdot)$ so $h^{-1}(B^\mdot) = 0$ and $B^\mdot \cong h^0(B^\mdot)$. Then $\phi(E^\mdot) < \phi(B^\mdot)$ follows as in Case 1. \\

\noindent \textbf{Case 3:} $h^{-1}(A\udot) = 0$. Then $A\udot\cong h^0(A\udot)$ has dimension $\leq 2$ and hence $\Hom(A^\mdot, E^\mdot) \cong \Hom(A^\mdot,F)$. Since $A^\mdot$ is a non-zero subobject of $E^\mdot$ and $(F,s)$ is a $\PT_q$ pair, we deduce that $A^\mdot \cong h^0(A^\mdot)$ is 2-dimensional. Then \eqref{Eq.Bayerstabilityvectors} implies $\phi(A^\mdot) < \phi(E^\mdot)$.
\end{proof}


We characterize rank zero $\rho^{(q)}$-semistable objects in $\cA$.
\begin{proposition}
Let $q \in \{-1,0,1\}$ and $v \in H^*(X,\Q)$ such that $v_{\leq n-3}=0$ and $v_{n-2}\neq0$.
An object $E\udot \in \cA$ with $\ch(E\udot) = v$ is $\rho^{(q)}$-stable (resp.~$\rho^{(q)}$-semistable) if and only if $E\udot \cong F$ for some Gieseker stable (resp.~semistable) sheaf $F$ on $X$ with $\ch(F)=v$.
\end{proposition}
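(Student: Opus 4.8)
The plan is to follow the strategy of Proposition~\ref{prop:PTq=Bayerstab} and of \cite[Prop.~6.1.1]{Bay}, now in the rank zero case. The first and main step is to show that a $\rho^{(q)}$-semistable object $E\udot \in \cA$ with $\ch(E\udot)=v$ is quasi-isomorphic to an honest $2$-dimensional sheaf. Since $v_0=0$ and $h^0(E\udot)\in\Coh_{\leq 2}(X)$ has vanishing rank, the canonical exact sequence $0\to h^{-1}(E\udot)[1]\to E\udot\to h^0(E\udot)\to 0$ in $\cA$ forces $h^{-1}(E\udot)$ to be a rank zero sheaf in $\Coh_{\geq 3}(X)$; hence, if nonzero, it is supported in some dimension $d$ with $3\leq d\leq n-1$. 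Because $v_{\leq n-3}=0$ and $v_{n-2}$ is non-zero effective, one has $\int_X H^2 v_{n-2}>0$, so the central charge $Z(E\udot)(x)$ has leading term $x^2\rho_2\int_X H^2 v_{n-2}$ with positive coefficient and the polynomial phase satisfies $\phi(E\udot)(x)\to\phi(\rho_2)$ as $x\to\infty$. On the other hand $\phi\big(h^{-1}(E\udot)[1]\big)(x)\to\phi(-\rho_d)$, and \eqref{eqn:Bayerstabaux} together with the defining inequalities \eqref{Eq.Bayerstabilityvectors} give $\phi(-\rho_d)>\phi(-\rho_n)>\phi(\rho_2)$. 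Thus $h^{-1}(E\udot)[1]$ is a nonzero proper subobject (it is proper, for otherwise $\ch(E\udot)$ could not equal $v$) of strictly larger phase, contradicting semistability. Therefore $h^{-1}(E\udot)=0$ and $E\udot\cong F:=h^0(E\udot)$, a $2$-dimensional sheaf with $\ch(F)=v$.

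The second step is to match $\rho^{(q)}$-(semi)stability of a $2$-dimensional sheaf $F$ with Gieseker (semi)stability. First I would check that subobjects of $F$ in $\cA$ are precisely its subsheaves: given a short exact sequence $0\to A\udot\to F\to B\udot\to 0$ in $\cA$, the long exact cohomology sequence gives $h^{-1}(A\udot)=0$ and realizes $h^{-1}(B\udot)\in\Coh_{\geq 3}(X)$ as a subsheaf of $h^0(A\udot)\in\Coh_{\leq 2}(X)$, so $h^{-1}(B\udot)=0$ and the sequence is a short exact sequence in $\Coh_{\leq 2}(X)$. Now let $0\neq A\subsetneq F$ be a subsheaf. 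If $\dim A\leq 1$ then $Z(A)(x)$ has degree $\leq 1$ in $x$, so $\phi(A)(x)\to\phi(\rho_1)$ or $\phi(\rho_0)$, both strictly larger than $\phi(\rho_2)=\lim\phi(F)$ by \eqref{Eq.Bayerstabilityvectors}; hence $\rho^{(q)}$-semistability forces $F$ to be pure, exactly as Gieseker semistability does. For $\dim A=2$, writing $Z(F)(x)=\sum_{d\leq 2}x^d\rho_d\,\alpha_d(F)$ and recalling that the $\alpha_d(F)$ are (up to the usual normalization) the coefficients of the Hilbert polynomial $P_F$, the fact that $\phi(\rho_0)>\phi(\rho_1)>\phi(\rho_2)$ in all three cases of \eqref{Eq.Bayerstabilityvectors} means that the inequality $\phi(A)\leq\phi(F)$ for $x\gg 0$ is equivalent to the lexicographic comparison of $\big(\alpha_1(A)/\alpha_2(A),\alpha_0(A)/\alpha_2(A)\big)$ with $\big(\alpha_1(F)/\alpha_2(F),\alpha_0(F)/\alpha_2(F)\big)$, i.e.\ to $P_A(m)/\alpha_2(A)\leq P_F(m)/\alpha_2(F)$ for $m\gg 0$. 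This is precisely Gieseker semistability of $F$, and the same computation with strict inequalities handles the stable case; conversely a pure Gieseker (semi)stable sheaf satisfies these phase inequalities and is therefore $\rho^{(q)}$-(semi)stable.

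I expect Step~1 to be essentially formal once the phase orderings are read off correctly. The point to be careful about is Step~2: one must verify that the polynomial phase comparison on $2$-dimensional sheaves reproduces exactly the reduced Hilbert polynomial order of Gieseker stability, in particular getting the direction of every inequality right and dealing with the purity and saturation subtleties for subsheaves. This is the content of the relevant section of \cite{Bay}, so I would either invoke it directly or spell out the short computation comparing the arguments of $Z(A)(x)$ and $Z(F)(x)$ as $x\to\infty$.
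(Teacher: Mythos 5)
Your proof is correct, and it fills in considerably more detail than the paper, which disposes of this proposition in a few lines and outsources the sheaf-level comparison to \cite[Sect.~2.1]{Bay}. The notable structural difference is in Step~1. The paper's argument does not invoke stability at all: from $\ch(E\udot)=v$ with $v_{\leq n-3}=0$, and from $h^0(E\udot)\in\Coh_{\leq 2}(X)$ having $\ch_{\leq n-3}=0$, one concludes $\ch_{\leq n-3}(h^{-1}(E\udot))=0$; but a nonzero sheaf in $\Coh_{\geq 3}(X)$ has $\ch_c\neq 0$ for $c=\mathrm{codim}\,\Supp\leq n-3$, so $h^{-1}(E\udot)=0$ outright. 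You instead set up a phase comparison, treating $h^{-1}(E\udot)[1]$ as a destabilizing subobject, and then argue properness via exactly that same Chern-character observation. This makes the phase comparison logically superfluous in Step~1 — once you have observed, in proving properness, that $\ch_{\leq n-3}(h^{-1}(E\udot))$ must vanish, the conclusion $h^{-1}(E\udot)=0$ follows directly without reference to the stability vector. It is not an error, but it is a detour; the direct numerical route is cleaner and is what the paper uses. Step~2, by contrast, is a genuine value-add: you prove that $\cA$-subobjects of a $2$-dimensional sheaf are precisely subsheaves (a necessary verification which the paper implicitly delegates), show purity is forced by the phase ordering $\phi(\rho_0)>\phi(\rho_1)>\phi(\rho_2)$, and then match the polynomial phase comparison with the reduced Hilbert polynomial order of Gieseker stability. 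This is exactly the content of the Bayer reference, correctly spelled out.
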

\begin{proof}
Let $E\udot\in\cA$ be a $\rho^{(q)}$-(semi)stable object with $\ch(E\udot) = v$. Since $h^{-1}(E\udot)\in\Coh_{\geq 3}(X)$, we have $h^{-1}(E\udot)=0$ and hence $E\udot\cong h^0(E\udot)$. Then $h^0(E\udot)$ is Gieseker (semi)stable \cite[Sect.~2.1]{Bay}. Conversely, it is easy to show that any Gieseker (semi)stable sheaf $F$ on $X$ with $\ch(F)=v$ determines a $\rho^{(q)}$-(semi)stable object in $\cA$ \cite[Sect.~2.1]{Bay}.
\end{proof}

\subsection{Relative version}\label{ss:relativemoduli}

We now generalize the main results in this section to the relative setting.

We first fix some notation.
Let
$f : \cX \to \cB$
be a smooth projective morphism of schemes of relative dimension $n$. 
Consider the {\em algebraic de Rham complex} 
\[\Omega^\bullet_{\cX/\cB}:=\left[\O_\cX \xrightarrow{d_{DR}} \Omega^1_{\cX/\cB} \xrightarrow{d_{DR}} \Omega^2_{\cX/\cB} \xrightarrow{d_{DR}} \cdots \xrightarrow{d_{DR}} \Omega^n_{\cX/\cB}\right],\]
as in \cite{Gr66,D68}. Define the {\em Hodge bundles} as
\begin{equation}\label{Eq:HdgHbdl}
\cH^k_{DR}(\cX/\cB):=R^kf_*\left(\Omega_{\cX/\cB}^\bullet\right).	
\end{equation}
When $\cB$ is an affine scheme, we write $H_{DR}^k(\cX/\cB):=\Gamma(\cB,\cH^k_{DR}(\cX/\cB)).$

Note that there is a a natural spectral sequence
\[E_1^{pq}=R^qf_*\Omega^p_{\cX/\cB} \Rightarrow \cH^{p+q}_{DR}(\cX/\cB)\]
given by the stupid truncations
\[\Omega^{\geq p}_{\cX/\cB}:= \left[0 \to \cdots \to 0 \to \Omega^p_{\cX/\cB} \xrightarrow{d_{DR}} \Omega^{p+1}_{\cX/\cB} \xrightarrow{d_{DR}}  \cdots \xrightarrow{d_{DR}} \Omega^n_{\cX/\cB}\right].\]
By \cite{D68}, the spectral sequence degenerates at $E_1$. Hence we have a filtration 
\[F^p\cH^{p+q}_{DR}(\cX/\cB):=\im\left(R^{p+q}f_*\big(\Omega^{\geq p}_{\cX/\cB}\big) \to R^{p+q}f_*\left(\Omega^{\bullet}_{\cX/\cB}\right)\right)\]
of the Hodge bundles $\cH^{p+q}_{DR}(\cX/\cB)$ such that
\begin{equation}\label{Eq:HdgFilt}
F^p\cH^{p+q}_{DR}(\cX/\cB)\cong R^{p+q}f_*\left(\Omega^{\geq p}_{\cX/\cB}\right)  \and \frac{F^p\cH^{p+q}_{DR}(\cX/\cB)}{F^{p+1}\cH^{p+q}_{DR}(\cX/\cB)} \cong R^qf_*\Omega^p_{\cX/\cB}.	
\end{equation}
Moreover, the Hodge bundles $\cH^{p+q}_{DR}(\cX/\cB)$, the filtrations $F^p\cH^{p+q}_{DR}(\cX/\cB)$, and the quotients $R^qf_*\Omega^p_{\cX/\cB}$ are vector bundles and stable under base change, by \cite[Thm.~5.5]{D68}.

Consider the {\em Gauss-Manin connection}
\[\nabla : \cH^k_{DR}(\cX/\cB) \to \cH^k_{DR}(\cX/\cB) \otimes \Omega_\cB^1.\]
As in \cite{KO68}, the Gauss-Manin connection $\nabla$ can be defined via the canonical filtration of $\bigwedge^p \Omega^1_\cX$ given by the short exact sequence
\[\xymatrix{0 \ar[r] & f^*\Omega_\cB^1 \ar[r] & \Omega_\cX^1 \ar[r] & \Omega_{\cX/\cB}^1 \ar[r] & 0}.\]
Indeed, the first two terms of the filtration give us a map 
\[\Omega^p_{\cX/\cB} \to \Omega^{p-1}_{\cX/\cB} \otimes f^*\Omega^1_{\cB}[1].\]
Then by gathering the above maps for all $p$ and taking the derived pushforward $R^qf_*$, we obtain a flat connection $\nabla$ called the Gauss-Manin connection.

Following \cite{Blo}, we define the notion of {\em horizontal sections} of Hodge bundles.

\begin{definition}[{cf. \cite[Rem.~3.9]{Blo}}]\label{Def:horizontalsection}
Let $f:\cX\to\cB$ be a smooth projective morphism of schemes of constant relative dimension.
We say that a section $\tv \in \Gamma(\cB,\cH^k_{DR}(\cX/\cB))$ is {\em horizontal} if
\[\nabla(\tv) =0 \in \Gamma(\cB, \cH^k_{DR}(\cX/\cB)\otimes \Omega^1_{\cB}).\]
\end{definition}

Roughly speaking, the horizontal sections $\tv$ of $\cH^k_{DR}(\cX/\cB)$ are locally constant families of cohomology classes $\tv_b \in H^*(\cX_b,\C)$ on the fibres $\cX_b:=\cX\times_\cB\{b\}$. The following remark justifies this interpretation.


\begin{remark}\label{Rem:horizontal}
Let $f:\cX\to\cB$ be a smooth projective morphism of schemes of constant relative dimension.
\begin{enumerate}
\item By considering the associated analytic spaces, we have
\[\cH_{DR}^k(\cX/\cB)^\an \cong R^kf^\an_*\C \otimes_{\C} \O_{\cB^\an} \and \nabla^\an \cong \id \otimes d_{\cB^\an}\]
when $\cB$ is smooth \cite{KO68}.
Thus a section $\tv$ of $\cH^k_{DR}(\cX/\cB)$ is horizontal if and only if
$\tv^\an = v\otimes 1 $
for some $v \in \Gamma(\cB, R^k f^\an_*\C)$.
\item 
\begin{enumerate}
\item Firstly, assume that $\cB= \Spec(A)$ for a local Artinian $\C$-algebra $A$. Then we have isomorphisms
\[H^k_{DR}(\cX/\cB) \cong H^k_{DR}(X)\otimes_\C A\]
by \cite[Prop.~3.8]{Blo}, where $X$ is the fibre of $f:\cX\to\cB$ over the unique closed point and $H^k_{DR}(X):=H^k_{DR}(X/\Spec(\C))$.
Then the horizontal sections are exactly the sections of the form
\[v\otimes 1 \in H^k_{DR}(X)\otimes_\C A\]
for some $v \in H^k_{DR}(X) \cong H^k(X,\C)$.
\item Secondly, for general $\cB$, a section $\tv \in \Gamma(\cB,\cH^k_{DR}(\cX/\cB))$ is horizontal if and only if its pullbacks 
\[\tv|_{\cA} \in \Gamma(\cA, \cH^k_{DR}(\cX_\cA/\cA))\]
are horizontal for all morphisms $\cA=\Spec(A) \to \cB$ from local Artinian $\C$-algebras $A$. Here $\cX_\cA:=\cX\times_\cB \cA$.
This can be shown by the Krull intersection theorem.
\end{enumerate}
\end{enumerate}
\end{remark}


We have a relative moduli space of $\PT_q$ pairs on the fibres of $f: \cX \to \cB$.

\begin{theorem}\label{Thm:RelativeModuli}
Let $\cB$ be a quasi-projective scheme and $f: \cX \to \cB$ a smooth projective morphism of relative dimension $n$. Let $\tv \in \bigoplus_p \Gamma(\cB,\cH^{2p}_{DR}(\cX/\cB))$ be a horizontal section
and $q\in \{-1,0,1\}$. 
Let $\cPair^{(q)}(\cX/\cB,\tv)$ be the moduli stack consisting of triples $(b,F,s)$ of a morphism $b: T \to \cB$, a $T$-flat coherent sheaf $F$ on $\cX_T:=\cX\times_{\cB} T$, and a section $s\in \Gamma(\cX_T,F)$ such that for all $t \in T(\C)$ the pair $(F_t,s_t)$ is a $\PT_q$ pair with $\ch(F_t)=\tv_t \in H^*(\cX_t,\C)$. Then we have the following properties:
\begin{enumerate}
\item[$(1)$] The canonical map
\[\cPair^{(q)}(\cX/\cB,\tv) \to \cB, \quad (b,F,s) \mapsto b\]
is a projective morphism of schemes.
\item[$(2)$] If $n \geq 4$, then the canonical map
\[\cPair^{(q)}(\cX/\cB,\tv) \to \cPerf(\cX/\cB)_{\O_{\cX}}^\spl, \quad (b,F,s) \mapsto (b, [\O_{\cX_T} \xrightarrow{s} F])\]
is an open embedding.
\end{enumerate}
\end{theorem}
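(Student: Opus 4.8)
The plan is to reduce everything to the absolute case already established in Theorem \ref{Thm:GIT} (projectivity over the base) and Theorem \ref{Thm:PairtoPerf} (open embedding into $\cPerf$), by carrying out the GIT construction in families over $\cB$ and then checking the two assertions fibrewise. First I would address the set-up: since $\tv$ is horizontal, Remark \ref{Rem:horizontal} tells us that $\tv_b \in H^*(\cX_b,\C)$ is a locally constant family, so on each connected component of $\cB$ the Hilbert polynomial $P_{\tv_b}(t)$ (with respect to a fixed relatively very ample $\O_{\cX/\cB}(1)$) is constant, say equal to $P_v(t)$ for a fixed $v \in H^*(\C)$. This is what makes the boundedness arguments uniform in $b$. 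By the relative boundedness of $\PT_q$ pairs — i.e., Lemma \ref{lem:boundedness} and Lemma \ref{lem2} applied fibrewise, together with the properness of the relative Quot scheme — there is an integer $m$, independent of $b$, such that every $\PT_q$ pair $(F_b,s_b)$ on any fibre $\cX_b$ satisfies: $F_b(m)$ is globally generated with $H^{>0}(\cX_b,F_b(m))=0$.

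Next I would build the relative parameter space. Set $V:=\C^{P_v(m)}$, $R_{m}:=\Gamma(\cX_b,\O(m))$ (which is locally free of constant rank over $\cB$ by cohomology and base change, as $\O_{\cX/\cB}(m)$ has vanishing higher cohomology on the fibres for $m \gg 0$), and form the relative Quot scheme $\cQ_{\cB}:=\cQuot(\cX/\cB, V\otimes \O_{\cX/\cB}(-m), \tv) \to \cB$, which is projective over $\cB$. Restricting to the open locus $\cQ^{\circ}_{\cB}$ where $V \xrightarrow{\sim} H^0(\cX_b,F_b(m))$ and the higher cohomology vanishes, the same construction as in subsection \ref{ss:GIT} (the projectivization of $R\pi_*\FF$, its closure inside $\cQ_{\cB}\times_{\cB}\PP_{\cB}(V\otimes R_m^\vee)$ via the Grothendieck embedding) produces a projective $\cB$-scheme $\cM_{\cB}$ with a $\PGL(V)$-action and $\SL(V)$-linearizations $\cL^{(q)}$. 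Crucially, the linearizations $\cL_{g^{(q)}(t),l}$ of Definition \ref{def:linearizationDTPT0PT1} are defined purely in terms of the Hilbert polynomial and the Plücker/cone embeddings, all of which make sense relatively; and GIT (semi)stability for a $\PGL(V)$-action commutes with base change once one fixes a linearization, because the Hilbert–Mumford numerical criterion (Lemma \ref{Lem:HilbMum}) is computed on $\C$-points. Then Proposition \ref{prop:GIT/PTq} — whose proof is a fibrewise statement about $\C$-points of $\cM$ — shows that, for $m\gg 0$ and $l \gg 0$ (with bounds depending only on $v$, hence only on $\cX/\cB$ and $\tv$), the GIT-semistable locus of $\cM_{\cB}$ with respect to $\cL^{(q)}$ is exactly the locus of $\PT_q$ pairs, and equals the GIT-stable locus. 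This gives
\[
\cPair^{(q)}(\cX/\cB,\tv) \;=\; [\cM_{\cB}^{st}(\cL^{(q)})/\PGL(V)] \;=\; [\cM_{\cB}^{ss}(\cL^{(q)})/\PGL(V)] \;=\; \cM_{\cB}/\!\!/_{\cL^{(q)}}\SL(V),
\]
so that $\cPair^{(q)}(\cX/\cB,\tv)$ is a GIT quotient of a projective $\cB$-scheme; since GIT quotients are projective over the base of the linearization, the structure map to $\cB$ is projective. This proves (1). (One should also note the stack $\cPair^{(q)}(\cX/\cB,\tv)$ is an algebraic space, hence a scheme once shown projective, by the relative version of Lemma \ref{lem:splpair}: $\PT_q$ pairs have trivial automorphisms, and this is a fibrewise condition.)

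For (2), assume $n\geq 4$. There is a relative moduli space $\cPerf(\cX/\cB)^{\spl}_{\O_{\cX}} \to \cB$ of simple perfect complexes with trivial determinant on the fibres (this exists by \cite{Ina,Lie} in the relative setting), and the assignment $(b,F,s)\mapsto (b,[\O_{\cX_T}\xrightarrow{s}F])$ defines a morphism over $\cB$ by Lemma \ref{lem:det} (the cokernel of $s$ has fibrewise codimension $\geq 2$) once one checks $I^\bullet$ is simple fibrewise — which is exactly the lemma preceding \eqref{eq:S2.2}, a fibrewise statement. To show this morphism is an open embedding I would follow the proof of Theorem \ref{Thm:PairtoPerf} verbatim: it is universally injective on $\C$-points by Lemma \ref{lem:injectivity'} (applied to the fibres); it is formally smooth by Proposition \ref{prop:infinitesimallifting}, whose hypotheses are about infinitesimal extensions $B_0 \subseteq B$ of $0$-dimensional schemes mapping to $\cB$ — the statement and proof never used that the ambient smooth variety was absolute, only that $X$ was smooth projective, so it applies to each $\cX_b$ and hence to the relative situation by the lifting criterion along Artinian extensions \cite[02HW]{Sta}; and $\cPair^{(q)}(\cX/\cB,\tv)$ is of finite type over $\cB$ by part (1). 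A universally injective, formally smooth, locally-of-finite-presentation morphism is an open immersion \cite[02LC]{Sta}.

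The main obstacle, and the only point requiring genuine care beyond bookkeeping, is the \emph{uniformity of the bounds}: one must verify that the integer $m$ in the boundedness step and the integer $l$ in Proposition \ref{prop:GIT/PTq} can be chosen independently of $b \in \cB$. Here horizontality of $\tv$ is essential — it forces the fibrewise Hilbert polynomial to be locally constant, so that on each connected component of $\cB$ (which has finitely many) one applies the absolute bounds for the single value $v = \tv_b$. Once this uniformity is in hand, the relative parameter space $\cM_{\cB}$ is a \emph{single} projective $\cB$-scheme whose fibrewise GIT analysis is governed entirely by the already-proved Proposition \ref{prop:GIT/PTq}, and the rest is formal descent of the absolute arguments to the relative setting.
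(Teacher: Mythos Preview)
Your treatment of (2) matches the paper's: both observe that the arguments of subsection \ref{ss:openembeddingtoPerf} (Lemma \ref{lem:injectivity'}, Proposition \ref{prop:infinitesimallifting}) are fibrewise in nature and carry over to the relative setting unchanged.

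For (1), however, you take a genuinely different route. You rebuild the entire GIT construction relatively over $\cB$ --- a relative Quot scheme, a relative parameter space $\cM_\cB$, relative linearizations --- and then invoke Proposition \ref{prop:GIT/PTq} fibrewise. The paper instead chooses a closed embedding $\cX \hookrightarrow \PP^N_\cB$ (which exists since $f$ is projective and $\cB$ is quasi-projective) and factors
\[
\cPair^{(q)}(\cX/\cB,P) \hookrightarrow \cPair^{(q)}(\PP^N_\cB/\cB,P) \cong \cPair^{(q)}(\PP^N,P) \times \cB,
\]
where the first arrow is a closed embedding (it is the zero locus of the canonical map $\I_{\cX/\PP^N_\cB}\otimes \FF \to \FF$, which is representable by an abelian cone via \cite[Cor.~7.7.8]{EGA3} since $\FF$ is flat) and the second is the trivial product decomposition. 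Projectivity over $\cB$ then follows immediately from the absolute Theorem \ref{Thm:GIT} applied to the single variety $\PP^N$; finally $\cPair^{(q)}(\cX/\cB,\tv)$ is open and closed in $\cPair^{(q)}(\cX/\cB,P)$.

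The advantage of the paper's method is precisely that it sidesteps the uniformity issue you correctly flag as ``the main obstacle''. Your justification that ``one applies the absolute bounds for the single value $v = \tv_b$'' is incomplete: the integers $M$ and $L_m$ in Proposition \ref{prop:GIT/PTq} depend on both $X$ and $v$, not on the Hilbert polynomial alone, so fixing $v$ does not by itself give uniformity as $\cX_b$ varies over $\cB$. One can repair your argument by noting that the auxiliary moduli entering those bounds (relative Hilbert schemes, relative $M^\mu$, etc.) are of finite type over $\cB$, but this has to be argued. The paper's trick of passing to $\PP^N$ replaces the varying family $\{\cX_b\}$ by a single fixed ambient variety, making uniformity automatic and the proof considerably shorter.
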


\begin{proof}
Fix a polynomial $P(t) \in \Q[t]$ and let $\cPair^{(q)}(\cX/\cB,P)$ be the moduli stack of $\PT_q$ pairs on the fibres of $f:\cX\to\cB$ with Hilbert polynomial $P(t)$ with respect to some relative very ample line bundle.
Note that we can find a closed embedding $\cX \hookrightarrow \PP^N_{\cB}$ over $\cB$ since $f:\cX\to\cB$ is projective and $\cB$ is quasi-projective.
Consider the canonical maps
\[\cPair^{(q)}(\cX/\cB,P) \hookrightarrow \cPair^{(q)}(\PP^N_{\cB}/\cB,P) \xrightarrow{\cong}\cPair^{(q)}(\PP^N,P)\times \cB\]
where the first map is a closed embedding, the second map is an isomorphism, and the last term is a projective scheme over $\cB$ by Theorem \ref{Thm:GIT}.
Indeed, as a functor over $\cPair^{(q)}(\PP^N_{\cB}/\cB,P)$, $\cPair^{(q)}(\cX/\cB,P)$ is the zero locus of the canonical map
\[\I_{\cX/\PP^N_{\cB}} \otimes \FF \to \FF\]
where $\I_{\cX/\PP^N_{\cB}}$ is the ideal sheaf of $\cX\subseteq \PP^N_{\cB}$ and $(\FF,s)$ is the universal pair on $\cPair^{(q)}(\PP^N_{\cB}/\cB,P)\times_{\cB} \PP^N_{\cB}$.
By \cite[Cor.~7.7.8]{EGA3}, the map $\I_{\cX/\PP^N_{\cB}} \otimes \FF \to \FF$ can be represented by the abelian cone of a coherent sheaf on $\cPair^{(q)}(\PP^N_{\cB}/\cB,P)$ since $\FF$ is flat.
Hence the desired map is a closed embedding since the abelian cone is separated over the base. Therefore we have (1) since $\cPair^{(q)}(\cX/\cB,\tv)$ is an open and closed subscheme of the projective scheme $\cPair^{(q)}(\cX/\cB,P)$ (for appropriate $P$). This proves (1).

All the arguments in subsection \ref{ss:openembeddingtoPerf} also works in the relative setting. This proves (2). 
\end{proof}


We end this subsection with a technical proposition stating that the relative Chern character of a perfect complex is horizontal. This fact will be used several times in section \ref{sec:VFC} and \ref{sec:deformationinvariance}.

Let $f:\cX \to\cB$ be a smooth projective morphism of schemes of constant relative dimension.
Let $I\udot$ be a perfect complex on $\cX$. Let
\[\At_{\cX/\cB}(I\udot) : I\udot \to I\udot \otimes \Omega^1_{\cX/\cB}[1]\]
be the (relative) Atiyah class of $I\udot$.
Let
\[\At_{\cX/\cB}^p(I\udot) :=\At_{\cX/\cB}(I\udot)\wedge\cdots\wedge\At_{\cX/\cB}(I\udot) : I\udot \to I\udot \otimes \Omega_{\cX/\cB}^p[p]\]
be the induced map. 
We define the (relative) {\em Chern character} $\ch_p(I\udot)$ of $I\udot$ as
\begin{equation}\label{Eq:ch}
H^p(\cX,\Omega^p_{\cX/\cB}) \to \Gamma(\cB,R^pf_*\Omega^p_{\cX/\cB}), \quad \frac{1}{p!}\tr(\At_{\cX/\cB}^p(I\udot)) \mapsto \ch_p(I\udot).
\end{equation}

The Chern character $\ch_p(I\udot)$ in \eqref{Eq:ch} is horizontal. More precisely, we have the following:

\begin{proposition}\label{lem:chishorizontal}
Let $f:\cX\to\cB$ be a smooth projective morphism of schemes of constant relative dimension.
Let $I\udot$ be a perfect complex on $\cX$.
Then there exists a section
\[\widetilde{\ch_p}(I\udot) \in \Gamma(\cB,F^p\cH_{DR}^{2p}(\cX/\cB))\]
satisfying the following properties:
\begin{enumerate}
\item[$(1)$] Under the projection map 
$F^p\cH^{2p}_{DR}(\cX/\cB) \twoheadrightarrow R^pf_*\Omega^p_{\cX/\cB}$ given by \eqref{Eq:HdgFilt},
the section $\widetilde{\ch_p}(I\udot)$ maps to the Chern character $\ch_p(I\udot)$ in \eqref{Eq:ch}.
\item[$(2)$] Under the inclusion map $F^p\cH_{DR}^{2p}(\cX/\cB) \hookrightarrow \cH^{2p}_{DR}(\cX/\cB)$ given by \eqref{Eq:HdgFilt}, 
the section $\widetilde{\ch_p}(I\udot)$ maps to a horizontal section.
\item[$(3)$] Under the restriction map $\Gamma(\cB,\cH_{DR}^{2p}(\cX/\cB)) \to H_{DR}^{2p}(\cX_b/\Spec(\C))$ for any $\C$-point $b \in \cB$, the section $\widetilde{\ch_p}(I\udot)$ maps to the usual Chern character $\ch_p(I\udot_b) \in H^{2p}(\cX_b,\C)$.
\end{enumerate}
The section $\widetilde{\ch_p}(I\udot)$ is uniquely determined by properties $(2)$ and $(3)$.
\end{proposition}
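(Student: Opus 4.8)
The plan is to construct $\widetilde{\ch_p}(I\udot)$ using the \emph{de Rham cohomology class} of the Atiyah–Chern form and then verify the three listed properties, with uniqueness coming almost for free from the analytic comparison in Remark~\ref{Rem:horizontal}. More precisely, recall that the Atiyah class $\At_{\cX/\cB}(I\udot)$ is really the relative part of a full (absolute) Atiyah class $\At_{\cX}(I\udot) : I\udot \to I\udot \otimes \Omega^1_{\cX}[1]$. Raising the absolute Atiyah class to the $p$-th power and taking a trace gives a class
\[
\frac{1}{p!}\tr\big(\At_{\cX}^p(I\udot)\big) \in H^p(\cX,\Omega^p_{\cX}).
\]
Using the Hodge-to-de Rham spectral sequence for the \emph{absolute} de Rham complex together with the canonical filtration of $\Omega^\bullet_\cX$ by powers of $f^*\Omega^1_{\cB}$ (the same filtration that defines the Gauss–Manin connection), the element $\tr(\At_{\cX}^p(I\udot))$ lifts canonically to a hypercohomology class in $H^{2p}(\cX, \Omega^{\geq p}_{\cX})$, hence maps to a class in $\Gamma(\cB, R^{2p}f_*\Omega^{\geq p}_{\cX/\cB}) = \Gamma(\cB, F^p\cH^{2p}_{DR}(\cX/\cB))$ via \eqref{Eq:HdgFilt}. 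This is our candidate $\widetilde{\ch_p}(I\udot)$. The cleanest way to organise this is to invoke the standard fact (Atiyah, Illusie, or \cite{D68}) that Chern–Weil theory via the Atiyah class produces classes in the \emph{de Rham} (equivalently Hodge-filtered) cohomology, not merely in $R^qf_*\Omega^p$.

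First I would set up the absolute Atiyah class and its trace, recording that its image in $H^p(\cX,\Omega^p_\cX)$ refines to a de Rham class; this gives property~(1) since the composition $\Omega^{\geq p}_{\cX} \to \Omega^{\geq p}_{\cX/\cB} \to \Omega^p_{\cX/\cB}[-p]$ sends the absolute Atiyah power to the relative one, which is exactly \eqref{Eq:ch}. Next, property~(2): a de Rham hypercohomology class, i.e.\ an element of $\Gamma(\cB, R^{2p}f_*\Omega^\bullet_{\cX/\cB})$ coming from $R^{2p}f_*$ of a complex on $\cX$ (not just $\cX/\cB$), is automatically Gauss–Manin flat — this is the defining property of the Gauss–Manin connection, which measures precisely the failure of a relative de Rham class to come from an absolute one. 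So once I exhibit $\widetilde{\ch_p}(I\udot)$ as the image of an absolute class, horizontality is immediate. Property~(3) is base change: the formation of the Atiyah class, its powers, and the trace all commute with the derived pullback along $\{b\}\hookrightarrow \cB$, and the Hodge-to-de Rham degeneration and the base change statements in \cite[Thm.~5.5]{D68} guarantee the fibrewise restriction of $\cH^{2p}_{DR}(\cX/\cB)$ is $H^{2p}_{DR}(\cX_b/\Spec\C)$ compatibly; under this the restricted class is the usual $\ch_p(I\udot_b)$ by the classical (absolute) Atiyah–Chern description over $\C$.

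For uniqueness, suppose $\widetilde{\ch_p}(I\udot)$ and $\widetilde{\ch_p}'(I\udot)$ both satisfy (2) and (3). Their difference $\delta$ is a horizontal section of $\cH^{2p}_{DR}(\cX/\cB)$ vanishing at every $\C$-point $b\in\cB$. By Remark~\ref{Rem:horizontal}(1), passing to the analytic site, $\cH^{2p}_{DR}(\cX/\cB)^\an \cong R^{2p}f^\an_*\C \otimes_\C \O_{\cB^\an}$ and horizontal sections correspond to sections of the locally constant sheaf $R^{2p}f^\an_*\C$; such a section vanishing at every point of $\cB^\an$ (which has the same $\C$-points as $\cB$) is zero on each connected component, hence $\delta^\an = 0$, hence $\delta = 0$ since $\cB$ is of finite type over $\C$ and the comparison map is injective. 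One subtlety: $\cB$ need not be smooth here, so the clean statement $\cH^k_{DR}(\cX/\cB)^\an \cong R^kf^\an_*\C \otimes \O_{\cB^\an}$ of Remark~\ref{Rem:horizontal} is literally stated for $\cB$ smooth; I would handle the general case by restricting along all maps $\Spec(A)\to\cB$ from local Artinian $\C$-algebras and using Remark~\ref{Rem:horizontal}(1)(b) together with \cite[Prop.~3.8]{Blo}, exactly as the Krull intersection argument there, to conclude horizontality and vanishing locally, then glue.

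The main obstacle I anticipate is purely bookkeeping rather than conceptual: making the identification ``trace of the absolute Atiyah power, viewed through the stupid filtration of $\Omega^\bullet_\cX$, lands in $F^p\cH^{2p}_{DR}$ and restricts correctly relatively and fibrewise'' fully precise, i.e.\ checking that the three filtrations/spectral sequences in play (the one defining $F^\bullet$ on $\cH_{DR}(\cX/\cB)$, the one defining Gauss–Manin, and the Hodge-to-de Rham SS on the fibres) are compatible with the Chern–Weil construction and with base change. All the needed inputs are in \cite{D68,KO68,Blo} and the standard theory of Atiyah classes; the work is to assemble them without sign or index errors. I do not expect any genuinely new difficulty, only careful diagram-chasing.
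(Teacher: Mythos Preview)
Your outline is conceptually parallel to the paper's strategy --- both pass through an \emph{absolute} Chern class and then argue that anything coming from an absolute class is Gauss--Manin flat --- but you have glossed over precisely the step that carries all the weight. The assertion that $\tr(\At_{\cX}^p(I\udot))$ ``lifts canonically to $H^{2p}(\cX,\Omega^{\geq p}_{\cX})$'' is not a bookkeeping matter: it is the content of the proof. The Proposition is stated for arbitrary (finite-type) $\cB$, and in the paper it is applied over singular bases (the local Artinian base $\cA_V$ in Proposition~\ref{prop:P(X)=P(X/A)}, and the moduli scheme $\curP$ in Proposition~\ref{prop:cosection}). For such $\cB$ the total space $\cX$ is singular, so there is no good naive $\Omega^\bullet_{\cX}$ with a well-behaved stupid filtration, the Hodge-to-de Rham spectral sequence does not degenerate, and the Atiyah class takes values in $\LL_{\cX}$ rather than $\Omega^1_{\cX}$. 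None of Atiyah, Illusie, or \cite{D68} provides the lift you need in this generality.

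The paper supplies exactly this missing lift via cyclic homology: the Atiyah--Chern class is identified with the Dennis trace in Hochschild homology, and Goodwillie's Chern character $\ch^G$ lifts the Dennis trace to \emph{negative} cyclic homology $\HN^0(\cX)$ --- this is the absolute refinement you are positing. One then maps to $\HN^0(\cX/\cB)$, which by HKR (valid since $\cX/\cB$ is smooth) is $\bigoplus_p F^pH^{2p}_{DR}(\cX/\cB)$. Horizontality is not deduced from a lift to the naive absolute de Rham complex but from the Feigin--Tsygan identification of $\HochP^0(\cX)$ with Hartshorne's algebraic de Rham cohomology and Hartshorne's comparison with $H^*(\cX^{\an},\C)$, which over a local Artinian base forces the image in $\HochP^0(\cX/\cB)$ to consist of horizontal sections. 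Your uniqueness argument via Remark~\ref{Rem:horizontal} is correct and matches the paper's; the gap is entirely in the existence of the filtered-de Rham lift for singular $\cX$.
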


\begin{remark}
We note that an analogous statement for the fundamental class $[\cZ]$ of a local complete intersection $\cZ \subseteq \cX$ was shown in \cite[Prop.~5.6]{Blo}.
\end{remark}

We briefly summarize how we prove Proposition \ref{lem:chishorizontal}.
We use the theory of Hochschild homology and cyclic homology, see Loday \cite{Lod} for the case of rings and Weibel \cite{W3,W4} for the case of schemes.\footnote{By Keller \cite{Kel}, the cyclic homologies defined in \cite{W4} coincide with the cyclic homologies defined in McCarthy \cite{McC} via the dg category of perfect complexes.}
By the Hochschild-Kostant-Rosenberg (HKR) theorem \cite{HKR}, we may identify the Hodge bundles, the filtrations, and the associated quotients in \eqref{Eq:HdgHbdl} and \eqref{Eq:HdgFilt} with the periodic cyclic homology, the negative cyclic homology, and the Hochschild homology, respectively. Moreover, the Chern character in \eqref{Eq:ch} can be identified with the Dennis trace map. 
Then the key to proving Proposition \ref{lem:chishorizontal} is Goodwillie's lift of the Dennis trace map to the negative cyclic homology \cite{Goo2}.\footnote{In the language of \cite{PTVV}, Chern characters lift to $p$-shifted {\em closed} $p$-forms \cite{TV11,TV15}. By \cite{HSSS}, the Chern character of \cite{TV15} matches with the classical Dennis trace.}

The proof was inspired by Pridham \cite{Pri}.

\begin{proof}[Proof of Proposition \ref{lem:chishorizontal}]

Note that the uniqueness of the section $\widetilde{\ch_p}(I\udot)$ follows immediately from the properties (2) and (3) in Proposition \ref{lem:chishorizontal} and Remark \ref{Rem:horizontal}(2). Hence it suffices to consider the statement locally on the base $\cB$. Thus we may assume that $\cB$ is an affine scheme. Then \eqref{Eq:ch} is an isomorphism.

By the HKR theorem in \cite[Thm.~3.3]{W4}, we have natural isomorphisms
\begin{align}
\HN^i(\cX/\cB) &\cong \bigoplus_p F^pH_{DR}^{2p+i}(\cX/\cB),
& \HochP^i(\cX/\cB) &\cong \bigoplus_p H^{2p+i}_{DR}(\cX/\cB), \label{Eq:39} \\ 
\HochH^i(\cX/\cB) &\cong \bigoplus_p R^{p+i}f_*(\Omega^p_{\cX/\cB}),
& \HC^i(\cX/\cB) &\cong \bigoplus_p \frac{H^{2p+i}_{DR}(\cX/\cB)}{F^{p+1}H^{2p+i}_{DR}(\cX/\cB)},\nonumber
\end{align}
where $\HN$, $\HochP$, $\HochH$, and $\HC$ denote the negative cyclic homology, the periodic cyclic homology, the Hochschild homology, and the cyclic homology, respectively, with the cohomological convention. More specifically, the isomorphisms \eqref{Eq:39} are induced by the left inverse ($\pi$ in \cite[Lem.~1.3.14]{Lod} and $e$ in \cite[Cor.~9.4.4]{Wei}) of the antisymmetrization map.

The relative Chern character $\ch_{\cX/\cB}(I\udot):=(\ch_p(I\udot))_p$ in \eqref{Eq:ch} comes from the absolute Chern character. More precisely, we have a commutative diagram
\begin{equation}\label{Eq:38}
\xymatrix@C+1pc{
\O_{\cX} \ar[r]^-{\ch_{\cX}(I\udot)} \ar@{=}[d]  & \bigoplus_p \LL^p_{\cX}[p] \ar[d] \\
\O_{\cX} \ar[r]_-{\ch_{\cX/\cB}(I\udot)}& \bigoplus_p \Omega^p_{\cX/\cB}[p]
}	
\end{equation}
in the derived category of $\cX$, where $\LL_{\cX}$ denotes Illusie's cotangent complex \cite{Ill} of $\cX$, $\LL^p_{\cX}:= \bigwedge^p \LL_{\cX}:=\Sym^p(\LL_{\cX}[1]])[-p]$ denotes the derived wedge product of $\LL_{\cX}$, and $\ch_{\cX}(I\udot)$ denotes the absolute Chern character defined as the trace of the exponential of the absolute Atiyah class $\At_{\cX}(I\udot) : I\udot \to I\udot \otimes \LL_{\cX}[1]$ \cite{Ill}.

Under the HKR ismorphism, the Chern character $\ch_{\cX}(I\udot)$ can be identified with the Dennis trace $\ch^D(I\udot)$. 
More precisely, we have a commutative diagram
\begin{equation}\label{Eq:31}
\xymatrix{
&\O_{\cX} \ar[ld]_{\ch^D(I\udot)} \ar[rd]^{\ch_{\cX}(I\udot)} & \\
L\tDelta^*\tDelta_*\O_{\cX} \ar[rr]_{\cong}\ar[d] & & \bigoplus_p \LL^p_{\cX}[p] \ar[d]\\
L\Delta^*\Delta_*\O_{\cX}  \ar[rr]_{\cong}^{\mathrm{HKR}} & & \bigoplus_p \Omega^p_{\cX/\cB}[p]
}	
\end{equation}
in the derived category of $\cX$ where $\tDelta : \cX \to \cX\times\cX$ and $\Delta : \cX \to \cX\times_{\cB}\cX$ are the diagonal maps.
The horizontal isomorphisms are given by the exponentials of the universal Atiyah classes as in Buchweitz-Flenner \cite[Thm.~6.2.1]{BF08b} (see Markarian \cite{Mar} and C\u{a}ld\u{a}raru \cite{Cal2} for the smooth case).
The hypercohomology of the left column of \eqref{Eq:31} computes the Hochschild homology,
\[\HH^i(\cX, L\tDelta^*\tDelta_*\O_{\cX}) \cong \HochH^i(\cX),\qquad
\HH^i(\cX, L\Delta^*\Delta_*\O_{\cX}) \cong \HochH^i(\cX/\cB)\]
by Yekutieli \cite[Prop.~3.3]{Yek}.
The lower horizontal isomorphism in \eqref{Eq:31} 
is compatible with the corresponding isomorphism in \eqref{Eq:39},
by the arguments in \cite[Prop.~4.4]{Cal2} since $\cX \to\cB$ is smooth.
The map $\ch^D(I\udot)$ in \eqref{Eq:31} is the Euler map of $I\udot$ in the sense of \cite[4.3]{BNT}, which agrees with the Dennis trace map for the dg category $\mathrm{Perf}(\cX)$ \cite{McC}, as desribed in \cite[4.5,~4.7]{BNT}.


As explained in \cite[3.8]{W4}, the Dennis trace map $\ch^D$ can be extended to the Goodwillie Chern character $\ch^G$ \cite{Goo2} via descent \cite{WG},
\begin{equation}\label{Eq:34}
\xymatrix{
& K^0(\cX) \ar[ld]_{\ch^D} \ar[d]^{\ch^G} \ar[rd] & \\
\HochH^0(\cX) & \ar[l] \HN^0(\cX) \ar[r] & \HochP^0(\cX). \\ 
}\end{equation}
Consider the canonical commutative diagram
\begin{equation}\label{Eq:35}
\xymatrix{
\HochH^0(\cX) \ar[d] & \HN^0(\cX) \ar[l] \ar[r] \ar[d] & \HochP^0(\cX) \ar[d] \\
\HochH^0(\cX/\cB) \ar[d]_{\cong}^{\mathrm{HKR}} & \HN^0(\cX/\cB) \ar@{->>}[l] \ar@{^{(}->}[r] \ar[d]_{\cong}^{\mathrm{HKR}}& \HochP^0(\cX/\cB) \ar[d]_{\cong}^{\mathrm{HKR}}\\
\bigoplus_p R^pf_* \Omega^p_{\cX/\cB} & \bigoplus_p F^p H^{2p}_{DR}(\cX/\cB) \ar@{->>}[l] \ar@{^{(}->}[r] & \bigoplus_p H^{2p}_{DR}(\cX/\cB).
}	
\end{equation}
Denote by
\begin{equation}\label{Eq:40}
\widetilde{\ch_p}(I\udot) \in F^p H^{2p}_{DR}(\cX/\cB)
\end{equation}
the image of $\ch^G(I\udot) \in HN_0(\cX)$ under the vertical maps in \eqref{Eq:35}. 
We now show that $\widetilde{\ch_p}(I\udot)$ in \eqref{Eq:40} satisfies the three properties in Lemma \ref{lem:chishorizontal} separately.
\begin{enumerate}
\item 
Clearly $\widetilde{\ch_p}(I\udot)$ maps to the Chern character $\ch_p(I\udot)$ in \eqref{Eq:ch}. This follows from the commutative diagrams \eqref{Eq:38}, \eqref{Eq:31}, \eqref{Eq:34}, and \eqref{Eq:35}. 

\item
We now claim that $\widetilde{\ch_p}(I\udot)$ is horizontal. Indeed, we may assume that $\cB=\Spec(A)$ for a local Artinian $\C$-algebra $A$ (Remark \ref{Rem:horizontal}). Let $X:=\cX_0$ be the fibre over the $\C$-point $0 \in \cB$.
By the Feigin-Tsygan theorem \cite[Thm.~5]{FT} (see also \cite[Thm.~2.2]{Emm}, \cite[Thm.~3.4]{W4}), we have a commutative square with bijective horizontal arrows
\begin{equation}\label{Eq:FT1}
\xymatrix{
\HochP^0(\cX) \ar[r]^-{\cong} \ar[d] & \bigoplus_p\HH^{2p}(\widehat{\cY},\hOmega_{\cY}^\bullet) \ar[d]\\
\HochP^0(\cX/\cB) \ar[r]^-{\cong} & \bigoplus_p\HH^{2p}(\cX,\Omega_{\cX/\cB}^\bullet)
}    
\end{equation}
where $\bigoplus_p\HH^{2p}(\widehat{\cY},\hOmega_{\cY}^\bullet) $ is Hartshorne's algebraic de Rham cohomology \cite{Har} given by an embedding $\cX \hookrightarrow \cY$ into a smooth scheme $\cY$.
Here the lower isomorphism in \eqref{Eq:FT1} coincides with the corresponding isomorphism in \eqref{Eq:39} since they both are given by the same left inverse ($\mu$ in \cite{Emm} and $e$ in \cite{W4}) of the antisymmetrization map.
By \cite[Thm.~IV.1.1]{Har} (and \cite[Lem.~5.5.3]{D68}), we have a commutative diagram
\[\qquad\xymatrix@C-1pc{
\HH^{2p}(\hcY,\hOmega_{\cY}^\bullet) \ar[r]^-{\cong} \ar[d] & \HH^{2p}(\hcY^\an,\hOmega_{\cY^\an}^\bullet) \ar[d] & H^{2p}(\cX^\an,\C) \ar[l]_-{\cong} \ar[d] \ar@{=}[r] & H^{2p}(X^\an,\C) \ar[d] \\
\HH^{2p}(\cX,\Omega_{\cX/\cB}^\bullet) \ar[r]^-{\cong} & \HH^{2p}(\cX^\an,\Omega_{\cX^\an/\cB^\an}^\bullet) &  H^{2p}(\cX^\an,A) \ar[l]_-{\cong} \ar@{=}[r] & H^{2p}(X^\an,A)
}\]
where the last vertical map is induced by the canonical map of constant sheaves $\C \to A$ on $X^\an$. 
Hence the image of the vertical map consists of the horizontal sections (see Remark \ref{Rem:horizontal}), which proves the claim.
\item 
We finally claim that the restriction $\widetilde{\ch_p}(I\udot)$ to the fibre of a point $b \in \cB$ is the usual Chern character. Indeed, we may assume that $\cB=\Spec(\C)$ since Chern characters are compatible with base change. Then the claim follows from \cite[Prop.~3.8.1]{W4}. 
\end{enumerate}
This completes the proof.
\end{proof}

\section{Virtual cycles}\label{sec:VFC}

In this section, we construct reduced virtual cycles for the moduli spaces of $\PT_q$ pairs on Calabi-Yau 4-folds. We use Kiem-Li's cosection localization technique \cite{KL13,KP20} to reduce the Oh-Thomas virtual cycles \cite{OT} on the moduli spaces.
The semi-regularity map of Buchweitz-Flenner \cite{BF03} will provide the necessary cosections.

Throughout this section, we fix a (smooth projective) Calabi-Yau 4-fold $X$ and a Calabi-Yau 4-form $\omega : \O_X \xrightarrow{\cong}\Omega^4_X$.

\subsection{Non-reduced virtual cycles}

In this preliminary subsection, we construct {\em non-reduced} virtual cycles for the moduli spaces of stable pairs.
We review Oh-Thomas's construction of virtual cycles in \cite{OT} for the reader's convenience. Instead of following \cite{OT} directly, we will use the setup introduced in \cite{Par}. 

Let $\curP:=\PTqvX$ be the moduli space of $\PT_q$ pairs $(F,s)$ on $X$ satisfying
\[\ch(F)= v = \left(0,0,\gamma,\beta, n-  \gamma \cdot \td_2(X)\right) \in H^*(X,\Q).\]
By Theorem \ref{Thm:PairtoPerf}, we have an open embedding
\[\curP \hookrightarrow \cPerf(X,v)_{\O_X}^{\spl}\]
into the moduli space of perfect complexes with fixed trivial determinant. Thus, by \cite{OT}, we can construct an Oh-Thomas virtual cycle on $\curP$.

\begin{theorem}[\cite{OT}]\label{Thm:NRVFC}
There exists a virtual cycle
\[\left[\curP\right]^\vir \in A_{\vd}\left(\curP\right)\]
of virtual dimension
\[\vd= n-\frac12 \gamma^2,\]
which depends\footnote{A different choice of orientation changes the {\em signs} of the virtual cycles on the connected components.} on the choice of an orientation \eqref{Eq:orientation} below.
\end{theorem}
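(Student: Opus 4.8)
The statement is essentially a citation of Oh--Thomas \cite{OT}, so the plan is to verify that their construction applies verbatim to our moduli space $\curP = \PTqvX$. The only input that needs checking is that $\curP$ is a quasi-projective scheme (indeed projective, by Theorem \ref{Thm:GIT}) carrying the three-term symmetric obstruction theory required by \cite{OT}. First I would invoke Theorem \ref{Thm:PairtoPerf}, which gives an open embedding $\curP \hookrightarrow \cPerf(X,v)_{\O_X}^{\spl}$ into the moduli space of simple perfect complexes with fixed trivial determinant. Pulling back the universal complex $\II\udot$ along this embedding and restricting, one obtains on $\curP \times X$ a perfect complex $\II\udot$ with $\det(\II\udot) \cong \O$, and the trace-free part $R\hom_\pi(\II\udot,\II\udot)_0$ defines, via the Atiyah class of Huybrechts--Thomas \cite{HT}, an obstruction theory $\phi: \EE := R\hom_\pi(\II\udot,\II\udot)_0[3] \to \LL_{\curP}$. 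Since $X$ is a Calabi-Yau $4$-fold, Serre duality together with $K_X \cong \O_X$ furnishes the self-duality isomorphism $\SD: \EE\dual[2] \cong \EE$ satisfying $\SD\dual[2] = \SD$, so that $\EE$ is a symmetric complex of tor-amplitude $[-2,0]$ in the sense of the conventions in the introduction. This is exactly the data on which \cite{OT} operates.

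The second ingredient is the isotropy of the intrinsic normal cone $\mathfrak{C}_{\curP} \hookrightarrow \fC(\EE\dual[-1]) = h^1/h^0((\EE\dual[-1])\dual)$ inside the orthogonal bundle stack associated to $\EE$. This is not something one proves directly; it follows from the derived Darboux theorem of Brav--Bussi--Joyce \cite{BBJ} applied to the $(-2)$-shifted symplectic derived enhancement of $\curP$, which exists because $\curP$ is an open substack of the moduli of perfect complexes on a Calabi-Yau $4$-fold and hence inherits a $(-2)$-shifted symplectic structure by Pantev--To\"en--Vaqui\'e--Vezzosi \cite{PTVV}; the shifted symplectic form is compatible with the symmetric structure $\SD$ on the truncation $\EE \simeq \tau^{\geq -2}\LL_{\curP}$. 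Thus, once the derived enhancement and its shifted symplectic form are in place, the isotropic condition is automatic.

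Given $(\EE, \SD)$, the orthogonal bundle stack, the isotropic cone, and a choice of orientation $o: \O_{\curP} \xrightarrow{\cong} \det(\EE)$ of Cao--Gross--Joyce \cite{CGJ} (whose existence is what the footnoted equation \eqref{Eq:orientation} records), the Oh--Thomas machinery produces a localized square root Gysin pullback $\sqrt{0^!_{\fC(\EE\dual[-1])}}$, and one sets
\[
[\curP]^\vir := \sqrt{0^!_{\fC(\EE\dual[-1])}}\,[\mathfrak{C}_{\curP}] \in A_{\vd}(\curP).
\]
The virtual dimension is half the rank of $\EE$; since $\rk(\EE) = -\chi(\II\udot,\II\udot)_0 = 2n - \gamma^2$ by a Riemann--Roch computation on $X$ using $v_2 = \gamma$ and $K_X \cong \O_X$ (the trace-free, degree-shifted Euler pairing on a Calabi-Yau $4$-fold), we get $\vd = n - \tfrac12\gamma^2$ as claimed. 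The dependence on the orientation is only through signs on connected components, as in \cite{OT}.

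\textbf{Main obstacle.} Strictly, no deep new argument is needed --- everything is an application of \cite{OT}. The only genuine verifications are: (i) that the Huybrechts--Thomas obstruction theory exists and is self-dual on the pair moduli space, which requires the open embedding into $\cPerf(X,v)_{\O_X}^{\spl}$ of Theorem \ref{Thm:PairtoPerf} and hence the hypothesis $n = 4 \geq 4$; and (ii) the isotropy of the normal cone, which, while ``known'', rests on the derived Darboux theorem \cite{BBJ} and the $(-2)$-shifted symplectic structure \cite{PTVV} on the ambient moduli of complexes --- the point where some care is needed is that the symplectic form restricts compatibly to the open substack $\curP$, which holds since open immersions preserve shifted symplectic structures. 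I expect these to be routine given the cited results, so the ``proof'' is largely a matter of assembling the inputs and citing \cite{OT}; the precise statement of isotropy and orientation is deferred to Section \ref{sec:VFC} as the text indicates.
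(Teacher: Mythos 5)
Your proposal is correct and takes essentially the same approach as the paper: open embedding into $\cPerf(X,v)_{\O_X}^{\spl}$ via Theorem \ref{Thm:PairtoPerf}, the Huybrechts--Thomas symmetric obstruction theory with Serre-duality self-duality, isotropy of the intrinsic normal cone via the $(-2)$-shifted symplectic structure of \cite{PTVV} and the Darboux theorem of \cite{BBJ}, orientations via \cite{CGJ}, and a Hirzebruch--Riemann--Roch computation of $\vd = n - \tfrac12\gamma^2$. (One notational slip: the square root Gysin pullback is applied to the quadratic cone stack $\fQ(\EE) \subseteq \fC(\EE) = h^1/h^0(\EE\dual)$, the zero locus of the quadratic function $\fq_\EE$, rather than to $\fC(\EE\dual[-1]) = h^1/h^0(\EE[1])$, which with the paper's conventions would be a different object.)
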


We call the virtual cycle $[\curP]^\vir$ in Theorem \ref{Thm:NRVFC} the {\em non-reduced virtual cycle}. We use this terminology since we will construct a canonical {\em reduced virtual cycle} $[\curP]^\red$ for any Calabi-Yau 4-fold $X$ and a surface class $\gamma$ in the next subsection (see Theorem \ref{Thm:RVFC}).

There are three key ingredients for defining the Oh-Thomas virtual cycle (in the language of \cite{Par}):

\begin{enumerate}
\item {\bf Symmetric obstruction theory.}
The moduli space $\curP$ has a (3-term) {\em symmetric obstruction theory} 
\begin{equation}\label{Eq:SOT}
\phi : \EE:=R\hom_{\pi}(\II\udot,\II\udot)_0[3] \xrightarrow{\At(\II\udot)}  \trunc\LL_{\curP},\quad \theta: \EE\dual[2] \xrightarrow{\SD} \EE
\end{equation}
induced by the Atiyah class $\At(\II\udot)$ of the universal complex $\II\udot$ and the relative Serre duality for the projection map $\pi: \curP \times X \to \curP$. Here $\phi$ and $\theta$ satisfy the following properties:
\begin{enumerate}
\item $\phi:\EE\to\trunc\LL_{\curP}$ is an {\em obstruction theory}, \cite{BF} i.e., $h^0(\phi)$ is bijective and $h^{-1}(\phi)$ is surjective;
\item $\EE$ is a {\em symmetric complex} (of tor-amplitude $[-2,0]$), i.e., $\theta$ is an isomorphism and $\theta=\theta\dual[2]$.
\end{enumerate}
The symmetric obstruction theory \eqref{Eq:SOT} is constructed by Huybrechts-Thomas \cite{HT}.\footnote{Alternatively, we can also use the derived enhancement of $\cPerf(X)^\spl_{\O_X}$ in \cite{ToVa,STV}.}
\item {\bf Orientation.}
There exists an {\em orientation} for the virtual cotangent complex $\EE$, i.e., an isomorphism of line bundles
\begin{equation}\label{Eq:orientation}
o : \O_{\curP} \xrightarrow{\cong} \det(\EE)	
\end{equation}
such that $o^2=(-1)^{\frac{(2\vd)(2\vd-1)}{2}}\cdot\det(\theta):\det(\EE\dual) \to \det(\EE)$. The existence of an orientation was shown by Cao-Gross-Joyce \cite{CGJ}.
\item {\bf Isotropic condition.} The intrinsic normal cone $\fC_{\curP}$ is {\em isotropic} in the virtual normal cone $\fC(\EE):=h^1/h^0(\EE\dual)$ with respect to the canonical quadratic function $\fq_\EE:\fC(\EE) \to \bbA^1$ (defined in Remark \ref{Rem:quadraticfunction} below), i.e., the restriction
\begin{equation*}\label{Eq:IsotropicCondition}
\fq_{\EE}|_{\fC_{\curP}} : \fC_{\curP} \hookrightarrow \fC(\EE) \xrightarrow{\fq_\EE} \bbA^1_{\curP}
\end{equation*}
vanishes. This isotropic condition can be shown by the Darboux theorem of Brav-Bussi-Joyce \cite{BBJ} for the $(-2)$-shifted symplectic derived enhancement \cite{PTVV,STV}. Indeed, by \cite[Lem.~1.11]{Par}, the arguments in \cite[Prop.~4.3]{OT} proves the isotropic condition.\footnote{One does not need the full proof of  \cite[Prop.~4.3]{OT}, but only the part involving MacPherson's graph construction \cite[Rem.~5.11]{Ful}.}
\end{enumerate}

We recall the definition of the canonical quadratic function $\fq_{\EE}$ from \cite{Par}.

\begin{remark}\label{Rem:quadraticfunction}
Given a symmetric complex $\EE$ on a quasi-projective scheme $\sX$, there is a canonical quadratic function 
\[\fq_\EE:\fC(\EE) \to \bbA^1_{\sX}\] 
induced by the symmetric form of $\EE$. Indeed, if $\EE=[B\to E \to B\dual]$ is a symmetric resolution, then $\fC(\EE)=[C(D)/B]$ for $D=\coker(B\to E)$, and the restriction $\fq_E|_{C(D)}:C(D) \hookrightarrow E \xrightarrow{\fq_E} \bbA^1$ of the quadratic function $\fq_E$ of $E$ is $B$-invariant. The quadratic function $\fq_\EE$ is defined via descent of $\fq_E|_{C(D)}$, which is independent of the choice of the symmetric resolution. See \cite[Prop.~1.7]{Par} for details.
\end{remark}

Note that from the above data, we can form a commutative diagram
\begin{equation}\label{Eq12}
\xymatrix{& \fQ(\EE) \ar[r] \ar@{^{(}->}[d] & \curP \ar@{^{(}->}[d]^{0} \\
\fC_{\curP} \ar@{^{(}->}[r] \ar@{.>}[ru] & \fC(\EE) \ar[r]^{\fq_\EE} & \bbA^1_{\curP}}
\end{equation}
where $\fQ(\EE)$ is the {\em quadratic cone stack} associated to the symmetric complex $\EE$, defined as the zero locus of the quadratic function $\fq_\EE$ on $\fC(\EE)$. 

We can construct the Oh-Thomas virtual cycle $[\curP]^\vir$ from the closed embedding  $\fC_{\curP} \hookrightarrow \fQ(\EE)$ in \eqref{Eq12}.

\begin{proof}[Proof of Theorem \ref{Thm:NRVFC}]
We first construct a {\em square root Gysin pullback}
\[\sqrt{0^!_{\fQ(\EE)}} : A_*(\fQ(\EE)) \to A_*(\curP)\]
for the zero section $0_{\fQ(\EE)} : \curP \to \fQ(\EE)$. Note that we can find a symmetric resolution 
\begin{equation}\label{Eq:SymRes}
\EE\cong \left[B\xrightarrow{d} E \cong E\dual\xrightarrow{d\dual} B\dual\right]
\end{equation}
for some special orthogonal bundle $E$ and a vector bundle $B$. Indeed, this can be shown by the proof of \cite[Prop.~4.1]{OT}, see also \cite[Prop.~1.3]{Par}. 
Here an orientation of $\EE$ induces an orientation of $E$ via the canonical isomorphism
\[\det(\EE) \cong \det(B\dual)\otimes \det(E\dual)\dual\otimes \det(B) \cong \det(E)\]
of line bundles. By considering the stupid truncation
\[\left[ 0 \to  E\dual \xrightarrow{d\dual} B\dual \right] \to \left[B\xrightarrow{d} E \cong E\dual\xrightarrow{d\dual} B\dual\right],\]
we can form a fibre diagram
\begin{equation}\label{Eq11}
\xymatrix{
Q \ar@{^{(}->}[rr] \ar[d]^{r} & 
& E \ar[d] \\
\fQ(\EE) \ar@{^{(}->}[r] & \fC(\EE) \ar@{^{(}->}[r] & [E/B]
}	
\end{equation}
for some cone $Q$. The horizontal maps in \eqref{Eq11} are closed embeddings and the vertical maps in \eqref{Eq11} are $B$-torsors. Then $Q$ is isotropic subcone of $E$ since
\[\fq_{E}|_Q = \fq_{\fC(\EE)}|_Q = 0\]
by \cite[Prop.~1.7(3)]{Par}.
Consider a factorization
\[\xymatrix{
& E|_Q \ar[d]\\
\curP \ar@{^{(}->}[r]^-{0_Q} \ar[rd]_{0_{\fQ(\EE)}} & Q \ar@/_0.4cm/[u]_{\tau} \ar[d]^r\\
& \fQ(\EE)
}\]
of the zero section $0_{\fQ(\EE)}$, where $\tau$ is the tautological section induced by the embedding $Q \hookrightarrow E$ in \eqref{Eq11}. Note that the zero section $0_Q : \curP \hookrightarrow Q$ is the zero locus of the tautological section $\tau \in \Gamma(Q,E|_Q)$. We define the square root Gysin pullback as the composition
\begin{equation}\label{Eq:squarerootGysin}
\sqrt{0^!_{\fQ(\EE)}}:= \sqrt{e}(E|_Q,\tau) \circ r^*: A_* (\fQ(\EE)) \xrightarrow{} A_{*+b} (Q) \xrightarrow{} A_{*+b-\frac12e} (\curP)	
\end{equation}
where $b:=\rk(B)$, $e:=\rk(E)$, $\vd=b -\frac12 e$, and $\sqrt{e}(E|_Q,\tau)$ denotes Oh-Thomas's localization \cite[Def.~3.2]{OT} of Edidin-Graham's square root Euler class \cite{EG1}.\footnote{See also \cite[Def.~4.1]{KP20} for an alternative construction of the localized square root Euler class $\sqrt{e}(E|_C,\tau)$ using the blowup method.}
The arguments in \cite[pp.~31--34]{OT} show that the square root Gysin pullback in \eqref{Eq:squarerootGysin} is independent of the choice of the symmetric resolution \eqref{Eq:SymRes}.

We then define the Oh-Thomas virtual cycle as
\begin{equation}\label{Eq:Def.NRVFC}
\left[\curP\right]^\vir := \sqrt{0^!_{\fQ(\EE)}}\left[\fC_{\curP}\right] \in A_\vd \left(\curP\right)	
\end{equation}
where 
$[\fC_{\curP}] \in A_0 (\fQ(\EE))$
is the fundamental class of the intrinsic normal cone with respect to the closed embedding $\fC_{\curP} \hookrightarrow \fQ(\EE)$ in \eqref{Eq12}. 

We can easily compute the virtual dimension via the Hirzebruch-Riemann-Roch formula \cite[Cor.~15.2.1]{Ful}. Indeed, we have
\begin{align*}
2\cdot\vd &= \rank(R\Hom_X(I\udot,I\udot)_0[1])\\
&= -\int_X \ch(I\udot)\dual\ch(I\udot) \td(X) + \int_X \td(X)\\	
&=-(-2n + 2 \td_2(X)\gamma +\td_4(X) - 2 \td_2(X)\gamma + \gamma^2) + \td_4(X) \\
& = 2n - \gamma^2.
\end{align*}
It completes the proof.
\end{proof}

We provide a simple analogy between the above description \eqref{Eq:Def.NRVFC} of the Oh-Thomas virtual cycles \cite{OT} and the Behrend-Fantechi virtual cycles \cite{BF}.

\begin{remark}\label{Rem4}
Fix an arbitrary (separated) Deligne-Mumford stack $\sX$.

Recall that a perfect obstruction theory $\psi:\KK \to \trunc \LL_{\sX}$ of tor-amplitude $[-1,0]$ is equivalent to a closed embedding of cone stacks\footnote{A morphism of cone stacks is always assumed to be $\bbA^1$-equivariant as in \cite[Def.~1.8]{BF}.} $\fC_{\sX} \hookrightarrow \fC(\KK)$, for some vector bundle stack $\fC(\KK)$. In \cite{Kre}, Kresch constructed a Gysin pullback 
\[0^!_{\fC(\KK)}:A_*(\fC(\KK)) \to A_*(\sX)\]
for any perfect complex $\KK$ of tor-amplitude $[-1,0]$. In \cite{BF,Kre}, the Behrend-Fantechi virtual cycle for a closed embedding $\fC_{\sX} \hookrightarrow \fC(\KK)$ was defined as
\[[\sX]^\vir_{BF} := 0^!_{\fC(\KK)}[\fC_\sX].\]

Analogously,	 a symmetric obstruction theory $\phi:\EE \to \trunc \LL_{\sX}$ of tor-amplitude $[-2,0$] satisfying the isotropic condition is equivalent to a closed embedding of cone stacks $\fC_{\sX} \hookrightarrow \fQ(\EE)$, for some quadratic cone stack $\fQ(\EE)$ associated to a symmetric complex $\EE$. Note that we can define the square root Gysin pullback 
\[\sqrt{0^!_{\fQ(\EE)}} : A_*(\fQ(\EE)) \to A_*(\sX)\]
for any symmetric complex $\EE$ of tor-amplitude $[-2,0]$ and an orientation $\O_{\sX} \xrightarrow{\cong} \det(\EE)$ as in the proof of Theorem \ref{Thm:NRVFC}.\footnote{The proof of Theorem \ref{Thm:NRVFC} works when $\sX$ is a quasi-projective scheme. Using the Kimura sequence for Artin stacks \cite{BP}, this can be generalized to the case when $\sX$ is a separated Deligne-Mumford stack, see \cite[A]{Par}.} Thus the Oh-Thomas virtual cycle for a closed embedding $\fC_{\sX} \hookrightarrow \fQ(\EE)$ and an orientation can be defined as
\[[\sX]^\vir _{OT}:=\sqrt{0^!_{\fQ(\EE)}}[\fC_{\sX}].\]
\end{remark}

\subsection{Reduced virtual cycles}

Recall that the standard curve counting invariants (GW/PT) on K3 surfaces vanish due to the following two reasons:
\begin{enumerate}
\item There exists a surjective cosection \cite{KL13};
\item the codimension of the Noether-Lefschetz locus is positive \cite{KT1,MPT}.
\end{enumerate}
To obtain non-zero invariants, one should consider {\em reduced} virtual cycles. 

An analogous phenomenon occurs when we want to count surfaces on Calabi-Yau 4-folds. Given a surface class $\gamma$, there are as many natural cosections as the codimension $\rho_\gamma$ of the Hodge locus. Thus in a generic situation, the non-reduced virtual cycle in Theorem \ref{Thm:NRVFC} vanishes and we want to construct a {\em reduced} virtual cycle. This can be achieved by the DT4 version of the cosection localization technique developed in \cite{KP20}. In this subsection, we present the main results on the reduced virtual cycles and we provide the proofs in the subsequent subsections.


We first introduce an important bilinear form $\sfB_\gamma$ on $H^1(X,T_X)$ associated to each surface class $\gamma$ on $X$. We also fix some notation.

\begin{definition}
Let $\gamma \in H^2(X,\Omega^2_X)$ be a (2,2)-class on $X$.
\begin{enumerate}
\item We define a symmetric bilinear form $\sfB_\gamma$ on $H^1(X,T_X)$ as
\begin{equation*}\label{Eq:Bgamma}
\mathsf{B}_\gamma : H^1(X,T_X) \otimes H^1(X,T_X) \to \C, \quad \xi_1 \otimes \xi_2 \mapsto \int_X \iota_{\xi_1}\iota_{\xi_2}\gamma \cup \omega	
\end{equation*}
where $\iota_{\xi_1}$ and $\iota_{\xi_2}$ denote the contraction maps and $\omega$ is the (chosen) Calabi-Yau 4-form.\footnote{For $\xi_1\in H^i(X,T_X)$ and $\xi_2\in H^j(X,T_X)$, we have $\iota_{\xi_2}\iota_{\xi_1}=(-1)^{(1+i)(1+j)}\iota_{\xi_1}\iota_{\xi_2}$ (see for example \cite[Lem.~2.5]{BBJ}). }
\item We denote the rank of the bilinear form $\sfB_\gamma$ by
\[\rho_\gamma :=\rank(\sfB_\gamma).\]
\item For the induced non-degenerate quadratic space of dimension $\rho_\gamma$, we write 
\begin{equation*}\label{Eq:H1TXgamma}
H^1(X,T_X)_\gamma:=\frac{H^1(X,T_X)}{\ker(\sfB_\gamma)}.
\end{equation*}
\end{enumerate}
\end{definition}

The bilinear form $\sfB_\gamma$ is closely related to the Hodge theory of the (2,2)-class $$\gamma\in H^2(X,\Omega^2_X) \subseteq H^4(X,\C).$$ 
Essentially, the rank $\rho_\gamma$ is equal to the codimension of the Hodge locus of $\gamma$ \cite{Blo} (see footnote \eqref{footnote:codHdg}).

We now state our main result.
\begin{theorem}\label{Thm:RVFC}
Let $X$ be a Calabi-Yau 4-fold, $v = (0,0,\gamma,\beta,n-\gamma\cdot\td_2(X)) \in H^*(X,\Q)$, and $q \in \{-1,0,1\}$.
Then there exists a canonical reduced virtual cycle
\[\left[\PTqvX\right]^\red \in A_{\rvd}\left(\PTqvX\right)\]
of reduced virtual dimension
\[\mathrm{rvd} = n- \frac12 \gamma^2 + \frac12 \rho_\gamma\]
depending on the choices of orientation of the symmetric complex $\EE$ and the quadratic space $H^1(X,T_X)_\gamma$. Moreover, $[\PTqvX]^\red$ is deformation invariant in the sense of Theorem \ref{Thm:DefInv}. \end{theorem}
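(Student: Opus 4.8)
\textbf{Proof strategy for Theorem \ref{Thm:RVFC}.} The plan is to construct $[\PTqvX]^\red$ by combining two complementary constructions: the \emph{algebraic twistor family} of \cite{KT1}, which will produce a genuine reduced symmetric obstruction theory $\phi^\red : \EE^\red \to \LL_{\PTqvX}$, and a \emph{non-degenerate cosection localization} in the spirit of \cite{KL13, KP20}, which will certify that the associated reduced intrinsic normal cone is isotropic so that the square root Gysin pullback of Remark \ref{Rem4} can be applied to $\EE^\red$. First I would set up the semi-regularity cosection on the moduli space. Using the Atiyah class of the universal complex $\II\udot$ and the trace map, one extends the pointwise semi-regularity map \eqref{eq:sr1} to a morphism $\SR : \EE\dual[1] = R\hom_\pi(\II\udot,\II\udot)_0[2] \to H^1(X,T_X)\dual \otimes \O_{\curP}$; the key algebraic identity to verify is that $\SR \circ \SR\dual = \sfB_\gamma \otimes 1_{\curP}$, which comes from globalizing the commutative triangle \eqref{eq:t1}, itself a manifestation of Serre duality on the Calabi-Yau $4$-fold together with the symmetry $\sfob = \sr\dual$. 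Having chosen a maximal non-degenerate subspace $V \subseteq H^1(X,T_X)$ with respect to $\sfB_\gamma$, composing $\SR$ with the projection $H^1(X,T_X)\dual \to V\dual$ gives the non-degenerate cosection $\Sigma : \EE\dual[1] \to V\dual \otimes \O_{\curP}$ with $\Sigma^2 = \Sigma \circ \Sigma\dual$ an isomorphism, hence a direct sum splitting $\EE = \EE^\red \oplus (V \otimes \O_{\curP})[1]$.

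Next I would prove that $\phi^\red : \EE^\red \xrightarrow{(1,0)} \EE \xrightarrow{\phi} \LL_{\PTqvX}$ is an obstruction theory (not just a morphism from a perfect complex). The naive point is that $h^0(\phi^\red)$ is still an isomorphism and $h^{-1}(\phi^\red)$ is still surjective after removing the trivial summand $V[1]$; but as the authors caution, the isotropic condition for $\phi$ does not descend to $\phi^\red$, so one must work harder. Here the algebraic twistor family enters: associated to the transversal slice $V$ one builds a family $\cX \to \cB$ with $\cB$ a neighborhood of the origin in $V$, whose Kodaira-Spencer map realizes $V$, and over which $\gamma$ stays Hodge (because $\sfB_\gamma|_V$ is non-degenerate the Hodge locus is cut out transversally). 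Comparing the relative obstruction theory of $\PTqvXB \to \cB$ with the absolute one on the central fibre, and using that $\rho_{\tgamma_b}$ is constant along this twistor base, shows that the relative obstruction theory restricts to $\phi^\red$ on $\curP$ — this is the mechanism by which $\phi^\red$ acquires the structure of a symmetric obstruction theory, with symmetry inherited from the Serre-duality pairing on $\EE^\red$ (the orthogonal complement of a non-degenerate, hence self-dual, summand).

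Then I would establish the isotropic condition for $\fC_{\curP} \hookrightarrow \fQ(\EE^\red)$. This is where the non-degenerate cosection localization does its work: the cosection $\Sigma$ localizes the intrinsic normal cone onto its degeneracy locus, and a Darboux-type local model argument (following \cite[Prop.~4.3]{OT}, \cite[Lem.~1.11]{Par}, and the MacPherson graph construction) shows that along $\phi^\red$ the cone lands in the quadratic cone $\fQ(\EE^\red)$ — equivalently, splitting off the orthogonal summand $V[1]$ from an isotropic cone in $\fQ(\EE)$ leaves an isotropic cone in $\fQ(\EE^\red)$, once one knows the splitting is compatible with the symmetric forms. With this in hand, one defines $[\PTqvX]^\red := \sqrt{0^!_{\fQ(\EE^\red)}}[\fC_{\curP}]$ exactly as in the proof of Theorem \ref{Thm:NRVFC}, and the virtual dimension computation gives $\rvd = \vd + \tfrac12\dim V = n - \tfrac12\gamma^2 + \tfrac12\rho_\gamma$ because $\EE^\red$ has rank $2\vd + 2\rho_\gamma$ when we pass from the $[-2,0]$ complex $\EE$ (rank $2\vd$) to $\EE^\red$ (we remove $V\oplus V\dual$ in degrees $-1,1$, which... ) — here I would carefully track the rank of the orthogonal bundle in a symmetric resolution. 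Independence of the choice of $V$ follows because any two maximal non-degenerate subspaces are related by an automorphism of $(H^1(X,T_X), \sfB_\gamma)$ preserving the cosection, and the square root Gysin pullback is insensitive to such changes; alternatively one can compare directly with the cosection-localized class of \cite{KP20} applied to $\SR$ itself, which manifestly does not involve $V$. Finally, deformation invariance is precisely the content of Theorem \ref{Thm:DefInv} and need not be reproved here.

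\textbf{Main obstacle.} The hard part will be upgrading $\phi^\red$ from ``a morphism out of a perfect complex that looks like an obstruction theory on cohomology'' to a genuine symmetric obstruction theory \emph{satisfying} the isotropic condition, since the two available techniques each supply only half: the algebraic twistor family gives the obstruction-theory and symmetry structure but not isotropy of the reduced cone, while cosection localization gives a virtual cycle directly but no reduced obstruction theory. Threading these together — in particular showing that the splitting $\EE = \EE^\red \oplus (V\otimes\O)[1]$ is orthogonal for the $(-2)$-shifted symmetric pairing and that this orthogonality is exactly what makes the isotropy descend — is the technical crux, and is presumably where the $(-1)$-shifted closed/exact $1$-form formalism of the appendices is secretly doing the bookkeeping.
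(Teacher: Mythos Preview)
Your overall strategy is correct, but you are working harder than the paper does, and your ``main obstacle'' is not an obstacle at all in the paper's actual proof of Theorem~\ref{Thm:RVFC}. The paper constructs the reduced virtual cycle using \emph{only} the cosection localization side: Proposition~\ref{prop:conereduction} establishes the cone reduction property $(\fC_{\curP})_\red \hookrightarrow \fQ(\EE^\red_V)$ directly as closed substacks of $\fC(\EE)$, by combining Kiem-Li's cone reduction lemma (Lemma~\ref{Lem:KLconereduction}) with the orthogonality of the splitting $\EE = \EE^\red_V \oplus (V\otimes\O_{\curP})[1]$. Since Chow groups are insensitive to nilpotents, this embedding of the \emph{reduced} cone already suffices to define $[\curP]^\red := \sqrt{0^!_{\fQ(\EE^\red_V)}}[\fC_{\curP}]$. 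At no point does the paper need to know that $\phi^\red$ is an obstruction theory; the algebraic twistor family appears only afterwards, in the separate Proposition~\ref{prop:redSOT}, to supply a reduced obstruction theory for purposes beyond Theorem~\ref{Thm:RVFC} (torus localization, virtual pullback, etc.). So the ``threading together'' you anticipate as the crux is in fact unnecessary here --- the paper explicitly flags this in Remark~\ref{Rem2}: cone reduction alone gives the cycle, algebraic twistor alone gives the obstruction theory, and only the \emph{combination} of both statements (not Theorem~\ref{Thm:RVFC} alone) requires both.

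Two smaller points. First, your argument for independence of $V$ via automorphisms of $(H^1(X,T_X),\sfB_\gamma)$ is replaced in the paper by something simpler: a choice of maximal non-degenerate $V$ is the same as a section of $H^1(X,T_X) \twoheadrightarrow H^1(X,T_X)_\gamma$, any two such sections are joined by an $\bbA^1$-family, and a deformation argument over $\bbA^1$ shows the reduced virtual cycle is constant. Second, your rank bookkeeping is garbled: one has $\EE = \EE^\red_V \oplus V[1]$ with $\rank(V[1]) = -\rho_\gamma$, so $\rank(\EE^\red_V) = 2\vd + \rho_\gamma$ and $\rvd = \tfrac12\rank(\EE^\red_V) = \vd + \tfrac12\rho_\gamma$; there is no $V \oplus V\dual$ in degrees $\pm 1$ to remove.
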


The reduced virtual cycle $[\PTqvX]^\red$ 
has a relation to the notion of {\em semi-regularity} of perfect complexes $I\udot$ associated to pairs $[(F,s)] \in  \PTqvX$ (Definition \ref{Def:SR}(2)).

\begin{theorem}\label{Thm:SR=smoothofrvd}
Let $X$ be a Calabi-Yau 4-fold, $v = (0,0,\gamma,\beta,n-\gamma\cdot\td_2(X)) \in H^*(X,\Q)$, and $q \in \{-1,0,1\}$.
For any $\PT_q$ pair $[(F,s)] \in \PTqvX$, the associated perfect complex $I\udot:=[\O_X \xrightarrow{s} F]$ is semi-regular if and only if 
$\PTqvX$ is smooth of dimension $\rvd=n-\frac12\gamma^2+\frac12\rho_\gamma$ at  $[(F,s)]$.
\end{theorem}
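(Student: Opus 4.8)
The plan is to reduce everything to local infinitesimal statements at the point $[I\udot]$, using the reduced symmetric obstruction theory $\phi^\red : \EE^\red \to \LL_{\PTqvX}$ from Theorem \ref{thm:3} and the identification $\EE = \EE^\red \oplus (V\otimes\O)[1]$ coming from a maximal non-degenerate subspace $V\subseteq H^1(X,T_X)$. The key numerical point is that $\EE^\red$ has rank $2\,\rvd = 2n-\gamma^2+\rho_\gamma$: indeed $\rank(\EE)=2\vd=2n-\gamma^2$ and we have split off a trivial factor $(V\otimes\O)[1]$ of rank $-\rho_\gamma$. So $\rvd$ is literally half the rank of the virtual (co)tangent complex after reduction. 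Since $\EE^\red$ is an obstruction theory, $h^0(\phi^\red)$ is an isomorphism and $h^{-1}(\phi^\red)$ is surjective, so the Zariski tangent space at $[(F,s)]$ is $h^0((\EE^\red)\dual)|_{[(F,s)]}=H^0$ and there is a surjection from the obstruction space $\Ext^2$-type term onto $h^1$ of the cotangent complex.

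\textbf{Key steps.} First I would pin down, at the point $[I\udot]$, the fibre of $\EE^\red$ and the maps in \eqref{eq:t1}: by construction the cosection $\Sigma$ restricted to $[I\udot]$ is the composite $\Ext^2_X(I\udot,I\udot)_0 \xrightarrow{\sr} H^1(X,T_X)\dual \twoheadrightarrow V\dual$, and non-degeneracy of $\Sigma^2=\sfB_\gamma|_V$ means that $\Ext^2_X(I\udot,I\udot)_0$ splits as $(\text{something}) \oplus V\dual$ compatibly with the symmetric form, where the "something" is $h^1$ of $(\EE^\red)\dual[?]$ at the point. The upshot is a canonical short exact sequence expressing $\Ext^2_X(I\udot,I\udot)_0^\red := \Ext^2_X(I\udot,I\udot)_0/\,\im(\sfob)$ — or equivalently $\ker(\sr)$ after using the triangle \eqref{eq:t1}, since $\sfob=\sr\dual$ and $\sfB_\gamma=\sr\circ\sfob$ — as the genuine obstruction space for the reduced theory. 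Concretely: $\sr$ is injective (semi-regularity) $\iff$ $\ker(\sr)=0$ $\iff$ the reduced obstruction space $h^1((\EE^\red)\dual)|_{[I\udot]}$ vanishes.

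Second, given $h^{-1}(\phi^\red)$ surjective and the reduced obstruction space vanishing, I would conclude $h^{-1}(\LL_{\PTqvX})=0$ at the point, hence $\PTqvX$ is a local complete intersection — in fact smooth — at $[(F,s)]$, of dimension $h^0=\dim H^0((\EE^\red)\dual)|_{[(F,s)]}$. The remaining task is to check this dimension equals $\rvd$. Since $\EE^\red$ is a symmetric complex of rank $2\,\rvd$ and, at a smooth point, the virtual tangent complex $(\EE^\red)\dual$ is concentrated in degree $0$ (its $h^{-1}$ vanishes, and its $h^1$ — i.e. $h^{-1}$ of $\EE^\red$ dualized — vanishes by the same semi-regularity plus the symmetry $\theta^\red$), we get $\rank = \dim h^0 - \dim h^{-1} - \dim h^{1} = \dim h^0$, but the symmetry forces $h^{-1}$ and $h^1$ to be dual, and both vanish, so $\dim T_{[(F,s)]}\PTqvX = h^0 = 2\,\rvd$? — here one must be careful: the tangent complex of the \emph{derived} enhancement is $\Ext^1_X(I\udot,I\udot)_0$ in degree $0$ and $\Ext^2_X(I\udot,I\udot)_0$ in degree $1$, and the reduced version removes a copy of $V$ from $\Ext^1$ (dually from $\Ext^2$). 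So the smooth dimension is $\dim\Ext^1_X(I\udot,I\udot)_0 - \rho_\gamma$ when $\sr$ is injective, and a Riemann–Roch / Serre-duality computation (exactly as in the $\vd$-computation in the proof of Theorem \ref{Thm:NRVFC}, now using $\chi(I\udot,I\udot)_0 = -2n+\gamma^2$ and $\Ext^3\cong(\Ext^1)\dual$, $\Ext^2$ symmetric) gives $\dim\Ext^1 = \dim\Ext^2 = \rho_\gamma$ contribution cancels against $\dim\Ext^3=\dim\Ext^1$, leaving $\dim\Ext^1 - \rho_\gamma = (2n-\gamma^2+\rho_\gamma)/2 \cdot$(corrected) $= n-\tfrac12\gamma^2+\tfrac12\rho_\gamma=\rvd$. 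For the converse, if $\PTqvX$ is smooth of dimension $\rvd$ at $[(F,s)]$, then $\LL_{\PTqvX}$ is a vector bundle of rank $\rvd$ there, $h^{-1}(\phi^\red)=0$ forces the reduced obstruction space to vanish, hence $\ker(\sr)=0$, i.e. $I\udot$ is semi-regular.

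\textbf{Main obstacle.} The delicate point is not the smoothness direction — which is essentially Buchweitz–Flenner once $\phi^\red$ is known to be an obstruction theory — but establishing cleanly that the reduced obstruction space $h^1((\EE^\red)\dual)|_{[I\udot]}$ is \emph{canonically} $\ker(\sr)$ (equivalently $\coker(\sfob)\dual$), i.e. that the splitting $\Ext^2_X(I\udot,I\udot)_0 = \ker(\sr)\oplus V\dual$ induced by non-degeneracy of $\sfB_\gamma|_V$ is exactly the one induced by the direct-sum decomposition $\EE=\EE^\red\oplus(V\otimes\O)[1]$ at the point. This requires unwinding the construction of $\phi^\red$ in Section \ref{sec:VFC} — in particular that the projection $\EE \to (V\otimes\O)[1]$ is (the dual shift of) $\Sigma$, so that fibrewise $\Ext^2(I\udot,I\udot)_0 \to V\dual$ is $\mathrm{pr}_V\circ\sr$ — together with the compatibility of the semi-regularity triangle \eqref{eq:t1} with the symmetric form, so that $\ker(\mathrm{pr}_V\circ\sr)$ really is $\ker(\sr)$ (using maximality of $V$: $\sr(\Ext^2_0)$ lands in a subspace on which $\mathrm{pr}_V$ is injective modulo $\ker\sr$, since $\im\sfob=\im\sr\dual$ pairs non-degenerately via $\sfB_\gamma$ with $V$). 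Once this identification is in hand, the vanishing of one side is equivalent to the vanishing of the other, and the theorem follows. I would also need to double-check the reduced virtual dimension bookkeeping via Hirzebruch–Riemann–Roch exactly as in the proof of Theorem \ref{Thm:NRVFC}, adding the correction $+\tfrac12\rho_\gamma$ coming from the removed trivial factor.
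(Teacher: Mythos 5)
Your proposal for the direction ($\Rightarrow$) takes the infinitesimal (Bloch-style) route, which the paper explicitly acknowledges is valid but deliberately does not take: the paper instead invokes the derived Darboux theorem of Brav--Bussi--Joyce to produce a minimal local model $U(t)\subset U$ (zero locus of an isotropic section), shows the reduced cosection $\widetilde{\SR_V}$ is an isomorphism on that chart by semi-regularity, and then uses the cone reduction plus a Nakayama/integrality argument to force $t=0$. Your chain ``semi-regularity $\Rightarrow h^{-1}(\EE^\red|_{[I\udot]})=0 \Rightarrow h^{-1}(\tau^{\geq-1}\LL)|_{[I\udot]}=0 \Rightarrow$ smooth'' can be made to work, but the middle implication is \emph{not} automatic from ``$h^{-1}(\phi^\red)$ surjective as a sheaf map'': you need surjectivity after passing to \emph{derived} fibres, which requires observing that $\cone(\phi^\red)$ has cohomology concentrated in degrees $\leq -2$ and running a Tor spectral sequence. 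That step is missing.

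The direction ($\Leftarrow$) as you wrote it has a genuine gap. You write that ``$h^{-1}(\phi^\red)=0$ forces the reduced obstruction space to vanish,'' but the obstruction-theory condition is that $h^{-1}(\phi^\red)$ is \emph{surjective}, i.e.\ the reduced obstruction space surjects \emph{onto} $h^{-1}(\tau^{\geq-1}\LL)$. Smoothness kills the target, not the source; surjectivity tells you nothing in the direction you need. What is actually used (and what the paper does) is the dimension hypothesis: smoothness of dimension $\rvd$ gives $\mathrm{ext}^1:=\dim\Ext^1_X(I\udot,I\udot)_0=\rvd$, and then the rank identity $2\rvd=\rank\EE^\red_V=2\mathrm{ext}^1-\bigl(\mathrm{ext}^2-\rho_\gamma\bigr)$ forces $\mathrm{ext}^2=\rho_\gamma=\dim V$, so that $\sfob|_V$ is an injection (Corollary \ref{Cor1}) between spaces of equal dimension, hence bijective, hence $\sfob$ surjective and $\sr=\sfob\dual$ injective. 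Your sketch lands on the conclusion but omits exactly this rank computation; without it, the argument is incomplete. A secondary point: the identification of the reduced obstruction space with $\ker(\sr)$ that you flag as the ``main obstacle'' is in fact not available in general --- it equals $\ker(\mathrm{pr}_V\circ\sr)=(\sfob(V))^{\perp_\sfY}$, which contains $\ker(\sr)=(\im\sfob)^{\perp_\sfY}$ but can be strictly larger unless $\sfY$ is non-degenerate on $\im(\sfob)$. Both sides vanish at semi-regular points (so this does not harm your ($\Rightarrow$) argument), but it should not be taken as the cornerstone of the proof.
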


By Theorem \ref{Thm:SR=smoothofrvd}, if all point in $\PTqvX$ are semi-regular, then it is smooth and the reduced virtual cycle is equal to the fundamental cycle
\[\left[\PTqvX\right]^\red=\left[\PTqvX\right] \in A_\rvd\left(\PTqvX\right).\]	

\begin{remark}
The role of $\rho_\gamma$ in the  surface counting theory is an interesting new feature compared to known enumerative geometry. Indeed $\rho_\gamma$ is not a topological datum of $\gamma\in H^4(X)$ but rather it is a Hodge theoretic datum of $\gamma\in H^{2,2}(X)$.
In particular rank of $\sfB_\gamma$ is not additive in $\gamma$ in general, but it is only sub-additive, i.e., $\rho_{\gamma_1+\gamma_2}\leq \rho_{\gamma_1}+\rho_{
\gamma_2}$. For example $\rho_\gamma$ remains the same when taking multiples of $\gamma$. 
\end{remark}
%
We now explain the main ideas in the proof of Theorem \ref{Thm:RVFC}. We will consider three different, but interrelated, approaches to the reduced theory. 

\begin{enumerate}
\item {\bf Cosection localization.} In subsection \ref{ss:VFC.CL}, we will prove Theorem \ref{Thm:RVFC} using the {\em cone reduction} lemma of Kiem-Li \cite{KL13}. The {\em semi-regularity map} of Buchweitz-Flenner \cite{BF03} will provide the necessary cosections. This cosection localization method is the simplest way to construct the reduced virtual cycle. 
\item {\bf Algebraic twistor family.} In subsection \ref{ss:VFC.ATF}, we will construct a {\em reduced obstruction theory} using the algebraic twistor family introduced by the second-named author and Thomas \cite{KT1}. 
The main idea is to consider an infinitesimal deformation of $X$ such that the Hodge locus of $\gamma$ is just the origin. 
The algebraic twistor family method 
alone does not provide a reduced virtual cycle without the cone reduction lemma due to the isotropic condition. See Remark \ref{Rem2}.
\item {\bf Reduction by $(-1)$-shifted $1$-forms.} In Appendix \ref{Appendix:ReductionviaDAG}, we will construct a {\em reduced $(-2)$-shifted symplectic derived enhancement}. 
The crucial part is that the cosections arising from the semi-regularity map have natural extensions to $(-1)$-shifted {\em closed} $1$-forms.
\end{enumerate}

\subsection{Semi-regularity map}

In this subsection, we revisit the {\em semi-regularity map} of Buchweitz-Flenner \cite{BF03} and show that the key commutative triangle in \cite{BF03} is {\em symmetric} for Calabi-Yau 4-folds.

We first recall the definition of the semi-regularity map.\footnote{Here we consider the fixed-determinant version of the semi-regularity map. For strict Calabi-Yau 4-folds, this is identical to the 2-semi-regularity map in \cite{BF03}. Also we use a different sign convention compared to \cite{BF03}.}

\begin{definition}[cf.~{\cite{Blo,BF03}}]\label{Def:SR}
Let $I\udot$ be a perfect complex on $X$.
\begin{enumerate}
\item We define the {\em semi-regularity map} by
\begin{equation}\label{eq:sr}
\sr : \Ext^2_X(I\udot,I\udot)_0 \to H^3(X,\Omega_X^1), \quad e\mapsto \tr(\At(I\udot)\circ e),
\end{equation}
where $\At(I\udot) : I\udot \to I\udot \otimes \Omega^1_X[1]$ denotes the Atiyah class of $I\udot$.
\item We say that $I\udot$ is {\em semi-regular} if $\sr$ is injective. 
\end{enumerate}
\end{definition}

The semi-regularity map connects the obstruction to deforming a perfect complex $I\udot$ and its Chern character $\ch(I\udot)$ by the key commutative triangle in \cite[Cor.~4.3]{BF03}. For Calabi-Yau 4-folds, this commutative triangle has a {\em symmetric} description.

\begin{proposition}\label{prop:ob=srdual}
Let $I\udot$ be a perfect complex on $X$ such that $\ch_1(I\udot)=0 \in H^1(X,\Omega^1_X)$ and $\ch_2(I\udot)=\gamma \in H^2(X,\Omega_X^2)$. 
Then the obstruction map
\begin{equation}\label{eq:ob}
\sfob : H^1(X,T_X) \to \Ext^2_X(I\udot,I\udot)_0, \quad \xi \mapsto \iota_{\xi} \At(I\udot)	
\end{equation}
fits into the commutative diagram 
\begin{equation}\label{eq:triangle.obsrBgamma'} \xymatrix{ H^1(X,T_X) \ar[rr]^-{\sfob} \ar@/_0.6cm/[rdd]_-{B_\gamma} \ar[rd]^-{\iota_{-}(\gamma)} && \Ext^2_{X}(I\udot,I\udot)_0 \ar@/^0.6cm/[ldd]^-{\sfob\dual} \ar[ld]_-{\sr} \\ & H^3(X,\Omega_{X}^1) \ar[d]^{\mathsf{SD}}_{\cong} &\\ & H^1(X,T_{X})\dual & }\end{equation}
where the vertical isomorphism $\SD$
is induced by the Serre duality pairing
\[H^3(X,\Omega_X^1) \otimes H^1(X,T_X) \to \C, \quad \alpha \otimes \xi \mapsto \int_X \iota_{\xi}(\alpha) \cup \omega\]
and we identified $(\Ext^2_X(I\udot,I\udot)_0)\dual = \Ext^2_X(I\udot,I\udot)_0$ via the Yoneda pairing
\[\sfY: \Ext^2_X(I\udot,I\udot)_0 \otimes \Ext^2_X(I\udot,I\udot)_0 \to \C, \quad  u \otimes v \mapsto \int_X \tr(u \circ v) \cup \omega.\]
\end{proposition}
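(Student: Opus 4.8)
The plan is to establish the commutativity of the diagram \eqref{eq:triangle.obsrBgamma'} by unwinding the definitions of all three maps in terms of the Atiyah class and then reducing everything to properties of the trace pairing on Hochschild-type cohomology. First I would note that the lower triangle — the one involving $\sr$, $\sfob\dual$, and $\SD$ — is actually dual to the upper triangle involving $\sfob$, $\iota_{-}(\gamma)$, and $\sr$. Indeed, once we know $\sr = \SD \circ \iota_{-}(\gamma)^\vee \circ (\text{something})$, taking duals and using that $\SD$ is self-dual (up to the symmetry of Serre duality) gives the lower triangle for free. So the real content is the commutativity of the \emph{upper} triangle: for all $\xi \in H^1(X,T_X)$,
\[
\sr(\sfob(\xi)) = \SD^{-1}\bigl(\mathsf{B}_\gamma(\xi,-)\bigr) = \iota_\xi(\gamma) \in H^3(X,\Omega^1_X),
\]
where the middle equality is just the definition of $\SD$ and $\mathsf{B}_\gamma$, so what must be proven is $\sr(\iota_\xi \At(I\udot)) = \iota_\xi(\gamma)$ in $H^3(X,\Omega^1_X)$.

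The key step is the identity $\sr(\iota_\xi \At(I\udot)) = \iota_\xi(\ch_2(I\udot))$, which I would deduce from the multiplicativity of the Atiyah class and the definition of Chern character as $\ch_2(I\udot) = \tfrac12 \tr(\At(I\udot)^2)$. Concretely, $\sr(\iota_\xi \At(I\udot)) = \tr(\At(I\udot) \circ \iota_\xi \At(I\udot))$ by definition of $\sr$, and the contraction $\iota_\xi : \Omega^2_X \to \Omega^1_X$ commutes past the trace, so $\tr(\At(I\udot) \circ \iota_\xi\At(I\udot)) = \iota_\xi\bigl(\tfrac12\tr(\At(I\udot)\wedge\At(I\udot))\bigr) \cdot (\text{sign})$, using graded-commutativity of $\wedge$ and the cyclic invariance of the trace — the factor of $\tfrac12$ appears precisely because $\At(I\udot)\wedge\iota_\xi\At(I\udot) + \iota_\xi(\At(I\udot)\wedge\At(I\udot))$ type manipulations collapse two equal terms. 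This is the heart of Buchweitz-Flenner \cite[Cor.~4.3]{BF03}, so I would cite their computation and only indicate the Calabi-Yau 4-fold specialization. The hypothesis $\ch_1(I\udot) = 0$ is used to ensure that $\sfob(\xi) = \iota_\xi\At(I\udot)$ lands in the \emph{traceless} part $\Ext^2_X(I\udot,I\udot)_0$ and that no $\ch_1$-correction term enters the identification of $\sr\circ\sfob$ with $\iota_{-}(\gamma)$.

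The remaining task is the compatibility of the two dualities used to identify $(\Ext^2_X(I\udot,I\udot)_0)^\vee$ with itself (Yoneda pairing $\sfY$) and $H^3(X,\Omega^1_X)$ with $H^1(X,T_X)^\vee$ (the pairing in the statement), namely that $\sr$ and $\sfob$ are \emph{mutually adjoint}: $\sfY(\sfob(\xi), e) = \langle \sr(e), \xi\rangle$ for all $\xi \in H^1(X,T_X)$, $e \in \Ext^2_X(I\udot,I\udot)_0$. I would prove this by writing both sides as $\int_X \tr\bigl(\iota_\xi\At(I\udot) \circ e\bigr)\cup\omega$ — the left side directly, the right side by unwinding $\langle \sr(e),\xi\rangle = \int_X \iota_\xi(\tr(\At(I\udot)\circ e))\cup\omega = \int_X \tr(\iota_\xi\At(I\udot)\circ e)\cup\omega$, again moving the contraction past the trace. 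This adjointness is exactly the statement $\sfob = \sr^\vee$ that the paper emphasizes as ``a special feature of Calabi-Yau 4-folds,'' and it is what makes the two curved arrows in \eqref{eq:triangle.obsrBgamma'} into genuine transposes of each other; it also immediately yields $\mathsf{B}_\gamma = \mathsf{B}_\gamma^\vee$, i.e.\ symmetry of $\mathsf{B}_\gamma$.

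The main obstacle I anticipate is \textbf{bookkeeping of signs}: the contraction operators $\iota_\xi$ for $\xi$ of odd degree anticommute (as recorded in the footnote to the definition of $\mathsf{B}_\gamma$), the trace is only cyclically invariant up to Koszul signs on a complex, and the Atiyah class lives in degree $1$, so each rearrangement in ``$\tr(\At \circ \iota_\xi \At) = \iota_\xi \tfrac12 \tr(\At\wedge\At)$'' carries a sign that must be tracked to land on $+\iota_\xi(\gamma)$ rather than its negative — this is presumably why the paper notes it uses ``a different sign convention compared to \cite{BF03}.'' A clean way to organize this is to pass through the Hochschild cohomology $\bigoplus_p \Ext^p_X(I\udot, I\udot\otimes\Omega^p_X[p])$, where the full Atiyah--Chern character $\exp(\At(I\udot))$ is a canonical class, the contraction $\iota_\xi$ acts by an explicit derivation, and the trace-to-$H^*(X,\Omega^*_X)$ map is a ring homomorphism; then the desired identities become formal consequences of $\ch_1 = 0$, $\ch_2 = \gamma$, and the Serre-duality compatibility of the trace, with all signs fixed once and for all by the chosen orientation conventions. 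Everything else — openness of semi-regularity, the extension of $\sr$ to a cosection $\SR$ on the moduli space, and the relation $\SR^2 = \mathsf{B}_\gamma\otimes 1$ — is then a routine relativization handled in the subsequent subsections.
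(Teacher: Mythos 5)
Your proposal is correct and follows essentially the same route as the paper: the top triangle ($\sr\circ\sfob = \iota_{-}(\gamma)$) is outsourced to \cite[Cor.~4.3]{BF03}, the right triangle ($\sfob^\dual = \SD\circ\sr$) is verified by the same contraction-past-trace computation $\int_X\tr(\iota_\xi\At\circ e)\cup\omega = \int_X\iota_\xi\tr(\At\circ e)\cup\omega$, and the remaining triangle is a definition unwinding plus the sign identity $\iota_{\xi_1}\iota_{\xi_2}\gamma = \iota_{\xi_2}\iota_{\xi_1}\gamma$ for $\xi_i$ in degree one. The only minor quibble is that you call the $\mathsf{B}_\gamma$-triangle ``just the definition'' when it does require that sign commutation (recorded in the paper's footnote on contractions), but this is exactly the paper's check of its ``left triangle,'' so there is no substantive gap.
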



\begin{proof}
Note that the obstruction map $\sfob$ in \eqref{eq:ob} is well-defined since 
\[\tr \circ \sfob (\xi) = \iota_\xi \ch_1(I\udot)=0.\]
The commutativity of the three triangles in \eqref{eq:triangle.obsrBgamma'} can be shown as follows:
\begin{enumerate}
\item (top triangle) We have
$\sr \circ \sfob = \iota_{-}(\gamma)$
by \cite[Cor.~4.3]{BF03}. 
\item (left triangle) For any $\xi_1,\xi_2 \in H^1(X,T_X)$, we have
\[\SD(\iota_{\xi_1}(\gamma))(\xi_2) = \int_X \iota_{\xi_2}\iota_{\xi_1}\gamma \cup \omega = \int_X \iota_{\xi_1}\iota_{\xi_2}\gamma \cup \omega = \sfB_\gamma(\xi_1,\xi_2).\]
\item (right triangle) For any $\xi \in H^1(X,T_X)$ and $e \in \Ext^2_X(I\udot,I\udot)$, we have
\[\sfob\dual(e)(\xi) = \int_X \tr(\iota_\xi \At(I\udot) \circ e) \cup \omega = \int_X \iota_\xi \tr(\At(I\udot)\circ e) \cup \omega = \SD(\sr(e))(\xi).\]
\end{enumerate}
This completes the proof.
\end{proof}

The commutative triangle \eqref{eq:triangle.obsrBgamma'} says that the map of vector spaces
\[\sfob=\sr\dual : \left(H^1(X,T_X),\sfB_\gamma\right) \longrightarrow \left(\Ext^2_X(I\udot,I\udot)_0, \sfY\right)\]
is compatible with the corresponding symmetric bilinear forms. Equivalently, we may write $\sfB_\gamma = \sfob^*(\sfY)$. 

In the following subsections, we will develop the reduced theory with respect to a non-degenerate subspace $V\subseteq H^1(X,T_X)$. We provide a simple observation on such non-degenerate subspaces.

\begin{corollary}\label{Cor1}
Let $I\udot$ be a perfect complex on $X$ with $\ch_1(I\udot)=0$ and $\ch_2(I\udot)=\gamma$.
Choose a non-degenerate subspace
\[V \subseteq H^1(X,T_X)\]
with respect to the bilinear form $\sfB_\gamma$.
Then the restriction
\[ (\sfob=\sr\dual)|_V : \left(V,\, \sfB_\gamma|_V\right) \hookrightarrow \left(\Ext^2_X(I\udot,I\udot)_0,\, \sfY \right)\]
is an injective map of non-degenerate quadratic spaces.
\end{corollary}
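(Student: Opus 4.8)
The claim is a short corollary of Proposition \ref{prop:ob=srdual}, so the plan is to package the contents of that proposition in the form stated and check the two assertions: that $\sfob|_V$ is injective, and that it is a morphism of non-degenerate quadratic spaces. First I would recall the commutative triangle \eqref{eq:triangle.obsrBgamma'}, which gives the identity of bilinear forms $\sfB_\gamma = \sfob^*(\sfY)$ on all of $H^1(X,T_X)$; restricting to $V$ yields $\sfB_\gamma|_V = (\sfob|_V)^*(\sfY)$. Since $(V, \sfB_\gamma|_V)$ is non-degenerate by hypothesis and $(\Ext^2_X(I\udot,I\udot)_0, \sfY)$ is non-degenerate because the Yoneda pairing is the Serre duality pairing (here $\dim X = 4$ and $K_X \cong \O_X$, so $\Ext^2_X(I\udot,I\udot)_0$ is self-dual of middle degree), both sides are genuine non-degenerate quadratic spaces and the only remaining point is injectivity of the map.

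For injectivity: suppose $\xi \in V$ with $\sfob(\xi) = 0$. Then for every $\eta \in V$ we have $\sfB_\gamma|_V(\xi,\eta) = \sfY(\sfob(\xi), \sfob(\eta)) = 0$, so $\xi$ lies in the radical of $\sfB_\gamma|_V$, which is trivial by non-degeneracy; hence $\xi = 0$. This is the one step with any content, and it is completely formal once the form identity is in hand — so in fact there is no real obstacle. I would phrase the proof in two or three sentences: invoke Proposition \ref{prop:ob=srdual}, note $\sfob|_V^*(\sfY) = \sfB_\gamma|_V$, observe that a morphism of quadratic spaces whose source is non-degenerate is automatically injective, and conclude by recording that the target $(\Ext^2_X(I\udot,I\udot)_0, \sfY)$ is non-degenerate by Serre duality.

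The only mild care needed is bookkeeping with the identification $(\Ext^2_X(I\udot,I\udot)_0)\dual \cong \Ext^2_X(I\udot,I\udot)_0$ used in Proposition \ref{prop:ob=srdual}: one should make sure that when we write $\sfob\dual = \SD \circ \sr$ the dual is taken with respect to $\sfY$, exactly as set up in that proposition, so that $\sfob^*(\sfY)(\xi_1,\xi_2) = \sfY(\sfob(\xi_1),\sfob(\xi_2)) = \SD(\sr(\sfob(\xi_1)))(\xi_2) = \SD(\iota_{\xi_1}\gamma)(\xi_2) = \sfB_\gamma(\xi_1,\xi_2)$, chaining the top and left triangles of \eqref{eq:triangle.obsrBgamma'}. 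With that chain written out, the corollary is immediate and no estimates or further geometry are required; I would expect the proof to occupy at most a few lines.

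\begin{proof}
By Proposition \ref{prop:ob=srdual}, under the identification $(\Ext^2_X(I\udot,I\udot)_0)\dual \cong \Ext^2_X(I\udot,I\udot)_0$ via the Yoneda pairing $\sfY$, we have $\sfob\dual = \SD \circ \sr$, and chaining the top and left triangles of \eqref{eq:triangle.obsrBgamma'} gives, for all $\xi_1,\xi_2 \in H^1(X,T_X)$,
\[
\sfY(\sfob(\xi_1),\sfob(\xi_2)) = \SD\big(\sr(\sfob(\xi_1))\big)(\xi_2) = \SD\big(\iota_{\xi_1}(\gamma)\big)(\xi_2) = \sfB_\gamma(\xi_1,\xi_2).
\]
Restricting to $V$ yields $(\sfob|_V)^*(\sfY) = \sfB_\gamma|_V$. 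Since $\sfY$ is a perfect pairing by Serre duality on the Calabi-Yau $4$-fold $X$, the space $(\Ext^2_X(I\udot,I\udot)_0, \sfY)$ is non-degenerate. If $\xi \in V$ satisfies $\sfob(\xi)=0$, then $\sfB_\gamma|_V(\xi,\eta) = \sfY(\sfob(\xi),\sfob(\eta)) = 0$ for every $\eta \in V$, so $\xi$ lies in the radical of $\sfB_\gamma|_V$, which is trivial by assumption; hence $\xi=0$ and $\sfob|_V$ is injective. Therefore $(\sfob=\sr\dual)|_V : (V, \sfB_\gamma|_V) \hookrightarrow (\Ext^2_X(I\udot,I\udot)_0, \sfY)$ is an injective morphism of non-degenerate quadratic spaces.
\end{proof}
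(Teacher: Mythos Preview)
Your proof is correct and follows essentially the same approach as the paper's: both deduce the form identity $(\sfob|_V)^*(\sfY)=\sfB_\gamma|_V$ from Proposition~\ref{prop:ob=srdual} and then observe that an element of $V$ killed by $\sfob$ lies in the radical of $\sfB_\gamma|_V$, hence vanishes. Your write-up is slightly more explicit (spelling out the chain through the triangles of \eqref{eq:triangle.obsrBgamma'} and recording non-degeneracy of the target via Serre duality), but the argument is the same.
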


\begin{proof}
If $\sfob(v)=0$ for some $v \in V$, then we have
\[\sfB_\gamma(v,-) = \int_X \tr\big(\sfob(v) \circ \sfob(-)\big) = 0 \in H^1(X,T_X)\dual.
\]
Thus the non-degeneracy of $\sfB_\gamma|_V$ implies that $v=0$.
\end{proof}

For reader's convenience, we review how the maps $\ob$ and $\sfB_\gamma$ in the commutative triangle \eqref{eq:triangle.obsrBgamma'} determine the obstructions of deforming the perfect complex $I\udot$ and the Hodge class $\gamma \in H^2(X,\Omega^2_X)$, respectively.
For simplicity, we only consider the first-order deformations here.
We refer to \cite[Prop.~4.2]{Blo} and \cite[Prop.~IV.3.1.5]{Ill} (cf. \cite[Cor.~3.4]{HT}) for the general case.

\begin{lemma}\label{Lem:Obstructions}
Consider a first-order deformation of $X$, i.e., a fibre diagram
\[\xymatrix{
X \ar@{^{(}->}[r] \ar[d] & \cX \ar[d]^f\\
\{o\} \ar@{^{(}->}[r] & \cA
}\]
where $f:\cX\to\cA$ is a smooth projective morphism and
$\cA$ is a square zero extension of $\Spec(\C)$.
Let $o \in \cA$ be the unique point.
\begin{enumerate}
\item[$(1)$] Given a $(2,2)$-class $\gamma \in H^2(X,\Omega^2_X) \subseteq H^4(X,\C)$,
there exists a horizontal section $\tgamma \in F^2H^4_{DR}(\cX/\cA)$ such that $\tgamma_o=\gamma$ if and only if the composition
\begin{equation*}\label{Eq:64}
T_{o,\cA} \xrightarrow{KS_{\cX/\cA}} H^1(X,T_X) \xrightarrow{\sfB_{\gamma}} H^1(X,T_X)\dual	
\end{equation*}
vanishes.
\item[$(2)$] Given a perfect complex $I\udot$ on $\cX$ with $\ch_1(I\udot)=0$,
there exists a perfect complex $\widetilde{I}\udot$ on $\cX$ with $\det(\widetilde{I}\udot)\cong \O_{\cX}$ such that $\widetilde{I}\udot_o\cong I\udot$ if and only if the composition
\begin{equation}\label{Eq:65}
T_{o,\cA} \xrightarrow{KS_{\cX/\cA}} H^1(X,T_X) \xrightarrow{\sfob} \Ext^2_X(I\udot,I\udot)_0 	
\end{equation}
vanishes.
\end{enumerate}
\end{lemma}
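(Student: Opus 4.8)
\textbf{Proof strategy for Lemma \ref{Lem:Obstructions}.}

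The plan is to treat both statements as special cases of the standard obstruction theory for the relevant deformation problems, combined with the description of the obstruction classes via Atiyah classes and the Kodaira--Spencer map. First I would set up the situation: write $\cA = \Spec(\C[\epsilon])$ (or more generally a square-zero extension by a line), let $\xi := \KS_{\cX/\cA}(\partial_\epsilon) \in H^1(X,T_X)$ be the Kodaira--Spencer class of the first-order deformation, and recall that the deformation $\cX \to \cA$ is classified by $\xi$ (via, e.g., \cite[Prop.~IV.3.1.5]{Ill}).

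For part (2), the obstruction to extending the perfect complex $I\udot$ (with fixed trivial determinant) across the square-zero extension $\cA$ lies in $\Ext^2_X(I\udot, I\udot)_0$, and by the theory of the Atiyah class (see \cite[Cor.~3.4]{HT}, \cite[Prop.~IV.3.1.5]{Ill}) this obstruction class is precisely the image of $\xi$ under contraction with the Atiyah class $\At(I\udot) : I\udot \to I\udot \otimes \Omega^1_X[1]$, i.e.\ $\iota_\xi \At(I\udot) = \sfob(\xi)$ — this is exactly the composition \eqref{Eq:65}. The hypothesis $\ch_1(I\udot) = 0$ guarantees that this obstruction lands in the trace-free part, so that $\sfob$ is well-defined, as already noted in the proof of Proposition \ref{prop:ob=srdual}. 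Hence $\widetilde I\udot$ exists if and only if $\sfob(\xi) = 0$, which is the claim; the fixed-determinant condition $\det(\widetilde I\udot) \cong \O_{\cX}$ is ensured because we work with the trace-free obstruction and the determinant obstruction vanishes separately.

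For part (1), the obstruction to extending the Hodge class: since the de Rham cohomology $\cH^4_{DR}(\cX/\cA)$ carries the Gauss--Manin connection $\nabla$, a horizontal section $\tgamma$ with $\tgamma_o = \gamma$ exists if and only if $\gamma$, viewed in the Hodge-filtered piece, can be lifted so that $\nabla(\tgamma) = 0$ while staying in $F^2$. The obstruction to this is governed, by Griffiths transversality and the standard infinitesimal computation (this is exactly Bloch's analysis \cite[Prop.~4.2]{Blo}), by the cup product / contraction of $\xi$ against $\gamma$: the class $\nabla_\xi(\gamma)$ modulo $F^2$ is $\iota_\xi(\gamma) \in H^3(X,\Omega^1_X)$, and the further obstruction to correcting within $F^2$ is captured by pairing once more, giving the composition $T_{o,\cA} \to H^1(X,T_X) \xrightarrow{\sfB_\gamma} H^1(X,T_X)\dual$ via the left triangle of \eqref{eq:triangle.obsrBgamma'} (recall $\sfB_\gamma(\xi_1,\xi_2) = \SD(\iota_{\xi_1}\gamma)(\xi_2)$). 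So I would invoke Proposition \ref{prop:ob=srdual} to identify the obstruction map with $\sfB_\gamma \circ \KS$ and conclude. The main obstacle I anticipate is keeping the bookkeeping of the Hodge filtration precise in part (1): one must argue that the full obstruction (not just the leading $\iota_\xi\gamma$ term) to a \emph{horizontal} lift in $F^2$ is exactly $\sfB_\gamma(\xi)$ and not merely that $\sfB_\gamma(\xi) = 0$ is necessary — this is where I would lean most heavily on \cite{Blo} and the degeneration of the Hodge-to-de Rham spectral sequence. Part (2) is comparatively routine once the Atiyah-class description of the obstruction is in hand.
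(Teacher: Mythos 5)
Your proposal is correct and follows essentially the same route as the paper: for part (1) both invoke Bloch \cite[Prop.~4.2]{Blo} directly, and for part (2) both recognize the obstruction as $\iota_\xi\At(I\udot)$ via Illusie \cite[Prop.~IV.3.1.5]{Ill} and Huybrechts--Thomas \cite[Cor.~3.4]{HT}. The one step the paper carries out that you assert as standard is a cotangent-complex diagram chase reconciling the ``absolute'' Atiyah class $\At_X(I\udot) : I\udot \to I\udot\otimes\Omega^1_X[1]$, which defines $\sfob$, with the obstruction class that Illusie's theory actually produces, namely $\At_{X/\cX}(I\udot)$ paired with $\KS_{o/\cA}$; the paper's diagram uses the smoothness of $\cX\to\cA$ (so $\LL_X\cong\LL_{\cX/\cA}|_X$) and the factorization of $\At_{X/\cA}(I\udot)$ through $\At_X(I\udot)$ to show the two agree up to sign.
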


\begin{proof}
(1) follows directly from \cite[Prop.~4.2]{Blo}.  
We would like to deduce (2) from \cite[Prop.~IV.3.1.5]{Ill} (or \cite[Cor.~3.4]{HT}). Consider a diagram
\[\xymatrix@C+2pc{
& \LL_X \cong \LL_{\cX/\cA}|_{X} \ar@<.8ex>[d] \ar[rd]^{\KS_{\cX/\cA}|_X} & \\
R\hom_{X}(I\udot,I\udot)[-1] \ar@{.>}[r]|-{\At_{X/\cA}(I\udot)} \ar@{.>}[ru]^-{\At_{X}(I\udot)} \ar@{.>}[rd]_-{\At_{X/\cX}(I\udot)} & \LL_{X/\cA} \ar[r]|-{\KS_{X/\cA}} \ar@{.>}@<.8ex>[u]^-{\mathrm{pr}_2}\ar@{.>}@<-.8ex>[d]_-{\mathrm{pr}_1}
 & \LL_{\cA}|_{X}[1]\\
& \LL_{X/\cX} \cong \LL_{o/\cA}|_{X} \ar@<-.8ex>[u] \ar[ru]_{\KS_{o/\cA}|_X}
}\]
where the left two triangles commute with the dotted arrows and the right two triangles commute with the solid arrows. Moreover, the composition of the middle arrows is zero because $\At_{X/\cA}(I^\mdot)$ factors through $\At_{X}(I^\mdot)$. 
Hence the total big square commutes (up to sign). Therefore, the map \eqref{Eq:65} corresponds to the composition
\[I\udot \xrightarrow{\At_{X/\cX}(I\udot)} I\udot \otimes \LL_{X/\cX}[1] \to I\udot \otimes \I_{X/\cX}[2] = I\udot \otimes \I_{o/\cA}[2]\]
via adjunction. 
\end{proof}


\subsection{Cosection localization}\label{ss:VFC.CL}

In this subsection, we construct the {\em reduced virtual cycle} and prove Theorem \ref{Thm:RVFC} via the {\em cosection localization} technique of Kiem-Li \cite{KL13}, as in \cite{KP20}.
We also prove Theorem \ref{Thm:SR=smoothofrvd} as a corollary.

Let $\curP:=\PTqvX$ be the moduli space of $\PT_q$ pairs on $X$ with fixed Chern character $v = (0,0,\gamma,\beta,n-\gamma\cdot\td_2(X)) \in H^*(X,\Q)$. Let $\phi:\EE\to\trunc\LL_{\curP}$ be the standard symmetric obstruction theory of $\curP$ in \eqref{Eq:SOT}.


We first observe that the semi-regularity maps provide {\em cosections}. 

\begin{proposition}\label{prop:cosection}
There exists a natural map
\[\SR : \EE\dual[1] \to H^1(X,T_X)\dual\otimes \O_{\curP}\]
satisfying the following properties:
\begin{enumerate}
\item[$(1)$] The fibre of the map $\SR$ over  $[(F,s)] \in \curP$ is the composition\footnote{\label{footnote:decomposition}The first map is given by the canonical quasi-isomorphism $C^\mdot \cong \bigoplus_i h^i(C^\mdot)[-i]$ for any complex of {\em vector spaces} (\cite[Prop.~III.2.4]{GM}).}
\[\SR|_{(F,s)} : R\Hom_X(I\udot,I\udot)_0[2] \to \Ext^2_X(I\udot,I\udot)_0 \xrightarrow{\sr} H^1(X,T_X)\dual,\]
where $\sr$ is the semi-regularity map (Definition \ref{Def:SR}(1)) for the associated complex $I\udot := [\O_X \xrightarrow{s}F]$.
\item[$(2)$] The square 
of the map $\SR$ is
\[\SR^2 = \sfB_\gamma \otimes 1 : H^1(X,T_X)\otimes \O_{\curP} \to H^1(X,T_X)\dual \otimes \O_{\curP}.\]
\end{enumerate}
Here, we identified $H^3(X,\Omega^1_X) = H^1(X,T_X)\dual$ via Serre duality and the Calabi-Yau 4-form, as in Proposition \ref{prop:ob=srdual}.
\end{proposition}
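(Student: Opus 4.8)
The plan is to construct $\SR$ by globalizing the semi-regularity map over the moduli space, using the universal complex $\II\udot$ on $\curP\times X$ and its relative Atiyah class, and then to identify its square with $\sfB_\gamma$ using the symmetric triangle \eqref{eq:triangle.obsrBgamma'} of Proposition \ref{prop:ob=srdual}. The construction of the cosection itself is the easy conceptual part; the real work is checking that the two required properties hold, and in particular that the fibrewise description in (1) is literally the pointwise semi-regularity map (so that property (2) can then be deduced by a fibrewise argument combined with a semicontinuity/density argument, or — better — by a direct global computation).

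\textbf{Construction of $\SR$.} Write $\pi:\curP\times X\to\curP$ for the projection and $p:\curP\times X\to X$ for the other projection. The relative Atiyah class of the universal complex gives a morphism $\At_\pi(\II\udot):\II\udot\to\II\udot\otimes p^*\Omega^1_X[1]$ in $D^b_{\mathrm{coh}}(\curP\times X)$. Composing a class in $R\hom_\pi(\II\udot,\II\udot)_0$ with $\At_\pi(\II\udot)$, taking the trace, and pushing forward along $\pi$ (using that $p^*\Omega^1_X$ is pulled back from $X$ so $R\pi_*$ of $\II\udot\otimes p^*\Omega^1_X[1]$ involves $H^*(X,\Omega^1_X)\otimes\O_\curP$), one obtains a morphism
\[
R\hom_\pi(\II\udot,\II\udot)_0[2]\To H^3(X,\Omega^1_X)\otimes\O_\curP\cong H^1(X,T_X)\dual\otimes\O_\curP,
\]
where the last isomorphism is Serre duality against the Calabi-Yau $4$-form $\omega$, exactly as in Proposition \ref{prop:ob=srdual}. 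Since $\EE\dual[1]=R\hom_\pi(\II\udot,\II\udot)_0[2]$ by \eqref{Eq:SOT}, this is the desired $\SR$. Property (1) then follows by base change to a point $[(F,s)]\in\curP$: the relative Atiyah class restricts to $\At(I\udot)$, the trace and the Serre duality identification are compatible with restriction, and the factorization through $\Ext^2_X(I\udot,I\udot)_0$ comes from the (noncanonical but functorial on the nose for complexes of vector spaces) splitting of $R\Hom_X(I\udot,I\udot)_0[2]$ into its cohomology in the derived category of $\C$-vector spaces (footnote \ref{footnote:decomposition}). One should be careful that $\sr$ as recalled in Definition \ref{Def:SR}(1) is exactly $e\mapsto\tr(\At(I\udot)\circ e)$, so this matches on the nose.

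\textbf{Computing the square.} For property (2) I would argue as follows. The dual $\SR\dual:H^1(X,T_X)\otimes\O_\curP\to\EE[1]$ is, fibrewise, the composition of the obstruction map $\sfob=\sr\dual$ of \eqref{eq:ob} with the inclusion of $\Ext^2_X(I\udot,I\udot)_0$ into $R\Hom_X(I\udot,I\udot)_0[2]$; indeed, globally $\SR\dual$ is induced by the same Atiyah-class construction read in the other direction (contracting with $\At_\pi(\II\udot)$), and one checks $\SR^\vee$ is the relative version of $\xi\mapsto\iota_\xi\At(\II\udot)$ composed with the Serre-duality identification. Then $\SR^2=\SR\circ\SR\dual$ is, on each fibre, $\sr\circ\sfob:H^1(X,T_X)\to H^1(X,T_X)\dual$, which by the top and left triangles of \eqref{eq:triangle.obsrBgamma'} equals $\SD\circ\iota_{-}(\gamma)=\sfB_\gamma$. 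Since $\sfB_\gamma$ is a fixed constant bilinear form (it depends only on $\gamma$ and $X$, not on the point of $\curP$) and two maps of trivial bundles $H^1(X,T_X)\otimes\O_\curP\to H^1(X,T_X)\dual\otimes\O_\curP$ agreeing on every closed point of the (finite type, reduced — or at least with enough closed points) moduli space agree, we conclude $\SR^2=\sfB_\gamma\otimes 1_\curP$.

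\textbf{Main obstacle.} The delicate point is making the identification in property (1) precise as stated with the specific decomposition of the fibre into cohomology groups, and making sure the global $\SR$ is canonical (well-defined independent of auxiliary choices such as injective resolutions), so that the fibrewise square computation can legitimately be promoted to the global identity $\SR^2=\sfB_\gamma\otimes 1$. This requires a careful bookkeeping of the relative Atiyah class, the trace map, and Serre duality, and an argument (via the valuative/closed-point comparison, or via the moduli-theoretic universal property) that a morphism of vector bundles on $\curP$ determined fibrewise is globally determined. Everything else — the construction, the compatibility with Definition \ref{Def:SR}, and invoking Proposition \ref{prop:ob=srdual} — is formal.
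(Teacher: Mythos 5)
Your construction of $\SR$ and the verification of property (1) match the paper. The gap is in your argument for property (2).

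Your strategy for $\SR^2 = \sfB_\gamma\otimes 1$ is to check the equality fibrewise at $\C$-points (via Proposition \ref{prop:ob=srdual}) and then conclude because ``two maps of trivial bundles on $\curP$ agreeing at every closed point agree.'' You hedge this with ``reduced --- or at least with enough closed points,'' but that is exactly where the argument breaks: the moduli space $\PTqvX$ is \emph{not} assumed reduced, and in the situations of interest (nontrivial virtual cycles) it typically is not. Both $\SR^2$ and $\sfB_\gamma\otimes 1$ are elements of $\Hom_{\C}(H^1(X,T_X),H^1(X,T_X)\dual)\otimes\Gamma(\curP,\O_\curP)$, and two such elements agreeing modulo the nilradical at every closed point can still differ by a nilpotent in $\Gamma(\curP,\O_\curP)$. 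Concretely, a priori $\SR^2$ is contraction with the relative Chern character $\ch_2(\II\udot)\in H^2(X,\Omega^2_X)\otimes\Gamma(\curP,\O_\curP)$, and without further input one only knows $\ch_2(\II\udot)\equiv\gamma\otimes 1$ modulo nilpotents; one must rule out nilpotent corrections.

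The paper closes this gap by a two-step argument that does not rely on closed-point density. First, applying Buchweitz--Flenner's commutative triangle \cite[Prop.~4.2]{BF03} directly to the universal complex $\II\udot$ on $\curP\times X$ (not just fibrewise) and taking the appropriate K\"unneth component via $\HH^0(\curP,-)$, they identify $\SR^2$ with $\iota_{\ch_2(\II\udot)}$, the contraction against the relative $\ch_2$ of the universal complex. Second, and crucially, they invoke Proposition \ref{lem:chishorizontal} --- horizontality of the relative Chern character, proved with Hochschild/negative cyclic homology and Goodwillie's lift of the Dennis trace --- to conclude $\ch_2(\II\udot)=\gamma\otimes 1$ exactly, with no nilpotent contribution. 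This horizontality is the genuine content missing from your sketch; without it the formula for $\SR^2$ is only established up to nilpotents, which is not sufficient for the cosection localization in Proposition \ref{prop:conereduction}.
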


\begin{proof}
Let $\II\udot$ denote the universal complex.
Consider the map
\[R\hom_\pi(\II\udot,\II\udot)_0[2] \xrightarrow{\At_{\curP\times X/\curP}(\II\udot)} R\hom_\pi(\II\udot,\II\udot\otimes \Omega^1_X)[3]\xrightarrow{\tr^3}  H^3(X,\Omega^1_X)\otimes \O_{\curP} ,\]
where the third trace map is defined as (see footnote \ref{footnote:decomposition})
$$
\tr^3 : R\hom_\pi(\II\udot,\II\udot\otimes \Omega^1_X)[3] \stackrel{\tr}{\to} R\Gamma(X,\Omega_X^1) \otimes \O_{\curP}[3] \to H^3(X,\Omega^1_X) \otimes \O_{\curP}.
$$
The map $\SR$ is then defined by composing the above with the isomorphism $H^3(X,\Omega^1_X) \cong H^1(X,T_X)^\vee$. The functoriality of the Atiyah classes ensures that the fibres of $\SR$ are the semi-regularity maps. This proves (1). 

Next, we claim that  
the square 
\[\SR^2 : H^1(X,T_X) \otimes \O_{\curP} \xrightarrow{\SR\dual} \EE[-1] \cong \EE\dual[1] \xrightarrow{\SR} H^1(X,T_X)\dual \otimes \O_{\curP}\]
is given by contraction with the Chern character
\[\ch_2(\II\udot) := \frac{1}{2} \tr (\At^2(\II\udot)) \in   H^2(X,\Omega_X^2) \otimes \Gamma(\curP, \O_{\curP}).\]
Statement (2) then follows because Proposition \ref{lem:chishorizontal} implies that the Chern character $\ch_2(\II\udot)$ is a horizontal section, i.e.,
\[\ch_2(\II\udot) = \gamma \otimes 1.\]

In order to prove the claim, it suffices to take hypercohomology $\mathbb{H}^0(\curP,-)$ of the composition $\SR^2$ and show that this equals contraction with $\ch_2(\II\udot)$. 
By \cite[Prop.~4.2]{BF03}, the following triangle commutes 
\[\xymatrix{
H^1(\curP \times X, T_{\curP \times X / \curP}) \ar[rr] \ar[rd]_{\iota_{\ch_2(\II\udot)}} && \Ext^2_{\curP \times X}(\II\udot,\II\udot)_0 \ar[ld] \\
& H^3(\curP \times X, \Omega^1_{\curP \times X / \curP}),&
}\]
where the top map is induced by $\SR\dual$ and the right map by $\SR$. Using the decomposition (see footnote \ref{footnote:decomposition})
$$
H^m(\curP \times X, T_{\curP \times X / \curP}) = \bigoplus_{i+j=m} H^i(X,T_X) \otimes H^j(\curP,\O_{\curP}) 
$$
and similarly for $\Omega^1_{\curP \times X / \curP}$, we obtain the commutative diagram
\[\xymatrix{
H^1(X,T_X) \otimes H^0(\curP,\O_{\curP}) \ar^{\mathbb{H}^0(\curP,\SR^\vee)}[rr] \ar[rd]_{\iota_{\ch_2(\II^\mdot)}} && \Ext^2_{\curP \times X}(\II\udot,\II\udot)_0 \ar^{\mathbb{H}^0(\curP,\SR)}[ld] \\
& H^3(X,\Omega^1_X) \otimes H^0(\curP,\O_{\curP})&
}\]
as desired.
\end{proof}

\begin{remark}
The map $\SR$ in Proposition \ref{prop:cosection} is the collection of all cosections associated to $(3,1)$-forms in \cite[Ex.~9.2]{KP20}. More precisely, if we choose a $(3,1)$-form $\xi \in H^1(X,T_X) \cong H^1(X,\Omega^3_X)$, then the composition 
\[\EE\dual[1] \xrightarrow{\SR} H^1(X,T_X)\dual \otimes \O_{\curP} \xrightarrow{\xi} \O_{\curP}\]
is exactly the cosection $\sigma^{\xi}$ associated to the $(3,1)$-form $\xi$ considered in \cite{KP20}.
\end{remark}

We sometimes call the symmetric complex $\EE$ in the obstruction theory $\phi:\EE \to \trunc \LL_{\curP}$ the {\em virtual cotangent complex}. Following this perspective, we define the {\em reduced virtual cotangent complex} $\EE^\red_V$ as follows:

\begin{definition}\label{Def:EEredV}
Choose a non-degenerate subspace $V\subseteq H^1(X,T_X)$ with respect to the bilinear form $\sfB_\gamma$.
\begin{enumerate}
\item We denote by
\begin{equation*}\label{Eq:SRV}	
\SR_V : \EE\dual[1] \xrightarrow{\SR} H^1(X,T_X)\dual\otimes \O_{\curP} \twoheadrightarrow V\dual \otimes \O_{\curP}
\end{equation*}
the composition.
Then the square
\[\SR^2_V =  \sfB_\gamma|_V \otimes 1 : V \otimes \O_{\curP} \xrightarrow{\SR_V\dual} \EE[-1] \cong \EE\dual[1] \xrightarrow{\SR_V} V\dual \otimes \O_{\curP}\]
is an isomorphism by Proposition \ref{prop:cosection}(2).
\item We define $\EE^\red_V:=\mathrm{cone}(\SR_V\dual[1])$. Since the natural map $\SR_V\dual[1]$ has a left inverse, we have a splitting 
\begin{equation}\label{eq:EEredV}
\EE \cong \EE^\red_V \oplus \left(V\otimes \O_{\curP}[1]\right).
\end{equation}
Moreover, $(\EE^\red_V)\dual[2] \to \EE\dual[2] \xrightarrow{\theta} \EE \to \EE^\red_V$ induces a symmetric form on $\EE^\red_V$ and \eqref{eq:EEredV} is a splitting of symmetric complexes. 
\item We denote by
\[\fQ(\EE_V^\red) \subseteq \fC(\EE_V^\red)\]
the quadratic cone stack induced by the above symmetric form.
\end{enumerate}
\end{definition}

The main result in this subsection is the following {\em cone reduction} property:

\begin{proposition}\label{prop:conereduction}
Given a non-degenerate subspace $V \subseteq H^1(X,T_X)$ with respect to $\sfB_\gamma$, 
there exists a commutative diagram
\[\xymatrix{
 & \fQ(\EE^\red_V) \ar@{^{(}->}[d] \ar@{^{(}->}[r] &\fC(\EE^\red_V)  \ar@{^{(}->}[d]\\
(\fC_{\curP})_\red \ar@{^{(}->}[r] \ar@{.>}[ru] & \fQ(\EE) \ar@{^{(}->}[r] & \fC(\EE)
}\]
for a unique dotted arrow and where the square is cartesian.
Here the closed embedding $(\fC_{\curP})_{\red} \hookrightarrow \fQ(\EE)$ is given by \eqref{Eq12}.\footnote{When we use $\red$ as a subscript, it denotes the associated reduced substack.}
\end{proposition}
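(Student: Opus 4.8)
The statement is that the intrinsic normal cone of $\curP$, viewed as a closed subcone of $\fQ(\EE)$ via \eqref{Eq12}, actually lands in the smaller quadratic cone stack $\fQ(\EE^\red_V)$ sitting inside $\fC(\EE^\red_V)$. Since $\EE = \EE^\red_V \oplus (V\otimes \O_\curP[1])$ as symmetric complexes (Definition~\ref{Def:EEredV}(2)), we have a corresponding decomposition of cone stacks $\fC(\EE) = \fC(\EE^\red_V)\times_\curP (V\dual\otimes\O_\curP)$ and of quadratic functions $\fq_\EE = \fq_{\EE^\red_V} \boxplus \fq_{V\otimes\O_\curP}$, where $\fq_{V\otimes\O_\curP}$ is the nondegenerate quadratic form $\sfB_\gamma|_V$ on the trivial bundle $V\dual\otimes\O_\curP$. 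The plan is therefore: (i) show that the composite $(\fC_\curP)_\red \hookrightarrow \fC(\EE) \to V\dual\otimes\O_\curP$ is the \emph{zero section}; then (ii) deduce that $(\fC_\curP)_\red$ factors through $\fC(\EE^\red_V)$ set-theoretically, and (iii) check the cartesian square and that the image lies in the quadratic locus $\fQ(\EE^\red_V)$, using that the whole thing already lands in $\fQ(\EE)$.

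Step (i) is the heart of the matter and is where the semi-regularity map as a cosection enters. The projection $\fC(\EE)\to V\dual\otimes\O_\curP$ is, up to the dualities, exactly (the cone-stack incarnation of) the cosection $\SR_V^\vee$ applied to the obstruction theory — i.e., $\SR_V : \EE^\vee[1] \to V^\vee\otimes\O_\curP$ composed with $\phi$ gives a cosection of the (reduced) obstruction theory in the sense of Kiem--Li \cite{KL13}. The key input is that this cosection is \emph{induced by deformations of $X$}: by Lemma~\ref{Lem:Obstructions}(2) and Proposition~\ref{prop:ob=srdual}, the dual $\SR_V^\vee = \sfob|_V$ is (fibrewise) the obstruction-to-deforming map for the perfect complex along the directions in $V$, and since $\ch_2(\II\udot) = \gamma\otimes 1$ is a \emph{horizontal} (locally constant) section by Proposition~\ref{lem:chishorizontal}, the bilinear form $\sfB_\gamma|_V$ is nondegenerate on the nose. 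The standard \emph{cone reduction lemma} of Kiem--Li (\cite[Lem.~4.1 or thereabouts]{KL13}; see also \cite{KP20}) then says that whenever one has a cosection $\sigma$ of an obstruction theory, the intrinsic normal cone is supported in $\ker\sigma$; here because $\SR_V$ is built from $\At(\II\udot)$ and the Chern character is horizontal, the cosection is actually the full $V^\vee$-valued map and its kernel is precisely $\fC(\EE^\red_V)\subseteq\fC(\EE)$. Concretely I would argue: a test curve / arc in $\curP$ with a deformation of the pair induces, via the semi-regularity map, a deformation of $\gamma$; but $\gamma$ is locally constant, so the induced class in $V^\vee$ must vanish — this forces the normal cone (on the reduced structure, which is all that is needed since cones only see the reduced structure) into the kernel.

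Step (ii)–(iii) are then formal. Once $(\fC_\curP)_\red$ factors through the closed substack $\fC(\EE^\red_V)\hookrightarrow\fC(\EE)$ as a cone stack, and since by construction (Theorem~\ref{Thm:NRVFC}, item (3)) it already factors through $\fQ(\EE) = \{\fq_\EE = 0\}\subseteq\fC(\EE)$, I restrict $\fq_\EE = \fq_{\EE^\red_V}\boxplus\sfB_\gamma|_V$ to the locus where the $V^\vee$-component is zero: there $\fq_\EE = \fq_{\EE^\red_V}$, so vanishing of $\fq_\EE$ on $(\fC_\curP)_\red$ is equivalent to vanishing of $\fq_{\EE^\red_V}$, i.e., $(\fC_\curP)_\red\hookrightarrow\fQ(\EE^\red_V)$. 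The uniqueness of the dotted arrow is automatic since $\fQ(\EE^\red_V)\hookrightarrow\fQ(\EE)$ is a monomorphism, and the indicated square is cartesian because $\fQ(\EE^\red_V) = \fQ(\EE)\times_{\fC(\EE)}\fC(\EE^\red_V)$ — this last identity follows from the compatibility of the quadratic functions under the direct-sum splitting together with $\sfB_\gamma|_V$ being an isomorphism (so $\fQ(\EE)$ meets the ``zero in the $V$-direction'' slice exactly in $\fQ(\EE^\red_V)$).

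\textbf{Main obstacle.} The delicate point is step (i): making precise the identification of the composite projection $\fC_\curP\to\fC(\EE)\to V^\vee\otimes\O_\curP$ with the cosection coming from deformations of $X$, and then invoking the cone reduction lemma with the \emph{right} kernel. One must be careful that (a) the cosection $\SR_V$ genuinely comes from a map of obstruction theories (not merely a fibrewise construction) — this is exactly the content of Proposition~\ref{prop:cosection}(1), whose proof uses the relative Atiyah class over $\curP\times X/\curP$ — and (b) that the Chern character being horizontal (Proposition~\ref{lem:chishorizontal}) is what upgrades ``the normal cone lies in $\ker\SR_V$'' to the nondegeneracy needed so that $\ker\SR_V = \fC(\EE^\red_V)$ rather than some larger stack. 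I expect the cleanest route is to reduce to the affine-scheme / Artinian-base situation, use Lemma~\ref{Lem:Obstructions}(2) to see that any first-order deformation of a pair produces a first-order deformation of $\gamma$ compatible with the Gauss--Manin connection, note $\nabla\tgamma = 0$, and conclude the $V^\vee$-component of the obstruction vanishes; the passage from first-order to the full cone is handled exactly as in \cite[\S4]{KP20}, to which I would refer for the remaining bookkeeping.
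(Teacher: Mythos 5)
Your proposal is correct and takes essentially the same route as the paper: decompose $\EE \cong \EE^\red_V \oplus (V\otimes\O_\curP[1])$ as symmetric complexes, apply Kiem--Li's cone reduction lemma (Lemma~\ref{Lem:KLconereduction}) for the cosection $\SR_V$ to land $(\fC_\curP)_\red$ inside $\fC(\EE^\red_V)$, and then use the compatible splitting $\fq_\EE = \fq_{\EE^\red_V}\circ p_1 + \fq_V\circ p_2$ to conclude it lies in $\fQ(\EE^\red_V)$ and that the square is cartesian. One small clarification of emphasis: the cone reduction lemma applies to any cosection and does not itself require the non-degeneracy $\SR_V^2 \cong \sfB_\gamma|_V$; that non-degeneracy (established via the horizontality of $\ch_2(\II\udot)$ in Proposition~\ref{prop:cosection}(2)) is what gives $\EE^\red_V$ its symmetric-complex structure and the direct-sum decomposition, rather than being the thing that makes the normal cone factor through $\fC(\EE^\red_V)$.
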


\begin{proof}
We will form a commutative diagram
\begin{equation}\label{Eq2}
\xymatrix{
& \fQ(\EE^\red_V) \ar@{^{(}->}[r] \ar@{^{(}->}[d] &\fC(\EE^\red_V) \ar[r] \ar@{^{(}->}[d] & \curP \ar@{^{(}->}[d]^{0}
\\
(\fC_{\curP})_\red \ar@{^{(}->}[r]^{} \ar@{.>}[ru] & \fQ(\EE) \ar@{^{(}->}[r] \ar[d] &\fC(\EE) \ar[r]^-{\fl_{\SR_V}} \ar[d]^{\fq_\EE} & V|_{\curP}\dual\\
& \curP \ar@{^{(}->}[r]^0 & \bbA^1_{\curP}
}	
\end{equation}
and claim that all the squares are Cartesian. Here, $\fl_{\SR_V}:=\fC(\SR_V\dual[1])$ is the linear function induced by $\SR_V\dual : V\otimes \O_{\curP} \to \EE[-1]$ and $\fq_{\EE}$ is the quadratic function induced by $\theta : \O_{\curP} \to (\EE\otimes\EE)[-2]$.
Indeed, the lower square in \eqref{Eq2} is cartesian since the quadratic cone stack $\fQ(\EE)$ is defined as the zero locus of the quadratic function $\fq_\EE$. The upper right square in \eqref{Eq2} is cartesian since $\EE^\red_V$ is defined as the cone of $\SR_V\dual[1] : V\otimes \O_{\curP} [1] \to \EE$, see the proof of Lemma \ref{Lem:KLconereduction}. 
The middle upper square is cartesian since \eqref{eq:EEredV} is a direct sum decomposition of {\em symmetric complexes}. More precisely, we have
\begin{equation}\label{Eq21}
\fq_{\EE} = \fq_{\EE^\red_V}\circ p_1 + \fq_V|_{\curP} \circ p_2 : \fC(\EE) = \fC(\EE^\red_V) \times_{\curP} V|_{\curP}\dual \to \bbA^1_{\curP}     
\end{equation}
where $p_1$, $p_2$ are the projection maps and $\fq_V: V\times V \to \C$ denotes the quadratic function induced by $\sfB_\gamma|_V$. Then the formula \eqref{Eq21} implies that the triangle
\[\xymatrix{
\fC(\EE^\red_V) \ar[rd]^{\fq_{\EE^\red_V}} \ar@{^{(}->}[d] & \\
\fC(\EE) \ar[r]_{\fq_\EE} & \bbA^1_{\curP}
}\]
commutes. Hence, the middle upper square in \eqref{Eq2} is cartesian, as claimed. Finally, by Kiem-Li's cone reduction lemma \cite{KL13} in the form of Lemma \ref{Lem:KLconereduction} below, we can find the dotted arrow that fits into the commutative diagram \eqref{Eq2}.
\end{proof}

We rephrase Kiem-Li's cone reduction lemma as follows. This lemma is required to complete the proof of Proposition \ref{prop:conereduction}.

\begin{lemma}[cf. {\cite[Prop.~4.3]{KL13}}]\label{Lem:KLconereduction}
Let $\phi:\EE \to \trunc\LL_{\sX}$ be an obstruction theory
for an arbitrary Deligne-Mumford stack $\sX$. 
Let $\Sigma: \EE\dual[1] \to F$ be a map for a vector bundle $F$.
Then there exists a commutative diagram
\begin{equation}\label{Eq:CL.3}
\xymatrix{
 & \fC(\EE_{\Sigma}) \ar[r] \ar@{^{(}->}[d] & \sX \ar@{^{(}->}[d]^{0_F}\\
(\fC_{\sX})_\red  \ar@{.>}[ru] \ar@{^{(}->}[r] & \fC(\EE) \ar[r]^-{\fl_{\Sigma}} & F
}    
\end{equation}
with a unique dotted arrow and where the square is cartesian.
Here the closed embedding $(\fC_{\sX})_{\red} \hookrightarrow \fC(\EE)$ is induced by the obstruction theory $\phi$,
the linear function $\fl_{\Sigma}:=\fC(\Sigma\dual[1]):\fC(\EE) \to F$ is induced by the map $\Sigma$, and
the perfect complex $\EE_{\Sigma}$ is the cone of the map $\Sigma\dual[1]:F\dual[1] \to \EE$.
\end{lemma}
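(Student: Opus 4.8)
The plan is to prove Lemma \ref{Lem:KLconereduction} by a direct construction, following the strategy of \cite[Prop.~4.3]{KL13} but phrased in the intrinsic language of cone stacks. First I would unwind the definitions: the obstruction theory $\phi:\EE\to\trunc\LL_\sX$ gives a closed embedding $\fC_\sX\hookrightarrow\fC(\EE)=h^1/h^0(\EE\dual)$, and its image lands in the reduced closed substack $(\fC_\sX)_\red$, which by definition is the closed substack of $\fC(\EE)$ cut out by the reduced structure. The map $\Sigma\dual[1]:F\dual[1]\to\EE$ induces, upon applying $h^1/h^0((-)\dual)$, a morphism of cone stacks $\fl_\Sigma=\fC(\Sigma\dual[1]):\fC(\EE)\to\fC(F\dual[1])=F$; here $F$ is a vector bundle so $\fC(F\dual[1])$ is just the total space of $F$. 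The complex $\EE_\Sigma:=\cone(\Sigma\dual[1]:F\dual[1]\to\EE)$ sits in a distinguished triangle $F\dual[1]\to\EE\to\EE_\Sigma\to F\dual[2]$, and taking duals and shifting, $\fC(\EE_\Sigma)$ is the fibre product $\fC(\EE)\times_F \sX$ over the zero section $0_F$ — this is precisely the cartesian square in \eqref{Eq:CL.3}, and it is a formal consequence of the fact that $h^1/h^0((-)\dual)$ sends the above triangle to an exact sequence of cone stacks (using that $F$ is a vector bundle, so $\fC(F\dual[1])\to BF$ is trivial and the relevant long exact sequence truncates correctly). This establishes the right-hand square and defines $\fC(\EE_\Sigma)$ as claimed.

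The heart of the matter — and the step I expect to be the main obstacle — is showing that the composite $(\fC_\sX)_\red\hookrightarrow\fC(\EE)\xrightarrow{\fl_\Sigma}F$ is the \emph{zero} map, so that $(\fC_\sX)_\red$ factors through the fibre of $0_F$, namely $\fC(\EE_\Sigma)$. This is the cone reduction statement of Kiem--Li. The key geometric input is that the linear function $\fl_\Sigma$, restricted to the normal cone, vanishes because $\Sigma$ composed with the obstruction theory produces a \emph{cosection} of the obstruction sheaf in the sense of \cite{KL13}: concretely, the composite $\EE\dual[1]\xrightarrow{\Sigma}F$ dualizes to a map $F\dual\to h^1(\EE\dual)$ whose further composition with $h^1(\phi\dual)$ — i.e.\ the induced map on obstruction sheaves — has the property that the image of the normal cone $\fC_\sX$ under $\fl_\Sigma$ is supported on the degeneracy locus of the cosection. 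Since we are only claiming the statement after passing to the reduced substack $(\fC_\sX)_\red$, and since $\fl_\Sigma$ is a linear (hence in particular reduced-structure-compatible) function, the argument of \cite[Prop.~4.3]{KL13} applies: locally one chooses a presentation of $\EE$ by a two-term complex of vector bundles, writes $\fC_\sX$ inside the associated bundle, and checks that the cosection kills the cone by the standard local computation with obstruction classes of deformations. The subtlety is that in \cite{KL13} the obstruction theory is two-term while here $\EE$ has tor-amplitude $[-2,0]$; however, for the \emph{linear} function $\fl_\Sigma$ this extra term is harmless — $\fl_\Sigma$ only sees $h^1(\EE\dual)$ and $h^0(\EE\dual)$ — so the reduction argument goes through verbatim after noting $\fC(\EE)$ depends only on $\tau^{\geq-1}\EE\dual$ in the relevant sense.

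With the vanishing $\fl_\Sigma|_{(\fC_\sX)_\red}=0$ established, the factorization through the cartesian square is automatic by the universal property of the fibre product, producing the unique dotted arrow $(\fC_\sX)_\red\dashrightarrow\fC(\EE_\Sigma)$; its uniqueness follows from the monomorphism $\fC(\EE_\Sigma)\hookrightarrow\fC(\EE)$. Finally I would record that this dotted arrow is a closed embedding: it is the base change of the closed embedding $(\fC_\sX)_\red\hookrightarrow\fC(\EE)$ along the closed embedding $\fC(\EE_\Sigma)\hookrightarrow\fC(\EE)$, hence closed, and it is a monomorphism since $(\fC_\sX)_\red\hookrightarrow\fC(\EE)$ is. This completes the lemma, and one then reads off Proposition \ref{prop:conereduction} by taking $F=V\dual\otimes\O_\curP$, $\Sigma=\SR_V$, intersecting with the quadratic cone stacks $\fQ(\EE)\subseteq\fC(\EE)$ and $\fQ(\EE^\red_V)\subseteq\fC(\EE^\red_V)$, and using the compatibility \eqref{Eq21} of quadratic functions under the splitting \eqref{eq:EEredV} — which guarantees that the dotted arrow restricts from $\fC(\EE_\Sigma)=\fC(\EE^\red_V)$ to $\fQ(\EE^\red_V)$, since a point of $(\fC_\curP)_\red$ mapping into $\fQ(\EE)$ and into $\fC(\EE^\red_V)$ automatically has vanishing $\fq_{\EE^\red_V}$-value by \eqref{Eq21} and the vanishing of the $V$-component.
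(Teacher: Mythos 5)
Your proposal follows the same overall strategy as the paper (establish the cartesian square, show $\fl_\Sigma$ vanishes on $(\fC_\sX)_\red$ by reducing to the two-term Kiem--Li setting, then use the universal property of the fibre product), but the crucial middle step is stated imprecisely and, as written, contains a false claim that would block a careful reader.

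The problem is the sentence ``$\fC(\EE)$ depends only on $\tau^{\geq -1}\EE\dual$,'' and with it the suggestion that one can ``locally choose a presentation of $\EE$ by a two-term complex of vector bundles.'' When $\EE$ has tor-amplitude worse than $[-1,0]$ (which is the only case that adds anything to \cite{KL13}), neither of these is available: there is no two-term vector bundle resolution, and $\fC(\EE)$ is genuinely \emph{not} determined by a truncation of $\EE$. Concretely, if $\EE \cong [E^{-e}\to\cdots\to E^{-1}\to E^0]$ and one sets $\EE^\tau := [E^{-1}\to E^0]$ (stupid truncation), then $\fC(\EE) = [C(D)/E_0]$ with $D = \coker(E^{-2}\to E^{-1})$, whereas $\fC(\EE^\tau) = [E_1/E_0]$; since $D$ is a proper quotient of $E^{-1}$, the former is a proper closed substack of the latter. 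So the correct statement is not that $\fC(\EE)$ is unchanged by truncation, but that there is a closed embedding $\fC(\EE)\hookrightarrow\fC(\EE^\tau)$ under which $\fC_\sX$ still lands in $\fC(\EE^\tau)$, and $\fl_\Sigma$ is the restriction of the linear function $\fl_{\Sigma^\tau}$ associated to the induced cosection $\Sigma^\tau:(\EE^\tau)\dual[1]\to F$. One then observes that $\phi^\tau:\EE^\tau\to\EE\to\trunc\LL_\sX$ is an honest two-term \emph{perfect} obstruction theory, splits $F$ locally into trivial summands $\O_\sX$ (a step you omit; \cite[Prop.~4.3]{KL13} is a statement about scalar cosections), applies \cite{KL13} to each component $\sigma_i$ of $\Sigma^\tau$, and finally uses the cartesian square you established to pull the resulting inclusion $(\fC_\sX)_\red\subseteq\fC((\EE^\tau)_{\sigma_i})$ back to $\fC(\EE_\Sigma)$. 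The point is that you need both an auxiliary two-term obstruction theory \emph{and} the cartesian square; the square is not a one-off verification at the start but is invoked again at the end to transport the Kiem--Li vanishing from $\fC(\EE^\tau)$ down to $\fC(\EE)$. With these corrections your argument matches the paper's.
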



\begin{proof}

Note that the statement is local on $\sX$.
Thus we may assume that $\sX$ is a quasi-projective scheme and $F=\O_{\sX}^{\oplus r}$ is a trivial vector bundle. 
By the resolution property of $\sX$, we can find a chain complex of vector bundles that represents $\EE$
\[\EE \cong [E^{-e} \xrightarrow{} \cdots \to E^{-2} \xrightarrow{d^{-1}} E^{-1} \xrightarrow{d^0} E^0]\]
and a morphism of chain complexes that represents $\Sigma$
\[\xymatrix{
\EE\dual[1] \ar[d]^{\Sigma} \ar@{}[rd]|-{\cong} & E_0 \ar[r]^{}\ar[d] & E_1 \ar[r]^{}\ar[d]^{\widetilde{\Sigma}} & E_2 \ar[r] \ar[d]& \cdots \ar[d] \ar[r] & E_e \ar[d] \\
F  & 0 \ar[r] &F \ar[r] & 0 \ar[r] & \cdots \ar[r] & 0
}\]
where we denoted $E_i:=(E^{-i})\dual$. 
Then we also have
\[\EE_{\Sigma} \cong [E^{-e} \xrightarrow{} \cdots \to E^{-2} \oplus F\dual \xrightarrow{(d^{-1},\tSigma\dual)} E^{-1} \xrightarrow{d^0} E^0].\]

Firstly, we will show that the square in \eqref{Eq:CL.3} is cartesian. Indeed, note that the cokernels
\[D:=\coker(d^{-1} : E^{-2} \to E^{-1}),\quad
D_{\tSigma}:=\coker((d^{-1},\tSigma\dual) : E^{-2}\oplus F\dual \to E^{-1})\]
fit into a right exact sequence of coherent sheaves
\[\xymatrix{F\dual \ar[r] & D \ar[r] & D_{\tSigma} \ar[r] & 0}.\]
By taking the associated abelian cones, we obtain a cartesian square
\[\xymatrix{
C(D_{\tSigma}) \ar[r] \ar@{^{(}->}[d] & \bbA^1_{\sX} \ar@{^{(}->}[d]^{0_F}\\
C(D) \ar[r]^-{\tSigma|_{C(D)}} & F.
}\]
Since $\fC(\EE) = [C(D)/E_0]$ and $\fC(\EE_{\Sigma}) = [C(D_{\tSigma})/E_0]$, we obtain the desired cartesian square via descent.

Secondly, we will find a dotted arrow that fits into the commutative diagram \eqref{Eq:CL.3}. The uniqueness is obvious.
Consider the induced (2-term) perfect obstruction theory 
\[\phi^\tau : \EE^\tau:= [E^{-1} \xrightarrow{d^0} E^0] \xrightarrow{t} \EE \xrightarrow{\phi} \trunc\LL_{\sX}\]
and an induced cosection $\Sigma^\tau$ that fits into the commutative diagram
\[\xymatrix{
\EE\dual[1] \ar[d]_-{t\dual[1]} \ar[r]^-{\Sigma} & F\\
(\EE^\tau)\dual[1] \ar@{.>}[ru]_-{\Sigma^\tau}.
}\]
Since $F$ is a trivial vector bundle, we may write
\[\Sigma^\tau=(\sigma_1,\cdots,\sigma_r) : (\EE^\tau)\dual[1] \to F \cong \O_{\sX}^{\oplus r}.\] 
Then $\sigma_i : (\EE^\tau)\dual[1] \to \O_{\sX}$ are cosections for the perfect obstruction theory $\phi^\tau$. Hence by \cite[Prop.~4.3]{KL13}, we have 
\[(\fC_{\sX})_\red \subseteq \fC((\EE^\tau)_{\sigma_i})\]
as substacks of $\fC(\EE^{\tau})$, where $(\EE^\tau)_{\sigma_i}:=\cone(\sigma_i\dual[1])$.
Here $ \fC((\EE^\tau)_{\sigma_i})$ are set-theoretically the kernel cone stacks in \cite{KL13}.
Hence we have a commutative diagram
\[\xymatrix{
& \fC(\EE_\Sigma) \ar@{^{(}->}[r] \ar@{^{(}->}[d] & \bigcap_i\fC((\EE^\tau)_{\sigma_i}) \ar[r] \ar@{^{(}->}[d] & \sX \ar@{^{(}->}[d]^{0_F}\\
(\fC_{\sX})_\red \ar@{^{(}->}[r] \ar@{-->}[rru] \ar@{.>}[ru]& \fC(\EE) \ar@{^{(}->}[r]^{\fC(t)} \ar@/_0.5cm/[rr]_{\fl_{\Sigma}} & \fC(\EE^\tau) \ar[r]^{\fl_{\Sigma^{\tau}}} & F
}\]
where the squares are cartesian by the result in the previous paragraph. Hence the dotted arrow in the above diagram gives us the desired dotted arrow in \eqref{Eq:CL.3}.
\end{proof}


Now Theorem \ref{Thm:RVFC} is a direct corollary of Proposition \ref{prop:conereduction}. 

\begin{proof}[Proof of Theorem \ref{Thm:RVFC}]

We first construct a reduced virtual cycle with respect to a maximal non-degenerate subspace $V \subseteq H^1(X,T_X)$. Here maximal means that $\dim(V)=\rank(\sfB_\gamma)=\rho_\gamma$. Thus the composition
\begin{equation}\label{Eq3}
V \hookrightarrow H^1(X,T_X) \twoheadrightarrow H^1(X,T_X)_\gamma	
\end{equation}
is an isomorphism.

As in the proof of Theorem \ref{Thm:NRVFC} (see also Remark \ref{Rem4}), we can construct a square root Gysin pullback
\[\sqrt{0^!_{\fQ(\EE^\red_V)}} : A_*\left(\fQ(\EE^\red_V)\right) \to A_*\left(\curP\right)\]
for the symmetric complex $\EE^\red_V$.
Here the orientation of $\EE^\red_V$ is determined by the orientations of $\EE$ and $H^1(X,T_X)_\gamma$ via the canonical isomorphisms
\[\det(\EE) \cong \det(\EE^\red_V) \otimes \det(V) \cong \det(\EE^\red_V) \otimes \det(H^1(X,T_X)_\gamma)\]
of line bundles, induced by the decomposition \eqref{eq:EEredV} and the isomorphism \eqref{Eq3}.

By the cone reduction property in Proposition \ref{prop:conereduction}, we may consider the fundamental class of the intrinsic normal cone $\fC_{\curP}$ as a cycle class in $\fQ(\EE^\red_V)$,
\[[\fC_{\curP}] \in A_0\left(\fQ(\EE^\red_V)\right).\]
Indeed, this can be defined as the pushforward of the fundamental class $[\fC_{\curP}] \in A_*((\fC_{\curP})_\red) = A_*(\fC_{\curP})$ under the closed embedding $(\fC_{\curP})_\red \hookrightarrow \fQ(\EE^\red_V)$.

Analogous to the definition of the non-reduced virtual cycle in \eqref{Eq:Def.NRVFC}, we define the {\em reduced virtual cycle} as
\begin{equation}\label{Eq6}
\left[\curP\right]^{\red}_V := \sqrt{0^!_{\fQ(\EE^\red_V)}} \left[\fC_{\curP}\right] \in A_{\mathrm{rvd}}\left(\curP\right).
\end{equation}
Then the reduced virtual dimension is
\[\rvd = \vd + \frac12\dim(V) = n- \frac12 \gamma^2 + \frac12 \rho_\gamma.\]

We now claim that the reduced virtual cycle $[\curP]^\red_V$ in \eqref{Eq6} is independent of the choice of a maximal non-degenerate subspace $V \subseteq H^1(X,T_X)$. Note that a choice of a maximal isotropic subspace $V$ of $H^1(X,T_X)$ is equivalent to a choice of a section of the surjection
$H^1(X,T_X) \twoheadrightarrow H^1(X,T_X)_\gamma.$
Thus, if we fix one splitting
\[H^1(X,T_X) =  H^1(X,T_X)_\gamma \oplus \ker(\sfB_\gamma),\]
then any other splitting corresponds to a map $H^1(X,T_X)_\gamma \to \ker(\sfB_\gamma).$ Therefore, any two splittings are connected via a family of splittings parametrized by $\bbA^1$. A deformation argument proves the claim.\footnote{Generally, we need to be careful when we apply the cone reduction in the relative setting. But in this case, the map $\curP \times \bbA^1 \to \bbA^1$ is flat, so we can detect the cone reduction property fibrewise.}
\end{proof}

\begin{remark}\label{Rem:non-deg/isotropic.cosection}
We view the cosection localization (Proposition \ref{prop:conereduction}) in this paper as an {\em orthogonal} version to the one in \cite{KP20}.
Indeed, there are two types of cosections $\Sigma : \EE\dual[1] \to F$ in DT4 theory:
\begin{enumerate}
\item {\em isotropic} cosections, i.e., $\Sigma^2=0$,
\item {\em non-degenerate} cosections, i.e., $\Sigma^2$ is an isomorphism.
\end{enumerate}
In \cite{KP20}, cosection localization for the first case is studied, whereas we introduced the second case in this paper.

In the second case, if the orthogonal bundle $(F,\Sigma^2)$ is of even rank and has a maximal isotropic subbundle, then we can reduce to the first case. Then both approaches give the same reduced virtual cycle.

However, for surfaces on Calabi-Yau 4-folds, there are \emph{many} examples (e.g. Example \ref{Ex:1}) that have an {\em odd} number of cosections. Therefore we indeed need cosection localization as in the second case for counting surfaces.
\end{remark}





\begin{remark}
In general, the cone reduction property in Proposition \ref{prop:conereduction} does not imply the existence of a reduced obstruction theory, see \cite[Appendix A]{MPT}. 
However, in our situation for $\PTqvX$, we will see in the next subsection that there indeed exists a reduced obstruction theory.
\end{remark}

We now prove Theorem \ref{Thm:SR=smoothofrvd}. The implication $(\Rightarrow)$ can be shown using the infinitesimal lifting criterion as in \cite[Thm.~7.3]{Blo}. We provide an alternative proof, more in line with DT4 theory, using the cone reduction property (Proposition \ref{prop:conereduction}).

\begin{proof}[Proof of Theorem \ref{Thm:SR=smoothofrvd}]
($\Rightarrow$) Let $[(F,s)] \in \curP$ be a $\PT_q$ pair for which the associated perfect complex $I\udot:=[\O_X \xrightarrow{s}F]$ is semi-regular. 

Choose a maximal non-degenerate subspace $V \subseteq H^1(X,T_X)$ with respect to $\sfB_\gamma$. We first claim that the restriction of the obstruction map $\sfob$ in \eqref{eq:ob} to $V$
\[\sfob|_V : V \hookrightarrow H^1(X,T_X) \twoheadrightarrow \Ext^2_X(I\udot,I\udot)_0\]
is an isomorphism. Indeed, by Corollary \ref{Cor1}, the above map $\sfob|_V$ is injective. On the other hand, the semi-regularity of $I\udot$ implies that the obstruction map $\sfob=\sr\dual:H^1(X,T_X) \to \Ext^2_X(I\udot,I\udot)_0$ is surjective. Hence we have 
\[\dim(V) = \rho_\gamma = \rank(\sfB_\gamma) = \rank(\sfob^*(\sfY))= \rank(\sfY)= \dim_{\C}\Ext^2_X(I\udot,I\udot)_0.\]
Therefore the above map $\sfob|_V$ is bijective. This proves the claim.

By the derived Darboux theorem \cite[Thm.~5.18(ii)]{BBJ},
we can find a diagram
\[\xymatrix{
& E \ar[d]\\
U(t) \ar@{^{(}->}[r] & U, \ar@/_0.4cm/[u]_{t}
}\]
where $U$ is a smooth affine scheme, $E$ is an orthogonal bundle over $U$, $t \in \Gamma(U,E)$ is an isotropic section, and $U(t)$ is the zero locus of $t$ in $U$, with a Zariski open embedding $U(t) \hookrightarrow \curP$ whose image contains $[I\udot] \in \curP$. Also, 
the standard symmetric obstruction theory $\phi:\EE \to \tau^{\geq -1} \LL_{\curP}$ in \eqref{Eq:SOT} can be written as
\[\xymatrix{
\EE|_{U(t)} \ar[d]^{\phi} \ar@{}[rd]|-{=} & T_U|_{U(t)} \ar[r]^-{dt}\ar[d] & E|_{U(t)}\dual \ar[r]^-{dt} \ar[d]^t & \Omega_U|_{U(t)} \ar@{=}[d]\\
\trunc\LL_{\curP}|_{U(t)}  & 0 \ar[r] & \I/\I^2 \ar[r]^d & \Omega_U|_{U(t)}
}\]
where $\I=\I_{U(t)/U}$ is the ideal sheaf of $U(t) \subseteq U$.
Moreover, 
by choosing a minimal Darboux chart (in the sense of \cite[Def.~2.13]{BBJ}), 
we can further assume that the fibre of the map $dt: T_U|_{U(t)} \to E|_{U(t)}\dual$ at $[I\udot] \in U(t)$ is zero.

Consider the map $\SR_V$ in Definition \ref{Def:EEredV}(1). Then we have a diagram
\begin{displaymath}
\xymatrix
{
\Omega_U|_{U(t)} \ar[r] \ar@{-->}[dr] & \EE|_{U(t)} \ar^{\SR_V}[d] \ar[r] & [T_U|_{U(t)} \to E|_{U(t)}^\vee \to 0] \\
& V^\vee \otimes \O_{U(t)}[1] &
}
\end{displaymath}
where the last term is the stupid truncation and the dotted arrow is the composition. Since $U(t)$ is affine, the dotted arrow is zero and we obtain a morphism of cochain complexes
\[\xymatrix{
\EE|_{U(t)} \ar[d]^{\SR_V} \ar@{}[rd]|-{=} & T_U|_{U(t)} \ar[r]^-{dt}\ar[d] & E|_{U(t)}\dual \ar[r]^-{dt} \ar[d]^{\widetilde{\SR_V}} & \Omega_U|_{U(t)} \ar[d]\\
V\dual \otimes \O_{U(t)}[1]  & 0 \ar[r] & V\dual \otimes \O_{U(t)} \ar[r] & 0
}\]
for some map $\widetilde{\SR_V}$.
By Proposition \ref{prop:cosection}(1), and the fact that the fibre of $dt$ at $[(F,s)]$ is zero,
the fibre of the map $\widetilde{\SR_V}$ over $[(F,s)] \in \curP$ can be identified with the composition
\begin{equation}\label{Eq5}
\widetilde{\SR_V}|_{(F,s)} : \Ext^2(I\udot,I\udot)_0 \xrightarrow{\sr} H^1(X,T_X)\dual \twoheadrightarrow V\dual.
\end{equation}
Since $\sfob|_V$ is an isomorphism, Proposition \ref{prop:ob=srdual} implies that the map \eqref{Eq5} is an isomorphism. Therefore, by shrinking $U$ further, we may assume that the map $\widetilde{\SR_V}$ is an isomorphism.

Consider $U(t)$ as the zero section $U(t) \subset E|_{U(t)}$.
By the cone reduction property in Proposition \ref{prop:conereduction}, 
we have 
\[\left[\frac{(C_{U(t)/U})_\red}{T_{U}|_{U(t)}} \right] = \left(\fC_{U(t)}\right)_\red \subseteq \fC(\EE^\red_V) = \left[\frac{U(t)}{T_U|_{U(t)}}\right]\]
as substacks of $\fC(\EE)\subseteq [E|_{U(t)}/T_U|_{U(t)}]$ since $\ker(\widetilde{\SR_V})=0$.
Therefore, we have
\[(C_{U(t)/U})_\red = U(t).\]
Equivalently, the ideal 
$\bigoplus_{n>0}\frac{\I^n}{\I^{n+1}} \subseteq \bigoplus_{n\geq 0}\frac{\I^n}{\I^{n+1}}$
is nilpotent. Hence, we have $\frac{\I^n}{\I^{n+1}} =0$
for some $n$, and Nakayama's lemma implies
$(1-i) \cdot \I^n =0 $
for some $i \in \I$. Since $U$ is integral and $U(t)$ is nonempty, we have $\I=0$ and thus $t=0$. Therefore, $U(t)=U$ is smooth of dimension
\[
\dim(U) = \left(\dim(U) - \frac12 \rank(E)\right) + \frac12 \rank(E) = \vd + \frac12 \rho_\gamma = \rvd.\]
This completes the proof of ($\Rightarrow$) since $U(t)$ is an open neighborhood of $[I\udot] \in \curP$.

($\Leftarrow$) Assume that $\curP$ is smooth of dimension $\rvd$ at $[I\udot] \in \curP$. Choose a maximal non-degenerate subspace $V$ of $H^1(X,T_X)$ with respect to $\sfB_\gamma$. By Corollary \ref{Cor1}, the restriction of the obstruction map to $V$
\[\sfob|_V : V \hookrightarrow H^1(X,T_X) \xrightarrow{\sfob} \Ext^2_X(I\udot,I\udot)_0\]
is injective. On the other hand, since $\curP$ is smooth at $[I\udot] \in \curP$, we have
\[\dim(V) = \rho_\gamma = 2\cdot(\rvd - \vd) = 2 \cdot\left(\mathrm{ext}^1 - (\mathrm{ext}^1 -  \frac12 \mathrm{ext}^2)\right)=\mathrm{ext}^2\]
where $\mathrm{ext}^1:=\dim_\C \Ext^1_X(I\udot,I\udot)_0$ and $\mathrm{ext}^2:=\dim_\C \Ext^2_X(I\udot,I\udot)_0$. Hence the above map $\sfob|_V$ is bijective. Then the obstruction map $\sfob$ is surjective, and its dual $\sr$ is injective (Proposition \ref{prop:ob=srdual}). Therefore $[I\udot] \in \curP$ is semi-regular.
\end{proof}


\subsection{Algebraic twistor family}\label{ss:VFC.ATF}

In this subsection, we construct a {\em reduced obstruction theory} via the {\em algebraic twistor family} introduced by the second-named author and Thomas in \cite{KT1}.

We use the notation from the previous subsections. Let $\PTqvX$ be the moduli space of $\PT_q$ pairs $(F,s)$ on a Calabi-Yau 4-fold $X$ with fixed cohomological Chern character $\ch(F)=v=(0,0,\gamma,\beta,n-\gamma\cdot\td_2(X)) \in H^*(X,\Q)$. 
The main result in this subsection is the following:

\begin{proposition}\label{prop:redSOT}
Given a non-degenerate subspace $V \subseteq H^1(X,T_X)$ with respect to $\sfB_\gamma$, the composition
\begin{equation}\label{eq:S5.redSOT}
\phi^\red_V : \EE^\red_V \to \EE \xrightarrow{\phi} \tau^{\geq -1} \LL_{\PTqvX}	
\end{equation}
is also a symmetric obstruction theory, i.e., $h^0(\phi^\red_V)$ is bijective and $h^{-1}(\phi^\red_V)$ is surjective. Here the first map in \eqref{eq:S5.redSOT} is given by the decomposition \eqref{eq:EEredV} and the second map is the standard symmetric obstruction theory in \eqref{Eq:SOT}.
\end{proposition}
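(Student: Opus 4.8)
The plan is to verify the defining properties of an obstruction theory for $\phi^\red_V$ directly from the already-established facts about $\phi$ and the splitting \eqref{eq:EEredV}. Recall $\EE \cong \EE^\red_V \oplus (V\otimes\O_\curP[1])$ as symmetric complexes, so $\EE^\red_V$ has tor-amplitude $[-2,0]$ and inherits a non-degenerate symmetric form; hence $\phi^\red_V$ is automatically a morphism of symmetric complexes once we know it is a morphism to $\trunc\LL_\curP$ at all, which it is by construction (it is $\phi$ precomposed with the split inclusion). So the only content is that $h^0(\phi^\red_V)$ is an isomorphism and $h^{-1}(\phi^\red_V)$ is surjective.

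First I would compare cohomology sheaves of $\EE^\red_V$ and $\EE$. The trivial summand $V\otimes\O_\curP[1]$ contributes only to $h^{-1}$: we have $h^0(\EE^\red_V)\cong h^0(\EE)$ and a short exact sequence $0\to V\otimes\O_\curP\to h^{-1}(\EE^\red_V)\to h^{-1}(\EE)\to 0$ coming from \eqref{eq:EEredV}. Wait --- the direction is the opposite: $\EE = \EE^\red_V \oplus (V\otimes\O_\curP[1])$ means $h^{-1}(\EE) = h^{-1}(\EE^\red_V)\oplus (V\otimes\O_\curP)$, so $h^{-1}(\EE^\red_V)$ is a \emph{direct summand} of $h^{-1}(\EE)$ and the complementary summand is the trivial bundle $V\otimes\O_\curP$ sitting inside via $\SR_V\dual[1]$. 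Since $h^0$ is unaffected, $h^0(\phi^\red_V) = h^0(\phi)$ is an isomorphism immediately. For $h^{-1}$: the map $\phi\colon\EE\to\trunc\LL_\curP$ has $h^{-1}(\phi)$ surjective, and $h^{-1}(\phi)$ factors as $h^{-1}(\EE)=h^{-1}(\EE^\red_V)\oplus(V\otimes\O_\curP)\to h^{-1}(\trunc\LL_\curP)$. The key point to check is that the restriction of $h^{-1}(\phi)$ to the trivial summand $V\otimes\O_\curP$ vanishes; then surjectivity of $h^{-1}(\phi)$ forces surjectivity of its restriction to $h^{-1}(\EE^\red_V)$, which is exactly $h^{-1}(\phi^\red_V)$.

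So the crux --- and the step I expect to be the main obstacle --- is showing that the composite $V\otimes\O_\curP[1]\hookrightarrow\EE\xrightarrow{\phi}\trunc\LL_\curP$ is zero, equivalently that the trivial quotient $\trunc\LL_\curP\to$ (dualized picture) pairs trivially with $\SR_V$. This is precisely where the algebraic twistor family of \cite{KT1} enters, and it is the geometric heart of the statement: the cosection $\SR_V$ restricted to $V$ is built from the semi-regularity maps, and by Proposition \ref{prop:cosection}(1) together with the symmetric triangle \eqref{eq:triangle.obsrBgamma'}, the $V$-component of the obstruction map $\sfob$ at each point $[(F,s)]$ is identified via Serre duality with $\SR_V\dual$ composed with $\sfob$. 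Concretely, I would argue fibrewise: the dual statement is that the composite $\LL_\curP\to\EE\dual$ followed by $\SR_V\colon\EE\dual[1]\to V\dual\otimes\O_\curP$ is the zero map $\LL_\curP[1]\to V\dual\otimes\O_\curP$. But the fibre of $\SR_V\circ(\text{obstruction theory})$ at $[(F,s)]$ is, by Lemma \ref{Lem:Obstructions}(2) and Proposition \ref{prop:ob=srdual}, governed by whether a first-order deformation of $X$ in a direction $\xi\in V$ preserves both $\gamma$ and the complex $I\udot$; and the algebraic twistor family is constructed precisely so that deformations of $X$ along $V$ \emph{do not} deform $I\udot$ (the Hodge locus of $\gamma$ transverse to $V$ is the origin), forcing the relevant composite to vanish. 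More precisely, I would invoke the twistor family $\cX\to\bbA(V)$ over a formal/first-order neighborhood, observe that the relative moduli space $\PTqvXB$ over $\bbA(V)$ has the same underlying space as $\curP$ near $[(F,s)]$ because no deformation along $V$ survives (this uses $\SR_V^2 = \sfB_\gamma|_V\otimes 1$ being an isomorphism, Definition \ref{Def:EEredV}(1)), and deduce that the relative obstruction theory $\EE_{\cX/\bbA(V)}$ restricted to the fibre over $0$ is $\EE^\red_V$, so that $\phi^\red_V$ is a genuine obstruction theory being the restriction of the relative one. This last comparison --- matching the cone of $\SR_V\dual[1]$ with the relative virtual cotangent complex of the twistor family --- is the technical payload; the cohomological bookkeeping of the first two paragraphs is then formal.
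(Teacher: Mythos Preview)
Your cohomological bookkeeping in the first two paragraphs is fine: $h^0(\phi^\red_V)=h^0(\phi)$ is immediate, and the surjectivity of $h^{-1}(\phi^\red_V)$ would indeed follow if the component $V\otimes\O_\curP\to h^{-1}(\trunc\LL_\curP)$ of $h^{-1}(\phi)$ vanished. But this is where you stall. Your fibrewise argument cannot establish a sheaf-level vanishing (a morphism of sheaves that vanishes at every closed point need not be zero when the base is non-reduced, and $\curP$ is typically highly singular). More to the point, the paper never proves this ``cross-term'' vanishes, and it is not needed.

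Your final paragraph lands on the right idea but contains a key misidentification: the relative virtual cotangent $\EE_{\cX_V/\cA_V}$ is $\EE$, \emph{not} $\EE^\red_V$. By Proposition~\ref{prop:P(X)=P(X/A)} the inclusion $\curP\hookrightarrow\curP^{(q)}_{\tv}(\cX_V/\cA_V)$ is an isomorphism of schemes, so the universal complex and hence the relative obstruction theory $\phi_{\cX_V/\cA_V}:\EE\to\trunc\LL_{\curP/\cA_V}$ are unchanged. The paper's argument runs as follows. Since $\curP\to\cA_V$ factors through the closed point, the cotangent triangle $\trunc\LL_\curP\to\trunc\LL_{\curP/\cA_V}\to\Omega^1_{\cA_V}|_\curP[1]=V^\vee\otimes\O_\curP[1]$ splits. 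Lemma~\ref{lem:compatibilityofKS} shows that the square with $\phi_{\cX_V/\cA_V}$ on the left, $\SR_V[1]$ on top, $\KS_{\curP/\cA_V}$ on the bottom, and the identity on the right commutes. Completing to a morphism of (split) distinguished triangles produces a dotted arrow $\phi^\red_V:\EE^\red_V\to\trunc\LL_\curP$, and the associated long exact sequence shows it is an obstruction theory. One then checks this dotted arrow coincides with the composition \eqref{eq:S5.redSOT} using that $\phi$ is the composite of $\phi_{\cX_V/\cA_V}$ with the split projection $\trunc\LL_{\curP/\cA_V}\to\trunc\LL_\curP$. The argument bypasses entirely the question of whether $\phi\circ\SR_V^\vee[1]$ vanishes.
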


The rest of this subsection is devoted to the proof of Proposition \ref{prop:redSOT}. By the deformation theory of algebraic varieties, any subspace $V\subseteq H^1(X,T_X)$ corresponds to a fibre diagram
\[\xymatrix{
X \ar@{^{(}->}[r] \ar[d] & \cX_V \ar[d]^{f_V} \\
\Spec(\C) \ar@{^{(}->}[r] & \cA_V
}\]
where $\cX_V \to \cA_V$ is a smooth projective morphism over the spectrum of the local Artinian $\C$-algebra $A_V:= \O_V / \mathfrak{m}^2 = \C \oplus V\dual$, where $\m \subset \O_V$ denotes the maximal ideal of functions vanishing at the origin. Here $\Omega^1_{\cA_V} = V\dual \otimes \O_{\cA_V}$ and the (restriction) of the (classical) Kodaira-Spencer map 
\[\KS_{\cX_V/\cA_V}|_X : \Omega^1_X \to V\dual\otimes \O_X[1]\]
corresponds to the given inclusion $V \subseteq H^1(X,T_X)$ via adjunction.
Let 
\[\tv=v\otimes 1 \in H^*_{DR}(\cX_V/\cA_V) \cong H^*_{DR}(X)\otimes_\C A_V\]
be the unique horizontal lift of $v \in H^*_{DR}(X)$ (see subsection \ref{ss:relativemoduli} for the definitions of the de Rham cohomology and horizontal sections). Form a fibre diagram
\begin{equation}\label{eq:S5.4}
\xymatrix{
\PTqvX \ \ar@{^{(}->}[r] \ar[d] & {\curly P}^{(q)}_{\tv}(\cX_V/\cA_V)\ar[d] \\
\Spec(\C) \ar@{^{(}->}[r] & \cA_V
}	
\end{equation}
where ${\curly P}^{(q)}_{\tv}(\cX_V/\cA_V)$ is the relative moduli space of $\PT_q$ pairs on the fibres of $\cX_V \to \cA_V$. The key observation is the following:

\begin{proposition}\label{prop:P(X)=P(X/A)}
If $V\subseteq H^1(X,T_X)$ is a non-degenerate subspace, then the canonical inclusion map
\[\PTqvX \hookrightarrow {\curly P}^{(q)}_{\tv}(\cX_V/\cA_V)\]
is an isomorphism of schemes.
\end{proposition}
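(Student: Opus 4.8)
The plan is to show that the relative moduli space ${\curly P}^{(q)}_{\tv}(\cX_V/\cA_V) \to \cA_V$, which is proper by Theorem~\ref{Thm:RelativeModuli}(1), is in fact supported set-theoretically on the central fibre, so that since $\cA_V$ is the spectrum of a local Artinian ring and $\PTqvX \hookrightarrow {\curly P}^{(q)}_{\tv}(\cX_V/\cA_V)$ is a closed immersion, the two agree after noting the relative moduli space is flat---or rather, that nothing survives off the closed point. The cleanest route is to show that no $\PT_q$ pair on $X$ deforms to first order over $\cA_V$ in a way compatible with the horizontal section $\tv$; equivalently, the obstruction to such a deformation is nonzero unless we stay over the central fibre. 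Concretely, for any $\C$-point $[(F,s)] \in \PTqvX$ with associated complex $I\udot = [\O_X \xrightarrow{s} F]$, I will combine Lemma~\ref{Lem:Obstructions}(2) with the non-degeneracy of $V$.

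First I would unwind what it means for a point of ${\curly P}^{(q)}_{\tv}(\cX_V/\cA_V)$ lying over $[(F,s)]$ to exist: via the open embedding into $\cPerf$ (Theorem~\ref{Thm:PairtoPerf}, its relative version Theorem~\ref{Thm:RelativeModuli}(2)) and Proposition~\ref{prop:infinitesimallifting}, this amounts to a perfect complex $\widetilde I\udot$ on $\cX_V$ with trivial determinant restricting to $I\udot$. Now since $\cX_V \to \cA_V$ is a first-order (square-zero) deformation of $X$ classified by the Kodaira-Spencer class corresponding to $V \hookrightarrow H^1(X,T_X)$, Lemma~\ref{Lem:Obstructions}(2) says such a lift exists if and only if the composition
\[ T_{o,\cA_V} \xrightarrow{\KS_{\cX_V/\cA_V}} H^1(X,T_X) \xrightarrow{\sfob} \Ext^2_X(I\udot,I\udot)_0 \]
vanishes. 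But $T_{o,\cA_V} = V$ (dual of $\Omega^1_{\cA_V}|_o = V\dual$), and $\KS_{\cX_V/\cA_V}$ is exactly the inclusion $V \hookrightarrow H^1(X,T_X)$, so this composition is precisely $\sfob|_V$. By Corollary~\ref{Cor1}, since $V$ is non-degenerate with respect to $\sfB_\gamma$ and $\ch_1(I\udot)=0$, $\ch_2(I\udot)=\gamma$, the map $\sfob|_V$ is \emph{injective}. Hence this obstruction vanishes only on the zero subspace, i.e.\ there is no nonzero tangent direction in $\cA_V$ along which $I\udot$ deforms.

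This shows ${\curly P}^{(q)}_{\tv}(\cX_V/\cA_V) \to \cA_V$ has image the closed point, hence factors (set-theoretically, then scheme-theoretically since it is proper and $\cA_V$ is Artinian local) through $\Spec(\C) \hookrightarrow \cA_V$; more precisely, I would argue that the pullback of the structure sheaf of ${\curly P}^{(q)}_{\tv}(\cX_V/\cA_V)$ along the inclusion of the reduced point is all of it, which forces the square \eqref{eq:S5.4} to be an isomorphism on the left vertical arrow. To be careful I should phrase the argument family-wise rather than pointwise: for any Artinian local $\C$-algebra $B$ with a map $\Spec B \to \cA_V$ not factoring through $\Spec \C$, and any $B$-point of the relative moduli space, restrict to a first-order thickening inside $\Spec B$ to derive a contradiction via the same obstruction computation; then conclude by the infinitesimal criterion / the fact that a proper scheme over Artinian local $\cA_V$ whose only $B$-points for all $B$ are those pulled back from $\Spec\C$ must be the pullback of its central fibre.

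The main obstacle I anticipate is the bookkeeping in passing from the single first-order statement (Lemma~\ref{Lem:Obstructions}(2), which is stated only for square-zero extensions of $\Spec\C$) to the full scheme-theoretic isomorphism over the possibly-higher-order Artinian base $\cA_V = \Spec(\C \oplus V\dual)$. Here $\cA_V$ \emph{is} itself a square-zero extension (since $\m^2 = 0$), so Lemma~\ref{Lem:Obstructions}(2) applies directly and the first-order computation is literally the whole story---the obstacle is more in verifying that injectivity of $\sfob|_V$ on the full $V$ (not just on lines) rules out \emph{all} $\cA_V$-points, which follows because any $\cA_V$-point restricted to the generic first-order direction would already have to lift, and the deformation being obstructed along every nonzero direction of the square-zero ideal means the only $\cA_V$-point is the trivial one. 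A secondary subtlety is matching the determinant-trivial condition and the horizontality of $\tv$ with the hypotheses of Corollary~\ref{Cor1}; this is exactly where Proposition~\ref{lem:chishorizontal} (horizontality of $\ch_2$) is invoked to ensure $\ch_2(I\udot_b) = \gamma$ is the correct class throughout.
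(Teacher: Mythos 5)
Your core algebraic observation is correct and is essentially dual to the paper's. By Corollary~\ref{Cor1}, non-degeneracy of $\sfB_\gamma|_V$ forces $\sfob|_V=\sr^\vee|_V$ to be injective, and since $\cA_V$ is itself a square-zero extension of $\Spec\C$, Lemma~\ref{Lem:Obstructions}(2) shows directly that no $\PT_q$ pair on $X$ lifts to a perfect complex on $\cX_V$ with trivial determinant. The paper instead applies Lemma~\ref{Lem:Obstructions}(1) and the same non-degeneracy to show that the \emph{scheme-theoretic Hodge locus} of $\gamma\otimes 1$ in $\cA_V$ is the closed point. These are two readings of the triangle \eqref{eq:triangle.obsrBgamma'} — you obstruct the pair, the paper obstructs the Hodge class — and either suffices for this part.

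The gap is in the passage from this to the scheme-theoretic isomorphism. What you establish is that $\mathrm{id}:\cA_V\to\cA_V$ admits no section through ${\curly P}^{(q)}_{\tv}(\cX_V/\cA_V)$; what the statement requires is that the structure morphism ${\curly P}^{(q)}_{\tv}(\cX_V/\cA_V)\to\cA_V$ factors scheme-theoretically through $\{o\}$, i.e.~that \emph{every} $T$-point factors through the fibre $\PTqvX$. Your proposed fix — restrict a nontrivial $B$-point to a first-order thickening and rerun the obstruction computation — fails precisely when the image of $V^\vee$ in $\mathfrak m_B$ lands inside $\mathfrak m_B^2$, where the first-order truncation of $\Spec B\to\cA_V$ is trivial. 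The natural higher-order induction is not a formality either: at the first small extension $B''\twoheadrightarrow B'$ where $V^\vee$ becomes visible, the obstruction to lifting the \emph{family} $I\udot_{B'}$ (not just the central complex) is a sum of the Kodaira–Spencer contribution and the intrinsic obstruction over the constant base, and the latter need not vanish since $\PTqvX$ may be singular, so you cannot isolate $\sfob(\kappa)=0$.

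The paper closes exactly this gap by working with arbitrary $T$-points at once, driven by Proposition~\ref{lem:chishorizontal}: for any affine $T$ and any $t:T\to{\curly P}^{(q)}_{\tv}(\cX_V/\cA_V)$, the relative $\ch_2(I\udot)$ on $\cX_T$ admits a horizontal lift in $F^2H^4_{DR}(\cX_T/T)$, which by uniqueness equals $t^*(\gamma\otimes 1)$, so $t^*(\overline{\gamma\otimes 1})=0$ and $T\to\cA_V$ lands in the Hodge locus $\{o\}$. You mention Proposition~\ref{lem:chishorizontal} only as a "secondary subtlety," but it is the load-bearing ingredient for the second half of the argument; promoting it to that role (and using the paper's step~1 to know the Hodge locus is $\{o\}$) would make your proof complete.
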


\begin{proof}
The proof is similar to \cite[Lem.~2.1]{KT1} and \cite[Prop.~2.2]{KT1}. We first claim that the {\em Hodge locus} of the horizontal section
\[\gamma \otimes 1 \in H^4_{DR}(\cX_V/\cA_V) \cong H^4_{DR}(X)\otimes A_V\]
is just the unique closed point in $\cA_V$. More precisely, the scheme-theoretical zero locus of the induced section
\[\overline{\gamma\otimes1} \in \frac{H^4_{DR}(\cX_V/\cA_V)}{F^2H^4_{DR}(\cX_V/\cA_V)}\]
is the closed point in $\cA_V$. Indeed, choose a subspace $W \subseteq V$ and let 
\[\cA_W:=\Spec(\C\oplus W\dual) \subseteq \cA_V\]
be the corresponding subscheme. Assume that the section $\overline{\gamma\otimes1}$ vanishes on $\cA_W$, i.e.,
\[\overline{\gamma\otimes1}|_{\cA_W} =0 \in \frac{H^4_{DR}(\cX_W/\cA_W)}{F^2H^4_{DR}(\cX_W/\cA_W)}.\]
Equivalently, the unique horizontal lift of $\gamma$ over $\cA_W$ is a Hodge class.


By \cite[Prop.~4.2]{Blo} (cf. Lemma \ref{Lem:Obstructions}(1)), we deduce that the composition
\[\sfB_\gamma|_W : W \hookrightarrow H^1(X,T_X) \xrightarrow{\iota_{-}(\gamma)} H^3(X,\Omega^1_X) \cong H^1(X,T_X)\dual\]
is zero. Hence we have
\[ W \subseteq V \cap \ker(\sfB_\gamma) =0\]
by the non-degeneracy of $V$. This proves the claim.

We will now show that any morphism
\begin{equation}\label{eq:S5.3}
t : T \to {\curly P}^{(q)}_{\tv}(\cX_V/\cA_V) 	
\end{equation}
from an affine scheme $T$ factors through $\PTqvX$. Indeed, let 
\[I\udot=[\O_{\cX_T} \xrightarrow{s} F]\]
be the family of $\PT_q$ pairs that corresponds to the map \eqref{eq:S5.3}, where
\[\xymatrix{
\cX_T \ar[r] \ar[d] & \cX_V \ar[d]\\
T \ar[r]^t & \cA_V
}\]
is the induced fibre diagram. By Proposition \ref{lem:chishorizontal}, we have 
\[t^*(\gamma\otimes 1) = \widetilde{\ch_2(I\udot)} \in F^2H^4_{DR}(\cX_T/T) \]
where $\widetilde{\ch_2(I\udot)} \in F^2 H^4_{DR}(\cX_T/T)$ is the unique horizontal lift of the Chern character $\ch_2(I\udot):=\tfrac{1}{2} \tr(\At^2(I\udot)) \in H^2(\cX_T,\Omega_{\cX_T/T}^2)$. Thus we have
\[t^*(\overline{\gamma\otimes 1}) = 0 \in \frac{H^4_{DR}(\cX_T/T)}{F^2H^4_{DR}(\cX_T/T)}.\]
Hence the map $t:T \to \cA_V$ factors through the zero locus of $\overline{\gamma\otimes1}$, which is the unique closed point in $\cA_V$ by the result in the previous paragraph. Now the fibre diagram \eqref{eq:S5.4} proves the desired property.

Finally, since the map $\PTqvX \hookrightarrow {\curly P}^{(q)}_{\tv}(\cX_V/\cA_V)$ is a closed embedding, the result in the previous paragraph proves that the map is an isomorphism.
\end{proof}

By Proposition \ref{prop:P(X)=P(X/A)}, the reduced obstruction theory of $\PTqvX$ can be obtained as the obstruction theory of ${\curly P}^{(q)}_{\tv}(\cX_V/\cA_V)$.

\begin{proof}[Proof of Proposition \ref{prop:redSOT}]
By Proposition \ref{prop:P(X)=P(X/A)}, the fibre diagram \eqref{eq:S5.4} can be expressed as
\[\xymatrix{
 & \cA_V \ar[d]\\
\PTqvX \ar[r] \ar[ru] & \Spec(\C) \ar@/^0.4cm/[u]
}\]
where the two triangles commute. Hence we have a canonical direct sum decomposition of the cotangent complexes
\begin{equation}\label{eq:S5.5}
\xymatrix{
\LL_{\PTqvX} \ar@<.4ex>[r] & \LL_{\PTqvX/\cA_V} \ar@<.4ex>[l] \ar@<.4ex>[r] & \LL_{\cA_V}|_{\PTqvX}[1] \ar@<.4ex>[l]
}	
\end{equation}
where the upper arrows and the lower arrows are the canonical distinguished triangles (cf. \cite[Lem.~2.10]{Qu}). 
Let
\[\phi_{\cX_V/\cA_V} : \EE_{\cX_V/\cA_V} \to \trunc \LL_{\PTqvX/\cA_V}\]
be the standard {\em relative} obstruction theory for ${\curly P}_{\tv}^{(q)}(\cX_V/\cA_V) \to \cA_V$ where $\EE_{\cX_V/\cA_V} = \EE$ by Proposition \ref{prop:P(X)=P(X/A)}.
We will consider the induced {\em absolute} obstruction theory. Indeed, form a morphism of distinguished triangles
\begin{equation}\label{eq:S5.6}
\xymatrix{
\EE^\red_V \ar@{.>}[d]^{\phi^\red_V} \ar[r] & \EE \ar[r]^-{\SR_V[1]} \ar[d]^{\phi_{\cX_V/\cA_V}} & V\dual\otimes \O_{\PTqvX}[1] \ar@{=}[d]\\
\trunc\LL_{\PTqvX} \ar[r] & \trunc \LL_{\PTqvX/\cA_V}  \ar[r]^-{\KS} & \Omega^1_{\cA_V}|_{\PTqvX}[1]
}	
\end{equation}
for some dotted arrow $\phi^\red_V$, where the upper distinguished triangle is induced by \eqref{eq:EEredV}, the lower distinguished triangle is the truncation of \eqref{eq:S5.5}, and the right square commutes by Lemma \ref{lem:compatibilityofKS} below. 
By the long exact sequence associated to \eqref{eq:S5.6}, we can deduce that the map $\phi^\red_V:\EE^\red_V \to \trunc \LL_{\PTqvX}$ is an obstruction theory, in the sense of \cite{BF}.
Moreover since the lower distinguished triangle in \eqref{eq:S5.6} splits, the obstruction theory $\phi^\red_V :\EE^\red_V \to \trunc \LL_{\PTqvX}$ is uniquely determined by the diagram \eqref{eq:S5.6}. Thus the map $\phi^\red_V$ in \eqref{eq:S5.6} is equal to the composition \eqref{eq:S5.redSOT}
since the composition 
\[\EE = \EE_{\cX_V/\cA_V}\xrightarrow{\phi_{\cX_V/\cA_V}} \tau^{\geq -1} \LL_{\PTqvX / \cA_V} \to \tau^{\geq -1} \LL_{\PTqvX}\]
is the original (non-reduced) obstruction theory for $\PTqvX$.
\end{proof}

We need the following compatibility result between the Kodaira-Spencer maps to complete the proof of Proposition \ref{prop:redSOT}. Here we consider a more general version than needed for Proposition \ref{prop:redSOT}. This will be used later in the proof of deformation invariance in Theorem \ref{Thm:DefInv}. 

\begin{lemma}\label{lem:compatibilityofKS}
Let $f:\cX\to\cB$ be a smooth projective morphism of relative dimension $4$ over a quasi-projective scheme $\cB$. Let $\curP:=\PTqvXB$ be the relative moduli space of $\PT_q$ pairs on the fibres of $f:\cX\to\cB$. Let
\[\xymatrix{
\curP\times_{\cB}\cX \ar[d]^{\pi} \ar[r] & \cX \ar[d]^f \\
\curP:=\PTqvXB \ar[r] & \cB
}\]
be a fibre diagram. Then we have a commutative diagram
\begin{equation}\label{eq:S5.8}
\xymatrix{
\EE_{\cX/\cB}:=R\hom_{\pi}(\II\udot,\II\udot\otimes \omega_{\cX/\cB})_0[3] \ar[r]^-{\SR_{\cX/\cB}} \ar[d]^{\phi_{\cX/\cB}} & Rf_*(\Omega_{\cX/\cB}^1 \otimes \omega_{\cX/\cB})|_{\curP}[4] \ar[d]^{-\KS_{\cX/\cB}}\\
\LL_{\curP/\cB} \ar[r]^{\KS_{\curP/\cB}} & \LL_{\cB}|_{\curP}[1]
}	
\end{equation}
where the relative obstruction theory $\phi_{\cX/\cB}$ and the relative cosection $\SR_{\cX/\cB}$ are given by the two components
\begin{align*}
	&\At_{\curP\times_\cB \cX/\cX}(\II\udot) : \II\udot \xrightarrow{\At_{\curP\times_\cB \cX/\cB}(\II\udot)} \II\udot \otimes \LL_{\curP \times_{\cB} \cX/\cB}[1] \xrightarrow{\mathrm{pr}_1} \II\udot \otimes \LL_{\curP/\cB}[1]\\
	&\At_{\curP\times_\cB \cX/\curP}(\II\udot) : \II\udot \xrightarrow{\At_{\curP\times_\cB \cX/\cB}(\II\udot)} \II\udot \otimes \LL_{\curP \times_{\cB} \cX/\cB}[1] \xrightarrow{\mathrm{pr}_2} \II\udot\otimes \Omega^1_{\cX/\cB}[1]
\end{align*}
of the Atiyah class $\At_{\curP\times_\cB \cX/\cB}(\II\udot)$ of the universal complex $\II\udot$.
For $\phi_{\cX/\cB}$, we use the adjunction $R \pi_*(-) \dashv L\pi^*(-) \otimes \omega_{\cX/\cB}[4]$.
\end{lemma}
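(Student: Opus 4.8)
The statement is a compatibility between two Kodaira--Spencer-type maps, and the natural strategy is to deduce it from the factorization properties of the Atiyah class, together with the known compatibility of Atiyah classes with base change. First I would set up the tautological distinguished triangle of cotangent complexes for the tower $\curP\times_\cB\cX \to \cX \to \cB$ (equivalently $\curP\times_\cB\cX\to\curP\to\cB$), namely
\[
\xymatrix{
\mathrm{pr}^*_{\cX}\LL_{\cX/\cB} \ar[r] & \LL_{\curP\times_\cB\cX/\cB} \ar[r] & \mathrm{pr}^*_{\curP}\LL_{\curP/\cB},
}
\]
and recall that $\At_{\curP\times_\cB\cX/\cB}(\II\udot):\II\udot\to\II\udot\otimes\LL_{\curP\times_\cB\cX/\cB}[1]$ is the universal Atiyah class whose two projections are precisely $\At_{\curP\times_\cB\cX/\cX}(\II\udot)$ and $\At_{\curP\times_\cB\cX/\curP}(\II\udot)$, as already stated in the lemma. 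The connecting map of the triangle above, after applying $\mathrm{pr}_{\curP}^*$ and pushing forward, is the Kodaira--Spencer map $\KS_{\curP/\cB}:\LL_{\curP/\cB}\to\LL_{\cB}|_{\curP}[1]$; on the $\cX$-side, the connecting map is $\mathrm{pr}_\cX^*(\KS_{\cX/\cB})$. The point is that these two connecting maps agree after the appropriate identifications because they are both induced by the single triangle, i.e. by the surjection $f^*\Omega^1_\cB\to\Omega^1_\cX$ (this is exactly the classical statement of \cite[IV.3.1]{Ill} that the Atiyah--Kodaira--Spencer square commutes up to sign, and is the same mechanism used in the proof of Lemma~\ref{Lem:Obstructions}(2)).

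Second I would chase the diagram. Starting from $\EE_{\cX/\cB}=R\hom_\pi(\II\udot,\II\udot\otimes\omega_{\cX/\cB})_0[3]$, the relative obstruction theory $\phi_{\cX/\cB}$ is the adjoint of $\At_{\curP\times_\cB\cX/\curP}(\II\udot)$ under $R\pi_*(-)\dashv L\pi^*(-)\otimes\omega_{\cX/\cB}[4]$, landing in $\LL_{\curP/\cB}$; composing with $\KS_{\curP/\cB}$ to reach $\LL_{\cB}|_{\curP}[1]$ amounts, by the functoriality of adjunction and the identification of connecting maps in the previous paragraph, to the adjoint of the composite $\II\udot\xrightarrow{\At_{\curP\times_\cB\cX/\cB}(\II\udot)}\II\udot\otimes\LL_{\curP\times_\cB\cX/\cB}[1]\to\II\udot\otimes\mathrm{pr}_\cX^*\LL_{\cX/\cB}[2]$, where the second arrow is the connecting map of the $\cX/\cB$ triangle. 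On the other hand, the relative cosection $\SR_{\cX/\cB}$ is built from $\At_{\curP\times_\cB\cX/\cX}(\II\udot)$ by taking the trace into $Rf_*(\Omega^1_{\cX/\cB}\otimes\omega_{\cX/\cB})[4]$, and composing with $-\KS_{\cX/\cB}$ (appropriately pushed forward) produces the adjoint of the trace of the \emph{same} composite $\II\udot\to\II\udot\otimes\mathrm{pr}_\cX^*\LL_{\cX/\cB}[2]$. So both legs of \eqref{eq:S5.8} are, up to the sign carried by $-\KS_{\cX/\cB}$, the adjoint of $\tr\big(\At_{\curP\times_\cB\cX/\cB}(\II\udot)\circ(\text{connecting map})\big)$; the sign is exactly the one recorded in Lemma~\ref{Lem:Obstructions}(2) (``the total big square commutes up to sign''), which is why $-\KS_{\cX/\cB}$ rather than $+\KS_{\cX/\cB}$ appears.

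Third, I would need to be careful that the trace map $\tr:R\hom_\pi(\II\udot,\II\udot\otimes(-))\to Rf_*(-)|_{\curP}$ and the projection $\LL_{\curP\times_\cB\cX/\cB}\to\mathrm{pr}^*_\cX\LL_{\cX/\cB}$ commute with the adjunction isomorphism $R\pi_*\dashv L\pi^*(-)\otimes\omega_{\cX/\cB}[4]$ in the expected way; this is a standard but slightly delicate verification using relative Serre duality and the projection formula, and one checks it after reducing to affine base where everything is explicit. Finally, although the lemma is stated over a quasi-projective $\cB$, the desired identity of morphisms in the derived category of $\curP$ can be checked Zariski-locally on $\cB$, and all the ingredients (Atiyah classes, cotangent triangles, trace, Serre duality) are stable under base change by the results of Section~\ref{ss:relativemoduli}, so no additional hypotheses are needed.

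\textbf{Main obstacle.} The conceptual content is routine once one believes the two connecting maps coincide; the genuine work is bookkeeping. The step I expect to be the real obstacle is pinning down \emph{signs and shifts} coherently: the $[1]$'s and $[3]$'s, the sign in $-\KS_{\cX/\cB}$, and the compatibility of the Serre-duality adjunction with the trace all have to be tracked simultaneously, and it is easy to be off by a sign or a shift. I would handle this exactly as in the proof of Lemma~\ref{Lem:Obstructions}(2) --- by writing out the single large diagram of Atiyah classes for the tower $\curP\times_\cB\cX\to\cX\to\cB$ (and $\to\curP$) and invoking that its commutativity up to a controlled sign is the statement of \cite[IV.3.1.5]{Ill}, cf. \cite[Cor.~3.4]{HT} --- rather than by a brute-force sign computation.
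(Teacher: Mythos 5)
Your proposal takes essentially the same route as the paper: both set up the diagram of Atiyah classes for the tower $\curP\times_\cB\cX\to\cX,\curP\to\cB$, deduce that the two compositions with the Kodaira--Spencer maps agree up to sign, and then apply the Serre-duality adjunction (this is the mechanism of Lemma~\ref{Lem:Obstructions}(2), which both you and the paper cite). Two small quibbles, neither fatal. First, you have swapped the labels of the two Atiyah class components: with the paper's conventions, $\At_{\curP\times_\cB\cX/\cX}(\II\udot)$ (the ``$/\cX$''-Atiyah class) is the one landing in the $\LL_{\curP/\cB}$-direction and giving $\phi_{\cX/\cB}$ by adjunction, while $\At_{\curP\times_\cB\cX/\curP}(\II\udot)$ lands in $\Omega^1_{\cX/\cB}$ and gives $\SR_{\cX/\cB}$ by trace; your text has them the other way around (easy to see from $\LL_{\curP\times_\cB\cX/\cX}\cong\mathrm{pr}^*\LL_{\curP/\cB}$, etc.). Second, the phrase ``these two connecting maps agree because they are both induced by the single triangle'' is a bit imprecise: what actually makes the big square anti-commute is that the \emph{relative} Atiyah class $\At_{\curP\times_\cB\cX/\cB}(\II\udot)$ factors through the \emph{absolute} Atiyah class $\At_{\curP\times_\cB\cX}(\II\udot)$, forcing the composition $\KS_{\curP\times_\cB\cX/\cB}\circ\At_{\curP\times_\cB\cX/\cB}(\II\udot)$ to vanish; the anti-commutativity
$\KS_{\cX/\cB}\circ\At_{\curP\times_\cB\cX/\curP}(\II\udot)=-\KS_{\curP/\cB}\circ\At_{\curP\times_\cB\cX/\cX}(\II\udot)$
then follows from the decomposition $\LL_{\curP\times_\cB\cX/\cB}\cong\mathrm{pr}^*\LL_{\curP/\cB}\oplus\mathrm{pr}^*\Omega^1_{\cX/\cB}$. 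Once that identity is in place, the passage to the square~\eqref{eq:S5.8} by Serre-duality adjunction and trace is exactly the bookkeeping you describe.
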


\begin{proof}
The proof is similar that that of Lemma \ref{Lem:Obstructions}.
Consider the diagram
\begin{equation}\label{eq:S5.7}
\xymatrix@C+4pc{
& \LL_{\cX/\cB}|_{\curP\times_\cB \cX} \ar@<.8ex>[d] \ar[rd]^{\KS_{\cX/\cB}} & \\
R\hom_{\curP\times_\cB \cX}(\II\udot,\II\udot)[-1] \ar[r]|-{\At_{\curP\times_\cB \cX/\cB}(\II\udot)} \ar@{.>}[ru]^-{\At_{\curP\times_\cB \cX/\curP}(\II\udot)} \ar@{.>}[rd]_-{\At_{\curP\times_\cB \cX/\cX}(\II\udot)} & \LL_{\curP\times_\cB \cX/\cB} \ar[r]|-{\KS_{\curP\times_\cB \cX/\cB}} \ar@{.>}@<.8ex>[u]^-{\mathrm{pr}_2}\ar@{.>}@<-.8ex>[d]_-{\mathrm{pr}_1}
 & \LL_{\cB}|_{\curP\times_\cB \cX}[1]\\
& \LL_{\curP/\cB}|_{\curP\times_\cB \cX} \ar@<-.8ex>[u] \ar[ru]_{\KS_{\curP/\cB}}
}	
\end{equation}
where the left two triangles commute with the dotted arrows, and the right two triangles commute with the solid arrows.
Since $\At_{\curP\times_\cB\cX/\cB}(\II\udot)$ factors through $\At_{\curP\times_\cB\cX}(\II\udot)$, the composition of the two horizontal arrows in \eqref{eq:S5.7} vanishes
\[\KS_{\curP\times_\cB\cX/\cB} \circ \At_{\curP\times_\cB\cX/\cB}(\II\udot)=0.\]
We deduce that the big square in \eqref{eq:S5.7} commutes (up to sign) 
\begin{equation}\label{eq:S5.9}
\KS_{\cX/\cB} \circ \At_{\curP\times_\cB\cX/\curP}(\II\udot) = - \KS_{\curP/\cB}\circ \At_{\curP\times_{\cB}\cX/\cX}(\II\udot).	
\end{equation}
The commutativity of \eqref{eq:S5.8} follows from \eqref{eq:S5.9} by adjunction.
\end{proof}
%


\begin{remark}\label{Rem2}
The main limitation of the algebraic twistor family approach in this subsection is that the reduced obstruction theory is given as a {\em composition}, but not as a {\em factorization},
\begin{equation}\label{Eq:4.22.1}
\xymatrix{
\EE^\red_V \ar[r] \ar[rd]_{\phi^\red_V} \ar@/^.6cm/[rr]^{\id} & \EE \ar[d]^{\phi} \ar[r]& \EE^\red_V \ar@{.>}[ld]^{?}\\
& \trunc \LL_{\PTqvX} .& 
}    
\end{equation}
In classical reduced theory, e.g.~\cite{KT1}, this was sufficient to construct reduced Behrend-Fantechi virtual cycles \cite{BF}.
However, in DT4 theory, this is not sufficient to construct reduced Oh-Thomas virtual cycles \cite{OT} since we need to know whether the reduced obstruction theory $\phi^\red_V$ satisfies the {\em isotropic condition}
\[\xymatrix{
& \fQ(\EE) \ar@{.>}[r]^-{\not\exists} \ar@{^{(}->}[d] & \fQ(\EE^\red_V) \ar@{^{(}->}[d] \\
\fC_{\PTqvX} \ar@{^{(}->}[r] \ar@/_0.6cm/@{^{(}->}[rr]_{\text{algebraic twistor}}\ar@{.>}[rru]|>>>>>>>>>{?} \ar@{^{(}->}[ru]& \fC(\EE) \ar@{->>}[r] & \fC(\EE^\red_V).
}\]
Thus we additionally need the cone reduction property in the previous subsection (Proposition \ref{prop:conereduction}) to construct a reduced virtual cycle from $\phi^\red_V$. 

To conclude, the cone reduction property in Proposition \ref{prop:conereduction} and the reduced obstruction theory in Proposition \ref{prop:redSOT} give us a commutative diagram
\begin{equation}\label{Eq:4}
\xymatrix@C+4pc{
(\fC_{\PTqvX})_\red \ar@{^{(}->}[d] \ar@{^{(}->}[r]^{\text{cone reduction}} & \fQ(\EE^\red_V) \ar@{^{(}->}[d] \\
\fC_{\PTqvX} \ar@{^{(}->}[r] \ar@{^{(}->}[r]_{\text{algebraic twistor}}  & \fC(\EE^\red_V).
}    
\end{equation}
This proves Theorem \ref{thm:4}.

In Appendix \ref{Appendix:ReductionviaDAG}, we will construct a diagonal arrow $\fC_{\PTqvX} \hookrightarrow \fQ(\EE^\red_V)$ in commutative diagram \eqref{Eq:4} using derived algebraic geometry, see Remark \ref{Rem:A16}.
\end{remark}



\begin{remark}\label{Rem:ReducedDerived}
By Proposition \ref{prop:P(X)=P(X/A)}, the standard derived enhancement on ${\curly P}^{(q)}_v(\cX_V/\cA_V)$ gives us a {\em reduced derived enhancement} on $\PTqvX$.
However, this derived enhancement is not necessarily homotopically finitely presented over $\Spec(\C)$ and does not give us the reduced virtual cycle in Theorem \ref{Thm:RVFC}.
In Appendix \ref{Appendix:ReductionviaDAG}, we will construct a different reduced derived enhancement on $\PTqvX$ that has a $(-2)$-shifted symplectic structure and induces the desired reduced virtual cycle.
\end{remark}

\subsection{Reduced virtual dimension}\label{ss:rvd}
In this subsection, we present various methods to compute the number $\rho_\gamma$ and the reduced virtual dimension.

\noindent

\hfill

\noindent \textbf{Local complete intersection surfaces.}
Let $X$ be a Calabi-Yau 4-fold. We take $i : S \hookrightarrow X$ a \emph{local complete intersection} (lci) surface, $v = \ch(\O_S) = (0,0,\gamma,\beta,\chi(\O_S)-\gamma \cdot \td_2(X)) \in H^*(X)$. Our goal is to make the key diagram from Proposition \ref{prop:ob=srdual} more explicit and determine the reduced virtual dimension $\mathrm{rvd} = \chi(\O_S)-\tfrac{1}{2}\gamma^2+\tfrac{1}{2}\rho_\gamma$.
The normal sheaf $N_{S/X}$ is locally free because $S$ is lci. Since $X$ has trivial canonical bundle, we have
$$K_S\cong N_{S/X} \wedge N_{S/X}$$
and the Serre duality pairing gives 
a non-degenerate symmetric bilinear pairing 
\begin{equation*}\label{eqn:SD_S}
    \mathsf{SD}_S : H^1(S,N_{S/X}) \cong H^1(S,N_{S/X})^\vee.
\end{equation*}

The space $H^1(S,N_{S/X})$ maps to the obstruction space of $I_{S/X}$. From the short exact sequence
\begin{equation}\label{idealses}
    0\to I_{S/X}\to \O_X\to \O_S\to 0\,,
\end{equation}
we get an exact triangle
\[R\Hom_X(I_{S/X},\O_S)\to R\Hom_X(I_{S/X},I_{S/X})[1]\to R\Hom_X(I_{S/X},\O_X)[1]\,.\]
By taking cones of maps from $R\Gamma(X,\O_X)[1]$ to the second and the third factor, we get an exact triangle
\begin{equation}\label{eqn:4.6.1}
    R\Hom_X(I_{S/X},\O_S)\to R\Hom_X(I_{S/X},I_{S/X})_0[1]\to R\Hom_X(\O_S,\O_X)[2]\,.
\end{equation}
Consider the map
\begin{equation}\label{eqn:4.6.2}
   \theta_S: H^1(S,N_{S/X})\to\Ext^1_X(I_{S/X},\O_S)\to \Ext^2_X(I_{S/X},I_{S/X})_0\,,
\end{equation}
where the first map is defined by adjunction. 

We give local descriptions of the obstruction map \eqref{eq:ob} and the semi-regularity map \eqref{eq:sr} for local complete intersection surfaces.
The map defined by 
\begin{equation*}\label{eqn:res}
    \sfob_{S/X} : H^1(X,T_X)\to H^1(S,T_X|_S)\to H^1(S,N_{S/X})
\end{equation*}
is the obstruction map for deforming $S\subset X$ along a deformation of $X$ \cite[Prop.~2.6]{Blo}. Let $d_S: \O_S\to I_{S/X}[1]$ be the map induced by \eqref{idealses}. Let $\alpha:\Ext^1_X(I_{S/X},\O_S)\to H^3(X,\Omega_X^1)$ be the map defined by the trace of 
\[I_{S/X}\to \O_S[1] \xrightarrow{\At(\O_S)} \O_S\otimes \Omega_X^1[2] \xrightarrow{d_S\otimes \id} I_{S/X}\otimes \Omega_X^1[3]\,.\] 
The local semi-regularity map is defined by
\begin{equation}\label{eqn:Lsr}
    \sr_{S/X}: H^1(S,N_{S/X})\to \Ext_X^1(I_{S/X},\O_S)\xrightarrow{\alpha}H^3(X,\Omega_X^1).
\end{equation}

The main result of this subsection consists of the following compatibilities.
\begin{proposition}\label{prop:4.6.1}
Let $i : S\hookrightarrow X$ be a local complete intersection surface inside a Calabi-Yau 4-fold $X$.
\begin{enumerate}
    \item[$(1)$] The map \eqref{eqn:4.6.2} is an isomorphism between two vector spaces with non-degenerate symmetric bilinear forms 
    \[\theta_S : \left(H^1(S,N_{S/X}),\SD_S\right)  \xrightarrow{\cong} \left(\Ext_X^2(I_{S/X}, I_{S/X})_0,\sfY\right).\]
    \item[$(2)$] The following diagram commutes
    \begin{equation*}
    \xymatrix@R=1pc@C=1.2pc{
    H^1(X,T_X)\ar[rr]^{\sfob_{S/X}}\ar[ddrr]_{\sfob} & & H^1(S,N_{S/X})\ar[rr]^{\sr_{S/X}}\ar[dd]^{\theta_S}_{\cong} \ar @{} [dl]^{(2')} \ar @{} [dr]_{(2'')}  & & H^3(X,\Omega_X^1).\\
    &&&&&\\
    & & \Ext^2_X(I_{S/X},I_{S/X})_0\ar[uurr]_{\sr} &&
    }
    \end{equation*}
\end{enumerate}
\end{proposition}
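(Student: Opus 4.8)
The plan is to reduce everything to a single computation with Atiyah classes on $X$ and $S$, following the formalism of Buchweitz--Flenner \cite{BF03}, but keeping careful track of the short exact sequence \eqref{idealses} and the adjunction $i^*\dashv i_*$.

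First I would prove (1). Since $S$ is lci inside the smooth $X$, the ideal $I_{S/X}$ is perfect and the conormal sheaf $I_{S/X}/I_{S/X}^2 = N_{S/X}^\vee$ is locally free of rank $2$. The exact triangle \eqref{eqn:4.6.1} together with $\Ext^0_X(\O_S,\O_X)=\Ext^1_X(\O_S,\O_X)=0$ (purity of $\O_S$, or $\ext^i_X(\O_S,K_X)=0$ for $i\neq n-2,n-1$) shows that $\theta_S$ is the composite of an isomorphism $H^1(S,N_{S/X})\xrightarrow{\cong}\Ext^1_X(I_{S/X},\O_S)$ (adjunction plus $N_{S/X}=\mathcal Hom_S(I_{S/X}/I_{S/X}^2,\O_S)$ and the fact that $i^!\O_S = N_{S/X}^\vee[-2]\otimes\cdots$; more directly $R\Hom_X(I_{S/X},\O_S)\cong R\Hom_S(Li^*I_{S/X},\O_S)$ and $h^0(Li^*I_{S/X})=N_{S/X}^\vee$) followed by the connecting map $\Ext^1_X(I_{S/X},\O_S)\to\Ext^2_X(I_{S/X},I_{S/X})_0$, which is an isomorphism in the relevant degree by the vanishing just quoted. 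For the compatibility of bilinear forms, I would use that the Yoneda pairing $\sfY$ on $\Ext^2_X(I_{S/X},I_{S/X})_0$ is, under the identification $\Ext^2_X(I_{S/X},I_{S/X})_0\cong\Ext^1_X(I_{S/X},\O_S)\cong H^1(S,N_{S/X})$, computed by composing with the canonical trace $R\Hom_X(\O_S,\O_S)\to \O_X[2n-\cdots]$; concretely, Serre duality on $X$ for sheaves supported on $S$ factors through Serre duality on $S$ via $K_S\cong N_{S/X}\wedge N_{S/X}$ (adjunction $i^!K_X = K_S[-(n-2)]$), and the wedge pairing $N_{S/X}\otimes N_{S/X}\to K_S$ is exactly what gives $\SD_S$. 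This matching is the content of the usual compatibility of Serre duality with closed immersions, so I expect it to be a diagram chase rather than a new idea.

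Next, (2). The triangle $(2'')$, i.e. $\sr = \sr_{S/X}\circ\theta_S^{-1}$, is essentially a definition: $\sr$ is the trace of $\At(I_{S/X})$ composed with the class, and under \eqref{eqn:4.6.1} the Atiyah class $\At(I_{S/X}): I_{S/X}\to I_{S/X}\otimes\Omega^1_X[1]$ restricted along $\O_S\xrightarrow{d_S}I_{S/X}[1]$ is exactly $\At(\O_S)$ up to the identifications, because the Atiyah class is functorial for the morphism $I_{S/X}\to\O_X$ and $\At(\O_X)=0$. So composing the definition \eqref{eqn:Lsr} of $\sr_{S/X}$ with $\theta_S$ reproduces \eqref{eq:sr}; this requires only unwinding the maps $\alpha$ and $d_S\otimes\mathrm{id}$. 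The triangle $(2')$, i.e. $\sfob = \theta_S\circ\sfob_{S/X}$, is the sheaf-theoretic incarnation of Bloch's statement \cite[Prop.~2.6]{Blo} that $\sfob_{S/X}$ governs deforming $S\subset X$: concretely $\sfob(\xi)=\iota_\xi\At(I_{S/X})$, and under adjunction $\iota_\xi\At(I_{S/X})$ corresponds, via the triangle $R\Hom_X(I_{S/X},\O_S)\to R\Hom_X(I_{S/X},I_{S/X})_0[1]\to\cdots$, to the image of $\xi$ under $H^1(X,T_X)\to H^1(S,T_X|_S)\to H^1(S,N_{S/X})$; the second arrow here is induced by $T_X|_S\to N_{S/X}$ and the identification of $N_{S/X}$ with $h^0(Li^*I_{S/X}^\vee[1])$. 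I would extract this from the commuting-square part of Lemma \ref{Lem:Obstructions}(2) applied to the ideal complex, together with the compatibility of the Kodaira--Spencer map of a deformation of $X$ with the induced deformation of $S$.

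The outer triangle $\sr\circ\sfob = \SD\circ\iota_{-}(\gamma)$ (with $\gamma=[S]$) is then automatic from Proposition \ref{prop:ob=srdual}, since $I_{S/X}=[\O_X\to\O_S]$ is the associated complex of the $\DT$-pair $(\O_S,1)$ and $\ch_2(I_{S/X})=[S]=\gamma$, $\ch_1=0$; alternatively it also follows by composing $(2')$ and $(2'')$ once those are established, which is a good consistency check. The main obstacle I anticipate is purely bookkeeping: pinning down all the identifications $R\Hom_X(I_{S/X},\O_S)\cong R\Hom_S(\cdots)$, $i^!K_X\cong K_S[-(n-2)]$, $N_{S/X}\wedge N_{S/X}\cong K_S$, and checking that the trace maps and the wedge pairing line up with the correct signs so that $\SD_S$ and $\sfY$ genuinely correspond under $\theta_S$. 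None of these steps is conceptually hard, but getting the compatibilities of Serre duality, adjunction, and Atiyah classes to fit together without sign errors is where the real work lies; I would organize it by first fixing the isomorphism $\theta_S$ on the level of complexes (not just cohomology), then checking (1) and the two inner triangles of (2) all use that same choice.
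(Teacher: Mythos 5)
Your outline for part (1) contains a genuine gap, not just a bookkeeping issue. You assert that $\theta_S$ is a composite of two isomorphisms: first $H^1(S,N_{S/X})\xrightarrow{\cong}\Ext^1_X(I_{S/X},\O_S)$, then the connecting map $\Ext^1_X(I_{S/X},\O_S)\to\Ext^2_X(I_{S/X},I_{S/X})_0$, "which is an isomorphism by the vanishing just quoted." Both halves of this are false in general. Since $S$ is lci of codimension $2$, the local Ext sheaves are $\ext^0_X(I_{S/X},\O_S)\cong N_{S/X}$ and $\ext^1_X(I_{S/X},\O_S)\cong\wedge^2 N_{S/X}\cong K_S$ (the latter is nonzero!), so the local-to-global spectral sequence gives an exact sequence
\[
0\to H^1(S,N_{S/X})\to\Ext^1_X(I_{S/X},\O_S)\to H^0(S,K_S)\to\cdots
\]
and the first map is merely injective, with cokernel inside $H^0(S,K_S)$, which is nonzero for many lci surfaces in a Calabi--Yau $4$-fold (e.g.\ K3 or general-type surfaces). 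Dually, the vanishings $\Ext^0_X(\O_S,\O_X)=\Ext^1_X(\O_S,\O_X)=0$ you quote do \emph{not} kill $\Ext^2_X(\O_S,\O_X)\cong H^0(S,K_S)$, so the connecting map $\Ext^1_X(I_{S/X},\O_S)\to\Ext^2_X(I_{S/X},I_{S/X})_0$ has kernel $H^0(S,K_S)$ and is only surjective after a further argument. The correct statement is that the \emph{composite} is an isomorphism, and seeing this is the heart of the proof: one has to confront \emph{two} long exact sequences (one from the triangle $I_{S/X}\to\O_X\to\O_S$, one from the local-to-global spectral sequence) whose "error terms" $H^0(S,K_S)$ and $H^1(S,K_S)$ match up via the fundamental local isomorphism $\ext^2_X(\O_S,\O_X)\cong K_S$; one then shows the two relevant compositions $H^j(S,K_S)\to\Ext^{j+1}_X(I_{S/X},\O_S)\to H^j(S,K_S)$ are isomorphisms, forcing both sequences to split compatibly. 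Your proposal skips this entirely.

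The pairing compatibility is also more than the diagram chase you anticipate: in the paper it goes through an auxiliary map $\theta'_S:H^1(S,N_{S/X})\to\Ext^2_X(\O_S,\O_S)$, Grothendieck duality ($i^!=\SD_S\circ Li^*\circ\SD_X^{-1}$), and a cyclicity-of-trace argument showing the two quadratic forms pull back to the same form on $\Ext^1_X(I_{S/X},\O_S)$; this intermediate $\theta'_S$ is also what makes the $(2')$ triangle tractable (combined with injectivity of $d_S$ on $\Ext^2_X(I_{S/X},I_{S/X})_0$), rather than the route through Lemma \ref{Lem:Obstructions}(2) you suggest. Your handling of $(2'')$ and the outer triangle (via Proposition \ref{prop:ob=srdual}) is fine, and your overall reduction to Atiyah-class formalism is the right instinct, but as written part (1) rests on two false claims and needs the splitting argument to be repaired.
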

\begin{proof}To simplify the notation, denote $I:=I_{S/X}$. We first prove that the map $\theta_S$ is an isomorphism. The compatibility of the two bilinear forms will be proved at the end. The exact triangle \eqref{eqn:4.6.1} gives a long exact sequence
\begin{align}
\begin{split} \label{eqn:les1}
0 &\to \underbrace{\Ext^2_X(\O_S,\O_X)}_{\cong H^0(S,K_S)} \xrightarrow{\textup{(i)}} \Ext^1_X(I,\O_S) \xrightarrow{\textup{(ii)}} \Ext^2_X(I,I)_0 \\
&\to \underbrace{\Ext^3_X(\O_S,\O_X)}_{\cong H^1(S,K_S)} \xrightarrow{\textup{(iii)}} \Ext^2_X(I,\O_S) \xrightarrow{\textup{(iv)}} \Ext^3_X(I,I)_0 \\
&\to \Ext^4_X(\O_S,\O_X)\xrightarrow{\textup{(v)}} \Ext^3_X(I,\O_S)\to 0
\end{split}
\end{align}
since $\Hom_X(I,\O_S)\to \Ext^1_X(I,I)_0$ is an isomorphism (Theorem \ref{Thm:PairtoPerf}).
The map (v) is Serre dual to \[\Ext^1_X(\O_S,I)\xrightarrow{\textup{(v')}} \Hom_X(\O_X,\O_S).\] Applying $\ext\udot_X(\O_S,-)$ to \eqref{idealses}, we have  
\[\hom_X(\O_S,I)=0, \quad \ext^1_X(\O_S,I)\cong \hom_X(\O_S,\O_S)\]
since $\hom_X(\O_S,\O_X)=\ext^1_X(\O_S,\O_X)=0$. Therefore
(v') can be identified with the map induced by the isomorphism $\hom_X(\O_S,\O_S)\cong \hom_X(\O_X,\O_S)$ and (iv) is surjective.

Consider the local-to-global spectral sequence 
\[H^p(X,\ext^q_X(I,\O_S)) \Rightarrow \Ext^{p+q}_X(I,\O_S).\]
Since $S$ is lci, there are isomorphisms
$$
\ext^q_X(I,\O_S) \cong \ext^{q+1}_X(\O_S,\O_S) \cong \bigwedge^{q+1} N_{S/X}, \quad \forall q\geq 0
$$
by \cite[Prop.VII.2.5]{SGA6}.
Since $\ext^q_X(I,\O_S)=0$, for all $q\geq 2$, the local-to-global spectral sequence collapses to
the long exact sequence
\begin{align}
\begin{split}  \label{eqn:les2}
0 &\to H^1(S,N_{S/X}) \xrightarrow{} \Ext^1_X(I,\O_S) \xrightarrow{(a)} \underbrace{H^0(S,N_{S/X} \wedge N_{S/X})}_{H^0(S,K_S)} \\
& \to H^2(S,N_{S/X}) \xrightarrow{} \Ext^2_X(I,\O_S) \xrightarrow{(b)} \underbrace{H^1(S,N_{S/X} \wedge N_{S/X})}_{H^1(S,K_S)} \to 0.
\end{split}
\end{align}
Combined with \eqref{eqn:les1}, we obtain morphisms
\begin{align*}
&H^0(S,K_S) \xrightarrow{\textup{(i)}} \Ext^1_X(I,\O_S) \xrightarrow{(a)} H^0(S,K_S), \\
&H^1(S,K_S) \xrightarrow{\textup{(iii)}} \Ext^2_X(I,\O_S) \xrightarrow{(b)} H^1(S,K_S).
\end{align*}
We show that the composition is an isomorphism. Since $\O_S$ is Cohen-Macaulay, we have $\ext^q_X(\O_S,\O_X)=0$, for $q=3,4$ (Lemma \ref{Lem.PT0=PT1}). Therefore we have $H^p(X,\ext^2_X(\O_S,\O_X))\cong \Ext^{p+2}_X(\O_S,\O_X)$ for all $p$. By functoriality of the local-to-global spectral sequence, the map $(a)\circ (i)$ is induced by
\[\xymatrix{
\Ext_X^2(\O_S,\O_X)\ar[r]^{\textup{(i)}} & \Ext^1_X(I,\O_S) \ar[d]^{(a)}\\
H^0(X,\ext^2_X(\O_S,\O_X))\ar[u]_{\cong}\ar[r]^{(\star)} & H^0(X,\ext^2_X(\O_S,\O_S))
}\]
where the map $(\star)$ is induced by applying $\ext\udot_X(\O_S,-)$ to \eqref{idealses}. By the fundamental local isomorphism, $(\star)$ is induced by 
\[\ext_X^2(\O_S,\O_X)\cong K_S\otimes i^*\O_X\cong \ext^2_X(\O_S,\O_S)\,.\]
Hence the composition is an isomorphism. Similar arguments show that the second composition is also an isomorphism. Consequently, \eqref{eqn:les1} and \eqref{eqn:les2} split into two short exact sequences and hence the composition
\[
\begin{tikzcd}
\theta_S: H^1(S,N_{S/X})\ar[r,hook] & \Ext^1_X(I,\O_S)\ar[r,twoheadrightarrow, "\textup{(ii)}"] &\Ext^2_X(I,I)_0
\end{tikzcd}
\]
is an isomorphism.

The commutativity $(2'')$ follows from the compatibility of $\At(I)$ and $\At(\O_S)$ along $d_S$. To prove the commutativity $(2')$, we first show that the following diagram commutes
\begin{equation}\label{eqn:Os}
    \begin{tikzcd}
    & \Ext^2_X(\O_S,\O_S) \\
    H^1(X,T_X) \ar[r,"\ob_{S/X}"]\ar[ur,"\sfob(\O_S)"] & H^1(S,N_{S/X}). \ar[u,"\theta_S'"] 
    \end{tikzcd}
\end{equation}
Here the map $\theta_S'$ is obtained similarly to \eqref{eqn:4.6.2}. Consider the composition
\[\theta_S'' : H^1(S, N_{S/X})\xrightarrow{\cong} \Ext^1_S(I/I^2,\O_S) \to \Ext^1_X(i_*I/I^2,i_*\O_S)\to \Ext^2_X(\O_S,\O_S)\,,\]
where the second map is the pushforward and the third map is induced by the exact sequence
\begin{equation}\label{eqn:I2}
    0\to I/I^2\to \O_X/I^2\to \O_S\to 0\,.
\end{equation}
The map $\theta_S'$ can be identified with $\theta''_S$ by comparing \eqref{idealses} and \eqref{eqn:I2}.
Therefore the commutativity of \eqref{eqn:Os} follows from the commutative diagram
\[
\begin{tikzcd}[column sep=large]
\O_S \ar[r,"e_S"] \ar[d,equal] & I/I^2[1]\ar[d,"\delta"]\\
\O_S \ar[r,"\At(\O_S)"] & \O_S\otimes\Omega^1_X[1]\,,
\end{tikzcd}
\]
where $e_S$ is induced from \eqref{eqn:I2} and $\delta$ is defined by the conormal sequence. By the naturality of Atiyah classes, the following diagram commutes
\[
\begin{tikzcd}[column sep=large]
H^1(X,T_X)\ar[r,"\sfob"] \ar[d,"\sfob(\O_S)"] & \Ext^2_X(I,I)_0\ar[d,"d_S"]\\
\Ext^2_X(\O_S,\O_S)\ar[r,"d_S"] & \Ext^3_X(\O_S,I)\,.
\end{tikzcd}
\]
The right vertical map is injective because it is dual to (ii). Therefore if suffices to check $(2')$ after post-composing with $d_S$. Since \eqref{eqn:Os} commutes, this follows from  the following diagram
\begin{equation*}\label{eqn:Os2}
    \begin{tikzcd}[column sep=large]
    H^1(S,N_{S/X})\ar[r,"\theta_S"]\ar[d,"\theta_S'"] & \Ext^2_X(I,I)_0\ar[d,"d_S"]\\
    \Ext^2_X(\O_S,\O_S) \ar[r,"d_S"] & \Ext^3_X(\O_S,I).
    \end{tikzcd}
\end{equation*}



Finally we prove that the map $\theta_S$ preserves pairings. We first show that $\theta_S'$ preserves pairings. Let $i^!$ be the right adjoint of $i_*$. Then $\theta_S'$ decomposes as
\begin{align*}
    \Ext^1_S(I/I^2,\O_S)\to \Ext^1_S(I/I^2,i^!i_*\O_S)\xrightarrow{\cong} \Ext^1_X(i_*I/I^2,i_*\O_S)\xrightarrow{e_S} \Ext^2_X(i_*\O_S,i_*\O_S)
\end{align*}
where the first map is the adjoint, the second map is Grothendieck duality, and the third map is induced by \eqref{eqn:I2}. By the functoriality of Serre duality, we are reduced to checking commutativity of the diagram
\[\xymatrix{
\Ext^1_S(I/I^2,i^!i_*\O_S)\ar[r]^\cong\ar[d]^{\SD_S} &\Ext^1_X(i_*I/I^2,i_*\O_S)\ar[d]^{\SD_X}\\
\Ext^3_S(Li^*i_*\O_S,I/I^2)^\vee &\Ext^3_X(i_*\O_S,i_*I/I^2)^\vee\ar[l]_\cong
}\]
where the bottom isomorphism is induced by the adjunction $Li^*\dashv i_*$.
Since $S$ is lci, we have $i^!=\SD_S\circ Li^*\circ \SD_X^{-1}$.  Therefore the above diagram commutes and $\theta_S'$ preserves pairings.
The pullbacks of quadratic forms on $\Ext^2_X(I,I)_0$ and $\Ext^2_X(\O_S,\O_S)$ to $\Ext^1_X(I,\O_S)$ coincide because $\tr(d_S\circ u \circ d_S \circ u) = \tr(u\circ d_S\circ u \circ d_S)$ for $u\in \Ext^1_X(I,\O_S)$. Since $\theta_S,\theta_S'$ both factor through $\Ext^1_X(I,\O_S)$, the compatibility of quadratic forms under pullback along $\theta_S$ follows from compatibility under pullback along $\theta_S'$.
\end{proof}
\begin{remark}
The map \eqref{eqn:Lsr} coincides with Bloch's semi-regularity map \cite[Prop.~8.2]{BF03}. Furthermore, we have a map 
$$
\sr' = \tr(\At(\O_S) \circ -) : \Ext_X^2(\O_S,\O_S) \to H^3(X,\Omega_X^1).
$$
By Proposition \ref{prop:4.6.1}$(2'')$,
compatibility of Atiyah classes, and cyclicity of trace, the following diagram commutes
\[\begin{tikzcd}
    \Ext^2_X(\O_S,\O_S)\ar[dr,"\sr'"] &\\
    H^1(S,N_{S/X}) \ar[u,"\theta_S'"]\ar[r,"\sr_{S/X}"] & H^3(X,\Omega_X^1).
\end{tikzcd}\]
\end{remark}

The following simple observation is useful for computing the reduced virtual dimension. 
\begin{remark} \label{rem:ATA}
Let $J$ be a complex invertible symmetric $m \times m$ matrix. In general, a complex $m \times n$ matrix $A$ does not satisfy $\rk(A^TJA)=\rk(A)$. This equality holds if and only if $J$ is non-degenerate on $\im(A)$.
\end{remark}
We now compute the reduced virtual dimension when $v=\ch(\O_S)$ for some lci surfaces.
\begin{corollary}\label{cor:4.6.4}
Let $i : S\hookrightarrow X$ be a local complete intersection surface inside a Calabi-Yau 4-fold $X$. If $v=\ch(\O_S)$ then
\[\mathrm{rvd} = \frac{1}{2}\chi(N_{S/X}) + \frac{1}{2} \mathrm{rk}(\sfob_{S/X}^\vee \circ \SD_S \circ \sfob_{S/X}).\]
In particular when $\sfob_{S/X}$ is surjective, i.e.~$S$ is semi-regular, we have
$$
\mathrm{rvd} = h^0(S,N_{S/X}).
$$
\end{corollary}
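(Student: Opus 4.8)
The plan is to reduce Corollary~\ref{cor:4.6.4} to the two identities
\[
\chi(\O_S)-\tfrac12\gamma^2=\tfrac12\chi(N_{S/X})
\qquad\text{and}\qquad
\rho_\gamma=\rk\!\big(\sfob_{S/X}^\vee\circ\SD_S\circ\sfob_{S/X}\big),
\]
and then combine them with the formula $\rvd=(\chi(\O_S)-\tfrac12\gamma^2)+\tfrac12\rho_\gamma$ of Theorem~\ref{Thm:RVFC}. Here one first notes that, for $F=\O_S$ on the Calabi--Yau $4$-fold $X$, Hirzebruch--Riemann--Roch gives $\chi(\O_S)=\int_X\ch(\O_S)\td(X)=\ch_4(\O_S)+\gamma\cdot\td_2(X)$, so the integer $n$ in $v=(0,0,\gamma,\beta,n-\gamma\cdot\td_2(X))$ is $\chi(\O_S)$.

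First I would establish $\chi(\O_S)-\tfrac12\gamma^2=\tfrac12\chi(N_{S/X})$. By the Hirzebruch--Riemann--Roch computation in the proof of Theorem~\ref{Thm:NRVFC} the left-hand side equals $\vd=\mathrm{ext}^1-\tfrac12\mathrm{ext}^2$, where $\mathrm{ext}^i:=\dim_\C\Ext^i_X(I,I)_0$ and $I:=I_{S/X}$ (using $\Ext^0_X(I,I)_0=\Ext^4_X(I,I)_0=0$ and Serre duality $\Ext^1\cong(\Ext^3)^\vee$ on $X$). So it suffices to prove $\chi(N_{S/X})=2\,\mathrm{ext}^1-\mathrm{ext}^2$. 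The three contributions are: $h^1(N_{S/X})=\mathrm{ext}^2$ by the isomorphism $\theta_S$ of Proposition~\ref{prop:4.6.1}$(1)$; $h^0(N_{S/X})=\dim\Hom_X(I,\O_S)=\mathrm{ext}^1$, using that $N_{S/X}=(I/I^2)^\vee$ for the lci surface $S$ (so $H^0(S,N_{S/X})=\Hom_S(I/I^2,\O_S)=\Hom_X(I,\O_S)$ via the local-to-global spectral sequence) together with the isomorphism $\Hom_X(I,\O_S)\xrightarrow{\cong}\Ext^1_X(I,I)_0$ of Theorem~\ref{Thm:PairtoPerf} (cf.~\eqref{eqn:4.6.1}); and $h^2(N_{S/X})=h^0(N_{S/X})$ by Serre duality on $S$, since $K_S\cong\det N_{S/X}$ by adjunction ($K_X\cong\O_X$) and $N_{S/X}^\vee\otimes\det N_{S/X}\cong N_{S/X}$ for a rank-$2$ bundle. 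Adding these gives $\chi(N_{S/X})=2\,\mathrm{ext}^1-\mathrm{ext}^2=2\vd$, as wanted.

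Next I would prove $\rho_\gamma=\rk(\sfob_{S/X}^\vee\circ\SD_S\circ\sfob_{S/X})$. By Proposition~\ref{prop:ob=srdual} (the symmetry $\sfob=\sr^\vee$ together with the triangle~\eqref{eq:triangle.obsrBgamma'}) the bilinear form $\sfB_\gamma$ is the pullback of the Yoneda pairing $\sfY$ on $\Ext^2_X(I,I)_0$ along $\sfob:H^1(X,T_X)\to\Ext^2_X(I,I)_0$, hence $\rho_\gamma=\rk(\sfob^\vee\circ\sfY\circ\sfob)$ (identifying $\sfY$ with the symmetric operator it defines). Proposition~\ref{prop:4.6.1}$(2)$ factors $\sfob=\theta_S\circ\sfob_{S/X}$, and Proposition~\ref{prop:4.6.1}$(1)$ says $\theta_S$ is an isometry $(H^1(S,N_{S/X}),\SD_S)\xrightarrow{\cong}(\Ext^2_X(I,I)_0,\sfY)$, so $\theta_S^\vee\circ\sfY\circ\theta_S=\SD_S$. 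Therefore $\sfB_\gamma=\sfob_{S/X}^\vee\circ\SD_S\circ\sfob_{S/X}$, and taking ranks yields the claim; combined with the previous paragraph this gives $\rvd=\tfrac12\chi(N_{S/X})+\tfrac12\rk(\sfob_{S/X}^\vee\circ\SD_S\circ\sfob_{S/X})$.

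Finally, for the ``in particular'' statement, if $\sfob_{S/X}$ is surjective then so is $\sfob=\theta_S\circ\sfob_{S/X}$, hence $\sr=\sfob^\vee$ is injective, i.e.~$I$ is semi-regular (this is the meaning of ``$S$ is semi-regular'' here). Since $\SD_S$ is a perfect pairing and $\im(\sfob_{S/X})=H^1(S,N_{S/X})$, Remark~\ref{rem:ATA} gives $\rk(\sfob_{S/X}^\vee\circ\SD_S\circ\sfob_{S/X})=\rk(\sfob_{S/X})=h^1(N_{S/X})$; substituting into the formula and using $\chi(N_{S/X})=2h^0(N_{S/X})-h^1(N_{S/X})$ from the first paragraph gives $\rvd=h^0(S,N_{S/X})$. (Alternatively, one may invoke Theorem~\ref{Thm:SR=smoothofrvd} directly: semi-regularity forces $\PTqvX$ to be smooth of dimension $\rvd$ at $[\O_S]$, with tangent space $\Ext^1_X(I,I)_0\cong H^0(S,N_{S/X})$.) The proof is essentially a transcription of Propositions~\ref{prop:ob=srdual} and~\ref{prop:4.6.1}; the only mildly delicate input is the self-duality $h^2(N_{S/X})=h^0(N_{S/X})$ needed to turn $\chi(N_{S/X})$ into $2\vd$, which is where the bulk of the (short) argument lies.
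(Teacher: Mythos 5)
Your proof is correct, and the second identity $\rho_\gamma=\rk(\sfob_{S/X}^\vee\circ\SD_S\circ\sfob_{S/X})$ and the ``in particular'' step are handled just as in the paper (both invoke Proposition~\ref{prop:4.6.1} and Remark~\ref{rem:ATA}). Where you diverge is the first identity $\tfrac12\chi(N_{S/X})=\vd$. The paper obtains this in one line by Hirzebruch--Riemann--Roch on $S$: $\chi(N_{S/X})=\int_S\ch(N_{S/X})\td(S)=2\chi(\O_S)-\int_S c_2(N_{S/X})=2\chi(\O_S)-[S]^2$, using $c_1(N_{S/X})=c_1(K_S)$, Noether's formula, and the self-intersection formula $i^*i_*[S]=c_2(N_{S/X})$. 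You instead expand $\vd=\ext^1-\tfrac12\ext^2$ (via HRR on $X$ and Serre duality of the $\Ext$ groups) and then identify $h^0(N_{S/X})=\ext^1$, $h^1(N_{S/X})=\ext^2$, $h^2(N_{S/X})=h^0(N_{S/X})$ using Proposition~\ref{prop:4.6.1}$(1)$, the isomorphism $\Hom_X(I,\O_S)\cong\Ext^1_X(I,I)_0$ from~\eqref{eqn:4.6.1}, and Serre duality on the lci surface $S$. Both routes are valid; the paper's intersection-theoretic computation is shorter and does not rely on the $\Ext$-to-$N_{S/X}$-cohomology dictionary, while yours makes that dictionary explicit (which, incidentally, also shows the useful side facts $h^0(N_{S/X})=\ext^1$ and $h^1(N_{S/X})=\ext^2$ as a by-product). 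One small point: the identification $\Hom_X(I,\O_S)=H^0(S,N_{S/X})$ is really just adjunction $i^*\dashv i_*$ plus $\hom_X(I,\O_S)\cong N_{S/X}$; phrasing it as ``local-to-global'' is not wrong but slightly indirect.
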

\begin{proof}
By the Hirzebruch-Riemann-Roch theorem we have
$$
\chi(N_{S/X}) = \int_S \ch(N_{S/X}) \, \td(S) = 2 \chi(\O_S) -  \int_S c_2(N_{S/X}) = 2 \chi(\O_S) - [S]^2,
$$
where we used $c_1(N_{S/X}) = c_1(\bigwedge^2 N_{S/X}) = c_1(K_S)$ and the self-intersection formula $i^* i_* [S] = c_2(N_{S/X})$. Thus the formula for the reduced virtual dimension follows from Proposition \ref{prop:4.6.1}. In the semi-regular case, we have $\mathrm{rk}(\sfob_{S/X}^\vee \circ \SD_S \circ \sfob_{S/X}) = \mathrm{rk}(\sfob_{S/X}) = h^1(N_{S/X})$ (Remark \ref{rem:ATA}). The result follows from the fact that $h^0(N_{S/X}) = h^2(N_{S/X})$ (see also Theorem \ref{Thm:SR=smoothofrvd}).
\end{proof}

\noindent
\textbf{Complete intersection Calabi-Yau 4-folds}
The following result is inspired by Steenbrink \cite{S87}.
\begin{corollary}\label{cor:cicy4}
Let $X\subset \PP^N$ be a complete intersection Calabi-Yau 4-fold defined by a regular section of $E_1=\oplus_{i=1}^{N-4} \O_{\PP^N}(e_i)$, $e_i\geq 2$. Suppose $S\subset X$ is a complete intersection surface of $\PP^N$ defined by a regular section of $E_2=\oplus_{j=1}^{N-2} \O_{\PP^N}(d_j)$, $d_j\geq 1$. Then the ideal sheaf $I_{S/X}$ is semi-regular. 
Moreover if $v=\ch(\O_S)$, then  
\[\rvd = \dim(\ker(H^0(E_2|_S)\to H^0(E_1|_S))\]
where the map is induced by the morphism $E_2|_S = N_{S/\PP^N} \to N_{X/\PP^N}|_S = E_1|_S$.
\end{corollary}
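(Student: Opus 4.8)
The plan is to reduce the corollary to Proposition \ref{prop:4.6.1} by computing the obstruction map $\sfob_{S/X} : H^1(X,T_X) \to H^1(S,N_{S/X})$ and showing that it is surjective. First I would set up the three relevant normal bundle/Koszul sequences: the restriction to $S$ of the Euler sequence for $T_{\PP^N}$, the sequence $0 \to T_X \to T_{\PP^N}|_X \to E_1|_X \to 0$ defining $X \subset \PP^N$ (using that the section of $E_1$ is regular), and its restriction to $S$, together with $0 \to T_S \to T_{\PP^N}|_S \to N_{S/\PP^N} \to 0$ and $N_{S/\PP^N} \cong E_2|_S$ (regularity of the section of $E_2$). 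These fit into a commutative diagram relating $N_{S/X}$, $N_{S/\PP^N} = E_2|_S$, and $N_{X/\PP^N}|_S = E_1|_S$ via $0 \to N_{S/X} \to E_2|_S \to E_1|_S \to 0$.

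Next I would trace the obstruction map through this diagram. The map $\sfob_{S/X}$ factors as $H^1(X,T_X) \to H^1(S, T_X|_S) \to H^1(S,N_{S/X})$. Using the long exact sequences in cohomology attached to the Koszul resolutions of $\O_X$ and $\O_S$ inside $\PP^N$ — exactly the Bott-vanishing style computation that $H^i(\PP^N, \Omega^1_{\PP^N}(k)) $, $H^i(\PP^N,\O(k))$ vanish in the appropriate ranges for $k \geq 1$ and $N \geq 4$ — one shows that $H^1(X,T_X)$ surjects onto $H^1(S,N_{S/X})$ provided $H^2$ of the relevant twisted bundles on $\PP^N$ vanish. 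Concretely, I expect the key vanishing inputs to be $H^{\geq 1}(\PP^N,\O_{\PP^N}(k)) = 0$ for $0 \le k$, $H^i(\PP^N,\Omega^1_{\PP^N}(k))=0$ for $i \ge 2$, $k \geq 0$, and the hypotheses $e_i \ge 2$, $d_j \ge 1$ which guarantee the twists that appear are non-negative; these force the connecting maps in the Koszul spectral sequences to vanish and give the surjectivity. Once $\sfob_{S/X}$ is surjective, semi-regularity of $I_{S/X}$ is immediate from Proposition \ref{prop:4.6.1}(2) (the surjection $\sfob_{S/X}$ composed with the isomorphism $\theta_S$ gives a surjection $\sfob$, hence $\sr = \sfob^\vee$ is injective by Proposition \ref{prop:ob=srdual}), and then Theorem \ref{Thm:SR=smoothofrvd} or Corollary \ref{cor:4.6.4} gives $\rvd = h^0(S,N_{S/X})$.

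It then remains to identify $h^0(S,N_{S/X})$ with $\dim \ker\big(H^0(S,E_2|_S) \to H^0(S,E_1|_S)\big)$. This follows by taking global sections of $0 \to N_{S/X} \to E_2|_S \to E_1|_S \to 0$: the left-exactness of $H^0$ gives $H^0(S,N_{S/X}) = \ker\big(H^0(E_2|_S) \to H^0(E_1|_S)\big)$ on the nose, with no further vanishing needed. (If one instead wants the cleaner "expected dimension" bookkeeping, one notes $H^0(E_2|_S) \to H^0(E_1|_S)$ need not be surjective, but the statement as phrased only asks for the kernel, so this subtlety does not arise.) Assembling: semi-regularity $\Rightarrow$ Theorem \ref{Thm:SR=smoothofrvd} $\Rightarrow$ $\PTqvX$ smooth of dimension $\rvd = h^0(N_{S/X})$, and $h^0(N_{S/X})$ is the asserted kernel dimension.

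The main obstacle I anticipate is the surjectivity of $\sfob_{S/X}$: one must carefully run the Koszul complex cohomology for $\O_S$ (a complete intersection of $N-2$ hypersurfaces) twisted by $T_{\PP^N}$ and by $\Omega^1_{\PP^N}$, keeping track of which cohomology groups on $\PP^N$ can be nonzero. The dimension hypotheses $N \ge 4$ (so $X$ is a fourfold), $e_i \ge 2$ (Calabi-Yau condition $\sum e_i = N+1$ combined with the lower bound), and $d_j \ge 1$ are exactly what is needed to land all twists in the Bott-vanishing range; pinning down that these suffice — rather than, say, needing $d_j \ge 2$ — is the delicate point and is presumably where the reference to Steenbrink \cite{S87} enters, since that paper handles precisely the infinitesimal Torelli / surjectivity-of-obstruction analysis for complete intersections. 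Everything else is formal diagram-chasing with the exact triangles \eqref{eqn:4.6.1}, \eqref{eqn:les1}, \eqref{eqn:les2} already established in Proposition \ref{prop:4.6.1}.
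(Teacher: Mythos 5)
Your overall plan is the same as the paper's: reduce everything to the surjectivity of $\sfob_{S/X}:H^1(X,T_X)\to H^1(S,N_{S/X})$ using Proposition~\ref{prop:4.6.1} and Corollary~\ref{cor:4.6.4}, then identify $h^0(N_{S/X})$ with $\dim\ker\bigl(H^0(E_2|_S)\to H^0(E_1|_S)\bigr)$ via left-exactness of $H^0$ in $0\to N_{S/X}\to E_2|_S\to E_1|_S\to 0$. The difference is in the mechanism for surjectivity, and here the paper's route is noticeably cleaner than what you outline. You propose tracing $\sfob_{S/X}$ through $T_{\PP^N}|_S$ and $\Omega^1_{\PP^N}$ via the Euler sequence and appealing to Bott vanishing; the paper never touches $T_{\PP^N}$ or $\Omega^1_{\PP^N}$ directly. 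Instead it exhibits a commutative diagram whose horizontal composite $H^0(\PP^N,E_1)\to H^0(S,E_1|_S)\to H^1(S,N_{S/X})$ (restriction followed by the connecting map for $0\to N_{S/X}\to E_2|_S\to E_1|_S\to 0$) factors, by compatibility of the two normal-bundle sequences, through $H^0(X,E_1|_X)\to H^1(X,T_X)\xrightarrow{\sfob_{S/X}}H^1(S,N_{S/X})$. Surjectivity of $\sfob_{S/X}$ then follows once the two maps in the top row are surjective, and both reduce to the vanishings $H^1(\PP^N,E_1\otimes I_{S/\PP^N})=0$ and $H^2(\PP^N,E_2\otimes I_{S/\PP^N})=0$. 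The key observation is that these are checked by the Koszul resolution of $\O_S$ in $\PP^N$ tensored with a sum of line bundles: every term is a direct sum of line bundles on $\PP^N$, whose cohomology vanishes in all intermediate degrees with no degree hypotheses whatsoever, so the Koszul spectral sequence degenerates and the groups in question vanish outright. This dispels the worry you raise at the end of your proposal: no Bott-vanishing-style case analysis is required, the hypotheses $e_i\ge 2$, $d_j\ge 1$ play no role in the vanishing step, and the citation to Steenbrink is only a source of inspiration, not a crutch for the surjectivity. Your route via $H^1(S,T_X|_S)$ would additionally have to address that $S$ need not be a complete intersection inside $X$ itself (only inside $\PP^N$), which is one more complication the paper's diagram quietly circumvents.
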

\begin{proof}
By the short exact sequence 
\begin{equation} \label{ses:N}
0\to N_{S/X}\to N_{S/\PP^N}\to N_{X/\PP^N}|_S\to 0
\end{equation}
and Proposition  \ref{prop:4.6.1}, it is enough to show that $\sfob_{S/X}$ is surjective. 
We note that the following diagram commutes
\[\begin{tikzcd}
H^0(\PP^N,E_1)\ar[r]\ar[dr] &H^0(S,E_1|_S)\ar[r] &H^1(S,N_{S/X})\\
& H^0(X,E_1|_X)\ar[r]\ar[u] & H^1(X,T_X).\ar[u,"\sfob_{S/X}"']
\end{tikzcd}\]
Therefore it suffices to show that two top horizontal maps are surjective. Consider a Koszul resolution $\bigwedge^{\bullet+1}E_2^\vee\to I_{S/\PP^N}$ and the associated spectral sequence \cite[Tag 07A9]{Sta} 
\[\Ext_{\PP^N}^p(E_1\otimes \bigwedge^{q+1}E_2^\vee,K_{\PP^N})\Rightarrow \Ext^{p+q}_{\PP^N}(E_1\otimes I_{S/\PP^N},K_{\PP^N})\,.\]
Since, on line bundles, $H^p(\PP^N,-)$ is nonzero only when $p=0$ or $N$, this spectral sequence degenerates and we have $0=\Ext^{N-1}_{\PP^N}(E_1\otimes I_{S/\PP^N},K_{\PP^N})\cong H^1(\PP^N,E_1\otimes I_{S/\PP^N})^\vee$. Therefore the first horizontal map is surjective. Similarly we have $H^2(\PP^N,E_2\otimes I_{S/\PP^N})= 0$. 
Consequently $H^1(S,E_2|_S)=H^1(S,N_{S/\PP^N})=0$ and hence the second horizontal map is surjective.
Therefore $\sfob_{S/X} = (\sr_{S/X})^\vee$ is surjective and $I_{S/X}$ is semi-regular. 
\end{proof}
Corollary~\ref{cor:cicy4} allows us to compute $\rvd$ and $\rho_\gamma$ for semi-regular cases. 
\begin{example} 
Suppose $X:=X_{e_1, \ldots, e_{N-4}} \subset \PP^N$ is a complete intersection Calabi-Yau 4-fold.  The possible cases are $X_6$, $X_{2,5}$, $X_{3,4}$, $X_{2,2,4}$, $X_{2,3,3}$, $X_{2,2,2,3}$, $X_{2,2,2,2,2}$. Consider complete intersection surfaces $S \subset \PP^N$ of type $(d_1,\ldots, d_{N-2})$ such that $S \subset X$. By Corollary \ref{cor:cicy4} the ideal sheaf $I_{S/X}$ is semi-regular.
\begin{enumerate}
    \item Suppose for all $i$ we have
\begin{equation} \label{ineq:rigid}
- d_i + \sum_{j=1}^{N-2} d_j < N+1.
\end{equation}
Then we claim $S \subset X$ is rigid, i.e.~$h^0(N_{S/X})=0$, so in particular $\rvd = 0$. This can be seen by showing that $H^0(S,N_{S/X})^\vee \cong H^2(S,N_{S/X}) = 0$ as follows. Using \eqref{ses:N} it suffices to show that $H^1(S,E_1|_S) = H^2(S,E_2|_S) = 0$. This follows by showing $H^2(\PP^N,E_1 \otimes I_{S/X}) = H^3(\PP^N,E_2 \otimes I_{S/X}) = 0$ by a Koszul calculation as in the proof of Corollary \ref{cor:cicy4}. 
E.g.~for $N=5$ all solutions to \eqref{ineq:rigid} are (up to permutations)
\[(1,1,1), (1,1,2), (1,1,3), (1,2,2); (1,2,3), (2,2,2), (1,1,4); (2,2,3).\]
In cases 1--4, $S$ is del Pezzo. In cases 5--7, $S$ is K3. In case 8, $S$ is general type. Using the formula $\rvd = \chi(\O_S) - \tfrac{1}{2} \gamma^2 + \tfrac{1}{2} \rho_\gamma$, and determining $\chi(\O_S)$, $\gamma^2$ in terms of $d_1,d_2,d_3$, this produces the following values for $\rho_\gamma$ in the above cases
\[19, 32, 37, 54; 62, 92, 32; 106.\]
\item Suppose $X=X_6$ is defined by a homogeneous polynomial $f \in R:=\C[x_0,...,x_5]$ of degree 6 and $S \subset X$ is defined by homogeneous polynomials $(s_1,s_2,s_3)$ of degrees $(d_1,d_2,d_3)$, then $f = s_1t_1+s_2t_2+s_3t_3$ for some $t_1,t_2,t_3$. 
Using the Koszul resolution of $I_{S/\PP^5}$, as in proof of Corollary \ref{cor:cicy4}, it is easy to see that $H^1(\PP^5,I_{S/\PP^5}(n)) = 0$ for all $n$. If the ideal $(s_1,s_2,s_3) \subset R$ is saturated then $H^0(S,N_{S/X})$ is the kernel of the following map
\begin{align*}
H^0(E_2|_S) \cong \bigoplus_{i=1}^{3} \Big(R / (s_1,s_2,s_3) \Big)_{(d_i)} &\to H^0(E_1|_S) \cong \Big(R / (s_1,s_2,s_3) \Big)_{(6)} \\
(u_1,u_2,u_3) &\mapsto u_1t_1+u_2t_2+u_3t_3
\end{align*}
where $(-)_{(n)}$ denotes the degree $n$ part.
This can be implemented in {\tt{Maccaulay2}}. E.g.~for the Fermat sextic and certain $S$ of degrees $(1,3,3), (2,3,3),(3,3,3)$, we obtain $\rvd = h^0(N_{S/X}) = 1,1,3$ respectively. Using the formula $\rvd = \chi(\O_S) - \tfrac{1}{2} \gamma^2 + \tfrac{1}{2} \rho_\gamma$, we obtain that $\rho_\gamma$ is $71,122,141$ respectively. 
\end{enumerate}

The value of $\rho_\gamma$ can also be computed by Griffiths' residue calculus \cite{G68}. In the cases of the sextic 4-fold listed above, our values for $\rho_\gamma$ reproduce \cite[Table 17.3]{Mov}. 
Corollary~\ref{cor:cicy4} gives an algebraic method for determining $\rho_\gamma$.


\end{example}

\noindent
\textbf{Holomorphic symplectic 4-folds} When a Calabi-Yau 4-fold $X$ has a non-degenerate holomorphic 2-form, the rank $\rho_\gamma$ of $\sfB_\gamma$ is easy to determine. 
The following computation is inspired by Voisin \cite{Voi89}.
\begin{corollary}\label{cor:4.6.5}
Suppose $X$ has a non-degenerate holomorphic 2-form $\sigma\in H^0(X,\Omega_X^2)$. Let $i:S\hookrightarrow X$ be a (possibly disconnected) smooth lagrangian surface, i.e., $i^*\sigma=0$. If $v=\ch(\O_S)$ then
\[\rvd = h^{1,0}(S) - \frac{1}{2} \dim \coker(i^*)\]
where $i^*: H^1(X,\Omega_X^1)\to H^1(S,\Omega_S^1)$.
\end{corollary}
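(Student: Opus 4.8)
The plan is to reduce the computation of $\rvd = h^0(N_{S/X}) - \frac12 \dim\coker(\sfob_{S/X})$ from Corollary \ref{cor:4.6.4} to cohomological data of $S$ and $X$, using the non-degenerate holomorphic 2-form $\sigma$ to identify $N_{S/X}$ with $\Omega^1_S$. Since $S$ is lagrangian, contraction with $\sigma$ gives a short exact sequence of bundles on $S$,
\begin{equation*}
0 \to T_S \to T_X|_S \xrightarrow{\iota_\bullet\sigma} \Omega^1_S \to 0,
\end{equation*}
where the middle-to-right map sends a tangent vector to the restriction of $\iota_v\sigma$, which is well-defined precisely because $i^*\sigma = 0$. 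Comparing with the normal bundle sequence $0 \to T_S \to T_X|_S \to N_{S/X} \to 0$ yields a canonical isomorphism $N_{S/X} \cong \Omega^1_S$. In particular $h^0(N_{S/X}) = h^0(\Omega^1_S) = h^{1,0}(S)$, which accounts for the first term in the claimed formula.

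Next I would analyze the obstruction map $\sfob_{S/X}: H^1(X,T_X) \to H^1(S,N_{S/X}) = H^1(S,\Omega^1_S)$. The non-degenerate 2-form $\sigma$ also trivializes $\Omega^4_X \cong \O_X$ in a way compatible with $\sigma^{\wedge 2}$, and more importantly it gives an isomorphism $T_X \cong \Omega^1_X$ via $v \mapsto \iota_v\sigma$. Under this identification, $\sfob_{S/X}$ should become (up to sign/scalar) the pullback map $i^*: H^1(X,\Omega^1_X) \to H^1(S,\Omega^1_S)$: indeed, the obstruction map factors as $H^1(X,T_X) \to H^1(S,T_X|_S) \to H^1(S,N_{S/X})$, and the first map is restriction while the second is the map induced by $T_X|_S \to \Omega^1_S$ in the lagrangian sequence above, which is exactly the realization of $i^*$ after twisting by $\sigma$. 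So $\coker(\sfob_{S/X}) \cong \coker(i^*: H^1(X,\Omega^1_X) \to H^1(S,\Omega^1_S))$, giving $\dim\coker(\sfob_{S/X}) = \dim\coker(i^*)$.

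Plugging into Corollary \ref{cor:4.6.4}, I get
\begin{equation*}
\rvd = h^0(N_{S/X}) - \tfrac12\big(\dim H^1(S,N_{S/X}) - \dim\coker(\sfob_{S/X})\big) = h^{1,0}(S) - \tfrac12\dim\coker(i^*),
\end{equation*}
where I used the identity $h^0(N_{S/X}) = h^2(N_{S/X})$ from Serre duality (so that $\chi(N_{S/X})/2 = h^0(N_{S/X}) - \tfrac12 h^1(N_{S/X})$), together with $h^1(N_{S/X}) = h^1(S,\Omega^1_S) = h^{1,1}(S)$. Alternatively one can feed the formula $\rvd = \chi(\O_S) - \tfrac12\gamma^2 + \tfrac12\rho_\gamma$ and identify $\rho_\gamma = \rk(\sfob_{S/X}) $ via Remark \ref{rem:ATA}, provided $\SD_S$ is non-degenerate on $\im(\sfob_{S/X})$ — which it is, since the image of $i^*$ in $H^{1,1}(S)$ is a sub-Hodge structure on which the polarization form is non-degenerate.

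The main obstacle I anticipate is verifying carefully that, under the identification $N_{S/X}\cong\Omega^1_S$ coming from $\sigma$ and the identification $T_X\cong\Omega^1_X$, the obstruction map $\sfob_{S/X}$ really coincides with $i^*$ on the nose (or at least up to an automorphism of $H^1(S,\Omega^1_S)$, which is all that is needed for the dimension count). This requires chasing the definition of $\sfob_{S/X}$ as the composite $H^1(X,T_X)\to H^1(S,T_X|_S)\to H^1(S,N_{S/X})$ through the commutative diagram of the lagrangian and conormal sequences, and checking the relevant square commutes up to a nonzero scalar; Voisin's argument in \cite{Voi89} handles the analogous point for lagrangian subvarieties of holomorphic symplectic manifolds, so I would follow that. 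A secondary point is confirming $h^1(N_{S/X}) = h^{1,1}(S)$ and $h^0(N_{S/X}) = h^2(N_{S/X})$; the latter is immediate from Serre duality and $K_S \cong \O_S$... wait, $S$ need not be a K3, so more precisely $h^2(N_{S/X}) = h^0(N_{S/X}^\vee\otimes K_S) = h^0(\Omega^1_S{}^\vee \otimes K_S) = h^0(T_S\otimes K_S) = h^0(\Omega^1_S) = h^{1,0}(S)$ using $N_{S/X}\cong\Omega^1_S$ and $K_S\cong\det\Omega^1_S$, so that again everything is consistent.
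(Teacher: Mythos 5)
Your overall strategy is exactly the paper's: use $\iota_{\bullet}\sigma$ to identify the normal-bundle sequence with the cotangent sequence, deduce $N_{S/X}\cong\Omega^1_S$ and $\sfob_{S/X}\cong i^*$, and then feed Corollary~\ref{cor:4.6.4}. Two points deserve attention.

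First, the displayed arithmetic has a slip. Corollary~\ref{cor:4.6.4} gives $\rvd=\tfrac12\chi(N_{S/X})+\tfrac12\rk(\sfob_{S/X}^\vee\circ\SD_S\circ\sfob_{S/X})$, and after $\chi(N_{S/X})=2h^0(N_{S/X})-h^1(N_{S/X})$ and replacing the conjugated rank by $\rk(\sfob_{S/X})$ you should get
\[
\rvd=h^0(N_{S/X})-\tfrac12\bigl(h^1(N_{S/X})-\rk(\sfob_{S/X})\bigr)=h^0(N_{S/X})-\tfrac12\dim\coker(\sfob_{S/X}),
\]
i.e.\ the quantity inside the parentheses should be the \emph{rank}, not the cokernel, of $\sfob_{S/X}$. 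As written, your middle expression equals $h^0-\tfrac12\rk(\sfob_{S/X})$, which is not what the final equality claims. The final answer $h^{1,0}(S)-\tfrac12\dim\coker(i^*)$ is nevertheless correct; only the intermediate line needs to be fixed.

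Second, the replacement $\rk(\sfob_{S/X}^\vee\circ\SD_S\circ\sfob_{S/X})=\rk(\sfob_{S/X})$ is an essential step in the \emph{main} computation, not just in the ``alternatively'' paragraph: Corollary~\ref{cor:4.6.4} literally involves the conjugated rank, and by Remark~\ref{rem:ATA} the equality holds only when $\SD_S$ (which under your identifications becomes the cup product on $H^{1,1}(S)$) is non-degenerate on $\im(i^*)$. This is precisely where the paper invokes the Hodge index theorem: $\im(i^*)$ contains the pullback of an ample class, and cup product has definite sign on the span of that class and on its primitive complement, so it is non-degenerate on any subspace containing the ample class. Your ``sub-Hodge structure with non-degenerate polarization'' remark is in the right spirit but should be replaced (or made precise) by this Hodge-index argument, and it should be placed in the main line of the proof rather than as an aside. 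Finally, the statement allows $S$ disconnected; the paper handles this by proving the connected case and noting the extension is routine (everything is additive over components), which is worth a sentence.
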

\begin{proof}
We prove the case when $S$ is connected. The disconnected case then follows easily. 
Since $S$ is a lagrangian, we have an isomorphism
\[\xymatrix{
0\ar[r] & N_{S/X}^*\ar[r]\ar[d]_\cong &\Omega^1_X|_S \ar[r]\ar[d]^\sigma_\cong & \Omega_S^1 \ar[r]\ar[d]_\cong & 0\\
0 \ar[r] &T_S \ar[r] &T_X|_S \ar[r] &N_{S/X} \ar[r] & 0. 
}\]
Therefore $\sfob_{S/X}$ can be identified with $i^*$. 
Under this identification, $\mathsf{SD}_S$ corresponds to the cup product. By the Hodge index theorem, the cup product is non-degenerate on the image of $i^*$. Therefore Remark \ref{rem:ATA} and Corollary \ref{cor:4.6.4} imply that the reduced virtual dimension equals
\begin{equation*}
\rvd =\frac{1}{2}\chi(\Omega^1_S) + \frac{1}{2} \mathrm{rk}(i^*). \qedhere
\end{equation*}
\end{proof}
\begin{example}
Suppose $X$ has a non-degenerate holomorphic 2-form. Consider a lagrangian plane $\PP^2 \cong S \subset X$ and let $v = \ch(\O_S)$. Since $i^*$ is non-trivial and $h^{1,1}(S) = 1$, $S$ is semi-regular. Moreover, $N_{S/X} \cong \Omega_{S}^1$ implies that $S$ is rigid and $\mathrm{rvd} = 0$. When $\curP_v^{(q)}(X)$ only contains lagrangian planes, $\deg ([\curP_v^{(q)}(X)]^{\red})$ is a (signed) count of lagrangian planes in $X$ in class $v$.
\end{example}


\begin{example}
In \cite{Sch} Schoen constructed minimal surfaces $S$ of general type related to the Hodge conjecture for abelian 4-folds. These surfaces satisfy $h^{1,0}(S)=4$, $h^{2,0}(S) = 5$, and $h^{1,1}(S) = 12$.
Let $S$ be a Schoen surface, then its Albanese variety $X=\mathrm{Alb}(S)$ is an abelian 4-fold. The Albanese map $i:S\to X$ is an embedding of $S$ as a lagrangian surface (for some non-degenerate $\sigma \in H^0(X,\Omega_X^2)$) \cite{CMLR}, and the cohomology class $[S] \in H^{2,2}(X) \cap H^4(X,\Q)$ is not the product of divisor classes on $X$ \cite{Sch}. Let $v=\ch(\O_S)$. From the constructions in \cite{Sch}, one can see that there exist semi-regular Schoen surfaces $S \subset X$. Then $i^*$ is surjective. 
Therefore $\rvd=4$ and $\rho_{[S]}=12$.
We revisit this example in future work.
\end{example}

\begin{remark} \label{rem:critnonsr}
Combining Theorem \ref{Thm:SR=smoothofrvd} and Corollary \ref{cor:4.6.5}, we obtain a numerical criterion for a smooth lagrangian surface $i: S \hookrightarrow X$ to be non-semi-regular. Consider $i^*: H^{1,1}(X)\to H^{1,1}(S)$. Indeed if $\rk(i^*)$ is \emph{odd} or $b_1(S) < \dim \coker(i^*)$, then $S$ is not semi-regular.

For example, let $\pi : X \to B$ be a hyperk\"ahler 4-fold with lagrangian fibration over a smooth base.  
The general fibre $S = F$ is a lagrangian abelian surface and the base $B$ is isomorphic to $\PP^2$ (\cite{Hwa, HX}). For $v=\ch(\O_F)$ and any $q\in\{-1,0,1\}$, we have a morphism
$$
B \to \PTqvX, \quad P \mapsto \pi^* I_{P/B}.
$$
This is a bijection on $\C$-points and using the fact that $R \pi_* \O_X \cong \O_B \oplus \Omega_B^1[-1] \oplus \Omega_B^2[-2]$ \cite{Mat1}, one can also show that it is an isomorphism on tangent spaces at $\C$-points. Thus, since $B$ is smooth, the map is an isomorphism.
Let $\Delta \subset B \times B$ denote the diagonal and $\pi_B : X \times B \to B$ projection.
Using again $R \pi_* \O_X \cong \O_B \oplus \Omega_B^1[-1] \oplus \Omega_B^2[-2]$, we obtain the following obstruction bundle
$$
\ext^2_{\pi_B}(\pi^* I_\Delta, \pi^* I_\Delta)_0 \cong T_B \otimes \Omega_B^1 \cong \mathcal{E}{\it{nd}}(T_B).
$$
The natural pairing $T_B \otimes \Omega_B^1\to \O_B$ is the unique cosection (up to scaling). Since $i^*$ is non-zero, we deduce $\rk i^* = 1$. A direct computation of $\rk i^* = 1$ can be found in \cite{SY,Mat2}. We deduce $\rvd = \tfrac{1}{2}$ so $S$ is not semi-regular.


\end{remark}
\begin{example} 
Let $C \subset \mathrm{K3}$ be a smooth curve of genus $g$ on a K3 surface. Then the Hilbert square $X = \mathrm{K3}^{[2]}$ is a hyperk\"ahler 4-fold and the symmetric product $S = C^{[2]}$ is a smooth lagrangian surface. Since $S$ is cut out transversally by a tautological section of the vector bundle $\O(C)^{[2]}$, we have
$$
\chi(\Omega^1_S) = \int_{S} \ch(\Omega^1_S) \cdot \td(S) = \int_X c_2(\O(C)^{[2]}) \ch(\Omega^1_X - \O(C)^{[2]\vee}) \, \frac{\td(X)}{\td(\O(C)^{[2]})}.
$$
This tautological integral can be easily determined e.g.~using \cite{EGL}. The result is $-(C^2)^2/4 = -(g-1)^2$. Moreover, $\rk i^* = 1$ when $g=0$ and $\rk i^* = 2$ otherwise (\cite[Sect.~3]{Voi89}). 
By Corollary~\ref{cor:4.6.5} we find
$$
\mathrm{rvd} = -\frac{1}{2}(g-1)^2 + \left\{ \begin{array}{cc} \frac{1}{2} & \textrm{if} \ g=0  \\ 1 & \textrm{otherwise.} \end{array} \right.
$$
This number is only non-zero for $g=0,1,2$, then $\mathrm{rvd} = 0,1,\tfrac{1}{2}$ respectively. In particular, for $g  \geq 2$ we deduce that $S$ is not semi-regular (Remark \ref{rem:critnonsr}). For $g=0$, $S \cong \PP^2$ is a lagrangian plane. 
\end{example}
%
When $X$ is a product of two $K3$ surfaces, we determine $\rho_\gamma$ for diagonal classes, see also Example \ref{ex:k3k3}.

\begin{proposition}\label{prop:k3k3}
Let $X$ be a product of two K3 surfaces $S_1$ and $S_2$. If $\gamma =\eta_1\cup\eta_2$ for some non-zero $\eta_1\in H^{1,1}(S_1)$ and $\eta_2\in H^{1,1}(S_2)$, then we have $\rho_\gamma=2$.
\end{proposition}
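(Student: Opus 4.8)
The plan is to compute $\rho_\gamma=\operatorname{rk}(\sfB_\gamma)$ directly using the K\"unneth decomposition of $H^1(X,T_X)$ and the product structure of the Calabi-Yau $4$-form. First I would record the ingredients. Write $\omega=\omega_1\boxtimes\omega_2$ for the trivialization of $\Omega^4_X$ coming from trivializations $\omega_i:\O_{S_i}\xrightarrow{\cong}\Omega^2_{S_i}$, and use $K_{S_i}\cong\O_{S_i}$, $H^1(S_i,T_{S_i})\cong H^1(S_i,\Omega^1_{S_i})=H^{1,1}(S_i)$ via $\omega_i$. By K\"unneth and the fact that $H^0(S_i,T_{S_i})=H^2(S_i,T_{S_i})=0$ for a K3 surface (since $h^0(T_{S_i})=h^{2,0}(S_i)^\vee=0$ would be wrong — rather $h^0(T_{S_i})=h^0(\Omega^1_{S_i})=0$ and $h^2(T_{S_i})=h^0(\Omega^1_{S_i}\otimes K_{S_i})^\vee=h^0(\Omega^1_{S_i})^\vee=0$), one gets
\[
H^1(X,T_X)=H^1(S_1,T_{S_1})\oplus H^1(S_2,T_{S_2})\cong H^{1,1}(S_1)\oplus H^{1,1}(S_2).
\]

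Next I would unwind the bilinear form. A class $\xi\in H^1(X,T_X)$ decomposes as $\xi=\xi_1+\xi_2$ with $\xi_i\in H^1(S_i,T_{S_i})$, and via $\omega_i$ we may regard $\xi_i\in H^{1,1}(S_i)$. The key point is that contraction $\iota_{\xi_i}$ with a vector field on $S_i$ lowers the $S_i$-degree, so for the product class $\gamma=\eta_1\boxtimes\eta_2\in H^{1,1}(S_1)\otimes H^{1,1}(S_2)\subseteq H^2(X,\Omega^2_X)$ one computes that $\iota_{\xi_1}\iota_{\xi_2}\gamma\cup\omega$ vanishes unless $\xi_1$ lives on one factor and $\xi_2$ on the other. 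Concretely, $\iota_{\xi_1}(\eta_1\boxtimes\eta_2)$ involves pairing $\xi_1$ (a $(1,1)$ vector-valued form on $S_1$) against $\eta_1$, landing in $H^2(S_1,\Omega^1_{S_1})\boxtimes H^1(S_2,\Omega^2_{S_2})$-type pieces; then a further contraction by $\xi_2\in H^1(S_2,T_{S_2})$ and cupping with $\omega=\omega_1\boxtimes\omega_2$ and integrating over $X=S_1\times S_2$ reduces, by Fubini, to a product of a pairing on $S_1$ and a pairing on $S_2$. Carrying this through, $\sfB_\gamma(\xi_1+\xi_2,\,\xi_1'+\xi_2')$ becomes (up to sign) a sum of terms of the form $(\int_{S_1}\xi_1\cup\eta_1)(\int_{S_2}\xi_2'\cup\eta_2)+(\int_{S_1}\xi_1'\cup\eta_1)(\int_{S_2}\xi_2\cup\eta_2)$, where $\int_{S_i}(-)\cup\eta_i$ denotes the intersection pairing with $\eta_i$ on the K3 surface $S_i$.

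In matrix terms, on the splitting $H^{1,1}(S_1)\oplus H^{1,1}(S_2)$ the form $\sfB_\gamma$ is the off-diagonal block matrix
\[
\sfB_\gamma=\begin{pmatrix}0 & \ell_1^{\mathsf T}\ell_2\\ \ell_2^{\mathsf T}\ell_1 & 0\end{pmatrix},
\]
where $\ell_i\colon H^{1,1}(S_i)\to\C$ is the linear functional $\alpha\mapsto\int_{S_i}\alpha\cup\eta_i$. Since $\eta_i\neq 0$ and the intersection form on $H^{1,1}(S_i)$ of a K3 surface is non-degenerate (it is the $(1,1)$-part of the non-degenerate cup product on $H^2$), the functional $\ell_i$ is non-zero, hence surjective, so each rank-one operator $\ell_i^{\mathsf T}\ell_j$ has rank exactly $1$. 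An off-diagonal block matrix with two rank-one off-diagonal blocks has rank $1+1=2$, so $\rho_\gamma=2$. The main obstacle I anticipate is purely bookkeeping: getting the Koszul/contraction signs right and verifying carefully, via the K\"unneth formula together with $h^0(T_{S_i})=h^2(T_{S_i})=0$, that the ``diagonal'' blocks $\sfB_\gamma|_{H^{1,1}(S_i)\times H^{1,1}(S_i)}$ genuinely vanish — i.e.\ that there is no contribution from contracting $\gamma$ twice in the same factor, which would require a nonzero class in $H^3(S_i,\Omega^{-1}_{S_i})$ or similar and hence cannot occur. Once the vanishing of the diagonal blocks and the non-vanishing of $\ell_1,\ell_2$ are established, the rank count is immediate.
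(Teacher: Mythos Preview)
Your proposal is correct and is essentially the same argument as the paper's: both compute $\sfB_\gamma$ on the K\"unneth splitting $H^1(X,T_X)\cong H^1(S_1,T_{S_1})\oplus H^1(S_2,T_{S_2})$, show the diagonal blocks vanish, and identify the off-diagonal block as a product of two nonzero linear functionals (your $\ell_i$ are the paper's $\mathsf{A}_i$ after using $\omega_i$ to identify $H^1(S_i,T_{S_i})\cong H^{1,1}(S_i)$). The only cosmetic difference is that the paper keeps track of the contraction with $\sigma_\ell$ explicitly rather than passing through the identification with $H^{1,1}$, but the content is identical.
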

\begin{proof}
Let $\sigma_1\in H^{2,0}(S_1)$ and $\sigma_2\in H^{2,0}(S_2)$ be two nowhere vanishing holomorphic 2-forms. Take $\omega=\sigma_1\cup \sigma_2 \in H^0(X,\Omega_X^4)$. Consider the isomorphism \[H^1(X,T_X) \cong H^1(S_1, T_{S_1}) \oplus H^1(S_2, T_{S_2}).\]
The symmetric bilinear form $\sfB_\gamma$ on $H^1(X,T_X)$ can be related to the two rank one maps
\[
\mathsf{A_\ell} : H^{1}(S_\ell,T_{S_\ell})\to\C,\, \quad \xi \mapsto \int_{S_\ell}  \eta_\ell \cup \iota_\xi(\sigma_\ell) = - \int_{S_\ell}  \iota_\xi(\eta_\ell) \cup  \sigma_\ell\,, \quad \ell=0,1\,.
\]
Namely, if $\xi_1 \in H^{1}(S_1,T_{S_1})$ and $\xi_2  \in H^{1}(S_2,T_{S_2})$, then
\begin{align*}
    \sfB_\gamma(\xi_1,\xi_2) &= \int_X \iota_{\xi_1}\iota_{\xi_2} (\eta_1\cup\eta_2) \cup \sigma_1 \cup \sigma_2\\
    &=\int_{S_1}\eta_1\cup\iota_{\xi_1}(\sigma_1)\cdot\int_{S_2}\eta_2\cup\iota_{\xi_2}(\sigma_2) \\ 
    &= \mathsf{A_1}(\xi_1) \cdot \mathsf{A_2}(\xi_2)
\end{align*}
and for $\xi_1,\xi_2\in H^{1}(S_1,T_{S_1})$ or $\xi_1,\xi_2\in H^{1}(S_2,T_{S_2})$, we have $\sfB_\gamma(\xi_1,\xi_2) = 0$. Therefore the rank of $\sfB_\gamma$ is two.
\end{proof}

\noindent
\textbf{Vanishing of $\rho_\gamma$.} 
Let $X$ be an arbitrary Calabi-Yau 4-fold and $\gamma \in H^{2,2}(X)$.
There are interesting cases for which $\rho_\gamma = 0$ and hence the non-reduced virtual cycle equals the reduced virtual cycle.
\begin{proposition}
Assume that $H^2(X,\O_X)=0$. If $\gamma = \eta_1\cup\eta_2$ for some $(1,1)$-classes $\eta_1$ and $\eta_2$ on $X$, then we have $\rho_\gamma=0$.
\end{proposition}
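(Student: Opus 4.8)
The plan is to show that the symmetric bilinear form $\sfB_\gamma$ vanishes identically by unwinding the contraction $\iota_{\xi_1}\iota_{\xi_2}\gamma$ when $\gamma = \eta_1\cup\eta_2$ is a product of two $(1,1)$-classes. First I would record the basic Leibniz-type formula for contracting a tangent vector (or, more precisely, a class $\xi \in H^1(X,T_X)$) against a wedge product of forms: for $\eta_1,\eta_2 \in H^1(X,\Omega_X^1)$ and $\xi \in H^1(X,T_X)$ one has
\[
\iota_\xi(\eta_1 \cup \eta_2) = (\iota_\xi \eta_1)\cup \eta_2 \pm \eta_1 \cup (\iota_\xi \eta_2),
\]
where $\iota_\xi \eta_j \in H^2(X,\O_X)$ and the sign is the Koszul sign. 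The crucial input is the hypothesis $H^2(X,\O_X) = 0$, which forces $\iota_\xi \eta_j = 0$ for every $\xi$ and $j$. Hence $\iota_\xi(\eta_1\cup\eta_2) = 0$ already at the level of the single contraction, and a fortiori $\iota_{\xi_1}\iota_{\xi_2}(\eta_1\cup\eta_2) = 0$ for all $\xi_1,\xi_2 \in H^1(X,T_X)$.

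From this it follows immediately that
\[
\sfB_\gamma(\xi_1,\xi_2) = \int_X \iota_{\xi_1}\iota_{\xi_2}\gamma \cup \omega = \int_X 0 \cup \omega = 0
\]
for all $\xi_1,\xi_2$, so $\sfB_\gamma \equiv 0$ and $\rho_\gamma = \rk(\sfB_\gamma) = 0$. One small point to be careful about is the compatibility of the contraction operation with the cup product and with passing to cohomology: contraction $\iota_\xi : \Omega_X^p \to \Omega_X^{p-1}$ is $\O_X$-linear and satisfies the graded Leibniz rule on the level of sheaves, and since everything is functorial it descends to the cup products on sheaf cohomology $H^\bullet(X,\Omega_X^\bullet)$; I would cite the sign conventions in the footnote to the definition of $\sfB_\gamma$ (or \cite[Lem.~2.5]{BBJ}) to fix the signs, though the signs are irrelevant here since every term vanishes.

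The only genuine subtlety — and really the only thing to check carefully — is that $\iota_\xi\eta$ for $\xi \in H^1(X,T_X)$ and $\eta \in H^1(X,\Omega_X^1)$ indeed lands in $H^2(X,\O_X)$ under the Dolbeault/Čech bookkeeping, i.e.\ that the cohomological degrees add up as $1 + 1 = 2$ while the form degree drops by one to $0$. Granting that, the argument is complete. I do not expect any serious obstacle; the statement is essentially a direct consequence of the Leibniz rule and the hypothesis $H^{0,2}(X) = 0$.
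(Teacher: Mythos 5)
Your argument is correct and is essentially the paper's own proof: apply the Leibniz rule to $\iota_\xi(\eta_1\cup\eta_2)$, note that each term lies in $H^2(X,\O_X)=0$, so $\iota_\xi\gamma=0$ and hence $\sfB_\gamma\equiv 0$. The extra remarks about signs and degree bookkeeping are fine but not needed, since every term vanishes.
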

\begin{proof}
For $\xi\in H^1(X,T_X)$, we have
$\iota_{\xi}(\eta_1\cup\eta_2) = \iota_{\xi}(\eta_1) \cup \eta_2 + \eta_1 \cup \iota_{\xi}(\eta_2) =0$
because $\iota_{\xi}(\eta_1), \iota_{\xi}(\eta_2) \in H^2(X,\O_X)$. Hence $\rho_\gamma=0$ by definition.
\end{proof}
\begin{proposition}
Let $Y\subset X$ be a smooth divisor with $H^2(Y,\O_Y)=0$. If $\gamma=[S]$ for some effective divisor $S\subset Y$ with $H^1(Y,\O_Y(S))=0$, then $\rho_\gamma=0$.
\end{proposition}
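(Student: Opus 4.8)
The plan is to show that the symmetric bilinear form $\sfB_\gamma$ on $H^1(X,T_X)$ vanishes identically, which gives $\rho_\gamma=\rk(\sfB_\gamma)=0$. Since $Y\subset X$ is a smooth divisor and $S\subset Y$ is an effective Cartier divisor on the smooth $Y$, the surface $S$ is a local complete intersection in $X$, with $\wedge^2N_{S/X}\cong K_S$ (because $K_X\cong\O_X$) and a normal sequence $0\to N_{S/Y}\to N_{S/X}\to N_{Y/X}|_S\to 0$ consisting of a line bundle, a rank-two bundle, and a line bundle. Thus Proposition~\ref{prop:4.6.1} applies and gives $\sfB_\gamma=\sfob_{S/X}^{*}\SD_S$, so it suffices to show that the Serre-duality form $\SD_S$ on $H^1(S,N_{S/X})$ vanishes on the image of $\sfob_{S/X}$. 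Equivalently, writing $\gamma=j_{*}\beta$ with $j\colon Y\hookrightarrow X$ and $\beta=c_1(\O_Y(S))\in H^1(Y,\Omega^1_Y)$ the class of the divisor $S$ in $Y$, and using $\wedge^2T_X\cong\Omega^2_X$ (via $\omega$), the adjunction $\int_X(\iota_{\xi_1}\iota_{\xi_2}\gamma)\cup\omega=\pm\int_X\gamma\cup(\iota_{\xi_1}\iota_{\xi_2}\omega)$ (valid because $\Omega^5_X=0$), the projection formula for $j_{*}$, and $\int_Y(-)\cup c_1(\O_Y(S))=\int_S(-)|_S$, one reduces to the identity $\sfB_\gamma(\xi_1,\xi_2)=\pm\int_S\big(\iota_{\xi_1}\iota_{\xi_2}\omega\big)|_S$, the integral being taken over the surface $S$ via $H^2(S,K_S)=\C$.

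I would then carry out the computation of this restricted class. The point is dimensional: $\dim Y=3$ and $\dim S=2$, so $\omega|_Y$ lies in $H^0(Y,N_{Y/X}^{\vee}\otimes K_Y)$ and $\omega|_S$ in $H^0(S,\Omega^4_X|_S)\cong H^0(S,\O_S)$, i.e.\ both are nowhere-vanishing residues of $\omega$. Unwinding the failure of contraction to commute with restriction along the conormal sequences $0\to N_{Y/X}^{\vee}\to\Omega^1_X|_Y\to\Omega^1_Y\to 0$ and $0\to N_{S/Y}^{\vee}\to\Omega^1_Y|_S\to\Omega^1_S\to 0$, the class $(\iota_{\xi_1}\iota_{\xi_2}\omega)|_S\in H^2(S,K_S)$ is expressed purely through the normal-component classes $\bar\xi_i:=\sfob_{S/X}(\xi_i)\in H^1(S,N_{S/X})$ and the residue of $\omega$ (indeed it must equal $\SD_S(\bar\xi_1,\bar\xi_2)$). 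The hypotheses enter as follows: from $0\to\O_Y\to\O_Y(S)\to N_{S/Y}\to 0$ together with $H^1(Y,\O_Y(S))=0$ and $H^2(Y,\O_Y)=0$ one gets $H^1(S,N_{S/Y})=0$, hence the injection $H^1(S,N_{S/X})\hookrightarrow H^1(S,N_{Y/X}|_S)$; and an integration-by-parts identity on the threefold $Y$ rewrites the surviving correction terms as cup products against $H^2(Y,\O_Y)=0$. All contributions then vanish, so $\sfB_\gamma\equiv0$ and $\rho_\gamma=0$.

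The main obstacle is precisely this bookkeeping of correction terms: keeping track of which cohomology group on $Y$ (or $S$) each surviving term pairs into, and isolating the one contribution whose vanishing genuinely requires $H^1(Y,\O_Y(S))=0$ rather than merely $H^2(Y,\O_Y)=0$ (that the latter alone is too weak is consistent with the fact that the preceding proposition is logically independent of this one). A slightly more formal alternative would be to work directly with $\sfB_\gamma=\sfob_{S/X}^{*}\SD_S$: the surjection $\mathrm{pr}\colon\wedge^2T_X|_S\to\wedge^2N_{S/X}=K_S$ annihilates $\wedge^2T_Y|_S$ (it factors through $\wedge^2N_{S/Y}=0$, since $N_{S/Y}$ is a line bundle), so $\sfB_\gamma(\xi_1,\xi_2)$ depends only on the image of $(\xi_1\wedge\xi_2)|_Y$ in $H^2(Y,T_Y\otimes N_{Y/X})$; one then shows this refined class pairs to zero with $\beta=c_1(\O_Y(S))$ under the Serre-duality pairing $H^2(Y,T_Y\otimes N_{Y/X})\times H^1(Y,\Omega^1_Y)\to H^3(Y,K_Y)=\C$, with the two hypotheses entering at exactly the same places.
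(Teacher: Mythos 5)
Your reduction to Proposition~\ref{prop:4.6.1} is the right first move, but after that you pursue a harder target than is necessary. You aim to show that the Serre--duality form $\SD_S$ vanishes on the image of $\sfob_{S/X}$, and propose to do this by tracking $\omega$ through the conormal filtrations of $\Omega^1_X|_Y$ and $\Omega^1_Y|_S$, or alternatively by analysing the filtration of $\wedge^2T_X|_S$. You yourself flag that the ``bookkeeping of correction terms'' is the main obstacle and that ``one then shows this refined class pairs to zero'' --- but that step is never carried out, and it is precisely the substance of the proof in your approach. As written, this is an incomplete sketch, not an argument.

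The cleaner route --- which the paper takes --- is to show the stronger statement that $\sfob_{S/X}$ is itself zero, after which $\sfB_\gamma=\sfob_{S/X}^*\SD_S$ vanishes for trivial reasons and no pairing computation is needed. Both of your hypotheses enter naturally in this diagram chase. One factors $\sfob_{S/X}$ as $H^1(X,T_X)\to H^1(Y,T_X|_Y)\to H^1(S,N_{S/X})$, and observes two things: first, the restriction $H^1(Y,T_Y)\to H^1(Y,T_X|_Y)$ is \emph{surjective}, because the next term in the long exact sequence for $0\to T_Y\to T_X|_Y\to N_{Y/X}\to 0$ is $H^1(Y,N_{Y/X})\cong H^1(Y,K_Y)\cong H^2(Y,\O_Y)^\vee=0$ (adjunction uses $K_X\cong\O_X$); second, the composite $H^1(Y,T_Y)\xrightarrow{\sfob_{S/Y}}H^1(S,N_{S/Y})\to H^1(S,N_{S/X})$ vanishes because $H^1(S,N_{S/Y})=0$, which you correctly derive from $H^1(Y,\O_Y(S))=0$ and $H^2(Y,\O_Y)=0$. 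Since the two routes from $H^1(Y,T_Y)$ to $H^1(S,N_{S/X})$ agree, and the left vertical map surjects, the bottom composition $\sfob_{S/X}$ is zero. Your observation that $\wedge^2N_{S/Y}=0$ and the consequent annihilation of $\wedge^2T_Y|_S$ is correct and is morally what drives the vanishing, but encoding it as a statement about the image of $\sfob_{S/X}$ inside $H^1(S,N_{S/X})$ and then pairing is strictly harder than proving the map is zero outright. I would recommend abandoning the residue computation and the filtration of $\wedge^2T_X|_S$ and instead proving $\sfob_{S/X}=0$ by the diagram above.
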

\begin{proof}
By Proposition \ref{prop:4.6.1} it is enough to show that $\ob_{S/X}$ is zero. Consider the commutative diagram
\[\begin{tikzcd}
 & H^1(T_Y) \ar[r,"\sfob_{S/Y}"]\ar[d,twoheadrightarrow] & H^1(N_{S/Y})\ar[d]\\
\sfob_{S/X}:H^1(T_X)\ar[r] & H^1(T_X|_Y) \ar[r] & H^1(N_{S/X})
\end{tikzcd}\]
where the left vertical map is surjective by the vanishing of $H^1(N_{Y/X}) \cong H^1(\O_Y(Y)) \cong H^1(K_Y)\cong H^2(\O_Y)^\vee$. By the vanishing of $H^1(Y,\O_Y(S))$, we have $H^1(N_{S/Y}) = 0$, therefore $\ob_{S/X}=0$.
\end{proof}
For example, the hypotheses of the previous proposition are satisfied for $\PP^3 \cong Y \subset X$ and any effective divisor $S \subset Y$.

\section{Deformation invariance}\label{sec:deformationinvariance}

In this section, we prove the deformation invariance of reduced virtual cycles.
As a corollary, we prove the variational Hodge conjecture for surface classes that carry non-zero reduced virtual cycles. 

\subsection{Deformation invariance}

In this subsection, we prove that the reduced virtual cycles on the moduli spaces of $\PT_q$ pairs are deformation invariant along the Hodge loci.

We first fix some notation. 
Let
$f:\cX\to\cB$ 
be a morphism of schemes satisfying the following conditions:
\begin{enumerate}
\item $f:\cX\to \cB$ is a smooth projective morphism of relative dimension $4$ with connected fibres;
\item $\cB$ is a smooth connected affine scheme;
\item $\omega_{\cX/\cB}$ is trivial, i.e., there exists an isomorphism $w: \O_{\cX}\cong \omega_{\cX/\cB}$.
\end{enumerate}
We denote by $\cX_b:=\cX\times_\cB \{b\}$ the fibre of $f:\cX\to\cB$ over a point $b \in \cB$.

Choose a horizontal section 
\[\tv = (\tv_p)_{p\geq0} \in \bigoplus_{p \geq 0} F^pH^{2p}_{DR}(\cX/\cB) \]
in the sense Definition \ref{Def:horizontalsection}.
We may regard $\tv$ as a locally constant family of cohomology classes
$\tv_b \in H^*(\cX_b,\C)$ by Remark \ref{Rem:horizontal}.
Let 
\[\tgamma \in \Gamma(\cB, R^2f_*\Omega^2_{\cX/\cB})\]
be the family of $(2,2)$-classes induced by $\tv_2 \in F^2H^4_{DR}(\cX/\cB)$, see \eqref{Eq:HdgFilt}.

Consider a vector bundle
\[\cT:=R^1f_*(T_{\cX/\cB})\]
on $\cB$ with a canonical symmetric bilinear form
\[\sfB_{\tgamma} : \cT \otimes \cT \xrightarrow{\id\otimes\tgamma} \cT \otimes \cT \otimes R^2f_*(\Omega^2_{\cX/\cB}) \xrightarrow{} R^4f_*(\O_{\cX}) \xrightarrow{w} R^4f_*\Omega^4_{\cX/\cB} \cong \O_{\cB}.\]

If the function
$b\in \cB \mapsto \rho_{\tgamma_b} \in \Z$
is constant, then we can form an induced orthogonal bundle of rank $\rho:=\rho_{\tgamma_b}$ as
\[\cT_{\tgamma}:={\cT}/{\ker(\sfB_{\tgamma})}.\]

Let $\curP:=\PTqvXB$ be the moduli space of $\PT_q$ pairs on the fibres of $f:\cX \to\cB$, defined in Theorem \ref{Thm:RelativeModuli}. We can form a fibre diagram
\begin{equation}\label{Eq:62}
\xymatrix{
\curP_b \ar@{^{(}->}[r]^{j_b} \ar[d] & \curP \ar[d]^{p} \\
\{b\} \ar@{^{(}->}[r]^{i_b} & \cB
}	
\end{equation}
where $\curP_b:=\PTqvXb$ is the moduli space of $\PT_q$ pairs on the fibre $\cX_b$.
By Theorem \ref{Thm:RelativeModuli}(2) and \cite{HT}, we have a relative symmetric obstruction theory
\begin{equation}\label{Eq:RelSOT}
\phi : \EE \to \trunc\LL_{\curP/\cB}	
\end{equation}
for $p:\curP \to \cB$. Moreover, the induced map
\[\phi_b : \EE_b:=L j_b^*\EE \xrightarrow{Lj_b^*\phi}  L j_b^*(\trunc \LL_{\curP/\cB}) \to \trunc \LL_{\curP_b}\]
is the standard symmetric obstruction theory for the fibre $\cX_b$ in \eqref{Eq:SOT}.

Now we can state our main theorem in this section.

\begin{theorem}\label{Thm:DefInv}
Let $f:\cX\to\cB$ be a smooth projective morphism of relative dimension $4$ with connected fibres to a smooth connected affine scheme $\cB$ such that $\omega_{\cX/\cB} \cong \O_{\cX}$. Let $\tv\in \bigoplus_{p}F^p H^{2p}_{DR}(\cX/\cB)$ be a horizontal section, $\tgamma \in \Gamma(\cB,R^2f_*\Omega^2_{\cX/\cB})$ the section induced by $\tv_2$, and $q \in \{-1,0,1\}$.
Assume that the function $b\in \cB \mapsto \rho_{\tgamma_b}\in \Z$ is constant.
For every orientation
\[o_{1}:\O_{\PTqvXB} \cong \det(\EE),\qquad o_{2}:\O_{\cB} \cong \det(\cT_{\tgamma}) \]
of the symmetric complex $\EE$ on $\PTqvXB$ and the orthogonal bundle $\cT_{\tgamma}$ on $\cB$, respectively, there exists a cycle class
\[\left[\PTqvXB\right]^\red \in A_{\rvd+\dim(\cB)}\left(\PTqvXB\right)\]
such that
\[\left[\PTqvXb\right]^\red = i_b^!\left[\PTqvXB\right]^\red \in A_{\rvd}\left(\PTqvXb\right)\]
for all $b \in \cB$. Here $i_b^!: A_*(\PTqvXB) \to A_*(\PTqvXb)$ denotes the refined Gysin pullback \cite{Ful} with respect to the fibre diagram \eqref{Eq:62} and $[\PTqvXb]^\red$ denotes the reduced virtual cycle in Theorem \ref{Thm:RVFC} with respect to the orientations induced by $o_1$ and $o_2$.
\end{theorem}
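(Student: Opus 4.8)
The strategy is to reduce the deformation-invariance statement to the basic compatibility of refined Gysin pullback with the square-root Gysin pullback for symmetric obstruction theories, combined with the cone reduction property of Proposition \ref{prop:conereduction} applied \emph{relatively} over $\cB$. First I would set up the relative reduced obstruction theory. Using the relative semi-regularity cosection
\[\SR_{\cX/\cB} : \EE\dual[1] \to R^1f_*(\Omega^3_{\cX/\cB})|_{\curP} \cong \cT\dual|_{\curP}\]
from Lemma \ref{lem:compatibilityofKS} (combined with relative Serre duality and the trivialization $w$), one checks $\SR_{\cX/\cB}^2 = \sfB_{\tgamma}\otimes 1_{\curP}$ by the relative version of Proposition \ref{prop:cosection}(2); here the key input is Proposition \ref{lem:chishorizontal}, which guarantees that $\ch_2(\II\udot)$ is the horizontal section $\tgamma$. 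Since $b\mapsto\rho_{\tgamma_b}$ is constant, $\cT_\gamma = \cT/\ker(\sfB_{\tgamma})$ is a genuine orthogonal \emph{bundle} on $\cB$, and locally on $\cB$ (after an \'etale cover trivializing the flag) we may choose a sub-bundle $\cV \subseteq \cT$ mapping isomorphically to $\cT_\gamma$; this gives a relative reduced symmetric complex $\EE^\red_{\cV}$ with $\EE \cong \EE^\red_{\cV}\oplus (\cV|_{\curP}[1])$ as symmetric complexes, exactly as in Definition \ref{Def:EEredV}. Then I would verify that $\phi^\red_{\cV}:\EE^\red_{\cV}\to\trunc\LL_{\curP/\cB}$ is a relative obstruction theory by the algebraic twistor argument of Proposition \ref{prop:redSOT} carried out fiberwise — or more directly, since the relative statement is fiberwise on $\cB$ by base change of cotangent complexes, it suffices that each $\phi^\red_{\cV,b}$ is an obstruction theory, which is Proposition \ref{prop:redSOT}. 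Independence of the choice of $\cV$ follows as in the proof of Theorem \ref{Thm:RVFC} by the $\bbA^1$-deformation argument, so the local pieces glue to a global relative reduced obstruction theory.

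Second, I would establish the relative cone reduction. The relative intrinsic normal cone $\fC_{\curP/\cB}$ embeds in $\fQ(\EE)$ via $\phi$ (using the isotropic condition for the relative obstruction theory, which holds because each fiber carries a $(-2)$-shifted symplectic derived enhancement by \cite{STV, PTVV} and one applies \cite[Prop.~4.3]{OT} / \cite[Lem.~1.11]{Par} fiberwise — alternatively one can run the relative Darboux theorem). Then Lemma \ref{Lem:KLconereduction}, applied to $\phi:\EE\to\trunc\LL_{\curP/\cB}$ with the vector bundle $\cV\dual|_{\curP}$ and the cosection $\SR_{\cV}$, yields a factorization $(\fC_{\curP/\cB})_\red \hookrightarrow \fQ(\EE^\red_{\cV})$ exactly as in Proposition \ref{prop:conereduction}, now in the relative setting. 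This lets me define
\[\left[\PTqvXB\right]^\red := \sqrt{0^!_{\fQ(\EE^\red_{\cV})}}\left[\fC_{\curP/\cB}\right] \in A_{\rvd+\dim\cB}(\PTqvXB),\]
where the square-root Gysin pullback is formed from the relative symmetric resolution of $\EE^\red_\cV$ and the orientation is induced from $o_1$ and $o_2$ via $\det(\EE)\cong\det(\EE^\red_\cV)\otimes\det(\cV)$ and $\det(\cV)\cong\det(\cT_\gamma)$ pulled back to $\curP$.

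Third — and this is where the actual comparison happens — I would prove $j_b^![\PTqvXB]^\red = [\PTqvXb]^\red$. The standard argument (cf.~\cite{BF, KP20, Par}) has two ingredients: commutativity of the refined Gysin pullback $i_b^!$ with fundamental classes of cones, giving $i_b^![\fC_{\curP/\cB}] = [\fC_{\curP_b}]$ in $A_*(\fQ(\EE^\red_{\cV,b}))$ after base change $j_b^*\fQ(\EE^\red_\cV)=\fQ(\EE^\red_{\cV,b})$ (this uses that $\fC_{\curP_b}$ is a closed subcone of $j_b^*\fC_{\curP/\cB}$ with the excess intersection accounted for by the normal bundle $i_b^*T_\cB = \O$, i.e.~there is no excess since $\{b\}\hookrightarrow\cB$ is regular of codimension $\dim\cB$ and the virtual dimensions drop by exactly $\dim\cB$); and commutativity of $i_b^!$ with the square-root Gysin pullback $\sqrt{0^!_{\fQ(\EE^\red_\cV)}}$, which follows from the compatibility of Edidin–Graham's localized square-root Euler class with refined Gysin pullback — this is the content of \cite[Thm.~4.2 / Cor.~4.3]{OT2} or can be derived from the blow-up construction of \cite[Def.~4.1]{KP20} together with the projection formula and \cite[Thm.~6.2]{Ful}. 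The main obstacle is precisely this last compatibility in the presence of the cosection reduction: one must check that the reduction (replacing $\fQ(\EE)$ by $\fQ(\EE^\red_\cV)$, i.e.~intersecting with the zero locus of the linear function $\fl_{\SR_\cV}$) commutes with $i_b^!$, which it does because $\SR_{\cX/\cB}$ is defined over $\cB$ so $j_b^*\fl_{\SR_\cV} = \fl_{\SR_{\cV,b}}$, and the cartesian diagram \eqref{Eq2} base-changes correctly along $i_b$. Assembling these, $i_b^![\PTqvXB]^\red = \sqrt{0^!_{\fQ(\EE^\red_{\cV,b})}}[\fC_{\curP_b}] = [\PTqvXb]^\red$, with the orientation bookkeeping matching by construction. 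Finally, the global existence of $[\PTqvXB]^\red$ independent of the \'etale cover used to define $\cV$ follows since the cycle class is characterized fiberwise by the identity just proved and $\PTqvXB\to\cB$ has $\cB$ connected; alternatively one invokes the $\bbA^1$-homotopy argument of Theorem \ref{Thm:RVFC} relatively. The one technical point I would be most careful about is ensuring the relative isotropic condition and the relative symmetric resolution exist globally on $\curP$ (not just fiberwise), for which I would cite the relative $(-2)$-shifted symplectic derived enhancement coming from $\cPerf(\cX/\cB)^{\spl}_{\O_\cX}$ and Theorem \ref{Thm:RelativeModuli}(2).
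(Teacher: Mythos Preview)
Your proposal contains a genuine gap at the relative cone reduction step. You write that Lemma~\ref{Lem:KLconereduction} ``applied to $\phi:\EE\to\trunc\LL_{\curP/\cB}$ \ldots\ yields a factorization $(\fC_{\curP/\cB})_\red \hookrightarrow \fQ(\EE^\red_{\cV})$ exactly as in Proposition~\ref{prop:conereduction}, now in the relative setting.'' But Lemma~\ref{Lem:KLconereduction} is stated for an \emph{absolute} obstruction theory $\EE\to\trunc\LL_\sX$, and the paper notes explicitly that Kiem--Li's cone reduction fails in the relative setting. The fix is not routine: one must first convert $\phi$ to an absolute obstruction theory $\phi^{\abs}:\EE^{\abs}\to\trunc\LL_\curP$ via the cone on $\KS_{\curP/\cB}$, and then lift the relative cosection $\SR$ to an absolute cosection $\SR^{\abs}$. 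The existence of this lift is equivalent to the vanishing of the composition
\[
\cT|_\curP \xrightarrow{\SR\dual}\EE[-1]\xrightarrow{\phi[-1]}\trunc\LL_{\curP/\cB}[-1]\xrightarrow{\KS_{\curP/\cB}}\Omega^1_\cB|_\curP,
\]
and checking this vanishing is, in the paper's words, ``the key part of the proof.'' It uses Lemma~\ref{lem:compatibilityofKS} to rewrite the composition as $\KS_{\cX/\cB}\circ\sfB_{\tgamma}$, and then Bloch's result that $\sfB_{\tgamma_b}\circ\KS_{\cX/\cB,b}=0$ for each $b$ precisely because the horizontal section $\tv_2$ lies in $F^2H^4_{DR}(\cX/\cB)$. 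This is the Hodge-theoretic input---the fact that the base $\cB$ sits inside the Hodge locus of $\tgamma$---and without it the relative reduced cycle need not exist. Your argument invokes Lemma~\ref{lem:compatibilityofKS} only to define $\SR_{\cX/\cB}$, never to establish this vanishing, so the step where you pass from the absolute Kiem--Li lemma to the relative statement is unjustified.

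Two smaller remarks. First, the \'etale cover to choose $\cV$ is unnecessary: $\cB$ is affine, so the surjection $\cT\twoheadrightarrow\cT_{\tgamma}$ splits globally. Second, you spend effort showing $\phi^\red_\cV$ is a relative obstruction theory (via a fiberwise twistor argument), but the paper's proof does not need this---the cone reduction $(\fC_{\curP/\cB})_\red\subseteq\fQ(\EE^\red_\cV)$ alone suffices to define the cycle via $\sqrt{0^!_{\fQ(\EE^\red_\cV)}}$. The final comparison with $i_b^!$ that you sketch (Vistoli's rational equivalence for the cone plus bivariance of the localized square-root Euler class from \cite{KP20}) is correct and matches the paper.
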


We briefly explain how we prove Theorem \ref{Thm:DefInv}.
We use the cosection localization technique of Kiem-Li \cite{KL13} as in subsection \ref{ss:VFC.CL}. 
Most of the arguments are just straightforward generalizations of the proof of Theorem \ref{Thm:RVFC} to the relative setting.
However there is one key difference. Kiem-Li's cone reduction lemma (cf.~Lemma \ref{Lem:KLconereduction}) fails in the relative setting.
Thus as in \cite{KP20}, we need to modify the relative obstruction theory/cosection for $\curP \to \cB$ to an absolution obstruction theory/cosection for $\curP$. In general, we need an additional condition on the cosection to do this. 
In our case, this additional condition can be deduced from the fact that the Hodge locus of $\tgamma \in H^4_{DR}(\cX/\cB)$ in $\cB$ is equal to $\cB$ itself.

\begin{proof}[Proof of Theorem \ref{Thm:DefInv}]
Form a fibre diagram
\[\xymatrix{
\curP \times_{\cB} \cX \ar[r]^-{q} \ar[d]^-{\pi} & \cX \ar[d]^-{f} \\
\curP \ar[r]^-{p} & \cB
}\]
where $\pi$ and $q$ denote the projection maps. Let $\II\udot$ denote the universal complex on $\curP\times_B \cX$.

We have a relative version of the cosections in Proposition \ref{prop:cosection},
\[\SR : \EE\dual[1]=R\hom_\pi(\II\udot,\II\udot)_0[2] \longrightarrow \cT|\dual_{\curP}.\]
Indeed, consider the composition
\begin{equation}\label{Eq:63}
R\hom_\pi(\II\udot,\II\udot)_0[2] \xrightarrow{\At_{\curP\times_{\cB} \cX/\curP}(\II\udot)} R\hom_\pi(\II\udot,\II\udot\otimes \Omega^1_{\cX/\cB})[3]\xrightarrow{\tr^3}  (R^3f_* \Omega^1_{\cX/\cB})|_{\curP}
\end{equation}
where the third trace map is defined as 
\[\tr^3 : R\hom_\pi(\II\udot,\II\udot\otimes \Omega^1_{\cX/\cB})[3] \stackrel{\tr}{\to} (Rf_*\Omega^1_{\cX/\cB}[3])|_{\curP} \to (R^3f_*\Omega^1_{\cX/\cB})|_{\curP}.\]
Here the last arrow uses Deligne's decomposition theorem \cite[Thm.~6.1]{D68},
\[R f_* \Omega^i_{\cX/\cB} \cong \bigoplus_j R^jf_* \Omega^i_{\cX/\cB}[-j],\] 
which replaces footnote \ref{footnote:decomposition} in the relative setting.
The map $\SR$ is then defined by composing the above map \eqref{Eq:63} with the Serre duality isomorphism $R^3f_*\Omega^1_{\cX/\cB} \cong (R^1f_*T_{\cX/\cB})\dual = \cT\dual$.


Note that we can form a relative version of the commutative triangle \eqref{eq:triangle.obsrBgamma'} in Proposition \ref{prop:cosection}, i.e., we have a commutative triangle
\begin{equation}\label{Eq13}
\xymatrix{
\cT|_{\curP} \ar[rr]^{\SR\dual} \ar[rd]_{\sfB_{\tgamma}|_{\curP}}&& \EE[-1] \ar[ld]^{\SR} \\
& \cT|_{\curP}\dual &
}	
\end{equation}
where we identified $\EE[-1] \cong \EE\dual[1]$ via the symmetric form of $\EE$. 
Here we used that  
\[\ch_2(\II\udot) = p^*(\tgamma) \in \Gamma(\curP, R^2\pi_*(\Omega^2_{\curP\times_{\cB} \cX/\curP})) \cong \Gamma(\curP, p^* (R^2f_* (\Omega^2_{\cX/\cB}))) \]
since both sections extend to horizontal sections $\widetilde{\ch_2}(\II\udot)$ and $p^*(\tv_2)$ of the vector bundle $F^2\cH^4_{DR}(\curP\times_\cB \cX/\curP)$ by Lemma \ref{lem:chishorizontal} (see our convention \eqref{Eq:ch}). Then the commutativity of \eqref{Eq13} follows from \cite[Prop.~4.2]{BF03} as in the proof of Proposition \ref{prop:cosection}.

Since $\cB$ is affine, we can find a subbundle $\cV \subseteq \cT$ such that the composition
\begin{equation}\label{Eq17}
\xymatrix{
\cV \ar@{^{(}->}[r] \ar@/_0.4cm/[rr]_{\cong} & \cT \ar@{->>}[r]  & \cT_{\tgamma}}	
\end{equation}
is an isomorphism. Then the restriction
\[\sfB_{\tgamma}|_{\cV} : \cV \otimes \cV \to \O_{\cB}\]
is non-degenerate. Thus we may regard $\cV$ as an orthogonal bundle over $\cB$. Moreover, the orientation of $\cV$ is determined by the orientation $o_2$ of $\cT_{\tgamma}$ via the isomorphism $\cV \cong \cT_{\tgamma}$ in \eqref{Eq17}.
We may view $\cV \subseteq \cT$ as a family of maximal non-degenerate subspaces of $H^1(\cX_b,T_{\cX_b})$.

As in Definition \ref{Def:EEredV}, the composition $\SR_{\cV}:\EE\dual[1] \xrightarrow{\SR} \cT|_{\curP} \dual \to \cV|_{\curP} \dual$ induces a direct sum decomposition of symmetric complexes
\[ \EE = \EE^\red_{\cV} \oplus \cV|_{\curP}[1].\]
More precisely, we let $\EE^\red_{\cV} := \cone(\SR\dual_{\cV}[1])$ and consider the symmetric form induced by the symmetric form of $\EE$.

Note that the relative symmetric obstruction theory $\phi:\EE \to \trunc\LL_{\curP/\cB}$ in \eqref{Eq:RelSOT} satisfies the isotropic condition. More precisely, we have 
\[\fC_{\curP/\cB} \subseteq \fQ(\EE)\]
as substacks of $\fC(\EE)$.
Indeed, this follows from the relative version of the Darboux theorem \cite{BBJ,BG} for the relative $(-2)$-shifted symplectic form for $p:\curP \to \cB$ in \cite{PTVV} since $\tgamma \in F^2H_{DR}^4(\cX/\cB)$. See \cite{Par2} and Remark \ref{Rem:RelativeDarboux} below. 


We claim that the relative cone reduction property holds for $p:\curP \to \cB$. More precisely, we claim that we can form a commutative diagram
\begin{equation}\label{Eq16}
\xymatrix@C+1pc{
& \fQ(\EE^\red_{\cV}) \ar@{^{(}->}[d] \ar@{^{(}->}[r] & \fC(\EE^\red_{\cV}) \ar[r] \ar@{^{(}->}[d] & \curP \ar@{^{(}->}[d]^0 \\
 (\fC_{\curP/\cB})_\red \ar@{^{(}->}[r]  \ar@{.>}[ru]& \fQ(\EE) \ar@{^{(}->}[r] & \fC(\EE) \ar[r]^{\fC(\SR_{\cV}\dual[1])} & \cV|_{\curP}^\vee
}	
\end{equation}
for some dotted arrow, where the embedding $(\fC_{\curP/\cB})_{\red} \hookrightarrow \fQ(\EE)$ is given by the obstruction theory $\phi$ as in the previous paragraph.

We first modify the relative obstruction theory $\phi:\EE \to \trunc \LL_{\curP/\cB}$ into an absolute one. Indeed, we can form a morphism 
\begin{equation}\label{Eq:51}
\xymatrix@C+1pc{
\EE^\abs \ar[r]^m \ar@{.>}[d]^{\phi^\abs}& \EE \ar[r] \ar[d]^{\phi} & \Omega^1_{\cB}|_{\curP}[1] \ar@{=}[d] \\
\trunc \LL_{\curP} \ar[r] & \trunc  \LL_{\curP/\cB} \ar[r]^-{\KS_{\curP/\cB}} & \Omega^1_{\cB}|_{\curP}[1]
}	
\end{equation}
of distinguished triangles on $\curP$, where the lower triangle is exact since $\cB$ is smooth.
The associated long exact sequence implies that $\phi^\abs :\EE^\abs \to \trunc\LL_\curP$ is an obstruction theory. Moreover, by considering the associated abelian cone stacks, we have a cartesian diagram
\begin{equation}\label{Eq14}
\xymatrix@C+2pc{
\fC_{\curP/\cB} \ar@{->>}[d] \ar@{^{(}->}[r]  & \fC(\EE) \ar@{->>}[d]^{\fC(m)} \\
\fC_{\curP} \ar@{^{(}->}[r]  & \fC(\EE^\abs)
}	
\end{equation}
where the horizontal arrows are closed embeddings given by the obstruction theories $\phi$, $\phi^\abs$, and the vertical arrows are $T_{\cB}$-torsors.

We then modify the relative cosection $\SR : \EE\dual[1] \to \cT|_{\curP}\dual$ into an absolute one. We will find a map $\SR^\abs:(\EE^\abs)\dual[1] \to \cT|_{\curP}\dual$ that fits into the commutative diagram
\begin{equation}\label{Eq19}
\xymatrix{
\EE\dual[1]  \ar[r]^-{\SR} \ar[d]_{m\dual[1]} & \cT|_{\curP}\dual \\
(\EE^\abs)\dual[1] \ar@{.>}[ru]_{\SR^\abs} .&
}    
\end{equation}
We need to show that the composition 
\begin{equation}\label{Eq:50}
\cT|_{\curP} \xrightarrow{\SR\dual} \EE[-1]\xrightarrow{\phi[-1]} \trunc \LL_{\curP/\cB}[-1] \xrightarrow{\KS_{\curP/\cB}} \Omega^1_{\cB}|_{\curP} 	
\end{equation}
vanishes.

The vanishing of \eqref{Eq:50} is the key part of the proof of Theorem \ref{Thm:DefInv}.
By the commutative triangle \eqref{Eq13} and the compatibility result in Lemma \ref{lem:compatibilityofKS}, we have a commutative diagram
\[\xymatrix@C+1pc{
\cT|_{\curP} \ar[r]^{\SR\dual} \ar[rd]_{\sfB_{\tgamma}|_{\curP}}& \EE[-1] \ar[d]^{\SR} \ar[r]^-{\phi[-1]} &  \trunc\LL_{\curP/\cB}[-1] \ar[d]^{-\KS_{\curP/\cB}} \\
&  \cT|_{\curP}\dual \ar[r]^{\KS_{\cX/\cB}|_{\curP}} & \Omega^1_{\cB}|_{\curP}
}\]
where $\KS_{\cX/\cB}$ denotes the Kodaira-Spencer map. Thus it suffices to show that the composition
\[\cT \xrightarrow{\sfB_{\tgamma}} \cT\dual \xrightarrow{\KS_{\cX/\cB}} \Omega^1_{\cB}\]
vanishes. Since $\cB$ is smooth, it suffices to show that the composition
\begin{equation}\label{Eq18}
T_{\cB,b} \xrightarrow{\KS_{\cX/\cB,b}} H^1(\cX_b,T_{\cX_b}) \xrightarrow{\sfB_{\tgamma_b}} H^1(\cX_b,T_{\cX_b})\dual	
\end{equation}
vanishes for all $b \in \cB$.
By \cite[Prop.~4.2]{Blo} (see also Lemma \ref{Lem:Obstructions}(1)), the composition \eqref{Eq18} indeed vanishes since the horizontal section $\tv_2$ is contained in $F^2 H^4_{DR}(\cX/\cB)$. 

Now we can apply Kiem-Li's cone reduction lemma \cite{KL13} for the absolute versions of the obstruction theory $\phi^{\abs}$ in \eqref{Eq:51} and cosection $\SR^{\abs}$ in \eqref{Eq19}.
Indeed, by Lemma \ref{Lem:KLconereduction}, we have a commutative diagram
\begin{equation}\label{Eq15}
\xymatrix{
& \fC((\EE^\abs)^\red_{\cV}) \ar@{^{(}->}[d] \ar[rr] && \curP \ar@{^{(}->}[d]^0\\
(\fC_{\curP})_\red \ar@{^{(}->}[r] \ar@{.>}[ru] & \fC(\EE^\abs) \ar[r]^-{\fl_{\SR^{\abs}}} &  \cT|_{\curP}\dual \ar@{->>}[r] & \cV|_{\curP}\dual
}	
\end{equation}
for some dotted arrow, where $\fl_{\SR^\abs}:=\fC((\SR^\abs)\dual[1])$ and
\[(\EE^\abs)^\red_{\cV}:=\cone(\cV|_{\curP}[1] \to \cT|_{\curP}[1] 
\xrightarrow{(\SR^\abs)\dual[1]} \EE^\abs).\] 
The existence of the dotted arrow in \eqref{Eq15} tells us that the lower horizontal composition in \eqref{Eq15} is zero.
Since the diagrams \eqref{Eq14} and \eqref{Eq19} are commutative, 
the lower horizontal composition in \eqref{Eq16} is also zero, which gives us the desired dotted arrow in \eqref{Eq16}.

We define the relative reduced virtual cycle as
\[[\curP]^\red := \sqrt{0^!_{\fQ(\EE^\red_{\cV})}}[\fC_{\curP/\cB}] \in A_*(\curP),\]
where the square root Gysin pullback $\sqrt{0^!_{\fQ(\EE^\red_{\cV})}}$ is defined as in the proof of Theorem \ref{Thm:NRVFC}.
By Vistoli's rational equivalence \cite[Lem.~3.16]{Vist},\footnote{In \cite{Vist}, it is stated for unramified maps between DM stacks, but the result holds for arbitrary DM type morphism between Artin stacks. This can be deduced from the proof of \cite[Prop.~4]{Kre2} (cf. \cite[Prop.~2.5]{Qu}).}
we have
\[i_b^![\fC_{\curP/\cB}] = [\fC_{\curP_b}] \in A_* (\fC_{\curP/\cB}|_{\curP_b})\]
where we denote $\curP_b :=\PTqvXb$.
Since Oh-Thomas's localized square root Euler classes \cite{OT} are bivariant classes by \cite[Lem.~4.4(3)]{KP20}, the square root Gysin pullback $\sqrt{0^!_{\fQ(\EE^\red_{\cV})}}$ commutes with the ordinary Gysin pullback $i^!_b$.
Therefore, we have
\[i_b^![\curP]^\red = [\curP_b]^\red,\]
which completes the proof. 
\end{proof}

\begin{remark}\label{Rem:RelativeDarboux}
The relative version of the Darboux theorem \cite{BBJ,BG} works only under an additional assumption.
If $\XX$ is a derived scheme over a scheme $\cB$ with a $(-2)$-shifted symplectic form $\theta \in \HN^{-4}(\XX/\cB)(2) \cong \cA^{2,\cl}(\XX/\cB;-2)$, we have a global Darboux chart only if $\theta$ maps to zero under the canonical map
\begin{equation}\label{Eq:HNtoHP}
\HN^{-4}(\XX/\cB)(2) \to \HochP^{-4}(\XX/\cB)(2) 
\end{equation}
to the periodic cyclic homology. 



There is a simple example where there is no Darboux chart for a $(-2)$-shifted symplectic derived scheme. Consider the origin in the affine plane
\[\XX:=\Spec(\C[t_1,t_2]/(t_1,t_2)) \subseteq \cB:=\Spec(\C[t_1,t_2]) \cong \bbA^2.\]
Using the quasi-free resolution $(\C[t_1,t_2][y_1,y_2], dy_i=t_i) \xrightarrow{qis} \C[t_1,t_2]/(t_1,t_2)$, we can find a $(-2)$-symplectic form
\[\theta:=d_{DR}y_1 \wedge d_{DR}y_2 \in \HN^{-4}(\XX/\cB)(2).\]
The isotropic condition fails in this case since $\fC_{\XX/\cB}=\fC(\LL_{\XX/\cB})=\bbA^2$ for the induced symmetric obstruction theory $\LL_{\XX/\cB}\xrightarrow{\id}\LL_{\XX/\cB}$.
This implies that there is no Darboux chart.
Moreover, we have $\theta \mapsto t_1t_2\neq0 \in \HochP^{-4}(\XX/\cB)(2) \cong \C[[t_1,t_2]]$ under the canonical map \eqref{Eq:HNtoHP}.

However, we still have the desired isotropic condition in the situation of Theorem \ref{Thm:DefInv}.
This will be shown in \cite{Par2}, and we briefly summarize the results here.
Let $f:\cX\to\cB$ be a smooth projective morphism of relative dimension $4$ with $\omega : \O_{\cX} \xrightarrow{\cong} \Omega^4_{\cX/\cB}$.
Let $\cM$ be any moduli space of perfect complexes on the fibres of $f: \cX\to\cB$.
The isotropic condition can be detected infinitesimally so we may assume that $\cB=\Spec(A)$ for a local Artinian $\C$-algebra $A$.
Let $\gamma=\ch_2(E) \in H^4(\cX_0,\C)$ for any $[E] \in \cM$ and let $\tgamma \in H^4_{DR}(\cX/\cB)$ be the horizontal lift.
Then the $(-2)$-symplectic form on the derived enhancement of $\cM$ \cite{PTVV,ToVa} maps to zero under the map \eqref{Eq:HNtoHP} if and only if
\begin{equation}\label{Eq:No(0,4)terms}
\tgamma \in F^1H^4_{DR}(\cX/\cB)=\HH^4(\cX,\Omega^{\geq1}_{\cX/\cB}) \subseteq H^4_{DR}(\cX/\cB)=\HH^4(\cX,\Omega^\bullet_{\cX/\cB}).
\end{equation}
In the situation of Theorem \ref{Thm:DefInv}, the condition \eqref{Eq:No(0,4)terms} is satisfied since we already assumed $\tgamma \in F^2H^4_{DR}(\cX/\cB)$. 
Moreover, condition \eqref{Eq:No(0,4)terms} implies the existence of a local Darboux chart which induces the desired isotropic condition.

Heuristically, the result in \cite{Par2} tells us that the relative isotropic condition is satisfied along the locus in the base where the deformations of $\ch_2(E)$ do not have a $(0,4)$-Hodge piece.
This condition is compatible with the gauge-theoretical point of view since the holomorphic bundles are the solutions of
\[\int_{\cX_0}\alpha(E)\cup \omega=0\]
where $\alpha(E)=2\ch_2(E)$, see \cite[Appendix A]{OT}. 
We thank Richard Thomas for the explanation of the gauge-theory perspective and the expectation that \eqref{Eq:No(0,4)terms} is the necessary additional condition to obtain deformation invariance.
\end{remark}

Consequently, we have deformation invariance of numerical invariants.

\begin{corollary}\label{Cor:DefInvInvariants}
In the situation of Theorem \ref{Thm:DefInv}, 
the function
\[b\in \cB \mapsto \int_{[\PTqvXb]^\red}{j_b^*(\star)} \in \Q\]
is constant for any insertion $\star \in A^{\rvd}(\curP^{(q)}_{\tv}(\cX/\cB))$. 
\end{corollary}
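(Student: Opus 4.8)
The plan is to deduce this as a formal consequence of the cycle-level deformation invariance established in Theorem \ref{Thm:DefInv}, using the compatibility of the refined Gysin pullback with proper pushforward and with capping against bivariant (operational) Chow classes. First I would recall that for each closed point $b\in\cB$ with inclusion $i_b:\{b\}\hookrightarrow\cB$ and the fibre square \eqref{Eq:62}, Theorem \ref{Thm:DefInv} gives
\[
\left[\PTqvXb\right]^\red = i_b^!\left[\PTqvXB\right]^\red \in A_\rvd\left(\PTqvXb\right).
\]
Since $\PTqvXb$ is proper over $\Spec(\C)$ (it is a projective scheme by Theorem \ref{Thm:RelativeModuli}(1)), the degree map $\int_{\PTqvXb}(-):A_0(\PTqvXb)\to\Q$ is defined, and the integral $\int_{[\PTqvXb]^\red}j_b^*(\star)$ makes sense as $\int_{\PTqvXb}\bigl(j_b^*\star\cap[\PTqvXb]^\red\bigr)$ once $\star\in A^\rvd(\curP^{(q)}_{\tv}(\cX/\cB))$ is an operational class of the correct codimension so that $j_b^*\star\cap[\PTqvXb]^\red\in A_0(\PTqvXb)$.

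The key step is the interchange of $i_b^!$ with both the operational cap product and the degree map. First I would note that $j_b^*\star = j_b^*(\text{restriction of }\star)$ acts on $A_*(\PTqvXb)$ and that operational classes commute with refined Gysin pullbacks by their very definition (bivariance), so
\[
j_b^*\star \cap \left[\PTqvXb\right]^\red = j_b^*\star\cap i_b^!\left[\PTqvXB\right]^\red = i_b^!\bigl(\star\cap\left[\PTqvXB\right]^\red\bigr) \in A_0(\PTqvXb).
\]
Here $\star\cap[\PTqvXB]^\red$ lies in $A_{\dim\cB}(\curP^{(q)}_{\tv}(\cX/\cB))$. Next, since $\curP^{(q)}_{\tv}(\cX/\cB)\to\cB$ is proper, I would push the class $\alpha:=\star\cap[\PTqvXB]^\red$ forward to $\cB$, obtaining $p_*\alpha\in A_{\dim\cB}(\cB)=A_{\dim\cB}(\cB)$; as $\cB$ is a smooth connected (hence irreducible) variety of dimension $\dim\cB$, we have $A_{\dim\cB}(\cB)=\Q\cdot[\cB]$, so $p_*\alpha = c\,[\cB]$ for a single rational number $c$. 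Then, by compatibility of proper pushforward with the Gysin pullback $i_b^!$ (applied to the fibre square defining $\PTqvXb$ over $\{b\}$), and since $i_b^![\cB]=[\{b\}]$ because $\cB$ is smooth and $i_b$ is the inclusion of a point, we get
\[
\int_{[\PTqvXb]^\red} j_b^*(\star) = \int_{\{b\}} i_b^!(p_*\alpha) = \int_{\{b\}} c\,[\{b\}] = c,
\]
independent of $b$. This is the whole argument modulo bookkeeping.

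The main obstacle — really the only non-formal point — is making sure all the Gysin-pullback compatibilities hold at the level of \emph{Kresch's Chow groups of Artin stacks} rather than for schemes or Deligne--Mumford stacks, since $\PTqvXb$ and $\curP^{(q)}_{\tv}(\cX/\cB)$ are algebraic spaces (and in the appendix, genuine Artin stacks). For the moduli of $\PT_q$ pairs this is mild: these are projective schemes, so everything reduces to Fulton's intersection theory \cite{Ful} and the standard compatibility of refined Gysin maps with proper pushforward and operational classes (see \cite[Ch.~6,17]{Ful}), together with the fact that operational classes are bivariant. The one place one must be slightly careful is the identity $i_b^![\cB] = [\{b\}]$ and the reduction $A_{\dim\cB}(\cB)=\Q[\cB]$; both use that $\cB$ is smooth, connected, and of pure dimension, which is part of the hypotheses of Theorem \ref{Thm:DefInv}. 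I would also remark that, because the formation of $c$ only involves $p_*\alpha$ and not the individual fibres, the statement in fact shows the invariant is \emph{defined} as an element of $A_{\dim\cB}(\cB)=\Q$ and its specialization to any fibre recovers the number, which is precisely what "constant function on $\cB$" means here.
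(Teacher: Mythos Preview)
Your proposal is correct and spells out precisely the standard argument the paper has in mind: the corollary is stated there without proof, as an immediate consequence of Theorem \ref{Thm:DefInv}. The ingredients you invoke---bivariance of operational classes, compatibility of refined Gysin pullback with proper pushforward, and $A_{\dim\cB}(\cB)=\Q[\cB]$ for smooth connected $\cB$---are exactly what make it immediate, and since the moduli spaces here are projective schemes over $\cB$, Fulton's classical theory suffices (your caveat about Kresch's Chow groups is unnecessary in this instance).
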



There are two technical assumptions in Theorem \ref{Thm:DefInv}:
\begin{enumerate}
\item [A1)] The function $b \in \cB \mapsto \rho_{\tgamma_b} \in \Z$ is constant.
\item [A2)] The symmetric complex $\EE$ on $\PTqvXB$ and the orthogonal bundle $\cT_{\tgamma}$ on $\cB$ are orientable.
\end{enumerate}
We first discuss the assumption A1 on $\rho_{\tgamma_b}$.

\begin{remark}\label{Rem:B0}
The assumption A1 is {\em generically} satisfied. 
Indeed, the function $b \in \cB \mapsto \rho_{\tgamma_b} \in \Z$ is a lower semi-continuous function. 
Hence if we let $\rho_{\max}$ be the maximal value, then
\[\cB^{\circ} := \{b\in\cB :\rho_{\tgamma_b}=\rho_{\max}\} \]
is an open dense subscheme of $\cB$. 
Consequently, we can form an orthogonal bundle of rank $\rho_{\max}$ on $\cB^\circ$ as
\[\cT_{\tgamma}:=(\cT|_{\cB^{\circ}})/\ker(\sfB_{\tgamma}).\]

On the other hand, for the {\em special} cases when assumption A1 is not satisfied, 
we have a vanishing result by Proposition \ref{Prop:vanishingforjumpingrho} below.
\end{remark}

\begin{proposition}\label{Prop:vanishingforjumpingrho}
Let $f:\cX\to\cB$ be a smooth projective morphism of relative dimension $4$ with connected fibres to a smooth connected affine scheme $\cB$ such that $\omega_{\cX/\cB} \cong \O_{\cX}$. Let $\tv\in \bigoplus_{p}F^p H^{2p}_{DR}(\cX/\cB)$ be a horizontal section and let $\tgamma \in \Gamma(\cB,R^2f_*\Omega^2_{\cX/\cB})$ be the section induced by $\tv_2$.
Let $\cB^{\circ} := \{b\in\cB :\rho_{\tgamma_b}=\rho_{\max}\}$ as in Remark \ref{Rem:B0}. 
If $b\in\cB$ is a point such that $b \not\in \cB^\circ$, then we have
\[\left[\PTqvXb\right]^\red = 0 \in A_*\left(\PTqvXb\right)\]
for any orientation.
\end{proposition}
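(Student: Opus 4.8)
The statement is a vanishing result: at a point $b \in \cB$ where $\rho_{\tgamma_b} < \rho_{\max}$, the reduced virtual cycle $[\PTqvXb]^\red$ vanishes. The plan is to compare the reduced virtual cycle at $b$ with the restriction to the fibre of a ``partially reduced'' relative virtual cycle built from a non-degenerate subbundle of rank $\rho_{\max}$, which only exists over $\cB^\circ$. The key point is that near $b$ one cannot reduce by a full maximal subspace of rank $\rho_{\max}$ — the cosection degenerates — but one \emph{can} still consider the cosection $\SR$ on the whole moduli space $\PTqvXb$ and observe that it factors through a subspace of strictly smaller dimension, forcing the relevant cone to sit inside a quadratic cone stack that is ``too small'', which kills the square-root Gysin pullback.

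\textbf{First step.} I would work locally on the base. Since $\rho_{\tgamma_b}$ is lower semi-continuous (Remark~\ref{Rem:B0}), after shrinking $\cB$ to an affine neighbourhood of $b$ I may assume $\cB^\circ = \cB \setminus Z$ for a proper closed subscheme $Z \ni b$, and that the rank of $\sfB_{\tgamma}$ is constant equal to $\rho_0 := \rho_{\tgamma_b}$ on a neighbourhood of the generic point of each component of $Z$ through $b$ — more precisely I just need that at the point $b$ itself the form $\sfB_{\tgamma_b}$ on $H^1(\cX_b, T_{\cX_b})$ has rank $\rho_0 < \rho_{\max}$. Over the open dense $\cB^\circ$ we have by Theorem~\ref{Thm:DefInv} (applied to $f|_{\cB^\circ}$) a relative reduced cycle $[\curP^{(q)}_{\tv}(\cX/\cB)|_{\cB^\circ}]^\red$ whose Gysin restriction to any fibre over $\cB^\circ$ is the reduced virtual cycle of that fibre. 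The difficulty is that $b \notin \cB^\circ$, so this does not directly apply.

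\textbf{Second step (the main point).} The reduced virtual cycle $[\PTqvXb]^\red$ is, by construction \eqref{Eq6}, $\sqrt{0^!_{\fQ(\EE^\red_V)}}[\fC_{\PTqvXb}]$ where $V \subseteq H^1(\cX_b, T_{\cX_b})$ is a \emph{maximal} non-degenerate subspace, so $\dim V = \rho_0$. Thus the reduced virtual dimension at $b$ is $\vd + \tfrac12\rho_0$. On the other hand, I want to exhibit a further cosection coming from the fact that $\rho_0$ is not the generic rank. The cosection $\SR : \EE^\vee[1] \to H^1(\cX_b,T_{\cX_b})^\vee \otimes \O$ from Proposition~\ref{prop:cosection} has square $\SR^2 = \sfB_{\tgamma_b} \otimes 1$. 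After quotienting by the chosen $V$ to form $\EE^\red_V$, the \emph{remaining} cosection $\SR^{\red} : (\EE^\red_V)^\vee[1] \to (H^1(\cX_b,T_{\cX_b})/V)^\vee \otimes \O$ still exists, and its square is $\overline{\sfB_{\tgamma_b}} \otimes 1$, which is now the \emph{zero} form since $V$ was maximal non-degenerate (so $H^1/V = \ker\sfB_{\tgamma_b}$ up to the quadratic form). Hence $\SR^{\red}$ is an \emph{isotropic} cosection on $\EE^\red_V$ in the sense of \cite{KP20}. Now here is where the hypothesis $\rho_0 < \rho_{\max}$ enters: I claim this isotropic cosection is \emph{nowhere zero} — equivalently, at every point $[I\udot] \in \PTqvXb$ the fibre of $\SR^{\red}$ is surjective onto a space of positive dimension. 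This is because the generic rank of $\sfB_{\tgamma}$ being $\rho_{\max} > \rho_0$ means that the image of the obstruction map $\sfob = \sr^\vee$ at \emph{any} complex $I\udot$ with $\ch_2 = \gamma$ contains directions detected by $(3,1)$-forms $\xi$ for which $\sfob(v) \ne 0$ but $\sfB_{\tgamma_b}$ vanishes on that direction; more carefully, one uses that $\SR$ at $[I\udot]$ is still $\sr$, which (by Corollary~\ref{Cor1} and the commutative triangle \eqref{eq:triangle.obsrBgamma'}) has image of dimension $\rho_{\max}$ on the \emph{relative} moduli over a neighbourhood of a generic point, hence on $\EE^\red_V$ there is a surviving nonzero part. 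The cleanest formulation: pick $\xi \in \ker(\sfB_{\tgamma_b}) \subseteq H^1(\cX_b,T_{\cX_b})$ which is not in $\ker$ of the full family $\sfB_{\tgamma}$ — such $\xi$ exists precisely because $\rho_0 < \rho_{\max}$ — and the corresponding cosection $\sigma^\xi$ on $\EE^\red_V$ is nowhere vanishing on $\PTqvXb$ (its vanishing locus would force the Hodge class to stay Hodge in the $\xi$-direction, contradicting the jump of $\rho$). Then apply \cite[Thm.~1.1 or the DT4 cosection vanishing theorem]{KP20}: a nowhere-zero isotropic cosection forces $[\PTqvXb]^\red = \sqrt{0^!_{\fQ(\EE^\red_V)}}[\fC] = 0$.

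\textbf{What I expect to be the main obstacle.} The delicate point is establishing that the residual isotropic cosection $\sigma^\xi$ (equivalently $\SR^{\red}$) is \emph{nowhere vanishing} on $\PTqvXb$ — not merely nonzero somewhere. Vanishing of $\sigma^\xi$ at a point $[I\udot]$ would say $\xi \cdot \sr(e) = 0$ for all $e \in \Ext^2$, i.e. $\xi$ lies in the kernel of $\sfob^\vee = \sr$ evaluated at this complex; but by \eqref{eq:triangle.obsrBgamma'} this means $\sfB_{\tgamma_b}(\xi, -)$ factors through $\sfob^\vee \circ \sfob$ which vanishes anyway since $\xi \in \ker\sfB_{\tgamma_b}$ — so this computation alone does \emph{not} give non-vanishing. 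The resolution is to work relatively: I must use that over a small base $\cB$ the \emph{relative} semi-regularity $\SR_{\cX/\cB}$ has a component in the $\xi$-direction which, because $\xi$ becomes non-degenerate for $\sfB_{\tgamma}$ after moving off $Z$, cannot be identically zero as a cosection of the relative obstruction theory; restricting this relative non-vanishing statement to the fibre over $b$ — using that $\fC_{\PTqvXb/\{b\}}$ is contained in the reduced cone and the relative cone reduction of Theorem~\ref{Thm:DefInv} applies over $\cB$ (with the rank-$\rho_0$ bundle $\cV$ that \emph{does} extend over all of $\cB$) — produces a cosection on the relative $\EE^\red_{\cV}$ restricted to the fibre that is the pullback of a \emph{non-vanishing} section of $(\ker\sfB_{\tgamma})^\vee / \ldots$ on $\cB$, hence nowhere zero on the fibre. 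So the argument is: reduce by the maximal subbundle $\cV$ of rank $\rho_0$ that extends across $b$ (rank is constant $= \rho_0$ in a neighbourhood of $b$ if we shrink appropriately — if not, one stratifies), then the remaining relative cosection restricted to the fibre over $b$ still has a piece landing in $\cT_{\tgamma}/\cV$ which is nonzero at $b$ because the generic rank is bigger; a deformation/specialization argument along a curve in $\cB$ meeting $\cB^\circ$ then gives the vanishing. I would present this as: (i) shrink so rank is $\rho_0$ near $b$; (ii) build relative $\EE^\red_{\cV}$ as in the proof of Theorem~\ref{Thm:DefInv}; (iii) note $\SR$ still gives a relative cosection $\EE^\red_{\cV}{}^\vee[1] \to (\cT/\cV)^\vee|_\curP$ whose square is $\overline{\sfB_{\tgamma}}$, which is zero at $b$ but nonzero at nearby points of $\cB^\circ$; (iv) restrict to the fibre and invoke the isotropic cosection vanishing of \cite{KP20}, the needed non-vanishing being inherited by pullback from $\cB$ near but not at $b$, then passing to the limit by continuity of the cosection. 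The cleanest self-contained route may actually avoid (iv)'s limit and instead directly quote: $[\PTqvXb]^\red = j_b^! [\curP^{(q)}_{\tv}(\cX/\cB^\circ \cup\{b\})]^{\text{partial-red}}$ where the partial reduction is by $\cV$ only (rank $\rho_0$), and then observe that this partially-reduced relative cycle has an \emph{isotropic} cosection that is non-vanishing on a dense open, forcing it to be zero already on all of $\curP$ — but this requires the base to be connected and the cosection-localized class to be supported on the degeneracy locus, which is where I expect to spend the most effort making rigorous.
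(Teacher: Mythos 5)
You correctly identify the core structural idea (reduce by the rank-$\rho_{\tgamma_b}$ non-degenerate subbundle $\cV$ that \emph{does} extend across $b$, then use the fact that over $\cB^\circ$ there is extra room in $\cT_{\tgamma}$), and you correctly diagnose the central obstacle: the residual cosection has square $\overline{\sfB_{\tgamma_b}}$ which \emph{vanishes} at $b$, so no naive ``non-vanishing at $b$'' argument can work. However, your two proposed escape routes both have genuine gaps. The ``passing to the limit by continuity'' suggestion would require the residual cosection to be nonzero at $b$, which (as you yourself point out) it is not. Your alternative --- ``has an isotropic cosection that is non-vanishing on a dense open, forcing it to be zero already on all of $\curP$'' --- is also not right: cosection localization only shows the class is \emph{supported on} the degeneracy locus of the cosection (which is $\curP_b$), not that it is zero. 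You never close the argument.

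The paper's finishing move, which you are missing, is a three-step combination: \textbf{(a)} reduce the base to a smooth connected affine \emph{curve} through $b$ so that $\cB^\circ = \cB \setminus \{b\}$ exactly; \textbf{(b)} over $\cB^\circ$, pick a section $w$ of the orthogonal complement $\cU$ of $\cV|_{\cB^\circ}$ inside a larger non-degenerate subbundle $\cW \subsetneq \cT|_{\cB^\circ}$ with $w^2$ nowhere vanishing --- this gives a \emph{scalar-valued} cosection $\sigma$ on $\EE^\red_\cV|_{\curP^\circ}$ with $\sigma^2$ a unit, so the vanishing lemma (Lemma~\ref{Lem:Vanishing}) kills the restriction $k^*[\tcurP]^\red_\cV$ over $\cB^\circ$; \textbf{(c)} the localization sequence $A_*(\tcurP_b) \xrightarrow{j_*} A_*(\tcurP) \xrightarrow{k^*} A_*(\tcurP^\circ) \to 0$ then forces $[\tcurP]^\red_\cV = j_*(\alpha)$ for some $\alpha$, and the self-intersection formula gives $i_b^! j_*(\alpha) = e(N_{b/\cB}) \cap \alpha = 0$ because $N_{b/\cB}$ is a \emph{trivial} line bundle on a point of a curve. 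Step (c) is exactly what the localization sequence buys you beyond ``supported on the degeneracy locus,'' and it is why the reduction to a curve in step (a) is essential (otherwise the normal bundle would not be trivial and the Euler class would not vanish). You also omit the orientation bookkeeping: the paper works on a $2$-fold \'etale cover $\tcurP$ of $\curP$ to get a canonical orientation of $\EE^\red_\cV$, and then uses $\tcurP_b = \curP_b \sqcup \curP_b$ to compare the relative reduced cycle with $\pm[\curP_b]^\red$ on the two sheets.
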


Before we prove Proposition \ref{Prop:vanishingforjumpingrho}, we discuss the assumption A2 on orientations.
The following simple observation shows that the orientability of the orthogonal bundle $\cT_{\tgamma}$ is a minor issue.

\begin{remark}\label{Rem:orientationofTgamma}
There exists a canonical 2-fold {\'e}tale cover $u:\widetilde{\cB} \to \cB^{\circ}$ such that $\cT_{\tgamma}|_{\widetilde{\cB}}$ is orientable.
Indeed, we can form a fibre diagram
\[\xymatrix{
\widetilde{\cB} \ar[r] \ar[d] & \Spec(\C) \ar[d] \\
\cB^{\circ} \ar[r]^{\det(\cT_{\tgamma})} & B\mu_2
}\]
where the horizontal map $\cB^{\circ} \to B\mu_2$ corresponds to the pair of the line bundle $\det(\cT_{\tgamma})$ and the isomorphism $\O_{\cB^{\circ}} \cong \det(\cT_{\tgamma})^{\otimes2}$ induced by $\sfB_{\tgamma}$.
Then we have a canonical bijection
\[\{\text{orientations of $\cT_{\tgamma}$}\} \cong \{\text{sections of $\widetilde{\cB} \to \cB^{\circ}$}\}.\]
Consequently, there exists a canonical orientation of the pullback $\cT_{\tgamma}|_{\widetilde{\cB}}$.
\end{remark}

On the other hand, the orientability of the symmetric complex $\EE$ is more subtle. 
The orientability result of Cao-Gross-Joyce \cite{CGJ} for the fibres does not imply the orientability for the relative case.
We suggest two strategies:
\begin{enumerate}[label=\alph*)]
\item Use the canonical orientation on the canonical 2-fold {\'e}tale cover of the {\em moduli space} $\curP$ as in Remark \ref{Rem:orientationofTgamma}.
\item Find an orientation after a {\em base change} by a finite {\'e}tale cover $\cB' \to \cB$.
\end{enumerate}

Strategy a) is sufficient for applications to the variational Hodge conjecture in the next subsection.
However, if we want to obtain the deformation invariance of numerical invariants (Corollary \ref{Cor:DefInvInvariants}),
strategy a) is not enough and we need strategy b). We expect that the following conjecture holds.

\begin{conjecture}\label{conj:familyorientation}
Let $f:\cX\to\cB$ be a smooth projective morphism with connected fibres of relative dimension $4$ to a smooth connected affine scheme $\cB$ such that $\omega_{\cX/\cB} \cong \O_{\cX}$. Let $\tv\in \bigoplus_{p}F^p H^{2p}_{DR}(\cX/\cB)$ be a horizontal section and let $q \in \{-1,0,1\}$. 
\begin{enumerate}
\item[$(1)$] Then there exists a finite {\'e}tale cover $u:\cB'  \to \cB$ such that $\EE|_{{\curly P}^{(q)}_{\tv'}(\cX'/\cB')}$ is orientable, where $\cX'=\cX\times_\cB \cB'$ and $\tv'=u^*(\tv)$.
\item[$(2)$] Moreover, we can further assume that the degree of the {\'e}tale cover $u:\cB'\to \cB$ in (1) divides 
\[|K_{\top}^0(\cX_b^\an)_{\mathrm{tor}}|!\]
where $K_{\top}^0(\cX_b^\an)_{\mathrm{tor}} \subseteq K^0_{\mathrm{top}}(\cX_b^\an)$ is the torsion subgroup of the complex topological $K$-theory of $\cX_b^\an$, which is finite. 
\end{enumerate}
\end{conjecture}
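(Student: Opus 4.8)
The statement to prove is Conjecture \ref{conj:familyorientation}: the existence of a finite \'etale base change $u:\cB'\to\cB$ after which the symmetric complex $\EE$ becomes orientable on the relative moduli space ${\curly P}^{(q)}_{\tv'}(\cX'/\cB')$, with quantitative control on the degree. Since this is stated as a conjecture in the excerpt, a complete proof is presumably not expected; however, the paper does claim to prove the topological counterpart (Corollary \ref{Cor:Or.2} referenced in the introduction), so the plan is to establish the topological statement in full and then indicate how the algebraic statement would follow from a comparison between the algebraic and topological orientation bundles. The key structural input is that orientations of $\EE$ form a torsor over $H^0$ of a $\Z/2$-local system on the moduli space, equivalently a $\mu_2$-gerbe is trivialized by the choice; the obstruction to an orientation is a class in $H^1(-,\Z/2)$, and the question is to kill this class by passing to a finite cover of the base.

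\textbf{Step 1: Reduce to a statement about the structure group of a $\Z/2$-bundle.} Let $\curP=\curP^{(q)}_{\tv}(\cX/\cB)$ with projection $p:\curP\to\cB$. By the work of Cao--Gross--Joyce \cite{CGJ}, for each fibre $b$ the complex $\EE_b$ admits an orientation, so the orientation $\Z/2$-torsor $O\to\curP$ is fibrewise trivial over $\cB$; hence $p_*(O/\cB)$ (the sheaf of fibrewise orientations) is an \'etale $\Z/2$-torsor sheaf over $\cB$ — more precisely a locally constant sheaf on $\cB^{\an}$ whose stalks are the sets of orientations of $\EE_{\curP_b}$. First I would show that choosing an orientation of $\EE|_{\curP'}$ over the relative moduli space for $\cX'/\cB'$ is equivalent to choosing a section of this torsor sheaf over $\cB'$, which reduces the problem to: given a locally constant sheaf of torsors under the (non-constant, in general) sheaf of groups $\underline{H}:=p_*(\underline{\Z/2}_{\curP})$ on $\cB^{\an}$, find a finite cover trivializing it. The monodromy of this torsor factors through the permutation action of $\pi_1(\cB,b)$ on the finite set $\mathrm{Or}(\EE_{\curP_b})$ of orientations, which is a torsor over $H^0(\curP_b,\Z/2)$; the image of this monodromy representation is a finite group $G$, and taking $\cB'\to\cB$ to be the corresponding connected finite \'etale cover (the one whose fundamental group is the kernel) trivializes the torsor. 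This proves part (1) over the analytic site; descent along $\cB^{\an}\to\cB$ (or rather, the fact that finite \'etale covers are algebraic by Riemann existence / \cite{SGA1}) promotes it to an algebraic finite \'etale cover. This is essentially the topological Corollary \ref{Cor:Or.2} the authors allude to.

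\textbf{Step 2: Bound the degree.} For part (2), the degree of the cover equals $|G|$ where $G\subseteq \mathrm{Sym}(\mathrm{Or}(\EE_{\curP_b}))$ is the monodromy image, and $|\mathrm{Or}(\EE_{\curP_b})|$ divides $|H^0(\curP_b,\Z/2)|$; so it would suffice to bound the order of the relevant $H^0$ in terms of $|K^0_{\top}(\cX_b^{\an})_{\mathrm{tor}}|$. The idea here is that the orientation bundle of $\EE$ on $\curP_b$ is pulled back from the orientation bundle on the moduli \emph{stack} of perfect complexes, and Cao--Gross--Joyce's analysis identifies the relevant $\Z/2$-cohomology with data controlled by the topological $K$-theory of $\cX_b$ (the "flag" or "excess" structures on $K^0_{\top}$). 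Concretely: the group of connected-component-wise sign changes acts through a quotient of $H^0(\curP_b,\Z/2)$, which injects into the $\Z/2$-cohomology of the relevant Picard-type group, and the torsion in $K^0_{\top}(\cX_b^{\an})$ governs how many distinct orientation choices can be permuted. I would make this precise by following \cite[Thm.~1.11 and \S2]{CGJ}, tracking the finite abelian group that appears and bounding its exponent/order by $|K^0_{\top}(\cX_b^{\an})_{\mathrm{tor}}|$, then noting that any finite subgroup of $\mathrm{Sym}$ of a set of that size has order dividing $(|K^0_{\top}(\cX_b^{\an})_{\mathrm{tor}}|)!$.

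\textbf{Step 3: From topological to algebraic orientations.} The passage from a topological trivialization to an \emph{algebraic} orientation $\O_{\curP'}\cong\det(\EE)$ requires comparing $\det(\EE)$ as an algebraic line bundle with its topological realization. Here one uses that $\det(\EE)^{\otimes 2}\cong\O$ canonically (from $\theta:\EE^\vee[2]\cong\EE$ together with our orientation conventions, cf.\ the notation section of the excerpt), so $\det(\EE)$ is a $2$-torsion line bundle, hence corresponds to a $\mu_2$-torsor, which is determined by its class in $H^1_{\et}(\curP',\mu_2)=H^1_{\et}(\curP',\Z/2)$; this algebraic $\Z/2$-cohomology matches the topological one for the comparison of interest (at least the relevant component does), so trivializing the topological class trivializes the algebraic one. \textbf{The main obstacle} I anticipate is exactly this last point made uniform in families: one must show the monodromy-kernel cover $\cB'\to\cB$ can be chosen so that the induced $2$-torsion line bundle $\det(\EE)|_{\curP'}$ is \emph{algebraically} trivial, not merely topologically trivial, which could fail if $\mathrm{Pic}(\curP')[2]$ has a continuous part — this is why the conjecture is only asserted, not proved, and a fully rigorous treatment would need either a result controlling $\mathrm{Pic}(\curP')[2]$ or an argument that the orientation class lies in the "discrete" part. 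I would therefore present Steps 1--2 as a complete proof of the topological counterpart (Corollary \ref{Cor:Or.2}) and flag Step 3 as the remaining gap explaining the conjectural status.
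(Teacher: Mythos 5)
Your general strategy---a monodromy representation, a finite cover killing the monodromy, and passage from topological to algebraic covers via Riemann existence---is in the same spirit as the paper's Proposition~\ref{Prop:Or.1} and Corollary~\ref{Cor:Or.2}. However, there are two concrete gaps in your execution.

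The more serious gap is in Steps~1--2. You assert that the sheaf of fibrewise orientations $p_*(O/\cB)$ is a locally constant sheaf on $\cB^{\an}$ with finite stalks, and in Step~2 you bound the covering degree via $|\mathrm{Or}(\EE|_{\curP_b})|$, which you claim divides $|H^0(\curP_b,\Z/2)|$. Neither is justified: $\curP \to \cB$ (equivalently $\cPerf(\cX/\cB,\tv)^{\mathrm{top}} \to \cB^{\mathrm{top}}$) is \emph{not} a locally trivial fibration of topological spaces, so the monodromy action of $\pi_1(\cB)$ on fibrewise orientations is not \emph{a priori} defined; moreover $H^0(\curP_b,\Z/2) \cong (\Z/2)^{\pi_0(\curP_b)}$ is neither uniformly finite in $b$ nor bounded in terms of $K^0_{\mathrm{top}}(\cX_b^{\an})_{\mathrm{tor}}$, since $\pi_0(\curP_b)$ can be arbitrarily large and can jump. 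The device the paper introduces to fix this is the topological mapping space $\cC(\cX/\cB,\tv) = \mathrm{Map}_{\mathrm{Top}_{/\cB^{\mathrm{top}}}}(\cX^{\mathrm{top}}, \cB^{\mathrm{top}} \times BU \times \Z)$: by Ehresmann's theorem this \emph{is} a locally trivial fibration over $\cB^{\mathrm{top}}$, and its fibre $\cC(\cX_b,\tv_b)$ has $\pi_0 \cong K^0_{\mathrm{top}}(\cX_b^{\an})_{\mathrm{tor}}$, a finite group. Proposition~\ref{Prop:Or.1} is applied to this fibration, and the $|K^0_{\mathrm{top}}(\cX_b^{\an})_{\mathrm{tor}}|!$ bound comes from permutations of this finite $\pi_0$-set. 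Since you never replace $\curP$ by this mapping space, your Steps~1--2 do not in fact prove Corollary~\ref{Cor:Or.2}.

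Step~3 misidentifies the remaining gap. Passing from a topological trivialization of a finite $\Z_2$-cover to an algebraic one is not the obstacle: Riemann existence (SGA~1, Exp.~XII, Cor.~5.2) identifies finitely-sheeted topological coverings of a $\C$-scheme with its finite \'etale covers, and the paper uses this at the end of the proof of Corollary~\ref{Cor:Or.2}; the orientation torsor is a finite \'etale $\mu_2$-torsor, whose triviality is detected in $H^1(\curP'^{\an},\Z/2)$, independently of any continuous part of $\mathrm{Pic}(\curP')$. The gap the paper actually identifies (in the remark following Corollary~\ref{Cor:Or.2}) is different: (1) construct a natural $\Z_2$-bundle on $\cC(\cX/\cB)$ whose restriction over each $b$ agrees with the Joyce--Tanaka--Upmeier $\Z_2$-bundle on $\cC(\cX_b)$ from \cite{JTU}, and (2) show it pulls back under $\cPerf(\cX/\cB,\tv)^{\mathrm{top}} \to \cC(\cX/\cB,\tv)$ to the determinant orientation bundle of $\EE$, i.e.\ the family version of the Cao--Gross--Joyce comparison \cite{CGJ}. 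Given (1) and (2), Corollary~\ref{Cor:Or.2} then yields the conjecture.
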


We will provide evidence for Conjecture \ref{conj:familyorientation} in Appendix \ref{Appendix:RelativeOrientation}.
Indeed, we will prove that the topological analog of Conjecture \ref{conj:familyorientation} is true (see Corollary \ref{Cor:Or.2}).

\begin{remark}
Assuming Conjecture \ref{conj:familyorientation}, $\EE$ is orientable in the following cases:
\begin{enumerate}
\item $\cB^\an$ is simply-connected.
\item $H^*(\cX_b^\an,\Z)$ has no torsion elements for some $b \in \cB$, e.g., all complete intersection Calabi-Yau $4$-folds in smooth projective toric varieties.
\end{enumerate}
In case (2), we have $K^0_{\mathrm{top}}(\cX^\an_b)_\mathrm{tor}=0$ by \cite[Cor.~2.5(i)]{AH}.
\end{remark}



We now prove Proposition \ref{Prop:vanishingforjumpingrho}.

\begin{proof}[Proof of Proposition \ref{Prop:vanishingforjumpingrho}]


We may assume that
\[\cB^{\circ} = \cB - \{b\}.\]
Indeed, we can find a smooth connected affine curve $C$ with a map $g:C \to \cB$ such that $g^{-1}(\cB^{\circ}) = C-\{c\}$ for some point $c\in C$. Replace $\cB$ by $C$.

We claim that there exists a non-degenerate subbundle
\[\cV \subseteq \cT\]
of rank $\rho_{\tgamma_b}$,
after replacing $\cB$ by some open neighborhood of $b \in \cB$. 
Indeed, choose a maximal non-degenerate subspace $V \subseteq H^1(\cX_b,T_{\cX_b})$. Then we can extend this to a map
$\cV:=V \otimes \O_{\cB} \to \cT$
of vector bundles.

Since its dual $\cT\dual \to \cV\dual$ is surjective over $b \in \cB$, by shrinking $\cB$, we may assume that $\cV$ is a subbundle of $\cT$.
Since $\sfB_{\tgamma}|_{\cV}$ is non-degenerate over $b \in\cB$, by shrinking $\cB$, we may assume that $\sfB_{\tgamma}|_{\cV}$
is non-degenerate on $\cB$. 

Let $\tcurP \to \curP$ be the canonical 2-fold {\'e}tale cover induced by the line bundle $\det(\EE)\otimes \det(\cV)\dual$ and the symmetric forms of $\EE$ and $\cV$ (as in Remark \ref{Rem:orientationofTgamma}). Form a fibre diagram
\[\xymatrix{
\tcurP_{b} \ar@{^{(}->}[r]^j \ar[d] & \tcurP \ar[d]^{} & \tcurP^\circ \ar@{_{(}->}[l]_k \ar[d] \\
\curP_{b} \ar@{^{(}->}[r] \ar[d] \ar@{.>}@/^0.4cm/[u]^{t_b}& \curP \ar[d]^p & \curP^\circ \ar[d]^{} \ar@{_{(}->}[l] \\
\{b\} \ar@{^{(}->}[r]^{i_b} & \cB & \cB^{\circ}. \ar@{_{(}->}[l]
}\]
Since the symmetric complex $\EE_b$ on the fibre $\curP_b$ is orientable by \cite{CGJ} and the vector space $\cV_b$ is always orientable, we have
\[\tcurP_b = \curP_b \sqcup \curP_b.\]
Let $t_b : \curP_b \hookrightarrow \tcurP_b$ denote the inclusion to the first component.

Although the rank of $\mathsf{B}_{\widetilde{\gamma}}$ jumps at $b$, 
the rank of $\mathsf{B}_{\widetilde{\gamma}}|_{\cV}$ does not jump,
so the argument in Theorem \ref{Thm:DefInv} can indeed be used.
Hence there exists a cycle class
\[[\tcurP]^\red_{\cV} \in A_*(\tcurP)\]
such that
\begin{equation}\label{Eq:61}
i_{b}^! [\tcurP]^\red_{\cV} = \left( [\curP_{b}]^\red, -[\curP_{b}]^\red \right) \in A_*(\tcurP_{b}) = A_*(\curP_{b})\oplus A_*(\curP_{b}).
\end{equation}
Indeed, we have a decomposition of symmetric complexes
\[\EE\cong\EE^\red_{\cV} \oplus (\cV|_{\curP}[1])\]
for some symmetric complex $\EE^\red_{\cV}$ and the relative cone reduction property 
\[(\fC_{\curP/\cB})_\red \subseteq \fQ(\EE^\red_{\cV}).\]
We can define the desired cycle class as
\[[\tcurP]^\red_{\cV}:=\sqrt{0^!_{\fQ\left(\EE^\red_\cV|_{\tcurP}\right)}}\left[\fC_{\tcurP/\cB}\right] \in A_*(\tcurP)\]
since the pullback $\EE^\red_\cV|_{\tcurP}$ is orientable by the definition of $\tcurP$.

Since there are additional cosections on $\curP^\circ$, we claim that
\begin{equation}\label{Eq:V1}
k^*[\tcurP]^\red_{\cV} = 0 \in A_*(\tcurP^\circ).
\end{equation}
Indeed, 
the orthogonal bundle $\cT_{\tgamma}$ is well-defined on $\cB^{\circ}$, and the composition
\[\cV|_{\cB^{\circ}} \hookrightarrow \cT|_{\cB^{\circ}} \twoheadrightarrow \cT_{\tgamma}\]
is injective on the fibres. Thus by considering a section of the surjection
\[\frac{\cT|_{\cB^{\circ}}}{\cV|_{\cB^{\circ}}} \twoheadrightarrow \frac{\cT_{\tgamma}}{\cV|_{\cB^{\circ}}}\]
we can find a non-degenerate subbundle $\cW$ of $\cT|_{\cB^{\circ}}$ properly containing $\cV|_{\cB^{\circ}}$,
\[\cV|_{\cB^{\circ}} \subsetneq \cW \subseteq \cT|_{\cB^{\circ}}.\]
Then we have $\cW = \cV|_{\cB^{\circ}} \oplus \cU$ for the orthogonal complement $\cU$ of $\cV|_{\cB^\circ}$ in $\cW$. Moreover, we have a decomposition of symmetric complexes
\[\EE|_{\curP^{\circ}} = \EE^\red_{\cW} \oplus (\cW|_{\curP^{\circ}}[1]) = \EE^{\red}_{\cW} \oplus (\cV|_{\curP^{\circ}}[1]) \oplus (\cU|_{\curP^{\circ}}[1])\]
for some symmetric complex $\EE^\red_{\cW}$. Hence we have
\[(\EE^{\red}_{\cV})|_{\curP^{\circ}} \cong \EE^{\red}_{\cW} \oplus (\cU|_{\curP^{\circ}}[1]).\]
By shrinking $\cB$ further, we find a section $w \in \Gamma(\cB^\circ, \cU)$ such that $w^2$ is nowhere vanishing. 
Consequently, we can find a cosection
\[\sigma : \EE^\red_{\cV}|_{\curP^{\circ}}[1] \to \O_{\curP^{\circ}}\]
such that $\sigma^2$ is nowhere vanishing. Lemma \ref{Lem:Vanishing} below proves the claim.

By the localization sequence
\[\xymatrix{
A_*(\tcurP_b) \ar[r]^{j_*} & A_*(\tcurP) \ar[r]^{k^*} & A_*(\tcurP^{\circ}) \ar[r] & 0,
}\]
and the vanishing \eqref{Eq:V1}, we have
\[[\tcurP]^\red_{\cV} = j_*(\alpha)\]
for some $\alpha \in A_*(\tcurP_b)$. By the self-intersection formula, we have
\[i^!_b[\tcurP]^\red_{\cV} = i_b^!\circ j_*(\alpha) = e(N_{b/\cB})\cdot\alpha = 0.\]
Then the formula \eqref{Eq:61} proves the desired vanishing result.
\end{proof}

We need the following lemma to complete the proof of Proposition \ref{Prop:vanishingforjumpingrho}.

\begin{lemma}[cf. {\cite[Thm.~8.12]{KP20}}]\label{Lem:Vanishing}
Let $\EE$ be any symmetric complex on a quasi-projective scheme $\sX$. Let $\sigma :\EE\dual[1] \to \O_{\sX}$ be a cosection such that $\sigma^2\in \Gamma(\sX,\O_{\sX})$ is nowhere vanishing. Consider a fibre diagram
\[\xymatrix{
\fQ(\EE_{\sigma}) \ar[r] \ar@{^{(}->}[d]^{l} &  \sX \ar@{^{(}->}[d]^0\\
\fQ(\EE) \ar[r]^-{\fl_{\sigma}} & \bbA^1_{\sX}
}\]
where $\EE_{\sigma}:=\cone(\sigma\dual[1] : \O_{\sX} \to \EE)$.
Then the composition
\[
A_*(\fQ(\EE_\sigma)) \xrightarrow{l_*} A_*(\fQ(\EE)) \xrightarrow{\sqrt{0^!_{\fQ(\EE)}}} A_*(\sX)\]
vanishes.
\end{lemma}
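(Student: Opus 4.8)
The plan is to reduce the vanishing of $\sqrt{0^!_{\fQ(\EE)}}\circ l_*$ to a statement about the localized square root Euler class in the presence of a non-degenerate cosection, following the strategy of \cite[Thm.~8.12]{KP20} and the blowup construction of the localized square root Euler class in \cite[\S4]{KP20}. First I would work Zariski-locally on $\sX$, so that I may assume $\sX$ is affine, $\EE$ admits a symmetric resolution $\EE\cong[B\xrightarrow{d}E\xrightarrow{d\dual}B\dual]$ with $E$ a special orthogonal bundle and $B$ a vector bundle (as in the proof of Theorem \ref{Thm:NRVFC}), and the cosection $\sigma:\EE\dual[1]\to\O_\sX$ is represented by a bundle map $\widetilde\sigma:E\dual\to\O_\sX$ which (after passing to the image in the appropriate cochain degree, exactly as in Lemma \ref{Lem:KLconereduction}) may be taken to come from a vector $\sigma_0\in\Gamma(\sX,E)$ with $q_E(\sigma_0)=\sigma^2$ nowhere vanishing. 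The hypothesis then says $\sigma_0$ spans a non-degenerate line sub-bundle $\ell=\O_\sX\cdot\sigma_0\subseteq E$, so $E$ splits orthogonally as $E\cong \ell\oplus\ell^\perp$ with $\ell$ a (trivialized, after rescaling) rank-$1$ orthogonal bundle and $\EE_\sigma$ has the symmetric resolution $[B\oplus\O_\sX\to E\to B\dual\oplus\O_\sX]$, equivalently $\EE_\sigma$ is the orthogonal complement of $\ell[1]$ inside $\EE$.

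The key point, exactly as in \cite{KP20}, is that the localized square root Gysin map $\sqrt{0^!_{\fQ(\EE)}}$ factors through a cone supported on the zero locus of the tautological section, and the push-forward $l_*$ lands in the part of $A_*(\fQ(\EE))$ coming from $\fQ(\EE_\sigma)=\fQ(\EE)\cap\{\widetilde\sigma=0\}$. Thus I would: (i) describe $\sqrt{0^!_{\fQ(\EE)}}$ via the diagram in the proof of Theorem \ref{Thm:NRVFC} as $\sqrt e(E|_Q,\tau)\circ r^*$ where $Q\subset E$ is the relevant isotropic cone and $\tau$ its tautological section; (ii) observe that $l_*[\,\cdot\,]$ is supported on the sub-cone $Q_\sigma\subseteq Q$ cut out by $\widetilde\sigma$, which is the isotropic cone for $\EE_\sigma$; (iii) on $Q_\sigma$ the restricted orthogonal bundle $E|_{Q_\sigma}$ contains the trivial non-degenerate line $\ell$ with tautological section a nowhere-vanishing \emph{isotropic-complement} component, so by the multiplicativity/excess properties of Edidin--Graham's square root Euler class \cite{EG1} and its localization \cite[Def.~3.2]{OT}, $\sqrt e(E|_{Q_\sigma},\tau)$ acquires a factor $\sqrt e(\ell)$; (iv) conclude $\sqrt e(\ell)=0$ because a rank-$1$ orthogonal bundle has vanishing square root Euler class (its "half rank'' is $\tfrac12$, so $\sqrt e$ lowers dimension by $\tfrac12\notin\Z$ and the class is zero by the convention $A_d(-)=0$ for $d\in\Q\setminus\Z$; alternatively, the non-vanishing isotropic section of $\ell$ kills the localized class). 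Combining (i)--(iv) gives $\sqrt{0^!_{\fQ(\EE)}}\circ l_*=0$, first locally and then globally by the Kimura/localization sequence for the Chow groups of the relevant cone stacks as in \cite[Appendix A]{Par}.

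I expect the main obstacle to be bookkeeping the compatibility between the \emph{derived-categorical} cosection $\sigma:\EE\dual[1]\to\O_\sX$ and the \emph{bundle-level} data on a symmetric resolution — namely showing that after a suitable choice of resolution (and minimal Darboux-type chart, as in the proof of Theorem \ref{Thm:SR=smoothofrvd}) the cosection really is induced by an orthogonal splitting $E\cong\ell\oplus\ell^\perp$ with $\ell$ non-degenerate of rank one. This is where the hypothesis $\sigma^2$ nowhere vanishing is used in an essential way: it guarantees $q_E(\sigma_0)\in\O_\sX^\times$, hence that $\sigma_0$ is a non-isotropic vector spanning a non-degenerate line, and that $\ell^\perp$ is again a special orthogonal bundle (after twisting by $\det\ell$, which is $2$-torsion and hence harmless for the square root Euler class). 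Once this orthogonal splitting is in place, the vanishing is formal from the known properties of $\sqrt e$ and $\sqrt{0^!}$ established in \cite{OT,EG1,KP20,Par}; the cone-reduction lemma (Lemma \ref{Lem:KLconereduction}) already supplies the identification $\fQ(\EE_\sigma)=\fQ(\EE)\times_{\bbA^1_\sX}\sX$ that makes the diagram in the statement cartesian, so no further geometry is needed there.
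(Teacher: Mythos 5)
The paper's own "proof'' of this lemma is a one-line citation to \cite[Sect.~8.3]{KP20}, so your proposal is necessarily a reconstruction rather than a paraphrase. Your high-level geometry is right: work locally with a symmetric resolution $\EE\cong[B\to E\to B^\vee]$, represent the cosection by a section $\sigma_0\in\Gamma(\sX,D^\vee)\subset\Gamma(\sX,E)$ with $q_E(\sigma_0)=\sigma^2$ a unit, let $\ell=\O_\sX\cdot\sigma_0$ and split $E=\ell\oplus\ell^\perp$, observe that $r^*l_*(-)$ is supported on $Q_\sigma=Q\cap\ell^\perp$ and that $\tau|_{Q_\sigma}$ takes values in $\ell^\perp$. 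That is all correct and is the same local setup used in \cite{KP20}.

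The gap is in your steps (iii)--(iv). You assert that $\sqrt e(E|_{Q_\sigma},\tau)$ "acquires a factor $\sqrt e(\ell)$'' by multiplicativity, and then that $\sqrt e(\ell)=0$ by a degree-$\tfrac12$ count. But the Edidin--Graham square-root Euler class and its localization in \cite{OT,KP20} are only defined for \emph{even-rank} oriented orthogonal bundles, and here $\ell$ has rank $1$ and $\ell^\perp$ has odd rank $e-1$. There is no factorization $\sqrt e(E)=\sqrt e(\ell)\cdot\sqrt e(\ell^\perp)$ with odd-rank factors anywhere in \cite{EG1,OT,KP20}, and the degree-$\tfrac12$ argument cannot repair this: it presupposes that $\sqrt e(\ell)$ is a well-defined degree-shifting operator, which is precisely what is not defined. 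The alternative justification you offer — "the non-vanishing isotropic section of $\ell$ kills the localized class'' — is also incorrect: $\ell$ is a \emph{non-degenerate} rank-$1$ bundle, so every non-zero section of $\ell$ is \emph{non}-isotropic; indeed $q_E(\sigma_0)=\sigma^2$ being a unit is exactly the hypothesis, and $\ell$ has no non-trivial isotropic section at all.

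The rigorous mechanism behind the vanishing is not a factorization but an orientation-reversing automorphism, and this is where the odd parity of $\rank(\ell)$ actually enters. Set $\iota := (-\mathrm{id}_\ell)\oplus\mathrm{id}_{\ell^\perp}\in O(E)$. Then $\det\iota=(-1)^{\rank\ell}=-1$, so $\iota$ reverses the orientation of $E$; and since $\tau|_{Q_\sigma}$ takes values in $\ell^\perp$, which $\iota$ fixes pointwise, $\iota$ is an automorphism of the pair $(E|_{Q_\sigma},\tau|_{Q_\sigma})$ carrying the orientation $o$ to $-o$. By bivariance of $\sqrt e(E|_Q,\tau)$ the operator commutes with $l_*$, so $\sqrt e(E|_Q,\tau)\circ l_*=\sqrt e(E|_{Q_\sigma},\tau|_{Q_\sigma})$; and by naturality of the localized square-root Euler class in the data (orthogonal bundle, isotropic section, orientation) one gets $\sqrt e(E|_{Q_\sigma},\tau|_{Q_\sigma};o)=\sqrt e(E|_{Q_\sigma},\tau|_{Q_\sigma};-o)=-\sqrt e(E|_{Q_\sigma},\tau|_{Q_\sigma};o)$, hence zero. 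This is the precise content that your "half-rank $\tfrac12$'' heuristic is gesturing at; replacing (iii)--(iv) with it (or with the analogous manipulation in the blowup model of \cite[Def.~4.1]{KP20}) would make the proof correct.
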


\begin{proof}
We omit the proof. See \cite[Sect.~8.3]{KP20}.
\end{proof}

\subsection{Variational Hodge conjecture}

As an application of the deformation invariance in the previous subsection, we prove that the {\em variational Hodge conjecture} holds for surface classes with non-zero reduced virtual cycles.

Fix a smooth projective variety $X$ over $\C$.

\begin{definition}
Let $\gamma \in H^{2p}(X,\C)$ be a cohomology class on $X$.
\begin{enumerate}
\item We say that $\gamma$ is a {\em Hodge class} if 
\[\gamma \in Hdg^p(X):=H^{2p}(X,\Q) \cap H^p(X,\Omega^p_X) \subset H^{2p}(X,\C).\]
\item We say that $\gamma$ is an {\em algebraic} cohomology class if $\gamma$ contained in the image of the cycle class map,
\[\gamma \in \im(\cl : A^p(X)_\Q \to Hdg^p(X)).\]
\end{enumerate}
\end{definition}

The {\em Hodge conjecture} predicts that every Hodge class on $X$ is algebraic.
Grothendieck's {\em variational Hodge conjecture} \cite[footnote 13]{Gr66} (cf.~\cite{Blo}) predicts that a deformation of an algebraic cohomology class as a Hodge class is algebraic. 

Thus we make the following definition:

\begin{definition}\label{Def:VHC}
Let $X$ be a smooth projective variety and $\gamma\in H^{2p}(X,\C)$ an algebraic cohomology class.
We say that the {\em variational Hodge conjecture} holds for the pair $(X,\gamma)$ if the following condition is satisfied:
for any smooth projective morphism $f: \cX \to \cB$ to a smooth connected scheme $\cB$
and any horizontal section $\tv_p \in \Gamma(\cB,F^p\cH^{2p}_{DR}(\cX/\cB))$
such that $\cX_0 \cong X$ and $(\tv_p)_0=\gamma$ for some $0 \in \cB$, the cohomology classes $(\tv_p)_b \in H^{2p}_{DR}(\cX_b)$ are algebraic for all $b \in \cB$.
%
\end{definition}

\begin{remark}
In some of the literature, the variational Hodge conjecture is stated for any horizontal section of the Hodge bundle $\cH_{DR}^{2p}(\cX/\cB)$ instead of the filtration $F^p\cH_{DR}^{2p}(\cX/\cB)$. Deligne's global invariant cycle theorem \cite[(4.1.1)]{D71} implies that the two versions are equivalent (see also \cite[Prop.~11.3.5]{CS}).
\end{remark}

We now state the main theorem of this subsection. 
\begin{theorem}\label{Thm:VHC}
Let $X$ be a Calabi-Yau 4-fold and let $\gamma \in H^2(X,\Omega^2_X)$.
If
\[[\PTqvX]^\red \neq 0 \in A_{\rvd}(\PTqvX)\]
for some $v \in H^*(X,\C)$ with $v_2=\gamma$ and $q \in \{-1,0,1\}$, then the variational Hodge conjecture holds for $(X,\gamma)$.
\end{theorem}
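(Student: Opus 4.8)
The strategy is to deduce Theorem \ref{Thm:VHC} from the deformation invariance result (Theorem \ref{Thm:DefInv}) combined with the vanishing result (Proposition \ref{Prop:vanishingforjumpingrho}), plus the elementary observation that the support of a nonzero virtual cycle produces algebraic cycles. Suppose we are given a family $f:\cX\to\cB$ over a smooth connected base $\cB$ with $\cX_0\cong X$ and a horizontal section $\tv_2$ of $F^2\cH^4_{DR}(\cX/\cB)$ with $(\tv_2)_0=\gamma$. The goal is to show $(\tv_2)_b$ is algebraic for every $b\in\cB$. First I would reduce to the setting of Theorem \ref{Thm:DefInv}: replacing $\cB$ by a Zariski-dense affine open containing $0$ does not change which fibres $\cX_b$ we need to treat for a dense set of $b$, and then propagating along chains of such opens (and using that algebraicity of $(\tv_2)_b$ on a dense set suffices via countability of Hodge loci / the global invariant cycle theorem, or simply by a connectedness/chain argument) reduces us to a smooth connected affine base. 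We also need $\omega_{\cX/\cB}\cong\O_\cX$: since $K_X\cong\O_X$ and the relative canonical bundle is a line bundle restricting to $\O$ on the central fibre, after shrinking $\cB$ (which is affine) it is trivial. Finally, we lift $\gamma$-with-$v$ to a horizontal section $\tv$ of $\bigoplus_p F^p\cH^{2p}_{DR}(\cX/\cB)$ with $\tv_0=v$; this is possible because $v$ itself is the Chern character of $\O_S$ for the relevant $S$, but more cleanly, a horizontal lift of each $v_p$ exists and is unique by Remark \ref{Rem:horizontal}, and we only really need $\tv_2=\tgamma$ to match.

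Next I would run the dichotomy on $\rho_{\tgamma_b}$. By Remark \ref{Rem:B0}, the function $b\mapsto\rho_{\tgamma_b}$ is lower semicontinuous with dense open locus $\cB^\circ$ where it attains its maximum $\rho_{\max}$. Case (i): $0\in\cB^\circ$, i.e. $\rho_{\tgamma_0}=\rho_{\max}$. Then $\rho_{\tgamma_b}$ is constant on $\cB^\circ$, so Theorem \ref{Thm:DefInv} applies over $\cB^\circ$ (using strategy a) for orientations, i.e. the canonical $2$-fold \'etale cover of the moduli space, which is all we need since we work with virtual \emph{cycles} not invariants). This gives a relative class $[\curP^{(q)}_{\tv}(\cX/\cB^\circ)]^\red$ restricting via $i_b^!$ to $[\PTqvXb]^\red$ for all $b\in\cB^\circ$. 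Since refined Gysin pullback $i_b^!$ is compatible with proper pushforward and the relative moduli space is projective over $\cB^\circ$, nonzeroness of $[\PTqvX]^\red$ at $0$ forces the relative class to be nonzero, hence $[\PTqvXb]^\red\neq 0$ for all $b$ in a dense open subset of $\cB^\circ$ — actually for \emph{all} $b\in\cB^\circ$, since if it vanished at some $b_1$ then by deformation invariance in the other direction it would vanish generically, contradicting nonvanishing at $0$. (More carefully: $i_0^![\text{relative}]^\red=[\PTqvX]^\red\neq0$ shows the relative class is nonzero, and then $i_{b}^!$ of a nonzero class supported on a projective-over-$\cB^\circ$ scheme is nonzero for $b$ outside a proper closed subset; combined with the fact that the non-vanishing locus is also closed under specialization within the Hodge locus, one gets all of $\cB^\circ$. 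The cleanest route is: the relative cycle is nonzero, and its image under $i_b^!$ is $[\PTqvXb]^\red$; if this were zero, pushing forward to a point would still need care, so instead argue that nonvanishing of $i_0^!$ plus the bivariance of $i_b^!$ forces the support of the relative cycle to dominate $\cB^\circ$, hence $i_b^!$ is nonzero generically, and a second application of Theorem \ref{Thm:DefInv} spreads this to all of $\cB^\circ$.) In any case, for every $b$ in a dense subset of $\cB^\circ$ we get $\PTqvXb\neq\emptyset$, hence there is a $\PT_q$ pair $(F_b,s_b)$ on $\cX_b$ with $\ch(F_b)=\tv_b$; its support is a surface (or curve/point) in $\cX_b$ whose fundamental class, together with lower-dimensional corrections, exhibits $\ch_2(F_b)=(\tv_2)_b=\tgamma_b$ as algebraic — more precisely $\tv_b=\ch(F_b)$ is algebraic since $F_b$ is a coherent sheaf, and hence so is each component $(\tv_p)_b$, in particular $(\tv_2)_b$. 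Case (ii): $0\notin\cB^\circ$, i.e. $\rho_{\tgamma_0}<\rho_{\max}$. Then Proposition \ref{Prop:vanishingforjumpingrho} gives $[\PTqvX]^\red=[\PTqvX_0]^\red=0$, contradicting the hypothesis. So Case (ii) cannot occur, and we are always in Case (i).

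The final step is to upgrade "algebraic for $b$ in a dense subset of $\cB$" to "algebraic for all $b\in\cB$." Here I would invoke the standard fact that the Hodge locus of the horizontal section $\tv_2$ — the set of $b$ where $(\tv_2)_b$ remains of type $(2,2)$ — is exactly $\cB$ by hypothesis (since $\tv_2$ is a section of $F^2$), and the locus of $b$ where $(\tv_2)_b$ is algebraic is a countable union of closed algebraic subsets by the usual Noether–Lefschetz/Cattani–Deligne–Kaplan argument (images of relative Hilbert schemes / Chow varieties). Since we have shown this locus contains a dense subset of the irreducible $\cB$, and it is a countable union of closed subsets, one of them must equal $\cB$; alternatively, and more in the spirit of the paper, every point of $\cB$ is a specialization of points where the class is algebraic, and algebraicity specializes along flat families of cycles (the support of $(F_b,s_b)$ spreads out over the relative moduli space which is projective over $\cB^\circ$ and whose closure maps to $\cB$). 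So $(\tv_2)_b$ is algebraic for all $b\in\cB$, and unwinding the reductions at the start, the variational Hodge conjecture holds for $(X,\gamma)$.

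\textbf{Main obstacle.} The delicate point is the middle step: transferring nonvanishing of the reduced virtual cycle from the central fibre $0$ to a dense set of fibres, and then extracting from a nonzero virtual cycle the \emph{non-emptiness} of the moduli space (a virtual cycle lives in the Chow group of the moduli space, so it is automatically zero if the space is empty — hence nonzeroness does give non-emptiness, but one must be careful that $i_b^!$ of a nonzero relative class can a priori be zero for special $b$). The clean resolution is to note that we only need non-emptiness for a dense set of $b$, which follows because the relative reduced virtual cycle $[\curP^{(q)}_{\tv}(\cX/\cB^\circ)]^\red$ is a nonzero class in a scheme projective over $\cB^\circ$, so its support dominates some positive-dimensional (in fact, by irreducibility of $\cB^\circ$, dense) subset of $\cB^\circ$, and over that subset the fibres are non-empty; then the algebraicity/specialization argument finishes. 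The orientation subtleties flagged in the paper (Conjecture \ref{conj:familyorientation}) are genuinely avoided here because we never push forward to a point — we only manipulate cycle classes, for which strategy a) (the canonical $2$-fold cover of the moduli space) suffices, exactly as remarked after the statement of Theorem \ref{thm:6}.
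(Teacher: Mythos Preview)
Your overall strategy --- reduce to the setting of Theorem \ref{Thm:DefInv}, use Proposition \ref{Prop:vanishingforjumpingrho} to force $0\in\cB^\circ$, then deduce nonemptiness of the fibres --- is the same as the paper's. However, there are two genuine gaps.

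First, a smaller issue: you claim a horizontal lift of $v$ to a section of $\bigoplus_p F^p\cH^{2p}_{DR}(\cX/\cB)$ exists by Remark \ref{Rem:horizontal}. The horizontal lift of each $v_p$ certainly exists in $\cH^{2p}_{DR}(\cX/\cB)$, but there is no reason the lift of $v_3$ stays in the Hodge piece $F^3\cH^6_{DR}$ --- that would be asking for a separate variational Hodge statement for $\beta=v_3$. The paper sidesteps this by replacing $\PTqvXB$ with the moduli space $\curP^{(q)}_{\tgamma,P_v}(\cX/\cB)$ which only fixes $\ch_2=\tgamma_b$ and the Hilbert polynomial $P_v$; the arguments for Theorem \ref{Thm:DefInv} go through verbatim for this modified space.

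Second, and more seriously, your passage from ``$i_0^![\curP]^\red = [\PTqvX]^\red\neq 0$'' to ``the support of the relative class dominates $\cB^\circ$'' is not justified: a nonzero cycle class on $\curP$ can perfectly well be supported over a proper closed subset of $\cB^\circ$, and bivariance of $i_b^!$ alone does not force the support to spread out. The paper instead argues by contradiction. Assume $p:\curP\to\cB$ is not surjective; since the image is closed, choose a smooth curve $C\to\cB$ through $0$ with $C\not\subset\im(p)$, normalize and shrink so that the set-theoretic image of $\curP|_C\to C$ is the single point $\{0\}$. Then $(\curP|_C)_\red = (\curP_0)_\red$, so orientability of $\EE$ transfers from the fibre by Lemma \ref{lem:nilp.orienation}, and since $j:\curP_0\hookrightarrow\curP|_C$ is a nilpotent thickening one has $[\curP|_C]^\red = j_*(\alpha)$ for some $\alpha\in A_*(\curP_0)$. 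The self-intersection formula now gives
\[
[\curP_0]^\red = i_0^!\,j_*(\alpha) = e(N_{0/C})\cap\alpha = 0
\]
since $N_{0/C}$ is trivial --- contradicting the hypothesis. This single contradiction argument simultaneously handles the orientation problem (your ``strategy a)'' is not needed) and eliminates your final specialization step: surjectivity of $p$ directly gives $\curP_b\neq\emptyset$, hence algebraicity of $\tgamma_b$, for \emph{every} $b\in\cB$, with no need to spread from a dense subset.
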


For Calabi-Yau 4-folds, we would like to regard Theorem \ref{Thm:VHC} as a {\em virtual} generalization of the results of Bloch and Buchweitz-Flenner \cite{Blo,BF03} since the semi-regular case is heuristically the {\em ideal} case (see Theorem \ref{Thm:SR=smoothofrvd}). In particular, for Calabi-Yau 4-folds, we recover the result of Buchweitz-Flenner as an immediate corollary.

\begin{corollary}
Let $X$ be a Calabi-Yau 4-fold and let $\gamma\in H^2(X,\Omega^2_X)$. If there exists a $\PT_q$ pair $(F,s)$ on $X$ such that $\ch_2(F)=\gamma$, for some $q \in \{-1,0,1\}$, and the associated complex $I\udot:=[\O_X \xrightarrow{s}F]$ is semi-regular, 
then the variational Hodge conjecture holds for $(X,\gamma)$.
\end{corollary}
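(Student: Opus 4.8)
The plan is to derive this corollary as an immediate consequence of the main deformation-invariance result Theorem \ref{Thm:VHC} together with Theorem \ref{Thm:SR=smoothofrvd}, which identifies the reduced virtual cycle with the fundamental cycle near a semi-regular point. First I would fix a $\PT_q$ pair $(F,s)$ on $X$ with $\ch_2(F)=\gamma$ whose associated complex $I\udot=[\O_X\xrightarrow{s}F]$ is semi-regular, and set $v:=\ch(F)\in H^*(X,\Q)$; note $v_2=\gamma$. By Theorem \ref{Thm:SR=smoothofrvd}, the moduli space $\PTqvX$ is smooth of dimension $\rvd=n-\tfrac12\gamma^2+\tfrac12\rho_\gamma$ at the point $[(F,s)]=[I\udot]$, so this point lies on a connected component $Z$ of $\PTqvX$ on which the reduced virtual cycle restricts to (a sign times) the fundamental cycle $[Z]$ (using a choice of orientation; recall that changing the orientation only changes signs on connected components). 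Consequently, with an appropriate choice of orientation we have that the contribution of $Z$ to $[\PTqvX]^\red$ is $\pm[Z]\neq 0$ in $A_{\rvd}(\PTqvX)$.

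The subtle point is that $[\PTqvX]^\red$ could conceivably vanish if contributions from other connected components cancel against $\pm[Z]$. However, the Chow group of a scheme splits as a direct sum over connected components, so $[Z]\neq 0$ in $A_{\rvd}(Z)$ forces the component of $[\PTqvX]^\red$ in $A_{\rvd}(Z)$ to be non-zero, hence $[\PTqvX]^\red\neq 0\in A_{\rvd}(\PTqvX)$. Here it is important that $\rvd=\dim Z$ so that $[Z]$ is genuinely the (non-zero) fundamental class in the top Chow group; since $Z$ is a non-empty scheme of pure dimension $\rvd$, this holds. Therefore the hypothesis of Theorem \ref{Thm:VHC} is satisfied, and we conclude that the variational Hodge conjecture holds for $(X,\gamma)$.

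I expect the only mildly delicate step to be making precise the statement ``the reduced virtual cycle equals the fundamental cycle on a semi-regular component with the correct orientation,'' which is the content of the remark immediately following Theorem \ref{Thm:SR=smoothofrvd}; once that is invoked, the argument is a two-line deduction. A cleaner way to package the proof, avoiding any orientation bookkeeping, is to first base-change (restrict) to the open-and-closed semi-regular locus $\PTqvX^{\sr}\subseteq\PTqvX$ (non-empty by assumption, smooth of pure dimension $\rvd$ by Theorem \ref{Thm:SR=smoothofrvd}) and observe that, by the cone-reduction construction, $[\PTqvX^{\sr}]^\red=\pm[\PTqvX^{\sr}]$ for a suitable orientation; this is manifestly non-zero, and since the variational Hodge conjecture conclusion of Theorem \ref{Thm:VHC} only uses non-vanishing of the reduced cycle of \emph{some} moduli space with $v_2=\gamma$, applying Theorem \ref{Thm:VHC} to this locus (or, more honestly, to the connected component $Z$) finishes the proof.
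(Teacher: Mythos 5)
Your overall strategy — combine Theorem \ref{Thm:SR=smoothofrvd} (semi-regularity forces local smoothness of dimension $\rvd$) with Theorem \ref{Thm:VHC} (non-vanishing of the reduced cycle implies the variational Hodge conjecture) — is exactly what the paper does, and the deduction is indeed a few lines. However, there is a genuine gap in the way you argue non-vanishing.

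You claim that the semi-regular point $[I\udot]$ lies on a connected component $Z$ of $\PTqvX$ which is ``a non-empty scheme of pure dimension $\rvd$'' and on which the reduced virtual cycle restricts to $\pm[Z]$. Neither assertion follows from Theorem \ref{Thm:SR=smoothofrvd}. That theorem gives smoothness of dimension $\rvd$ \emph{at the single point} $[I\udot]$, hence on some Zariski-open neighbourhood, but the connected component $Z$ could perfectly well contain other irreducible components of higher dimension, singular loci, or non-semi-regular points; in that case $Z$ has no fundamental class of pure dimension $\rvd$, and there is no reason for $[\PTqvX]^\red$ to restrict to $\pm[Z]$ on all of $Z$. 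The same issue undermines your ``cleaner'' alternative: the semi-regular locus is open (smoothness of a fixed dimension is open), but it need not be closed — a non-semi-regular point can lie in the closure of the semi-regular locus, e.g.\ at the intersection of a smooth $\rvd$-dimensional component with a larger component — so it is not a union of connected components, and moreover Theorem \ref{Thm:VHC} is stated for the full $\PTqvX$, not for an arbitrary open subscheme of it.

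The fix is small but essential, and is what the paper actually does: take the non-empty smooth open subset $U\subseteq\PTqvX$ around $[I\udot]$ provided by Theorem \ref{Thm:SR=smoothofrvd}. The reduced virtual cycle restricts under flat pullback to $[\PTqvX]^\red|_U = \pm[U]$, and $[U]\ne 0$ in $A_{\rvd}(U)$ because $U$ is non-empty and smooth of pure dimension $\rvd$. Since flat pullback to an open subset is a group homomorphism, $[\PTqvX]^\red\ne 0$ in $A_{\rvd}(\PTqvX)$. Now apply Theorem \ref{Thm:VHC}. This sidesteps any claim about the global structure of a connected component and avoids the direct-sum-over-components discussion entirely.
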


\begin{proof}
By Theorem \ref{Thm:SR=smoothofrvd}, there exists a non-empty smooth open subset $U \subseteq \PTqvX$ such that
$[\PTqvX]^\red|_U = \pm [U] \in A_*(U).$
Hence $[\PTqvX]^\red\neq0$ and Theorem \ref{Thm:VHC} completes the proof.
\end{proof}

We now prove our main theorem in this subsection. 

\begin{proof}[Proof of Theorem \ref{Thm:VHC}]
Let $f:\cX\to\cB$ be a smooth projective morphism to a smooth connected scheme $\cB$ and $\tgamma$ be a horizontal section of $F^2\cH_{DR}^{4}(\cX/\cB)$. 
Assume that $\cX_0 \cong X$ and $\tgamma_0=\gamma$ for some point $0\in \cB$. 

Heuristically, Theorem \ref{Thm:VHC} follows from the deformation invariance in Theorem \ref{Thm:DefInv}. However, to obtain a rigorous proof, we need to overcome the following technical issues:
\begin{enumerate}
\item existence of a horizontal lift $\widetilde{\beta} \in \Gamma(\cB,F^3\cH_{DR}^6(\cX/\cB))$ of $\beta=v_3$;
\item $f:\cX\to\cB$ may not be relatively Calabi-Yau (i.e., $\omega_{\cX/\cB}\cong \O_{\cX}$);
\item the function $b\in \cB \mapsto \rho_{\tgamma_b}$ may not be constant;
\item orientability of the orthogonal bundle $\cT_{\tgamma}$ on $\cB$;
\item orientability of the virtual cotangent complex $\EE$ of $\PTqvXB \to \cB$;
\item the reduced virtual cycles $[\PTqvXb]^\red$ live in the Chow groups of different schemes for each $b\in\cB$.
\end{enumerate}

We first resolve issue (1). Denote by $P_v(t)$ the Hilbert polynomial determined by $v$ with respect to some polarization on $X$. We will consider a small variation on the moduli space $\PTqvXB$, namely
\[\curP:= \curP^{(q)}_{\widetilde{\gamma}, P_v}(\cX/\cB) =
\left\{(F,s,b) : \begin{matrix}
   \text{$(F,s)$ is a $\PT_q$ pair on the fibre $\cX_b$ of $b \in \cB$}\\ \text{ such that $\ch_2(F)=\tgamma_b$ and $P_F(t) = P_v(t)$}
\end{matrix}\right\}.\]
The basic properties of $\PTqvXB$ in Theorem \ref{Thm:RelativeModuli} also holds for $\curP$. More precisely, the projection map
\[p : \curP \to \cB\]
is projective and the canonical map
\[\curP \hookrightarrow \cPerf(\cX/\cB)_{\O_{\cX}}^\spl\]
is an open embedding. Consequently, we have a symmetric obstruction theory
\[\phi:\EE \to \trunc \LL_{\curP/\cB}\]
for $p:\curP\to\cB$ that satisfies the isotropic condition. To prove Theorem \ref{Thm:VHC}, it suffices to show that the projection map $p:\curP \to \cB$ is surjective. Since $p:\curP \to \cB$ is projective, it suffices to show that the image of $p:\curP\to\cB$ contains a non-empty open subset of $\cB$.


Secondly, we resolve the issue (2). We claim that there exists a nonempty open neighborhood $\cB'$ of $0\in\cB$ such that 
\[\omega_{\cX\times_{\cB}\cB'/\cB'}\cong \O_{\cB'}.\] 
Indeed, by \cite[Thm.~5.5]{D68}, the pushforward $f_*(\omega_{\cX/\cB})$ is a vector bundle that commutes with base change. Since the fibre $\cX_0$ is Calabi-Yau, $f_*(\omega_{\cX/\cB})$ is a line bundle. Consider the canonical map of line bundles
\[f^*f_*(\omega_{\cX/\cB}) \to \omega_{\cX/\cB}\]
given by adjunction. Since it is an isomorphism over $0 \in \cB$, it is an isomorphism over an open subscheme of $\cX$ containing the fibre $\cX_0$. Since $f:\cX\to\cB$ is projective, by shrinking $\cB$, we may assume that the map $f^*f_*(\omega_{\cX/\cB}) \to \omega_{\cX/\cB}$ is an isomorphism. Moreover, by shrinking $\cB$ again, we may assume that $f_*(\omega_{\cX/\cB}) \cong \O_{\cB}$, and thus $\omega_{\cX/\cB} \cong f^*f_*(\omega_{\cX/\cB}) \cong \O_{\cX}$. It proves the claim.

Next, the vanishing result in Proposition \ref{Prop:vanishingforjumpingrho} resolves issue (3). Indeed, since $[\curP^{(q)}_{v_0}(\cX_0)]^\red\neq 0$ by assumption, we have $0 \in \cB^{\circ}$. Replace $\cB$ by $\cB^{\circ}$. Then we may assume that the function $b\in \cB \mapsto \rho_{\tgamma_b}$ is constant.

Issue (4) is resolved by simply replacing $\cB$ by a 2-fold {\'e}tale cover $\cB' \to \cB$ such that $\cT_{\tgamma}|_{\cB'}$ is orientable (Remark \ref{Rem:orientationofTgamma}).

We resolve issue (5) via an infinitesimal trick. 
(Alternatively, we can also use a $2$-fold {\'e}tale cover of $\cP$ as in the proof of Proposition \ref{Prop:vanishingforjumpingrho}.) 
Assume that $p:\curP \to \cB$ is not surjective. Since the image is closed, there exists a 1-dimensional integral closed subscheme $C \subseteq \cB$ such that $0 \in C$ and $C \not\subseteq \im(p:\curP \to \cB)$.
By replacing $\cB$ by the normalization $\widetilde{C} \to C$ of $C$, we may assume that $\cB$ is a smooth quasi-projective curve, and the (set-theoretical) image $\im(p:\curP \to \cB)$ consists of finitely many points.
By shrinking $\cB$ further, we may assume that the image contains a single point $0 \in \cB$.
Then we have $(\curP)_\red = (\curP_0)_\red$ and hence $\EE$ is orientable by Lemma \ref{lem:nilp.orienation} below.

Finally, we explain how to avoid issue (6). We follow the situation in the previous paragraph so that $\cB$ is a smooth curve and the set-theoretical image of $p:\curP \to \cB$ contains a single point $0 \in \cB$.
Form a fibre diagram
\[\xymatrix{
\curP_0 \ar@{^{(}->}[r]^{j} \ar[d] & \curP \ar[d]^p\\
\{0\} \ar@{^{(}->}[r]^{i_0} & \cB .
}\]
Since $\curP_0 \hookrightarrow \curP$ is a nilpotent thickening and the Chow groups are invariant under nilpotent thickenings, we may write
\[[\curP]^\red = j_*(\alpha)\]
for some $\alpha \in A_*(\curP_0)$. By Theorem \ref{Thm:DefInv} and the self-intersection formula, we have
\[[\curP_0]^\red  = i_0^! [\tcurP]^\red = i_0^! j_*(\alpha) = e(N_{0/\cB})\cap\alpha = 0\]
since the normal bundle $N_{0/\cB}$ is a trivial line bundle. This contradicts the assumption $[\curP_v^{(q)}(X)]^{\red} \neq 0$, and hence we conclude that $p:\curP \to \cB$ is surjective.
\end{proof}

We need the Lemma \ref{lem:nilp.orienation} to complete the proof of Theorem \ref{Thm:VHC}.

\begin{lemma}\label{lem:nilp.orienation}
Let $\sX \hookrightarrow \sX'$ be a nilpotent embedding of quasi-projective schemes. Let $E$ be an orthogonal bundle on $\sX'$. If $E|_{\sX}$ is orientable, then $E$ is orientable.
\end{lemma}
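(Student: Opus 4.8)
The statement is entirely topological/algebraic and does not involve any moduli-theoretic input, so the plan is to reduce orientability of $E$ to orientability of $E|_{\sX}$ by analyzing the line bundle $\det(E)$ together with its quadratic trivialization. Recall that an orientation of the orthogonal bundle $(E,q)$ is an isomorphism $o:\O_{\sX'}\cong\det(E)$ with $\det(q)=(-1)^{\binom{r}{2}}(o\circ o^\vee)^{-1}$, $r=\rank(E)$. Given the orientation $o_0$ on $E|_{\sX}$, the goal is to extend it over the nilpotent thickening. The first step is to observe that $\det(q):\det(E)^\vee\cong\det(E)$ already provides a trivialization of $\det(E)^{\otimes2}$, i.e.\ a reduction of structure group of $\det(E)$ to $\mu_2$. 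Thus $\det(E)$ is classified by a map $\sX'\to B\mu_2$, equivalently by a class in $H^1_{\mathrm{\acute et}}(\sX',\mu_2)=H^1(\sX',\mu_2)$ (or simply by a $2$-torsion line bundle with a chosen square-trivialization), and an orientation is precisely a splitting, i.e.\ a lift of this class to a point.

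The key step is then the following: the restriction map on the relevant cohomology (or on the set of $\mu_2$-torsors, or on $\Gamma$ of the structure sheaf) is a bijection under a nilpotent thickening. Concretely, let $\I\subseteq\O_{\sX'}$ be the nilpotent ideal with $\O_{\sX'}/\I=\O_{\sX}$. Then the map $\mathbb{G}_m(\sX')\to\mathbb{G}_m(\sX)$ is surjective (a unit lifts because $\I$ is nilpotent) with kernel $1+\I$, which is a uniquely $2$-divisible group since $2$ is invertible and every element $1+x$ with $x$ nilpotent has a unique square root $1+\tfrac12 x-\tfrac18 x^2+\cdots$ (the series terminates). Hence $\mathbb{G}_m(\sX')/2\to\mathbb{G}_m(\sX)/2$ is an isomorphism and $\,_2\mathbb{G}_m(\sX')\to\,_2\mathbb{G}_m(\sX)$ is an isomorphism. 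Feeding this into the Kummer sequence $1\to\mu_2\to\mathbb{G}_m\xrightarrow{2}\mathbb{G}_m\to1$ and using the five lemma (together with $\Pic(\sX')\to\Pic(\sX)$ being an isomorphism on $2$-torsion line bundles, which again follows from the five lemma applied to $0\to\I\to\O_{\sX'}^\times\to\O_{\sX}^\times\to0$ on $H^1$) shows $H^1(\sX',\mu_2)\xrightarrow{\cong}H^1(\sX,\mu_2)$. So the $\mu_2$-gerbe obstruction to orienting $E$ vanishes on $\sX'$ iff it vanishes on $\sX$, and moreover the set of orientations on $\sX'$ surjects onto the set of orientations on $\sX$: concretely, the orientation $o_0:\O_\sX\cong\det(E)|_\sX$ extends to an isomorphism $o:\O_{\sX'}\cong\det(E)$ because $\Hom(\O_{\sX'},\det(E))\to\Hom(\O_\sX,\det(E)|_\sX)$ is surjective on isomorphisms (a local generator lifts; surjectivity of global sections follows from vanishing of $H^1$ of the conormal-twisted ideal, or simply because $\det(E)$ and $\O$ are both trivial locally and the transition data lifts). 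After lifting, the compatibility condition $\det(q)=(-1)^{\binom{r}{2}}(o\circ o^\vee)^{-1}$ is an equality of two isomorphisms $\O_{\sX'}\cong\O_{\sX'}$, i.e.\ of two units in $\mathbb{G}_m(\sX')$, which agree after restriction to $\sX$; their ratio lies in $1+\I$. One then rescales $o$ by the unique square root in $1+\I$ of this ratio, which leaves $o|_\sX$ unchanged (the square root of $1$ is $1$) and enforces the compatibility on $\sX'$.

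In the order I would carry this out: (i) set up $\det(E)$ as a $\mu_2$-torsor via $\det(q)$ and phrase orientations as splittings; (ii) prove $1+\I$ is uniquely $2$-divisible using nilpotence of $\I$ and $\mathrm{char}=0$ (or just $2$ invertible); (iii) deduce $\Pic$ and $H^1(-,\mu_2)$ are unchanged under the thickening, hence an orientation exists on $\sX'$ iff on $\sX$; (iv) lift $o_0$ to some isomorphism $o:\O_{\sX'}\cong\det(E)$; (v) correct $o$ by a unit in $1+\I$ to match the sign normalization $\det(q)=(-1)^{\binom r2}(o\circ o^\vee)^{-1}$. The main obstacle is step (iii)—making sure the comparison of $2$-torsion line bundles (equivalently, the injectivity/surjectivity of $\Pic(\sX')\to\Pic(\sX)$ restricted to $2$-torsion) is handled cleanly; this is a standard consequence of the exact sequence $0\to\I\to\O_{\sX'}^\times\to\O_{\sX}^\times\to0$ on cohomology plus nilpotence, but one must be slightly careful because $\sX'$ need not be reduced or affine. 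Everything else is bookkeeping with the quadratic-form sign conventions already fixed in the Notation section.
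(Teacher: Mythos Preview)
Your proposal is correct, but takes a different route from the paper. The paper argues directly at the level of the $\mu_2$-torsor $U\to\sX'$ obtained as the fibre of $BSO(n)\to BO(n)$: since $E|_\sX$ is orientable, $U|_\sX$ is trivial and hence splits as $U_1\sqcup U_2$. The key observation is purely topological---a nilpotent thickening does not change the Zariski topology of $U$, so the decomposition into open-and-closed subsets extends to $U=\widetilde{U_1}\sqcup\widetilde{U_2}$, and each $\widetilde{U_i}\to\sX'$ is finite \'etale bijective, hence an isomorphism. This gives a section of $U\to\sX'$ in one stroke, with no cohomology.

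Your cohomological approach via the Kummer sequence and the unique $2$-divisibility of $1+\I$ reaches the same conclusion. What the paper's argument buys is brevity and conceptual clarity: it is really just the statement that finite \'etale covers (hence $\mu_2$-torsors) are invariant under nilpotent thickenings, visible at the level of connected components. Your approach, by contrast, makes the algebraic mechanism explicit and in particular isolates the step you correctly flag as requiring care---that $\Pic(\sX')[2]\to\Pic(\sX)[2]$ is injective because the kernel of $\Pic(\sX')\to\Pic(\sX)$ is a quotient of $H^1(\sX',1+\I)$, which is a $\Q$-vector space. One minor slip: the exact sequence should read $1\to(1+\I)\to\O_{\sX'}^\times\to\O_\sX^\times\to1$ with the multiplicative kernel, not $0\to\I\to\cdots$; you use the correct version earlier, so this is just a typo. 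Your correction step (v), rescaling $o$ by the unique square root in $1+\I$, is a nice touch that the paper does not need because its construction of the section is already compatible with the $\mu_2$-structure.
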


\begin{proof}
Consider the canonical short exact sequence of algebraic groups
\[\xymatrix{1 \ar[r] & SO(n) \ar[r] & O(n) \ar[r]^{\det} & \mu_2 \ar[r] & 1}\]
where $n$ is the rank of $E$.
Note that the orthogonal bundle $E$ defines a function $\sX' \xrightarrow{E} BO(n)$ to the classifying stack $BO(n):=[\Spec(\C)/O(n)]$ of $O(n)$. Consider the fibre diagram
\[\xymatrix{
U|_{\sX} \ar[r] \ar[d] & U \ar[r] \ar[d]& BSO(n) \ar[r] \ar[d] & \Spec(\C) \ar[d]\\
\sX \ar[r] & \sX' \ar[r]^-{E} & BO(n) \ar[r]^{\det} & B\mu_2.
}\]
Since $E|_{\sX}$ is orientable, the principal $\mu_2$-bundle $U|_{\sX} \to \sX$ is trivial. Hence $U|_{\sX} = U_1 \bigsqcup U_2$ for some open subschemes such that the compositions $U_1,U_2 \hookrightarrow U|_{\sX} \to \sX$ are isomorphism. Since $U$ and $U|_{\sX}$ have the same Zariski topology, we can decompose $U = \widetilde{U_1}\bigsqcup \widetilde{U_2}$ by disjoint open subschemes $\widetilde{U_1}, \widetilde{U_2}$ extending $U_1,U_2$. Then $\widetilde{U_1}, \widetilde{U_2} \to  \sX'$ are both finite \'etale bijective maps. Hence they are both isomorphisms. This means that the $\mu_2$-bundle $U \to \sX'$ has a section. Hence $E$ is orientable.
\end{proof}

The following example illustrates Theorem \ref{Thm:VHC} in some non-semi-regular cases.
\begin{example}\label{ex:k3k3}
Let $X=S_1\times S_2$ be a product of two K3 surfaces. Let $\beta_1\in H^{1,1}(S_1)$, $\beta_2\in H^{1,1}(S_2)$ be two irreducible effective curve classes with $\beta_1^2=2h-2$, $h\geq 2$ and $\beta_2^2=0$. 
Let $v:=\ch(\O_{C_1\times C_2})$ for any $C_1\in |\O_{S_1}(\beta_1)|$ and $C_2\in |\O_{S_2}(\beta_2)|$.
Then we have an isomorphism
\begin{equation}\label{eq:k3k31}
    \phi : |\O_{S_1}(\beta_1)|\times \PP^1\to \curP^{(1)}_v(X)\,.
\end{equation}
Under this isomorphism we have
\[[\curP^{(1)}_v(X)]^\red = h^0(\O_{S_1}(\beta_1))\cdot[\pt \times\PP^1]\]
for some orientation.

We briefly illustrate the argument. The proof will appear in one of the sequels to this paper.
By the choice of $\beta_2$, there exists an elliptic fibration $p:S_2\to\PP^1$ such that $\beta_2$ is the class of a fibre.  By Proposition~\ref{prop:k3k3} the reduced virtual dimension of $\curP^{(1)}_v(X)$ is one. 
Let $\II\udot=[\O_{|\O_{S_1}(\beta_1)|\times S_1}\twoheadrightarrow \O_{\mathcal{C}}]$ be the universal pair on $|\O_{S_1}(\beta_1)|\times S_1$. Consider the fibre diagram
\[\xymatrix{
& S_1 \times S_2 \ar@{^{(}->}[r]^-{j} \ar[d] \ar[ld]_{\mathrm{pr}} & S_1\times \PP^1 \times S_2 \ar[d]^{\id\times\id\times p} \\
S_1 & S_1\times\PP^1 \ar@{^{(}->}[r]^-{\id\times\Delta} & S_1\times\PP^1\times\PP^1
}\]
where $\Delta:\PP^1\to\PP^1\times\PP^1$ is the diagonal. Then the universal pair $j_*\textup{pr}^*\II\udot$ on $|\O_{S_1}(\beta_1)|\times\PP^1 \times X$ induces the morphism \eqref{eq:k3k31}. Using \cite[Prop.~1.18]{Par}, we can show that $\phi$ is an isomorphism and the reduced obstruction theory of $\curP^{(1)}_v(X)$ reduces to a 2-term obstruction theory on $|\O_{S_1}(\beta_1)|\times\PP^1$.

By Theorem~\ref{Thm:SR=smoothofrvd}, the points of $\curP^{(1)}_v(X)$ are \emph{not} semi-regular. Since the reduced virtual cycle of $\curP^{(1)}_v(X)$ is non-zero, Theorem~\ref{Thm:VHC} applies to the class $\gamma = \beta_1\cup \beta_2$.  
\end{example}

\begin{remark}
The original version of the variational Hodge conjecture by Grothendieck \cite{Gr66} is stated for all {\em geometric points}, while Definition \ref{Def:VHC} only considers $\C$-points. In the situation of Theorem \ref{Thm:VHC}, since we have shown that the map $p: \curP^{(q)}_{\tgamma,P_v}(\cX/\cB) \to \cB$ is surjective on $\C$-points, it is also surjective for all geometric points. Hence we also deduce the original form of the variational Hodge conjecture.
\end{remark}

\begin{remark}
In the situation of Theorem \ref{Thm:VHC}, we further speculate that  $[\PTqvX]^\red\neq0$ implies {\em smoothness} of the Hodge locus of $\gamma$ in the moduli space of Calabi-Yau $4$-folds near $X$.\footnote{For simplicity we consider Calabi-Yau $4$-folds with $h^{0,1}=h^{0,2}=h^{0,3}=0$. Then there exists a moduli space of Calabi-Yau $4$-folds as a smooth separated Deligne-Mumford stack with quasi-projective coarse moduli space by Viehweg \cite{Vie} (see also Koll\'ar \cite[Cor.~8.23]{Kol2}) and the Bogomolov-Tian-Todorov theorem \cite{Bog,Tian,Tod}.}
We note that not much is known about singularities of the Hodge locus.
At least when the scheme-theoretical Hodge locus is reduced near $X$ and $[\PTqvX]^\red\neq0$, then Proposition \ref{Prop:vanishingforjumpingrho} proves smoothness near $X$ since the dimension of the tangent space is constant near $X$.
We hope to pursue this direction in a future work.
\end{remark}





\appendix

\section{Reductions via \texorpdfstring{$(-1)$}{}-shifted 1-forms}\label{Appendix:ReductionviaDAG}

In this section, we construct a reduced $(-2)$-shifted symplectic derived enhancement on $\PTqvX$. 
In particular, this construction shows that the reduced obstruction theory constructed from the algebraic twistor family in section \ref{ss:VFC.ATF} satisfies the isotropic condition.

In subsection \ref{ss:ThreeReductions}, we present general reduction procedures by cosections that come from $(-1)$-shifted {\em closed}/{\em exact} $1$-forms. 
In subsection \ref{ss:shifteclosed1forms}, we show that the cosections on $\PTqvX$ induced by the semi-regularity map can be enhanced to $(-1)$-shifted {\em closed} $1$-forms.

We use the following conventions/notations in this section:
\begin{enumerate}
\item We use the language of {\em $\infty$-categories} in the sense of Lurie \cite{Lur1,Lur2}. 
\begin{itemize}
\item We write $\Map_{\mathcal{C}}(-,-)$ for the mapping space of an $\infty$-category $\mathcal{C}$.
\end{itemize}
We consider the following $\infty$-categories:
\begin{itemize}
\item Let $\cdga$ be the $\infty$-category of commutative differential graded $\C$-algebras and let $\cdga^{\leq0} \subseteq \cdga$ be the full subcategory of non-positively graded objects.
\item Let $\dSt$ be the $\infty$-category of {\em derived stacks}, i.e., $\infty$-sheaves on $\cdga^{\leq0}$ with respect to the {\'e}tale topology.

\item Let $\epsilon\textrm{-}\dg^\gr$ (resp. $\dg^\gr$) be the symmetric monoidal stable $\infty$-category of graded mixed complexes (resp. graded complexes), see \cite{PTVV}.

\noindent We use the weight-shift notation $E((m)):=\oplus_p E(p+m)$ for any $E=\oplus_p E(p) \in \epsilon\textrm{-}\dg^\gr$ and $m \in \Z$.
\end{itemize}

\item We use the language of {\em derived Artin stacks} of To\"en-Vezzosi \cite{ToVe}.
\begin{itemize}
\item 
All derived Artin stacks are assumed to be {\em of finite type}, i.e., their classical truncation are (higher) Artin stacks of finite type.
\item Let $\sfD(\MM)$ be the symmetric monoidal stable $\infty$-category of quasi-coherent sheaves on a derived Artin stack $\MM$.
Let $f^*:\sfD(\MM_2) \to \sfD(\MM_1)$ be the pullback functor for a morphism $f:\MM_1 \to \MM_2$ of derived Artin stacks and let $f_*$ be the right adjoint.
\item A derived Artin stack $\MM$ is {\em homotopically finitely presented} if the cotangent complex $\LL_\MM\in \sfD(\MM)$ is perfect (see \cite[Prop.~2.2.2.4]{ToVe}).
\item A derived Artin stack $\MM$ is a {\em derived scheme} 
if the classical truncation $\MM_\cl$ is a scheme. 
\end{itemize}

\item Let $\DR(-): \dSt^\op \to \epsilon\textrm{-}\cdga^\gr$ be the $\infty$-functor of {\em derived de Rham complexes} of Pantev-To\"en-Vaqui\'e-Vezzosi \cite{PTVV}, defined as follows:
\begin{itemize}
\item Let $\epsilon\textrm{-}\cdga^\gr$ (resp.~$\cdga^\gr$) be the $\infty$-category of graded mixed commutative differential graded $\C$-algebras (resp. graded commutative differential graded $\C$-algebras), see \cite{CPTVV}. 

\item Define $\DR$ as the right Kan extension of the $\infty$-functor $\DR_\aff|_{\cdga^{\leq0}}:\cdga^{\leq0} \to \epsilon\textrm{-}\cdga^\gr$, where $\DR_\aff$ is the left adjoint of the $\infty$-functor $\epsilon\textrm{-}\cdga^\gr \to \cdga :  E\mapsto E(0)$.
\end{itemize}
Abusing notation, we use the same symbol $\DR(-)$ to denote the compositions $\dSt^{\op} \xrightarrow{\DR} \epsilon\textrm{-}\cdga^\gr \to \cdga^\gr$ and $\dSt^{\op} \xrightarrow{\DR} \epsilon\textrm{-}\cdga^\gr \to \dg^\gr$.
\item
Let $\NC(-) : \epsilon\textrm{-}\cdga^\gr \to \cdga^\gr$ (resp. $\CC(-): \epsilon\textrm{-}\dg^\gr \to \dg^\gr$) be the $\infty$-functor of {\em weighted negative cyclic complexes} (resp.~{\em weighted cyclic complexes}), defined as the right adjoint (resp.~left adjoint) of the $\infty$-functor of trivial mixed structures, $\cdga^\gr \to \epsilon\textrm{-}\cdga^\gr$ (resp. $\dg^\gr \to \epsilon\textrm{-}\dg^\gr$) $: E \mapsto (E,\epsilon=0)$.
\begin{itemize}
\item Abusing notation, we use the same symbols $\NC,\CC$ to denote the compositions $\NC\circ\DR,\CC\circ\DR$.
\end{itemize}
\end{enumerate}

\subsection{Three reductions of derived schemes}\label{ss:ThreeReductions}
In this subsection, we revisit Kiem-Li's cone reduction \cite{KL13} via {\em derived algebraic geometry}.
We will see that there are three levels of reductions for derived schemes.

We consider the following hierarchy of structures
\[\xymatrix{
\{\text{derived schemes}\} \ar[d]^-{(a)}\\
\left\{
\text{schemes with obstruction theories}
\right\} \ar[d]^-{(b)}\\
\left\{
{\begin{matrix}
\text{schemes with closed embedding of}\\
\text{their intrinsic normal cone into an abelian cone stack}
\end{matrix}}\right\}.
}\]
More precisely, the above two arrows can be given as follows:
\begin{enumerate}[label=(\alph*)]
\item For any homotopically finitely presented derived scheme $\XX$, there is an induced obstruction theory
\[\phi : \EE:=\LL_{\XX}|_\sX \to \LL_\sX \to \trunc \LL_\sX\]
on the classical truncation $\sX:=\XX_{\cl}$ by \cite[Prop.~1.2]{STV}. 
\item For any scheme $\sX$ with an obstruction theory $\phi:\EE \to \trunc \LL_\sX$, there is an induced closed embedding
\[\iota:\fC_{\sX} \hookrightarrow \fC(\EE)\]
of the intrinsic normal cone $\fC_{\sX}$ into the abelian cone stack $\fC(\EE)$.
\end{enumerate}

In derived algebraic geometry, {\em $(-1)$-shifted $1$-forms} are the natural analogs of {\em cosections}. We have a similar hierarchy for them:
\begin{enumerate}[label=(\alph*)]
\item Let $\alpha : \O_{\XX} \to \LL_{\XX}[-1]$ be a $(-1)$-shifted $1$-form on a homotopically finitely presented derived scheme $\XX$.
Then we have an induced {\em cosection}
\[\sigma:=\alpha|_{\sX}\dual : \EE\dual[1] \to \O_{\sX}\]
for the induced obstruction theory $\phi : \EE:=\LL_\XX|_{\sX} \to \trunc \LL_{\sX}$ on the classical truncation $\sX:=\XX_{\cl}$. 
\item Let $\sigma : \EE\dual[1] \to \O_{\sX}$ be a cosection for an obstruction theory $\phi:\EE\to\trunc\LL_{\sX}$ on a scheme $\sX$. Then we have an induced {\em linear function}
\[\fl_{\sigma}:=\fC(\sigma\dual[1]) : \fC(\EE) \to \bbA^1_{\sX}\]\
on the associated abelian cone stack $\fC(\EE)$.
\end{enumerate}

We now state the main result in this subsection.
We refer to Appendix \ref{Appendix:Artinstacks} for the generalizations of intrinsic normal cones, obstruction theories, and abelian cone stacks to (higher) Artin stacks.

\begin{theorem}\label{Thm:DerivedCL}
Let $\XX$ be a homotopically finitely presented derived Artin stack.
Let $\phi:\EE:=\LL_{\XX}|_{\sX} \to \trunc \LL_{\sX}$ be the induced obstruction theory on the classical truncation $\sX:=\XX_{\cl}$ and
let $\iota : \fC_{\sX} \hookrightarrow \fC(\EE)$ be the induced closed embedding.
\begin{enumerate}
\item[$(1)$] {\bf (Cone reduction, \cite{KL13})}
For any $(-1)$-shifted $1$-form $\alpha \in \cA^1(\XX;-1)$, we have a commutative diagram of cone stacks
\[\xymatrix{
&& \fC(\EE_\sigma) \ar@{^{(}->}[d] \ar[r] & \sX \ar@{^{(}->}[d]^0\\
(\fC_{\sX})_\red \ar@{.>}[rru]^{\iota^{\red}} \ar@{^{(}->}[r] & \fC_{\sX} \ar@{^{(}->}[r]^{\iota} & \fC(\EE) \ar[r]^{\fl_{\sigma}} & \bbA^1_{\sX}
}\]
for a unique dotted arrow where $\sigma:=\alpha|_{\sX}\dual$ and $\EE_{\sigma}:=\cone(\sigma\dual[1])$. 
\item[$(2)$] {\bf (Obstruction theory reduction)}
For any $(-1)$-shifted {\em closed} $1$-form $\talpha \in \cA^{1,\cl}(\XX;-1)$, we have a commutative diagram of complexes
\[\xymatrix{
\O_{\sX}[1] \ar[r]^-{\sigma\dual[1]} & \EE \ar[r] \ar[d]_{\phi} & \EE_\sigma \ar@{.>}[ld]^-{\phi^{\red}} \\
& \trunc \LL_{\sX}&
}\]
for a unique dotted arrow where $\alpha:=\talpha_0$ is the underlying $(-1)$-shifted $1$-form, $\sigma:=\alpha|_{\sX}\dual$, and $\EE_{\sigma}:=\cone(\sigma\dual[1])$.
\item[$(3)$] {\bf (Derived reduction, \cite{STV})} For any $(-1)$-shifted {\em exact} $1$-form $\ttalpha \in \cA^{1,\mathrm{ex}}(\XX;-1)\simeq \cA^0(\XX;-1)$ (see Remark \ref{Rem:closed/exactforms} below),
we have a homotopy commutative diagram of derived Artin stacks
\[\xymatrix{
& \XX^{\red} \ar[r] \ar[d] & \XX \ar[d]^0 \\
\sX \ar[r] \ar[ru] & \XX \ar[r]^-{\ttalpha} & \bbA^1_{\XX}[-1]
}\]
for some $\XX^\red$ where the square is homotopy cartesian and the triangle induces isomorphisms $\sX \cong (\XX^{\red})_{\cl} \cong \XX_{\cl}$.
\end{enumerate}
\end{theorem}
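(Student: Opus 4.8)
The plan is to show that, over a quasi-projective classical truncation, the passage from a $(-1)$-shifted closed $1$-form to a $(-1)$-shifted exact $1$-form is a non-obstruction, so that any reduction datum at the level of obstruction theories can be promoted to a derived reduction, and conversely any derived reduction induces the obstruction-theoretic reduction by functoriality of the induced obstruction theory (arrow (a) above). The key input, which I would isolate as a lemma (``derived Poincar\'e lemma'', the cited Proposition \ref{Prop:DerivedPoincare}), is that on a homotopically finitely presented derived Artin stack $\XX$ whose classical truncation is a quasi-projective scheme, the natural map
\[
\cA^{1,\mathrm{ex}}(\XX;-1) = \cA^0(\XX;-1) \longrightarrow \cA^{1,\cl}(\XX;-1)
\]
is an equivalence (or at least surjective on $\pi_0$ with the relevant fibers contractible), i.e.\ every $(-1)$-shifted closed $1$-form is, up to contractible choice, exact. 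Granting this, a $(-1)$-shifted closed $1$-form $\talpha$ lifts to a $(-1)$-shifted exact $1$-form $\ttalpha$, Theorem \ref{Thm:DerivedCL}(3) produces the derived reduction $\XX^\red = \XX \times_{\bbA^1_\XX[-1],\, 0} \XX$, and then arrow (a) applied to $\XX^\red \hookrightarrow \XX$ produces exactly the reduced obstruction theory $\phi^\red : \EE_\sigma \to \trunc\LL_\sX$ of part (2). I would check compatibility by computing $\LL_{\XX^\red}|_\sX$ from the homotopy cartesian square: the fiber sequence for the pullback gives $\LL_{\XX^\red} \simeq \cone\big(\,\O_{\XX^\red}[1] \xrightarrow{} \LL_\XX|_{\XX^\red}\,\big)$ with the connecting map governed by $\ttalpha$, which restricts on $\sX$ to $\cone(\sigma^\vee[1]) = \EE_\sigma$, and the induced map to $\trunc\LL_\sX$ is visibly the $\phi^\red$ of (2) by the defining universal property of the cone.

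For the converse direction, I would start from an obstruction-theoretic reduction $\phi^\red:\EE_\sigma \to \trunc\LL_\sX$ coming from a $(-1)$-shifted closed $1$-form $\talpha$, invoke the derived Poincar\'e lemma to upgrade $\talpha$ to an exact form $\ttalpha$, form $\XX^\red$ via (3), and observe that its induced obstruction theory agrees with $\phi^\red$ by the cone computation above; uniqueness of the dotted arrow in (2) then forces the two reductions to coincide. The remaining point is to argue that the derived enhancement $\XX^\red$ is itself well-defined up to equivalence independently of the lift $\ttalpha$ of $\talpha$: this follows because the space of such lifts is a torsor under (the fiber of the Poincar\'e lemma map, which is) contractible, so all choices are canonically equivalent. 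I would phrase the final equivalence as an equivalence of the respective spaces/groupoids of reduction data, $\{$derived reductions of $\XX\}\simeq\{$obstruction-theory reductions of $(\sX,\phi)\}$, both of which are identified with $\pi_0\cA^{1,\cl}(\XX;-1)$ (equivalently $\pi_0\cA^0(\XX;-1)$).

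\textbf{Main obstacle.} The substantive content is entirely in the derived Poincar\'e lemma: showing that on $\XX$ with quasi-projective classical truncation the closed-to-exact comparison map $\cA^0(\XX;-1) \to \cA^{1,\cl}(\XX;-1)$ is an equivalence. Over an affine derived scheme this is essentially the statement that the weight-graded pieces of the de Rham complex $\DR(\XX)$ are acyclic in the relevant degree, i.e.\ a Poincar\'e-lemma / de Rham acyclicity statement for cdgas; the quasi-projective case requires descending this from an affine cover, where one must control the higher cohomology of the sheafified de Rham complex and use that $\sX$ quasi-projective guarantees the resolution property and vanishing of the relevant $\HH^{>0}$ terms of coherent sheaves after the standard $[-1]$-shift bookkeeping, exactly as in the proof of Proposition \ref{prop:infinitesimallifting}. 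I expect the cleanest route is to cite \cite{PTVV, CPTVV} for the affine statement and then run a Mayer--Vietoris / descent spectral sequence argument, with the only genuine work being the degree count ensuring the obstruction and ambiguity groups vanish. Everything downstream (the cone computations, the torsor argument for independence of lift, matching the dotted arrows) is formal manipulation with cotangent complexes and universal properties.
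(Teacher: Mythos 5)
Your high-level plan coincides with the paper's: the heart of the matter is the derived Poincar\'e lemma (Proposition \ref{Prop:DerivedPoincare}), which identifies $(-1)$-shifted closed and exact $1$-forms on $\XX$, and once that is in hand the derived reduction $\XX^\red = \XX \times_{\bbA^1_\XX[-1],\,0} \XX$ induces the obstruction-theory reduction via the cotangent fiber sequence $\O_\sX[1] \to \LL_\XX|_\sX \to \LL_{\XX^\red}|_\sX$, so that $\LL_{\XX^\red}|_\sX \cong \EE_\sigma$. Your cone computation is exactly what the paper records in Remark \ref{Rem:RelationbetweenReductions}(2) (the connecting map being $d_{DR}(\ttalpha)_0$, i.e.\ $\sigma^\vee[1]$). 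So the skeleton of the argument is the same.

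Where you diverge is in the proof of the Poincar\'e lemma itself, and here the two routes are genuinely different. You propose to prove the affine case and run a Zariski/\'etale descent (Mayer--Vietoris) argument. This is a legitimate strategy in principle --- both $\cA^0(-;-1)$ and $\cA^{1,\cl}(-;-1)$ are sheaves for the \'etale topology, so an equivalence on an affine \v{C}ech nerve globalizes --- but you have deferred the entire content to the affine case without an argument (citing \cite{PTVV,CPTVV} does not settle it, since the acyclicity of $\DR$ in the relevant weight-graded piece and degree range is precisely what needs proving, and it is a fact about commutative algebra, not a formal consequence of the framework). The paper takes a different route: it never descends, but instead computes globally by passing from $\NC$ through $\CC$ and $\PC$, using the Feigin--Tsygan theorem, Goodwillie's nil-invariance, and Hartshorne's comparison theorem to identify $\PC(\XX)(p)$ with $R\Gamma(\sX^\an,\C)[2p]$ --- a statement that lives entirely on the classical truncation and whose negative-degree vanishing is topologically obvious. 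The only delicate step is the injectivity of $H^{-2}(\PC(\XX)(1)) \to H^0(\CC(\XX)(0))$, which is checked by restricting to one non-empty affine open and invoking an injectivity result of Emmanouil \cite{Emm}. That is not a descent argument; the affine open is used only to verify one injectivity, not to cover $\sX$. The paper's argument thus trades the ``affine-case-plus-descent'' package for a direct identification of periodic cyclic homology with Betti cohomology, which makes the vanishing transparent. Your approach would, if completed, give a somewhat cleaner statement that makes the quasi-projectivity hypothesis look replaceable by ``scheme with affine diagonal,'' but as written the real work (the affine Poincar\'e lemma) is declared rather than done.

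One concrete slip: your appeal to Proposition \ref{prop:infinitesimallifting} (infinitesimal lifting of stable pairs) as a model for ``controlling $\HH^{>0}$ via the resolution property'' is a misfire --- that proposition has nothing to do with Hochschild or cyclic homology, and the vanishing you need here is not a vanishing of sheaf cohomology of coherent sheaves on $\sX$ but a vanishing in the weight-$1$ piece of the negative cyclic complex, which requires the cyclic-homological identifications above (or some affine computation you do not supply). Finally, a small bookkeeping check you gloss over: the uniqueness of the dotted arrow in (2) requires $\Hom_{D(\sX)}(\O_\sX[2],\trunc\LL_\sX)=0$; this does hold since $\trunc\LL_\sX$ is concentrated in degrees $[-1,1]$ and $\sX$ has no negative-degree cohomology, so it is fine, but it should be stated rather than swept into ``formal manipulation.''
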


In Theorem \ref{Thm:DerivedCL}, we use the following definitions of shifted {\em closed}/{\em exact} forms.

\begin{remark}\label{Rem:closed/exactforms}
Let $\XX$ be a homotopically finitely presented derived Artin stack.
\begin{enumerate}
\item The space of {\em $d$-shifted $p$-forms} is defined as the mapping space
\begin{align*}
\cA^p(\XX;d):=\,&\Map_{\dg^\gr}(\C((-p)), \DR(\XX)[d-p]) \\\cong\, & \Map_{\sfD(\XX)}(\O_{\XX},\wedge^p\LL_\XX[d]).	
\end{align*}
\item The space of {\em $d$-shifted closed $p$-forms} is defined as the mapping space
\begin{align*}
\cA^{p,cl}(\XX;d):=\,&\Map_{\dg^\gr}(\C((-p)),\NC(\XX)[d-p]) \\ \cong\,& \Map_{\epsilon\textrm{-}\dg^\gr}(\C((-p)),\DR(\XX)[d-p]).
\end{align*}
\item [(3)] The space of {\em $d$-shifted exact $p$-forms} is defined as
\[\cA^{p,\mathrm{ex}}(\XX;d):=\,\Map_{\dg^\gr}(\C((-p+1)),\CC(\XX)[d-p+1]).\]
\end{enumerate}
Then we have canonical maps of spaces
\[\xymatrix{
\cA^{p,\mathrm{ex}}(\XX;d) \ar[r] & \cA^{p,cl}(\XX;d) \ar[r]^-{(-)_0} & \cA^{p}(\XX;d).
}\]
%
\end{remark}

Before we prove Theorem \ref{Thm:DerivedCL}, we explain how the three reductions in Theorem \ref{Thm:DerivedCL} are related.

\begin{remark}\label{Rem:RelationbetweenReductions}
In the situation of Theorem \ref{Thm:DerivedCL}, we have the following:
\begin{enumerate}
\item The obstruction theory reduction in Theorem \ref{Thm:DerivedCL}(2) is equivalent to the {\em scheme-theoretical cone reduction}, i.e., there exists a commutative diagram of cone stacks
\[\xymatrix{
& \fC(\EE_\sigma) \ar@{^{(}->}[d] \ar[r] & \sX \ar@{^{(}->}[d]^0\\
\fC_{\sX} \ar@{.>}[ru] \ar@{^{(}->}[r]^{\iota} & \fC(\EE) \ar[r]^{\fl_{\sigma}} & \bbA^1_{\sX}
}\]
for some dotted arrow. 
Hence the obstruction theory reduction in Theorem \ref{Thm:DerivedCL}(2) clearly implies the cone reduction in Theorem \ref{Thm:DerivedCL}(1).
\item The derived reduction in Theorem \ref{Thm:DerivedCL}(3) implies the obstruction theory reduction in Theorem \ref{Thm:DerivedCL}(2). Indeed, the commutative diagram of derived Artin stacks induces a commutative diagram of cotangent complexes
\[\xymatrix@C+1pc{
\O_{\sX}[1] = \LL_{\XX^\red/\XX}|_{\sX}[-1] \ar[r]^-{d_{DR}(\ttalpha)_0} &\LL_{\XX}|_{\sX} \ar[r] \ar[d] & \LL_{\XX^\red}|_{\sX} \ar@{.>}[ld] \\
& \LL_{\sX} &
}\]
where $\LL_{\XX^\red/\XX}|_{\sX} = \LL_{\XX/\bbA^1_{\XX}[-1]}|_{\sX} = \O_{\sX}[2]$. By composing with the canonical map $\LL_{\sX} \to \trunc \LL_{\sX}$, we obtain the desired obstruction theory reduction.
\end{enumerate}
\end{remark}

Note that the cone reduction in Theorem \ref{Thm:DerivedCL}(1) was shown by Kiem-Li \cite{KL13} (see Lemma \ref{Lem:KLconereduction}) and the derived reduction in Theorem \ref{Thm:DerivedCL}(3) is trivial. (This approach was first introduced by Sch\"urg-To\"en-Vezzosi \cite{STV}.) 
Thus the essential part of Theorem \ref{Thm:DerivedCL} is the obstruction theory reduction in Theorem \ref{Thm:DerivedCL}(2). The following {\em derived Poincar\'e lemma} (which was essentially shown by Brav-Bussi-Joyce \cite{BBJ}) shows that $(-1)$-shifted closed $1$-forms are always exact and hence Theorem \ref{Thm:DerivedCL}(2) follows from Theorem \ref{Thm:DerivedCL}(3).

\begin{proposition}\label{Prop:DerivedPoincare}
Let $\XX$ be a homotopically finitely presented derived Artin stack.
Then the canonical map
\[\cA^{1,\mathrm{ex}}(\XX;-1) \xrightarrow{} \cA^{1,\cl}(\XX;-1)\]
is an equivalence.
\end{proposition}

\begin{proof}
Since the statement is local, we may assume that $\XX$ is a derived affine scheme such that the associated analytic space of the classical truncation is connected.

Let $\PC(\XX)$ be the cofibre of the map $B: \CC(\XX)((-1))[1] \to \NC(\XX)$ in $\dg^\gr$ induced by the map $\DR(\XX)((-1))[1] \to \NC(\XX)$ in \cite[Rem.~1.6]{PTVV} via adjunction.
Consider the induced morphism of cofibre sequences
\begin{equation}\label{Eq:A4.2}
\xymatrix{
\NC(\XX)(p) \ar[r]^-{I}\ar[d] & \PC(\XX)(p) \ar[r]^-{S}\ar[d] & \CC(\XX)(p-1)[2] \ar@{=}[d]\\
\DR(\XX)(p) \ar[r]^-{I} & \CC(\XX)(p) \ar[r]^-{S} & \CC(\XX)(p-1)[2]
}	
\end{equation}
in $\dg$.
Note that we have canonical equivalences
\begin{equation}\label{Eq:A4.1}
\PC(\XX)(p) \cong \PC(\sX)(p) \cong R\Gamma(\widehat{\sY},\hOmega_{\sY}^{\bullet})[2p] \cong R\Gamma(\sX^\an,\C)[2p]
\end{equation}
in $\dg$ where $\hOmega_{\sY}^{\bullet}$ is Hartshorne's algebraic de Rham complex \cite{Har} for an embedding $\sX \hookrightarrow \sY$ into a smooth scheme $\sY$ and $\sX^\an$ is the analytic space associated to $\sX$.
Based on the HKR isomorphism \cite{TV11,BZN} (see also \cite{Hoy}),
the first equivalence follows from Goodwillie's nil-invariance \cite[Thm.~IV.2.1]{Goo1}, 
the second equivalence follows from the Feigin-Tsygan theorem \cite[Thm.~5]{FT} (see also \cite[Thm.~2.2]{Emm} and \cite[Thm.~3.4]{W4}), 
and the third equivalence follows from Hartshorne's comparison theorem \cite[Thm.~IV.1.1]{Har}.

We first claim that the canonical map given by the upper sequence in \eqref{Eq:A4.2}
\begin{equation}\label{Eq:A4.3}
\Map_{\dg}(\C,\CC(\XX)(0)[-1]) \xrightarrow{B} \Map_{\dg}(\C,\NC(\XX)(1)[-2])	
\end{equation}
is an equivalence. Indeed, since $H^k(-)$ of \eqref{Eq:A4.1} vanishes for $p=1$ and $k \leq -3$, it suffices to show that
\[H^{-2}(\PC(\XX)(1)) \xrightarrow{B} H^{0}(\CC(\XX)(0))\]
is injective.
Then we have a commutative square
\[\xymatrix{
H^{-2}(\PC(\XX)(1)) \ar[r]^S \ar[d]^{\cong} & H^0(\CC^0(\XX)(0))\ar[d]\\
H^{-2}(\PC(\XX))(1) \ar@{^{(}->}[r]^S & H^0(\CC^0(\XX)(0))
}\]
where the left vertical arrow is bijective by \eqref{Eq:A4.1} and the lower horizontal arrow is injective by \cite[Prop.~2.6]{Emm}.
Hence we have the desired injectivity.

Then the canonical map given by the lower sequence in \eqref{Eq:A4.2}
\begin{equation}\label{Eq:A4.4}
\Map_{\dg}(\C,\DR(\XX)(0)[-1]) \xrightarrow{I} \Map_{\dg}(\C,\CC(\XX)(0)[-1])	
\end{equation}
is an equivalence since $\CC(\XX)(-1) \cong 0$.
By combining the two equivalences \eqref{Eq:A4.3} and \eqref{Eq:A4.4}, we can deduce that the canonical map
\[\Map_{\dg}(\C,\DR(\XX)(0)[-1]) \xrightarrow{d_{DR}} \Map_{\dg}(\C,\NC(\XX)(1)[-2])\]
is an equivalence as desired.
\end{proof}

\begin{remark}
The derived Poincar\'e lemma also appears in \cite[Lem.~6.11]{AKLPR}.
We refer readers who are interested in a generalization of Kiem-Li's cosection localization \cite{KL13}, using derived algebraic geometry and homotopical methods in intersection theory, to \cite{AKLPR} (we only consider the cone reduction part in this paper).
\end{remark}

\subsection{Shifted closed $1$-forms on moduli of perfect complexes}\label{ss:shifteclosed1forms}

In this subsection, we prove that the cosections given by the semi-regularity maps can be enhanced to shifted {\em closed} $1$-forms. 
This is achieved by generalizing the {\em integration map} of \cite{PTVV} and defining the {\em hat product} for derived mapping stacks.

We first generalize the {\em integration map} in \cite{PTVV} for Calabi-Yau varieties to arbitrary smooth projective varieties.

\begin{theorem}\label{Prop:Integration}
Let $X$ be a smooth projective variety of dimension $n$.
Let $\MM$ be a derived Artin stack.
Then there exists a natural map 
\[\int_{\pi}(-) : \DR(X\times\MM) \to \DR(\MM)((-n))\]
in $\epsilon\textrm{-}\dg^\gr$ where $\pi: X\times\MM\to\MM$ is the projection map. 
 Consequently, we have a natural homotopy-commutative square
\[\xymatrix{
\cA^{p,\cl}(X\times\MM,i) \ar[r]^-{\int_{\pi}(-)} \ar[d] & \cA^{p-n,\cl}(\MM,i-n) \ar[d]\\
\cA^p(X\times\MM,i) \ar[r]^-{\int_{\pi}(-)} & \cA^{p-n}(\MM,i-n).
}\]
\end{theorem}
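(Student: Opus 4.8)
The strategy is to construct the integration map $\int_\pi$ by dualizing the derived de Rham complex construction against the relative dualizing sheaf of $\pi$, which in the proper smooth case is $\omega_{X\times\MM/\MM}\cong \O_{X\times\MM}\otimes\pi_X^*\Omega^n_X[n]$ up to the usual shift. First I would recall that, by construction, $\DR(X\times\MM)$ computes $R\Gamma$ of a sheafy de Rham complex on $X\times\MM$, and that the graded pieces of its weight filtration are built from $\wedge^p\LL_{X\times\MM}\cong \bigoplus_{a+b=p}\pi_X^*\Omega^a_X\otimes\pi_\MM^*\wedge^b\LL_\MM$. Hence I want a natural transformation that applies $R\pi_*$ and contracts the $\pi_X^*\Omega^a_X$ factor against the fundamental class $\int_X\colon R\Gamma(X,\Omega^n_X)[n]\to\C$; this only produces something nonzero on the top $a=n$ part, which is exactly the weight shift $((-n))$. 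Concretely, the cleanest implementation is to use that $\DR(-)$ is the left Kan extension of $\DR_{\aff}$ from affines (as recalled in the conventions), so it suffices to produce the map on affine pieces $\Spec A \to \MM$, i.e.\ a map $\DR(X\times\Spec A)\to \DR(\Spec A)((-n))$, and check naturality; then Kan-extend. On affines this is the classical statement that $R\Gamma(X,\Omega^\bullet_{X}\otimes_\C (\cdot))$ admits a trace to $(\cdot)$ shifted by the weight of $\Omega^n_X$, compatible with the mixed (de Rham) differential because $\int_X$ kills exact forms by Stokes/residue theory — i.e.\ $\int_X d_{DR}(-)=0$. This compatibility with the mixed structure $\epsilon$ is the whole content of landing in $\epsilon\textrm{-}\dg^\gr$ rather than merely $\dg^\gr$.

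The key steps, in order, are: (1) set up the relative de Rham complex $\DR(X\times\MM)$ as a $\pi_\MM$-pushforward and identify its weight-graded pieces via the Künneth decomposition of $\wedge^p\LL_{X\times\MM}$ (using smoothness of $X$ so $\LL_{X/\C}=\Omega^1_X$ is a vector bundle, and base change $\LL_{X\times\MM}\simeq \pi_X^*\Omega^1_X\oplus\pi_\MM^*\LL_\MM$); (2) construct on affine targets the contraction-and-trace map $R\pi_*\big(\pi_X^*\Omega^a_X\otimes\pi_\MM^*\wedge^b\LL_\MM\big)\to (\text{nonzero only if }a=n)\to \wedge^b\LL_{\Spec A}[-n]$ using the trace $R\Gamma(X,\Omega^n_X)[n]\to\C$ and projection formula; (3) verify this assembles into a map of \emph{graded mixed} complexes — the differential within fixed weight is $\pi$-linear hence commutes automatically, and the mixed operator $\epsilon$ of weight $+1$ acts through $d_{DR}$ on the $\Omega^\bullet_X$ factor, so compatibility reduces to $\int_X d_{DR}=0$, which is Stokes' theorem for the proper smooth $X$ (equivalently, the de Rham differential on $\Omega^{n-1}_X\to\Omega^n_X$ has vanishing composite with the residue/trace in cohomology); (4) invoke the universal property of the left Kan extension defining $\DR$ to extend from affines over $\MM$ to all of $\DR(X\times\MM)$, using that $X$ is fixed and proper so $\pi$ is nice enough that $R\pi_*$ commutes with the relevant colimits (base change for $\pi$ along $\Spec A\to\MM$); (5) deduce the stated homotopy-commutative square by applying $\Map_{\dg^\gr}(\C((-p)),-[i-p])$ for closed forms (using $\NC=$ homotopy fixed points of the mixed structure, so the map on $\NC$ is induced functorially from the map of $\epsilon\textrm{-}\dg^\gr$ objects) and $\Map_{\dg^\gr}(\C((-p)),-[i-p])$ after forgetting the mixed structure for the underlying-form version; the square commutes because forgetting $\epsilon$ is compatible with $\int_\pi$ by construction.

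The main obstacle I expect is step (3): checking that $\int_\pi$ is genuinely a morphism in $\epsilon\textrm{-}\dg^\gr$ and not merely in $\dg^\gr$, i.e.\ strict (or coherent up to all higher homotopies) compatibility with the mixed structure. In \cite{PTVV} the analogous integration map for $d$-oriented spaces is built precisely so that $\int_X d_{DR}=0$ holds on the nose, but there $X$ is O-compact/Calabi-Yau and the orientation is an extra datum; here I would instead use that a smooth projective variety is canonically $n$-oriented in the relevant sense with the trace $R\Gamma(X,\O_X)\to\C$ replaced by the Serre-duality trace $R\Gamma(X,\omega_X)[n]\to\C$, and that this trace is a map of complexes killing exact forms because $\omega_X$ sits at the top of $\Omega^\bullet_X$ (so there is simply no room for a de Rham differential out of it). Making this precise at the level of $\infty$-categorical functoriality — rather than on naive cochain models — is where care is needed: one realizes $\DR_{\aff}$ via its explicit mixed cdga model, produces $\int_\pi$ as an explicit chain map there, and then appeals to the fact that such a strict map of mixed complexes represents a well-defined map in $\epsilon\textrm{-}\dg^\gr$, which is then transported along the Kan extension. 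A secondary (minor) point is base change: ensuring $R\pi_*$ applied fiberwise over $\MM$ is compatible with the colimit presentation of $\DR$, which holds because $\pi$ is proper flat with $X$ fixed, so cohomology and base change apply uniformly.
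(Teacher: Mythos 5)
Your overall plan is sound and identifies the right core ingredient (a trace on $X$ landing in weight $-n$), but the route you take is different from the paper's, and in a way that makes the hardest step harder than it needs to be. Your step (3) --- showing that $\int_\pi$ is a morphism in $\epsilon\textrm{-}\dg^\gr$ and not merely in $\dg^\gr$, by verifying $\int_X d_{DR}=0$ coherently in a strict mixed cdga model and transporting --- is exactly the point where the paper takes a shortcut you are missing. The paper reduces by the K\"unneth equivalence $\DR(X)\otimes\DR(\MM)\xrightarrow{\cong}\DR(X\times\MM)$ in $\epsilon\textrm{-}\cdga^\gr$ (which it proves as a lemma from projection formula, base change, and perfectness of $X\to\Spec\C$), so that the whole construction factors through a single map $\int_X\colon\DR(X)\to\C((-n))$. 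It then invokes the adjunction $\CC(-)(n)\dashv(-)((-n))$ between the weighted cyclic complex functor and the weight-shift: a map $\DR(X)\to\C((-n))$ in $\epsilon\textrm{-}\dg^\gr$ is the \emph{same} datum as a map $\CC(X)(n)\to\C$ in $\dg$, because $\C((-n))$ carries the trivial mixed structure. This reframing completely dissolves the coherence problem; there is no homotopy-coherent compatibility with $\epsilon$ left to check by hand. The remaining content is purely that $H^0(\CC(X)(n))\cong\HH^{2n}(X,\Omega^{\leq n}_X)\cong H^n(X,\Omega^n_X)\cong\C$ and $H^{>0}(\CC(X)(n))=0$ for $X$ smooth projective of dimension $n$, which replaces both your affine Kan extension and your appeal to Stokes' theorem.

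Two further remarks. First, your attribution of $\int_X d_{DR}=0$ to ``Stokes/residue theory'' is imprecise in this algebraic setting; what is actually at play is the $E_1$-degeneration of the Hodge--de~Rham spectral sequence, and in the paper's formulation this is absorbed automatically into the identification of $H^0(\CC(X)(n))$ rather than invoked as a separate compatibility. Second, your plan of Kan-extending from affines over $\MM$ is unnecessary once one has the K\"unneth equivalence: the integration map on $X\times\MM$ is simply $\id_{\DR(\MM)}\otimes\int_X$ under that equivalence, and the naturality in $\MM$ is then automatic. Your approach could in principle be made to work, but it would require justifying that your strict chain-level model computes $\DR$ and that your chain map represents a well-defined map in the $\infty$-category, neither of which you spell out; the paper's adjunction argument sidesteps both.
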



\begin{proof}
By the K\"unneth formula in Lemma \ref{Lem:Kunneth} below, we have a canonical equivalence
\[\DR(\MM) \otimes \DR(X) \xrightarrow{\cong} \DR(X\times \MM) \]
in $\epsilon\textrm{-}\cdga^\gr$.
Hence it suffices to construct a natural integration map for $X$,
\[\int_X (-) : \DR(X) \to \C((-n))\]
in $\epsilon\textrm{-}\dg^\gr$. Equivalently, it suffices to construct a natural map
\[\CC(X)(n) \to \C\]
in $\dg$ since we have an adjunction $\CC(-)(n) \dashv (-)((-n))$.
The canonical equivalence
\[H^0(\CC(X)(n)) \cong \HH^{2n}(X,\Omega^{\leq n}_X) \xleftarrow{\cong} H^n(X,\Omega^n_X) \xrightarrow{\cong} \C\]
gives us the desired map.
\end{proof}

We provide a K\"unneth formula for $\DR$ in the following lemma. This lemma is required to complete the proof of Theorem \ref{Prop:Integration}.

\begin{lemma}\label{Lem:Kunneth}
Let $X$ be a smooth projective variety. 
Let $\MM$ be a derived Artin stack.
Then there is a canonical equivalence
\[\DR(X) \otimes \DR(\MM) \to \DR(X\times \MM)\]
in $\epsilon\textrm{-}\cdga^\gr$.
\end{lemma}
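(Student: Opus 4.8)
The plan is to reduce the statement to the affine case and then invoke the universal property defining $\DR$ as a left Kan extension. First I would recall that, by construction, the functor $\DR:\dSt^{\op}\to\epsilon\textrm{-}\cdga^\gr$ is the left Kan extension of $\DR_\aff|_{\cdga^{\leq0}}$ along the Yoneda embedding, and that $\DR_\aff$ is left adjoint to the forgetful functor $E\mapsto E(0)$; in particular $\DR_\aff$ is strong symmetric monoidal, so for ordinary affine derived schemes $\Spec A$, $\Spec B$ one has a canonical equivalence $\DR_\aff(A)\otimes\DR_\aff(B)\xrightarrow{\cong}\DR_\aff(A\otimes B)$ in $\epsilon\textrm{-}\cdga^\gr$. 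Since the tensor product on $\epsilon\textrm{-}\cdga^\gr$ preserves colimits separately in each variable, and left Kan extension commutes with colimits, the comparison map $\DR(-)\otimes\DR(-)\to\DR(-\times-)$ — defined on affines as above and extended by left Kan extension in both variables — is an equivalence whenever \emph{both} arguments are colimits of affine derived schemes along diagrams of the right shape. An arbitrary derived Artin stack $\MM$ (of finite type) is such a colimit: it admits a smooth hypercover by affine derived schemes, and $\DR$ sends this to a limit diagram that we can nonetheless control.

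The second step is to handle the smooth projective variety $X$, which is \emph{not} affine. Here I would use that $X$ is a classical smooth proper scheme, so $\DR(X)$ can be computed via any Zariski (equivalently, smooth) hypercover $U_\bullet\to X$ by affine schemes as $\DR(X)\cong\lim_{[\bullet]}\DR(U_\bullet)$ in $\epsilon\textrm{-}\cdga^\gr$, using smooth descent for $\DR$ (this is part of the construction in \cite{PTVV,CPTVV}; $\DR$ satisfies \'etale, hence Nisnevich, hence Zariski descent on qcqs objects). Simultaneously $X\times\MM$ is covered by $U_\bullet\times\MM$. The key point making the limit–tensor interchange work is that $X$ is \emph{proper}: the relevant cohomology $\HH^*(X,\Omega^\bullet_X)$ (and more generally the pieces of $\DR(X)$) are finite-dimensional over $\C$, so $\DR(X)$ is a perfect (dualizable) object of $\dg^\gr$ — in fact of $\epsilon\textrm{-}\dg^\gr$ after forgetting the algebra structure. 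Dualizability of $\DR(X)$ is exactly what is needed to commute the finite smooth-descent limit computing $\DR(X)$ past the colimit-preserving functor $(-)\otimes\DR(\MM)$: for a dualizable $P$ the functor $(-)\otimes P$ preserves all limits as well as all colimits. Thus $\DR(X)\otimes\DR(\MM)\cong(\lim_\bullet \DR(U_\bullet))\otimes\DR(\MM)\cong\lim_\bullet(\DR(U_\bullet)\otimes\DR(\MM))\cong\lim_\bullet\DR(U_\bullet\times\MM)\cong\DR(X\times\MM)$, where the middle equivalence is the affine K\"unneth map of the first step applied levelwise, and the outer equivalences are descent for $\DR$.

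The main obstacle, and the step requiring genuine care, is establishing that $\DR(X)$ is dualizable in $\epsilon\textrm{-}\dg^\gr$ and that the smooth-descent limit presenting it is a \emph{finite} limit (or at least one commuting with $(-)\otimes\DR(\MM)$). For $X$ smooth proper over $\C$ this follows from the degeneration of the Hodge-to-de Rham spectral sequence and the finiteness of coherent cohomology: each graded piece $\DR(X)(p)$ is (a shift of) $R\Gamma(X,\Omega^{\geq p}_X[\,\cdot\,])$, a perfect complex of $\C$-modules, and the mixed differential is compatible with this finiteness, so $\DR(X)\simeq\bigoplus_p \DR(X)(p)$ is a bounded object all of whose weight pieces are perfect; hence it is dualizable and one can truncate the Zariski Cech diagram to a finite stage without changing the limit. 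I would also need to check that the comparison map constructed abstractly by left Kan extension agrees, on the level of weight pieces, with the classical K\"unneth isomorphism $\Omega^\bullet_X\boxtimes\Omega^\bullet_\MM\cong\Omega^\bullet_{X\times\MM}$ — this is a compatibility of monoidal structures that holds on affines by the adjunction defining $\DR_\aff$ and propagates by the universal property. Once these finiteness and compatibility points are in place, the interchange of limits and tensor products gives the asserted equivalence in $\epsilon\textrm{-}\cdga^\gr$, functorially in both $X$ and $\MM$.
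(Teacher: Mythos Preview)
Your descent strategy is a natural first idea, but the key interchange step is not justified as written, and the paper takes a quite different route.

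The dualizability claim has the variables reversed. To commute the limit $\lim_\bullet \DR(U_\bullet)$ past the functor $(-)\otimes\DR(\MM)$ you need $\DR(\MM)$ dualizable, not $\DR(X)$: for dualizable $P$ it is $(-)\otimes P$ that preserves limits, and here $P=\DR(\MM)$, which is not dualizable for a general derived Artin stack. The finite-\v{C}ech-cover argument you also mention does rescue the $X$-side interchange (a finite Zariski cover of a separated scheme gives a finite totalization, and tensoring preserves finite limits in a stable category). But that only reduces you to the ``affine $\times$ stack'' K\"unneth $\DR(U)\otimes\DR(\MM)\cong\DR(U\times\MM)$ for $U$ affine open in $X$, which you treat as already known from Step~1. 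It is not: running descent on the $\MM$-side now requires $\DR(U)\otimes(-)$ to preserve the (infinite) limit over smooth affines of $\MM$, hence $\DR(U)$ dualizable --- and $\DR(U)$ is \emph{not} dualizable for a positive-dimensional affine $U$ (already $\Gamma(U,\O_U)$ is infinite-dimensional). So the pure descent/left-Kan-extension argument has a genuine gap at this point.

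The paper avoids descent altogether. It first constructs the comparison map from the universal property of the coproduct in $\epsilon\textrm{-}\cdga^\gr$, then uses that the forgetful functor $\epsilon\textrm{-}\cdga^\gr\to\dg^\gr$ is conservative to reduce to checking an equivalence in $\dg^\gr$. There one has the explicit weight decomposition $\DR(-)\cong\bigoplus_p R\Gamma(-,\wedge^p\LL_-[p])$ (forgetting the mixed structure), together with $\LL_{X\times\MM}\cong q^*\LL_X\oplus\pi^*\LL_\MM$, which reduces the problem to the single K\"unneth-type map
\[
R\Gamma(X,\Omega^{p_1}_X)\otimes R\Gamma(\MM,\wedge^{p_2}\LL_\MM)\to R\Gamma\bigl(X\times\MM,\,q^*\Omega^{p_1}_X\otimes\pi^*\wedge^{p_2}\LL_\MM\bigr).
\]
This is then factored into three pieces: the projection formula and base change for $\pi:X\times\MM\to\MM$ (both equivalences by \cite{BZFN} since $X\to\Spec(\C)$ is a perfect morphism), and the compatibility of $R\Gamma(\MM,-)$ with tensoring by the \emph{perfect} complex $R\Gamma(X,\Omega^{p_1}_X)$. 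Properness of $X$ enters only at this last step, exactly where you tried to invoke dualizability --- but applied to the finite-dimensional object $R\Gamma(X,\Omega^{p_1}_X)$ rather than to $\DR(X)$ as a whole, and with the correct role in the tensor product.
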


\begin{proof}

Form a homotopy-cartesian square of derived Artin stacks
\begin{equation}\label{Eq:Kun1}
\xymatrix{
X \times \MM \ar[r]^-{q} \ar[d]^{\pi} & X \ar[d] \\
\MM \ar[r] & \Spec(\C).
}\end{equation}
Consider the canonical map 
\begin{equation}\label{Eq:Kun2}
q^*(-)\cup \pi^*(-)	: \DR(X) \otimes \DR(\MM) \to \DR(X\times\MM) 
\end{equation}
in $\epsilon\textrm{-}\cdga^\gr$ given by the universal property of the coproduct.
It suffices to show that the map \eqref{Eq:Kun2} induces an equivalence in $\dg^\gr$ since the forgetful functor
\begin{equation*}\label{Eq:Kun3}
\epsilon\textrm{-}\cdga^\gr \to \cdga^\gr \to \dg^\gr	
\end{equation*}
is conservative.

Note that we have a canonical equivalence
\begin{equation}\label{Eq:Kun4}
\DR(\MM) \cong \bigoplus_p R\Gamma(\MM,\wedge^p\LL_\MM[p])	
\end{equation}
in $\cdga^\gr$, if we forget the mixed structure in $\DR(\MM)$ (see \cite[Rem.~2.4.4]{CPTVV}).
Moreover, since we have a canonical equivalence $\LL_{X\times\MM}\cong q^*(\LL_X)\oplus \pi^*(\LL_\MM)$ in $\sfD(X\times\MM)$, we also have an induced canonical equivalence
\[\wedge^p\LL_{X\times\MM} \cong \bigoplus_{p=p_1+p_2}q^*(\wedge^{p_1}\LL_X) \otimes \pi^*(\wedge^{p_2}\LL_\MM)\]
in $\sfD(X\times\MM)$.
Applying \eqref{Eq:Kun4} for $X,\MM,X\times\MM$ to the canonical map \eqref{Eq:Kun2}, it suffices to show that the canonical map
\begin{equation}\label{Eq:Kun5}
R\Gamma(X,\wedge^{p_1} \LL_X) \otimes R\Gamma(\MM,\wedge^{p_2}\LL_\MM) \to R\Gamma(X\times\MM,q^*(\wedge^{p_1}\LL_X) \otimes \pi^*(\wedge^{p_2}\LL_\MM))	
\end{equation}
in $\dg$ is an equivalence.

We will factor the map \eqref{Eq:Kun5} as follows: Firstly, consider the canonical map
\begin{equation}\label{Eq:Kun6}
\pi_*q^*(\wedge^{p_1}\LL_X) \otimes \wedge^{p_2}\LL_\MM \to \pi_*(q^*(\wedge^{p_1}\LL_X)	\otimes \pi^*(\wedge^{p_2}\LL_\MM))
\end{equation}
in $\sfD(\MM)$ given by the adjunction $\pi^*\dashv \pi_*$.
Secondly, consider the canonical base change map
\begin{equation}\label{Eq:Kun7}
R\Gamma(X,\wedge^{p_1}\LL_X) \otimes \O_\MM \to \pi_*q^*(\wedge^{p_1}\LL_X)
\end{equation}
in $\sfD(\MM)$ given by the cartesian square \eqref{Eq:Kun1}. 
Finally, consider the canonical map
\begin{equation}\label{Eq:Kun8}
R\Gamma(X,\wedge^{p_1}\LL_X)\otimes R\Gamma(\MM,\wedge^{p_2}\LL_M) \to R\Gamma(\MM,R\Gamma(X,\wedge^{p_1}\LL_X) \otimes \wedge^{p_2}\LL_\MM)
\end{equation}
in $\dg$ given by the adjunction $(-)\otimes\O_\MM\dashv R\Gamma(\MM,-)$.
Then the map \eqref{Eq:Kun5} is the composition of \eqref{Eq:Kun8}, $R\Gamma(\MM,(-)\otimes \wedge^{p_2}\LL_\MM)$ of the map \eqref{Eq:Kun7}, and $R\Gamma(\MM,-)$ of the map \eqref{Eq:Kun6}.
Hence it suffices to show that the three maps \eqref{Eq:Kun6},\eqref{Eq:Kun7},\eqref{Eq:Kun8} are equivalences.

The map \eqref{Eq:Kun6} is the projection formula for $\pi:X\times\MM\to\MM$
and the map \eqref{Eq:Kun7} is the base change formula for the square \eqref{Eq:Kun1}.
Both of these maps are equivalences by \cite[Prop.~3.10]{BZFN} since $X \to \Spec(\C)$ is a perfect morphism in the sense of \cite[Def.~3.2]{BZFN} by \cite[Cor.~3.23,~Prop.~3.19]{BZFN}.

The map \eqref{Eq:Kun8} 
is an equivalence since $R\Gamma(X,\wedge^{p_1}\LL_X)$ is a perfect complex in $\dg$ 
and both the functors $(-)\otimes R\Gamma(\MM,\wedge^{p_2}\LL_\MM), R\Gamma(\MM,(-)\otimes\wedge^{p_2}\LL_\MM)$ commute with finite colimits.
\end{proof}

We compare the integration map in Theorem \ref{Prop:Integration} and the one in \cite{PTVV}.

\begin{remark}
Let $X$ be a Calabi-Yau $n$-fold with fixed Calabi-Yau $n$-form $\omega : \O_X \xrightarrow{\cong} \Omega^n_X$.
Let $\MM$ be a derived Artin stack.
Then the integration map in \cite{PTVV} can be expressed as
\begin{equation}\label{Eq:PTVV}
\int_{\pi,\PTVV} (-) \cong \int_{\pi} ((-) \cup q^*\omega) : \DR(X\times\MM) \to \DR(\MM)[-n] 	
\end{equation}
where $q:X\times\MM \to X$ and $\pi : X\times\MM\to\MM$ are the projection maps.
More precisely, the right hand side of the equivalence \eqref{Eq:PTVV} is defined as the composition
\begin{multline*}
\DR(X\times\MM)\xrightarrow{\id\otimes \omega} \DR(X\times\MM) \otimes \DR(X)((n))[-n]\\
\xrightarrow{\id\otimes q^*}\DR(X\times\MM) \otimes \DR(X\times\MM)((n))[-n]\\
\xrightarrow{\cup} \DR(X\times\MM)((n))[-n] \xrightarrow{\int_\pi}\DR(\MM)[-n]
\end{multline*}
where $\omega \in H^0(X,\Omega^n_X)\cong\HH^n(X,\Omega^{\geq n}_X)$ is regarded as an element in
\[ \pi_0\Map_{\dg}(\C,\NC(X)(n)[-n]) \cong \pi_0\Map_{\epsilon\textrm{-}\dg^\gr}(\C,\DR(X)((n))[-n])\]
and the cup product $\cup$ is given by the multiplicative structure of $\DR(X\times\MM)$.

We now explain how we can obtain the equivalence \eqref{Eq:PTVV}. Recall that the integration map in \cite{PTVV} is based on the construction of the canonical map
\[\kappa : \DR(X\times\MM) \to R\Gamma(X,\O_X)\otimes  \DR(\MM) \]
in $\epsilon\textrm{-}\dg^\gr$.
This can be reinterpreted as the composition
\[\DR(X\times\MM) \xrightarrow{\cong} \DR(X)\otimes \DR(\MM) \to R\Gamma(X,\O_X)\otimes \DR(\MM)\]
where the first arrow is given by the K\"unneth formula in Lemma \ref{Lem:Kunneth} and the second arrow is induced by the canonical map $\DR(X) \to R\Gamma(X,\O_X)$ that corresponds to the canonical isomorphism $\CC(X)(0) \xrightarrow{\cong} R\Gamma(X,\Omega^{\leq0}_X)$ via adjunction.
Hence it suffices to show that
\[\xymatrix{
\int_{X,\PTVV}{(-)} \cong \int_X {(-) \cup \omega} :  \DR(X) \to \C[-n]
}\]
holds in $\epsilon\textrm{-}\dg^\gr$. This follows from the homotopy-commutative diagram
\[\xymatrix@C-1pc{
\CC(X)(0) \ar@{}[r]|-{\cong} \ar[d]^{(-)\cup\omega}& R\Gamma(X,\Omega^{\leq 0}_X) \ar[d]^{(-)\cup\omega}\ar[r]^-{\omega}
& R\Gamma(X,\Omega^n_X) \ar[r] \ar[d] & H^n(X,\Omega^n_X)[-n] \ar[r]^-{\cong} \ar[d]^-{\cong} & \C[-n] \ar@{=}[d]\\
\CC(X)(n)[-n] \ar@{}[r]|-{\cong} & R\Gamma(X,\Omega^{\leq n}_X[n])\ar@{=}[r] &R\Gamma(X,\Omega^{\leq n}_X[n])\ar[r] & \HH^{2n}(X,\Omega^{\leq n}_X)[-n] \ar[r]^-{\cong} & \C[-n]
}\]
in $\dg$ via adjunction.
\end{remark}

We note that the integration map in Theorem \ref{Prop:Integration} without the closing structures can be described quite simply as follows.

\begin{remark}\label{Rem:Integrationwithoutepsilon}
Let $X$ be a smooth projective variety of dimension $n$.
Let $\MM$ be a derived Artin stack.	
Then the integration map
\[\int_{\pi}(-) : \cA^{p}(X\times\MM;i) \to \cA^{p-n}(X;i-n)\]
can be described as follows: Consider the projection map
\[\wedge^p\LL_{X\times\MM} \cong \bigoplus_{p=p_1+p_2}q^*(\wedge^{p_1}\LL_X) \otimes \pi^*(\wedge^{p_2}\LL_\MM) \to \pi^*(\wedge^{p-n}\LL_\MM)\otimes q^*(\Omega^n_X)\]
in $\sfD(X\times\MM)$. The adjunction $\pi_* \dashv \pi^!\cong \pi^* \otimes q^*(\Omega^n_{X})[n]$ gives us a canonical map
\[\pi_*(\wedge^p\LL_{X\times\MM}) \to \wedge^{p-n}\LL_\MM[-n]\]
in $\sfD(\MM)$. Applying $\Map_{\sfD(\MM)}(\O_{\MM}[-i],-)$, we obtain the above integration map.
\end{remark}

Based on the generalized integration map in Theorem \ref{Prop:Integration}, we can define the {\em hat products} for {\em derived mapping stacks}. 
We first collect some basic facts on derived mapping stacks.

\begin{remark}\label{Rem:DerivedMappingspace}
Let $X$ be a smooth projective variety.
Let $\YY$ be a derived Artin stack.
The derived mapping stack $\UMap(X,\YY)$ is the derived stack determined by the adjunction
\[\Map_{\dSt}(\UU,\UMap(X,\YY)) \cong \Map_{\dSt}(X\times\UU,\YY)\]
for any $\UU \in \dSt$.
By \cite{HLP}, $\UMap(X,\YY)$ is a derived Artin stack when $\YY$ has a quasi-affine diagonal or the classical truncation of $\YY$ is a $1$-Artin stack with affine stabilizers.

The cotangent complex can be computed as follows. For simplicity, assume that $\YY$ is homotopically finitely presented and has quasi-affine diagonal. Let
\[\ev : X\times\UMap(X,\YY) \to \YY\]
be the evaluation map. By taking differential, we have a canonical map
\[\ev_* : \TT_{X\times{\UMap(X,\YY)}} \cong  q^*(T_X) \oplus \pi^*(\TT_{\UMap(X,\YY)})\to \ev^*(\TT_{\YY})\]
where $\pi,q$ are the projection maps. 
The second component of $\ev_*$ induces an isomorphism
\[(\ev_*)_2 : \TT_{\UMap(X,\YY)} \cong \pi_*\ev^*(\TT_\YY).\]
in $\sfD({\UMap(X,\YY)})$ by \cite[Prop.~5.1.10]{HLP}.

We note that the first component of $\ev_*$ gives us 
\[R\Gamma(X,T_X) \otimes \O_{{\UMap(X,\YY)}} \cong \pi_*q^*T_X \xrightarrow{(\ev_*)_1} \pi_*\ev^*(\TT_\YY) \xleftarrow[\cong]{(\ev_*)_2} \TT_{{\UMap(X,\YY)}},\]
and thus we obtain a canonical map
\begin{equation}\label{Eq:xihat}
\widehat{-} : H^i(X,T_X) \to \HH^i({\UMap(X,\YY)},\TT_{\UMap(X,\YY)})	
\end{equation}
between shifted vector fields.
\end{remark}

We now define the {\em hat product} for a derived mapping stack.

\begin{definition}\label{Def:HatProduct}
Let $X$ be a smooth projective variety of dimension $n$. 
Let $\YY$ be a derived Artin stack.
Assume that $\UMap(X,\YY)$ is representable by a derived Artin stack.
We define the {\em hat product}
\[\widehat{-\cdot-} : \DR(X) \otimes \DR(\YY) \to \DR(\UMap(X,\YY))((-n))\]
as the composition
\[\DR(X)\otimes \DR(\YY) \xrightarrow{q^*(-) \cup \ev^*(-)}\DR(X\times\UMap(X,\YY)) \xrightarrow{\int_\pi}\DR(\UMap(X,\YY))((-n))\]
in $\epsilon\textrm{-}\dg^\gr$, where $\ev$ is the evaluation map and $\pi,q$ are the projection maps.
Consequently, we also have a homotopy-commutative square
\[\xymatrix{
\cA^{p,\cl}(X;i) \times \cA^{q,\cl}(\YY;j) \ar[r]^-{\widehat{-\cdot-}} \ar[d] & \cA^{p+q-n,\cl}(\UMap(X,\YY);i+j-n) \ar[d]\\
\cA^p(X;i)\times \cA^q(\YY,j) \ar[r]^-{\widehat{-\cdot-}} & \cA^{p+q-n}(\UMap(X,\YY);i+j-n).
}\]
\end{definition}

We can describe the shifted symplectic forms of \cite{PTVV} on the derived moduli spaces of perfect complexes in terms of the hat product.

\begin{remark}
Let $X$ be a Calabi-Yau $n$-fold with a fixed Calabi-Yau $n$-form $\omega :\O_X \cong \Omega^n_X$.

Let $R\cPerf$ be the derived moduli stack of perfect complexes (of fixed tor-amplitude $[a,b]$).
Then the cotangent complex can be computed as
\[\LL_{R\cPerf} \cong \II\dual \otimes \II[-1]\]
for the universal complex $\II$ on $R\cPerf$.
Moreover, the $2$-shifted non-degenerate $2$-form on $R\cPerf$ given by the duality 
$(\LL_{R\cPerf}[1])\dual \cong \LL_{R\cPerf}[1]$
can be enhanced to a $2$-shifted symplectic $2$-form
\[\Theta \in \cA^{2,\cl}(R\cPerf;2)\]
using the negative cyclic homology valued second Chern character of $\II$ \cite{TV15}.

Let $R\cPerf(X):=\UMap(X,R\cPerf)$ be the derived moduli stack of perfect complexes on $X$. 
Then
\[\theta:=\widehat{\omega\cdot\Theta} =\int_\pi q^*\omega \cup \ev^*\Theta \in \cA^{2,\cl}(R\cPerf(X),2-n)\]
is the $(2-n)$-shifted symplectic form in \cite{PTVV}.
\end{remark}

We now show that the cosections associated to the semi-regularity maps can be enhanced to $(-1)$-shifted closed $1$-forms.

\begin{proposition}\label{Prop:SRclosed}
Let $X$ be a Calabi-Yau $n$-fold with a fixed Calabi-Yau $n$-form $\omega \in H^0(X,\Omega^n_X)$.
For each $(n-3)$-shifted vector field $\xi \in H^{n-3}(X,T_X)$, we can form a $(-1)$-shifted closed $1$-form
\[\alpha_{\xi}\in \cA^{1,\cl}(R\cPerf(X);-1)\]
such that the induced cosection on the obstruction theory $(R\hom_\pi(\II,\II)[1])\dual \to \LL_{\cPerf(X)}$ of the classical truncation $\cPerf(X):=R\cPerf(X)_\cl$ is equal to the composition
\begin{multline*}
R\hom_\pi(\II,\II)[2] \xrightarrow{\At(\II)}R\hom_\pi(\II,\II\otimes \LL_{X\times\cPerf(X)})[3] \\\xrightarrow{} R\hom_\pi(\II,\II\otimes \Omega^1_X)[3]\xrightarrow{\tr} R\Gamma(X,\Omega^1_X)\otimes\O_{\cPerf(X)}[3]\\
\xrightarrow{} H^3(X,\Omega^1_X)\otimes \O_{\cPerf(X)} \xrightarrow{-\int_X(-)\cup\iota_{\xi}\omega}\O_{\cPerf(X)}
\end{multline*}
where $\II$ is the universal complex.
\end{proposition}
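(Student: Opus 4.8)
The plan is to use the machinery built up in the preceding subsection, namely the generalized integration map (Theorem~\ref{Prop:Integration}), the hat product (Definition~\ref{Def:HatProduct}), and the negative-cyclic Chern character description of the $2$-shifted symplectic form $\Theta$ on $R\cPerf$. The key observation is that the second Chern character of the universal complex $\II$ on $R\cPerf$ lifts to a negative cyclic class, hence underlies a $2$-shifted \emph{closed} $1$-form on $R\cPerf$ obtained by differentiating it. More precisely, $\ch_2^{\HN}(\II) \in \pi_0\Map_{\dg^\gr}(\C((-2)),\NC(R\cPerf)[-2]) \cong \pi_0 \cA^{2,\cl}(R\cPerf;2) \oplus \cdots$; but in fact one wants the \emph{Atiyah-class} description: the map $\At(\II)^2/2 : \O_{R\cPerf} \to \wedge^2\LL_{R\cPerf}[2]$ is a $2$-shifted $2$-form on $R\cPerf$ (the closed refinement is exactly $\Theta$ after accounting for the trace; but here we want something one-shifted differently). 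The right gadget is the $1$-form obtained from $\ch_1$ relative to $X$: on $R\cPerf(X) = \UMap(X,R\cPerf)$ one has the evaluation map $\ev$ and the relative Atiyah class, and the composite $\ev^*\big(\mathrm{tr}(\At(\II_{R\cPerf}))\big)$ lives in $\cA^1(X\times R\cPerf(X);1) = \cA^1$ of the universal complex restricted along $\ev$. Contracting the $X$-direction against $\iota_\xi\omega \in H^{n-1}(X,\Omega^n_X)$ lowers degree appropriately.

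Concretely, first I would recall that $\mathrm{ch}_1(\II) := \mathrm{tr}(\At(\II))$ on $R\cPerf$ refines to a negative cyclic class, i.e.\ to an element $\widetilde{\mathrm{ch}}_1 \in \cA^{1,\cl}(R\cPerf;1)$ whose underlying $1$-form is $\mathrm{tr}(\At(\II)) : \O_{R\cPerf}\to \LL_{R\cPerf}[1]$; this is a special case of the Chern-character/HKR compatibility already invoked in the proof of Proposition~\ref{lem:chishorizontal} (Goodwillie's lift of the Dennis trace). Pulling this back via $\ev : X \times R\cPerf(X) \to R\cPerf$ gives a closed $1$-form $\ev^*\widetilde{\mathrm{ch}}_1 \in \cA^{1,\cl}(X\times R\cPerf(X);1)$. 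Next I would package $\iota_\xi\omega$: since $\omega \in H^0(X,\Omega^n_X)$ and $\xi\in H^{n-3}(X,T_X)$, the contraction $\iota_\xi\omega \in H^{n-3}(X,\Omega^{n-1}_X)$ and hence defines a closed form $\iota_\xi\omega \in \cA^{n-1,\cl}(X;n-3)$ (via $H^{n-3}(X,\Omega^{n-1}_X) \hookrightarrow \HH^{2n-4}(X,\Omega^{\geq n-1}_X)$, using that $\Omega^{\geq n-1}_X = [\Omega^{n-1}_X \to \Omega^n_X]$ is formal in the relevant range). Then define
\[
\alpha_\xi := \widehat{\,(\iota_\xi\omega)\cdot \widetilde{\mathrm{ch}}_1\,} \;=\; \int_\pi\, q^*(\iota_\xi\omega)\,\cup\, \ev^*\widetilde{\mathrm{ch}}_1 \;\in\; \cA^{1+(n-1)-n,\,\cl}(R\cPerf(X);\,1+(n-3)-n) \;=\; \cA^{1,\cl}(R\cPerf(X);-1),
\]
using the hat product of Definition~\ref{Def:HatProduct} with $p=n-1$, $q=1$, $i=n-3$, $j=1$. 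This is manifestly a $(-1)$-shifted \emph{closed} $1$-form because the hat product sends pairs of closed forms to closed forms.

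After that, the remaining task is to identify the underlying cosection, i.e.\ to compute $(\alpha_\xi)_0|_{\cPerf(X)}$ and match it with the stated composite involving the semi-regularity map. Here I would use Remark~\ref{Rem:Integrationwithoutepsilon} (the non-closed description of $\int_\pi$ as adjunction against $\pi^!$) together with the formula $(\ev_*)_2 : \TT_{R\cPerf(X)} \cong \pi_*\ev^*\TT_{R\cPerf} \cong \pi_*R\hom(\II_X,\II_X)[1]$ from Remark~\ref{Rem:DerivedMappingspace}, where $\II_X := \ev^*\II$ is the universal complex on $X\times\cPerf(X)$. Dualizing and unwinding, $(\alpha_\xi)_0|_{\cPerf(X)}$ as a map $\big(R\hom_\pi(\II_X,\II_X)[1]\big)^\vee \to \O_{\cPerf(X)}$, i.e.\ a map $R\hom_\pi(\II_X,\II_X)_0[2]\to\O$ after splitting off the trace part (the trace-zero reduction is harmless since the contraction with $\mathrm{ch}_1$ of a trivial-determinant summand vanishes), is precisely: apply the \emph{relative} Atiyah class $\At_{X\times\cPerf(X)/\cPerf(X)}(\II_X) : \II_X \to \II_X\otimes\Omega^1_X[1]$, take trace to land in $R\Gamma(X,\Omega^1_X)\otimes\O[3]$, project to $H^3(X,\Omega^1_X)\otimes\O$, and pair against $\iota_\xi\omega$ via $-\int_X(-)\cup\iota_\xi\omega$; the sign $-$ and the degree shift come from the K\"unneth reordering $q^*(\iota_\xi\omega)\cup\ev^*(\widetilde{\mathrm{ch}}_1)$ versus the order used to define $\widehat{-\cdot-}$ and from the standard sign in $\iota_{\xi_2}\iota_{\xi_1} = (-1)^{(1+i)(1+j)}\iota_{\xi_1}\iota_{\xi_2}$. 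This is exactly the stated composite, since $\At(\II) = \At_{X\times\cPerf(X)/\cPerf(X)}(\II_X) + (\text{pullback part})$ and the pullback part dies under $\mathrm{tr}$ composed with projection to $H^3(X,\Omega^1_X)$ (it lands in the wrong K\"unneth summand). The main obstacle I anticipate is bookkeeping: keeping the weight/degree shifts, the K\"unneth decompositions, and the closedness refinements consistent across $\DR$, $\NC$, $\CC$ and the two adjunction descriptions of $\int_\pi$, and in particular carefully checking that the closed refinement $\widetilde{\mathrm{ch}}_1$ of $\mathrm{tr}(\At(\II))$ is the \emph{correct} one (coming from $\ch_1^{\HN}$ via Goodwillie/HKR) so that its image under $\ev^*$ and then $\int_\pi$ genuinely produces the semi-regularity cosection rather than some variant differing by a coboundary or a scalar. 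Everything else is a formal consequence of the functoriality of Atiyah classes and of the hat product.
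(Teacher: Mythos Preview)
Your approach has the right skeleton (hat product of a closed form on $X$ with a closed form on $R\cPerf$, then identify the underlying cosection via Atiyah classes), but the choice of input on $R\cPerf$ is wrong, and this shows up as a degree mismatch.

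You take $\widetilde{\ch}_1 \in \cA^{1,\cl}(R\cPerf;1)$ and pair it against $\iota_\xi\omega \in \cA^{n-1,\cl}(X;n-3)$. But the hat product then lands in
\[
\cA^{(n-1)+1-n,\cl}\bigl(R\cPerf(X);\,(n-3)+1-n\bigr)=\cA^{0,\cl}(R\cPerf(X);-2),
\]
i.e.\ a $(-2)$-shifted closed \emph{function}, not a $(-1)$-shifted closed $1$-form. (Your displayed arithmetic ``$1+(n-1)-n$'' and ``$1+(n-3)-n$'' actually equals $0$ and $-2$.) A $(-2)$-shifted function has no induced cosection $\TT[-1]\to\O$; your final ``dualizing and unwinding'' paragraph cannot get off the ground, because there is no map from $R\hom_\pi(\II,\II)[2]$ in sight --- you only have a section of $\O[-2]$. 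Conceptually, $\tr(\At(\II))$ does not see the obstruction direction $e\in\Ext^2$ at all: the semi-regularity map is $e\mapsto\tr(\At(\II)\circ e)$, which is bilinear in $\At$ and $e$, so you need a $2$-form on $R\cPerf$, not a $1$-form.

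The paper's proof uses instead the $2$-shifted symplectic form $\Theta\in\cA^{2,\cl}(R\cPerf;2)$ (whose underlying $2$-form is the trace pairing, built from $\ch_2\sim\tfrac12\tr(\At^2)$). The hat product $\alpha_\xi:=\widehat{[\iota_\xi\omega]\cdot\Theta}$ then lands correctly in $\cA^{1,\cl}(R\cPerf(X);-1)$. The identification of the underlying cosection is done via the contraction formula of Lemma~\ref{Lem:Hatproduct},
\[
(\alpha_\xi)_0=\widehat{(\iota_\xi\omega)\cdot\Theta_0}=-\iota_{\widehat{\xi}}\bigl(\widehat{\omega\cdot\Theta_0}\bigr)=-\iota_{\widehat{\xi}}(\theta_0),
\]
so that $(\alpha_\xi)_0$ is the contraction of the $(2-n)$-shifted symplectic form $\theta_0$ on $R\cPerf(X)$ by the vector field $\widehat{\xi}$; then $\ev_*=\At(\II)$ (Remark~\ref{Rem:DerivedMappingspace}) yields exactly the stated composite. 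Your K\"unneth/Atiyah-class bookkeeping at the end is essentially the content of this lemma once the correct input $\Theta$ is used.
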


\begin{proof}
Note that we have
\[H^{2n-4}(X,\Omega^{\geq n-1}_X) = \bigoplus_{p\geq n-1}H^{2n-4-p}(X,\Omega^p_X) \subseteq H^{2n-4}_{DR}(X)\]
by the Hodge decomposition.
Hence $\iota_{\xi}\omega \in H^{n-3}(X,\Omega^{n-1}_X)$ induces an element
\[[\iota_\xi\omega]:=(\iota_\xi\omega,0,\ldots,0) \in H^{2n-4}(X,\Omega^{\geq n-1}_X).\]
We define the $(-1)$-shifted closed $1$-form as the following hat product in Definition \ref{Def:HatProduct}
\[\alpha_\xi := \widehat{[\iota_\xi\omega]\cdot \Theta} \in \cA^{1,\cl}(R\cPerf(X;-1)).\]

It remains to compute the induced cosection of $\alpha_\xi$ in the classical truncation. By Lemma \ref{Lem:Hatproduct} below, the underlying $(-1)$-shifted $1$-form can be written as
\[(\alpha_\xi)_0= \widehat{(\iota_\xi\omega)\cdot \Theta_0} =\iota_{\widehat{\xi}}(\widehat{\omega\cdot\Theta_0})=\iota_{\widehat{\xi}}(\theta_0) \in \cA^1(R\cPerf(X);-1)\]
where $\widehat{\xi}$ is defined by \eqref{Eq:xihat} in Remark \ref{Rem:DerivedMappingspace}.
Here $(-)_0$ denotes the underlying form after forgetting the closing structure as in Remark \ref{Rem:closed/exactforms}.
By \cite[Rem.~A.1]{STV}, we have
\[\ev_* =\At(\II) : \TT_{X\times R\cPerf(X)} \to R\hom(\II,\II)[1]\]
for the evaluation map $\ev: X\times R\cPerf(X) \to R\cPerf$.
Since $\widehat{\xi}$ is defined by the first composition of $\ev_*$, we have the desired formula by taking dual and classical truncation.
\end{proof}

We need the following technical lemma to complete the proof of Proposition \ref{Prop:SRclosed}.
This formula is inspired by \cite[Lem.~8]{Viz}.

\begin{lemma}\label{Lem:Hatproduct}
Let $X$ be a smooth projective variety of dimension $n$. 
Let $\YY$ be a homotopically finitely presented derived Artin stack. 
Assume that $\UMap(X,\YY)$ is a derived Artin stack.
Then we have
\[\iota_{\widehat{\xi}}(\widehat{\alpha\cdot\beta}) \cong -\widehat{(\iota_{\xi}\alpha)\cdot\beta} \in \cA^{p+q-n-1}(\UMap(X,\YY);i+j-n+k)\]
for $\alpha \in \cA^{p}(X;i)$, $\beta \in \cA^q(\YY;j)$, $\xi \in H^k(X,T_X)$, where $\widehat{\xi}$ is defined by \eqref{Eq:xihat} in Remark \ref{Rem:DerivedMappingspace}.
\end{lemma}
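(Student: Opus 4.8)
\textbf{Proof proposal for Lemma \ref{Lem:Hatproduct}.}

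The plan is to unwind both sides using the definition of the hat product (Definition \ref{Def:HatProduct}) and the explicit description of the integration map without closing structure from Remark \ref{Rem:Integrationwithoutepsilon}, and then reduce the claim to a pointwise compatibility between contraction and the integration projection. Concretely, by Definition \ref{Def:HatProduct}, $\widehat{\alpha\cdot\beta} = \int_\pi q^*\alpha\cup\ev^*\beta$, so I first need to compute $\iota_{\widehat\xi}(\int_\pi q^*\alpha\cup\ev^*\beta)$. The key input is that contraction $\iota_{\widehat\xi}$ on $\UMap(X,\YY)$ is induced, via the isomorphism $(\ev_*)_2:\TT_{\UMap(X,\YY)}\cong\pi_*\ev^*\TT_\YY$ and the first component $(\ev_*)_1: \pi_*q^*T_X\to\pi_*\ev^*\TT_\YY$ of the evaluation differential (Remark \ref{Rem:DerivedMappingspace}), from contraction by $\xi\in H^k(X,T_X)$ on the product $X\times\UMap(X,\YY)$. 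Thus the first step is to establish a ``push-pull for contraction'' statement: for a vector field on $X$ pulled back along $q$, contraction commutes with $\int_\pi$ up to the expected Koszul sign, i.e.
\[
\iota_{\widehat\xi}\Big(\int_\pi \omega\Big) \;\cong\; -\int_\pi \iota_{q^*\xi}\,\omega
\]
for $\omega\in\cA^\bullet(X\times\UMap(X,\YY);\bullet)$, where the sign comes from moving $\iota_{q^*\xi}$ (a degree-shifting, odd operator) past the $n$-fold integration. I would prove this by working with the description of $\int_\pi$ from Remark \ref{Rem:Integrationwithoutepsilon} as the composite of the projection $\wedge^p\LL_{X\times\UMap(X,\YY)}\to\pi^*\wedge^{p-n}\LL_{\UMap(X,\YY)}\otimes q^*\Omega^n_X$ with the counit of the adjunction $\pi_*\dashv\pi^!\cong\pi^*\otimes q^*\Omega^n_X[n]$; contraction by $q^*\xi$ acts only on the $X$-factor $q^*\Omega^n_X$ via $\iota_\xi:\Omega^n_X\to\Omega^{n-1}_X\otimes T_X$ (wait: more precisely $\Omega^n_X\to\Omega^{n-1}_X$ after pairing with $\xi$, after incorporating the duality), and one checks that this is compatible with the projection and the adjunction counit. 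This is the step I expect to be the main obstacle, since it requires being careful with the identification $\pi^!\cong\pi^*\otimes q^*\Omega^n_X[n]$ and tracking all Koszul signs coming from the degrees of $\alpha$, $\beta$, $\xi$ and the shift $[n]$; the cited analogue \cite[Lem.~8]{Viz} should serve as a model.

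Granting that push-pull formula, the second step is to combine it with the fact that $\iota_{\widehat\xi}$ on $X\times\UMap(X,\YY)$, when applied to $q^*\alpha\cup\ev^*\beta$, only affects the $q^*\alpha$-factor. This is because $\widehat\xi$ corresponds under $(\ev_*)_1$ to a section of $\ev^*\TT_\YY$ supported on the ``$X$-direction'', and $\ev^*\beta$ is pulled back along $\ev$ while $q^*\alpha$ is pulled back along $q$; on the product the relevant contraction acts through the Leibniz rule and the term hitting $\ev^*\beta$ vanishes (this uses that the composite $X\times\UMap(X,\YY)\xrightarrow{\ev}\YY$ followed by the structure used to build $\widehat\xi$ is, so to speak, ``constant in the $\YY$-direction''; concretely, $\ev^*(\beta)$ lies in the image of $\ev^*$ and $\iota_{q^*\xi}\circ\ev^* = 0$ because $q^*\xi$ maps to zero in $\ev^*\TT_\YY$ — this is exactly the content of how $\widehat\xi$ is built as the \emph{first} component of $\ev_*$). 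Hence
\[
\iota_{\widehat\xi}\big(q^*\alpha\cup\ev^*\beta\big) \;\cong\; (\iota_{q^*\xi}q^*\alpha)\cup\ev^*\beta \;=\; q^*(\iota_\xi\alpha)\cup\ev^*\beta,
\]
with no sign since $\ev^*\beta$ is not moved past $\iota_{q^*\xi}$ in this arrangement (or the sign is absorbed into the ordering convention; I would fix the convention so the only surviving sign is the one from step one).

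Putting the two steps together:
\[
\iota_{\widehat\xi}(\widehat{\alpha\cdot\beta}) = \iota_{\widehat\xi}\Big(\int_\pi q^*\alpha\cup\ev^*\beta\Big)
= -\int_\pi \iota_{q^*\xi}\big(q^*\alpha\cup\ev^*\beta\big)
= -\int_\pi q^*(\iota_\xi\alpha)\cup\ev^*\beta
= -\widehat{(\iota_\xi\alpha)\cdot\beta},
\]
which is the desired identity, with the degree bookkeeping $\cA^{p+q-n-1}(\UMap(X,\YY);i+j-n+k)$ following automatically from the degree of $\iota_\xi\alpha\in\cA^{p-1}(X;i+k)$ together with the degree shift $-n$ built into the hat product. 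Finally I would remark that the same computation runs verbatim on the level of (non-closed) forms since all constructions used — $q^*$, $\ev^*$, $\cup$, $\int_\pi$, $\iota_{\widehat\xi}$ — are defined before imposing closing structure, which is consistent with the fact that in Proposition \ref{Prop:SRclosed} only the underlying $1$-form $(\alpha_\xi)_0$ is what gets contracted.
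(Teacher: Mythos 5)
Your strategy matches the paper's in outline — unwind the hat product, apply a Leibniz rule on $X\times\UMap(X,\YY)$, then push through the integration — but the two intermediate identities you rely on are both incorrect, and they are incorrect for the same reason: you work throughout with the vector field $q^*\xi$ on $X\times\UMap(X,\YY)$, when the argument actually requires the combined field
\[
\txi := (q^*\xi,\,-\pi^*\widehat{\xi}) \in \HH^k\bigl(X\times\UMap(X,\YY),\, q^*T_X \oplus \pi^*\TT_{\UMap(X,\YY)}\bigr).
\]
The defining property of $\widehat{\xi}$ — that $(\ev_*)_2(\pi^*\widehat{\xi}) = (\ev_*)_1(q^*\xi)$ — is precisely the statement that $\ev_*(\txi)=0$; it is $\txi$, not $q^*\xi$, that dies under $\ev_*$.

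Concretely, both of your steps fail as stated. In step 2 you assert $\iota_{q^*\xi}\circ\ev^*=0$ ``because $q^*\xi$ maps to zero in $\ev^*\TT_\YY$,'' but $\ev_*(q^*\xi)=(\ev_*)_1(q^*\xi)$ is generically nonzero — indeed it is exactly the class used to construct $\widehat{\xi}$. What vanishes is $\iota_{\txi}\circ\ev^*$, and the correct Leibniz computation is $\iota_{\txi}(q^*\alpha\cup\ev^*\beta)=(\iota_{q_*\txi}q^*\alpha)\cup\ev^*\beta \pm q^*\alpha\cup(\iota_{\ev_*\txi}\ev^*\beta)=q^*(\iota_\xi\alpha)\cup\ev^*\beta$, where the second term drops out because $\ev_*\txi=0$. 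In step 1 your push-pull identity $\iota_{\widehat{\xi}}(\int_\pi\omega)\cong-\int_\pi\iota_{q^*\xi}\omega$ has a right-hand side that vanishes identically: the map $\int_\pi$ of Remark \ref{Rem:Integrationwithoutepsilon} first projects onto the $q^*(\Omega^n_X)$-component, and $\iota_{q^*\xi}$ lowers the $q^*\Omega^\bullet_X$-degree, so the $\Omega^n_X$-component of $\iota_{q^*\xi}\omega$ would have to come from a $\Omega^{n+1}_X$-component of $\omega$, which does not exist. The correct push-pull is $\int_\pi\iota_{\txi}\omega=-\iota_{\widehat{\xi}}\int_\pi\omega$: the $q^*\xi$-summand of $\txi$ contributes nothing to $\int_\pi$ by exactly the degree argument above, while the $-\pi^*\widehat{\xi}$-summand commutes with $\pi_*$ and gives $-\iota_{\widehat{\xi}}\int_\pi$. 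So the sign in the statement is not a ``Koszul sign from moving $\iota$ past the integration'' but is baked into the definition of $\txi$. Introducing $\txi$ and verifying $\ev_*\txi=0$ is the missing idea; once you have it, both of your steps become correct with $\iota_{q^*\xi}$ replaced by $\iota_{\txi}$.
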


\begin{proof}
For simplicity of notation,
let $\MM:=\UMap(X,\YY)$ and
\[\txi:=(q^*\xi,-\pi^*\hxi) \in \HH^k(X\times\MM,\TT_{X\times\MM}) =H^k(X\times\MM,q^*\TT_X)\oplus \HH^k(X\times\MM,\pi^*\TT_\MM).\]
By the definition of $\hxi$ in \eqref{Eq:xihat}, we have
\[\ev_*(\txi) = 0 \in \HH^k(X\times \MM,\ev^*(\TT_\YY)).\]

Firstly, we have
\begin{align*}
\iota_{\txi}(q^*(\alpha)\cup\ev^*(\beta)) 
&= (\iota_{q_*\txi}q^*(\alpha)) \cup \ev^*(\beta) \pm q^*(\alpha)\cup (\iota_{\ev_*\txi}\ev^*(\beta))	\\
&=q^*(\iota_{\xi}(\alpha))\cup \ev^*(\beta)
\end{align*}
in $\HH^{i+j+k}(X\times\MM,\wedge^{p+q-1}\LL_{X\times\MM})$.

Secondly, we have
\[\int_\pi\iota_{\txi}(q^*(\alpha)\cup\ev^*(\beta)) = - \iota_{\hxi}\int_\pi(q^*(\alpha)\cup\ev^*(\beta))\]
in $\HH^{i+j+k-n}(\MM, \wedge^{p+q-n-1}\LL_\MM)$ (see Remark \ref{Rem:Integrationwithoutepsilon}).

Combining these two formulas, we have
\[\widehat{(\iota_{\xi}\alpha)\cdot\beta}  = \int_\pi\iota_{\txi}(q^*(\alpha)\cup\ev^*(\beta))  = -\iota_{\hxi}\int_\pi(q^*(\alpha)\cup\ev^*(\beta)) =  -\iota_{\widehat{\xi}}(\widehat{\alpha\cdot\beta}),\]
which completes the proof.
\end{proof}

Now we have the desired reduced obstruction theory as follows:

\begin{corollary}\label{Cor:ReducedOTviaDAG}
Let $X$ be a Calabi-Yau $4$-fold, $v \in H^*(X,\Q)$, and $q \in \{-1,0,1\}$. 
Let $\curP := \PTqvX$ and let $\phi:\EE\to\trunc\LL_{\curP}$ be the standard symmetric obstruction theory.
Choose a non-degenerate subspace $V \subseteq H^1(X,T_X)$ with respect to the symmetric bilinear form $\sfB_{\gamma}$ for $\gamma:=v_2 \in H^2(X,\Omega^2_X)$.
Then we have a reduced obstruction theory $\phi^{\red}$ that fits into the commutative diagram
\[\xymatrix{
V\otimes\O_{\curP}[1] \ar[r]^-{\SR_V\dual[1]} & \EE \ar[r] \ar[d]_{\phi} & \EE_V^\red \ar@{.>}[ld]^-{\phi^{\red}} \\
& \trunc \LL_{\curP} .&
}\]
Moreover, this reduced obstruction theory $\phi^\red$ comes from a canonical $(-2)$-shifted symplectic derived enhancement.
\end{corollary}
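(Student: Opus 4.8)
The strategy is to assemble the reduced derived enhancement as a pullback, in the derived category of stacks, along a $(-1)$-shifted exact $1$-form built from the semi-regularity $(-1)$-shifted closed $1$-forms of Proposition \ref{Prop:SRclosed}, and then apply the derived reduction of Theorem \ref{Thm:DerivedCL}(3) together with the derived Poincar\'e lemma (Proposition \ref{Prop:DerivedPoincare}). First I would recall that $\curP := \PTqvX$ sits inside $\cPerf(X,v)^{\spl}_{\O_X}$ by Theorem \ref{Thm:PairtoPerf}, hence inherits the $(-2)$-shifted symplectic derived enhancement $R\curP \subseteq R\cPerf(X,v)^{\spl}_{\O_X}$ with symplectic form $\theta = \widehat{\omega\cdot\Theta}$. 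Fix a basis $\xi_1,\dots,\xi_{\rho}$ of a chosen subspace that maps isomorphically onto $V$ under $H^1(X,T_X) \supseteq V$ (here I use $n=4$, so $n-3 = 1$ and the shifted vector fields $\xi_i \in H^1(X,T_X)$ are exactly what Proposition \ref{Prop:SRclosed} needs); this gives $(-1)$-shifted \emph{closed} $1$-forms $\alpha_{\xi_i} \in \cA^{1,\cl}(R\cPerf(X);-1)$, restricting to $(-1)$-shifted closed $1$-forms on $R\curP$.

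Next, package the $\alpha_{\xi_i}$ into a single map $\alpha_V : \O_{R\curP} \to \LL_{R\curP}[-1]\otimes V^\vee$ of closed $1$-forms valued in $V^\vee$, whose underlying $(-1)$-shifted $1$-form, after passing to the classical truncation and dualizing, is precisely the cosection $\SR_V$ of Definition \ref{Def:EEredV} (this identification is what Proposition \ref{Prop:SRclosed} provides for each $\xi_i$, and the labeling $\xi_i \leftrightarrow$ dual basis of $V^\vee$ makes the packaging functorial). By the derived Poincar\'e lemma (Proposition \ref{Prop:DerivedPoincare}), since the classical truncation $\curP$ is a quasi-projective scheme with affine diagonal, the canonical map $\cA^{0}(R\curP;-1) \to \cA^{1,\cl}(R\curP;-1)$ is an equivalence, so each $\alpha_{\xi_i}$ lifts to a $(-1)$-shifted \emph{exact} $1$-form, i.e.\ an element of $\cA^0(R\curP;-1) = \Map_{\dSt}(R\curP, \bbA^1_{R\curP}[-1])$. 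Collecting these, I obtain a map $\ttalpha_V : R\curP \to \bbA^1_{R\curP}[-1]\otimes V^\vee$; form the homotopy pullback
\[
\begin{tikzcd}
(R\curP)^\red \ar[r] \ar[d] & R\curP \ar[d,"0"] \\
R\curP \ar[r,"\ttalpha_V"] & \bbA^1_{R\curP}[-1]\otimes V^\vee.
\end{tikzcd}
\]
By Theorem \ref{Thm:DerivedCL}(3) this $(R\curP)^\red$ has classical truncation canonically $\curP$, and by Remark \ref{Rem:RelationbetweenReductions}(2) the induced obstruction theory on $\curP$ is the cone of $\SR_V^\vee[1] : V\otimes\O_{\curP}[1] \to \EE$, namely $\EE^\red_V$, with the required factorization $\phi^\red$; uniqueness of $\phi^\red$ follows as in Proposition \ref{prop:redSOT} (the lower distinguished triangle splits). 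Then I would pull back $\theta$ along $(R\curP)^\red \hookrightarrow R\curP$: because $\LL_{(R\curP)^\red/R\curP} = \O[2]\otimes V^\vee$, the pullback $(\theta')$ is still closed and its underlying $2$-form is non-degenerate (one checks on the classical truncation that the self-duality of $\EE$ restricts to self-duality of $\EE^\red_V \oplus V[1]$ compatibly with the splitting, so killing the trivial summand preserves non-degeneracy), giving a $(-2)$-shifted symplectic structure on $(R\curP)^\red$ inducing $\phi^\red$. Finally, identify $(R\curP)^\red$ with $R\curP^{(q)}_v(\cX_V/\cA_V)$ from subsection \ref{ss:VFC.ATF}: by Proposition \ref{prop:P(X)=P(X/A)} the latter has classical truncation $\curP$ and induced obstruction theory $\phi^\red_V$ (Proposition \ref{prop:redSOT}), so both derived enhancements are minimal with the same classical truncation and the same induced obstruction theory, hence equivalent; this also verifies the isotropic condition for $\phi^\red$ promised in Remark \ref{Rem2} and Remark \ref{Rem:ReducedDerived}.

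\textbf{Main obstacle.} The technical heart is establishing that the $\alpha_{\xi}$ of Proposition \ref{Prop:SRclosed} assemble \emph{coherently} into a $V^\vee$-valued closed $1$-form and that this assembly is compatible, after exact-lifting via Proposition \ref{Prop:DerivedPoincare}, with taking classical truncations — i.e.\ that the diagram relating $\ttalpha_V$, its underlying shifted $1$-form, and the cosection $\SR_V$ commutes in the appropriate homotopy-coherent sense. A second subtlety is the non-degeneracy of the pulled-back $(-2)$-shifted $2$-form: one must check that $\theta$ restricts, along the inclusion of the reduced derived scheme, to a form whose adjoint $\LL_{(R\curP)^\red}[1] \to \TT_{(R\curP)^\red}[1]$ is an equivalence — this uses that the symmetric decomposition $\EE \cong \EE^\red_V \oplus (V\otimes\O[1])$ of Definition \ref{Def:EEredV}(2) is orthogonal with respect to the symplectic pairing, which at the classical level is the content of Definition \ref{Def:EEredV}, but must be lifted to the derived level. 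I expect the hat-product formalism (Definition \ref{Def:HatProduct}, Lemma \ref{Lem:Hatproduct}) and the functoriality of the integration map (Theorem \ref{Prop:Integration}) to handle both points, but bookkeeping the graded mixed structures is where the real work lies.
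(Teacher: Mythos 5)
Your argument is essentially the paper's, with the same three key inputs (Proposition \ref{Prop:SRclosed} to exhibit the cosections as underlying forms of $(-1)$-shifted closed $1$-forms, Proposition \ref{Prop:DerivedPoincare} to promote closed to exact, and Theorem \ref{Thm:DerivedCL} to run the reduction). The only structural difference is one of ordering: the paper first establishes the factorization $\phi^\red$ directly from Theorem \ref{Thm:DerivedCL}(2), which needs only the \emph{closed} $1$-forms, and then separately produces the derived enhancement by lifting to exact forms and taking the derived zero locus; you go straight to the derived zero locus first and read off the obstruction theory afterwards via Remark \ref{Rem:RelationbetweenReductions}(2). Both routes are correct; the paper's ordering is slightly more modular in that it isolates exactly what each of Theorem \ref{Thm:DerivedCL}(2) and (3) buys you, whereas yours emphasizes that the whole corollary follows from the single derived-geometric construction. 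Your final identification of $(R\curP)^\red$ with $R\curP^{(q)}_v(\cX_V/\cA_V)$ is an extra observation the paper defers to Remark \ref{Rem:ReducedDerived} rather than including in the proof.

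On your stated obstacles: the coherence of packaging the $\alpha_{\xi_i}$ into a $V^\vee$-valued form is a non-issue — the paper simply works with the basis $\xi_1,\ldots,\xi_{\rho_\gamma}$ and takes the derived zero locus of the $\rho_\gamma$ individual $(-1)$-shifted functions, which is what your packaging unwinds to. The non-degeneracy of the restricted $(-2)$-shifted $2$-form, on the other hand, is a genuine point; it is not handled inside this corollary but relies on the computation $\SR_V^2 = \sfB_\gamma|_V \otimes 1$ from Proposition \ref{prop:cosection}(2), which gives the orthogonal splitting of Definition \ref{Def:EEredV}(2) at the level of symmetric complexes. The paper flags this explicitly in the remark immediately following the corollary — the derived-geometric argument here only replaces Kiem-Li's cone reduction lemma, not the cosection-square computation — so your instinct that this must come from outside the DAG formalism is exactly right.
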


\begin{proof}
Choose a basis $\xi_1,\cdots,\xi_{\rho_\gamma}\in V$.
For the first statement, by Theorem \ref{Thm:DerivedCL}(2), it suffices to show that the associated cosections $\SR(\xi_i): \EE\dual[1] \xrightarrow{\SR} H^1(X,T_X)\dual\otimes\O_{\curP} \xrightarrow{\xi} \O_{\curP}$ can be enhanced to $(-1)$-shifted closed $1$-forms on the standard derived enhancement $R\curP$ of $\curP$.
By Proposition \ref{Prop:SRclosed}, $\SR(\xi_i)$ are the underlying cosections of the $(-1)$-shifted closed $1$-forms $u^*(\alpha_{\xi_i})$, where $u:R\curP \to R\cPerf(X)$ is given by the universal complex on $X\times R\curP$. 

For the second statement, since the classical truncation $\curP$ is quasi-projective, by Proposition \ref{Prop:DerivedPoincare}, the closed $1$-forms $u^*(\alpha_{\xi_i})$ can be lifted to unique exact $1$-forms $\widetilde{u^*(\alpha_{\xi_i})} \in \cA^{1,\mathrm{ex}}(R\curP;-1) = \cA^{0}(R\curP;-1)$.
By taking the derived zero locus of these $(-1)$-shifted functions, we have a reduced derived scheme $R\curP^\red \subseteq R\curP$ which induces the reduced obstruction theory as above. Moreover, we can pullback the $(-2)$-shifted symplectic form $\theta \in \cA^{2,\cl}(R\curP;-2)$ via the inclusion map $R\curP^\red \hookrightarrow R\curP$, which is also non-degenerate.
\end{proof}

\begin{remark}\label{Rem:A16}
In the situation of Corollary \ref{Cor:ReducedOTviaDAG}, the existence of the reduced obstruction theory is equivalent to the scheme-theoretical cone reduction (see Remark \ref{Rem:RelationbetweenReductions}(1)).
Hence we have a commutative diagram
\begin{equation*}\label{Eq:A16}
\xymatrix{
&\fQ(\EE_V^\red) \ar@{^{(}->}[r] \ar@{^{(}->}[d] & \fC(\EE_V^\red) \ar@{^{(}->}[d]  \ar@/^1cm/[dd]^{\fq_{\EE^\red_V}}\\
\fC_{\curP} \ar@{.>}[ru]^{\exists} \ar@{^{(}->}[r] \ar[rru] & \fQ(\EE) \ar@{^{(}->}[r]\ar[d] & \fC(\EE) \ar[d]^{\fq_\EE} \\
& \curP \ar[r]^0 & \bbA^1_{\curP}.
}    
\end{equation*}
In particular, the existence of a dotted arrow implies the isotropic condition.

The main difference between Corollary \ref{Cor:ReducedOTviaDAG} and Proposition \ref{prop:redSOT} is that the reduced obstruction theory in Corollary \ref{Cor:ReducedOTviaDAG} is given as a {\em factorization}, whereas the one in Proposition \ref{prop:redSOT} is given as a {\em composition}.
However note that by \eqref{Eq:4.22.1}, the two actually coincide.
\end{remark}

\begin{remark}
The result in this section does not replace section \ref{sec:VFC}. Corollary \ref{Cor:ReducedOTviaDAG} only replaces Kiem-Li's cone reduction lemma in Lemma \ref{Lem:KLconereduction}. We still need the other arguments in section \ref{sec:VFC} to define an Oh-Thomas virtual cycle from Corollary \ref{Cor:ReducedOTviaDAG}. Indeed, we need to compute the square of $\SR$ to show that $\SR_V$ is a {\em non-degenerate} cosection (see Remark \ref{Rem:non-deg/isotropic.cosection}) and hence the induced symmetric form on $\EE^\red_V$ is non-degenerate. 
\end{remark}

\begin{remark}\label{Rem:BlochConj}
Corollary \ref{Cor:ReducedOTviaDAG} provides a simple alternative proof of the Bloch conjecture for surface classes on Calabi-Yau $4$-folds, which was shown in general by Pridham \cite{Pri} and Bandiera-Lepri-Manetti \cite{BLM} (and by Iacono-Manetti \cite{IM} when the normal bundle is extendable).

Let $X$ be a Calabi-Yau $4$-fold and $S\subseteq X$ be a local complete intersection surface.
Consider an extension $\cA \hookrightarrow \bcA$ of local Artinian $\C$-schemes such that $\m_{\cA}\cdot \J=0$ where $\J:=\I_{\cA/\bcA}$.
Let $\cS \subseteq X\times\cA$ be a $\cA$-flat extension of $S$.
Let
\[\sfob(\cS) \in H^1(X,N_{S/X}) \otimes \J \cong \Ext^2_X(\I_{S/X},\I_{S/X})_0 \otimes  \J\]
be the obstruction of deforming $\cS\subseteq X\times \cA$ to $X\times\bcA$ in \cite[Prop.~2.6]{Blo}.
By \cite{Blo}, the obstruction $\sfob(\cS)$ lies in the kernel of the semi-regularity map
\[\sr : H^1(X,N_{S/X}) \cong \Ext^2_X(\I_{S/X},\I_{S/X})_0 \to H^3(X,\Omega_X^1)\]
when the canonical map
\begin{equation}\label{Eq:A.19}
d_{DR} : \J \to \Omega^1_{\bcA}\otimes \O_{\cA}	
\end{equation}
is injective. The Bloch conjecture states that the same result holds without the assumption on the injectivity of \eqref{Eq:A.19}.
(See Proposition \ref{prop:4.6.1} for the comparison of the obstruction/semi-regularity maps for closed subschemes \cite{Blo} and ideal sheaves \cite{BF03}.)

Let $\curP:=\curP^{(-1)}_v(X)$ be the Hilbert scheme of subschemes of Chern character $v=\ch(\O_S)$.
Let $u : \cA \to \curP:=\curP^{(-1)}_v(X)$ be the canonical map given by $\cS$. By \cite[Thm.~4.5]{BF}, the obstruction $\sfob(\cS)$ is equivalent to the composition
\[u^*\EE \xrightarrow{u^*\phi} u^*(\trunc\LL_{\PTqvX}) \to \trunc\LL_{\cA} \to \trunc\LL_{\cA/\bcA} \cong \J[1]\]
where $\phi:\EE\to\trunc\LL_{\PTqvX}$ is the standard obstruction theory.
Instead of choosing a non-degenerate subspace $V$ in Corollary \ref{Cor:ReducedOTviaDAG}, we now consider all the cosections that come from $H^1(X,T_X)$.
By the arguments in Corollary \ref{Cor:ReducedOTviaDAG}, we can form a reduced obstruction theory $\phi'$ that fits into the diagram
\[\xymatrix{
H^1(X,T_X)\otimes\O_{\curP}[1] \ar[r]^-{\SR\dual[1]} & \EE \ar[r] \ar[d]_{\phi} & \EE' \ar@{.>}[ld]^-{\phi'} \\
& \trunc \LL_{\curP} .&
}\]
Hence the obstruction $\sfob(\cS)$ comes from the left term in the exact sequence,
\[\xymatrix@C-1.2pc{h^1(\EE'|_{[S]}\dual \otimes \J) \ar[r] & h^1(\EE|_{[S]}\dual \otimes \J) \cong \Ext^2_X(\I_{S/X},\I_{S/X})_0\otimes \J \ar[r]^-{\sr} & H^1(X,T_X)\dual \otimes \J,}\]
which implies $\sr(\sfob(\cS))=0$.
\end{remark}


\begin{remark}
The results in this section can also be applied to the reduced curve counting theory for Calabi-Yau $3$-folds with holomorphic $2$-forms in \cite{KT1}.
Indeed, Proposition \ref{Prop:SRclosed} implies that the cosections associated to the semi-regularity map and a $(2,0)$-form in \cite{KT1} can be enhanced to a $(-1)$-shifted closed $1$-form. This in particular provides reduced derived enhancements.

However, the reduced curve counting theory for $K3$ surfaces in \cite{MPT} does not require the result in this subsection \ref{ss:shifteclosed1forms} (but only the previous subsection \ref{ss:ThreeReductions}), and was already studied in \cite{STV}.
The main reason is that the cosections in \cite{MPT} do not involve the Atiyah classes and we can see directly that it comes from the de Rham differential of $(-1)$-shifted functions. 
\end{remark}


\section{Moduli stacks of semi-stable sheaves}\label{Appendix:Artinstacks}

In this section, we generalize the results in the previous sections to the moduli {\em stack} of  semi-stable sheaves.
In particular, we construct reduced Oh-Thomas virtual cycles on these moduli stacks and prove that they are deformation invariant along Hodge loci.


We will use the following notations/conventions in this section.
\begin{enumerate}	
\item We use the language of $\infty$-categories \cite{Lur1,Lur2} and derived algebraic geometry \cite{ToVe} as in Appendix \ref{Appendix:ReductionviaDAG}.
\begin{itemize}
\item An {\em algebraic stack} will always be a classical $1$-Artin stack. 
\end{itemize}
\item We use the following concepts developed by Aranha-Pstragowski \cite{AP}.
These are natural generalizations of analogous notions for DM stacks in \cite{BF,Man}.
\begin{itemize}
\item To any algebraic stack $\sX$ and a derived category object $\EE \in \sfD(\sX)$, 
we can associate a {\em (higher) abelian cone stack} \cite[Section 3]{AP}
\[\qquad\qquad\fC(\EE):=\Spec\Sym\EE[-1] :  T \in \mathsf{Sch} \mapsto \Map_{\sfD(T)}(\EE[-1]|_T,\O_T).\]
When $\trunc\EE \cong [D \to B\dual \to S\dual]$ for a coherent sheaf $D$ and vector bundles $B,S$, then we write $\fC(\EE):= \left[\sfrac{C(D)}{\left[\sfrac{B}{S}\right]}\right]$.
\item Let $\fC_{\sX/\sY} \subseteq \fC(\LL_{\sX/\sY})$ denote the {\em intrinsic normal cone} for a morphism of algebraic stacks $f:\sX \to \sY$, see \cite[Thm.~6.2, Thm.~6.3]{AP}.
\item An {\em obstruction theory}\footnote{In \cite[Def.~8.1]{AP}, the full cotangent complex $\LL_{\sX/\sY}$ is used in the definition of an obstruction theory, but here we use the truncated cotangent complex $\trunc\LL_{\sX/\sY}$. } is a morphism $\phi :\EE \to \trunc\LL_{\sX/\sY}$ in $\sfD(\sX)$ such that $h^0(\phi)$, $h^{1}(\phi)$ are bijective, and $h^{-1}(\phi)$ is surjective.
An obstruction theory $\phi :\EE \to \trunc\LL_{\sX/\sY}$ induces a closed embedding $\fC_{\sX/\sY}\hookrightarrow \fC(\EE)$ by \cite[Prop.~8.2]{AP}.
\end{itemize}
\item We consider the following generalizations of \cite{OT,Par}.
\begin{itemize}
\item We consider the {\em symmetric complexes} of tor-amplitude $[-3,1]$ as in \cite[Def.~C.1]{Par}. They are perfect complexes $\EE$ on algebraic stacks $\sX$ equipped with $(-2)$-shifted symmetric forms $\theta : \O_{\sX} \to \Sym^2(\EE[-1])$ such that $\iota(\theta):\EE\dual[2]\cong \EE$.

Note that there is a {\em canonical quadratic function} $\fq_\EE:\fC(\EE) \to \bbA^1_{\sX}$ associated to the symmetric form $\theta$.
We denote by $\fQ(\EE)\subseteq \fC(\EE)$ the zero locus of $\fq_\EE$.
\item An {\em orientation} of a symmetric complex $\EE$ of tor-amplitude $[-3,1]$ is an isomorphism of line bundles $o : \O_{\sX} \to \det(\EE)$ such that $\det(\iota(\theta))= (-1)^{\frac{r(r-1)}{2}}o\circ o\dual$ where $r=\rank(\EE)$.
\item A {\em symmetric obstruction theory} for a morphism $f:\sX\to\sY$ of algebraic stacks is a pair of a symmetric complex $\EE$ of tor-amplitude $[-3,1]$ on $\sX$ and an obstruction theory $\phi:\EE \to \trunc \LL_{\sX/\sY}$ for $f$.
\end{itemize}
\end{enumerate}

\subsection{Reduced virtual cycles for Artin stacks}

In this subsection, we extend Oh-Thomas virtual cycles \cite{OT} (more generally, square root virtual pullbacks \cite{Par}) and Kiem-Li's cone reduction lemma \cite{KL13} to Artin stacks (which are global quotients). 
Consequently, we have reduced Oh-Thomas virtual cycles for these Artin stacks.

We first generalize {\em Oh-Thomas virtual cycles} \cite{OT} (and {\em square root virtual pullbacks} \cite{Par}) to Artin stacks.
\begin{theorem}\label{Thm:OTforArtin}
Let $f : \sX \to \sY$ be a morphism of algebraic stacks.
Let $\phi : \EE \to \trunc \LL_{\sX/\sY}$ be a symmetric obstruction theory of tor-amplitude $[-3,1]$ such that
$(\fC_{\sX/\sY})_{\red} \subseteq \fQ(\EE)$ as substacks of $\fC(\EE)$. 
Let $o : \O_{\sX} \to \det(\EE)$ be an orientation.
Assume that $\sX$ is a global quotient stack, i.e., there exists a quasi-projective scheme $P$ with a linear action of a linear algebraic group $G$ such that $\sX \cong [P/G]$.
Then there is a canonical square root virtual pullback
\[\sqrt{f^!}: A_*(\sY) \to A_*(\sX).\]
Moreover, for any cartesian square
\[\xymatrix{
\sX' \ar@{^{(}->}[r] \ar[d]^{f'} & \sX\ar[d]^f\\
\sY' \ar@{^{(}->}[r]^i & \sY
}\]
where $i$ is a regular closed embedding, we have
\begin{equation}\label{Eq:DefInvArtin}
i^! \circ \sqrt{f^!} = \sqrt{(f')^!} \circ i^! : A_*(\sY) \to A_*(\sX').	
\end{equation}
\end{theorem}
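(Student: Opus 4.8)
The plan is to construct the square root virtual pullback $\sqrt{f^!}$ by reducing to the already-understood quasi-projective case via the global quotient presentation $\sX \cong [P/G]$, and then to establish the base-change identity \eqref{Eq:DefInvArtin} by the standard deformation-to-the-normal-cone argument, using that square root Euler classes are bivariant. First I would fix a $G$-equivariant presentation $\sX = [P/G]$ with $P$ quasi-projective. The key technical device is the Totaro--Edidin--Graham approximation: for each $n$, choose a $G$-representation $W_n$ with an open $U_n \subseteq W_n$ on which $G$ acts freely with $\mathrm{codim}(W_n \setminus U_n)$ large, so that $\sX_n := [(P \times U_n)/G]$ is a quasi-projective scheme (or at least a Deligne--Mumford stack, but in the global-quotient/linear setting one gets a scheme) approximating $\sX$ in the sense that $A_k(\sX) \cong A_{k + \dim W_n - \dim G}(\sX_n)$ for $k$ in any fixed range once $n \gg 0$ (this is Kresch's definition of $A_*$ for stacks, already invoked in the paper). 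Pulling back the symmetric obstruction theory $\phi$, the orientation $o$, and the isotropic intrinsic normal cone along the smooth map $\sX_n \to \sX$ gives the same structures on $\sX_n$ (smooth pullback preserves symmetric complexes, orientations, obstruction theories, and the isotropy of $\fC_{\sX/\sY}$ since $\fC$ is compatible with smooth base change by \cite{AP}). One then applies the construction of Theorem \ref{Thm:NRVFC} / \cite{Par} on the quasi-projective approximation $\sX_n$ relative to the corresponding approximation of $\sY$, yielding a square root virtual pullback $\sqrt{f_n^!}$. The main point is that these are compatible as $n$ varies — this follows because $\sqrt{f_n^!}$ is built from $r^*$ and the localized square root Euler class $\sqrt{e}$, both of which commute with the flat pullbacks $\sX_{n+1} \to \sX_n$ (up to the expected degree shift by $\dim W_{n+1} - \dim W_n$), and because the square root Euler class is a bivariant class by \cite[Lem.~4.4(3)]{KP20}. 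Hence the $\sqrt{f_n^!}$ glue to a well-defined operation $\sqrt{f^!}: A_*(\sY) \to A_*(\sX)$ independent of the choices of presentation and approximations; here one should also invoke the Kimura-type excision/localization sequence for Artin stacks (the paper cites \cite{BP}) exactly as in \cite[Appendix A]{Par} to handle the passage between approximations cleanly.

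For the base-change identity \eqref{Eq:DefInvArtin}, given the cartesian square with $i$ a regular closed embedding, I would pass to the approximations simultaneously: $\sX'_n = [(P' \times U_n)/G]$ sits inside $\sX_n$ via a regular closed embedding induced by $i$ (regularity of a closed embedding is preserved under flat base change), and the square of approximations is again cartesian. On each approximation level the desired identity $i^! \circ \sqrt{f_n^!} = \sqrt{(f'_n)^!} \circ i^!$ is precisely the compatibility of the refined Gysin pullback $i^!$ with the square root virtual pullback on quasi-projective schemes, which holds because $i^!$ commutes with flat pullback $r^*$ (standard intersection theory, \cite{Ful}) and commutes with the localized square root Euler class $\sqrt{e}$ as the latter is bivariant \cite[Lem.~4.4(3)]{KP20} — this is exactly the argument used at the end of the proof of Theorem \ref{Thm:DefInv} in the scheme case. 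One also uses Vistoli's rational equivalence \cite[Lem.~3.16]{Vist} (in the form valid for DM-type morphisms of Artin stacks, as remarked in the footnote in the proof of Theorem \ref{Thm:DefInv}) to identify $i^!$ applied to the intrinsic normal cone class $[\fC_{\sX/\sY}]$ with $[\fC_{\sX'/\sY'}]$. Taking the limit over $n$ and matching degree shifts gives \eqref{Eq:DefInvArtin}.

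The main obstacle I anticipate is not any single deep new idea but rather bookkeeping: one must verify that every ingredient in the Oh--Thomas construction (the symmetric resolution $\EE \cong [B \to E \to B\dual]$, the isotropic subcone $Q \subseteq E$, the factorization through the tautological section $\tau$, the localized class $\sqrt{e}(E|_Q,\tau)$) exists $G$-equivariantly on $P$ — i.e.\ can be chosen on the level of the quotient stack — and descends/lifts compatibly through the approximating schemes. The resolution property for global quotient stacks $[P/G]$ with $P$ quasi-projective and $G$ linear guarantees enough $G$-equivariant vector bundles, so a $G$-equivariant symmetric resolution exists; the special orthogonal structure on $E$ and the orientation-compatibility require the same care as in \cite[Prop.~4.1]{OT}, \cite[Prop.~1.3]{Par} but go through verbatim equivariantly. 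The remaining subtlety is the independence of $\sqrt{f^!}$ from the choice of presentation $[P/G]$: this is handled exactly as independence of the square root Gysin pullback from the choice of symmetric resolution in \cite[pp.~31--34]{OT}, combined with the standard fact that any two global quotient presentations of $\sX$ are dominated by a common one. Once these compatibilities are in place, both the construction and the base-change identity are formal consequences of the quasi-projective case proved earlier in the paper.
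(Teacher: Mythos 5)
Your proposal takes a genuinely different route from the paper's, and the difference hides two gaps that the paper's more intrinsic approach sidesteps.

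The paper does \emph{not} pass through the Totaro--Edidin--Graham approximation sequence $\sX_n \to \sX$. Instead it works directly on the stack $\sX$: Step 1 uses the resolution property of global quotient stacks (Thomason) to produce a symmetric resolution $\EE \cong [S \to B \to E \cong E^\vee \to B^\vee \to S^\vee]$ on $\sX$ itself; Step 2 builds the square root pullback on $\sX$ by (i) forming the isotropic cone $C(D) \hookrightarrow E$ over $\sX$, with its tautological section $\tau$, and invoking the localized Edidin--Graham class already extended to equivariant/Kresch Chow groups via \cite[Rem.~3.3]{KP20}, and (ii) choosing a global factorization $\sX \xrightarrow{g} \sZ \xrightarrow{h} \sY$ into a \emph{representable} morphism followed by a smooth morphism, so that Manolache's specialization map $\mathrm{sp}_{\sX/\sZ}$ is available; Steps 3--4 handle independence of the resolution and the base-change identity (via Vistoli, as you say). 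The approximation is thus absorbed into the definition of $A_*(\sX)$ and of the extended Edidin--Graham class, never unfolded.

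Your unfolding has two concrete problems. First, $\sY$ is an arbitrary algebraic stack, not assumed to be a global quotient, so there is no ``corresponding approximation'' $\sY_n$ to relativize against; the paper's factorization $\sX \to \sZ \to \sY$ and the specialization map for the representable $g$ are exactly what handles an arbitrary base, and your outline has no substitute. Second, pulling back the symmetric obstruction theory along the smooth map $\sX_n \to \sX$ does not give a symmetric obstruction theory for $\sX_n$ over anything: $\EE|_{\sX_n}$ maps to $\tau^{\geq -1}\bigl(\LL_{\sX/\sY}|_{\sX_n}\bigr)$, which differs from $\tau^{\geq -1}\LL_{\sX_n/\sY}$ by the summand $\Omega^1_{\sX_n/\sX}$, and inserting that summand destroys the self-duality $\EE \cong \EE^\vee[2]$. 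So you cannot literally ``apply the construction of Theorem \ref{Thm:NRVFC} on the quasi-projective approximation $\sX_n$.'' The operations (specialization, tautological section, localized Euler class) must be performed with the data living on $\sX$, using $A_*(\sX_n)$ only as a computational model of $A_*(\sX)$, which is precisely what the paper's packaging accomplishes. Your independence-of-presentation and base-change paragraphs are fine in spirit and match the paper's Steps 3--4.
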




\begin{remark}
In the situation of Theorem \ref{Thm:OTforArtin}, when $\sY=\Spec(\C)$,  we define the {\em Oh-Thomas} virtual cycle as
\[[\sX]^\vir:=\sqrt{f^!}[\sY] \in A_*(\sX).\]
Then the formula \eqref{Eq:DefInvArtin} gives us the deformation invariance.
\end{remark}

\begin{proof}[Proof of Theorem \ref{Thm:OTforArtin}]
We prove Theorem \ref{Thm:OTforArtin} in four steps.

\hfill

\noindent{\em Step 1.}
We first show that there exists a {\em symmetric resolution}
\begin{equation}\label{Eq:A.3}
\EE \cong [S \xrightarrow{e} B \xrightarrow{d} E \cong E\dual \xrightarrow{d\dual} B\dual \xrightarrow{e\dual} S\dual]	
\end{equation}
as in \cite[Prop.~4.1]{OT}.
More precisely, \eqref{Eq:A.3} is an equivalence in $\sfD(\sX)$ for some vector bundles $B$, $S$ and a special orthogonal bundle $(E,\fq_E)$ such that the symmetric form of $\EE$ is represented by 
\[\xymatrix{
\EE\dual[2] \ar[d]^{\iota(\theta)} \ar@{}[r] & S \ar[r]^{e} \ar@{=}[d] & B \ar[r]^{d}  \ar@{=}[d] & E \ar[r]^-{d\dual\circ\fq_E} \ar[d]^{\fq_E} & B\dual \ar[r]^{e\dual} \ar@{=}[d] & S\dual \ar@{=}[d] \\
\EE \ar@{}[r] & S \ar[r]^{e} & B \ar[r]^-{\fq_E\circ d} & E\dual \ar[r]^-{d\dual} & B\dual \ar[r]^{e\dual} & S\dual.
}\]
Since the global quotient stack $\sX$ has the resolution property \cite[Lem.~2.6]{Tho87}, we can find a resolution $A^{\bullet}:= [A^{-3}\xrightarrow{a^{-2}} A^{-2} \xrightarrow{a^{-1}} A^{-1} \xrightarrow{a^0} A^0 \xrightarrow{a^1} A^1] \cong \EE$ such that $\iota(\theta):\EE\dual[2] \to \EE$ is represented by a chain map $\theta^{\bullet} : (A^{\bullet})\dual[2] \to A^{\bullet}$ satisfying $(\theta^{\bullet})\dual[2]=\theta^{\bullet}.$ Let $S:=A^{-3}$ and
\[B:=\coker((A^1)\dual \xrightarrow{((a^1)\dual,\theta^{-3})} (A^0)\dual \oplus A^{-3}).\]
Then $B\dual=\ker(A^{0}\oplus (A^{-3})\dual \xrightarrow{(a^1,\theta^{1})} A^1)$ and we have a quasi-isomorphism
\[c^{\bullet} : B^{\bullet}:=[A^{-3}\xrightarrow{a^{-2}} A^{-2} \xrightarrow{a^{-1}} A^{-1} \xrightarrow{b^0} B\dual \xrightarrow{b^1} S\dual] \xrightarrow{\cong} A^{\bullet}\] 
where $b^0:=(a^0,0)$, $b^1:=\mathrm{pr}_2$, $c^{\leq-1}:=\id$, $c^0:=\mathrm{pr}_1$, and $c^1:=\theta^1$.
Also $\theta^{\bullet}$ factors through a self-dual map $\tau^{\bullet} : (B^{\bullet})\dual[2] \to B^{\bullet}$. Since $\tau^1$ is an isomorphism, we can apply \cite[Prop.~4.1]{OT} to the middle three terms of $B^{\bullet}$. Let 
\[E:=\coker(B \to (A^{-1})\dual\oplus A^{-2}) \cong \ker(A^{-1}\oplus (A^{-2})\dual \to B\dual)=E\dual,\]
then we have \eqref{Eq:A.3} as claimed.\footnote{We thank Arkadij Bojko for pointing out an omission in an earlier version of Step 1.}

\hfill

\noindent{\em Step 2.} We then construct $\sqrt{f^!}$ as follows:
consider the fibre diagram
\[\xymatrix{
C(D) \ar@{^{(}->}[r] \ar@{->>}[d] &  E \ar@{->>}[d] \\
\fC(\EE) \ar@{^{(}->}[r] & \left[\sfrac{E}{\left[\sfrac{B}{S}\right]}\right]
}\]   
where $D:=\coker(d:B \to E)$. The horizontal maps are closed embeddings and the vertical maps are $[\sfrac{B}{S}]$-torsors. Then we have
\[\fq_{\EE}|_{C(D)} = \fq_E|_{C(D)}\]
since the symmetric forms of $\EE$ and $E$ map to the same (degenerate) symmetric form on $[S\xrightarrow{e} B \xrightarrow{d} E \to 0 \to 0]$.
Let $\tau \in \Gamma(C(D),E|_{C(D)})$ be the tautological section.
Then we have a localized Edidin-Graham class 
\begin{equation}\label{Eq:A.1}
\sqrt{e}(E|_{C(D)},\tau) : A_*(C(D)) \to A_*(\sX)
\end{equation}
since $C(D)$ is a global quotient stack. 
See \cite[Rem.~3.3]{KP20} for the extension of the localized Edidin-Graham class \cite[Def.~3.2]{OT} to the equivariant Chow groups \cite{EG2,Tot2}. Here the orientation of $E$ is induced by the orientation of $\EE$ via the isomorphism $\det(E)\cong\det(\EE)$ induced by the symmetric resolution \eqref{Eq:A.3}.

Consider a global factorization
\begin{equation}\label{Eq:5}
\xymatrix{
& \sZ\ar[d]^{h}\\
\sX \ar[ru]^g \ar[r]_f & \sY
}\end{equation}
into a representable morphism $g$ and a smooth morphism $h$ for some algebraic stack $\sZ$. 
Such factorization exists since $\sX$ can be embedded into a smooth algebraic stack $\mathscr{W}$ by assumption and we may take $g:\sX \to \sZ:=\mathscr{W}\times\sY$. Let
\[\xymatrix{
\fC \ar[r]^s \ar[d]^r & C \ar@{^{(}->}[r]^j \ar[d] & C(D) \ar[d] \\
\fC_{\sX/\sZ} \ar[r] & \fC_{\sX/\sY} \ar@{^{(}->}[r] & \fC(\EE)
}\] 
be a fibre diagram of higher Artin stacks. Then we can form maps
\begin{equation}\label{Eq:A.2}
A_*(\sY) \xrightarrow{h^*} A_*(\sZ) \xrightarrow{\mathrm{sp}_{\sX/\sZ}} A_*(\fC_{\sX/\sZ}) \xrightarrow{r^*} A_*(\fC) \xrightarrow{(s^*)^{-1}} A_*(C) \xrightarrow{j_*} A_*(C(D))
\end{equation}
between the Chow groups of algebraic stacks(=$1$-Artin stacks).
Here the specialization map $\mathrm{sp}_{\sX/\sZ}$ is well-defined as in \cite{Man} since $g:\sX\to\sZ$ is representable, and the smooth pullbacks $r^*$, $s^*$ are isomorphisms by \cite[Prop.~4.3.2]{Kre} since $r$ is a $[B/S]$-torsor and $s$ is a $\mathrm{Tot}(\TT_{\sZ/\sY})$-torsor.
By composing the two maps \eqref{Eq:A.1} and \eqref{Eq:A.2}, we can define the square root virtual pullback
\begin{equation}\label{Eq:6}
\sqrt{f^!}: A_*(\sY) \to A_*(\sX)
\end{equation}

It is easy to show that $\sqrt{f^!}$ is independent of the choice of a global factorization \eqref{Eq:5}.
Indeed, we just have to show that \eqref{Eq:A.2} is independent of \eqref{Eq:5}, which can be seen by the arguments in \cite[Lem.~4.6]{KP19}.

\hfill

\noindent{\em Step 3.} Next we show that the square root virtual pullback $\sqrt{f^!}$ in \eqref{Eq:6} is independent of the choice of the symmetric resolution \eqref{Eq:A.3}, as in \cite[Sect.~4.2]{OT}.
This part is quite technical so we will only focus on the main difference with \cite{OT}.
By a deformation argument as in \cite{OT} and adding an acyclic symmetric complex of the form 
\[[T_1 \xrightarrow{\id} T_1 \to 0 \to T_1\dual \xrightarrow{\id}T_1\dual] \oplus [0 \to T_2 \xrightarrow{(1,0)} T_2\oplus T_2\dual \xrightarrow{(0,1)} T_2\dual \to 0],\]
it suffices to consider the following special case. 
Consider a commutative diagram 
\[\xymatrix{
0 \ar[r] & M \ar@{^{(}->}[r] \ar@{^{(}->}[d] & N \ar@{->>}[r] \ar@{^{(}->}[d] & K \ar[r] \ar@{^{(}->}[d] & 0 \\
 & S \ar[r]^e & B \ar[r]^d & E 
}\]
where the upper row is exact and the vertical arrows are injective. We will compare the two symmetric resolutions
\begin{align*}
\EE &\cong [S \xrightarrow{e} B \xrightarrow{d} E \xrightarrow{d\dual} B\dual \xrightarrow{e\dual} S\dual]\\
\EE &\cong [(S/M) \xrightarrow{e} (B/N) \xrightarrow{d} (K^{\perp}/K) \xrightarrow{d\dual} (B/N)\dual \xrightarrow{e\dual} (S/M)\dual].
\end{align*}
Consider the fibre diagram
\[\xymatrix{
C(D) \ar@{^{(}->}[r] \ar[d]^t & K^{\perp} \ar@{^{(}->}[r] \ar[d] & E \ar[dd]\\
C(G) \ar@{^{(}->}[r] \ar[d] & K^{\perp}/K \ar[d]  \\
\fC(\EE) \ar@{^{(}->}[r] & \left[\sfrac{(K^{\perp}/K)}{\left[\sfrac{(B/N)}{(S/M)}\right]}\right] \ar@{^{(}->}[r] & \left[\sfrac{E}{\left[\sfrac{B}{S}\right]}\right]
}\]
where $G:=\coker((B/N) \to (K^{\perp}/K))$. 
Let $\widetilde{\tau} \in \Gamma(C(G),K^{\perp}/K|_{C(G)})$ be the tautological section.
Then the formula
\[\sqrt{e}(E|_{C(D)},\tau) = \sqrt{e}(K^{\perp}/K|_{C(G)}, \widetilde{\tau}) \circ t^*\]
in \cite[Cor.~4.7]{KP20} proves the claim.

\hfill

\noindent{\em Step 4.}
Finally, we prove \eqref{Eq:DefInvArtin}. Consider the factorization $\sX'\to\sZ'\to\sY'$ induced by the base change of \eqref{Eq:5} along $i:\sY'\to\sY$. By Vistoli's rational equivalence \cite{Vist,Kre2} (cf. \cite[Prop.~2.5]{Qu}), we have a commutative square
\[\xymatrix{
A_*(\cZ) \ar[rr]^{\sp_{\sX/\sZ}} \ar[d]^{i^!} && A_*(\fC_{\sX/\sZ}) \ar[d]^{i^!} \\
A_*(\cZ') \ar[r]^{\sp_{\sX'/\sZ'}} & A_*(\fC_{\sX'/\sZ'}) \ar[r] & A_*(\fC_{\sX/\sZ}|_{\sX'}).\\
}\]
Since the localized Edidin-Graham class \eqref{Eq:A.1} commutes with the refined Gysin pullback $i^!$ by \cite[Lem.~4.4(3)]{KP20}, we obtain the desired formula \eqref{Eq:DefInvArtin}.
\end{proof}

Next, we generalize Kiem-Li's cone reduction lemma \cite{KL13} to Artin stacks.

\begin{theorem}\label{Thm:KLforArtin}
Let $\sX$ be an algebraic stack.
Let $\phi:\EE\to \trunc \LL_{\sX}$ be an obstruction theory. 
Let $\Sigma : \EE\dual[1] \to F$ be a map to a vector bundle $F$. 
Then
\[(\fC_{\sX})_{\red} \subseteq \fC(\EE_{\Sigma})\]
as substacks of $\fC(\EE)$ where $\EE_{\Sigma}:=\cone(\Sigma\dual[1] : F\dual[1] \to \EE)$.
\end{theorem}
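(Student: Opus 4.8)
The statement is the Artin-stack analogue of Lemma \ref{Lem:KLconereduction} (Kiem--Li's cone reduction), and the strategy is to reduce it to that lemma, or rather to its underlying local geometry, using the machinery of higher abelian cone stacks of Aranha--Pstragowski \cite{AP}. The key observation is that cone reduction is a statement that can be checked locally, and that on a smooth atlas everything becomes a statement about honest schemes where \cite[Prop.~4.3]{KL13} applies verbatim. First I would recall the explicit local model: since $\phi:\EE\to\trunc\LL_{\sX}$ is an obstruction theory, we have the closed embedding $\fC_{\sX}\hookrightarrow\fC(\EE)$ from \cite[Prop.~8.2]{AP}, and the map $\Sigma:\EE\dual[1]\to F$ induces a linear function $\fl_{\Sigma}=\fC(\Sigma\dual[1]):\fC(\EE)\to F$ with $\fC(\EE_{\Sigma})=\fl_{\Sigma}^{-1}(0_F)$ by the same reasoning as in the first part of the proof of Lemma \ref{Lem:KLconereduction} (the abelian cone of a right-exact sequence $F\dual\to D\to D_{\Sigma}\to 0$ sits in a cartesian square over the zero section of $F$). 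So the content of the theorem is exactly that the composition $\fC_{\sX}\hookrightarrow\fC(\EE)\xrightarrow{\fl_{\Sigma}}F$ factors through the zero section $0_F$.

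Second, I would pass to a smooth atlas: choose a smooth surjection $u:U\to\sX$ from a scheme $U$ (which exists since $\sX$ is algebraic, and we may even take $U$ quasi-projective as $\sX$ is a global quotient). Pulling back the obstruction theory gives an obstruction theory $u^*\phi:u^*\EE\to\trunc\LL_U$ — here one uses smooth base change for intrinsic normal cones, i.e.\ $\fC_U = u^*\fC_{\sX}\times_{\sX}U$ in the appropriate sense, which is \cite[Thm.~6.2, Thm.~6.3]{AP} — and pulls back $\Sigma$ to $u^*\Sigma:(u^*\EE)\dual[1]\to u^*F$ with $u^*F$ a vector bundle on $U$. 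By the resolution property (inherited since $\sX$ is a global quotient, cf.\ \cite[Lem.~2.6]{Tho87}) and further shrinking $U$, I can represent $u^*\EE$ by a two-term complex $[E^{-1}\xrightarrow{d^0}E^0]$-type truncation as in Lemma \ref{Lem:KLconereduction}, pass to the induced two-term perfect obstruction theory $\phi^\tau$, trivialize $u^*F\cong\O_U^{\oplus r}$ and decompose $u^*\Sigma^\tau=(\sigma_1,\dots,\sigma_r)$ into scalar cosections $\sigma_i$. Then \cite[Prop.~4.3]{KL13} applies directly to each $\sigma_i$ and yields $(\fC_U)_{\red}\subseteq\bigcap_i\fC((\EE^\tau)_{\sigma_i})$ inside $\fC(\EE^\tau)$, which (by the diagram-chase at the end of the proof of Lemma \ref{Lem:KLconereduction}) is equivalent to $(\fC_U)_{\red}\subseteq\fC(u^*\EE_{\Sigma})$ inside $\fC(u^*\EE)$, i.e.\ the pulled-back linear function $\fl_{u^*\Sigma}$ vanishes on $(\fC_U)_{\red}$.

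Third, I would descend this vanishing back to $\sX$. The statement ``$\fl_{\Sigma}|_{(\fC_{\sX})_{\red}}=0$'' is a closed condition on the cone stack $\fC(\EE)$, and $\fl_{u^*\Sigma}$ is the pullback of $\fl_{\Sigma}$ along the smooth surjection $\fC(u^*\EE)\to\fC(\EE)$ (which restricts to a smooth surjection $(\fC_U)_{\red}\to(\fC_{\sX})_{\red}$, since reduced structures and intrinsic normal cones are stable under smooth pullback). A morphism of (higher Artin) stacks which becomes zero after smooth-surjective pullback is itself zero on the reduced substack; equivalently, the substack $\fl_{\Sigma}^{-1}(0_F)\cap\fC(\EE)=\fC(\EE_{\Sigma})$ contains $(\fC_{\sX})_{\red}$ because its pullback to the atlas contains $(\fC_U)_{\red}$, and containment of closed substacks can be checked smooth-locally. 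This gives the desired $(\fC_{\sX})_{\red}\subseteq\fC(\EE_{\Sigma})$ as substacks of $\fC(\EE)$.

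\textbf{Main obstacle.} The genuine difficulty is purely bookkeeping rather than conceptual: one must make sure that the abelian cone stack formalism of \cite{AP} — in particular the cartesian square identifying $\fC(\EE_{\Sigma})$ with $\fl_{\Sigma}^{-1}(0_F)$, and the compatibility $\fl_{u^*\Sigma}=u^*\fl_{\Sigma}$ — commutes with pullback along the smooth atlas, and that ``reducedness'' of the intrinsic normal cone stack $(\fC_{\sX})_{\red}$ is well-defined and smooth-local (the cone $\fC_{\sX}$ is in general a non-separated Artin stack, so one should work with the reduced substack in the sense appropriate to stacks, which is still smooth-local). Once those functoriality statements are in place, the theorem follows formally from the scheme-level result \cite[Prop.~4.3]{KL13} exactly as Lemma \ref{Lem:KLconereduction} does. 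I expect no new ideas are needed beyond the proof of Lemma \ref{Lem:KLconereduction}; indeed the proof given there already ``localizes'' the question, so the only addition is the descent step, which is routine for smooth-surjective maps.
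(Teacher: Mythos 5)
Your high-level strategy matches the paper exactly: pass to a smooth atlas, apply the scheme-level cone reduction lemma (Lemma~\ref{Lem:KLconereduction}), then descend. But there is a concrete gap at the atlas step that would make the plan fail as written.

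You assert that pulling $\phi$ back along $u:U\to\sX$ gives ``an obstruction theory $u^*\phi : u^*\EE \to \trunc\LL_U$.'' This is false: the pullback has target $u^*(\trunc\LL_{\sX})=\trunc(\LL_{\sX}|_U)$, which differs from $\trunc\LL_U$ by the relative cotangent $\Omega^1_{U/\sX}$ (a nonzero vector bundle since $u$ is smooth of positive relative dimension on a genuine Artin stack). Consequently $u^*\EE$ is \emph{not} an obstruction theory for the scheme $U$, $\fC_U$ does \emph{not} embed into $\fC(u^*\EE)$, and Lemma~\ref{Lem:KLconereduction} does not apply as you invoke it. The paper's fix is precisely what your proposal omits: replace $u^*\EE$ by $\tEE$ fitting into a morphism of distinguished triangles with third column $\Omega^1_{\tsX/\sX}$, i.e.\ extend $\EE|_{\tsX}$ so that the target of the obstruction theory becomes $\trunc\LL_{\tsX}$ rather than $\trunc(\LL_{\sX}|_{\tsX})$. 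Constructing $\tEE$ requires completing a square in the derived category, and the paper justifies the needed dotted arrow by taking the atlas $\tsX$ to be \emph{affine} (so the relevant $\mathrm{Hom}$ obstruction vanishes because $\cone(\phi[1])$ has amplitude $(-\infty,-3]$); your proposal takes only a quasi-projective scheme, which does not directly grant this.

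Relatedly, your claim ``$\fC_U = u^*\fC_{\sX}\times_{\sX}U$'' is not the right base-change statement: $\fC_U \to \fC_{\sX}|_U$ is a $BT_{U/\sX}$-torsor, not an isomorphism, and the same holds for the abelian cone stacks once $\EE$ is replaced by $\tEE$. This is exactly what makes the descent work in the paper — all four columns of the comparison diagram are $BT_{\tsX/\sX}$-torsors, so the scheme-level containment descends. Your final paragraph has the right intuition (containment of closed substacks is smooth-local), but you cannot reach that step without first producing a bona fide obstruction theory on the atlas, which is the one nontrivial move in the proof.
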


\begin{proof}
Choose a smooth surjection $\tsX \to \sX$ from an affine scheme $\tsX$. We claim that we can form a morphism of distinguished triangles
\begin{equation}\label{Eq:A.4}
\xymatrix{
\EE|_{\tsX}\ar[r] \ar[d]^{\phi} \ar[r] & \tEE \ar[r] \ar[d]^{\tphi} & \Omega^1_{\tsX/\sX} \ar@{.>}[r] \ar@{=}[d] & \EE|_{\tsX}[1] \ar[d]^{\phi[1]}\\
\trunc (\LL_{\sX}|_{\tsX}) \ar[r] & \trunc \LL_{\tsX} \ar[r] & \Omega^1_{\tsX/\sX} \ar[r] & \trunc (\LL_{\sX}|_{\tsX}) [1]
}	
\end{equation}
in the underlying triangulated category $D(\sX):=[\sfD(\sX)]$.
Indeed, the dotted arrow exists since $\tsX$ is affine and $\cone(\phi[1]) \in D(\tsX)$ has amplitude $(-\infty,-3]$. 
Then $\tEE$ and $\tphi$ can be defined by the axioms of triangulated categories. 

The long exact sequence associated to \eqref{Eq:A.4} ensures that $\tphi$ is an obstruction theory.
Moreover we can define a map $\tSigma$ as the composition
\[\tSigma : \tEE\dual[1]\to \EE|_{\tsX}\dual[1] \xrightarrow{\Sigma|_{\tsX}} F|_{\tsX}.\]
Then we can apply Kiem-Li's cone reduction lemma (see Lemma \ref{Lem:KLconereduction}) for the scheme $\tsX$. Hence we have
\[(\fC_{\tsX})_{\red} \subseteq \fC(\tEE_{\tSigma})\]
as substacks of $\fC(\tEE)$, where $\tEE_{\tSigma}:=\cone(\tSigma\dual[1] : F|_{\tsX}\dual[1] \to \tEE)$.
By \eqref{Eq:A.4}, we have a fibre diagram
\[\xymatrix{
(\fC_{\tsX})_{\red} \ar@{^{(}->}[r] \ar@{->>}[d]& \fC_{\tsX} \ar@{^{(}->}[r] \ar@{->>}[d] & \fC(\tEE) \ar@{->>}[d] & \fC(\tEE_{\tSigma}) \ar@{_{(}->}[l] \ar@{->>}[d] \\
(\fC_{\sX}|_{\tsX})_\red \ar@{^{(}->}[r] &  \fC_{\sX}|_{\tsX} \ar@{^{(}->}[r] & \fC(\EE|_{\tsX}) & \fC((\EE_\Sigma)|_{\tsX}) \ar@{_{(}->}[l]
}\]
where the horizontal maps are closed embeddings and the vertical maps are $BT_{\tsX/\sX}$-torsors by \cite[Lem.~5.12(1),~Thm.~6.3(2)]{AP}. Hence we can deduce the desired property via descent.
\end{proof}

Finally, we generalize the construction of reduced Oh-Thomas virtual cycles for non-degenerate cosections in this paper (cf.~\cite{KP20}) to Artin stacks.

\begin{theorem}\label{Thm:RedOTforArtin}
In the situation of Theorem \ref{Thm:OTforArtin}, consider a map $\Sigma:\EE\dual[1] \to F$ to a special orthogonal bundle $(F,\fq_F,o_F)$ on $\sX$ such that $\Sigma^2=\fq_F$.
We further assume that $\sY$ is smooth scheme
and the composition
\begin{equation}\label{Eq:B.5}
F\dual \xrightarrow{\Sigma\dual} \EE[-1] \xrightarrow{\phi} (\trunc\LL_{\sX/\sY})[-1] \xrightarrow{\KS} \Omega^1_{\sY}|_
{\sX}     
\end{equation}
is zero.
Then there is a canonical reduced square root virtual pullback
\[\sqrt{f^!_{\red}}: A_*(\sY) \to A_*(\sX)\]
such that $\sqrt{f^!} = \sqrt{e}(F)\circ \sqrt{f^!_{\red}}$.
Moreover, for any cartesian square
\[\xymatrix{
\sX' \ar@{^{(}->}[r] \ar[d]^{f'} & \sX\ar[d]^f\\
\sY' \ar@{^{(}->}[r]^i & \sY
}\]
where $i$ is a closed embedding of smooth schemes, then we have
\begin{equation}\label{Eq:B.4}
i^! \circ \sqrt{f^!_\red} = \sqrt{(f')^!_\red} \circ i^! : A_*(\sY) \to A_*(\sX').	
\end{equation}
\end{theorem}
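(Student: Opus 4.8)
The plan is to mimic the construction of the (non-reduced) square root virtual pullback in Theorem~\ref{Thm:OTforArtin}, but applied to the reduced symmetric complex obtained by splitting off the orthogonal bundle $F$. The first step is to produce the reduced symmetric complex: since $\Sigma^2 = \fq_F$ is an isomorphism, the composition $\Sigma\circ\Sigma\dual[1]$ identifies $F$ with a direct summand of $\EE[1]$, so setting $\EE^\red := \cone(\Sigma\dual[1]\colon F\dual[1]\to\EE)$ we obtain a splitting of symmetric complexes $\EE\cong\EE^\red\oplus (F|_{\sX}[1])$, exactly as in Definition~\ref{Def:EEredV}, and $\EE^\red$ inherits a $(-2)$-shifted symmetric form and an orientation (from $o$ and $o_F$ via $\det(\EE)\cong\det(\EE^\red)\otimes\det(F)$). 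The complex $\EE^\red$ has tor-amplitude $[-3,1]$ and comes with an induced map $\phi^\red\colon\EE^\red\to\trunc\LL_{\sX/\sY}$ which is still an obstruction theory (the long exact sequence argument as in the proof of Proposition~\ref{prop:redSOT}, using that the decomposition is of symmetric complexes). The key point where the hypothesis \eqref{Eq:B.5} enters is precisely to upgrade the \emph{relative} cosection/obstruction-theory reduction to an \emph{absolute} one: the vanishing of $\KS\circ\phi[-1]\circ\Sigma\dual$ is what allows $\Sigma$ to factor through the absolute obstruction theory $\EE^\abs$ built from the distinguished triangle $\EE^\abs\to\EE\to\Omega^1_{\sY}|_{\sX}[1]$, exactly as in the proof of Theorem~\ref{Thm:DefInv}. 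This is the step I expect to be the main obstacle, since it requires carefully assembling the absolute obstruction theory, checking the relevant diagrams commute, and then invoking the Artin-stack cone reduction lemma Theorem~\ref{Thm:KLforArtin}.

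Granting that, the second step is the cone reduction itself: applying Theorem~\ref{Thm:KLforArtin} to $\EE^\abs$ and the absolute cosection $\Sigma^\abs$, together with the decomposition $\fq_{\EE}=\fq_{\EE^\red}\circ p_1 + \fq_F\circ p_2$ of the quadratic function under the symmetric splitting, one obtains
\[
(\fC_{\sX/\sY})_\red \subseteq \fQ(\EE^\red) \subseteq \fC(\EE^\red)
\]
as substacks of $\fC(\EE)$, the isotropic condition for $\phi^\red$ following from the isotropic hypothesis $(\fC_{\sX/\sY})_\red\subseteq\fQ(\EE)$ for $\phi$ plus the formula for $\fq_\EE$; this is the relative Artin-stack analogue of Proposition~\ref{prop:conereduction}. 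With the isotropic condition in hand, Theorem~\ref{Thm:OTforArtin} applied to the symmetric obstruction theory $\phi^\red\colon\EE^\red\to\trunc\LL_{\sX/\sY}$ (note $\sX$ is still a global quotient stack and $\EE^\red$ is oriented) produces the reduced square root virtual pullback
\[
\sqrt{f^!_\red}\colon A_*(\sY)\to A_*(\sX).
\]

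The third step is the comparison $\sqrt{f^!}=\sqrt{e}(F)\circ\sqrt{f^!_\red}$. This is a statement purely about localized square root Euler classes: choosing compatible symmetric resolutions of $\EE$ and $\EE^\red$ so that $\EE$ is built from $\EE^\red$ by adding the split orthogonal summand $F$, the relation follows from the multiplicativity/excess-intersection property of the Edidin--Graham class, namely $\sqrt{e}(E|_{C(D)}\oplus F, \tau\oplus \tau_F) = \sqrt{e}(F)\circ\sqrt{e}(E|_{C(D)},\tau)$, used already in Step~3 of the proof of Theorem~\ref{Thm:OTforArtin} and in \cite[Cor.~4.7]{KP20}. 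One has to be slightly careful that the specialization/smooth-pullback part \eqref{Eq:A.2} of the construction is literally the same for $\EE$ and $\EE^\red$ (it only depends on $f$, not on the obstruction theory), so the only difference between $\sqrt{f^!}$ and $\sqrt{f^!_\red}$ is the extra Euler factor $\sqrt{e}(F)$.

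Finally, the fourth step is the deformation invariance \eqref{Eq:B.4}. Given the cartesian square with $i$ a regular embedding of smooth schemes, the base change $\sX'\hookrightarrow\sX$ pulls back $\EE^\red$, $\Sigma$, $F$, and the whole reduced construction is stable under this pullback (one checks that \eqref{Eq:B.5} for $f$ restricts to \eqref{Eq:B.5} for $f'$, and that the isotropic condition descends, using Vistoli's rational equivalence $i^![\fC_{\sX/\sY}]=[\fC_{\sX'/\sY'}]$ as in the proofs of Theorem~\ref{Thm:DefInv} and Theorem~\ref{Thm:OTforArtin}). Since the localized Edidin--Graham classes are bivariant \cite[Lem.~4.4(3)]{KP20}, $\sqrt{f^!_\red}$ commutes with the refined Gysin pullback $i^!$, giving \eqref{Eq:B.4}. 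Altogether, essentially every ingredient is already in place in Theorems~\ref{Thm:OTforArtin} and~\ref{Thm:KLforArtin} and in the proof of Theorem~\ref{Thm:DefInv}; the proof is a matter of combining them, with the absolute-cosection construction via \eqref{Eq:B.5} being the one place that needs genuine care.
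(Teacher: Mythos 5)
Your proposal is correct and follows essentially the same route as the paper's own (terse) proof: decompose $\EE\cong\EE^\red\oplus F[1]$ as symmetric complexes, use the vanishing of \eqref{Eq:B.5} to lift the relative cosection to the absolute obstruction theory built from the triangle $\EE^\abs\to\EE\to\Omega^1_{\sY}|_{\sX}[1]$ as in Theorem~\ref{Thm:DefInv}, apply the Artin-stack cone reduction Theorem~\ref{Thm:KLforArtin} to obtain $(\fC_{\sX/\sY})_\red\hookrightarrow\fQ(\EE^\red)$, feed this into the construction of Theorem~\ref{Thm:OTforArtin} (with the induced orientation on $\EE^\red$), and derive \eqref{Eq:B.4} from Vistoli's rational equivalence and bivariance of the localized Edidin--Graham class. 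The one place you go beyond what the paper writes is the explicit justification of $\sqrt{f^!}=\sqrt{e}(F)\circ\sqrt{f^!_\red}$ via compatible symmetric resolutions and multiplicativity of $\sqrt{e}$, which is a useful amplification of an argument the paper leaves implicit (and which is consistent with Step~3 of the proof of Theorem~\ref{Thm:OTforArtin} and with \cite[Cor.~4.7]{KP20}); note also that the construction of $\sqrt{f^!_\red}$ does not strictly require a reduced obstruction theory $\phi^\red\colon\EE^\red\to\trunc\LL_{\sX/\sY}$ — only the cone-reduction datum $(\fC_{\sX/\sY})_\red\hookrightarrow\fQ(\EE^\red)$ is used — so your appeal to the argument of Proposition~\ref{prop:redSOT} is harmless but dispensable.
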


\begin{proof}
Let $\EE \cong \EE^\red \oplus F[1]$ be a decomposition of symmetric complexes induced by $\Sigma^2=\fq_F$.
By a standard argument as in Theorem \ref{Thm:DefInv}, we can modify the relative obstruction theory $\phi$ into an absolute obstruction theory. 
The vanishing of the composition \eqref{Eq:B.5} ensures that the relative cosection $\Sigma$ can be lifted to the absolute obstruction theory.
Hence we can apply the cone reduction lemma in Theorem \ref{Thm:KLforArtin}.
As in Theorem \ref{Thm:DefInv}, we now have a closed embedding $(\fC_{\sX/\sY})_\red \hookrightarrow \fQ(\EE^\red)$. Then we can define the reduced square root virtual pullback $\sqrt{f^!_\red}$ as in Theorem \ref{Thm:OTforArtin} using the data $(\fC_{\sX/\sY})_\red \hookrightarrow \fQ(\EE^\red)$. Here we consider the orientation of $\EE^\red$ induced by the canonical isomorphism $\det(\EE^\red) \cong \det(\EE)\otimes\det(F[1])\dual$.
The argument in Theorem \ref{Thm:OTforArtin} also proves formula \eqref{Eq:B.4}.
\end{proof}

\begin{remark}\label{Rem:globalquotientshypothesis}
We hope to remove the global quotient hypothesis in Theorem \ref{Thm:OTforArtin} and Theorem \ref{Thm:RedOTforArtin}.
At least when $\sX$ has reductive stabilizers and quasi-affine diagonal, then the authors expect that this will not be very difficult.
Indeed, we sketch the following argument.
By Alper-Hall-Rydh \cite{AHR}, there exists an {\'e}tale cover $\cX' \to \cX$ by a global quotient stack $\cX'$. 
By the Chow lemma \cite{BP} (cf.~\cite{LMB}), we can form a proper surjection $\cX'' \to \cX$ from a global quotient stack $\cX''$ using $\cX'$. 
Since we have a Kimura sequence \cite[Cor.~1.23]{AKLPR} for the Chow groups of higher Artin stacks developed by Khan \cite{Kha}, we can reduce the situation to the global quotient case.

If this expectation works, then we can apply Theorem \ref{Thm:RedOTforArtin} to any open substack of the moduli stack $\cPerf(X)$ of a perfect complexes on a Calabi-Yau $4$-fold $X$ which has a good moduli space \cite{Alp}.
\end{remark}

\subsection{Moduli stacks of semi-stable sheaves}

In this subsection, we apply the results in the previous subsection to the moduli {\em stack} of sheaves.

We first consider the {\em rigidified} version. Let $(X,H)$ be a polarized Calabi-Yau $4$-fold. Let $\cCoh(X)$ be the moduli stack of coherent sheaves on $X$. Then there exists a natural $B\GG_m$-action on $\cCoh(X)$. Let $\cCoh(X)\!\!\!\fatslash\GG_m:=[\cCoh(X)/B\GG_m]$ be the quotient stack.\footnote{Since the $B\GG_m$-action on $\cCoh(X)$ is {\em free}, i.e., the induced $\GG_m$-action on the stabilizers $\Aut(E)$ are free for all $E\in \cCoh(X)$, we can alternatively use the {\em rigidification} of classical $1$-Artin stacks, see \cite[Thm.~A.1]{AOV}.} Let
\[\cM^{H,ss}_v(X) \subseteq \cCoh(X)\!\!\!\fatslash \GG_m\]
be the open substack of (Gieseker) $H$-semi-stable sheaves with Chern character $v\in H^*(X,\Q)$. Here semi-stability is an open condition by \cite[Prop.~2.3.1]{HL}.

\begin{theorem}\label{Thm:semistable.rigidified}
Let $(X,H)$ be a polarized Calabi-Yau $4$-fold and $v\in H^*(X,\Q)$.
Then, for any choice of orientation, there exists a canonical {\em reduced} virtual cycle
\[[\cM^{H,ss}_v(X) ]^\red \in A_{1-\frac12\chi(v,v) +\frac12\rho_\gamma}(\cM^{H,ss}_v(X) )\]
where $\gamma:=v_2$ and $\chi(v,v):=\int_Xv\dual \cdot v \cdot \td(X)$.
Moreover, if $[\cM^{H,ss}_v(X)]^\red\neq 0$, then the variational Hodge conjecture holds for $(X,\gamma)$.
\end{theorem}

\begin{proof}
We first fix some notation. Let $\cM:=\cM^{H,ss}_v(X)$ and $\lambda:\tcM\to\cM$ be the $\GG_m$-gerbe induced by $\cCoh(X)\to\cCoh(X)\!\!\!\fatslash\GG_m$.
Let $\sE$ be the universal sheaf on $X \times\tcM$ and $\pi:X\times\tcM \to\tcM$ be the projection map.
Note that we have a weight decomposition
\[D(\tcM) \cong \prod_{w\in\Z}D(\cM)\]
by Bergh-Schn\"urer \cite{BS}, where $D(\tcM):=[\sfD(\tcM)]$ and $D(\cM):=[\sfD(\cM)]$ are the underlying triangulated categories of the stable $\infty$-categories.\footnote{For an $1$-Artin stack, the derived category defined via descent in the context of derived algebraic geometry \cite{ToVe} and the derived category defined as a full subcategory of sheaves of modules on the lisse-\'etale site in \cite{LMB,Ols} coincide by Hall-Rydh \cite[Prop.~1.3]{HR}.}
Moreover,
\[\lambda^* : D(\cM) \to D(\tcM)\]
is fully faithful and the essential image consists of weight zero objects.

We describe the canonical symmetric obstruction theory on $\tcM$ as follows. Let $R\cCoh(X)$ be the derived moduli stack of coherent sheaves on $X$. Then the canonical map $\tcM \xrightarrow{\sE} R\cCoh(X)$ induces an open embedding on the classical truncations. Hence we have an induced symmetric obstruction theory
\[\tphi : \tEE:=\LL_{R\cCoh(X)}|_{\tcM}=(R\hom_\pi(\sE,\sE)[1])\dual \xrightarrow{\At(\sE)} \trunc \LL_{\tcM},\]
which satisfies the isotropic condition by \cite{BBBBJ} and is orientable by \cite{CGJ}.
Since $\tEE \in D(\tcM)$ is a weight zero object, we have $\lambda^*(\lambda_*\tEE) \cong \tEE$ for $\lambda_*\tEE\in D(\cM)$.

We note that the canonical obstruction theory on $\cM$ given by the standard derived enhancement is {\em not} symmetric.
Indeed, the $B\GG_m$-action on $\cCoh(X)$ extends to $R\cCoh(X)$ and the canonical map $\cM \to [R\cCoh(X)/B\GG_m]$ induces an open embedding on the classical truncations. Hence we have an induced obstruction theory
\[\phi_1 : \EE_1:=\LL_{[R\cCoh(X)/B\GG_m]}|_{\cM}\to \trunc \LL_{\cM}\]
where the virtual cotangent complex $\EE_1$ fits into the distinguished triangle
\[\xymatrix{
\LL_{[R\cCoh(X)/B\GG_m]}|_{\tcM} \ar[r]\ar@{=}[d] & \LL_{R\cCoh(X)}|_{\tcM} \ar[r] \ar@{=}[d]& \LL_{B\GG_m}|_{\tcM}\ar@{=}[d]\\
\lambda^*\EE_1 \ar[r] & \tEE \ar[r]^-{t} & \O_{\tcM}[-1].
}\]
Here $t\dual : \O_{\tcM}[1] \to \tEE\dual=R\hom_{\pi}(\sE,\sE)[1]$ is given by the identity $\id_{\sE}:\sE\to\sE$ since the induced $\GG_m$-action on the derived loop space $\cL_{R\cCoh(X),E} \subseteq \mathrm{Tot}(R\Hom_X(E,E))$ at $[E] \in R\cCoh(X)$ is the homothety.

We then {\em symmetrize} the obstruction theory $\phi_1$ via reduction operation in \cite[Lem.~C.2]{Par}.
Indeed, since
\[\Hom_{D(\cM)}(\O_{\cM}[3],\O_{\cM}[-2])=0,\]
we can form a reduction diagram (in the sense of \cite[Lem.~C.2]{Par})
\begin{equation}\label{Eq:Rigid.1}
\xymatrix{
\O_{\cM}[3] \ar@{=}[r] \ar[d] & \O_{\cM}[3] \ar[d]^{\lambda_*t\dual[2]}\\
\EE_1 \ar[r]^{\alpha\dual[2]} \ar[d]^{\beta\dual[2]} & \lambda_*\tEE \ar[r]^-{\lambda_*t} \ar[d]^{\alpha} & \O_{\cM}[-1] \ar@{=}[d]\\
\EE \ar[r]^{\beta} & \EE_1\dual[2] \ar[r] & \O_{\cM}[-1]
}	
\end{equation}
in the triangulated category $D(\cM)$ where $\EE$ is a {\em symmetric} complex. Since
\[\Hom_{D(\cM)}(\O_{\cM}[3],\trunc \LL_{\cM})=\Hom_{D(\cM)}(\O_{\cM}[4],\trunc \LL_{\cM})=0,\]
there exists a unique symmetric obstruction theory
\[\phi : \EE \to \trunc\LL_\cM\]
which fits into the commutative diagram
\[\xymatrix{
\O_{\cM}[3] \ar[r] & \EE_1 \ar[r]^{\beta\dual[2]} \ar[rd]_{\phi_1}&  \EE \ar[r] \ar@{.>}[d]^{\exists\phi}  & \O_{\cM}[4] \\
&& \trunc \LL_{\cM}
}\]
as the dotted arrow.
The isotropic condition for $\phi$ follows from the one for $\tphi$. Indeed, we can form a commutative diagram of (higher) cone stacks on $\tcM$
\[\xymatrix{
\fC_{\tcM} \ar@{^{(}->}[r] \ar[d] & \fC(\lambda^*\EE_1\dual[2]) \ar@{^{(}->}[r]_-{\cong}^-{\fC(\lambda^*\alpha)}\ar[d]^{\fC(\lambda^*\beta)} & \fC(\tEE) \ar[d]\\
\fC_{\cM}|_{\tcM} \ar@{^{(}->}[r] & \fC(\lambda^*\EE) \ar@{^{(}->}[r]_-{\cong} & \fC(\lambda^*\EE_1)
}\]
where the horizontal maps are closed embeddings and the vertical maps are $\mathrm{Tot}(\TT_{\tcM/\cM})\cong B\GG_a$-torsors. From the reduction diagram \eqref{Eq:Rigid.1}, we can deduce that the two quadratic functions $\fq_{\EE}$ and $\fq_{\tEE}$ restrict to the same function on $\fC_{\tcM}$. Since the composition
\[\fC_{\tcM} \twoheadrightarrow \fC_{\cM}|_{\tcM} \twoheadrightarrow \fC_{\cM}\]
is smooth surjective, we obtain the desired isotropic condition via descent.

Moreover, an orientation of $\EE$ can be induced by one of $\tEE$ since we have
\[\det(\EE) \cong \lambda_*\det(\tEE)\]
by the reduction diagram \eqref{Eq:Rigid.1}.

We will now form a cosection $\SR: \EE\dual[1] \to H^1(X,T_X)\dual \otimes \O_{\cM}$ for the obstruction theory $\phi$ on $\cM$. Firstly, define a cosection for the obstruction theory $\tphi$ on $\tcM$ as the composition
\begin{multline*}
\widetilde{\SR} : \tEE\dual[1]=R\hom_\pi(\sE,\sE)[2] \xrightarrow{\At(\sE)} R\hom_\pi(\sE,\sE\otimes \LL_{X\times\tcM})[3] \\\xrightarrow{} R\hom_\pi(\sE,\sE\otimes \Omega^1_{X})[3] \xrightarrow{\tr} R\Gamma(X, \Omega^1_X)\otimes \O_{\tcM}[3] \\
\xrightarrow{} H^3(X,\Omega^1_X)\otimes \O_{\tcM} \xrightarrow{\cong} H^1(X,T_X)\dual\otimes \O_{\tcM}.
\end{multline*}
Then we have $\widetilde{\SR}^2=\sfB_\gamma\otimes 1_{\O_{\tcM}}$ by the argument in Proposition \ref{prop:cosection}. Then there exists a unique cosection
\[\SR : \EE\dual[1] \to H^1(X,T_X)\dual\otimes \O_{\cM}\]
which fits into the commutative diagram
\[\xymatrix{
\O_{\cM}[3] \ar[r] & \EE_1 \ar[r]^{\beta\dual[2]} \ar[d]_{\alpha\dual[2]}&  \EE \ar[r] \ar@{.>}[d]^{\exists\SR}  & \O_{\cM}[4] \\
& \lambda_*\tEE \ar[r]^-{\lambda_*\widetilde{\SR}} & H^1(X,T_X)\dual\otimes \O_{\cM}[1]
}\]
since
\[\Hom_{\cM}(\O_{\cM}[3], H^1(T_X)\dual\otimes \O_{\cM}[1]) = \Hom_{\cM}(\O_{\cM}[4], H^1(T_X)\dual\otimes \O_{\cM}[1])=0.\]
Moreover, we have $\SR^2=\lambda_*\widetilde{\SR}^2 =\sfB_\gamma \otimes 1_{\O_{\cM}}$ by the reduction diagram \eqref{Eq:Rigid.1}.

Therefore, if we choose a maximal non-degenerate subspace $V \subseteq H^1(T_X)$, then we have a non-degenerate cosection $\SR_V : \EE\dual[1] \xrightarrow{\SR} H^1(T_X)\dual\otimes\O_{\cM}[1]\to V\dual\otimes\O_{\cM}[1]$ for the symmetric obstruction theory $\phi:\EE\to\trunc\LL_{\cM}$. Hence we can apply Theorem \ref{Thm:RedOTforArtin} and define a reduced Oh-Thomas virtual cycle
\[[\cM]^\red \in A_*(\cM)\]
where $\cM$ is a global quotient stack \cite{HL}.

Finally, we show that $[\cM]^\red$ is deformation invariant and $[\cM]^\red\neq0$ implies the variational Hodge conjecture. Let $f:\cX\to\cB$ be a family of Calabi-Yau $4$-folds (as in Theorem \ref{Thm:DefInv}). Let $\tgamma \in F^2H^4_{DR}(\cX/\cB)$ be a horizontal section and let $P_v(t)\in \Q[t]$ be the Hilbert polynomial determined by $v$. Then we have a relative moduli stack $\cM^{ss}(\cX/\cB)$ of semi-stable sheaves $E$ on the fibres $\cX_b$ of $f:\cX\to\cB$ over $b \in \cB$ with $\ch_2(E)=\tgamma_b$ and $P_E(t)=P_v(t)$ as a global quotient open substack of $\cCoh(\cX/\cB)$ \cite{HL}.
Then all the arguments above also work for the relative setting so that we have a relative symmetric obstruction theory $\phi_{\cX/\cB} : \EE_{\cX/\cB} \to \trunc\LL_{\cM(\cX/\cB)}$ and a cosection $\SR_{\cX/\cB}:\EE_{\cX/\cB}\dual[1] \to (R^1f_*T_{\cX/\cB})|_{\cM(\cX/\cB)}\dual$.
As in Lemma \ref{lem:compatibilityofKS}, we have an analogous commutative square
\[\xymatrix@C+2pc{
\EE_{\cX/\cB} \ar[r]^-{\SR_{\cX/\cB}} \ar[d]^{\phi_{\cX/\cB}}  &  (R^1f_*T_{\cX/\cB})|_{\cM(\cX/\cB)}\dual[1] \ar[d]^{\KS_{\cX/\cB}}\\ 
\trunc \LL_{\cM(\cX/\cB)} \ar[r]^-{\KS_{\cM(\cX/\cB)/\cB}} & \Omega^1_{\cB}|_{\cM(\cX/\cB)}[1]
}\]
since $\Hom_{\cM(\cX/\cB)}(\O_{\cM(\cX/\cB)}[4], \Omega^1_\cB|_{\cM(\cX/\cB)}[1])=0$. Then, based on Theorem \ref{Thm:RedOTforArtin}, all the remaining arguments in Theorem \ref{Thm:DefInv} work in this setting.
Moreover, the analog of Theorem \ref{Thm:VHC} also works in this setting, since there is a good moduli space of $\cM(\cX/\cB)$ which is proper over $\cB$.
\end{proof}

\begin{remark}
In the proof of Theorem \ref{Thm:semistable.rigidified}, if we restrict to the moduli {\em scheme} $\cM^{H,st}_v(X)$ of stable sheaves, then the reduction in \eqref{Eq:Rigid.1} is just the truncation
\[\tau^{[-2,0]}(R\hom_\pi(\sE,\sE)[1])\dual,\]
which is used in \cite{OT,KP20}.
\end{remark}

\begin{remark}
Theorem \ref{Thm:semistable.rigidified} also holds for the non-rigidified moduli stack $\widetilde{\cM}^{H,ss}_v(X) \subseteq \cCoh(X)$.
However, we will not use this approach since this will give us a weaker statement for the application to the variational Hodge conjecture since $[\widetilde{\cM}^{H,ss}_v(X)]^\red = \lambda^*[\cM^{H,ss}_v(X)]^\red$.
\end{remark}

\begin{remark}\label{Rem:goodmoduli.rigidified}
Assuming the expectation in Remark \ref{Rem:globalquotientshypothesis}, then the first part of Theorem \ref{Thm:semistable.rigidified} holds for any $1$-Artin open substack $\cM(X) \subseteq \cPerf(X)\!\!\!\fatslash \GG_m$ with good moduli space. Moreover, if we can form a $1$-Artin open substack $\cM(\cX/\cB)\subseteq\cPerf(\cX/\cB)\!\!\!\fatslash \GG_m$ with proper good moduli space for any family of Calabi-Yau $4$-folds $\cX\to\cB$, then the second part of Theorem \ref{Thm:semistable.rigidified} also holds.
\end{remark}

Next, we consider the {\em fixed determinant} version.
Let $(X,H)$ be a polarized Calabi-Yau $4$-fold.
Note that there is a canonical determinant map $\det: \cCoh(X) \to \cPic(X)$ to the moduli stack  $\cPic(X)$ of line bundles on $X$. Let  $\cCoh(X)_{\O_X}:=\cCoh(X)\times_{\det,{\curly Pic}(X),\O_X}\Spec(\C)$ be the fibre over the trivial line bundle. Let
\[\cM^{H,ss}_v(X)_{\O_X} \subseteq \cCoh(X)_{\O_X}\]
be the open substack of $H$-semi-stable sheaves with Chern character $v \in H^*(X,\Q)$.

\begin{theorem}\label{Thm:semistable.fixeddet}
Let $(X,H)$ be a polarized Calabi-Yau $4$-fold and $v\in H^*(X,\Q)$.
Assume that $v_0\neq0$ and $v_1=0$.
Then, for any choice of orientation, there exists a canonical {\em reduced} virtual cycle
\[[\cM^{H,ss}_v(X)_{\O_X} ]^\red \in A_{\frac12\chi(\O_X)-\frac12\chi(v,v) +\frac12\rho_\gamma}(\cM^{H,ss}_v(X)_{\O_X} )\]
where $\gamma:=v_2$.
Moreover, if $[\cM^{H,ss}_v(X)_{\O_X}]^\red\neq 0$, then the variational Hodge conjecture holds for $(X,\gamma)$.
\end{theorem}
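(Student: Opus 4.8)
The plan is to mirror the proof of Theorem \ref{Thm:semistable.rigidified} almost verbatim, replacing the rigidification $\cCoh(X)\!\!\!\fatslash\GG_m$ by the fixed-determinant stack $\cCoh(X)_{\O_X}$ and keeping track of how the symmetric obstruction theory changes. First I would set $\cM:=\cM^{H,ss}_v(X)_{\O_X}$, let $\sE$ be the universal sheaf on $X\times\cM$ and $\pi:X\times\cM\to\cM$ the projection. The moduli stack of line bundles $\cPic(X)$ is smooth, and the canonical map $\cM\to[R\cCoh(X)/B\GG_m]\times_{[R\cPic(X)/B\GG_m]}\Spec(\C)$ (using the derived fibre of $\det$ over $\O_X$, and the fact that $\GG_m$ acts freely) induces an open embedding on classical truncations. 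As in the rigidified case the resulting virtual cotangent complex is not yet symmetric: it is the fibre of $R\hom_\pi(\sE,\sE)[1]^\vee\to\O_\cM[-1]$ together with the trace-free correction coming from $\LL_{R\cPic(X)}$; explicitly one gets $\EE_1$ fitting in a triangle whose cone on the trace-zero part is governed by $H^*(X,\O_X)$. Since $v_1=0$ forces $\ch_1(\sE)=0$, the relevant $\Ext$-groups with $\O_\cM$ vanish in the right degrees, and I can run the symmetrization via the reduction lemma \cite[Lem.~C.2]{Par} exactly as in \eqref{Eq:Rigid.1}, producing a genuine symmetric obstruction theory $\phi:\EE\to\trunc\LL_\cM$ with $\EE\cong(\tau^{[-3,1]}R\hom_\pi(\sE,\sE)_0[1])^\vee$ up to the usual bookkeeping. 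The isotropic condition follows from \cite{BBBBJ} applied to the $(-2)$-shifted symplectic derived enhancement coming from $R\cCoh(X)_{\O_X}$, pulled through the $B\GG_a$-torsor as in the rigidified argument, and orientability follows from \cite{CGJ}.

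Next I would construct the semi-regularity cosection $\SR:\EE^\vee[1]\to H^1(X,T_X)^\vee\otimes\O_\cM$ by the same formula as in Proposition \ref{prop:cosection} and the proof of Theorem \ref{Thm:semistable.rigidified}: compose the relative Atiyah class $\At(\sE)$, the projection to $\Omega^1_X$, the trace, the projection to $H^3(X,\Omega^1_X)$, and Serre duality, then descend the resulting cosection through the reduction diagram. The identity $\SR^2=\sfB_\gamma\otimes 1_{\O_\cM}$ holds because $\ch_2(\sE)$ is a horizontal section of the relevant Hodge bundle by Proposition \ref{lem:chishorizontal}, so it equals $\gamma\otimes 1$. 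Choosing a maximal non-degenerate subspace $V\subseteq H^1(X,T_X)$ with respect to $\sfB_\gamma$ gives a non-degenerate cosection $\SR_V$, and since $\cM^{H,ss}_v(X)_{\O_X}$ is a global quotient stack (by the GIT construction of moduli of semistable sheaves, \cite{HL}), Theorem \ref{Thm:RedOTforArtin} applies to yield the reduced virtual cycle $[\cM]^\red\in A_*(\cM)$. The dimension count is the same as in Theorem \ref{Thm:semistable.rigidified} but with the contribution of the fixed determinant removed: instead of virtual dimension $1-\tfrac12\chi(v,v)$ one gets $\tfrac12\chi(\O_X)-\tfrac12\chi(v,v)$ (the difference being $h^0(\O_X)$, i.e.\ the rank of the $\GG_m$-stabilizer direction that is no longer rigidified away but instead cut by $\det$), and the reduced cycle lives in degree $\tfrac12\chi(\O_X)-\tfrac12\chi(v,v)+\tfrac12\rho_\gamma$.

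For the deformation-invariance and variational-Hodge-conjecture part I would repeat the last paragraph of the proof of Theorem \ref{Thm:semistable.rigidified} verbatim: given a family of Calabi-Yau $4$-folds $f:\cX\to\cB$ with a horizontal section $\tgamma\in F^2H^4_{DR}(\cX/\cB)$, form the relative fixed-determinant moduli stack $\cM^{H,ss}_v(\cX/\cB)_{\O_\cX}$ as a global quotient open substack of $\cCoh(\cX/\cB)_{\O_\cX}$, equip it with the relative symmetric obstruction theory and relative cosection $\SR_{\cX/\cB}$, verify the Kodaira–Spencer compatibility square as in Lemma \ref{lem:compatibilityofKS} (the obstruction term $\Hom(\O[4],\Omega^1_\cB[1])$ vanishes), and invoke Theorem \ref{Thm:RedOTforArtin} plus the argument of Theorem \ref{Thm:DefInv} to get $i_b^![\cM(\cX/\cB)_{\O_\cX}]^\red=[\cM_b(\cX_b)_{\O_{\cX_b}}]^\red$. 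Since moduli of semistable sheaves admit a proper good moduli space over $\cB$ \cite{HL}, the surjectivity argument of Theorem \ref{Thm:VHC} goes through, giving the variational Hodge conjecture for $X$ and $v_2$ whenever $[\cM]^\red\neq0$.

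The main obstacle I expect is the bookkeeping in the symmetrization step: unlike the rigidified case, where the extra direction is a clean $B\GG_m$ with a canonical homothety trivialization, here the constraint is $\det(\sE)\cong\O_X$ cut inside a stack whose Picard factor contributes $R\Gamma(X,\O_X)$, so the cone of the trace map involves $H^{>0}(X,\O_X)$ which need not vanish for a general Calabi-Yau $4$-fold. One must check that the hypothesis $v_1=0$ (forcing $\ch_1(\sE)=0$, hence that the relative determinant is constant and the relevant correction term is genuinely $R\Gamma(X,\O_X)$ placed in the degree where the reduction lemma's vanishing $\Hom(\O[3],\O[-2])=0$ and $\Hom(\O[3\text{ or }4],\trunc\LL)=0$ still applies) is exactly what makes the symmetrization and the cosection descent well-posed; if $v_1\neq0$ the fixed-determinant condition is no longer the naive one and the argument would need modification, which is why the theorem is stated only under $v_0\neq0$, $v_1=0$.
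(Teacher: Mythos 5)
Your proposal proceeds by mirroring the proof of Theorem \ref{Thm:semistable.rigidified}, including the symmetrization via the reduction diagram \eqref{Eq:Rigid.1}. This is a genuinely different — and unnecessarily complicated — route from the paper's intended argument, which explicitly refers to Theorem \ref{Thm:RVFC} (the $\PT_q$-pair construction), not to Theorem \ref{Thm:semistable.rigidified}. The reason the fixed-determinant case parallels the $\PT_q$-pair case is precisely the hypothesis $v_0 \neq 0$: when $\rk(E) = v_0 \neq 0$, the cotrace $\alpha \mapsto \alpha\cdot\id_E$ splits the trace, giving a genuine direct-sum decomposition $R\Hom_X(E,E) \cong R\Hom_X(E,E)_0 \oplus R\Gamma(X,\O_X)$. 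Consequently the cotangent complex of the derived fibre product $R\cCoh(X)\times_{R\cPic(X)}\Spec(\C)$ restricted to $\cM:=\cM^{H,ss}_v(X)_{\O_X}$ is the trace-free summand $(R\hom_\pi(\sE,\sE)_0[1])^\vee$ — a summand, not a cone — and this is already symmetric via Serre duality, exactly as for $\PT_q$-pairs via Huybrechts–Thomas. There is no cone governed by $H^{>0}(X,\O_X)$ to worry about, and no need for the reduction lemma \cite[Lem.~C.2]{Par}.

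Consequently your statement that "the resulting virtual cotangent complex is not yet symmetric" is incorrect, as is the closing diagnosis that $v_1=0$ is what makes the symmetrization and cosection descent well-posed. The hypothesis $v_1=0$ is what makes the fixed-determinant condition sensible (so that sheaves with Chern character $v$ can actually have determinant $\O_X$); it is $v_0\neq0$ that makes $R\Hom(E,E)_0$ a genuine summand and the obstruction theory symmetric. Your dimension bookkeeping also contains an error: you claim the difference between $1-\tfrac12\chi(v,v)$ and $\tfrac12\chi(\O_X)-\tfrac12\chi(v,v)$ is $h^0(\O_X)$, but the actual difference is $\tfrac12\chi(\O_X)-1$, which vanishes only for strict Calabi--Yau $4$-folds with $h^{0,1}=h^{0,2}=h^{0,3}=0$; the theorem covers general Calabi--Yau $4$-folds (abelian, hyperk\"ahler, products of K3's) where this is nonzero. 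Finally, the target of the map you propose, $[R\cCoh(X)/B\GG_m]\times_{[R\cPic(X)/B\GG_m]}\Spec(\C)$, involves rigidification, whereas $\cCoh(X)_{\O_X}$ is defined as $\cCoh(X)\times_{\cPic(X)}\Spec(\C)$ without rigidification; the stabilizer of $(E,\phi)$ still contains the finite group $\mu_{\rk E}$ of scalars with determinant one. The rest of your plan (semi-regularity cosection, application of Theorem \ref{Thm:RedOTforArtin}, Kodaira--Spencer compatibility, deformation invariance, variational Hodge conjecture via the proper good moduli space over $\cB$) is correct and matches the paper; the symmetrization detour should simply be replaced by the direct trace-free argument.
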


\begin{proof}
This follows from the arguments in Theorem \ref{Thm:RVFC}, Theorem \ref{Thm:DefInv}, and Theorem \ref{Thm:VHC}, based on  Theorem \ref{Thm:RedOTforArtin}.
This is possible by the existence of the relative moduli stack of semi-stable sheaves as a global quotient stack with proper good moduli space over the base.
We omit the details.
\end{proof}

\begin{remark}
Assuming the expectation in Remark \ref{Rem:globalquotientshypothesis}, we can generalize Theorem \ref{Thm:semistable.fixeddet} to any $1$-Artin open substack of $\cPerf(X)_{\O_X}$ with (proper) good moduli space, as in Remark \ref{Rem:goodmoduli.rigidified}.
\end{remark}

\section{Orientations for relative moduli spaces}\label{Appendix:RelativeOrientation}
In this subsection, we prove that family orientations exist on the topological versions of the moduli spaces. We regard this result as heuristic evidence for Conjecture \ref{conj:familyorientation}.

\subsection{Orientations for fibrations of topological spaces}

We consider the following purely topological statement.

\begin{proposition}\label{Prop:Or.1}
Let $p : \cC \to \cB$ be a (locally trivial) fibration of topological spaces. 
Assume that the base $\cB$ is path connected and locally path connected, and the fibre $\cC_b:=\cC\times_{\cB}\{b\}$ over a point $b \in \cB$ has finitely many connected components.
Let $O \to \cC$ be a principal $\Z_2$-bundle. 
If the fibres $O|_{\cC_b}$ are trivial for all $b \in \cB$, then there exists a finite-sheeted covering $u:\cB' \to \cB$ such that the pullback $O|_{\cC\times_{\cB}\cB'}$ is trivial. 

Moreover, we can further assume that $\deg(u)=|\pi_0(\cC_b)|!$.
\end{proposition}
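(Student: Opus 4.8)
The plan is to analyze the covering space of $\cB$ that classifies trivializations of $O$ along fibres. First I would observe that, since $\cC_b$ has finitely many connected components, a trivialization of $O|_{\cC_b}$ is the same as a choice of one of two sections on each component, so the set of trivializations of $O|_{\cC_b}$ is a torsor under $H^0(\cC_b,\Z_2) \cong \Z_2^{\pi_0(\cC_b)}$, hence has $2^{|\pi_0(\cC_b)|}$ elements; it is non-empty by hypothesis. The key step is to assemble these fibrewise trivialization sets into a covering space $q : \cB'' \to \cB$: a point of $\cB''$ over $b$ is a trivialization of $O|_{\cC_b}$. To see this is genuinely a covering, I would use local triviality of $p : \cC \to \cB$ together with the fact that over a path-connected, locally path-connected base a principal $\Z_2$-bundle on $\cC|_U = \cC_b \times U$ (for $U$ a suitable path-connected, locally path-connected, say contractible-enough, neighbourhood) is pulled back from $\cC_b$ up to the choice of a local section — so trivializations can be transported continuously over $U$, giving local trivializations of $q$. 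The fibres of $q$ all have cardinality $2^{|\pi_0(\cC_b)|}$ (this is locally constant on the connected base $\cB$, hence constant), so $q$ is finite-sheeted of degree $2^{|\pi_0(\cC_b)|}$.

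Next I would pass to a connected component $\cB' \subseteq \cB''$ and let $u : \cB' \to \cB$ be the restriction of $q$; since $\cB$ is connected and locally path connected and $q$ is a covering, $u$ is still a finite-sheeted covering map. By construction $\cB'$ carries a tautological continuous family of fibrewise trivializations of $O$, i.e. a section of the covering $(\cC \times_\cB \cB'') \times_{\cC \times_\cB \cB'}(-) \to \cC \times_\cB \cB'$ that parametrizes trivializations; unwinding this, the pullback $O|_{\cC \times_\cB \cB'}$ admits a global section restricting to the chosen trivialization on each fibre. A principal $\Z_2$-bundle with a global section is trivial, so $O|_{\cC \times_\cB \cB'}$ is trivial, which is the first assertion.

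For the refinement on the degree, I would note $\deg(u) = |\pi_0(\cB')| \le \deg(q) = 2^{|\pi_0(\cC_b)|}$ is not directly what is wanted; instead I would replace $\cB'$ by the Galois/normal closure. The covering $q : \cB'' \to \cB$ has structure group a subgroup of $\mathrm{Aut}(\Z_2^{\pi_0(\cC_b)}\text{-torsor})$; its monodromy is a homomorphism $\pi_1(\cB,b) \to \mathrm{Sym}(q^{-1}(b))$, and since the fibre is a torsor under the abelian group $G := \Z_2^{\pi_0(\cC_b)}$ which acts simply transitively, the image lies in the normalizer of $G$ inside $\mathrm{Sym}(G)$, i.e. in the affine group $G \rtimes \mathrm{Aut}(G)$; but actually the relevant sub-covering whose trivialization suffices is the one on which the torsor becomes trivialized, whose monodromy lands in $\mathrm{Sym}(\pi_0(\cC_b)) \cong S_{|\pi_0(\cC_b)|}$ acting by permuting components — so I can take the associated covering of fibre $S_{|\pi_0(\cC_b)|}$, of degree $|\pi_0(\cC_b)|!$, on which the components are globally ordered, hence a preferred trivialization is singled out, hence $O$ pulls back to the trivial bundle. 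The main obstacle I expect is precisely this bookkeeping: showing that the relevant quotient covering (where the component-set is rigidified, not where the full $\Z_2$-torsor is trivialized) already suffices to trivialize $O$, and that its degree divides $|\pi_0(\cC_b)|!$ — this is where one must be careful that trivializing $O$ only requires a consistent choice of which of the two sections to take on each component, but this choice can in fact be made once the components are globally labelled, using the fibrewise triviality hypothesis to fix the sign convention on a basepoint fibre and transporting. Everything else is standard covering space theory.
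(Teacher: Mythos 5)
Your construction of the covering space of fibrewise trivializations, followed by passage to a connected component, is a valid and somewhat different route to the main assertion. The paper instead works with the monodromy actions $m : \pi_1(\cC, c) \to \Aut(\pi_0(O_c))$ and $n : \pi_1(\cB, b) \to \Aut(\pi_0(O_b))$ directly: using the bijection $\pi_0(O_b) \cong \pi_0(\cC_b)\times\pi_0(O_c)$ one obtains a commutative square in which the map $\Aut(\pi_0(O_c)) \to \Aut(\pi_0(O_b))$ is injective, so killing $n$ by passing to the cover with $\pi_1 = \ker(n)$ also kills $m$ and hence trivializes $O$. Your covering of trivializations corresponds instead to the stabilizer of a single fibrewise trivialization, a subgroup that can strictly contain $\ker(n)$, so your cover can actually be of smaller degree than the paper's; both approaches yield the first assertion. (You do need to justify that the space of trivializations really is a covering; your appeal to ``contractible-enough'' neighbourhoods goes beyond the stated hypothesis of local path connectedness, but this is a technicality that would have to be cleaned up in either approach.)

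The argument for the degree refinement, however, has a genuine gap. You assert that rigidifying the component set of $\cC_b$ — passing to a degree-$|\pi_0(\cC_b)|!$ cover on which the local system $\pi_0(\cC/\cB)$ is trivialized — already singles out a preferred trivialization of $O$, so that $O$ pulls back trivially. This is false: labelling the components does not determine, for each component, which of the two preimage components of $O$ to pick, and the obstruction to making such a choice consistently is not killed by the $S_n$-covering. Concretely, take $\cB = \cC = O = S^1$ with both $p : \cC \to \cB$ and $O \to \cC$ given by $z \mapsto z^2$. Then $|\pi_0(\cC_b)| = 2$, all hypotheses are satisfied (the restriction of $O$ to a two-point fibre is automatically trivial), and $|\pi_0(\cC_b)|! = 2$. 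But the connected degree-$2$ cover $\cB' \to \cB$, $z \mapsto z^2$, gives $\cC\times_\cB\cB' \cong S^1 \sqcup S^1$ with each component mapping homeomorphically onto $\cC$, so $O$ pulls back to the nontrivial bundle on each component; the minimal cover that works has degree $4$. So the ``transporting the sign convention'' step fails precisely because the remaining monodromy after killing the $S_n$-part is the $\Z_2^n$-translation part, and that is still nontrivial in general. You should also be aware that the paper's proof of the ``Moreover'' clause is itself delicate at exactly this point: it only establishes that $\deg(u)$ divides $|\Aut(\pi_0(O_b))|$, and since $|\pi_0(O_b)| = 2\,|\pi_0(\cC_b)|$, this does not literally give the stated bound $|\pi_0(\cC_b)|!$; the $S^1$ example above shows that bound cannot hold as written. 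So while your attempted repair does not work, the underlying difficulty is not one you could reasonably have resolved.
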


\begin{proof}
Fix a point $c \in \cC$ and let $b:=p(c) \in \cB$. 
Consider the fibre diagram	
\[\xymatrix{
O_{c} \ar[r] \ar[d] & O_b \ar[r] \ar[d] & O\ar[d]\\
\{c\} \ar[r] & \cC_b \ar[r] \ar[d] & \cC \ar[d]^p \\
& \{b\} \ar[r] & \cB
}\]
where $O_b:=O|_{\cC_b}$.
Since the two maps $O \to \cC $ and $O \to \cB$ are  fibrations, there are monodromy actions
\begin{equation}\label{Eq:Or.2}
m: \pi_1(\cC,c) \lefttorightarrow \pi_0(O_c), \qquad n : \pi_1(\cB,b) \lefttorightarrow \pi_0(O_b).
\end{equation}
Moreover, the canonical map
\[\pi_0(O_c) \to \pi_0(O_b)\]
is compatible with the above two monodromy actions in \eqref{Eq:Or.2}. Since we have $O_b = \cC_b \bigsqcup \cC_b$ by assumption, there is a canonical bijection
\begin{equation}\label{Eq:Or.1}
\pi_0(O_b) \cong \pi_0(\cC_b) \times \pi_0(O_c).
\end{equation}
We can form a commutative diagram of groups
\begin{equation}\label{Eq:Or.3}
\xymatrix{
\pi_1(\cC,c) \ar[r]^{p_*} \ar[d]_m& \pi_1(\cB,b) \ar[d]^n \\
\Aut(\pi_0(O_c)) \ar[r] & \Aut(\pi_0( O_b) )
}\end{equation}
where the vertical maps are given by the monodromy actions in \eqref{Eq:Or.2}, the upper horizontal map is the canonical pushforward for $p$, and the lower horizontal map is given by the decomposition \eqref{Eq:Or.1}.

To show that the $\Z_2$-bundle $O \to \cC$ is trivial, it suffices to show that the left vertical map $m:\pi_1(\cC,c) \to \Aut(\pi_0(O_c))$ in \eqref{Eq:Or.3} is trivial. Since the lower horizontal map in \eqref{Eq:Or.3}, given by \eqref{Eq:Or.1}, is injective, it suffices to show that the right vertical map $n:\pi_1(\cB,b) \to \Aut(\pi_0(O_b))$ in \eqref{Eq:Or.3} is trivial. 

Note that $\pi_0(O_b)$ is finite since $\pi_0(\cC_b)$ is finite by assumption and we have \eqref{Eq:Or.1}. Hence the kernel of the map $n:\pi_1(\cB,b) \to \Aut(\pi_0(O_b))$ is a normal subgroup of $\pi_1(\cB,b)$ of finite index. Hence there exists a covering map
\[u : \cB' \to \cB\]
such that the composition
\[\pi_1(\cB',b') \xrightarrow{u_*} \pi_1(\cB,b) \xrightarrow{n} \Aut(\pi_0(O_b))\]
is trivial and the degree of $u$ divides the order of the finite group $\Aut(\pi_0(O_b))$.
Then the pullback $O|_{\cC\times_{\cB}\cB'}$ is trivial since the commutative square \eqref{Eq:Or.3} is compatible with base change.

Finally, replacing $\cB'$ by its covering, we can further assume that the degree of $u:\cB'\to\cB$ is exactly the order of the group $\Aut(\pi_0(O_b))$.
\end{proof}

\subsection{Orientations for relative topological moduli spaces}

Let $f:\cX\to\cB$ be a smooth projective morphism of smooth quasi-projective schemes.
Consider the mapping space
\[\cC(\cX/\cB):=\mathrm{Map}_{\mathrm{Top}_{/\cB^{\mathrm{top}}}}(\cX^{\mathrm{top}}, \cB^{\mathrm{top}} \times BU \times \Z),\]
where $(-)^{\mathrm{top}}$ denotes the underlying topological space (with the analytic topology). Here we used the modified compact-open topology on the mapping space $\mathrm{Map}_{\mathrm{Top}/\cB^{\mathrm{top}}}(-,-)$ which satisfies adjunction (\cite{BB}).
Then we have a homotopy fibre diagram
\[\xymatrix{
\cC(\cX_b) \ar[r] \ar[d] & \cC(\cX/\cB) \ar[d] \\
\{b\} \ar[r] & \cB
}\]
for each $b \in \cB$, where $\cC(\cX_b):=\cC(\cX_b/\{b\})$. Note that
\[\pi_0(\cC(\cX_b)) = K^0_{\mathrm{top}}(\cX_b)\]
is the topological $K$-theory of $\cX_b$.

Let $\tv$ be a horizontal section of $\bigoplus_p \cH^{2p}_{DR}(\cX/\cB)$.
Then $\tv^\mathrm{top}$ can be regarded as a section of the $\C$-local system $\bigoplus_p R^{2p} f^{\mathrm{top}}_* \C$ (Remark~\ref{Rem:horizontal}(1)).
Hence we can consider an open subspace
\[\cC(\cX/\cB,\tv) \subseteq \cC(\cX/\cB)\]
which fits into the fibre diagram
\[\xymatrix{
\cC(\cX_b,\tv_b) \ar[r] \ar[d] & \cC(\cX/\cB,\tv) \ar[d] \\
\{b\} \ar[r] & \cB
}\]
for each $b \in \cB$, where $\cC(\cX_b,\tv_b)=\cC(\cX_b/\{b\},\tv_b) \subseteq \cC(\cX_b)$ is the union of connected components $\alpha \subseteq \cC(\cX_b)$ whose corresponding $K$-theory classes $[\alpha]$ map to $\tv_b \in H^*(\cX_b,\C)$ under the topological Chern character map
\[\ch^\mathrm{top} : K^0_\mathrm{top}(\cX_b) \to H^*(\cX_b,\Q).\]

\begin{corollary}\label{Cor:Or.2}
Let $f:\cX\to\cB$ be a smooth projective morphism of smooth quasi-projective schemes.
Assume that $\cB$ is connected.
Let $\tv$ be a horizontal section of $\bigoplus_p \cH^{2p}_{DR}(\cX/\cB)$.
Let $O \to \cC(\cX/\cB,\tv)$ be a principal $\Z_2$-bundle. 
If the fibres $O|_{\cC(\cX_b,\tv_b)}$ are trivial for all $b \in \cB$, then there exists a finite {\'e}tale cover $u:\cB' \to \cB$ such that the pullback $O|_{\cC(\cX'/\cB',\tv')}$ is trivial, where $\cX':=\cX\times_{\cB}\cB'$ and $\tv':=u^*(\tv)$.

Moreover, we can further assume that $\deg(u) = |K_{\mathrm{top}}^0(\cX_b)_{\mathrm{tor}}|!$ where $(-)_{\mathrm{tor}}$ denotes the torsion subgroup.
\end{corollary}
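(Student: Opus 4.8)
The plan is to deduce Corollary~\ref{Cor:Or.2} directly from the purely topological Proposition~\ref{Prop:Or.1} by verifying its hypotheses for the fibration $p:\cC(\cX/\cB,\tv) \to \cB$. First I would observe that $\cB^{\mathrm{top}}$ is connected and locally path connected (it is the analytification of a smooth quasi-projective scheme, hence a manifold), so the only nontrivial input is that each fibre $\cC(\cX_b,\tv_b)$ has finitely many connected components and that the map $p$ is an honest locally trivial fibration. The latter follows because $f:\cX\to\cB$ is a smooth projective (in particular proper submersive) morphism, so $f^{\mathrm{top}}$ is a fibre bundle by Ehresmann's theorem; trivializing $\cX$ over a contractible open $U\subseteq\cB^{\mathrm{top}}$ gives a homeomorphism $\cX|_U\cong U\times\cX_b$ over $U$, and the adjunction property of the modified compact-open topology on $\Map_{\mathrm{Top}_{/\cB^{\mathrm{top}}}}(-,-)$ then yields a homeomorphism $\cC(\cX/\cB)|_U\cong U\times\cC(\cX_b)$ over $U$, which restricts to the corresponding statement for $\cC(\cX/\cB,\tv)$ since $\tv$ is locally constant. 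This exhibits $p$ as locally trivial.

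The key remaining point is the finiteness of $\pi_0(\cC(\cX_b,\tv_b))$. Here I would use that $\pi_0(\cC(\cX_b)) = K^0_{\mathrm{top}}(\cX_b)$ and that $\cC(\cX_b,\tv_b)$ is the union of the components lying over $\tv_b$ under the rational Chern character $\ch^{\mathrm{top}}: K^0_{\mathrm{top}}(\cX_b)\to H^*(\cX_b,\Q)$. The kernel of $\ch^{\mathrm{top}}$ after tensoring with $\Q$ is exactly the torsion subgroup $K^0_{\mathrm{top}}(\cX_b)_{\mathrm{tor}}$ (by the Atiyah--Hirzebruch spectral sequence, $K^0_{\mathrm{top}}(\cX_b)$ is a finitely generated abelian group and $\ch^{\mathrm{top}}\otimes\Q$ is an isomorphism onto the even rational cohomology), so the fibre of $\ch^{\mathrm{top}}$ over any class in its image is a coset of $K^0_{\mathrm{top}}(\cX_b)_{\mathrm{tor}}$, which is finite. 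Hence $\pi_0(\cC(\cX_b,\tv_b))$ is finite, with cardinality dividing $|K^0_{\mathrm{top}}(\cX_b)_{\mathrm{tor}}|$ (it is either empty or a full coset).

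With all hypotheses of Proposition~\ref{Prop:Or.1} verified, I would apply it to the principal $\Z_2$-bundle $O\to\cC(\cX/\cB,\tv)$ to obtain a finite-sheeted covering $u_0:\cB_0\to\cB^{\mathrm{top}}$, of degree $|\pi_0(\cC(\cX_b,\tv_b))|!$, over which the pullback of $O$ is trivial. The last step is to upgrade this topological covering to a finite \'etale cover of the scheme $\cB$: since $\cB$ is a smooth (hence normal) connected quasi-projective variety, the Riemann existence theorem gives an equivalence between finite-sheeted topological covers of $\cB^{\mathrm{top}}$ and finite \'etale covers of $\cB$, so $u_0$ is the analytification of a unique finite \'etale morphism $u:\cB'\to\cB$; then $\cX':=\cX\times_\cB\cB'$ has $(\cX')^{\mathrm{top}}=\cX^{\mathrm{top}}\times_{\cB^{\mathrm{top}}}\cB_0$ and $\cC(\cX'/\cB',\tv')^{\mathrm{top}}=\cC(\cX/\cB,\tv)\times_{\cB^{\mathrm{top}}}\cB_0$, so $O|_{\cC(\cX'/\cB',\tv')}$ is trivial. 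Finally, since $|\pi_0(\cC(\cX_b,\tv_b))|$ divides $|K^0_{\mathrm{top}}(\cX_b)_{\mathrm{tor}}|$, iterating or enlarging $\cB'$ (passing to a further cover) lets us arrange $\deg(u)=|K^0_{\mathrm{top}}(\cX_b)_{\mathrm{tor}}|!$, as in the final sentence of Proposition~\ref{Prop:Or.1}. I expect the main obstacle to be bookkeeping rather than conceptual: carefully checking that the modified compact-open topology does make $\cC(\cX/\cB,\tv)\to\cB$ a genuine locally trivial fibration (as opposed to merely a quasi-fibration or a map with the homotopy lifting property), so that Proposition~\ref{Prop:Or.1} applies verbatim; the identification of $\ker(\ch^{\mathrm{top}}\otimes\Q)$ with the torsion subgroup and the Riemann existence argument are standard.
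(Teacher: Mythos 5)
Your proposal is correct and follows essentially the same route as the paper: apply Proposition~\ref{Prop:Or.1} after checking local triviality of $\cC(\cX/\cB,\tv)\to\cB$ (via Ehresmann and the adjunction property of the modified compact-open topology) and finiteness of $\pi_0(\cC(\cX_b,\tv_b))\cong K^0_{\mathrm{top}}(\cX_b)_{\mathrm{tor}}$ (via finite generation of topological $K$-theory from Atiyah--Hirzebruch), and then upgrade the resulting finite-sheeted topological covering to a finite \'etale cover by the Riemann existence theorem \cite[Expos\'e XII, Cor.~5.2]{SGA1}. The only nitpick is that the fibre of $\ch^{\mathrm{top}}$ over $\tv_b$, when non-empty, is a full coset of $K^0_{\mathrm{top}}(\cX_b)_{\mathrm{tor}}$ and hence has cardinality exactly $|K^0_{\mathrm{top}}(\cX_b)_{\mathrm{tor}}|$ (not merely dividing it), which is what the paper uses to obtain the degree formula directly without the further-cover step.
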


\begin{proof}
By Ehresmann's theorem $\cC(\cX/\cB,\tv) \to \cB$ is a locally trivial fibration.
To apply Proposition \ref{Prop:Or.1}, it suffices to show that
\[\pi_0(\cC(\cX_b,\tv_b)) = (\ch^\mathrm{top})^{-1}(\tv_b) \]
is finite. This set is bijective to $\ker(\ch^\mathrm{top})=K^0_{\mathrm{top}}(\cX_b)_{\mathrm{tor}}$. Since $K^0_{\mathrm{top}}(\cX_b)$ is a finitely generated abelian group (by the Atiyah-Hirzebruch spectral sequence), 
$\pi_0(\cC(\cX_b,\tv_b)) \cong K^0_{\mathrm{top}}(\cX_b)_{\mathrm{tor}}$ is also finite.

Finally, there is a natural equivalence of categories
\[\{\text{finite {\'e}tale covers of $\cB$}\} \cong \{\text{finitely-sheeted topological coverings of $\cB^\top$}\}\]
by \cite[Expos\'e XII, Cor.~5.2]{SGA1}.
Hence the finite covering in Proposition \ref{Prop:Or.1} comes from a finite {\'e}tale map.
\end{proof}

\begin{remark}
Let $f:\cX\to\cB$ be a smooth projective morphism of smooth quasi-projective schemes of relative dimension $4$ with connected fibres. Assume that the relative canonical bundle $\omega_{\cX/\cB}$ is trivial and $\cB$ is connected.
Let $\tv$ be a horizontal section of $\bigoplus_p \cH^{2p}_{DR}(\cX/\cB)$.
Let $\cPerf(\cX/\cB,\tv)$ be the moduli stack of all perfect complexes on the fibres of $f$ (which exists as a higher Artin stack) whose Chern characters are pullbacks of $\tv$. Then there exists a canonical $\Z_2$-bundle $O \to \cPerf(\cX/\cB,\tv)$ given by the determinant line bundle $\det(\EE)$ of the restriction $\EE$ of the cotangent complex of the derived enhancement of $\cPerf(\cX/\cB,\tv)$. We want to know whether the $\Z_2$-bundle $O$ is trivial.

If the canonical map 
\[p^{\mathrm{top}} : \cPerf(\cX/\cB,\tv)^{\mathrm{top}} \to \cB^{\mathrm{top}}\]
(given by the topological realization functor \cite{Sim,Bla}) were a locally trivial fibration, then this will follow from Proposition \ref{Prop:Or.1}.
However, $p^{\mathrm{top}}$ is {\em not} locally trivial in general.

What we can use instead is the topological mapping space. There is a canonical map
\[\cPerf(\cX/\cB,\tv)^{\mathrm{top}} \to \cC(\cX/\cB,\tv)\]
and the map
\[c : \cC(\cX/\cB,\tv) \to \cB^{\mathrm{top}}\]
which is a locally trivial fibration.

In view of Corollary \ref{Cor:Or.2}, the remaining ingredients required to prove Conjecture \ref{conj:familyorientation} are as follows:
\begin{enumerate}
\item construct a natural $\Z_2$-bundle on $\cC(\cX/\cB)$ whose fibres on $\cC(\cX_b)$ are the ones defined by Joyce-Tanaka-Upmeier \cite{JTU}, and
\item compare the $\Z_2$ bundle in (1) with the natural orientation on $\cPerf(\cX/\cB)$.
\end{enumerate}

We note that this strategy is exactly the same as the fibrewise result of Cao-Gross-Joyce \cite{CGJ}. They showed the orientability on $\cPerf(\cX_b)$ by (1) constructing an orientation bundle on $\cC(\cX_b)$, (2) showing the compatibility of orientations of $\cPerf(\cX_b)$ and $\cC(\cX_b)$, and (3) showing the orientability of $\cC(\cX_b)$. In their setting, (3) was the most difficult part using topological facts on $5$-manifolds. In our situation, we only need to generalize parts (1) and (2) to the relative setting. 
\end{remark}

\noindent {\tt{younghan.bae@math.ethz.ch, m.kool1@uu.nl, hyeonjun93@snu.ac.kr}}
\end{document}